\numberwithin{equation}{section}
\newtheorem{Theorem}{Theorem}[section]
\newtheorem{Corollary}[Theorem]{Corollary}
\newtheorem{Lemma}[Theorem]{Lemma}
\newtheorem{Proposition}[Theorem]{Proposition}
{ \theoremstyle{definition}

\newtheorem{Remark}[Theorem]{Remark} }
\newcommand{\ds}{\displaystyle}
\newcommand{\bK}{\mathbf{K}}
\newcommand{\be}{\mathbf{e}}
\newcommand{\bk}{\mathbf{k}}
\newcommand{\bl}{\mathbf{l}}
\newcommand{\bm}{\mathbf{m}}
\newcommand{\bn}{\mathbf{n}}
\newcommand{\bs}{\mathbf{s}}
\newcommand{\bone}{\mathbf{1}}
\newcommand{\BN}{\mathbb{N}}
\newcommand{\BZ}{\mathbb{Z}}
\newcommand{\BR}{\mathbb{R}}
\newcommand{\BC}{\mathbb{C}}
\newcommand{\BH}{\mathbb{H}}
\newcommand{\BK}{\mathbb{K}}
\newcommand{\BO}{\mathbb{O}}
\newcommand{\fa}{\mathfrak{a}}
\newcommand{\fg}{\mathfrak{g}}
\newcommand{\fh}{\mathfrak{h}}
\newcommand{\fk}{\mathfrak{k}}
\newcommand{\fl}{\mathfrak{l}}
\newcommand{\fn}{\mathfrak{n}}
\newcommand{\fp}{\mathfrak{p}}
\newcommand{\fz}{\mathfrak{z}}
\newcommand{\cB}{\mathcal{B}}
\newcommand{\cF}{\mathcal{F}}
\newcommand{\cH}{\mathcal{H}}
\newcommand{\cK}{\mathcal{K}}
\newcommand{\cO}{\mathcal{O}}
\newcommand{\cP}{\mathcal{P}}
\newcommand{\rK}{\mathrm{K}}
\newcommand{\rT}{\mathrm{T}}
\newcommand{\ve}{\varepsilon}
\newcommand{\rank}{\operatorname{rank}}
\newcommand{\End}{\operatorname{End}}
\newcommand{\Hom}{\operatorname{Hom}}
\newcommand{\Ext}{\operatorname{Ext}}
\newcommand{\Tr}{\operatorname{Tr}}
\newcommand{\tr}{\operatorname{tr}}
\newcommand{\Det}{\operatorname{Det}}
\newcommand{\Pf}{\operatorname{Pf}}
\newcommand{\sgn}{\operatorname{sgn}}
\newcommand{\Proj}{\operatorname{Proj}}
\newcommand{\hsum}{\sideset{}{^\oplus}{\sum}}
\newcommand{\hboxtimes}{\mathbin{\hat{\boxtimes}}}
\newcommand{\dottimes}{\mathbin{\dot{\times}}}
\newcommand{\Sym}{\operatorname{Sym}}
\newcommand{\Skew}{\operatorname{Skew}}
\newcommand{\Supp}{\operatorname{Supp}}
\newcommand{\Herm}{\operatorname{Herm}}
\newcommand{\Rest}{\operatorname{Rest}}
\renewcommand{\Re}{\operatorname{Re}}
\begin{document}

\allowdisplaybreaks

\newcommand{\arXivNumber}{1804.07100}

\renewcommand{\PaperNumber}{036}

\FirstPageHeading

\ShortArticleName{Construction of Intertwining Operators}

\ArticleName{Construction of Intertwining Operators \\ between Holomorphic Discrete Series Representations}

\Author{Ryosuke NAKAHAMA}

\AuthorNameForHeading{R.~Nakahama}

\Address{Graduate School of Mathematical Sciences, The University of Tokyo, \\
3-8-1 Komaba Meguro-ku Tokyo 153-8914, Japan}
\Email{\href{mailto:nakahama@ms.u-tokyo.ac.jp}{nakahama@ms.u-tokyo.ac.jp}}

\ArticleDates{Received April 24, 2018, in final form April 02, 2019; Published online May 05, 2019}

\Abstract{In this paper we explicitly construct $G_1$-intertwining operators between holomorphic discrete series representations $\cH$ of a Lie group $G$ and those $\cH_1$ of a subgroup $G_1\subset G$ when $(G,G_1)$ is a symmetric pair of holomorphic type. More precisely, we construct $G_1$-intertwining projection operators from $\cH$ onto $\cH_1$ as differential operators, in the case $(G,G_1)=(G_0\times G_0,\Delta G_0)$ and both $\cH$, $\cH_1$ are of scalar type, and also construct $G_1$-intertwining embedding operators from $\cH_1$ into $\cH$ as infinite-order differential operators, in the case $G$ is simple, $\cH$ is of scalar type,
and $\cH_1$ is multiplicity-free under a maximal compact subgroup $K_1\subset K$. In the actual computation we make use of series expansions of integral kernels and the result of Faraut--Kor\'anyi (1990) or the author's previous result (2016) on norm computation. As an application, we observe the behavior of residues of the intertwining operators, which define the maps from some subquotient modules, when the parameters are at poles.}

\Keywords{branching laws; intertwining operators; symmetry breaking operators; symmetric pairs; holomorphic discrete series representations; highest weight modules}

\Classification{22E45; 43A85; 17C30}

\tableofcontents

\section{Introduction}
The purpose of this paper is to study intertwining operators between a holomorphic discrete series representation of some reductive Lie group~$G$ and that of some reductive subgroup $G_1\subset G$, and write down such an operator explicitly.

Let $G$ be a real reductive Lie group, $G_1$ be a reductive subgroup of $G$, and consider a representation $(\pi,\cH)$ of~$G$. Then it is a fundamental problem to understand how the representation $(\pi,\cH)$ of $G$ behaves
when it is restricted to the subgroup~$G_1$. Recently Kobayashi \cite{K1} proposed a program for such problems in the following three stages.
\begin{description}\itemsep=0pt
\item{(Stage A)} Abstract features of the restriction $\pi|_{G_1}$.
\item{(Stage B)} Branching laws.
\item{(Stage C)} Construction of symmetry breaking operators.
\end{description}
In general, the restriction $\pi|_{G_1}$ may behave wildly, for example, the multiplicity becomes infinite, or it contains continuous spectrum, even if $(G,G_1)$ is a symmetric pair, and $\pi$ is a unitary representation of~$G$. However Kobayashi and his collaborators found conditions for $(G,G_1,\pi)$ that the restriction $\pi|_{G_1}$ behaves nicely, that is, it is discretely decomposable \cite{Kdd1, Kdd2, Kdd3, Kbgg, KyO1, KyO2}, its multiplicity becomes finite or uniformly bounded \cite{Ksf,Kfm1, KM, KtO}, or decomposes multiplicity-freely \cite{Kmf0, Kmf1, Kmf2} (Stage~A). In particular, if $G$ is a reductive Lie group of \textit{Hermitian type}
(i.e., the Riemannian symmetric space $G/K$ has a natural complex structure), $(G,G_1)$ is a symmetric pair of \textit{holomorphic type} (i.e., a symmetric pair such that the embedding map $G_1/K_1\hookrightarrow G/K$ is holomorphic), and $\pi$ is in the nice class of representations, called the \textit{holomorphic discrete series representations} of~$G$, then the restriction $\pi|_{G_1}$ decomposes discretely~\cite{Kdd2, Ma}. Moreover, if the holomorphic discrete series representation $\pi$ is of \textit{scalar type}, then it decomposes multiplicity-freely. Also, under the assumption that $(G,G_1)$ is a symmetric pair of holomorphic type and $\pi$ is a holomorphic discrete series representation, the branching law
\begin{gather*} \pi|_{G_1}\simeq\hsum_{\pi_1\in\hat{G}_1}m(\pi,\pi_1)\pi_1 \end{gather*}
is determined in \cite{Kmf0, Kmf1} (see the survey \cite{Kmf0-1}) (Stage~B). Here $\hat{G}_1$ denotes the unitary dual of~$G_1$, i.e., the set of equivalence classes of unitary representations of~$G_1$, and $m(\pi,\pi_1)\in\BZ_{\ge 0}$. Thus our next interest is to understand the above decomposition explicitly, for example, to construct the $G_1$-intertwining operator between $\pi|_{G_1}$ and $\pi_1$ explicitly (Stage~C). Such problems have been considered by e.g.\ Clerc--Kobayashi--{\O}rsted--Pevzner \cite{CKOP}, Kobayashi--Kubo--Pevzner~\cite{KKP}, Kobayashi--{\O}rsted--Somberg--Sou\v{c}ek~\cite{KOSS}, Kobayashi--Speh \cite{KS1,KS2}, M\"ollers--{\O}rsted--Oshima \cite{MOO} and M\"ollers--Oshima \cite{MO} when $\pi$ are principal series or complementary series representations, and by e.g.\ Ibukiyama--Kuzumaki--Ochiai~\cite{IKO}, Kobayashi--Pevzner~\cite{KP1,KP2} and Peng--Zhang~\cite{PZ} when $\pi$ are holomorphic discrete series representations. The approach used in \cite{KKP,KOSS,KP1,KP2} is called the ``F-method'', in which the explicit intertwining operators are determined by solving certain differential equations. This idea first appeard in~\cite{Kfmeth}. In this paper, we also attack this problem when $\pi$ are holomorphic discrete series representations, but take an approach different from the F-method, namely, by computing some integrals using series expansion.

\looseness=-1 Now we review holomorphic discrete series representations. Let $G$ be a reductive Lie group of Hermitian type, and $K\subset G$ be a maximal compact subgroup with Cartan involution~$\vartheta$. Then there exists a complex subspace $\fp^+\subset\fg^\BC$ in the complexified Lie algebra of $G$ and a~bounded domain $D\subset\fp^+$ such that the Riemannian symmetric space $G/K$ is diffeomorphic to~$D$, and~$G/K$ admits a natural complex structure via this diffeomorphism. Next, let $(\tau,V)$ be a finite-dimensional representation of $\tilde{K}^\BC$, the universal covering group of $K^\BC$, and consider the space of holomorphic sections of the homogeneous vector bundle $\tilde{G}\times_{\tilde{K}}V$ on $G/K$. Then by the Borel embedding, it is isomorphic to the space of $V$-valued holomorphic functions on $D$
\begin{gather*} \Gamma_\cO\big(G/K,\tilde{G}\times_{\tilde{K}}V\big)\simeq \cO(D,V). \end{gather*}
Clearly this admits an action of $\tilde{G}$. If $(\tau,V)$ is sufficiently ``regular'', then $\cO(D,V)$ admits a~$\tilde{G}$-invariant inner product which is given by a converging integral on $D$. In this case the corresponding Hilbert subspace $\cH_\tau(D,V)\subset\cO(D,V)$ admits a~unitary representation. This family of representations is called the \textit{holomorphic discrete series representations}.

We take a subgroup $G_1\subset G$ which is stable under the Cartan involution $\vartheta$ of $G$. We assume that the embedding map $G_1/K_1\hookrightarrow G/K$ of Riemannian symmetric spaces is holomorphic. Let $\fp^+_1:=\fp^+\cap\fg^\BC_1$ be the intersection of $\fp^+$ and the complexfied Lie algebra of $G_1$, and $\fp^+_2:=(\fp^+_1)^\bot\subset\fp^+$ be the orthogonal complement under a suitable inner product on $\fp^+$. We take a finite-dimensional representation $(\rho,W)$ of $\tilde{K}_1^\BC$, and consider the corresponding holomorphic discrete series representation $\cH_{\rho}(D_1,W)$ of $\tilde{G}_1$. Then $\cH_{\rho}(D_1,W)$ appears in the direct summand of $\cH_{\tau}(D,V)|_{\tilde{G}_1}$ if and only if $(\rho,W)$ appears in the irreducible decomposition of $\cP(\fp^+_2)\otimes V$ under~$\tilde{K}_1^\BC$, where $\cP(\fp^+_2)$ is the space of holomorphic polynomials on~$\fp^+_2$~\cite{Kmf1}. Our aim is to write down the $\tilde{G}_1$-intertwining operator between $\cH_{\tau}(D,V)$ and each $\cH_{\rho}(D_1,W)$ explicitly.

We calculate the intertwining operator in the following way. First, we find the kernel function $\hat{\rK}(x;y_1)$ which is $\tilde{G}_1$-invariant in a suitable sense (see Proposition~\ref{kernelfunc}). Then the intertwining operators are given by (see Corollary~\ref{integral_expression})
\begin{alignat*}{3}
& \cH_{\tau}(D,V) \to \cH_{\rho}(D_1,W),\qquad && f \mapsto \big\langle f,\hat{\rK}(\cdot;y_1)\big\rangle_{\cH_\tau(D,V)},&\\
& \cH_{\rho}(D_1,W) \to \cH_{\tau}(D,V),\qquad && g \mapsto \big\langle g,\hat{\rK}(x;\cdot)^*\big\rangle_{\cH_\rho(D_1,W)}. &
\end{alignat*}
This gives an integral expression of the intertwining operator, and this step is similar to the method used in~\cite{KS0, KS1, KS2, MOO}. However, it seems to be difficult to get information on the bran\-ching from this expression. Also, in~\cite{KP1} it is proved that the intertwining operator from~$\cH_{\tau}(D,V)$ to~$\cH_{\rho}(D_1,W)$ is always given by a~differential operator (localness theorem), but we cannot derive this fact from our integral expression. Thus we try to rewrite the integral expression to a~differential expression (possibly of infinite order) by substituting $f(x)$ with~${\rm e}^{(x|z)}$, $g(y)$ with ${\rm e}^{(y|w)}$, where $(\cdot|\cdot)$ is a suitable inner product on $\fp^+$. Then we can show that there exists a polynomial $F^*(z_1,z_2)\in\cP(\overline{\fp^+_1\times\fp^+_2},\Hom(V,W))$ and a function $F(x_2;w_1)\in\cO(\fp^+_2\times\overline{D_1},\Hom(W,V))$ such that the intertwining operators are given by (see Theorem~\ref{main})
\begin{alignat*}{3}
&\cH_\tau(D,V)\to\cH_\rho(D_1,W),\qquad && f(x) \mapsto F^*\left.\left(\overline{\frac{\partial}{\partial x_1}},\overline{\frac{\partial}{\partial x_2}}\right)\right|_{x_2=0}f(x_1,x_2), & \\
& \cH_\rho(D_1,W) \to\cH_\tau(D,V),\qquad && g(x_1) \mapsto F\left(x_2;\overline{\frac{\partial}{\partial x_1}}\right)g(x_1).&
\end{alignat*}
The latter operator is of infinite order in general, but we can show that $F\big(x_2;\overline{\frac{\partial}{\partial x_1}}\big)g(x_1)$ converges uniformly on every compact set in some open subset of~$D$, extends holomorphically on whole~$D$, and defines a continuous map between spaces of all holomorphic functions (see Theo\-rems~\ref{conti_ext} and~\ref{extend}). The functions $F$ and $F^*$ are given by an explicit integral, and actual computation of~$F$ and~$F^*$ is performed in Section~\ref{sect_examples} case by case, by using the series expansion of integrands and the result of Faraut--Kor\'anyi~\cite{FK0} or the author's previous result \cite{N2} on norm computation. In this way, we get the explicit intertwining operators of both direction $\cH_\tau(D,V)\rightleftarrows\cH_\rho(D_1,W)$ in the case $(G,G_1)$ is one of
\begin{alignat}{3}
&(U(q,s),U(q,s')\times U(s'')),\qquad & &(\operatorname{SO}^*(2s),\operatorname{SO}^*(2(s-1))\times \operatorname{SO}(2)),& \notag\\
&(\operatorname{SO}^*(2s),U(s-1,1)),\qquad & &(\operatorname{SO}_0(2,2s),U(1,s)), & \label{list_normal_derivative}\\
&(E_{6(-14)},U(1)\times \operatorname{Spin}_0(2,8)), \qquad &&& \notag
\end{alignat}
which are given by normal derivatives (Corollaries~\ref{normal_der_1} and~\ref{normal_der_2}). We also get the projection operators $\cH_\tau(D,V)\rightarrow\cH_\rho(D_1,W)$ in the case $(G,G_1)$ is of the form
\begin{gather*} (G,G_1)=(G_0\times G_0,\Delta G_0), \end{gather*}
where $G_0$ is a simple Lie group of Hermitian type, when both $(\tau,V)$ and $(\rho,W)$ are scalar (and some few other cases) (Theorem~\ref{thm_tensor}), which gives essentially the same result as in~\cite{PZ} (see also, e.g.,~\cite{BCK,OR,P}).
In addition we get the embedding operators $\cH_\rho(D_1,W)\rightarrow\cH_\tau(D,V)$ in the case $(G,G_1)$ is one of
\begin{alignat}{3}
&(\operatorname{Sp}(s,\BR), \operatorname{Sp}(s',\BR)\times \operatorname{Sp}(s'',\BR)),\qquad & &(U(q,s), U(q',s')\times U(q'',s'')),&\notag\\
&(\operatorname{SO}^*(2s), \operatorname{SO}^*(2s')\times \operatorname{SO}^*(2s'')),\qquad & &(\operatorname{Sp}(s,\BR), U(s',s'')),&\notag\\
&(\operatorname{SO}^*(2s), U(s',s'')),\qquad & &(\operatorname{SU}(s,s), \operatorname{Sp}(s,\BR)),&\notag\\
&(\operatorname{SU}(s,s), \operatorname{SO}^*(2s)),\qquad & &(\operatorname{SO}_0(2,n), \operatorname{SO}_0(2,n')\times \operatorname{SO}(n'')), &\notag\\
&(E_{6(-14)},{\rm SL}(2,\BR)\times \operatorname{SU}(1,5)),\qquad & &(E_{6(-14)},U(1)\times \operatorname{SO}^*(10)),&\notag\\
&(E_{6(-14)},\operatorname{SU}(2,4)\times \operatorname{SU}(2)),\qquad & &(E_{7(-25)},{\rm SL}(2,\BR)\times \operatorname{Spin}_0(2,10)),&\notag\\
&(E_{7(-25)},U(1)\times E_{6(-14)}),\qquad & &(E_{7(-25)},\operatorname{SU}(2)\times \operatorname{SO}^*(12)),&\notag\\
&(E_{7(-25)},\operatorname{SU}(2,6)),\qquad &&&\label{list_infinite_order}
\end{alignat}
when $(\tau,V)$ is scalar and $(\rho,W)$ is multiplicity-free under the maximal compact subgroup $K_1\subset G_1$ (or more generally when $(\rho,W)$ satisfies the assumption (\ref{assumption_orth}) given later) (Theo\-rems~\ref{main1}, \ref{main2}, \ref{main3}, \ref{main4}, \ref{main5}, \ref{main6}), but $(E_{7(-25)},U(1)\times E_{6(-14)})$ case is based on some unproved assumption (Theorems~\ref{main2}(4)). We note that this assumption for $(\rho,W)$, which is the same assumption used in the author's previous paper~\cite{N2}, is needed since the explicit computation of intertwining operators is deeply based on the explicit norm computation of $\cH_\rho(D_1,W)$ given in~\cite{N2}. The symmetric pairs $(G,G_1)$ in the lists (\ref{list_normal_derivative}), (\ref{list_infinite_order}) exhaust all symmetric spaces of holomorphic type such that $G$ is simple (if we replace $(U(q,s), U(q',s')\times U(q'',s''))$ with $(\operatorname{SU}(q,s), S(U(q',s')\times U(q'',s'')))$). It remains as a future task to construct embedding operators for tensor product case, and to construct projection operators in the list~(\ref{list_infinite_order}) (for some special cases it is already done; see~\cite{IKO,Ju,KP2}).

\looseness=-1 The embedding intertwining operators $\cH_\rho(D_1,W)\rightarrow\cH_\tau(D,V)$ we compute in this paper are written uniformly
in the following form, although they are computed case by case. Let $(G,G_1)$ be a symmetric pair in the list (\ref{list_infinite_order}),
and $\chi$, $\chi_1$ be (suitably normalized) characters of maximal compact subgroups $K$, $K_1$ of $G$, $G_1$ respectively.
We assume $(\tau,V)=\big(\chi^{-\lambda},\BC\big)$, $(\rho,W)=\big(\chi_1^{-\varepsilon(\lambda+\delta k)}\otimes\rho_0,W\big)$,
where $(\rho_0,W)$ is a representation of $K_1$ which appears in the decomposition of $\cP(\fp^+_2)$, $\varepsilon$ and $\delta$ are
1 or 2 according to $(G,G_1)$, and $k\in\BZ_{\ge 0}$ if $\fp^+_2$ is of tube type, $k=0$ if $\fp^+_2$ is not of tube type.
We write $\cH_\tau(D,V)=\cH_\lambda(D)$, $\cH_\rho(D_1,W)=\cH_{\varepsilon(\lambda+\delta k)}(D_1,W)$.
We assume $\cH_{\varepsilon(\lambda+\delta k)}(D_1,W)$ to be multiplicity-free under $K_1$.
Then the intertwining operator is of the form
\begin{gather*}
\cF_{\lambda,k,W}\colon \ \cH_{\varepsilon(\lambda+\delta k)}(D_1,W)\to\cH_\lambda(D), \\
\cF_{\lambda,k,W}f(x_1,x_2)=\Delta(x_2)^k\sum_{\substack{W'\in\Supp_{K_1}(\cP(\fp^+_1)\otimes W) \\ \hspace{15pt}\cap\Supp_{K_1}(\cP(\fp^+_2))}}
\frac{1}{b_{W,W'}(\lambda+\delta k)}
\cK_{W,W'}\left(x_2;\frac{1}{\varepsilon}\overline{\frac{\partial}{\partial x_1}}\right)f(x_1),
\end{gather*}
where $x_1\in\fp^+_1$, $x_2\in\fp^+_2$, $\Delta(x_2)$ is a polynomial on $\fp^+_2$, $\Supp_{K_1}(\cP(\fp^+_1)\otimes W)$ and $\Supp_{K_1}(\cP(\fp^+_2))$ denote all $K_1$-types which appear in the decomposition of $\cP(\fp^+_1)\otimes W$ and $\cP(\fp^+_2)$ respectively, and for each~$W'$, $b_{W,W'}(\lambda)\in\BC[\lambda]$ is a monic polynomial given by a product of Pochhammer symbols, and
\begin{gather*} \cK_{W,W'}(x_2;y_1)\in \big(W'\otimes \overline{W'}\big)^{K_1}\subset \cP(\fp^+_2)\otimes\overline{\cP(\fp^+_1)\otimes W} \end{gather*}
is a $\overline{W}\simeq\Hom(W,\BC)$-valued $K_1$-invariant polynomial, normalized such that
\begin{gather*} \sum_{W'}\cK_{W,W'}(x_2;y_1)={\rm e}^{\frac{1}{2}(x_2|Q(y_1)x_2)_{\fp^+}}\rK_{W}(x_2)
={\rm e}^{\frac{1}{2}(Q(x_2)y_1|y_1)_{\fp^+}}\rK_W(x_2), \end{gather*}
where $\rK_W(x_2)\in\cP(\fp^+_2,\Hom(W,\BC))^{K_1}$ is a fixed polynomial, and $Q\colon \fp^+\to\Hom(\overline{\fp^+},\fp^+)$ is a~quadratic map determined from the Jordan triple system structure of $\fp^+$.
On the other hand, when $(G,G_1)$ is in the list~(\ref{list_normal_derivative}), we have $Q(y_1)x_2=0$ and
${\rm e}^{\frac{1}{2}(x_2|Q(y_1)x_2)_{\fp^+}}\rK_{W}(x_2)=\rK_W(x_2)$.
In this case the embedding intertwining operator is given by the multiplication operator
\begin{gather*}
\cF_{\lambda,k,W}\colon \cH_{\varepsilon(\lambda+\delta k)}(D_1,W)\to\cH_\lambda(D), \\
\cF_{\lambda,k,W}f(x_1,x_2)=\Delta(x_2)^k\rK_{W}(x_2)f(x_1), \qquad x_1\in\fp^+_1,\quad x_2\in\fp^+_2.
\end{gather*}

By the explicit computation of the intertwining operators, we can study how the operator depends on the parameter of the holomorphic discrete series representation. More precisely, since each $b_{W,W'}(\lambda)$ in the above formula is a holomorphic polynomial, $\cF_{\lambda,k,W}$ extends meromorphically for all $\lambda\in\BC$, and defines an intertwining operator $\cO_{\varepsilon(\lambda+\delta k)}(D_1,W)_{\tilde{K}_1}\to\cO_\lambda(D)_{\tilde{K}}$. Now suppose $\nu=\lambda$ is a pole of $\cF_{\nu,k,W}$ of order $i_0$. In this case $\cO_{\varepsilon(\lambda+\delta k)}(D_1,W)_{\tilde{K}_1}$ and $\cO_\lambda(D)_{\tilde{K}}$ are no longer unitarizable. Then for $i=0,1,\ldots,i_0$, there exists a submodule $M_i\subset \cO_{\varepsilon(\lambda+\delta k)}(D_1,W)_{\tilde{K}_1}$ such that
\begin{gather*} \tilde{\cF}_{\lambda,k,W}^i:=\lim_{\nu\to\lambda}(\nu-\lambda)^i\cF_{\nu,k,W}\colon M_i\to\cO_\lambda(D)_{\tilde{K}} \end{gather*}
is well-defined. This $M_i$ contains all $K_1$-types $W'$ such that it appears as the summand of $\cF_{\nu,k,W}$ and the corresponding $b_{W,W'}(\nu+\delta k)$ has a zero of order at most $i$ at $\nu=\lambda$. Moreover, $\tilde{\cF}_{\lambda,k,W}^i$ is trivial on $M_{i-1}$, and defines a map from $M_i/M_{i-1}$. However, this is not intertwining unless $i=i_0$. But fortunately, if there exists a submodule $M'$ such that $M_{i-1}\subsetneq M'\subsetneq M_i$, then the restriction
\begin{gather*} \tilde{\cF}_{\lambda,k,W}^i\colon \ M'/M_{i-1}\to\cO_\lambda(D)_{\tilde{K}} \end{gather*}
is intertwining. Whether such submodule $M'$ exists or not depends on the pair $(G,G_1)$. In this paper we observe this phenomenon only when $G$ is classical and the minimal $K_1$-type $W$ is 1-dimensional, but this also occur when $G$ is exceptional or $W$ is not 1-dimensional.

This paper is organized as follows. In Section~\ref{section2} we prepare some notations and review some facts on Lie algebras of Hermitian type, Jordan triple systems, and holomorphic discrete series representations. In Section~\ref{section3} we construct a general theory on the intertwining operators between holomorphic discrete series representations. In Section~\ref{section4}, as a preparation for case by case analysis, we fix the explicit realization of Lie groups and their root systems. In Sections~\ref{sect_examples} and \ref{sect_remaining} we compute the explicit intertwining operators by using the results of Sections~\ref{section3} and~\ref{section4}. In Section~\ref{section6}, we study what occurs when the parameter is at a pole in the cases $G$ is classical and both $\cH$ and $\cH_1$ are of scalar type.

\section{Preliminaries for general theory}\label{section2}
\subsection{Root systems}\label{root_sys}
Let $\fg$ be a reductive Lie algebra with Cartan involution $\vartheta$. We decompose $\fg$ into a sum of simple non-compact subalgebras, a semi-simple compact subalgebra and an abelian subalgebra as
\begin{gather*} \fg=\fg_{(1)}\oplus\cdots\oplus\fg_{(m)}\oplus\fg_{\text{cpt}}\oplus\fz(\fg). \end{gather*}
We assume that each simple non-compact subalgebra $\fg_{(i)}$ is of Hermitian type, that is,
its maximal compact subalgebra $\fk_{(i)}:=\fg^\vartheta_{(i)}$ has a 1-dimensional center $\fz(\fk_{(i)})$, and also that the abelian part $\fz(\fg)$ is fixed by $\vartheta$. For each $i$, we fix an element $z_{(i)}\in\fz(\fk_{(i)})$ such that $\operatorname{ad}(z_{(i)})$ has eigenvalues $+\sqrt{-1}$, $0$, $-\sqrt{-1}$,
and decompose the complexified Lie algebra $\fg_{(i)}^\BC$ into eigenspaces under $\operatorname{ad}(z_{(i)})$ as
\begin{gather*} \fg_{(i)}^\BC=\fp^+_{(i)}\oplus \fk^\BC_{(i)}\oplus\fp^-_{(i)}. \end{gather*}
We denote
\begin{alignat*}{3}
& \fp^+ :=\fp^+_{(1)}\oplus\cdots\oplus\fp^+_{(m)},\qquad && \fk^\BC:=\fk_{(1)}^\BC\oplus\cdots\oplus\fk_{(m)}^\BC\oplus\fg_{\text{cpt}}^\BC\oplus\fz(\fg)^\BC,&\\
&\fp^-:=\fp^-_{(1)}\oplus\cdots\oplus\fp^-_{(m)},\qquad && \fk :=\fk_{(1)}\oplus\cdots\oplus\fk_{(m)}\oplus\fg_{\text{cpt}}\oplus\fz(\fg)=\fg^\vartheta,&
\end{alignat*}
so that
\begin{gather*} \fg^\BC=\fp^+\oplus\fk^\BC\oplus\fp^-. \end{gather*}
We denote the anti-holomorphic extension of the Cartan involution $\vartheta$ on $\fg^\BC$ by the same sym\-bol~$\vartheta$.
Also, let $\hat{\vartheta}:=\vartheta\circ \operatorname{Ad}({\rm e}^{\pi z})$ ($z:=\sum_i{z_{(i)}}$) be the anti-holomorphic involution on~$\fg^\BC$ fixing~$\fg$.

Next, we fix a Cartan subalgebra $\fh\subset\fk$. Then $\fh^\BC$ automatically becomes a Cartan subalgebra of $\fg^\BC$.
We set $\fh_{(i)}:=\fh\cap\fg_{(i)}$.
Let $\Delta_{\fg^\BC_{(i)}}=\Delta(\fg^\BC_{(i)},\fh^{\BC}_{(i)})$ be the root system of $\fg^\BC_{(i)}$,
and let $\Delta_{\fp^\pm_{(i)}}$, $\Delta_{\fk^\BC_{(i)}}$ be the set of roots
such that the corresponding root space is contained in $\fp^\pm_{(i)}$, $\fk^\BC_{(i)}$ respectively.
We fix a positive system $\Delta_{\fg^\BC_{(i)},+}\subset \Delta_{\fg^\BC_{(i)}}$
such that $\Delta_{\fp^+_{(i)}}\subset\Delta_{\fg^\BC_{(i)},+}$,
and denote $\Delta_{\fk^\BC_{(i)},+}:=\Delta_{\fk^\BC_{(i)}}\cap \Delta_{\fg^\BC_{(i)},+}$.
Then we can take a system of strongly orthogonal roots
$\{\gamma_{1,(i)},\ldots,\gamma_{r_{(i)},(i)}\}\subset \Delta_{\fp^+_{(i)}}$,
where $r_{(i)}=\rank_\BR\fg_{(i)}$, such that
\begin{enumerate}\itemsep=0pt
\item $\gamma_{1,(i)}$ is the highest root in $\Delta_{\fp^+_{(i)}}$,
\item $\gamma_{k,(i)}$ is the root in $\Delta_{\fp^+_{(i)}}$
which is highest among the roots strongly orthogonal to each $\gamma_{j,(i)}$ with $1\le j\le k-1$.
\end{enumerate}
For each $j$, let $\fp^+_{jj,(i)}$ be the root space corresponding to $\gamma_{j,(i)}$.
We take an element $e_{j,(i)}\in\fp^+_{jj,(i)}$ such that
\begin{gather*} -[[e_{j,(i)},\vartheta e_{j,(i)}],e_{j,(i)}]=2e_{j,(i)}, \end{gather*}
and set
\begin{alignat*}{4}
& h_{j,(i)}:=-[e_{j,(i)},\vartheta e_{j,(i)}]\in\sqrt{-1}\fh_{(i)},\qquad && e_{(i)}:=\sum_{j=1}^{r_{(i)}}e_{j,(i)}\in\fp^+_{(i)},\qquad && e:=\sum_{i=1}^m e_{(i)}\in\fp^+,& \\
&\fa_{\fl,(i)}:=\bigoplus_{j=1}^{r_{(i)}}\BR h_{j,(i)}\subset\sqrt{-1}\fh_{(i)},\qquad && \fa_{(i)}^+:=\bigoplus_{j=1}^{r_{(i)}}\BR e_{j,(i)}\subset\fp^+_{(i)}.&&&
\end{alignat*}
Then the restricted root system $\Sigma=\Sigma\big(\fg^\BC_{(i)},\fa_{\fl,(i)}^\BC\big)$ is one of
\begin{gather*} \Sigma=\big\{ \tfrac{1}{2}(\gamma_{j,(i)}-\gamma_{k,(i)})\big|_{\fa_{\fl,(i)}}\!\!\colon
 1\le j, k\le r_{(i)}, j\ne k \big\}\\
 \hphantom{\Sigma=}{}
\cup\big\{{\pm}\tfrac{1}{2}(\gamma_{j,(i)}+\gamma_{k,(i)})\big|_{\fa_{\fl,(i)}}\!\!\colon 1\le j\le k\le r_{(i)}\big\} \end{gather*}
(type $C_{r_{(i)}}$), or
\begin{gather*} \Sigma=(\text{as above})\cup\big\{ {\pm}\tfrac{1}{2}\gamma_{j,(i)}\big|_{\fa_{\fl,(i)}}\colon 1\le j\le r_{(i)}\big\} \end{gather*}
(type $BC_{r_{(i)}}$). For $1\le j\le k\le r_{(i)}$ we set
\begin{gather*}
\fp^+_{jk,(i)} :=\big\{x\in\fp^+_{(i)}\colon \operatorname{ad}(l)x=\tfrac{1}{2}(\gamma_{j,(i)}+\gamma_{k,(i)})(l)x\text{ for all }l\in\fa_{\fl,(i)}\big\},\\
\fp^+_{0j,(i)} :=\big\{x\in\fp^+_{(i)}\colon \operatorname{ad}(l)x=\tfrac{1}{2}\gamma_{j,(i)}(l)x\text{ for all } l\in\fa_{\fl,(i)}\big\}.
\end{gather*}
Then we have
\begin{gather*} \fp^+_{(i)}=\bigoplus_{\substack{0\le j\le k\le r_{(i)}\\ (j,k)\ne (0,0)}}\fp^+_{jk,(i)}. \end{gather*}
We set
\begin{alignat*}{4}
&\fp^+_{\rT,(i)}:=\bigoplus_{1\le j\le k\le r_{(i)}}\fp^+_{jk,(i)},\qquad &&
\fp^-_{\rT,(i)}:=\vartheta\fp^+_{\rT,(i)},\qquad &&
\fp^+_{\rT}:=\bigoplus_{i=1}^m \fp^+_{\rT,(i)}, &\\
&\fk^\BC_{\rT,(i)}:=[\fp^+_{\rT,(i)},\fp^-_{\rT,(i)}], \qquad && \fk_{\rT,(i)} :=\fk^\BC_{\rT,(i)}\cap \fk_{(i)}, &&&\\
& \fg^\BC_{\rT,(i)}:=\fp^+_{\rT,(i)}\oplus \fk^\BC_{\rT,(i)}\oplus \fp^-_{\rT,(i)},\qquad && \fg_{\rT,(i)}:=\fg^\BC_{\rT,(i)}\cap \fg_{(i)},&&&
\end{alignat*}
and we define the integers
\begin{gather*}
d_{(i)} :=\dim\fp^+_{12,(i)},\qquad b_{(i)}:=\dim\fp^+_{01,(i)},\\
n_{(i)}:=\dim\fp^+_{(i)}=r_{(i)}+\tfrac{1}{2}r_{(i)}(r_{(i)}-1)d_{(i)}+b_{(i)}r_{(i)},\qquad
n:=\dim\fp^+=\sum_{i=1}^m n_{(i)},\\
n_{\rT,(i)}:=\dim\fp^+_{\rT,(i)}=r_{(i)}+\tfrac{1}{2}r_{(i)}(r_{(i)}-1)d_{(i)},\\
p_{(i)}:=2+(r_{(i)}-1)d_{(i)}+b_{(i)}.
\end{gather*}

Throughout the paper, let $G^\BC$ be a connected complex Lie group with Lie algebra $\fg^\BC$, and let
$G$, $K^\BC$, $K$, $G^\BC_{(i)}$, $G_{(i)}$, $K^\BC_{(i)}$, $K_{(i)}$, $G^\BC_{\rT,(i)}$, $G_{\rT,(i)}$, $K^\BC_{\rT,(i)}$, $K_{\rT,(i)}$
be the connected Lie subgroup with Lie algebras
$\fg$, $\fk^\BC$, $\fk$, $\fg^\BC_{(i)}$, $\fg_{(i)}$, $\fk^\BC_{(i)}$, $\fk_{(i)}$,
$\fg^\BC_{\rT,(i)}$, $\fg_{\rT,(i)}$, $\fk^\BC_{\rT,(i)}$, $\fk_{\rT,(i)}$ respectively.
Also, let
\begin{gather*} K_{L,(i)}:=\big\{ k\in K_{\rT,(i)}\colon \operatorname{Ad}(k)e_{(i)}=e_{(i)}\big\}, \end{gather*}
which is possibly non-connected, and we denote its Lie algebra by $\fk_{\fl,(i)}$.

For $k\in K^\BC$, we write $k^*:=(\vartheta k)^{-1}$.
Then for each $i$, there exists a unique Hermitian inner product $(\cdot|\cdot)_{\fp^+_{(i)}}$,
holomorphic in the first variable and anti-holomorphic in the second variable, such that
\begin{gather*}
(\operatorname{Ad}(k)x|y)_{\fp^+_{(i)}} =(x|\operatorname{Ad}(k^*)y)_{\fp^+_{(i)}}, \qquad x,y\in\fp^+_{(i)},\quad k\in K^\BC_{(i)},\qquad
(e_{1,(i)}|e_{1,(i)})_{\fp^+_{(i)}} =1.
\end{gather*}
This is proportional to the restriction of the Killing form of $\fg_{(i)}^\BC$ on $\fp^+_{(i)}\times \fp^-_{(i)}$,
if we identify $\fp^+_{(i)}$ and $\fp^-_{(i)}$ through $\vartheta$. By summing these inner products, we define
\begin{gather}\label{innerprod}
(x|y)=(x|y)_{\fp^+}:=\sum_{i=1}^m (x_i|y_i)_{\fp^+_{(i)}} \qquad
 x=\sum_{i=1}^m x_i, \qquad y=\sum_{i=1}^m y_i\in\fp^+=\bigoplus_{i=1}^m\fp^+_{(i)} .
\end{gather}
From now on we omit $\operatorname{Ad}$ or $\operatorname{ad}$ if there is no confusion, so that $(kx|y)_{\fp^+}=(x|k^*y)_{\fp^+}$.

\subsection{Operations on Jordan triple systems}
$\fp^+$ has a Hermitian positive Jordan triple system structure with the product
\begin{gather*} (x,y,z)\mapsto -[[x,\vartheta y],z]. \end{gather*}
For $x,y\in\fp^+$, let $D(x,y)$ be the linear map, $Q(x,y)$ be the anti-linear map on $\fp^+$ defined by
\begin{gather*} D(x,y):=-\operatorname{ad}([x,\vartheta y])\bigr|_{\fp^+},\qquad Q(x,y):=\operatorname{ad}(x)\operatorname{ad}(y)\vartheta\bigr|_{\fp^+}, \end{gather*}
and let $Q(x):=\frac{1}{2}Q(x,x)$.
We recall that, for $x,y\in\fp^+$, the \textit{Bergman operator} $B(x,y)\in\End(\fp^+)$ is defined as
\begin{gather*} B(x,y):=I-D(x,y)+Q(x)Q(y)\in\End(\fp^+). \end{gather*}
We say $(x,y)\in\fp^+\times\fp^+$ is \textit{quasi-invertible} if $B(x,y)$ (or equivalently $B(y,x)$) is invertible,
and in this case the \textit{quasi-inverse} $x^y$ is defined as
\begin{gather*} x^y:=B(x,y)^{-1} \left(x-Q(x)y\right)\in\fp^+. \end{gather*}
Then if $B(x,y)$ is invertible, then there exists an element $k\in K^\BC$ such that $B(x,y)z=\operatorname{Ad}(k)z$ holds for any $z\in\fp^+$.
Also, $B(x,y)$ and $x^y$ satisfy the following properties. For $x,y,z\in\fp^+$ and $k\in K^\BC$, if $(x,y)$ is quasi-invertible, then
\begin{alignat}{3}
& B(kx,k^{*-1}y) =kB(x,y)k^{-1}, &&\label{Bergman_k}\\
& B(x,y)B(x^y,z) =B(x,y+z) \qquad & & \text{\cite[Part V, Proposition III.3.1, (J6.4)]{FKKLR}},&\label{Bergman_left}\\
& B(z,x^y)B(y,x) =B(y+z,x) \qquad & & \text{\cite[Part V, Proposition III.3.1, (J6.4$'$)]{FKKLR}},&\label{Bergman_right}\\
& (kx)^{k^{*-1}y} =k(x^y), &&& \label{quasiinv_k}\\
& x^{y+z} =(x^y)^z \qquad & & \text{\cite[Part V, Theorem III.5.1(i)]{FKKLR}},& \label{quasiinv_add}\\
&(x+z)^y=x^y+B(x,y)^{-1}z^{(y^x)} \qquad & & \text{\cite[Part V, Theorem III.5.1(ii)]{FKKLR}}& \label{quasiinv_twice}
\end{alignat}
hold. Here, the equality (\ref{quasiinv_add}) holds when one of $(x,y+z)$ or $(x^y,z)$ is quasi-invertible,
and the other also becomes quasi-invertible. Similarly, the equality~(\ref{quasiinv_twice}) holds when one of $(x+z,y)$ or $(z,y^x)$ is
quasi-invertible, and then the other also is. Also, for the Bergman operator, we can show directly from the definition that,
if $\fp^+_1,\fp^+_2\subset \fp^+$ are Jordan triple subsystems such that $D(\fp^+_1,\fp^+_2)=\{0\}$ (we do not assume they are ideals),
then we have
\begin{gather}\label{Bergman_decomp}
B(x_1+x_2,y_1+y_2)=B(x_1,y_1)B(x_2,y_2), \qquad x_1,y_1\in\fp^+_1, \quad x_2,y_2\in\fp^+_2.
\end{gather}

Next, we recall the spectral decomposition and the spectral norm. For any $x_i\in\fp^+_{(i)}$, there exist complex numbers
$a_{1,i},\ldots, a_{r_{(i)},i}$ and an element $k_i\in K_{(i)}$ such that
\begin{gather*} x_i=k_i\sum_{j=1}^{r_{(i)}} a_{j,i}e_{j,(i)}. \end{gather*}
The set $\{a_{1,i},\ldots,a_{r_{(i)},i}\}$ is unique under the condition that $a_{j,i}\in\BR_{\ge 0}$ and
$a_{1,i}\ge\cdots\ge a_{r_{(i)},i}\ge 0$. This is called the \textit{spectral decomposition}.
For $x=\sum\limits_{i=1}^m x_i=\sum\limits_{i=1}^m k_i\sum\limits_{j=1}^{r_{(i)}}a_{j,i}e_{j,(i)}\in\fp^+=\bigoplus\limits_{i=1}^m \fp^+_{(i)}$,
the \textit{spectral norm} is defined as
\begin{gather}\label{spectral_norm}
|x|_\infty=|x|_{\fp^+,\infty}:=\max_{1\le i\le m}\max_{1\le j\le r_{(i)}}|a_{j,i}|.
\end{gather}
In fact this becomes a norm on the vector space $\fp^+$ (see \cite[Part V, Proposition VI.4.1]{FKKLR}).

Next, for each $i$, let $h_{(i)}(x,y)\in\cP(\fp^+\times\overline{\fp^+})$ be the \textit{generic norm} on $\fp^+_{(i)}$.
This is the polynomial, holomorphic in $x$ and anti-holomorphic in $y$, satisfying
\begin{gather*} \Det_{\fp^+_{(i)}}(B(x_i,y_i))=h_{(i)}(x_i,y_i)^{p_{(i)}}, \qquad x_i,y_i\in\fp^+_{(i)} . \end{gather*}
If $x_i=\sum\limits_{j=1}^{r_{(i)}}a_je_{j,(i)}$, $y_i=\sum\limits_{j=1}^{r_{(i)}}b_je_{j,(i)}\in\fa^+_{(i)}\subset\fp^+_{(i)}$,
then $h_{(i)}(x_i,y_i)$ is given by
\begin{gather*} h_{(i)}(x_i,y_i)=\prod_{j=1}^{r_{(i)}}(1-a_j\overline{b_j}). \end{gather*}
For later use we abbreviate
\begin{gather*} \Det_{\fp^+}(B(x,y))^{-1}=\prod_{i=1}^m h_{(i)}(x_i,y_i)^{-p_{(i)}}=:h(x,y)^{-p}. \end{gather*}
Also, we abbreviate $B(x,x)=:B(x)$, $h_{(i)}(x_i,x_i)=h_{(i)}(x_i)$. Let
\begin{align}
D\colon \hspace{-3pt}&=(\text{connected component of }\{x\in \fp^+\colon B(x)\text{ is positive definite.}\}\text{ which contains }0) \notag \\
&=\big\{x\in \fp^+\colon |x|_\infty <1\}=\{x\in \fp^+\colon |D(x,x)|_{\fp^+,\mathrm{op}}<2\big\}\label{BSD}
\end{align}
be the \textit{bounded symmetric domain}, which is diffeomorphic to $G/K$ via the Borel embedding which we will review later
(for these equalities see \cite[Part~V, Proposition~VI.4.2]{FKKLR}. Here $|\cdot|_{\fp^+,\mathrm{op}}$ denotes the
operator norm on $\End(\fp^+)$ with respect to $|\cdot|_{\fp^+}$).
Then if $x,y\in D$, $B(x,y)$ is invertible, and thus it is in the image of $K^\BC$.
Moreover, since $D$ is simply connected, there exists a holomorphic map $\tilde{B}\colon D\times\overline{D}\to K^\BC$
(or $\tilde{B}\colon D\times\overline{D}\to \tilde{K}^\BC$, where $\tilde{K}^\BC$ is the universal covering group of $K^\BC$)
such that
\begin{gather*} \operatorname{Ad}(\tilde{B}(x,y))=B(x,y)\in\End(\fp^+), \qquad \tilde{B}(0,0)=\bone_{K^\BC}\in K^\BC \quad \big(\text{resp.} \ {} \in \tilde{K}^\BC\big) \end{gather*}
holds. From now on we omit the tilde, and use the same symbol $B$ instead of $\tilde{B}$.

Next we consider $\fp^+_\rT$. This has a complex Jordan algebra structure with the product
\begin{gather*} (x,y)\mapsto x\cdot y:=-\tfrac{1}{2}[[x,\vartheta e],y]. \end{gather*}
We recall the quadratic map $P\colon \fp^+_\rT\to\End(\fp^+_\rT)$ defined by
\begin{gather}\label{quad_repn}
P(x)y:=2x\cdot(y\cdot x)-y\cdot(x\cdot x)=Q(x)Q(e)y, \qquad x,y\in\fp^+_\rT .
\end{gather}
If $y$ is in the real form $\left\{y\in\fp^+_\rT\colon Q(e)y=y\right\}$ of $\fp^+_\rT$,
then $P(x)y=-\frac{1}{2}[[x,\vartheta y],x]=Q(x)y$ holds.
Next we review the \textit{determinant polynomials} on Jordan algebras.
On each simple component~$\fp^+_{\rT,(i)}$ there exists a determinant polynomial $\Delta_{(i)}$,
which is the homogeneous polynomial of degree~$r_{(i)}$ satisfying
\begin{gather*}
\Delta_{(i)}(kx)=\Delta_{(i)}(ke_{(i)})\Delta_{(i)}(x)
\qquad \text{for all} \ k\in K^\BC_{\rT,(i)},\ x\in \fp^+_{\rT,(i)},\qquad
\Delta_{(i)}(e_{(i)})=1.
\end{gather*}
The quadratic map $P$ and the determinant polynomials are related as
\begin{gather*} \Det_{\fp^+_{\rT,(i)}}(P(x_i))=\Delta(x_i)^{2n_{\rT,(i)}/r_{(i)}}, \qquad x_i\in\fp^+_{\rT,(i)} . \end{gather*}
We \looseness=-1 extend $\Delta_{(i)}$ on $\fp^+_{(i)}$ such that it does not depend on $\big(\fp^+_{\rT,(i)}\big)^\bot=\bigoplus\limits_{j=1}^{r_{(i)}} \fp^+_{0j,(i)}$, and denote by the same symbol $\Delta_{(i)}$.
Then the determinant polynomial $\Delta_{(i)}$ and the generic norm $h_{(i)}$ are related as
\begin{gather}\label{det_norm}
\Delta_{(i)}(e_{(i)}-x)=h_{(i)}(x,e_{(i)}), \qquad x\in\fp^+_{(i)}.
\end{gather}
For the theory of Jordan algebras and Jordan triple systems, see, e.g., \cite[Part V]{FKKLR}, \cite{FK, L,Sat}.

\subsection{Polynomials on Jordan triple systems}
Let $\cP(\fp^+)$ be the space of all holomorphic polynomials on $\fp^+$. Then $K^\BC$ acts on $\cP(\fp^+)$ by
\begin{gather*} (\operatorname{Ad}|_{\fp^+})^\vee(k)f(x):=f\big(k^{-1}x\big), \qquad k\in K^\BC,\quad f\in\cP(\fp^+). \end{gather*}
Then clearly we have $\cP(\fp^+)\simeq \cP(\fp^+_{(1)})\otimes\cdots\otimes\cP(\fp^+_{(m)})$, according to the
simple decomposition of the Jordan triple system $\fp^+=\fp^+_{(1)}\oplus\cdots\oplus\fp^+_{(m)}$.
In the rest of this subsection, we assume $\fg$ is simple, and we drop the subscript $(i)$. We set
\begin{gather*} \BZ_{++}^r:=\big\{\bm=(m_1,\ldots,m_r)\in\BZ^r\colon m_1\ge \cdots \ge m_r\ge 0\big\}. \end{gather*}
Then $\cP(\fp^+)$ is decomposed as follows.
\begin{Theorem}[Hua--Kostant--Schmid, {\cite[Part~III, Theorem~V.2.1]{FKKLR}}]\label{HKS}
Under the $K^\BC$-action, $\cP(\fp^+)$ is decomposed as
\begin{gather*} \cP(\fp^+)=\bigoplus_{\bm\in\BZ^{r}_{++}}\cP_\bm(\fp^+), \end{gather*}
where $\cP_\bm(\fp^+)$ is the irreducible representation of $K^\BC$ with lowest weight
$-m_1\gamma_1-\cdots-m_r\gamma_r$. Moreover, each $\cP_\bm(\fp^+)$ has a nonzero $K_L$-invariant polynomial,
which is unique up to scalar multiple.
\end{Theorem}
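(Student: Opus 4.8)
The plan is to write down explicit lowest-weight vectors for $K^\BC$ inside $\cP(\fp^+)$ — the ``conical polynomials'' assembled from the Jordan determinant and its corner restrictions — to realise each $\cP_\bm(\fp^+)$ as the submodule they generate, and then to argue that these submodules already exhaust $\cP(\fp^+)$. Concretely, for $1\le l\le r$ let $\fp^+_{\rT,[l]}:=\bigoplus_{1\le j\le k\le l}\fp^+_{jk}$, the ``corner'' Jordan subalgebra with unit $e_1+\cdots+e_l$, and let $\Delta^{(l)}$ be its determinant polynomial, extended to a polynomial on $\fp^+$ that is independent of the complementary Peirce spaces, exactly as $\Delta=\Delta^{(r)}$ is extended above; it is homogeneous of degree $l$. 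For $\bm=(m_1,\dots,m_r)\in\BZ^r_{++}$ put
\[
\Delta_\bm:=\prod_{l=1}^{r-1}\big(\Delta^{(l)}\big)^{m_l-m_{l+1}}\big(\Delta^{(r)}\big)^{m_r}\in\cP(\fp^+),
\]
homogeneous of degree $|\bm|:=m_1+\cdots+m_r$.

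The first steps are bookkeeping. Since $\cP(\fp^+)$ is a commutative algebra on which the negative root vectors of $\fk^\BC$ act by derivations, a product of lowest-weight vectors is again a lowest-weight vector, of weight the sum of the weights; reading the weight of $\Delta^{(l)}$ off the transformation rule $\Delta^{(l)}(kx)=\Delta^{(l)}(k(e_1+\cdots+e_l))\Delta^{(l)}(x)$ for $k$ in the parabolic of $K^\BC$ stabilising the corner flag, one gets that $\Delta_\bm$ is a lowest-weight vector of weight $-m_1\gamma_1-\cdots-m_r\gamma_r$. Let $\cP_\bm(\fp^+)$ be the $K^\BC$-submodule of $\cP(\fp^+)$ generated by $\Delta_\bm$; being a finite-dimensional module generated by an extremal (lowest-weight) vector, it is irreducible with lowest weight $-\sum_l m_l\gamma_l$. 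The roots $\gamma_1,\dots,\gamma_r$ are strongly orthogonal, hence linearly independent, so the weights $-\sum_l m_l\gamma_l$ are pairwise distinct; the $\cP_\bm(\fp^+)$ are therefore pairwise non-isomorphic and $\sum_\bm\cP_\bm(\fp^+)\subseteq\cP(\fp^+)$ is a direct sum.

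The real work is \emph{exhaustion}, $\bigoplus_\bm\cP_\bm(\fp^+)=\cP(\fp^+)$ — equivalently, that every lowest-weight vector of $\cP(\fp^+)$ is a scalar multiple of some $\Delta_\bm$, so that every irreducible $K^\BC$-summand is one of the $\cP_\bm(\fp^+)$ and the decomposition is multiplicity-free. I would deduce this from the Jordan structure of $\fp^+$: the Peirce/triangular decomposition shows that the lowest-weight vectors of $\cP(\fp^+)$ are the semi-invariants for a Borel $\fb\subset\fk^\BC$, while the spectral decomposition $\fp^+=K\cdot\fa^+$ shows $(K^\BC,\fp^+)$ is a multiplicity-free (regular prehomogeneous) space, i.e.\ a Borel has a dense orbit in $\fp^+$; a Borel semi-invariant is then determined up to scalar by its weight, and a weight-count identifies the admissible weights with $\{-\sum_l m_l\gamma_l:\bm\in\BZ^r_{++}\}$. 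Equivalently, and perhaps more cheaply, one checks degree by degree that $\sum_{|\bm|=d}\dim\cP_\bm(\fp^+)=\binom{d+n-1}{n-1}=\dim\cP^d(\fp^+)$ via the Weyl dimension formula applied to the highest weights dual to $-\sum_l m_l\gamma_l$, which forces equality. Finally, the ``moreover'' clause: restriction to $\fa^+$ identifies $\cP(\fp^+)^{K_L}$ with the symmetric polynomials $\BC[\fa^+]^{S_r}$ (a Chevalley-type restriction theorem, again a consequence of $\fp^+=K\cdot\fa^+$ together with $K_L$ inducing $S_r$ on $\fa^+$), of Hilbert series $\prod_{l=1}^r(1-t^l)^{-1}=\sum_d\#\{\bm\in\BZ^r_{++}:|\bm|=d\}\,t^d$; comparing with $\cP(\fp^+)^{K_L}=\bigoplus_\bm\cP_\bm(\fp^+)^{K_L}$ forces $\dim\cP_\bm(\fp^+)^{K_L}=1$ for every $\bm$. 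The main obstacle is the exhaustion step, and within it the passage from ``a Borel has a dense orbit'' (or from the dimension identity) to the exact list of lowest weights; everything else is routine once the corner-determinant and spectral-decomposition facts are granted.
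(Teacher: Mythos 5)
This theorem is quoted in the paper from \cite[Part~III, Theorem~V.2.1]{FKKLR} without proof, so there is no in-paper argument to compare against; I am judging your sketch against the standard proof. Your overall strategy --- building the conical lowest-weight vectors $\Delta_\bm$ from the corner determinants, observing that the resulting irreducibles are pairwise inequivalent because the strongly orthogonal roots $\gamma_1,\dots,\gamma_r$ are linearly independent, and then proving exhaustion via multiplicity-freeness plus an identification of the weight monoid (or a Hilbert-series count) --- is exactly the standard one, and the first half (construction of $\Delta_\bm$, its weight, irreducibility of the cyclic module it generates, directness of the sum) is sound.

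The gaps are concentrated exactly where you admit the ``real work'' is, and they are genuine. First, the implication ``$\fp^+=K\cdot\fa^+$ implies a Borel of $K^\BC$ has a dense orbit'' is a theorem, not an observation: the usual proofs go through an antiholomorphic involution fixing $\fa^+$ pointwise and normalizing $K$, or through commutativity of the algebra of $K$-invariant differential operators; as written this step is only asserted. Second, and more seriously, identifying the monoid of lowest weights with $\{-\sum_l m_l\gamma_l\colon\bm\in\BZ^r_{++}\}$ --- equivalently, showing that the fundamental $B$-semi-invariants are precisely $\Delta^{(1)},\dots,\Delta^{(r)}$, or verifying that $\sum_{|\bm|=d}\dim\cP_\bm(\fp^+)$ equals the dimension of the space of degree-$d$ polynomials on $\fp^+$ --- is the actual content of Hua--Kostant--Schmid; ``a weight-count identifies the admissible weights'' and ``one checks degree by degree via the Weyl dimension formula'' are placeholders for a nontrivial computation (in the classical cases this identity is the Cauchy/Littlewood identity). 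Third, your justification of the ``moreover'' clause is flawed: injectivity of the restriction map $\cP(\fp^+)^{K_L}\to\BC[\fa^+]$ would require $K_L\cdot\fa^+$ (not $K\cdot\fa^+$) to be Zariski-dense in $\fp^+$, and this fails outside tube type --- for $\fp^+=M(q,s;\BC)$ with $q<s$ one computes $K_L\simeq U(q)\times U(s-q)$ and $K_L\cdot\fa^+\subset M(q,q;\BC)\times\{0\}$, a proper subvariety. The cheap correct route to the ``moreover'' is: existence of a nonzero $K_L$-invariant in $\cP_\bm(\fp^+)$ by averaging $\Delta_\bm$ over $K_L$ and evaluating at $e$ (which $K_L$ fixes and where $\Delta_\bm$ equals $1$), after which the Hilbert-series identity $\sum_\bm\dim\cP_\bm(\fp^+)^{K_L}t^{|\bm|}=\prod_{l=1}^r\big(1-t^l\big)^{-1}$ forces each dimension to be exactly $1$; your comparison of Hilbert series alone does not yield uniqueness without that lower bound.
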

Let $d_\bm^{(d,r,b)}:=\dim\cP_\bm(\fp^+)$, $d_\bm^{(d,r,0)}:=\dim\cP_\bm(\fp^+_\rT)$,
where $\cP_\bm(\fp^+_\rT)$ is the irreducible representation of $K^\BC_\rT$ with lowest weight $-m_1\gamma_1-\cdots-m_r\gamma_r$,
and let $\Phi_\bm^{(d,r)}$ be the $K_L$-invariant polynomial in $\cP_\bm(\fp^+)$
such that $\Phi_\bm^{(d,r)}(e)=1$. Especially, when $\bm=(m,\ldots,m)$, then $\Phi_{(m,\ldots,m)}^{(d,r)}(x)=\Delta(x)^m$ holds.

Next we recall the \textit{Fischer inner product}.
For two holomorphic polynomials $f,g\in\cP(\fp^+)$, it is defined as
\begin{gather}\label{Fischer}
\langle f,g\rangle_F:=\frac{1}{\pi^n}\int_{\fp^+}f(x)\overline{g(x)} {\rm e}^{-|x|_{\fp^+}^2}{\rm d}x.
\end{gather}
This integral converges for polynomials $f$ and $g$, and the reproducing kernel is given by ${\rm e}^{(x|y)_{\fp^+}}$.
Let $\bK_\bm^{(d)}(x,y)\in\cP(\fp^+\times\overline{\fp^+})$ be the reproducing kernel of $\cP_\bm(\fp^+)$
with respect to $\langle\cdot,\cdot\rangle_F$, so that $\sum\limits_{\bm\in\BZ_{++}^r}\bK_\bm^{(d)}(x,y)={\rm e}^{(x|y)_{\fp^+}}$. Then the following holds.
\begin{Proposition}[{\cite[Part III, Lemma V.3.1(a), Theorem V.3.4]{FKKLR}}]\label{Kmxe}
\begin{gather*} \bK_\bm^{(d)}(x,e)=\frac{d_\bm^{(d,r,b)}}{\left(\frac{n}{r}\right)_{\bm,d}}\Phi_\bm^{(d,r)}(x)
=\frac{d_\bm^{(d,r,0)}}{\left(\frac{n_\rT}{r}\right)_{\bm,d}}\Phi_\bm^{(d,r)}(x). \end{gather*}
\end{Proposition}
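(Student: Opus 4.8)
The plan is to evaluate the reproducing kernel $\bK_\bm^{(d)}(x,y)$ at $y=e$ by exploiting the characterization of $\cP_\bm(\fp^+)$ as the irreducible $K^\BC$-submodule of $\cP(\fp^+)$ with the stated lowest weight, together with the fact (Theorem~\ref{HKS}) that the space of $K_L$-invariants in $\cP_\bm(\fp^+)$ is one-dimensional. First I would observe that, for fixed $x$, the function $y\mapsto \bK_\bm^{(d)}(x,y)$ is an anti-holomorphic element of $\overline{\cP_\bm(\fp^+)}$; specializing $y=e$ gives a polynomial in $x$ lying in $\cP_\bm(\fp^+)$. I then check that this polynomial is $K_L$-invariant: since $K_L$ fixes $e$ and acts on $\cP_\bm(\fp^+)$ through the $K^\BC$-action, the reproducing property $\langle f,\bK_\bm^{(d)}(\cdot,e)\rangle_F=f(e)$ for all $f\in\cP_\bm(\fp^+)$ is preserved under replacing $f$ by $k\cdot f$ with $k\in K_L$, because $f(k^{-1}e)=f(e)$. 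Hence $\bK_\bm^{(d)}(\cdot,e)$ spans the line of $K_L$-invariants, so it is a scalar multiple of $\Phi_\bm^{(d,r)}$, say $\bK_\bm^{(d)}(x,e)=c_\bm\,\Phi_\bm^{(d,r)}(x)$.

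The remaining task is to compute the constant $c_\bm$. I would determine it by pairing both sides against themselves in the Fischer inner product, or equivalently by evaluating a single scalar identity. On one hand, $\langle \bK_\bm^{(d)}(\cdot,e),\bK_\bm^{(d)}(\cdot,e)\rangle_F=\bK_\bm^{(d)}(e,e)$ by the reproducing property. On the other hand this equals $|c_\bm|^2\langle \Phi_\bm^{(d,r)},\Phi_\bm^{(d,r)}\rangle_F$. So $c_\bm=\bK_\bm^{(d)}(e,e)/\langle\Phi_\bm^{(d,r)},\Phi_\bm^{(d,r)}\rangle_F$ (the constant is real and positive since $\Phi_\bm^{(d,r)}(e)=1>0$ forces the same sign on both sides of the evaluation at $x=e$). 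Now $\bK_\bm^{(d)}(e,e)=\sum_\alpha |\phi_\alpha(e)|^2$ over an orthonormal basis $\{\phi_\alpha\}$ of $\cP_\bm(\fp^+)$; but the evaluation functional $f\mapsto f(e)$ on $\cP_\bm(\fp^+)$ factors through the one-dimensional $K_L$-invariant subspace (an invariant argument shows $f(e)$ depends only on the $K_L$-average of $f$), which yields $\bK_\bm^{(d)}(e,e)=d_\bm^{(d,r,b)}/(\text{trace of the invariant projection})$; more cleanly, writing the orthonormal basis adapted to the $K_L$-action, $\bK_\bm^{(d)}(e,e)=\|\Phi_\bm^{(d,r)}\|_F^{-2}$, giving $c_\bm=\|\Phi_\bm^{(d,r)}\|_F^{-2}$.

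To finish, one must identify $\|\Phi_\bm^{(d,r)}\|_F^2=\bigl(\frac{n}{r}\bigr)_{\bm,d}/d_\bm^{(d,r,b)}$, where $(\cdot)_{\bm,d}$ is the generalized Pochhammer symbol. The cleanest route is to use the known formula for the Fischer norm of the spherical polynomials in a Jordan triple system: $\langle\Phi_\bm^{(d,r)},\Phi_{\bm'}^{(d,r)}\rangle_F=\delta_{\bm\bm'}\,(n/r)_{\bm,d}\,/\,d_\bm^{(d,r,b)}$, which is exactly the content being cited from \cite{FKKLR}. Substituting this gives $c_\bm=d_\bm^{(d,r,b)}/(n/r)_{\bm,d}$, and the second equality in the statement follows verbatim by repeating the entire argument with $\fp^+_\rT$ in place of $\fp^+$: the $K^\BC_\rT$-module $\cP_\bm(\fp^+_\rT)$ has the same one-dimensional $K_L$-invariant line spanned by the restriction of $\Phi_\bm^{(d,r)}$, so the same computation produces $d_\bm^{(d,r,0)}/(n_\rT/r)_{\bm,d}$; since both expressions equal the single well-defined polynomial $\bK_\bm^{(d)}(x,e)$, they agree. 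The main obstacle is the explicit evaluation of the Fischer norm $\|\Phi_\bm^{(d,r)}\|_F^2$ in terms of the Pochhammer symbol $(n/r)_{\bm,d}$ and the dimension $d_\bm^{(d,r,b)}$; this is genuinely the nontrivial input and is precisely where I would invoke the Faraut--Korányi theory cited in the statement rather than reprove it.
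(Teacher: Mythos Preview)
The paper does not give its own proof of this proposition; it is stated as a citation to \cite[Part~III, Lemma~V.3.1(a), Theorem~V.3.4]{FKKLR} with no further argument. So there is no ``paper's approach'' to compare against---your outline is essentially a sketch of how the cited results fit together, and you correctly identify the Fischer-norm formula $\|\Phi_\bm^{(d,r)}\|_F^2=(n/r)_{\bm,d}/d_\bm^{(d,r,b)}$ as the substantive input.

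Two small points of sloppiness in your write-up. First, the constant is most cleanly obtained by evaluating at $x=e$: from $\bK_\bm^{(d)}(x,e)=c_\bm\Phi_\bm^{(d,r)}(x)$ and $\Phi_\bm^{(d,r)}(e)=1$ you get $c_\bm=\bK_\bm^{(d)}(e,e)$, and then the $K_L$-averaging argument (that $f(e)$ equals the value at $e$ of the $K_L$-projection of $f$) gives $\bK_\bm^{(d)}(\cdot,e)=\Phi_\bm^{(d,r)}/\|\Phi_\bm^{(d,r)}\|_F^2$; your intermediate formula ``$c_\bm=\bK_\bm^{(d)}(e,e)/\langle\Phi_\bm^{(d,r)},\Phi_\bm^{(d,r)}\rangle_F$'' is miswritten. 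Second, your justification of the second equality is incomplete: repeating the argument on $\fp^+_\rT$ produces the reproducing kernel $\bK_\bm^\rT(x,e)$ for $\cP_\bm(\fp^+_\rT)$, not $\bK_\bm^{(d)}(x,e)$, and you need to say why these agree. The missing link is that $\Phi_\bm^{(d,r)}$ depends only on the $\fp^+_\rT$-component, so integrating out the normalized Gaussian on $(\fp^+_\rT)^\bot$ shows $\|\Phi_\bm^{(d,r)}\|_{F,\fp^+}^2=\|\Phi_\bm^{(d,r)}\|_{F,\fp^+_\rT}^2$; hence both constants equal the common value $\|\Phi_\bm^{(d,r)}\|_F^{-2}$.
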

Here, for $\lambda\in\BC$, $\bs\in\BC^r$, $\bm\in(\BZ_{\ge 0})^r$ and $d\in\BZ_{\ge 0}$, $(\lambda+\bs)_{\bm,d}$ is defined as
\begin{gather}\label{Pochhammer}
(\lambda+\bs)_{\bm,d}:=\prod_{j=1}^r\left(\lambda+s_j-\frac{d}{2}(j-1)\right)_{m_j},\qquad
(\lambda)_m:=\lambda(\lambda+1)\cdots(\lambda+m-1),
\end{gather}
and we write $(\lambda+(0,\ldots,0))_{\bm,d}=(\lambda)_{\bm,d}$.
We renormalize $\Phi_\bm^{(d,r)}$ as
\begin{gather}\label{Phitilde1}
\tilde{\Phi}_\bm^{(d)}(x):=\frac{d_\bm^{(d,r,b)}}{\left(\frac{n}{r}\right)_{\bm,d}}\Phi_\bm^{(d,r)}(x)
=\frac{d_\bm^{(d,r,0)}}{\left(\frac{n_\rT}{r}\right)_{\bm,d}}\Phi_\bm^{(d,r)}(x),
\end{gather}
so that
\begin{gather*} {\rm e}^{(x|e)_{\fp^+}}=\sum_{\bm\in\BZ_{++}^r}\bK_\bm^{(d)}(x,e)=\sum_{\bm\in\BZ_{++}^r}\tilde{\Phi}_\bm^{(d)}(x). \end{gather*}
For example, when $\fp^+=M(r,\BC)$ (i.e., $G=\operatorname{SU}(r,r)$), if the eigenvalues of $x\in M(r,\BC)$ are $t_1,\ldots,t_r$, we have
\begin{gather}
\tilde{\Phi}_\bm^{(2)}(x) = \!\left(\!\frac{\prod\limits_{i<j}(m_i-m_j-i+j)}{\prod\limits_{i=1}^r (r-i)!}\!\right)^2\!\!\! \frac{1}{\prod\limits_{i=1}^r (r-i+1)_{m_i}}
 \frac{\prod\limits_{i=1}^r (r-i)!}{\prod\limits_{i<j}(m_i-m_j-i+j)}
\frac{\det\big(\big(t_i^{m_j+r-j}\big)_{i,j}\big)}{\det\big(\big(t_i^{r-j}\big)_{i,j}\big)}\notag \\
\hphantom{\tilde{\Phi}_\bm^{(2)}(x)}{} =\frac{\prod\limits_{i<j}(m_i-m_j-i+j)}{\prod\limits_{i=1}^r (m_i+r-i)!}
\frac{\det\left((t_i^{m_j+r-j})_{i,j}\right)}{\det\left((t_i^{r-j})_{i,j}\right)}.\label{Schur}
\end{gather}
Then $\tilde{\Phi}_\bm^{(d)}(x)$ does not depend on $r$ in the following sense.
Since $\tilde{\Phi}_\bm^{(d)}$ is $K_L$-invariant, it is determined by the value on $\fa^+\subset\fp^+$.
Thus for $x=a_1e_1+\cdots+a_re_r\in\fa^+$, we write
\begin{gather*} \tilde{\Phi}_\bm^{(d)}(x)=:\tilde{\Phi}_\bm^{(d)}(a_1,\ldots,a_r). \end{gather*}
Then this does not depend on $r$, that is,
\begin{gather*} \tilde{\Phi}_\bm^{(d,r)}(a_1,\ldots,a_{r-1},0)=\tilde{\Phi}_\bm^{(d,r-1)}(a_1,\ldots,a_{r-1}) \end{gather*}
holds. Also, when $\fp^+=\Sym(r,\BC)$, $M(q,s;\BC)$ or $\Skew(s,\BC)$ (i.e., $G=\operatorname{Sp}(r,\BR)$, $\operatorname{SU}(q,s)$ or $\operatorname{SO}^*(2s)$ respectively),
for $x,y\in\fp^+$, $\bK_\bm^{(d)}(x,y)$ depends only on the eigenvalues of $xy^*$, so following the notation in \cite{M} we write
\begin{gather}\label{Phitilde2}
\bK_\bm^{(d)}(x,y)=:\tilde{\Phi}_\bm^{(d)}(xy^*)=\tilde{\Phi}_\bm^{(d)}(y^*x).
\end{gather}

\subsection{Holomorphic discrete series representations}\label{HDS}
In this subsection we recall the explicit realization of the holomorphic discrete series representation
of the universal covering group $\tilde{G}$. First we recall the Borel embedding,
\begin{gather*} \xymatrix{ G/K \ar[r] \ar@{-->}[d]^{\mbox{\rotatebox{90}{$\sim$}}} & G^\BC/K^\BC P^- \\
 D \ar@{^{(}->}[r] & \fp^+, \ar[u]_{\exp} } \end{gather*}
where $P^\pm:=\exp(\fp^\pm)$. When $g\in G^\BC$ and $x\in \fp^+$ satisfy $g\exp(x)\in P^+K^\BC P^-$, we write
\begin{gather}\label{cocycle}
g\exp(x)=\exp(\pi^+(g,x))\kappa(g,x)\exp(\pi^-(g,x)),
\end{gather}
where $\pi^+(g,x)\in\fp^+$, $\kappa(g,x)\in K^\BC$, and $\pi^-(g,x)\in\fp^-$.
If $g=k\in K^\BC$, $g=\exp(y)\in P^+$ or $g=\exp(\vartheta y)\in P^-$ with $y\in\fp^+$, we have
\begin{alignat*}{3}
&\pi^+(k,x)=kx,\qquad && \kappa(k,x)=k,&\\
& \pi^+(\exp(y),x)=x+y,\qquad && \kappa(\exp(y),x) =\bone_{K^\BC},& \\
& \pi^+(\exp(\vartheta y),x) =x^y,\qquad && \operatorname{Ad}(\kappa(\exp(\vartheta y),x))|_{\fp^+}=B(x,y)^{-1}.&
\end{alignat*}
$\pi^+$ gives the birational action of $G^\BC$ on $\fp^+$, and from now on we abbreviate $\pi^+(g,x)=:gx$.
Especially, if $x\in D$ and $g\in G$, then automatically $gx\in D$ and $\kappa(g,x)$ is well-defined,
and the action of $G$ on $D$ is transitive. Since $D$ is simply connected, the map $\kappa\colon G\times D\to K^\BC$
lifts to the universal covering space, that is, $\kappa\colon \tilde{G}\times D\to \tilde{K}^\BC$ is well-defined.
We denote this extended map by the same symbol~$\kappa$. Then for $x,y\in\fp^+$ and $g\in G^\BC$,
\begin{gather}\label{Bergman_property}
B\big(gx,\big(\hat{\vartheta}g\big)y\big)=\kappa(g,x)B(x,y)\kappa\big(\hat{\vartheta}g,y\big)^*
\end{gather}
holds in $\End(\fp^+)$, where $\hat{\vartheta}$ is the anti-holomorphic involution of $G^\BC$ fixing $G$, and $\operatorname{Ad}$ is omitted.
If $g\in G$ (i.e., $g=\hat{\vartheta}g$) and $x,y\in D$, this also holds in $K^\BC$, regarding $B(x,y)$ as the element of~$K^\BC$.
This formula is also verified in $\tilde{K}^\BC$ if $g\in \tilde{G}$.

Now let $(\tau,V)$ be an irreducible holomorphic representation of $\tilde{K}^\BC$ with $\tilde{K}$-invariant inner product $(\cdot,\cdot)_\tau$.
We consider the space of holomorphic sections of the homogeneous vector bundle on $G/K$ with fiber~$V$. Then since $D\simeq G/K$ is contractible, it is isomorphic to the space of $V$-valued holomorphic functions on~$D$
\begin{gather*} \Gamma_\cO\big(G/K, \tilde{G}\times_{\tilde{K}}V\big)\simeq \cO(D,V). \end{gather*}
Via this identification, $\tilde{G}$ acts on $\cO(D,V)=\cO_\tau(D,V)$ by
\begin{gather*} \hat{\tau}(g)f(x)=\tau\big(\kappa\big(g^{-1},x\big)\big)^{-1}f\big(g^{-1}x\big),\qquad g\in\tilde{G}, \quad x\in D, \quad f\in\cO(D,V) , \end{gather*}
and the function $\tau(B(x,y))\in \cO(D\times\overline{D},\End(V))$ is invariant under the diagonal action of~$\tilde{G}$.
If the function $\tau(B(x,y))$ is positive-definite, that is,
$\sum\limits_{j,k=1}^N(\tau(B(x_j,x_k))v_j,v_k)_\tau\ge 0$ holds for any $\{x_j\}_{j=1}^N\subset D$ and $\{v_j\}_{j=1}^N\subset V$,
then there exists a unique Hilbert subspace $\cH_\tau(D,V)\subset\cO_\tau(D,V)$ with the reproducing kernel $\tau(B(x,y))$,
on which $\tilde{G}$ acts unitarily via $\hat{\tau}$.
This representation $(\hat{\tau},\cH_\tau(D,V))$ is called a unitary highest weight representation.
Especially, if its inner product is given by the converging integral
\begin{gather*} \langle f,g\rangle_{\hat{\tau}}:=C_\tau\int_D \big(\tau\big(B(x)^{-1}\big)f(x),g(x)\big)_{\tau}h(x)^{-p}{\rm d}x, \end{gather*}
where $h(x)^{-p}{\rm d}x:=\prod\limits_{i=1}^m h_{(i)}(x_i)^{-p_{(i)}}{\rm d}x=\Det(B(x))^{-1}{\rm d}x$ is the $G$-invariant measure on~$D$,
and~$C_\tau$ is the constant such that $\Vert v\Vert_{\hat{\tau}}=|v|_\tau$ holds for any constant functions
(or elements in the minimal $K$-type)~$v$,
then $(\hat{\tau},\cH_\tau(D,V))$ is called a holomorphic discrete series representation.
In this case, all bounded holomorphic functions on $D$ belong to $\cH_\tau(D,V)$,
and especially the space of $\tilde{K}$-finite vectors is equal to the space of all polynomials,
\begin{gather*} \cH_\tau(D,V)_{\tilde{K}}=\cO_\tau(D,V)_{\tilde{K}}=\cP(\fp^+,V). \end{gather*}
For general $\cH_\tau(D,V)$ such that the above integral does not converge for any non-zero function,
it may happen that the $\tilde{K}$-finite part $\cH_\tau(D,V)_{\tilde{K}}$ is strictly smaller than $\cP(\fp^+,V)$.

Now we assume $G$ is simple. Let $\chi$ be the character of $\tilde{K}^\BC$ such that
\begin{gather}\label{char}
\chi(k)^p=\Det(\operatorname{Ad}(k)|_{\fp^+}),\qquad \text{or} \qquad \chi(B(x,y))=h(x,y).
\end{gather}
Then for $x,y\in\fp^+$ we have ${\rm d}\chi([x,-\vartheta y])=(x|y)_{\fp^+}$, where $(\cdot|\cdot)_{\fp^+}$ is as in (\ref{innerprod}).
Let $(\tau_0,V)$ be a fixed irreducleble representation of $K^\BC$.
Then for $\lambda\in\BR$, $(\tau,V)=(\tau_0\otimes\chi^{-\lambda},V)$ is again a~representation of $\tilde{K}^\BC$.
In this case we denote $\cH_\tau(D,V)=:\cH_\lambda(D,V)$.
If $\lambda$ is sufficiently large, this becomes a holomorphic discrete series representation.
The parameter $\lambda$ such that the unitary subrepresentation $\cH_\lambda(D,V)\subset\cO_\lambda(D,V)$ exists is classified by
Enright--Howe--Wallach~\cite{EHW} and Jakobsen~\cite{J}.

When $\cH_\lambda(D,V)$ is holomorphic discrete, we consider the irreducible decomposition of \linebreak
$\cH_\lambda(D,V)_{\tilde{K}}\simeq \cP(\fp^+,V)\otimes \chi^{-\lambda}$ as $\tilde{K}^\BC$-modules,
\begin{gather*} \cP(\fp^+,V)\otimes \chi^{-\lambda}\simeq \bigoplus_m W_m\otimes\chi^{-\lambda}, \end{gather*}
such that its components are orthogonal to one another with respect to the Fischer inner product
\begin{gather*} \langle f,g\rangle_{F,\tau_0}:=\frac{1}{\pi^n}\int_{\fp^+} (f(x),g(x) )_{\tau_0}{\rm e}^{-|x|_{\fp^+}^2}{\rm d}x. \end{gather*}
Then since both $\langle \cdot,\cdot\rangle_{F,\tau_0}$ and
$\langle\cdot,\cdot\rangle_{\hat{\tau}}=\langle \cdot,\cdot\rangle_{\lambda,\tau_0}$ are $\tilde{K}$-invariant,
there exists a constant $p_m(\lambda)>0$ such that $\Vert f_m\Vert^2_{\lambda,\tau_0}/\Vert f_m\Vert^2_{F,\tau_0}=p_m(\lambda)$
for any $f_m\in W_m$.
Now we additionally assume that
\begin{gather}\label{assumption_orth}
W_m\perp W_n \text{ in } \langle\cdot,\cdot\rangle_{F,\tau_0}\qquad \text{implies}\qquad
W_m\perp W_n \text{ in } \langle\cdot,\cdot\rangle_{\lambda,\tau_0}
\end{gather}
for sufficiently large $\lambda$.
This holds if $\cP(\fp^+,V)$ is multiplicity-free, or $G=U(q,s)$ and one of~$U(q)$ and~$U(s)$ acts trivially on $V$.
Then $\Vert \cdot\Vert_{\lambda,\tau_0}$ is computed as
\begin{gather}\label{norm_compare}
\Vert f\Vert_{\lambda,\tau_0}^2=\sum_{m,n}\langle f_m,f_n\rangle_{\lambda,\tau_0}=\sum_m p_m(\lambda)\Vert f_m\Vert_{F,\tau_0}^2,
\end{gather}
where $f_m$ is the orthogonal projection of $f$ onto $W_m$, that is, the cross terms vanish.
Especially, if the reproducing kernel with respect to $\langle\cdot,\cdot\rangle_{F,\tau_0}$ is expanded as
\begin{gather*} {\rm e}^{(x|y)_{\fp^+}}I_V=\sum_m \bK_m(x,y)\in\cO\big(\fp^+\times\overline{\fp^+},\End(V)\big), \end{gather*}
where \looseness=-1 $\bK_m(x,y)\in W_m\otimes\overline{W_m}$, then the reproducing kernel with respect to $\langle\cdot,\cdot\rangle_{\lambda,\tau_0}$ is expanded as
\begin{gather}\label{kernel_expansion}
h(x,y)^{-\lambda}\tau_0(B(x,y))=\sum_m p_m(\lambda)^{-1}\bK_m(x,y)\in\cO\big(D\times\overline{D},\End(V)\big).
\end{gather}
Each $p_m(\lambda)$ is meromorphically continued for all $\lambda\in\BC$, and defines a positive definite kernel function if
$p_m(\lambda)^{-1}\ge 0$ for all $m$, and for such $\lambda$ there exists a unitary subrepresentation $\cH_\lambda(D,V)\subset\cO(D,V)$.
Also, if $f_m\in W_m$ we have
\begin{gather}\label{exp_onD}
\langle f_m,{\rm e}^{(\cdot|y)_{\fp^+}}\rangle_{\lambda,\tau_0}=p_m(\lambda)\langle f_m,\bK_m(\cdot,y)\rangle_{F,\tau}=p_m(\lambda)f_m(y).
\end{gather}
Under this assumption (\ref{assumption_orth}), $p_m(\lambda)$ are explicitly computed in \cite{N2} for classical $G$.
We note that if we drop the assumption (\ref{assumption_orth}), then the formulas (\ref{norm_compare}) and (\ref{kernel_expansion}) become
more complicated, and their explicit formulas are not known so far.
Especially if $(\tau_0,V)$ is trivial,
then by \cite[Part~III, Corollary~V.3.9]{FKKLR}, \cite{FK0} for $f_\bm\in\cP_\bm(\fp^+)$ we have
\begin{gather}\label{scalar_norm}
p_\bm(\lambda)=\frac{\Vert f_\bm\Vert_\lambda^2}{\Vert f_\bm\Vert_F^2}=\frac{1}{(\lambda)_{\bm,d}},
\end{gather}
where $(\lambda)_{\bm,d}$ is as (\ref{Pochhammer}), and therefore we have
\begin{gather}\label{ext_of_h}
h(x,y)^{-\lambda}=\sum_{\bm\in\BZ_{++}^r}(\lambda)_{\bm,d}\bK_\bm^{(d)}(x,y),
\end{gather}
where $\bK_\bm^{(d)}(x,y)\in\cP_\bm(\fp^+)\otimes \overline{\cP_\bm(\fp^+)}$ is as in the previous subsection.

\section[Intertwining operators between holomorphic discrete series representations]{Intertwining operators between holomorphic discrete series\\ representations}\label{section3}
\subsection{Setting}
Let $G$ be a connected real reductive Lie group such that each simple non-compact component is of Hermitian type, as in Section~\ref{root_sys},
and let $z\in\fz(\fk)$ be the element such that $\operatorname{ad}(z)|_{\fp^+}=\sqrt{-1}I_{\fp^+}$.
Let $G_1\subset G$ be a connected reductive subgroup which is stable under the Cartan involution $\vartheta$ of $G$.
We denote the Lie algebra of $G_1$ and its Cartan decomposition under $\vartheta$ by $\fg_1=\fk_1\oplus \fp_1$. We assume
\begin{gather}\label{assumption}
z\in \fg_1.
\end{gather}
We set $\fp_1^+:=\fg_1^\BC\cap \fp^+$, $\fp_1^-:=\fg_1^\BC\cap \fp^-$, so that
$\fg_1^\BC=\fp^+_1\oplus \fk^\BC_1\oplus \fp^-_1$.
Also, let $\fp_2^+\subset\fp^+$ be the orthogonal complement of $\fp^+_1$ with respect to the inner product $(\cdot|\cdot)_{\fp^+}$
defined in (\ref{innerprod}).
We define another inner product $(\cdot|\cdot)_{\fp^+_1}$ on $\fp^+_1$ as in (\ref{innerprod}), changing $\fg$ to $\fg_1$,
and let $D_1\subset\fp^+_1$ be the bounded symmetric domain, defined as in (\ref{BSD}). Then we have
\begin{Proposition}\label{BSD_intersection}
$D_1=D\cap \fp^+_1$.
\end{Proposition}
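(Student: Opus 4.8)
The plan is to prove both inclusions $D_1\subset D\cap\fp^+_1$ and $D\cap\fp^+_1\subset D_1$, using the characterizations of the bounded symmetric domains in terms of the spectral norm, as given in \eqref{BSD}: namely $D=\{x\in\fp^+\colon |x|_{\fp^+,\infty}<1\}$ and $D_1=\{x_1\in\fp^+_1\colon |x_1|_{\fp^+_1,\infty}<1\}$. Since both are open unit balls for the respective spectral norms, the statement $D_1=D\cap\fp^+_1$ amounts to showing that the spectral norm $|\cdot|_{\fp^+}$, restricted to $\fp^+_1$, coincides with the intrinsic spectral norm $|\cdot|_{\fp^+_1}$. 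This is the real content of the proposition, and I expect it to be the main obstacle.

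First I would observe that $\fp^+_1=\fg_1^\BC\cap\fp^+$ is a Jordan triple subsystem of $\fp^+$: indeed the triple product $(x,y,z)\mapsto -[[x,\vartheta y],z]$ preserves $\fg_1^\BC$ (because $\fg_1$ is $\vartheta$-stable, so $\vartheta$ preserves $\fg_1^\BC$, and $\fg_1^\BC$ is a subalgebra), and it preserves $\fp^+$, hence it preserves the intersection. Moreover, by the assumption \eqref{assumption} that $z\in\fg_1$, the grading element $\operatorname{ad}(z)$ restricts correctly to $\fg_1^\BC$, so the decomposition $\fg_1^\BC=\fp^+_1\oplus\fk^\BC_1\oplus\fp^-_1$ is the eigenspace decomposition for $\operatorname{ad}(z)$; this guarantees that $\fp^+_1$ is a \emph{positive} Hermitian Jordan triple system in its own right and that its associated bounded symmetric domain $D_1$ is indeed a bounded domain biholomorphic to $G_1/K_1$. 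The key general fact about Jordan triple subsystems that I would invoke is: if $\fp^+_1$ is a subtriple of the positive Hermitian JTS $\fp^+$, then for $x_1\in\fp^+_1$ the spectral values of $x_1$ computed inside $\fp^+_1$ agree with those computed inside $\fp^+$. One clean way to see this is via the operator-norm description, also in \eqref{BSD}: $|x|_{\fp^+,\infty}<1 \iff |D(x,x)|_{\fp^+,\mathrm{op}}<2$, together with the fact that for $x_1\in\fp^+_1$ the operator $D(x_1,x_1)$ preserves $\fp^+_1$ and its restriction is exactly the corresponding operator $D_{\fp^+_1}(x_1,x_1)$ of the subtriple; one then needs that the largest eigenvalue of the self-adjoint operator $D(x_1,x_1)$ on $\fp^+$ equals that of its restriction to the invariant subspace $\fp^+_1$. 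The inclusion $\fp^+_2=(\fp^+_1)^\bot$ is $D(x_1,x_1)$-invariant as well (since $D(x_1,x_1)$ is self-adjoint with respect to $(\cdot|\cdot)_{\fp^+}$), so $\fp^+=\fp^+_1\oplus\fp^+_2$ block-diagonalizes $D(x_1,x_1)$, and the restriction to the $\fp^+_2$ block is positive semidefinite; hence $\|D(x_1,x_1)|_{\fp^+}\|_{\mathrm{op}}=\max\{\|D_{\fp^+_1}(x_1,x_1)\|_{\mathrm{op}}, \|D(x_1,x_1)|_{\fp^+_2}\|_{\mathrm{op}}\}$. This is not immediately $\|D_{\fp^+_1}(x_1,x_1)\|_{\mathrm{op}}$ unless I rule out that the $\fp^+_2$-block has a strictly larger top eigenvalue — but the point is that the spectral values of $x_1$ as an element of $\fp^+$ are the square roots of the nonzero eigenvalues of $\tfrac12 D(x_1,x_1)$ restricted to the subspace generated by $x_1$ and its odd powers under the triple product, which lies inside $\fp^+_1$; eigenvectors in $\fp^+_2$ correspond to eigenvalue $0$ as far as the spectral decomposition of $x_1$ is concerned. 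So the maximal spectral value of $x_1$ in $\fp^+$ is $\max_j|a_j|$ where the $a_j$ come from the spectral decomposition $x_1=k\sum a_j e_j$ with $k\in K^\BC$; one checks this decomposition can be taken inside $\fg_1^\BC$ because $x_1$ and all Jordan-theoretic data it generates stay in $\fp^+_1$.

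Concretely I would argue as follows. Given $x_1\in\fp^+_1$, perform the spectral decomposition \emph{inside} $\fp^+_1$: there are $k_1\in K_1$ (or more precisely in the group generated by $K_1$-type operators on $\fp^+_1$) and a Jordan frame $\{e_j'\}$ of $\fp^+_1$ with $x_1=\sum_j a_j e_j'$, $a_j\ge 0$, giving $|x_1|_{\fp^+_1,\infty}=\max_j a_j$. Since $K_1\subset K$ and the $e_j'$ are tripotents of $\fp^+$ as well (idempotency of the triple product is intrinsic to the subtriple), this is \emph{also} a valid presentation of $x_1$ inside $\fp^+$ by elements of $K^\BC$ and tripotents of $\fp^+$. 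By uniqueness of the (ordered, nonnegative) spectral values in $\fp^+$ — which is part of the spectral decomposition statement recalled before \eqref{spectral_norm} — we conclude the $a_j$ are exactly the $\fp^+$-spectral values of $x_1$, hence $|x_1|_{\fp^+,\infty}=\max_j a_j=|x_1|_{\fp^+_1,\infty}$. Therefore $x_1\in D_1\iff |x_1|_{\fp^+_1,\infty}<1\iff|x_1|_{\fp^+,\infty}<1\iff x_1\in D$, and since $x_1\in\fp^+_1$ throughout, this says precisely $D_1=D\cap\fp^+_1$.

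The main obstacle, as indicated, is the claim that the spectral values are insensitive to whether one computes them in $\fp^+$ or in the subtriple $\fp^+_1$; this is a general lemma about positive Hermitian Jordan triple systems and I would cite the relevant result from \cite{FKKLR} (Part~V) or \cite{L} on subtriples and spectral norms rather than reprove it. A secondary, more routine point to nail down is that $\fp^+_1$ really is a \emph{positive} JTS so that $D_1$ as defined via \eqref{BSD} is genuinely the open unit ball for $|\cdot|_{\fp^+_1,\infty}$ — this follows from $\fg_1$ being reductive with $\fg_1\cap\fp^+$ graded by $\operatorname{ad}(z)$ under assumption \eqref{assumption}, together with the standard correspondence between simple Hermitian Lie algebras and simple positive JTSs applied componentwise.
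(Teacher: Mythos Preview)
Your approach via spectral norms is genuinely different from the paper's and, with care, can be made to work, but the paper's argument is much shorter and avoids the subtleties you encountered. The paper uses the Bergman-operator characterization from \eqref{BSD} directly: one observes that for $x\in\fp^+_1$ the Bergman operator $B_1(x)$ on $\fp^+_1$ is simply the restriction $B(x)|_{\fp^+_1}$ (since the triple product restricts), so if $x\in D\cap\fp^+_1$ then $B(x)$ is positive definite, hence so is $B_1(x)$. Together with connectedness of the convex set $D\cap\fp^+_1$ this gives $D\cap\fp^+_1\subset D_1$; the inclusion $D_1=G_1.0\subset D$ is immediate. In fact the paper then derives the spectral-norm equality $|x_1|_{\fp^+,\infty}=|x_1|_{\fp^+_1,\infty}$ as a \emph{corollary} of the proposition, via $|x|_\infty=\inf\{t>0:t^{-1}x\in D\}$---the reverse of your logic.

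Your argument as written has a gap at the ``uniqueness'' step. You take a Jordan frame $\{e_j'\}$ of $\fp^+_1$ and claim the resulting $a_j$ are ``exactly the $\fp^+$-spectral values of $x_1$'' by uniqueness of the spectral decomposition in $\fp^+$. But the uniqueness statement before \eqref{spectral_norm} is for the decomposition with respect to the \emph{fixed} frame $\{e_{j,(i)}\}$ of $\fp^+$ and an element of $K$; the $e_j'$, while tripotents of $\fp^+$, need not be primitive there (e.g., for $(\operatorname{SU}(s,s),\operatorname{SO}^*(2s))$ the frame elements of $\Skew(s,\BC)$ are rank-two matrices, not primitive in $M(s,\BC)$), so the $\fp^+$-spectral multiset may repeat each $a_j$ with multiplicity. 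The \emph{maximum} still agrees, which is all you need, but to justify that you must invoke the intrinsic characterization you allude to earlier (the odd functional calculus, or the fact that the distinct spectral values depend only on the subtriple generated by $x_1$). That is a correct and citable route, but it is considerably heavier than the paper's three-line restriction-of-positive-definite argument.
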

\begin{proof}$D_1=G_1.0\subset D\cap\fp^+_1$ is clear. For $x,y\in \fp^+_1$, let $B_1(x,y)$ be the Bergman operator on~$\fp^+_1$.
Then $B_1(x,y)=B(x,y)|_{\fp^+_1}$ holds. Therefore if $x\in D\cap \fp^+_1$, then $B(x)$ is positive definite,
and hence $B_1(x)=B(x)|_{\fp^+_1}$ is also positive definite. Since $D\cap \fp^+_1$ is connected, $D\cap\fp^+_1\subset D_1$ holds.
\end{proof}

This implies that the spectral norms (\ref{spectral_norm}) on $\fp^+$ and $\fp^+_1$ coincide.
\begin{Corollary}
For $x\in\fp^+_1\subset\fp^+$, $|x|_{\fp^+,\infty}=|x|_{\fp^+_1,\infty}$ holds.
\end{Corollary}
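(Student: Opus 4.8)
The plan is to deduce the statement directly from Proposition~\ref{BSD_intersection} together with the characterization of the bounded symmetric domain via the spectral norm given in~(\ref{BSD}). Recall that for any Hermitian positive Jordan triple system, the bounded symmetric domain is exactly the open unit ball for the spectral norm; this holds both for $\fp^+$ (giving $D=\{x\in\fp^+\colon |x|_{\fp^+,\infty}<1\}$) and for the subsystem $\fp^+_1$ (giving $D_1=\{x\in\fp^+_1\colon |x|_{\fp^+_1,\infty}<1\}$). Since $\fp^+_1$ is a Jordan triple subsystem of $\fp^+$ (being $\fg^\BC_1\cap\fp^+$, hence closed under the triple product inherited from $\fp^+$), these two spectral norms are \emph{a priori} different functions on $\fp^+_1$, and the Corollary asserts they coincide.

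First I would fix $x\in\fp^+_1\subset\fp^+$ and apply both descriptions. By Proposition~\ref{BSD_intersection}, for every real $t>0$ we have $tx\in D$ if and only if $tx\in D\cap\fp^+_1=D_1$. Translating through~(\ref{BSD}) applied to $\fp^+$ and to $\fp^+_1$ respectively, this reads $t|x|_{\fp^+,\infty}<1 \iff t|x|_{\fp^+_1,\infty}<1$, using the homogeneity $|tx|_\infty=t|x|_\infty$ of the spectral norm (which is immediate from the spectral decomposition, since scaling $x$ scales all the spectral values $a_{j}$ by $t$). Since this equivalence holds for all $t>0$, the two nonnegative real numbers $|x|_{\fp^+,\infty}$ and $|x|_{\fp^+_1,\infty}$ must be equal: if, say, $|x|_{\fp^+,\infty}<|x|_{\fp^+_1,\infty}$, then choosing $t$ with $1/|x|_{\fp^+_1,\infty}<t<1/|x|_{\fp^+,\infty}$ would give $tx\in D$ but $tx\notin D_1$, contradicting $D_1=D\cap\fp^+_1$. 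The case $x=0$ is trivial since both norms vanish.

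Alternatively, and perhaps more cleanly, one can argue without scaling: the spectral norm of $x$ equals $\inf\{s>0\colon x/s\in \overline{D}\}$, or equivalently $|x|_\infty = \inf\{s>0\colon x\notin sD\}$ type characterization, so $|x|_{\fp^+,\infty}$ and $|x|_{\fp^+_1,\infty}$ are both recovered as the same functional of the domain once we know $D_1=D\cap\fp^+_1$; I would phrase the proof in whichever of these two ways is shortest in context. There is essentially no obstacle here: the only substantive input is Proposition~\ref{BSD_intersection}, already proved, and the norm characterization~(\ref{BSD}), already cited from~\cite{FKKLR}. The one point requiring a line of care is the homogeneity of the spectral norm under positive real scaling, which should be noted explicitly since it is what converts the set-theoretic identity of domains into the pointwise identity of norms.
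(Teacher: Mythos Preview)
Your proof is correct and essentially identical to the paper's: the paper writes $|x|_{\fp^+,\infty}=\inf\{t>0\colon \frac{1}{t}x\in D\}$, $|x|_{\fp^+_1,\infty}=\inf\{t>0\colon \frac{1}{t}x\in D_1\}$ and concludes from Proposition~\ref{BSD_intersection}, which is precisely your ``cleaner'' alternative formulation. Your first version via the equivalence $t|x|_{\fp^+,\infty}<1\iff t|x|_{\fp^+_1,\infty}<1$ is just an unpacking of the same infimum argument.
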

\begin{proof}This follows from $|x|_{\fp^+,\infty}=\inf\left\{t>0\colon \frac{1}{t}x\in D\right\}$,
$|x|_{\fp^+_1,\infty}=\inf\left\{t>0\colon \frac{1}{t}x\in D_1\right\}$ \linebreak (see~(\ref{BSD})) and Proposition~\ref{BSD_intersection}.
\end{proof}

Let $(\tau,V)$ be a representation of $\tilde{K}^\BC$, and consider the representation $(\hat{\tau},\cH_\tau(D,V))$ of $\tilde{G}$, as in Section~\ref{HDS}.
We want to discuss the restriction $\cH_\tau(D,V)|_{\tilde{G}_1}$.
Then since it is discretely decomposable, the space of $\tilde{K}_1$-finite vectors coincides with the space of $\tilde{K}$-finite vectors
(see \cite[Theorem~4.5]{K1}),
which is a subspace of the space of $V$-valued polynomials on $\fp^+$.
\begin{gather*} \cH_\tau(D,V)_{\tilde{K}_1}=\cH_\tau(D,V)_{\tilde{K}}\subset\cP(\fp^+,V). \end{gather*}
Since $\fp^+$ acts on $\cH_\tau(D,V)_{\tilde{K}}\subset\cP(\fp^+,V)$ by 1st order differential operators with constant coefficients,
every $(\fg_1,\tilde{K}_1)$-submodule in $\cH_\tau(D,V)_{\tilde{K}_1}$ has $\fp^+_1$-invariant vectors,
and the space of $\fp^+_1$-invariant vectors is equal to
\begin{gather*} \cH_\tau(D,V)_{\tilde{K}_1}^{\fp^+_1}=\cH_\tau(D,V)\cap\cP(\fp^+_2,V). \end{gather*}
Thus if we write the decomposition of the above space under $\tilde{K}_1^\BC$ as
\begin{gather*} \cH_\tau(D,V)\cap\cP(\fp^+_2,V)\simeq \bigoplus_i m(\rho_i)(\rho_i,W_i), \end{gather*}
then $\cH_\tau(D,V)$ is decomposed under $\tilde{G}_1$ abstractly as
\begin{gather*}
\cH_\tau(D,V)_{\tilde{K}}|_{(\fg_1,\tilde{K}_1)} \simeq \bigoplus_i m(\rho_i)\cH_{\rho_i}(D_1,W_i)_{\tilde{K}_1},\\
\cH_\tau(D,V)|_{\tilde{G}_1} \simeq \hsum_i m(\rho_i)\cH_{\rho_i}(D_1,W_i)
\end{gather*}
(see \cite{JV}, \cite[Section 8]{Kmf1}, \cite{Ma}). Especially, if $\cH_\tau(D,V)_{\tilde{K}}=\cP(\fp^+, V)$, the decomposition of
$\cH_\tau(D,V)$ under $\tilde{G}_1$ corresponds to the decomposition of $\cP(\fp^+_2,V)\simeq\cP(\fp^+_2)\otimes V$ under $\tilde{K}_1^\BC$.
Now let $(\rho,W)$ be an irreducible representation of $\tilde{K}_1^\BC$ which appears in $\cP(\fp^+_2)\otimes V|_{\tilde{K}_1^\BC}$,
with $\tilde{K}_1$-invariant inner product $(u,v)_\rho$.
Our aim is to construct $\tilde{G}_1$-intertwining operators between $\cH_\tau(D,V)|_{\tilde{G}_1}$ and $\cH_\rho(D_1,W)$
explicitly. To do this, we assume $\cH_\tau(D,V)_{\tilde{K}}=\cP(\fp^+,V)$.

\subsection{Integral expression}

First we find the intertwining operators in integral expression.
Let $(\rho,W)$ be an irreducible representation of $\tilde{K}_1^\BC$ which appears in $\cP(\fp^+_2)\otimes V|_{\tilde{K}_1^\BC}$,
and let $\cF_{\tau\rho}^*\colon \cH_\tau(D,V)\to \cH_{\rho}(D_1,W)$ be a $\tilde{G}_1$-intertwining operator.
Then for any $y_1\in D_1$, the linear map $\cH_\tau(D,V)\to W$, $f\mapsto (\cF_{\tau\rho}^* f)(y_1)$ is continuous,
and by the Riesz representation theorem, there exists $\hat{\rK}_{y_1}\in\cH_\tau(D,V)\otimes \overline{W}$ such that
\begin{gather*} \langle f,\hat{\rK}_{y_1}\rangle_{\hat{\tau}}=(\cF_{\tau\rho}^* f)(y_1), \qquad f\in\cH_\tau(D,V),\quad y_1\in D_1. \end{gather*}
We write $\hat{\rK}(x;y_1):=\hat{\rK}_{y_1}(x)$ for $x\in D$, $y_1\in D_1$.
We identify $V\otimes \overline{W}$ and $\Hom(W,V)$ via the inner product of $W$.
Then $\cF_{\tau\rho}^*\colon \cH_\tau(D,V)\to \cH_{\rho}(D_1,W)$ and its adjoint operator $\cF_{\tau\rho}\colon \cH_\rho(D_1,W)\to \cH_\tau(D,V)$ are given by
\begin{alignat*}{3}
&(\cF_{\tau\rho}^* f)(y_1)=\langle f,\hat{\rK}(\cdot,y_1)\rangle_{\hat{\tau}},\qquad && f\in\cH_\tau(D,V),\quad y_1\in D_1,\\
& (\cF_{\tau\rho} f)(x)=\langle f,\hat{\rK}(x,\cdot)^*\rangle_{\hat{\rho}}, \qquad && f\in\cH_\rho(D,W),\quad x\in D.
\end{alignat*}
By the intertwining property, $\hat{\rK}(x;y_1)$ must satisfy
\begin{gather}\label{G1-invariance}
\hat{\rK}(gx;gy_1)=\tau(\kappa(g,x))\hat{\rK}(x;y_1)\rho(\kappa(g,y_1))^*
\end{gather}
for any $g\in\tilde{G}_1$, where $\kappa(g,x)$ is as (\ref{cocycle}). Thus we find a kernel function satisfying (\ref{G1-invariance}).

Let $\rK(x_2)\in\cP(\fp^+_2)\otimes V\otimes\overline{W}\simeq\cP(\fp^+_2,\Hom(W,V))$ be an operator-valued polynomial satisfying
\begin{gather}\label{K-invariance}
\rK(kx_2)=\tau(k)\rK(x_2)\rho(k)^{-1}, \qquad x_2\in \fp^+_2,\quad k\in \tilde{K}^\BC_1.
\end{gather}
Let $\Proj_2\colon \fp^+\to\fp^+_2$ be the orthogonal projection, and we define an operator-valued function $\hat{\rK}\in\cO(D\times \overline{D_1}, \Hom(W,V))$ by
\begin{gather}\label{Khat}
\hat{\rK}(x;y_1)=\hat{\rK}(x_1,x_2;y_1):=\tau(B(x,y_1))\rK(\Proj_2(x^{y_1})),
\end{gather}
where $x=(x_1,x_2)\in D$, $y_1\in D_1$. Then the following holds.
\begin{Proposition}\label{kernelfunc}
For any $x\in D$, $y_1\in D_1$, and $g\in\tilde{G}_1$, $\hat{\rK}(x;y_1)$ satisfies the identity \eqref{G1-invariance}.
\end{Proposition}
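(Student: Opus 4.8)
The plan is to verify the transformation law \eqref{G1-invariance} directly from the explicit formula \eqref{Khat}, using the cocycle identities for $\kappa$ and the algebraic properties of the Bergman operator and quasi-inverse collected in Section~\ref{section2}. The key point is that every ingredient of $\hat{\rK}$ transforms equivariantly: $\tau(B(x,y_1))$ transforms by the Bergman cocycle identity \eqref{Bergman_property}, the quasi-inverse $x^{y_1}$ transforms by \eqref{quasiinv_k}-type formulas, and the polynomial $\rK$ transforms by its defining property \eqref{K-invariance}. The main technical obstacle will be controlling how the projection $\Proj_2$ interacts with the $\tilde K_1^\BC$-action hidden inside these cocycle factors, and showing that the "leftover" $\tilde K^\BC$-element acting on $x^{y_1}$ restricts, after projection to $\fp^+_2$, to an element of $\tilde K_1^\BC$ for which \eqref{K-invariance} applies.

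First I would reduce the general $g\in\tilde G_1$ to the three generating cases $g=k\in\tilde K_1^\BC$, $g=\exp(w)\in P^+_1$, and $g=\exp(\vartheta w)\in P^-_1$ with $w\in\fp^+_1$, since these generate $\tilde G_1$ (or at least $\fg_1$, which suffices by connectedness). For $g=k\in\tilde K_1^\BC$: here $\kappa(k,x)=k$, $kx = \operatorname{Ad}(k)x$, and since $k$ preserves $\fp^+_1$ and the inner product $(\cdot|\cdot)_{\fp^+}$, it preserves $\fp^+_2$ as well, so $\Proj_2\circ\operatorname{Ad}(k)=\operatorname{Ad}(k)\circ\Proj_2$. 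Combined with \eqref{quasiinv_k}, $(kx)^{ky_1}=k(x^{y_1})$ (note $k^{*-1}y_1 = ky_1$ is false in general, so I must be careful: for $k\in\tilde K_1$ one has $k^* = k^{-1}$ only on the compact part; in fact for $y_1\in D_1$ one uses $k y_1$ with $k\in\tilde K_1\subset G_1$ in the sense of the birational action, and the relevant identity is $(kx)^{k y_1}=k(x^{y_1})$ valid because $\hat\vartheta k = k$ and \eqref{quasiinv_k} applies with the anti-holomorphic structure), and $\tau(B(kx,ky_1))=\tau(k)\tau(B(x,y_1))\tau(k)^*$ by \eqref{Bergman_property} with $\hat\vartheta k=k$. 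Then \eqref{K-invariance} gives $\rK(\Proj_2(k x^{y_1}))=\rK(k\Proj_2(x^{y_1}))=\tau(k)\rK(\Proj_2(x^{y_1}))\rho(k)^{-1}$, and assembling these yields \eqref{G1-invariance} after noting $\rho(\kappa(k,y_1))^* = \rho(k)^*$ and reconciling $\tau(k)^*$ with $\rho(k)^{-1}$ via the unitarity of the inner products.

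Next, for $g=\exp(\vartheta w)\in P^-_1$ (the case $g=\exp(w)\in P^+_1$ being essentially dual, or even simpler since then $\kappa=\bone$ and $gx = x+w$ with $\Proj_2(x+w)=\Proj_2 x$ as $w\in\fp^+_1$): here $gx = x^w$, $\operatorname{Ad}(\kappa(g,x))|_{\fp^+}=B(x,w)^{-1}$, and I need $(x^w)^{y_1^{w}} $-type manipulations. The strategy is to compute $(gx)^{gy_1} = (x^w)^{(y_1)^w}$ and use the addition formula \eqref{quasiinv_add}: $x^{w + (\text{something})}$, or more precisely combine \eqref{Bergman_left}, \eqref{Bergman_right}, and \eqref{quasiinv_twice} to rewrite $(x^w)^{(y_1^w)}$ in terms of $x^{y_1}$ acted on by a Bergman-type operator. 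The Bergman cocycle \eqref{Bergman_property} then handles the $\tau(B(gx,gy_1))$ factor. The crux is that the residual $K^\BC$-element twisting $x^{y_1}$ is $\kappa(g,x)\kappa(\hat\vartheta g, y_1)^{-1}$-related, and since $g\in\tilde G_1$ and $y_1\in D_1$, $\kappa(g,y_1)\in\tilde K_1^\BC$ by the very fact that $D_1 = D\cap\fp^+_1$ is $\tilde G_1$-stable (Proposition~\ref{BSD_intersection}); so after projecting to $\fp^+_2$ this residual element acts through its image in $\tilde K_1^\BC$ and \eqref{K-invariance} applies. The hardest part is verifying the compatibility $\Proj_2\big(B(x,w)^{-1}\,\xi\big) = (\text{image of }\kappa\text{ in }\tilde K_1^\BC)\cdot\Proj_2(\xi)$; this requires knowing that $\fp^+_2$ is stable under the relevant $\tilde K_1^\BC$-action and that the decomposition $\fp^+=\fp^+_1\oplus\fp^+_2$ is respected by $B(x_1,y_1)$ for $x_1,y_1\in\fp^+_1$ — which follows from \eqref{Bergman_decomp} once one checks $D(\fp^+_1,\fp^+_2)$ acts compatibly, using $z\in\fg_1$ from \eqref{assumption}. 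Once these stability statements are in place, the identity \eqref{G1-invariance} falls out by direct substitution; I expect no genuinely new idea is needed beyond careful bookkeeping of the cocycle factors and repeated application of the Jordan-theoretic identities \eqref{Bergman_k}--\eqref{quasiinv_twice}.
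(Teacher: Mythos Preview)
Your overall strategy---reduce to generators $k\in K_1^\BC$, $\exp(w)\in P_1^+$, $\exp(\vartheta w)\in P_1^-$, apply the Bergman cocycle \eqref{Bergman_property} to the $\tau(B(\cdot,\cdot))$ factor, and handle the quasi-inverse via the Jordan identities \eqref{quasiinv_k}--\eqref{quasiinv_twice}---matches the paper's proof. However, your execution has a confusion that would derail the $P^\pm$ cases if carried out as written.

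After the Bergman cocycle gives $\tau(B(gx,gy_1))=\tau(\kappa(g,x))\,\tau(B(x,y_1))\,\tau(\kappa(g,y_1))^*$, the $\tau(\kappa(g,x))$ factor is already exactly what \eqref{G1-invariance} demands on the left; it is \emph{not} to be absorbed into the $\rK$ term. What remains is to show
\[
\rK\bigl(\Proj_2\bigl((gx)^{gy_1}\bigr)\bigr)=\tau(\kappa(g,y_1))^{*-1}\,\rK\bigl(\Proj_2(x^{y_1})\bigr)\,\rho(\kappa(g,y_1))^*,
\]
and by \eqref{K-invariance} this reduces to the purely set-theoretic identity
\[
\Proj_2\bigl((gx)^{gy_1}\bigr)=\kappa(g,y_1)^{*-1}\,\Proj_2(x^{y_1}).
\]
The twisting element here is $\kappa(g,y_1)^{*-1}$, which lies in $\tilde K_1^\BC$ because $g\in\tilde G_1$ and $y_1\in D_1$; this is the observation you correctly make, but then abandon. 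Your ``hardest part''---the compatibility of $\Proj_2$ with $B(x,w)^{-1}=\kappa(\exp(\vartheta w),x)$ for general $x\in D$---is a red herring: that operator does \emph{not} preserve $\fp^+_2$, and no such compatibility holds or is needed. Likewise \eqref{Bergman_decomp} is irrelevant here (its hypothesis $D(\fp^+_1,\fp^+_2)=\{0\}$ is not satisfied).

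Once reduced correctly, the generator cases are: for $g=\exp(\vartheta w_1)$, $\kappa(\hat\vartheta g,y_1)=\bone$ and \eqref{quasiinv_add} gives $(x^{w_1})^{y_1-w_1}=x^{y_1}$ directly---this is the \emph{easy} case, not the hard one. The case $g=\exp(-z_1)\in P_1^+$, which you dismiss as ``even simpler'', is actually where the work is: one must show $\Proj_2\bigl((x-z_1)^{(y_1^{z_1})}\bigr)=B(z_1,y_1)\Proj_2(x^{y_1})$, and this uses \eqref{quasiinv_twice} to write $(x-z_1)^{(y_1^{z_1})}=B(z_1,y_1)x^{y_1}-B(z_1,y_1)z_1^{y_1}$, the second term lying in $\fp^+_1$ and hence killed by $\Proj_2$.
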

\begin{proof}
By (\ref{Bergman_property}), we have
\begin{gather*} \tau(B(gx,gy_1))=\tau(\kappa(g,x))\tau(B(x,y_1))\tau(\kappa(g,y_1))^*. \end{gather*}
Thus it suffices to show
\begin{gather*} \rK(\Proj_2((gx)^{gy_1}))=\tau(\kappa(g,y_1))^{*-1}\rK(\Proj_2(x^{y_1}))\rho(\kappa(g,y_1))^*. \end{gather*}
By $\tilde{K}^\BC_1$-invariance of $\rK(\cdot)$, this is equivalent to
\begin{gather*} \Proj_2((gx)^{gy_1})=\kappa(g,y_1)^{*-1}\Proj_2(x^{y_1}), \qquad x\in D,\quad y_1\in D_1,\quad g\in G_1. \end{gather*}
First we show
\begin{gather*} \Proj_2\big((gx)^{(\hat{\vartheta}g)y_1}\big)=\kappa(\hat{\vartheta}g,y_1)^{*-1}\Proj_2(x^{y_1}), \qquad x\in \fp^+,\quad y_1\in \fp^+_1 \end{gather*}
for $g=k\in K_1^\BC$ or $g=\exp(-z_1)$, $g=\exp(\vartheta w_1)\in G_1^\BC$ with $z_1,w_1\in\fp^+_1$,
when one side is well-defined, that is, we show
\begin{gather*}
\Proj_2\big((kx)^{k^{*-1}y_1}\big)=k\Proj_2(x^{y_1}), \\
\Proj_2\big((x-z_1)^{(y_1^{z_1})}\big)=B(z_1,y_1)\Proj_2(x^{y_1}), \\
\Proj_2\big((x^{w_1})^{y_1-w_1}\big)=\Proj_2(x^{y_1}).
\end{gather*}
In fact, the 1st and 3rd formulas follow from (\ref{quasiinv_k}), (\ref{quasiinv_add}),
and the 2nd formula follows from $(x-z_1)^{(y_1^{z_1})}=B(z_1,y_1)x^{y_1}-B(z_1,y_1)z_1^{y_1}$,
which is a consequence of~(\ref{quasiinv_twice}), and that $B(z_1,y_1)z_1^{y_1}\in\fp^+_1$ is annihilated by $\Proj_2$.
Since any $g\in G_1$ is written as the form $g=\exp(\vartheta w_1)k\exp(-z_1)$ with $z_1,w_1\in D_1$ and $k\in K^\BC_1$
(which is proved by using the $KAK$-decomposition and \cite[Part~III, Lemma~III.2.4]{FKKLR}),
the proposition follows from the cocycle condition of $\kappa$.
\end{proof}

We write $\cP(\fp^+_2,\Hom(W,V))^{\tilde{K}^\BC_1}:=\{\rK(x_2)\in \cP(\fp^+_2,\Hom(W,V))\colon \text{satisfying }(\ref{K-invariance})\}$,
$\cO(D\times \overline{D_1}$, $\Hom(W,V))^{\tilde{G}_1}
:=\{\hat{\rK}(x;y_1)\in\cO(D\times \overline{D_1}, \Hom(W,V))\colon \text{satisfying }(\ref{G1-invariance})\}$.
Then we have the following.
\begin{Lemma}
The linear map $\cO(D\times \overline{D_1}, \Hom(W,V))^{\tilde{G}_1}\to\cP(\fp^+_2,\Hom(W,V))^{\tilde{K}^\BC_1}$,
$\hat{\rK}(x;y_1)\mapsto \hat{\rK}(0,x_2;0)$ is bijective, and its inverse is given by $\rK(x_2)\mapsto \hat{\rK}(x;y_1)$ in \eqref{Khat}.
\end{Lemma}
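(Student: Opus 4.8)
The plan is to check directly that the two maps are mutually inverse. Write $\Phi$ for the restriction $\hat{\rK}\mapsto\hat{\rK}(0,x_2;0)$ and $\Psi$ for the assignment $\rK\mapsto\hat{\rK}$ of \eqref{Khat}. That $\Psi$ is well defined is immediate: its output satisfies \eqref{G1-invariance} by Proposition~\ref{kernelfunc} and is holomorphic in $x\in D$, anti-holomorphic in $y_1\in D_1$, being built from $B(\cdot,\cdot)$, the quasi-inverse, the linear map $\Proj_2$, the polynomial $\rK$ and the holomorphic representation $\tau$. Plugging $x=(0,x_2)$, $y_1=0$ into \eqref{Khat}, and using $B(x,0)=\bone$ and $x^0=x$, I get $\Psi(\rK)(0,x_2;0)=\rK(x_2)$, so $\Phi\circ\Psi=\mathrm{id}$; in particular $\Phi$ is onto $\cP(\fp^+_2,\Hom(W,V))^{\tilde{K}^\BC_1}$ and $\Psi$ is injective. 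The real work is to prove $\Psi\circ\Phi=\mathrm{id}$: that an arbitrary $\hat{\rK}\in\cO(D\times\overline{D_1},\Hom(W,V))^{\tilde{G}_1}$ is recovered via \eqref{Khat} from $\rK:=\hat{\rK}(0,\cdot\,;0)$. This will at the same time show $\rK$ is a polynomial satisfying \eqref{K-invariance}, so that $\Phi$ genuinely lands in the stated space.

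\textbf{Complexifying the invariance.} The first step I would take is to extend \eqref{G1-invariance} to $\hat{\rK}(gx;(\hat{\vartheta}g)y_1)=\tau(\kappa(g,x))\hat{\rK}(x;y_1)\rho(\kappa(\hat{\vartheta}g,y_1))^*$ for $g$ in a neighbourhood of $\bone$ in $\tilde{G}_1^{\BC}$: for such $g$ one has $gx\in D$ and $(\hat{\vartheta}g)y_1\in D_1$, both sides are holomorphic in $g$, and they agree on $\tilde{G}_1$ (where $\hat{\vartheta}g=g$), hence everywhere, since a holomorphic function vanishing on a real form vanishes identically. Two specializations then extract exactly what is missing from the real group. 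Taking $g=\exp(tz)$, $t\in\BR$ --- legitimate by the assumption $z\in\fg_1$ of \eqref{assumption} --- and using $\operatorname{Ad}(\exp tz)=e^{it}I_{\fp^+}$ and that $\tau,\rho$ restrict to characters of $\exp(\BR z)$, one finds $\hat{\rK}(e^{it}x;0)=e^{itN}\hat{\rK}(x;0)$ for a fixed $N\in\BR$; comparing Taylor coefficients at $0$ forces $\hat{\rK}(\cdot\,;0)$ to be a homogeneous polynomial of degree $N$ on $\fp^+$ (and $0$ unless $N\in\BZ_{\ge0}$). Taking $g=\exp(v)$ with $v\in\fp^+_1$, where $\kappa(\exp v,x)=\bone$, $\hat{\vartheta}(\exp v)=\exp(-\vartheta v)$, $0^{-v}=0$ and $\kappa(\exp(-\vartheta v),0)=\bone$, one gets $\hat{\rK}(x+v;0)=\hat{\rK}(x;0)$ for small $v\in\fp^+_1$; since $\hat{\rK}(\cdot\,;0)$ is now a polynomial, it must be independent of the $\fp^+_1$-variable, i.e.\ $\hat{\rK}(x_1,x_2;0)=\rK(x_2)$ with $\rK\in\cP(\fp^+_2,\Hom(W,V))$. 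Finally, putting $g=k\in K_1$, $x=(0,x_2)$, $y_1=0$ into \eqref{G1-invariance} and continuing holomorphically in $k$ shows $\rK$ obeys \eqref{K-invariance}.

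\textbf{Reconstruction.} Now I would fix $(x,y_1)\in D\times D_1$, choose $g\in\tilde{G}_1$ with $gy_1=0$ (transitivity of $\tilde{G}_1$ on $D_1$), and compute: by \eqref{G1-invariance} and the relation $\hat{\rK}(\cdot\,;0)=\rK\circ\Proj_2$ from the previous step,
\[
\hat{\rK}(x;y_1)=\tau(\kappa(g,x))^{-1}\hat{\rK}(gx;0)\,\rho(\kappa(g,y_1))^{*-1}=\tau(\kappa(g,x))^{-1}\rK\big(\Proj_2(gx)\big)\,\rho(\kappa(g,y_1))^{*-1}.
\]
The identity $\Proj_2\big((gx)^{gy_1}\big)=\kappa(g,y_1)^{*-1}\Proj_2(x^{y_1})$ proved inside Proposition~\ref{kernelfunc}, with $gy_1=0$, gives $\Proj_2(gx)=\kappa(g,y_1)^{*-1}\Proj_2(x^{y_1})$; applying \eqref{K-invariance} makes the $\rho$-factors cancel, leaving $\tau(\kappa(g,x))^{-1}\tau(\kappa(g,y_1))^{*-1}\rK(\Proj_2(x^{y_1}))$; and \eqref{Bergman_property} with $gy_1=0$, $B(gx,0)=\bone$ gives $B(x,y_1)^{-1}=\kappa(g,y_1)^*\kappa(g,x)$, hence $\tau(\kappa(g,x))^{-1}\tau(\kappa(g,y_1))^{*-1}=\tau(B(x,y_1))$. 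Thus $\hat{\rK}(x;y_1)=\tau(B(x,y_1))\rK(\Proj_2(x^{y_1}))=\Psi(\rK)(x;y_1)$, so $\Psi\circ\Phi=\mathrm{id}$, and together with $\Phi\circ\Psi=\mathrm{id}$ the lemma follows.

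\textbf{Main obstacle.} I expect the delicate point to be the holomorphic continuation of \eqref{G1-invariance} off the totally real subgroup $\tilde{G}_1$: one must check that for $g$ near $\bone$ in $\tilde{G}_1^{\BC}$ the point $(\hat{\vartheta}g)y_1$ still lies in $D_1$, so that $\hat{\rK}$ can be evaluated there, and that the resulting equality of holomorphic germs in $g$ really propagates across $\tilde{G}_1$. The rest --- polynomiality and $\fp^+_1$-translation invariance of $\hat{\rK}(\cdot\,;0)$, and the bookkeeping with $B$, $\kappa$ and the quasi-inverse --- is routine once that continuation is in hand, but these two structural facts about $\hat{\rK}(\cdot\,;0)$ are precisely what is invisible from $\tilde{G}_1$ alone.
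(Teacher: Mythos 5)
Your proof is correct, but the key step --- that an invariant kernel is determined by, and reconstructed from, $\hat{\rK}(0,\cdot\,;0)$ --- is handled by a genuinely different mechanism than in the paper. The paper proves $\Rest\circ\Ext=\mathrm{id}$ exactly as you do, identifies $\Rest\hat{\rK}$ with a polynomial through the abstract chain $\cO(D\cap\fp^+_2,\Hom(W,V))^{\tilde{K}^\BC_1}\simeq\Hom_{\tilde{K}^\BC_1}(W,\cP(\fp^+_2,V))$ (using that $W$ is irreducible, hence lands in the $\tilde{K}_1$-finite part), and then proves \emph{injectivity} of $\Rest$ by an identity-theorem argument on the domain: the diagonal orbit $S_{x_2}=\{(g(0,x_2);g\cdot 0)\colon g\in G_1\}$ is shown to contain a totally real submanifold of full dimension in $D_{x_2}$, so two invariant kernels agreeing at $(0,x_2;0)$ agree on $D_{x_2}$, and the union over $x_2$ of these sets contains an open subset of $D\times\overline{D_1}$. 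You instead run the identity theorem in the \emph{group} variable, holomorphically continuing \eqref{G1-invariance} to a neighbourhood of $\bone$ in $G_1^\BC$, and then use $\exp(tz)$ and $\exp(v)$, $v\in\fp^+_1$, to extract the homogeneity (hence polynomiality) and $\fp^+_1$-independence of $\hat{\rK}(\cdot\,;0)$; after that the explicit reconstruction $\hat{\rK}(x;y_1)=\tau(B(x,y_1))\rK(\Proj_2(x^{y_1}))$ follows from transitivity of $\tilde{G}_1$ on $D_1$, \eqref{Bergman_property}, and the identity $\Proj_2((gx)^{gy_1})=\kappa(g,y_1)^{*-1}\Proj_2(x^{y_1})$ already established inside Proposition~\ref{kernelfunc}. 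Your route buys a direct proof of $\Psi\circ\Phi=\mathrm{id}$ (rather than injectivity plus surjectivity) together with the structural facts about $\hat{\rK}(\cdot\,;0)$, at the cost of the complexification and its domain bookkeeping, which you correctly flag; the paper's route stays on the real group but needs the totally-real-submanifold geometry. The only point worth making explicit in your write-up is that the scalar action of $\exp(tz)$ on $V$ and $W$ uses the irreducibility of both modules together with $z\in\fz(\fk)\cap\fk_1\subset\fz(\fk_1)$.
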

\begin{proof}We write the restriction map by $\Rest$, that is, $(\Rest \hat{\rK})(x_2):=\hat{\rK}(0,x_2;0)$,
and write the linear map in~(\ref{Khat}) by $\Ext$.
For any $\hat{\rK}\in\cO(D\times \overline{D_1}, \Hom(W,V))^{\tilde{G}_1}$, clearly $(\Rest \hat{\rK})(x_2)$ satisfies
(\ref{K-invariance}), and therefore
\begin{gather*}
\big(\Rest \hat{\rK}\big)(x_2)\in \cO(D\cap \fp^+_2,\Hom(W,V))^{\tilde{K}^\BC_1}\\
\qquad {} \simeq \Hom_{\tilde{K}^\BC_1}(W,\cO(D\cap\fp^+_2,V)) \simeq\Hom_{\tilde{K}^\BC_1}(W,\cP(\fp^+_2,V))\simeq\cP(\fp^+_2,\Hom(W,V))^{\tilde{K}^\BC_1}
\end{gather*}
holds.
Also, for any $\rK(x_2)\in\cP(\fp^+_2,\allowbreak\Hom(W,V))^{\tilde{K}^\BC_1}$ we have
\begin{gather*} (\Rest\circ\Ext \rK)(x_2)=\tau(B(x_2,0))\rK\big(\Proj_2\big((x_2)^0\big)\big)=\rK(x_2). \end{gather*}
Thus $\Rest$ is surjective. Therefore it suffices to show that $\Rest$ is injective.
Suppose $\hat{\rK}_1$ and $\hat{\rK}_2\in\cO(D\times \overline{D_1}, \Hom(W,V))^{\tilde{G}_1}$ satisfy $\Rest\hat{\rK}_1=\Rest\hat{\rK}_2$.
Then by~(\ref{G1-invariance}),
\begin{gather*} \hat{\rK}_1(g.(0,x_2);g.0)=\hat{\rK}_2(g.(0,x_2);g.0) \end{gather*}
holds for any $g\in\tilde{G}_1$.
We fix $x_2\in D\cap \fp^+_2$, and set
\begin{gather*}
S_{x_2} :=\left\{ (g.(0,x_2);g.0)\in D\times\overline{D_1}\colon g\in G_1\right\}\subset D\times\overline{D_1}, \\
D_{x_2} :=\left\{ (g.(0,x_2);h.0)\in D\times\overline{D_1}\colon g,h\in G_1\right\}\subset D\times\overline{D_1}.
\end{gather*}
We show that $S_{x_2}$ contains a totally real submanifold of full dimension of $D_{x_2}$.
Let $\mathrm{pr}_1\colon D\times\overline{D_1}\to D\to D\cap\fp^+_1=D_1$, $\mathrm{pr}_2\colon D\times\overline{D_1}\to \overline{D_1}$
be the projections. Then since for every $x_2\in D\cap\fp^+_2$, $\{\exp(z).(0,x_2)\colon z\in\fp_1\}\subset D$ intersects transversally
with $\fp^+_2$ at $x_2$, the differential of $\mathrm{pr}_1|_{S_{x_2}}$ at $(0,x_2;0)$ is surjective.
Similarly, since $G_1$ acts transitively on $D_1$, the differential of $\mathrm{pr}_2|_{S_{x_2}}$ at $(0,x_2;0)$ is also surjective.
Therefore, $\mathrm{pr}_1|_{S_{x_2}}$ and $\mathrm{pr}_2|_{S_{x_2}}$ are both submersive near $(0,x_2;0)\in S_{x_2}$,
and $T_{(x;y_1)}S_{x_2}+JT_{(x;y_1)}S_{x_2}=T_{(x;y_1)}D_{x_2}$ holds on this neighborhood,
where $J$ is the complex structure of $D_{x_2}$. Hence $S_{x_2}$ contains a totally real submanifold of full dimension of $D_{x_2}$,
and $\hat{\rK}_1=\hat{\rK}_2$ holds on $D_{x_2}$ for each $x_2\in D\cap\fp^+_2$.
Finally, since $\bigcup_{x_2\in D\cap\fp^+_2} D_{x_2}$ contains an open subset of $D\times\overline{D_1}$,
$\hat{\rK}_1=\hat{\rK}_2$ holds on whole $D\times\overline{D_1}$. Therefore $\Rest$ is injective.
\end{proof}

Therefore if we assume $\cH_\tau(D,V)_{\tilde{K}}=\cP(\fp^+,V)$, then for any $(\rho,W)\subset \cP(\fp^+_2,V)$,
\begin{gather*}
 \overline{\Hom_{\tilde{G}_1}(\cH_\tau(D,V),\cH_\rho(D_1,W))}\simeq \Hom_{\tilde{G}_1}(\cH_\rho(D_1,W),\cH_\tau(D,V)) \\
\hphantom{\overline{\Hom_{\tilde{G}_1}(\cH_\tau(D,V),\cH_\rho(D_1,W))}}{} \simeq \cP(\fp^+_2,\Hom(W,V))^{\tilde{K}^\BC_1}\simeq \cO(D\times \overline{D_1}, \Hom(W,V))^{\tilde{G}_1}
\end{gather*}
holds, and by the above argument,
for any $\rK\in\cP(\fp^+_2,\Hom(W,V))^{\tilde{K}^\BC_1}$, $\hat{\rK}(x;y_1)$ in (\ref{Khat}) becomes
the kernel function of the intertwining operator.
Especially, $\hat{\rK}(\cdot;y_1)\in\cH_\tau(D,V)\otimes \overline{W}$ holds for any $y_1\in D_1$,
and $\hat{\rK}(x;\cdot)^*\in\cH_{\rho}(D_1,W)\otimes \overline{V}$ holds for any $x\in D$.
Also, even if $\cH_\tau(D,V)_{\tilde{K}}\ne\cP(\fp^+)\otimes V$, since $\hat{\rK}(x;\cdot)$ is a bounded function on $D_1$ for any $x\in D$,
$\hat{\rK}(x;\cdot)^*\in\cH_{\rho}(D_1,W)\otimes \overline{V}$ holds
if we assume $\cH_\rho(D_1,W)$ is a holomorphic discrete series representation.
\begin{Corollary}\label{integral_expression}
Let $(\tau,V)$ be a $\tilde{K}^\BC$-module, and $(\rho,W)$ be a $\tilde{K}_1^\BC$-module
which appears in $\cP(\fp^+_2)\otimes V|_{\tilde{K}_1^\BC}$.
Let $\rK(x_2)\in \cP(\fp^+_2)\otimes V\otimes\overline{W}\simeq\cP(\fp^+_2,\Hom(W,V))$ be an operator-valued polynomial
satisfying \eqref{K-invariance}, and define $\hat{\rK}(x;y_1)\in\cO(D\times\overline{D_1},\Hom(W,V))$ by \eqref{Khat}.
\begin{enumerate}\itemsep=0pt
\item[$(1)$] Assume $\cH_\tau(D,V)_{\tilde{K}}=\cP(\fp^+,V)$. Then the linear maps
\begin{gather*}
\cF_{\tau\rho}^*\colon \ \cH_\tau(D,V)\to\cH_\rho(D_1,W), \\
(\cF_{\tau\rho}^*f)(y_1):=\big\langle f,\hat{\rK}(\cdot;y_1)\big\rangle_{\hat{\tau}}
=C_\tau\int_D\hat{\rK}(x;y_1)^*\tau\big(B(x)^{-1}\big)f(x)h(x)^{-p}{\rm d}x, \\
\cF_{\tau\rho}\colon \ \cH_\rho(D_1,W)\to \cH_\tau(D,V), \\
(\cF_{\tau\rho}f)(x):=\big\langle f,\hat{\rK}(x;\cdot)^*\big\rangle_{\hat{\rho}}
=C_\rho\int_{D_1}\hat{\rK}(x;y_1)\rho\big(B(y_1)^{-1}\big)f(y_1)h_1(y_1)^{-p_1}{\rm d}y_1
\end{gather*}
intertwine the $\tilde{G}_1$-action. Here the second equalities hold only if $\cH_\tau(D{,}V)$ resp.~$\cH_\rho(D_1{,}W)$ are holomorphic discrete series representations.
\item[$(2)$] Assume $\cH_\rho(D_1,W)$ is a holomorphic discrete series representation.
Then the linear map $\cF_{\tau\rho}\colon \cH_\rho(D_1,W)\to \cO_\tau(D,V)$,
\begin{gather*} (\cF_{\tau\rho}f)(x):=\big\langle f,\hat{\rK}(x;\cdot)^*\big\rangle_{\hat{\rho}}
=C_\rho\int_{D_1}\hat{\rK}(x;y_1)\rho\big(B(y_1)^{-1}\big)f(y_1)h_1(y_1)^{-p_1}{\rm d}y_1 \end{gather*}
intertwines the $\tilde{G}_1$-action.
\end{enumerate}
\end{Corollary}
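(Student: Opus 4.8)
The plan is to recognize the formula~\eqref{Khat} as the kernel function of a genuine $\tilde{G}_1$-intertwining operator, using Proposition~\ref{kernelfunc}, the preceding Lemma, and the abstract branching recalled in Section~\ref{section3}, and then to substitute the explicit integral realizations of the inner products from Section~\ref{HDS} into the resulting Riesz pairings.

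For part~(1): since $\rK$ satisfies~\eqref{K-invariance}, Proposition~\ref{kernelfunc} gives $\hat{\rK}\in\cO(D\times\overline{D_1},\Hom(W,V))^{\tilde{G}_1}$. On the other hand, because $\cH_\tau(D,V)|_{\tilde{G}_1}$ is discretely decomposable there exist honest continuous $\tilde{G}_1$-intertwining operators $\cF^*\colon\cH_\tau(D,V)\to\cH_\rho(D_1,W)$ (for instance the orthogonal projections onto the isotypic summands), and for any such $\cF^*$ the evaluations $f\mapsto(\cF^*f)(y_1)$ are continuous, so Riesz representation yields $\hat{\rK}_{\cF^*}(\cdot;y_1)\in\cH_\tau(D,V)\otimes\overline{W}$; a direct computation with the cocycle identity~\eqref{cocycle}, the Bergman identity~\eqref{Bergman_property}, and the $\tilde{G}$-invariance of the reproducing kernels $\tau(B(\cdot,\cdot))$, $\rho(B(\cdot,\cdot))$ shows that $\hat{\rK}_{\cF^*}$ satisfies~\eqref{G1-invariance}. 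Thus $\cF^*\mapsto\Rest\hat{\rK}_{\cF^*}$ is a linear map $\Hom_{\tilde{G}_1}(\cH_\tau(D,V),\cH_\rho(D_1,W))\to\cP(\fp^+_2,\Hom(W,V))^{\tilde{K}^\BC_1}$, injective by the preceding Lemma. Since $\cH_\tau(D,V)_{\tilde{K}}=\cP(\fp^+,V)$, the branching discussion identifies the dimension of the source with the multiplicity of $W$ in $\cP(\fp^+_2)\otimes V$ under $\tilde{K}^\BC_1$, which equals the dimension of the target, so the map is bijective. Hence every $\rK$ as in the statement is $\Rest\hat{\rK}_{\cF^*}$ for some $\cF^*$, and by the Lemma the $\hat{\rK}$ of~\eqref{Khat} coincides with $\hat{\rK}_{\cF^*}$; in particular $\hat{\rK}(\cdot;y_1)\in\cH_\tau(D,V)\otimes\overline{W}$, the pairing $(\cF_{\tau\rho}^*f)(y_1):=\langle f,\hat{\rK}(\cdot;y_1)\rangle_{\hat{\tau}}$ recovers $(\cF^*f)(y_1)$ and so intertwines, and its Hilbert-space adjoint $\cF_{\tau\rho}$ is automatically $\tilde{G}_1$-intertwining, with $(\cF_{\tau\rho}f)(x)=\langle f,\hat{\rK}(x;\cdot)^*\rangle_{\hat{\rho}}$ after unwinding the two pairings against the reproducing kernels. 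The second, integral equalities then follow by inserting, whenever $\cH_\tau(D,V)$ resp.\ $\cH_\rho(D_1,W)$ is holomorphic discrete, the convergent integral form of $\langle\cdot,\cdot\rangle_{\hat{\tau}}$ resp.\ $\langle\cdot,\cdot\rangle_{\hat{\rho}}$ from Section~\ref{HDS}, the integrals converging because $\hat{\rK}(\cdot;y_1)$ resp.\ $\hat{\rK}(x;\cdot)^*$ lies in the relevant Hilbert space.

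For part~(2) one drops the hypothesis on $\cH_\tau(D,V)$ and argues directly. Fixing $x\in D$, the polynomial Bergman operator $B(x,y_1)$ and the quasi-inverse $x^{y_1}=B(x,y_1)^{-1}(x-Q(x)y_1)$, hence also $\rK(\Proj_2(x^{y_1}))$, extend continuously to the compact set $\{x\}\times\overline{D_1}$ --- here $B(x,y_1)$ stays invertible because its determinant $h(x,y_1)^p$ does not vanish once $|x|_\infty<1$ and $|y_1|_\infty\le 1$ --- so $y_1\mapsto\hat{\rK}(x;y_1)^*$ is a bounded holomorphic $\Hom(V,W)$-valued function on $D_1$; since a holomorphic discrete series representation contains every bounded holomorphic function, $\hat{\rK}(x;\cdot)^*\in\cH_\rho(D_1,W)\otimes\overline{V}$, the pairing $(\cF_{\tau\rho}f)(x):=\langle f,\hat{\rK}(x;\cdot)^*\rangle_{\hat{\rho}}$ converges and is holomorphic in $x$, i.e.\ defines a map into $\cO_\tau(D,V)$. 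The identity $\cF_{\tau\rho}\circ\hat{\rho}(g)=\hat{\tau}(g)\circ\cF_{\tau\rho}$ is then a direct manipulation: substitute $\hat{\rho}(g)f$ into the pairing, move $\hat{\rho}(g)^{-1}$ onto the kernel, use the $\tilde{G}_1$-invariance of $\rho(B(\cdot,\cdot))$ to absorb the cocycle factor $\rho(\kappa(g,y_1))$ produced by~\eqref{G1-invariance}, change variables $y_1\mapsto g^{-1}y_1$ (preserving the $\tilde{G}_1$-invariant measure on $D_1$), and read off the remaining factor $\tau(\kappa(g^{-1},x))^{-1}\hat{\rK}(g^{-1}x;\cdot)^*$.

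The step I expect to be the genuine obstacle is the first half of~(1): showing that the kernel built from an \emph{arbitrary} $\rK$ really is the kernel of an intertwining operator --- equivalently, that $\hat{\rK}(\cdot;y_1)\in\cH_\tau(D,V)\otimes\overline{W}$ --- in the generality where $\cH_\tau(D,V)$ need not be holomorphic discrete, so that $\hat{\rK}(\cdot;y_1)$ is only known to be holomorphic (not bounded) on $D$ and no direct convergence estimate is at hand. This is precisely where the hypothesis $\cH_\tau(D,V)_{\tilde{K}}=\cP(\fp^+,V)$ is used, via matching the dimension of the space of kernel functions supplied by the Lemma with the abstract branching multiplicity.
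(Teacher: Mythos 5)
Your argument is correct and follows essentially the same route as the paper: Proposition~\ref{kernelfunc} and the Lemma identify $\cO(D\times\overline{D_1},\Hom(W,V))^{\tilde{G}_1}$ with $\cP(\fp^+_2,\Hom(W,V))^{\tilde{K}^\BC_1}$, the dimension count against the branching multiplicity (valid under $\cH_\tau(D,V)_{\tilde{K}}=\cP(\fp^+,V)$) shows every such kernel is the Riesz kernel of an actual intertwiner, and part~(2) rests on the boundedness of $\hat{\rK}(x;\cdot)$ on $D_1$ exactly as in the text. You also correctly locate where the hypothesis $\cH_\tau(D,V)_{\tilde{K}}=\cP(\fp^+,V)$ enters.
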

These operators are defined as maps from $\cH_\tau(D,V)$ resp.~$\cH_\rho(D_1,W)$, but in fact these extend continuously to maps between
$\cO_\tau(D,V)$ and $\cO_\rho(D_1,W)$ if $\cH_\tau(D,V)$ or $\cH_\rho(D_1,W)$ is a~holomorphic discrete series representation.
\begin{Theorem}\label{conti_ext}\quad
\begin{enumerate}\itemsep=0pt
\item[$(1)$] Assume $\cH_\tau(D,V)$ is a holomorphic discrete series representation. Then
the linear map $\cF_{\tau\rho}^*\colon\! \cH_\tau(D,V)\!\to\!\cH_\rho(D_1{,}W)$ extends continuously to the map $\cF_{\tau\rho}^*\colon\! \cO_\tau(D,V)\!\to\!\cO_\rho(D_1{,}W)$.
\item[$(2)$] Assume $\cH_\rho(D_1,W)$ is a holomorphic discrete series representation. Then
the linear map $\cF_{\tau\rho}\colon\! \cH_\rho(D_1,W)\!\to\! \cO_\tau(D{,}V)$ extends continuously to the map $\cF_{\tau\rho}\colon\! \cO_\rho(D_1,W)\!\to \! \cO_\tau(D{,}V)$.
\end{enumerate}
\end{Theorem}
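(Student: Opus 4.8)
The plan is, in both parts, to establish a uniform estimate bounding the sup-norm of the output on compacta of $D$ (resp.\ $D_1$) by the sup-norm of the input on compacta of $D_1$ (resp.\ $D$), first on the dense subspace of $\tilde{K}$-finite vectors $\cP(\fp^+,V)=\cH_\tau(D,V)_{\tilde{K}}$ (resp.\ $\tilde{K}_1$-finite vectors $\cP(\fp^+_1,W)=\cH_\rho(D_1,W)_{\tilde{K}_1}$), and then to extend by continuity; since $D$ and $D_1$ are balanced (i.e.\ $\zeta D\subseteq D$, $\zeta D_1\subseteq D_1$ for $|\zeta|\le 1$, by (\ref{BSD})), these subspaces are dense in $\cO_\tau(D,V)$, $\cO_\rho(D_1,W)$ for the compact-open topology, and the extended map automatically takes values in holomorphic functions, being a locally uniform limit of maps into holomorphic functions. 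I will repeatedly use two elementary facts: (a) a homogeneous element $g$ of degree $d$ has $\Vert g\Vert_{\hat{\tau}}\le c\sup_{|x|_\infty=1}|g(x)|$, resp.\ $\Vert g\Vert_{\hat{\rho}}\le c\sup_{|y|_\infty=1}|g(y)|$, with $c$ independent of $d$ — because $\cH_\tau(D,V)$, resp.\ $\cH_\rho(D_1,W)$, being holomorphic discrete, the integral defining its norm converges already for constant functions, hence a fortiori against $|x|_\infty^{2d}\le 1$; and (b) if $f$ is holomorphic and $\sup_{|y|_\infty\le r}|f|=M(r)$, then its degree-$d$ homogeneous component $f^{[d]}$ satisfies $\sup_{|y|_\infty=1}|f^{[d]}(y)|\le r^{-d}M(r)$, by Cauchy's estimate along complex lines.

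For part (1), I would first use $\tilde{G}_1$-equivariance to reduce to the base point: choosing $g=g(y_1)\in\tilde{G}_1$ with $g\cdot 0=y_1$ depending continuously on $y_1$, the intertwining property of $\cF_{\tau\rho}^*$ gives $(\cF_{\tau\rho}^*f)(y_1)=\rho(\kappa(g^{-1},y_1))^{-1}\big(\cF_{\tau\rho}^*(\hat{\tau}(g^{-1})f)\big)(0)$, while $\sup_{C_0}|\hat{\tau}(g^{-1})f|\le c(g,C_0)\sup_{g\cdot C_0}|f|$ for a fixed compact $C_0\subset D$. At $y_1=0$ the kernel (\ref{Khat}) degenerates, since $B(x,0)=I$ and $x^0=x$, so that $\hat{\rK}(x;0)=\rK(x_2)$ is a \emph{polynomial}; hence for $h\in\cH_\tau(D,V)$ one has $(\cF_{\tau\rho}^*h)(0)=\langle h,\rK(x_2)\rangle_{\hat{\tau}}$, and because $\rK(x_2)$ involves only finitely many $\tilde{K}$-types and $\langle\,\cdot\,,\,\cdot\,\rangle_{\hat{\tau}}$ is $\tilde{K}$-invariant, this depends on $h$ through only finitely many of its Taylor coefficients at $0$ — that is, it is a fixed finite-order constant-coefficient differential operator in $h$ evaluated at $0$, whence $|(\cF_{\tau\rho}^*h)(0)|\le c\sup_{C_0}|h|$. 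Letting $y_1$ range over a compact $C_1\subset D_1$, over which $g(y_1)$ and $\kappa(g(y_1)^{-1},y_1)$ stay in compacta, and combining with the reduction, gives the desired estimate; the resulting extension agrees with $\cF_{\tau\rho}^*$ on $\cH_\tau(D,V)$ by density. (Incidentally, transporting this last observation back by equivariance re-proves, softly, that $\cF_{\tau\rho}^*$ is a differential operator.)

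For part (2) the operator is of infinite order, and this is where the real work lies. Decompose both arguments into homogeneous components in the $y_1$-variable, $f=\sum_{d\ge 0}f^{[d]}$ and $\hat{\rK}(x;\,\cdot\,)^*=\sum_{d\ge 0}\hat{\rK}(x;\,\cdot\,)^{*[d]}$ (the latter with coefficients holomorphic in $x$); these are the isotypic decompositions for the central circle of $\tilde{K}_1$, hence $\hat{\rho}$-orthogonal, so for $f\in\cH_\rho(D_1,W)$ one has $(\cF_{\tau\rho}f)(x)=\sum_{d\ge 0}\langle f^{[d]},\hat{\rK}(x;\,\cdot\,)^{*[d]}\rangle_{\hat{\rho}}$, and I would take the right-hand side as the definition of the extension on all of $\cO_\rho(D_1,W)$. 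The decisive point is that the kernel extends holomorphically past $\partial D_1$ in the $y_1$-direction: for a compact $C\subset D$ put $\rho_C:=\sup_{x\in C}|x|_\infty<1$; the homogeneity $B(\lambda x,\lambda^{-1}y_1)=B(x,y_1)$ ($\lambda>0$) reduces invertibility of $B(x,y_1)$, whenever $|x|_\infty|y_1|_\infty<1$, to the case $x,y_1\in D$ covered by (\ref{BSD}), so that $x^{y_1}$, hence $\hat{\rK}(x;y_1)^*$, extends holomorphically in $y_1$ to the ball $\{|y_1|_\infty<\rho_C^{-1}\}$, uniformly for $x\in C$. By facts (a) and (b) applied on this enlarged ball one gets $\Vert\hat{\rK}(x;\,\cdot\,)^{*[d]}\Vert_{\hat{\rho}}\le c(C)\,(R')^{-d}$ for any $R'<\rho_C^{-1}$, while $\Vert f^{[d]}\Vert_{\hat{\rho}}\le c\,r^{-d}\sup_{|y_1|_\infty\le r}|f|$ for any $r<1$. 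Since $R'$ may be taken $>1$, one may fix $r\in(1/R',1)$, so that $rR'>1$; then by term-by-term Cauchy--Schwarz, $\sum_{d\ge 0}\big|\langle f^{[d]},\hat{\rK}(x;\,\cdot\,)^{*[d]}\rangle_{\hat{\rho}}\big|\le c(C)\Big(\sum_{d\ge 0}(rR')^{-d}\Big)\sup_{|y_1|_\infty\le r}|f|<\infty$, locally uniformly in $x\in C$. Hence the series converges locally uniformly (its partial sums being holomorphic in $x$), defines a continuous map $\cO_\rho(D_1,W)\to\cO_\tau(D,V)$, and agrees with the operator of Corollary~\ref{integral_expression}(2) on $\cH_\rho(D_1,W)$. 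The main obstacle is precisely this convergence estimate: one must prove the holomorphic continuation of the kernel across $\partial D_1$ and verify that the resulting geometric decay of its homogeneous components outweighs the growth (geometric, with ratio $r^{-1}$ for any $r<1$) of the homogeneous components of an arbitrary $f\in\cO_\rho(D_1,W)$.
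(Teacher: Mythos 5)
Your proof is correct. It rests on the same two pillars as the paper's proof of~(2): the homogeneity of the Bergman operator under the central one-parameter subgroup generated by $\hbar$ (equivalently, $B(\lambda x,\lambda^{-1}y_1)=B(x,y_1)$ for $\lambda>0$), which is what allows the kernel to be continued beyond $D\times\overline{D_1}$ to $\{|x|_\infty|y_1|_\infty<1\}$; and the integrability $\int_{D_1}|\rho(B(y_1)^{-1})|_{\rho,\mathrm{op}}h_1(y_1)^{-p_1}{\rm d}y_1<\infty$ for a holomorphic discrete series, which bounds the Hilbert norm by the sup norm (your fact~(a)). The packaging differs: the paper applies the scaling once, writing $((\cF_{\tau\rho}f)(x),v)_\tau=\chi_\rho({\rm e}^{t\hbar})\chi_\tau({\rm e}^{-t\hbar})\langle f({\rm e}^{-t}y_1),\hat{\rK}({\rm e}^tx;y_1)^*v\rangle_{\hat{\rho},y_1}$ for $|x|_\infty<{\rm e}^{-t}\le 1$ and concluding with a single Cauchy--Schwarz, whereas you Fourier-decompose along the central circle into homogeneous components, push the kernel past $\partial D_1$ in the $y_1$-variable (the same analytic continuation in $t$, viewed from the other side), and sum a geometric series term by term; this costs more bookkeeping but makes explicit the geometric decay of the kernel's Taylor coefficients that Theorem~\ref{extend} later exploits. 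Two small points should be supplied: the lift of $B(x,y_1)$ to $\tilde{K}^\BC$, needed to define $\tau(B(x,y_1))$, does extend to $\{|x|_\infty|y_1|_\infty<1\}$ because that set is star-shaped about the origin, hence simply connected; and in fact~(a) the passage from convergence of the norm integral on constants to convergence of the operator-norm integral uses positivity of $\rho(B(y_1)^{-1})$ (operator norm dominated by the trace) --- a step the paper also makes silently. For~(1) the paper gives no argument (``we only prove~(2)''; the intended proof is the mirror image of~(2)), and your route is genuinely different and simpler: reduce to the base point by $\tilde{G}_1$-equivariance, where $\hat{\rK}(\cdot;0)=\rK(\Proj_2(\cdot))$ is a polynomial, so that evaluation of $\cF_{\tau\rho}^*h$ at $0$ factors through finitely many Taylor coefficients of $h$; as you note, this also yields a soft proof of the localness theorem as a byproduct.
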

\begin{proof}
We only prove (2). By the $\tilde{G}_1$-invariance of $\hat{\rK}$, for $k\in \tilde{K}_1$ we have
\begin{gather*} (\cF_{\tau\rho}f)(x)=\big\langle f(y_1),\hat{\rK}(x;y_1)^*\big\rangle_{\hat{\rho},y_1}
=\big\langle \rho(k)^{-1}f(ky_1),\hat{\rK}\big(k^{-1}x;y_1\big)^*\tau(k)^*\big\rangle_{\hat{\rho},y_1}. \end{gather*}
Let $\hbar=-\sqrt{-1}z\in\sqrt{-1}\fz(\fk)$ be the element such that $\operatorname{ad}(\hbar)|_{\fp^+}=I_{\fp^+}$.
Then for $t\in\sqrt{-1}\BR$ and $v\in V$, by setting $k={\rm e}^{-t\hbar}$ we have
\begin{gather*} ((\cF_{\tau\rho}f)(x),v)_\tau=\chi_\rho\big({\rm e}^{t\hbar}\big)\chi_\tau\big({\rm e}^{-t\hbar}\big)\big\langle f\big({\rm e}^{-t}y_1\big),
\hat{\rK}\big({\rm e}^tx;y_1\big)^*v\big\rangle_{\hat{\rho},y_1}, \end{gather*}
where we write $\rho({\rm e}^{t\hbar})=\chi_\rho({\rm e}^{t\hbar})I_W$, $\tau({\rm e}^{t\hbar})=\chi_\tau({\rm e}^{t\hbar})I_V$.
Here, though $\chi_\rho$ and $\chi_\tau$ are defined as a~function on $Z(\tilde{K})\subset \tilde{G}$,
their ratio $\chi_\rho\chi_\tau^{-1}$ is well-defined as a function on $Z(K)\subset G$.
By analytic continuation this holds for $t\in\BC$ such that $|{\rm e}^t|\ge 1$ and $|{\rm e}^t x|_\infty <1$.
Then for $|x|_\infty<{\rm e}^{-t}\le 1$ we have
\begin{align*}
|((\cF_{\tau\rho}f)(x),v)_\tau|&=\big|\chi_\rho({\rm e}^{t\hbar})\chi_\tau({\rm e}^{-t\hbar})\big\langle f\big({\rm e}^{-t}y_1\big),
\hat{\rK}\big({\rm e}^tx;y_1\big)^*v\big\rangle_{\hat{\rho},y_1}\big| \\
&\le \chi_\rho({\rm e}^{t\hbar})\chi_\tau\big({\rm e}^{-t\hbar}\big)\big\Vert f\big({\rm e}^{-t}y_1\big)\big\Vert_{\hat{\rho},y_1}
\big\Vert\hat{\rK}\big({\rm e}^tx;y_1\big)^*v\big\Vert_{\hat{\rho},y_1}.
\end{align*}
Now since $\cH_\rho(D_1,W)$ is a holomorphic discrete series representation, we have
\begin{align*}
\big\Vert f\big({\rm e}^{-t}y_1\big)\big\Vert_{\hat{\rho},y_1}^2
&=C_\rho\int_{D_1}\big(\rho(B(y_1)^{-1})f\big({\rm e}^{-t}y_1\big),f\big({\rm e}^{-t}y_1\big)\big)_\rho h_1(y_1)^{-p_1}{\rm d}y_1 \\
&\le C_\rho\int_{D_1}\big|\rho\big(B(y_1)^{-1}\big)\big|_{\rho,\mathrm{op}}h_1(y_1)^{-p_1}{\rm d}y_1 \sup_{y_1\in D_1}\big|f\big({\rm e}^{-t}y_1\big)\big|_\rho^2 \\
&=C_\rho^{\prime 2}\sup_{|y_1|_\infty\le {\rm e}^{-t}} |f(y_1)|_\rho^2.
\end{align*}
Therefore for $|x|_\infty<{\rm e}^{-t}$ we have
\begin{gather*}
 |((\cF_{\tau\rho}f)(x),v)_\tau|
\le C_\rho'\chi_\rho\big({\rm e}^{t\hbar}\big)\chi_\tau\big({\rm e}^{-t\hbar}\big)\sup_{|y_1|_\infty\le {\rm e}^{-t}}|f(y_1)|_\rho
\big(\big\langle\hat{\rK}\big({\rm e}^tx;\cdot\big)^*,\hat{\rK}\big({\rm e}^tx;\cdot\big)^*\big\rangle_{\hat{\rho}}v,v\big)_\tau^{1/2},
\end{gather*}
and since $\big\langle\hat{\rK}(x;\cdot)^*,\hat{\rK}(x;\cdot)^*\big\rangle_{\hat{\rho}}\in C^\omega(D,\End(V))$,
$\cF_{\tau\rho}$ extends continuously to the map from $\cO_\rho(D_1,W)$ to $\cO_\tau(D,V)$.
\end{proof}

\subsection{Differential expression}
Next we will establish differential expressions for the intertwining operators.
To do this for $\cF_{\tau\rho}\colon \cH_\rho(D_1,W)\to \cH_\tau(D,V)$, we assume that $(G,G_1)$ is a symmetric pair, that is,
there exists an involution $\sigma$ of $G$ (without loss of generality we assume $\sigma\vartheta=\vartheta\sigma$) such that $G_1=(G^\sigma)_0$.
A~symmetric pair which satisfies the assumption (\ref{assumption}) is called a symmetric pair of \textit{holomorphic type}.
We extend $\sigma$ on $\fg^\BC$ holomorphically. Then this defines the involution on $\fp^+$ and also on~$D$.

Since the reproducing kernel of $\cP(\fp^+,V)$ with respect to the Fischer norm
is given by ${\rm e}^{(x|z)_{\fp^+}}I_V$, that is,
\begin{gather*} f(x)=\frac{1}{\pi^n}\int_{\fp^+}{\rm e}^{(x|z)_{\fp^+}}f(z){\rm e}^{-|z|_{\fp^+}^2}{\rm d}z \end{gather*}
holds for $f\in \cH_\tau(D,V)_{\tilde{K}}=\cP(\fp^+,V)$, we have
\begin{align*}
(\cF_{\tau\rho}^*f)(y_1)&=\left\langle \frac{1}{\pi^n}\int_{\fp^+}{\rm e}^{(\cdot|z)_{\fp^+}}f(z){\rm e}^{-|z|_{\fp^+}^2}{\rm d}z,
\hat{\rK}(\cdot;y_1)\right\rangle_{\hat{\tau}} \\
&=\frac{1}{\pi^n}\int_{\fp^+}\big\langle {\rm e}^{(\cdot|z)_{\fp^+}}I_V,\hat{\rK}(\cdot;y_1)\big\rangle_{\hat{\tau}}f(z){\rm e}^{-|z|_{\fp^+}^2}{\rm d}z \\
&=\frac{1}{\pi^n}\int_{\fp^+}\cF_{\tau\rho}^*\big({\rm e}^{(\cdot|z)_{\fp^+}}I_V\big)(y_1)f(z){\rm e}^{-|z|_{\fp^+}^2}{\rm d}z,
\end{align*}
and similarly, for $f\in \cH_\rho(D_1,W)_{\tilde{K}_1}=\cP(\fp^+_1,W)$ we have
\begin{align*}
(\cF_{\tau\rho}f)(x)&=\left\langle \frac{1}{\pi^{n_1}}\int_{\fp^+_1}{\rm e}^{(\cdot|w_1)_{\fp^+_1}}f(w_1){\rm e}^{-|w_1|_{\fp^+_1}^2}{\rm d}w_1,
\hat{\rK}(x;\cdot)^*\right\rangle_{\hat{\rho}} \\
&=\frac{1}{\pi^{n_1}}\int_{\fp^+_1}\big\langle {\rm e}^{(\cdot|w_1)_{\fp^+_1}}I_W,\hat{\rK}(x;\cdot)^*\big\rangle_{\hat{\rho}}
f(w_1){\rm e}^{-|w_1|_{\fp^+_1}^2}{\rm d}w_1 \\
&=\frac{1}{\pi^{n_1}}\int_{\fp^+_1}\cF_{\tau\rho}\big({\rm e}^{(\cdot|w_1)_{\fp^+_1}}I_W\big)(x)f(w_1){\rm e}^{-|w_1|_{\fp^+_1}^2}{\rm d}w_1.
\end{align*}
Now we have
\begin{Lemma}\quad
\begin{enumerate}\itemsep=0pt
\item[$(1)$] $\ds \cF_{\tau\rho}^*\big({\rm e}^{(\cdot|z)_{\fp^+}}I_V\big)(y_1)=\cF_{\tau\rho}^*\big({\rm e}^{(\cdot|z)_{\fp^+}}I_V\big)(0){\rm e}^{(y_1|z)_{\fp^+}}$.
\item[$(2)$] If $(G,G_1)$ is symmetric, then
\begin{gather*} \cF_{\tau\rho}\big({\rm e}^{(\cdot|w_1)_{\fp^+_1}}I_W\big)(x)=\cF_{\tau\rho}\big({\rm e}^{(\cdot|w_1)_{\fp^+_1}}I_W\big)(0,x_2){\rm e}^{(x_1|w_1)_{\fp^+_1}}. \end{gather*}
\end{enumerate}
\end{Lemma}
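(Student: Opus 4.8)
The plan is to reduce both identities to the observation that the intertwining operators commute with the action of $\fp^+_1$, and then to integrate the resulting first-order linear system. For $w_1\in\fp^+_1\subset\fp^+$ write $\partial_{w_1}$ for the constant-coefficient holomorphic derivative $\partial_{w_1}f(x):=\frac{d}{dt}\big|_{t=0}f(x+tw_1)$. From $\kappa(\exp(y),x)=\bone_{K^\BC}$ and $\pi^+(\exp(y),x)=x+y$ for $y\in\fp^+$ in (\ref{cocycle}) one reads off that $w_1\in\fp^+_1\subset\fg_1^\BC$ acts on $\cP(\fp^+,V)=\cH_\tau(D,V)_{\tilde{K}}$ by $-\partial_{w_1}$, and on $\cP(\fp^+_1,W)=\cH_\rho(D_1,W)_{\tilde{K}_1}$ by $-\partial_{w_1}$ as well, so the scalar is the same on both sides. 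Recall also that $(\,\cdot\,|\,\cdot\,)_{\fp^+}$ and $(\,\cdot\,|\,\cdot\,)_{\fp^+_1}$ are holomorphic, hence $\BC$-linear, in the first slot.

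For part (1): since $\cF_{\tau\rho}^*$ is $\tilde{G}_1$-equivariant, its restriction to $\tilde{K}$-finite vectors is a $(\fg_1,\tilde{K}_1)$-homomorphism $\cP(\fp^+,V)\to\cP(\fp^+_1,W)$, so $\cF_{\tau\rho}^*\circ\partial_{w_1}=\partial_{w_1}\circ\cF_{\tau\rho}^*$ on polynomials for every $w_1\in\fp^+_1$. By Theorem~\ref{conti_ext}(1), $\cF_{\tau\rho}^*$ extends to a continuous map $\cO_\tau(D,V)\to\cO_\rho(D_1,W)$; since $\partial_{w_1}$ is continuous for the compact-open topology and polynomials are dense in $\cO_\tau(D,V)$ ($D$ being circled and convex, so homogeneous Taylor expansions converge uniformly on compacta), the commutation extends to all of $\cO_\tau(D,V)$. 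Applying it to $f_z:={\rm e}^{(\,\cdot\,|z)_{\fp^+}}I_V\in\cO_\tau(D,V)$, which satisfies $\partial_{w_1}f_z=(w_1|z)_{\fp^+}f_z$, gives $\partial_{w_1}\big(\cF_{\tau\rho}^*f_z\big)=(w_1|z)_{\fp^+}\cF_{\tau\rho}^*f_z$ for all $w_1\in\fp^+_1$. Hence ${\rm e}^{-(\,\cdot\,|z)_{\fp^+}}\cF_{\tau\rho}^*f_z$ has vanishing derivative in every $\fp^+_1$-direction, and since $D_1$ is connected it equals the constant $\big(\cF_{\tau\rho}^*f_z\big)(0)$; this is part~(1).

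For part (2): the same argument applies to $\cF_{\tau\rho}\colon\cP(\fp^+_1,W)\to\cP(\fp^+,V)$, where now $\fp^+_1\subset\fp^+$ acts on $\cH_\tau(D,V)$ by differentiation in the $x_1$-variable only. Using Theorem~\ref{conti_ext}(2) and density of polynomials in $\cO_\rho(D_1,W)$, for $g_{w_1}:={\rm e}^{(\,\cdot\,|w_1)_{\fp^+_1}}I_W$ and $v_1\in\fp^+_1$ we obtain $\partial_{v_1}\big(\cF_{\tau\rho}g_{w_1}\big)=\cF_{\tau\rho}\big(\partial_{v_1}g_{w_1}\big)=(v_1|w_1)_{\fp^+_1}\cF_{\tau\rho}g_{w_1}$, the derivative taken in the $x_1$-variable. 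Thus $\tilde{F}(x_1,x_2):={\rm e}^{-(x_1|w_1)_{\fp^+_1}}\big(\cF_{\tau\rho}g_{w_1}\big)(x_1,x_2)$ has vanishing derivative in every $\fp^+_1$-direction, hence is constant on each fiber $D\cap(x_2+\fp^+_1)$. At this point the symmetric-pair hypothesis is used: if $\sigma$ is the involution with $G_1=(G^\sigma)_0$, then on $\fp^+$ its $(+1)$- and $(-1)$-eigenspaces are $\fp^+_1$ and $\fp^+_2$, so $\sigma$ preserves $D$; together with circledness and convexity of $D$, $(x_1,x_2)\in D$ forces $(-x_1,-x_2)\in D$, then $(-x_1,x_2)=\sigma(-x_1,-x_2)\in D$, then $(0,x_2)=\tfrac12\big((x_1,x_2)+(-x_1,x_2)\big)\in D$. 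The fiber $D\cap(x_2+\fp^+_1)$ is convex, hence connected, and contains both $(0,x_2)$ and $(x_1,x_2)$, so $\tilde{F}(x_1,x_2)=\tilde{F}(0,x_2)=\big(\cF_{\tau\rho}g_{w_1}\big)(0,x_2)$, which is part~(2).

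The routine ingredients are the commutation with $\partial_{w_1}$ on $\tilde{K}$-finite vectors and its propagation to all holomorphic functions by continuity and density. The point that needs care — and the only place the symmetric-pair assumption is genuinely used — is the geometry in part (2): that $(0,x_2)\in D$ whenever $(x_1,x_2)\in D$ and that the fiber $D\cap(x_2+\fp^+_1)$ is connected, which is exactly what makes the right-hand side of (2) meaningful; in part (1) the analogous step is trivial since $D_1$ is connected. One should also keep track of the normalisation noted above, that $w_1\in\fp^+_1$ acts by the same constant multiple of $\partial_{w_1}$ on $\cH_\tau(D,V)$ and on $\cH_\rho(D_1,W)$, which is immediate from (\ref{cocycle}).
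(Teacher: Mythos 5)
Your proof is correct and follows essentially the same route as the paper: both reduce the identities to the fact that the intertwining operators commute with the constant-coefficient first-order action of $\fp^+_1$, and both use the symmetric-pair hypothesis together with convexity of $D$ to see that $(0,x_2)\in D$ whenever $(x_1,x_2)\in D$, so that the resulting first-order equation can be integrated from the base point. The only cosmetic difference is that you integrate a vanishing gradient over the convex fiber $D\cap\big((0,x_2)+\fp^+_1\big)$ (and justify the commutation on the exponential via density of polynomials), whereas the paper solves the ODE along the segment $t\mapsto(tx_1,x_2)$ by uniqueness of solutions with a given initial condition.
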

\begin{proof}
(1) Since $\cF_{\tau\rho}^*$ intertwines the $\tilde{G}_1$-action, it also intertwines the $\fg^\BC_1$-action.
Especially, since $\fp^+_1\subset \fg^\BC_1$ acts as a 1st-order differential operator with constant coefficients, we have
\begin{align*}
\frac{{\rm d}}{{\rm d}t}\cF_{\tau\rho}^*\big({\rm e}^{(\cdot|z)_{\fp^+}}I_V\big)(ty_1)
&=\left.\frac{{\rm d}}{{\rm d}s}\right|_{s=0}\cF_{\tau\rho}^*\big({\rm e}^{(\cdot|z)_{\fp^+}}I_V\big)(ty_1+sy_1) \\
&=\left.\frac{{\rm d}}{{\rm d}s}\right|_{s=0}\cF_{\tau\rho}^*\big({\rm e}^{(\cdot+sy_1|z)_{\fp^+}}I_V\big)(ty_1)\\
& =\left.\frac{{\rm d}}{{\rm d}s}\right|_{s=0}\cF_{\tau\rho}^*\big({\rm e}^{(\cdot|z)_{\fp^+}}I_V\big)(ty_1){\rm e}^{s(y_1|z)_{\fp^+}} \\
&=\cF_{\tau\rho}^*\big({\rm e}^{(\cdot|z)_{\fp^+}}I_V\big)(ty_1)(y_1|z)_{\fp^+}.
\end{align*}
Therefore, as functions of $t$, both
\begin{gather*} \cF_{\tau\rho}^*\big({\rm e}^{(\cdot|z)_{\fp^+}}I_V\big)(ty_1)\qquad \text{and} \qquad
\cF_{\tau\rho}^*\big({\rm e}^{(\cdot|z)_{\fp^+}}I_V\big)(0){\rm e}^{t(y_1|z)_{\fp^+}} \end{gather*}
satisfy the same differential equation with the same initial condition, and thus they coincide.
By substituting $t=1$, we get the desired formula.

(2) First we note that under the assumption that $(G,G_1)$ is symmetric,
if $x=(x_1,x_2)\in D$, where $x_1\in\fp^+_1$, $x_2\in\fp^+_2$, then $(tx_1,x_2)\in D$ holds for any $t\in[-1,1]$,
because $-\sigma(x_1,x_2)=(-x_1,x_2)\in D$ and $D$ is convex.
Therefore $\cF_{\tau\rho}\big({\rm e}^{(\cdot|w_1)_{\fp^+_1}}I_W\big)(tx_1,x_2)$ is well-defined for any $t\in[-1,1]$ if $(G,G_1)$ is symmetric.
Then as for (1), we can show that $\cF_{\tau\rho}\big({\rm e}^{(\cdot|w_1)_{\fp^+_1}}I_W\big)(tx_1,x_2)$ and
$\cF_{\tau\rho}\big({\rm e}^{(\cdot|w_1)_{\fp^+_1}}I_W\big)(0,x_2){\rm e}^{t(x_1|w_1)_{\fp^+_1}}$ satisfy the same differential equation
with the same initial condition, and thus they coincide.
\end{proof}

Thus we define $F_{\tau\rho}^*(z)\in\cP(\overline{\fp^+},\Hom(V,W))$ and
$F_{\tau\rho}(x_2;w_1)\in\cO((D\cap\fp^+_2)\times\overline{\fp^+_1},\allowbreak\Hom(W,V))$ by
\begin{gather*}
F_{\tau\rho}^*(z)=F_{\tau\rho}^*(z_1,z_2):=\cF_{\tau\rho}^*\big({\rm e}^{(\cdot|z)_{\fp^+}}I_V\big)(0)
=\big\langle {\rm e}^{(\cdot|z)_{\fp^+}}I_V,\hat{\rK}(\cdot;0)\big\rangle_{\hat{\tau}} \\
\hphantom{F_{\tau\rho}^*(z)}{} =\big\langle {\rm e}^{(\cdot|z)_{\fp^+}}I_V,\rK(\Proj_2(\cdot))\big\rangle_{\hat{\tau}}
=C_\tau\int_D \rK(x_2)^*\tau\big(B(x)^{-1}\big){\rm e}^{(x|z)_{\fp^+}}h(x)^{-p}{\rm d}x,\\
F_{\tau\rho}(x_2;w_1)=\cF_{\tau\rho}\big({\rm e}^{(\cdot|w_1)_{\fp^+_1}}I_W\big)(0,x_2)
=\big\langle {\rm e}^{(y_1|w_1)_{\fp^+_1}}I_W,\hat{\rK}(0,x_2;y_1)^*\big\rangle_{\hat{\rho},y_1}\\
\hphantom{F_{\tau\rho}(x_2;w_1)}{}=\big\langle {\rm e}^{(y_1|w_1)_{\fp^+_1}}I_W,\bigl(\tau(B(x_2,y_1))\rK(\Proj_2((x_2)^{y_1}))\bigr)^*\big\rangle_{\hat{\rho},y_1}\\
\hphantom{F_{\tau\rho}(x_2;w_1)}{}=C_\rho\int_{D_1} \tau(B(x_2,y_1))\rK(\Proj_2((x_2)^{y_1}))\rho\big(B(y_1)^{-1}\big){\rm e}^{(y_1|w_1)_{\fp^+_1}}h_1(y_1)^{-p}{\rm d}y_1
\end{gather*}
(where the last equalities hold only if $\cH_\tau(D,V)$ resp.~$\cH_\rho(D_1,W)$ is a holomorphic discrete series representation).
Then we have
\begin{align*}
(\cF_{\tau\rho}^*f)(y_1)&=\frac{1}{\pi^n}\int_{\fp^+}F_{\tau\rho}^*(z_1,z_2){\rm e}^{(y_1|z)_{\fp^+}}f(z){\rm e}^{-|z|_{\fp^+}^2}{\rm d}z\\
&=\frac{1}{\pi^n}\left.\int_{\fp^+}F_{\tau\rho}^*(z_1,z_2){\rm e}^{(x|z)_{\fp^+}}f(z){\rm e}^{-|z|_{\fp^+}^2}{\rm d}z\right|_{x_1=y_1,x_2=0}\\
&=F_{\tau\rho}^*\left(\left.\overline{\frac{\partial}{\partial x_1}}\right|_{x_1=y_1},
\left.\overline{\frac{\partial}{\partial x_2}}\right|_{x_2=0}\right)
\frac{1}{\pi^n}\int_{\fp^+}{\rm e}^{(x|z)_{\fp^+}}f(z){\rm e}^{-|z|_{\fp^+}^2}{\rm d}z\\
&=F_{\tau\rho}^*\left.\left(\overline{\frac{\partial}{\partial x_1}},\overline{\frac{\partial}{\partial x_2}}\right)\right|_{x_1=y_1,x_2=0}f(x),
\end{align*}
and similarly we have
\begin{gather*}
(\cF_{\tau\rho}f)(x) =\frac{1}{\pi^{n_1}}\int_{\fp^+_1}F_{\tau\rho}(x_2;w_1){\rm e}^{(x_1|w_1)_{\fp^+_1}}f(w_1){\rm e}^{-|w_1|_{\fp^+_1}^2}{\rm d}w_1
=F_{\tau\rho}\left.\left(x_2;\overline{\frac{\partial}{\partial y_1}}\right)\right|_{y_1=x_1}f(y_1).
\end{gather*}
Here, for anti-holomorphic polynomial $f\in\cP(\overline{\fp^+})$, let $f\big(\overline{\frac{\partial}{\partial x}}\big)$ be the
holomorphic differential operator characterized by
\begin{gather*} f\left(\overline{\frac{\partial}{\partial x}}\right){\rm e}^{(x|y)_{\fp^+}}=f(y), \end{gather*}
and similarly for anti-holomorphic function $f\in\cO(\overline{\fp^+_1})$, let $f\big(\overline{\frac{\partial}{\partial x_1}}\big)$ be the
operator characterized by
\begin{gather*} f\left(\overline{\frac{\partial}{\partial x_1}}\right){\rm e}^{(x_1|y_1)_{\fp^+_1}}=f(y_1). \end{gather*}

Next we describe $\Proj_2((x_2)^{y_1})$, and $\tau(B(x_2,y_1))$ when $\tau=\chi^{-\lambda}$ is one-dimensional, where $\chi$ is as (\ref{char}),
namely $\tau(B(x_2,y_1))=\chi^{-\lambda}(B(x_2,y_1))=h(x_2,y_1)^{-\lambda}$.
\begin{Proposition}\label{projprop}
Assume $(G,G_1)$ is a symmetric pair.
\begin{enumerate}\itemsep=0pt
\item[$(1)$] $\Proj_2((x_2)^{y_1})=(x_2)^{Q(y_1)x_2}$.
\item[$(2)$] Assume $G$ is simple. Then $h(x_2,y_1)^2=h(Q(x_2)y_1,y_1)=h(x_2,Q(y_1)x_2)$.
\end{enumerate}
\end{Proposition}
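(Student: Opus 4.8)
\emph{Proof strategy.}
The plan is to transfer both statements into identities in the Hermitian Jordan triple system $\fp^+$ and to use that the involution $\sigma$ defining $G_1$ acts on $\fp^+$ as an automorphism of the Jordan triple structure which is orthogonal for $(\cdot|\cdot)_{\fp^+}$. Consequently $\fp^+_1$ and $\fp^+_2$ are the $(+1)$- and $(-1)$-eigenspaces of $\sigma$, the orthogonal projection is $\Proj_2=\tfrac12(\mathrm{id}_{\fp^+}-\sigma)$, and $D(x,y)$, $Q(x)$, $B(x,y)$ and the generic norm $h$ are all $\sigma$-equivariant. In particular, for $x_2\in\fp^+_2$ and $y_1\in\fp^+_1$ one gets $Q(x_2)y_1\in\fp^+_1$ and $Q(y_1)x_2\in\fp^+_2$ (using that $Q(x_2)$ depends holomorphically and quadratically on $x_2$, while $Q(y_1)$ is conjugate-linear on $\fp^+$), and $\sigma\big((x_2)^{y_1}\big)=(-x_2)^{y_1}$ by $\sigma$-equivariance of the quasi-inverse. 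Throughout we work where the relevant Bergman operators are invertible and extend the resulting rational identities; the hypothesis that $(G,G_1)$ is a symmetric pair enters exactly through the existence of $\sigma$ with these properties, and the simplicity of $G$ in (2) only ensures that $\Det_{\fp^+}B(\cdot,\cdot)=h(\cdot,\cdot)^p$ with a single integer $p$ and a single polynomial $h$.

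For part (2) the heart of the matter is the operator identity
\[
  B(x_2,y_1)\,B(-x_2,y_1)=B(Q(x_2)y_1,y_1)=B(x_2,Q(y_1)x_2),
\]
each of the three operators being equal to $(\mathrm{id}+Q(x_2)Q(y_1))^2-D(x_2,y_1)^2$. The last two equalities are valid for arbitrary elements and follow from the fundamental formula $Q(Q(a)b)=Q(a)Q(b)Q(a)$ together with $D(Q(a)b,b)=D(a,Q(b)a)=D(a,b)^2-2Q(a)Q(b)$. The first equality uses in addition that $B(x_2,y_1)=\mathrm{id}-D(x_2,y_1)+Q(x_2)Q(y_1)$ and $B(-x_2,y_1)=\mathrm{id}+D(x_2,y_1)+Q(x_2)Q(y_1)$ commute, which holds because $[D(a,b),Q(a)Q(b)]=0$ (apply $D(a,b)Q(a)=Q(a)D(b,a)$ twice), so that the term $D(x_2,y_1)$ drops out of the product. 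Taking $\Det_{\fp^+}$ and using $h(-x_2,y_1)=h(\sigma x_2,\sigma y_1)=h(x_2,y_1)$ gives $h(x_2,y_1)^{2p}=h(Q(x_2)y_1,y_1)^p=h(x_2,Q(y_1)x_2)^p$; since these are polynomials on an irreducible variety whose $p$th powers coincide, their ratios are constant roots of unity, and evaluating at $y_1=0$, where all terms equal $1$, forces the ratios to be $1$.

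For part (1), write $(x_2)^{y_1}=B(x_2,y_1)^{-1}\big(x_2-Q(x_2)y_1\big)$ and $(-x_2)^{y_1}=B(-x_2,y_1)^{-1}\big(-x_2-Q(x_2)y_1\big)$. Since the two Bergman operators commute and $B(x_2,y_1)+B(-x_2,y_1)=2(\mathrm{id}+Q(x_2)Q(y_1))$ while $B(x_2,y_1)-B(-x_2,y_1)=-2D(x_2,y_1)$, combining the two fractions yields
\[
  \Proj_2\big((x_2)^{y_1}\big)=\tfrac12\big((x_2)^{y_1}-(-x_2)^{y_1}\big)
  =\big(B(x_2,y_1)B(-x_2,y_1)\big)^{-1}\Big((\mathrm{id}+Q(x_2)Q(y_1))x_2-D(x_2,y_1)Q(x_2)y_1\Big).
\]
By the operator identity of part (2) the outer operator equals $B(x_2,Q(y_1)x_2)$, and the numerator simplifies, using $D(x_2,y_1)Q(x_2)=Q(x_2)D(y_1,x_2)$ and $D(y_1,x_2)y_1=2Q(y_1)x_2$, to $x_2+Q(x_2)Q(y_1)x_2-2Q(x_2)Q(y_1)x_2=x_2-Q(x_2)(Q(y_1)x_2)$. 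Hence $\Proj_2\big((x_2)^{y_1}\big)=B(x_2,Q(y_1)x_2)^{-1}\big(x_2-Q(x_2)(Q(y_1)x_2)\big)=(x_2)^{Q(y_1)x_2}$, the quasi-inverse on the right being computed inside $\fp^+_2$ (consistently, since $\fp^+_2$ is a subsystem and $Q(y_1)x_2\in\fp^+_2$).

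The routine ingredients are the $\sigma$-equivariance of the Jordan operations and the bookkeeping of the standard Jordan triple identities collected in \cite[Part~V]{FKKLR}. The one step that I expect to need genuine care is establishing the chain $B(x_2,y_1)B(-x_2,y_1)=B(Q(x_2)y_1,y_1)=B(x_2,Q(y_1)x_2)$ — recognizing both that the factor $D(x_2,y_1)$ disappears from the product precisely because it commutes with $Q(x_2)Q(y_1)$, and that what survives is exactly the Bergman operator attached to the quasi-inverse data — since this single operator identity drives both parts of the proposition.
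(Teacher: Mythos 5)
Your proof is correct, and for part (2) it follows essentially the paper's own route: both arguments rest on the operator identity $B(x_2,y_1)B(-x_2,y_1)=B(Q(x_2)y_1,y_1)=B(x_2,Q(y_1)x_2)$ (the paper's Lemma~\ref{projlemma}(1)), combined with $B(-x_2,y_1)=\sigma B(x_2,y_1)\sigma$ and extraction of the $p$-th root of the determinant identity. Two points differ. First, you prove that key identity from scratch by exhibiting all three operators as $(I+Q(x_2)Q(y_1))^2-D(x_2,y_1)^2$, using $D(x,y)Q(x)=Q(x)D(y,x)$, the fundamental formula $Q(Q(x)y)=Q(x)Q(y)Q(x)$, and $D(Q(x)y,y)=D(x,y)^2-2Q(x)Q(y)$; the paper instead cites (J4.2), (J4.2$'$) of [FKKLR]. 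Your symmetric form has the added benefit of making the commutativity of $B(x_2,y_1)$ and $B(-x_2,y_1)$ manifest. Second, and more substantively, for part (1) the paper goes through the general additive decomposition $x^y=(Q(x)y)^y+x^{Q(y)x}$ (Lemma~\ref{projlemma}(2), established by a separate computation) and then observes that the first summand lies in $\fp^+_1$; you instead write $\Proj_2=\tfrac12(\mathrm{id}-\sigma)$, use $\sigma\big((x_2)^{y_1}\big)=(-x_2)^{y_1}$, and antisymmetrize the formula $x^y=B(x,y)^{-1}(x-Q(x)y)$ directly, so that the $D(x_2,y_1)$-terms cancel and one lands on $B(x_2,Q(y_1)x_2)^{-1}\big(x_2-Q(x_2)Q(y_1)x_2\big)$ in one stroke. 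Your route avoids the general quasi-inverse decomposition (which is only needed here in its $\fp^+_2$-component anyway) at the price of invoking $[D(x_2,y_1),Q(x_2)Q(y_1)]=0$; both arguments are legitimate, and yours is somewhat more self-contained since it does not defer the Jordan-triple identities to the literature. The auxiliary facts you use — that $\sigma$ is an isometric automorphism of the Jordan triple system, that $Q(x_2)y_1\in\fp^+_1$ and $Q(y_1)x_2\in\fp^+_2$, and the root-of-unity argument resolving $h^{2p}=h(Q(x_2)y_1,y_1)^p$ into $h^2=h(Q(x_2)y_1,y_1)$ — are all correctly handled.
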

\begin{Lemma}\label{projlemma}
Let $x,y\in\fp^+$.
\begin{enumerate}\itemsep=0pt
\item[$(1)$] $B(-x,y)B(x,y)=B(Q(x)y,y)=B(x,Q(y)x)$.
\item[$(2)$] $x^y=(Q(x)y)^y+x^{Q(y)x}$.
\end{enumerate}
\end{Lemma}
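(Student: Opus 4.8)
\emph{Proof sketch.} Both parts are pure Jordan triple identities in $\fp^+$ — no symmetric pair structure enters — so the plan is to reduce them to the standard identities for $D$, $Q$ and the Bergman operator recorded in \cite[Part~V]{FKKLR}. For part~(1) I would expand $B(\pm x,y)=I\mp D(x,y)+Q(x)Q(y)$ and first note that $D(x,y)$ commutes with $Q(x)Q(y)$: applying the identity $D(x,y)Q(x)=Q(x)D(y,x)$ twice gives $D(x,y)Q(x)Q(y)=Q(x)D(y,x)Q(y)=Q(x)Q(y)D(x,y)$. Hence the cross terms cancel and $B(-x,y)B(x,y)=(I+Q(x)Q(y))^2-D(x,y)^2$. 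Now the linearized fundamental formula $D(x,y)^2=D(Q(x)y,y)+2Q(x)Q(y)$ together with the fundamental formula $Q(Q(x)y)=Q(x)Q(y)Q(x)$ turn the right-hand side into $I-D(Q(x)y,y)+Q(Q(x)y)Q(y)=B(Q(x)y,y)$. The second equality $B(Q(x)y,y)=B(x,Q(y)x)$ is then checked term by term: $D(Q(x)y,y)=D(x,Q(y)x)$ for the linear term, and $Q(x)Q(Q(y)x)=Q(x)Q(y)Q(x)Q(y)=Q(Q(x)y)Q(y)$ for the quadratic term.

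For part~(2) the plan is to evaluate the right-hand side using $u^v=B(u,v)^{-1}(u-Q(u)v)$ and then appeal to part~(1). Part~(1) gives $B(x,Q(y)x)=B(Q(x)y,y)=B(-x,y)B(x,y)$, so all quasi-inverses in sight are defined as soon as $(x,y)$ and $(-x,y)$ are quasi-invertible (e.g.\ for $x$ small), and the general identity then follows by rationality of both sides. Setting $P:=B(Q(x)y,y)^{-1}=B(x,y)^{-1}B(-x,y)^{-1}$ and using $Q(Q(x)y)y=Q(x)Q(y)Q(x)y$, one computes
\[ (Q(x)y)^y+x^{Q(y)x}=P\big[\big(Q(x)y-Q(Q(x)y)y\big)+\big(x-Q(x)Q(y)x\big)\big]=P\,(I-Q(x)Q(y))(x+Q(x)y). \]
Since $x^y=B(x,y)^{-1}(x-Q(x)y)$, the claim $(Q(x)y)^y+x^{Q(y)x}=x^y$ is equivalent, after multiplying through by $B(x,y)$, to $(I-Q(x)Q(y))(x+Q(x)y)=B(-x,y)(x-Q(x)y)$. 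Expanding the right-hand side as $(I+D(x,y)+Q(x)Q(y))(x-Q(x)y)$ and using $D(x,y)x=2Q(x)y$ and $D(x,y)Q(x)y=Q(x)D(y,x)y=2Q(x)Q(y)x$, both sides collapse to $x+Q(x)y-Q(x)Q(y)x-Q(x)Q(y)Q(x)y$, which finishes it.

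There is no serious obstacle here — the whole argument is bookkeeping with Jordan triple identities — and the one thing that needs care is the normalization constant: in the conventions of the paper ($-[[e_{j,(i)},\vartheta e_{j,(i)}],e_{j,(i)}]=2e_{j,(i)}$) one has $D(u,v)u=2Q(u)v$ and correspondingly $D(x,y)^2=D(Q(x)y,y)+2Q(x)Q(y)$, and a dropped factor of $2$ there would destroy the cancellations. If one wanted to avoid citing the linearized fundamental formula, part~(1) could instead be proved by writing $B$, $D$, $Q$ out in terms of iterated brackets in $\fg^\BC$ and applying the Jacobi identity together with $[\fp^+,\fp^+]=\{0\}$, but quoting \cite[Part~V]{FKKLR} is cleaner.
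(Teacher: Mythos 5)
Your proof is correct, and part (2) is essentially the paper's own argument: reduce to $B(-x,y)(x-Q(x)y)=x+Q(x)y-Q(x)Q(y)x-Q(x)Q(y)Q(x)y$ via part (1) and the fundamental formula, then expand using $D(x,y)Q(x)=Q(x)D(y,x)$ and $D(x,y)x=2Q(x)y$; the bookkeeping matches line for line. For part (1) the paper simply cites \cite[Part~V, Proposition~I.5.1, (J4.2), (J4.2$'$)]{FKKLR}, whereas you unwind the identity by hand. Your expansion is valid, but note that the relation $D(x,y)^2=D(Q(x)y,y)+2Q(x)Q(y)$ you invoke is, modulo the commutation $[D(x,y),Q(x)Q(y)]=0$, exactly equivalent to the identity (J4.2) being proved, so it should be quoted from the standard JTS identity list (it is one) rather than presented as an independent input; otherwise the argument risks circularity. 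Your remark about establishing (2) first on the quasi-invertible locus and extending by rationality is a point the paper leaves implicit, and your caution about the normalization $D(u,v)u=2Q(u)v$ is exactly the right thing to watch.
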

\begin{proof}
(1) Use \cite[Part V, Proposition I.5.1, (J4.2), (J4.2$'$)]{FKKLR} and $B(x,-y)=B(-x,y)$, $Q(-x)=Q(x)$.

(2) Both sides are computed as
\begin{align*}
({\rm l.h.s.})&=B(x,y)^{-1}(x-Q(x)y),\\
({\rm r.h.s.})&=B(Q(x)y,y)^{-1}(Q(x)y-Q(Q(x)y)y)+B(x,Q(y)x)^{-1}(x-Q(x)Q(y)x) \\
&=B(x,y)^{-1}B(-x,y)^{-1}(x+Q(x)y-Q(x)Q(y)x-Q(x)Q(y)Q(x)y),
\end{align*}
where we used (1) and \cite[Part V, Proposition I.4.1]{FKKLR}. Thus it suffices to show
\begin{gather*} B(-x,y)(x-Q(x)y)=x+Q(x)y-Q(x)Q(y)x-Q(x)Q(y)Q(x)y. \end{gather*}
In fact, we have
\begin{gather*}
B(-x,y)(x-Q(x)y) =(I+D(x,y)+Q(x)Q(y))(x-Q(x)y) \\
\qquad{}=x+D(x,y)x+Q(x)Q(y)x-Q(x)y-D(x,y)Q(x)y-Q(x)Q(y)Q(x)y \\
\qquad{}=x+D(x,y)x+Q(x)Q(y)x-Q(x)y-Q(x)D(y,x)y-Q(x)Q(y)Q(x)y \\
\qquad{}=x+2Q(x)y+Q(x)Q(y)x-Q(x)y-2Q(x)Q(y)x-Q(x)Q(y)Q(x)y \\
\qquad{}=x+Q(x)y-Q(x)Q(y)x-Q(x)Q(y)Q(x)y,
\end{gather*}
where we used \cite[Part V, Proposition I.2.1(J1)]{FKKLR} at the 3rd equality, and $D(x,y)x=2Q(x)y$ at the 4th equality.
Thus the lemma follows.
\end{proof}

\begin{proof}[Proof of Proposition \ref{projprop}]
(1) When $x_2\in\fp^+_2$, $y_1\in\fp^+_1$, we have $(Q(x_2)y_1)^{y_1}\in\fp^+_1$, $(x_2)^{Q(y_1)x_2}\allowbreak\in\fp^+_2$. Therefore
\begin{gather*} \Proj_2((x_2)^{y_1})=\Proj_2\big((Q(x_2)y_1)^{y_1}+(x_2)^{Q(y_1)x_2}\big)=(x_2)^{Q(y_1)x_2}. \end{gather*}

(2) We extend $\sigma$ on $\fg^\BC$ holomorphically. Then since $\sigma$ acts by $+1$ on $\fp^+_1$ and $-1$ on $\fp^+_2$,
$B(-x_2,y_1)=\sigma B(x_2,y_1)\sigma$ holds. Therefore by Lemma \ref{projlemma},
\begin{gather*}
\sigma B(x_2,y_1)\sigma B(x_2,y_1)=B(Q(x_2)y_1,y_1)=B(x_2,Q(y_1)x_2), \\
\therefore \Det(B(x_2,y_1))^2=\Det(B(Q(x_2)y_1,y_1))=\Det(B(x_2,Q(y_1)x_2)).
\end{gather*}
Since $\Det(B(x_2,y_1))=h(x_2,y_1)^p$, the proposition follows.
\end{proof}

Therefore when $(G,G_1)$ is symmetric, we have
\begin{gather*}
 F_{\tau\rho}(x_2;w_1)
:=\big\langle {\rm e}^{(y_1|w_1)_{\fp^+_1}}I_W,\bigl(\tau(B(x_2,y_1))\rK\big((x_2)^{Q(y_1)x_2}\big) \bigr)^*\big\rangle_{\hat{\rho},y_1},
\end{gather*}
and moreover if $G$ is simple and $\tau=\chi^{-\lambda}$ is one-dimensional, we have
\begin{align*}
F_{\tau\rho}(x_2;w_1)&=\big\langle {\rm e}^{(y_1|w_1)_{\fp^+_1}}I_W,
\big(h(x_2,Q(y_1)x_2)^{-\lambda/2}\rK\big((x_2)^{Q(y_1)x_2}\big)\big)^*\big\rangle_{\hat{\rho},y_1}\\
&=\big\langle {\rm e}^{(y_1|w_1)_{\fp^+_1}}I_W,
\big(h(Q(x_2)y_1,y_1)^{-\lambda/2}\rK\big((x_2)^{Q(y_1)x_2}\big)\big)^*\big\rangle_{\hat{\rho},y_1}.
\end{align*}
We summarize the above results.
\begin{Theorem}\label{main}
Let $(\tau,V)$ be a $\tilde{K}^\BC$-module, and $(\rho,W)$ be a $\tilde{K}_1^\BC$-module
which appears in $\cP(\fp^+_2)\otimes V|_{\tilde{K}_1^\BC}$.
Let $\rK(x_2)\in \cP(\fp^+_2)\otimes V\otimes\overline{W}\simeq\cP(\fp^+_2,\Hom(W,V))$ be an operator-valued polynomial
satisfying \eqref{K-invariance}.
\begin{enumerate}\itemsep=0pt
\item[$(1)$] Assume $\cH_\tau(D,V)_{\tilde{K}}=\cP(\fp^+,V)$.
We define $F_{\tau\rho}^*(z)\in\cP(\overline{\fp^+},\Hom(V,W))$ by
\begin{gather*} F_{\tau\rho}^*(z)=F_{\tau\rho}^*(z_1,z_2):=\big\langle {\rm e}^{(\cdot|z)_{\fp^+}}I_V,\rK(\Proj_2(\cdot))\big\rangle_{\hat{\tau}}. \end{gather*}
Then the linear map
\begin{gather*}
\cF_{\tau\rho}^*\colon \cH_\tau(D,V)_{\tilde{K}}\to\cH_\rho(D_1,W)_{\tilde{K}_1}, \qquad
(\cF_{\tau\rho}^*f)(x_1)
=F_{\tau\rho}^*\left.\left(\overline{\frac{\partial}{\partial x_1}},\overline{\frac{\partial}{\partial x_2}}\right)\right|_{x_2=0}f(x)
\end{gather*}
intertwines the $(\fg_1,\tilde{K}_1)$-action.
\item[$(2)$] Assume $(G,G_1)$ is symmetric, and also assume ``$\cH_\tau(D,V)_{\tilde{K}}=\cP(\fp^+,V)$'' or ``$\cH_\rho(D_1,W)$ is
a holomorphic discrete series representation''.
We define $F_{\tau\rho}(x_2;w_1)\in\cO((D\cap\fp^+_2)\times\overline{\fp^+_1},$ $\Hom(W,V))$ by
\begin{gather*}
 F_{\tau\rho}(x_2;w_1)
:=\big\langle {\rm e}^{(y_1|w_1)_{\fp^+_1}}I_W,\bigl(\tau(B(x_2,y_1))\rK\big((x_2)^{Q(y_1)x_2}\big)\bigr)^*\big\rangle_{\hat{\rho},y_1}.
\end{gather*}
Then the linear map
\begin{gather*}
\cF_{\tau\rho}\colon \cH_\rho(D_1,W)_{\tilde{K}_1}\to \cO_\tau(D,V)_{\tilde{K}}, \qquad
(\cF_{\tau\rho}f)(x) =F_{\tau\rho}\left(x_2;\overline{\frac{\partial}{\partial x_1}}\right)f(x_1)
\end{gather*}
intertwines the $(\fg_1,\tilde{K}_1)$-action.
\end{enumerate}
\end{Theorem}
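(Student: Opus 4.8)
The plan is to assemble the computations already carried out in this subsection: the two operators in the statement are exactly the integral operators of Corollary~\ref{integral_expression}, rewritten on the subspaces of $\tilde K$- and $\tilde K_1$-finite vectors. By Corollary~\ref{integral_expression}, $\cF_{\tau\rho}^*$ --- and, when $(G,G_1)$ is symmetric, also $\cF_{\tau\rho}$ --- is already a $\tilde G_1$-intertwining operator, hence $(\fg_1,\tilde K_1)$-intertwining. The spaces $\cH_\tau(D,V)_{\tilde K}$ and $\cH_\rho(D_1,W)_{\tilde K_1}$ consist of polynomials ($=\cP(\fp^+,V)$ by the standing hypothesis in~(1), and $\subseteq\cP(\fp^+_1,W)$) and are $(\fg_1,\tilde K_1)$-submodules, so it suffices to show that on these subspaces the integral operators coincide with the displayed differential operators; the intertwining property of the latter is then inherited.

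For part~(1) I would use that ${\rm e}^{(x|z)_{\fp^+}}I_V$ is the reproducing kernel of $\cP(\fp^+,V)$ for the Fischer pairing, so that $f(x)=\pi^{-n}\int_{\fp^+}{\rm e}^{(x|z)_{\fp^+}}f(z){\rm e}^{-|z|_{\fp^+}^2}{\rm d}z$ for $f\in\cP(\fp^+,V)$. Inserting this into $(\cF_{\tau\rho}^*f)(y_1)=\langle f,\hat{\rK}(\cdot;y_1)\rangle_{\hat{\tau}}$ and moving the continuous linear map $\langle\,\cdot\,,\hat{\rK}(\cdot;y_1)\rangle_{\hat{\tau}}$ under the integral --- which is legitimate because, for a fixed polynomial $f$, the Fischer expansion ${\rm e}^{(\cdot|z)_{\fp^+}}I_V=\sum_m\bK_m(\cdot,z)$ pairs nontrivially with $f$ in only finitely many $K$-types --- gives $(\cF_{\tau\rho}^*f)(y_1)=\pi^{-n}\int_{\fp^+}\cF_{\tau\rho}^*({\rm e}^{(\cdot|z)_{\fp^+}}I_V)(y_1)f(z){\rm e}^{-|z|_{\fp^+}^2}{\rm d}z$. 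The two-part Lemma preceding Proposition~\ref{projprop} then factors $\cF_{\tau\rho}^*({\rm e}^{(\cdot|z)_{\fp^+}}I_V)(y_1)=F_{\tau\rho}^*(z)\,{\rm e}^{(y_1|z)_{\fp^+}}$, and by the defining property of $g\mapsto g(\overline{\partial/\partial x})$, integrating ${\rm e}^{(x|z)_{\fp^+}}f(z){\rm e}^{-|z|_{\fp^+}^2}$ against the anti-holomorphic coefficients of $F_{\tau\rho}^*(z)$ yields $F_{\tau\rho}^*(\overline{\partial/\partial x_1},\overline{\partial/\partial x_2})f(x)$ evaluated at $x_1=y_1$, $x_2=0$. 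This step uses only $z\in\fg_1$, not that $(G,G_1)$ is symmetric.

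For part~(2) I would run the same scheme with the Fischer kernel ${\rm e}^{(x_1|w_1)_{\fp^+_1}}I_W$ on $\fp^+_1$: reproduce $f\in\cH_\rho(D_1,W)_{\tilde K_1}\subseteq\cP(\fp^+_1,W)$, move the $\hat{\rho}$-pairing inside, and apply part~(2) of that Lemma to obtain $\cF_{\tau\rho}({\rm e}^{(\cdot|w_1)_{\fp^+_1}}I_W)(x)=F_{\tau\rho}(x_2;w_1)\,{\rm e}^{(x_1|w_1)_{\fp^+_1}}$ --- here one needs $(tx_1,x_2)\in D$ for $t\in[-1,1]$, which is precisely where the symmetric-pair hypothesis enters, through convexity and $\sigma$-stability of~$D$. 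Converting to a differential operator as above gives $(\cF_{\tau\rho}f)(x)=F_{\tau\rho}(x_2;\overline{\partial/\partial x_1})f(x_1)$, and substituting $\Proj_2((x_2)^{y_1})=(x_2)^{Q(y_1)x_2}$ from Proposition~\ref{projprop}(1) into the definition of $F_{\tau\rho}$ produces the formula in the statement (with the simplification $\tau(B(x_2,y_1))=h(x_2,y_1)^{-\lambda}$ via Proposition~\ref{projprop}(2) when $\tau$ is one-dimensional and $G$ simple). That the image lies in $\cO_\tau(D,V)_{\tilde K}$ is immediate, since on finite vectors the operator agrees with the intertwining map of Corollary~\ref{integral_expression}, whose restriction to $\tilde K$-finite vectors has $\tilde K$-finite image.

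The genuinely delicate point is the interchange of the unitary inner product with the Fischer integral together with the convergence of the resulting sum of differential monomials; as noted, on $\tilde K$- (resp.\ $\tilde K_1$-)finite vectors this is a finite manipulation, so it causes no difficulty here. The analytic statements that this theorem does \emph{not} address --- that $F_{\tau\rho}(x_2;\overline{\partial/\partial x_1})g$ converges and extends holomorphically to all of $D$ for arbitrary $g\in\cO_\rho(D_1,W)$, so that $\cF_{\tau\rho}$ is in fact defined on all of $\cO_\rho(D_1,W)$ and maps into $\cO_\tau(D,V)$ --- are separate, and are handled in Theorems~\ref{conti_ext} and~\ref{extend}.
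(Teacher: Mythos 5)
Your proposal is correct and follows essentially the same route as the paper: the theorem is proved there by exactly this chain — reproduce $\tilde K$- (resp.\ $\tilde K_1$-)finite vectors via the Fischer kernel, pull the $\hat\tau$- (resp.\ $\hat\rho$-)pairing inside the Gaussian integral, factor out the exponential using the two-part Lemma (whose second part is where the symmetric-pair hypothesis and convexity of $D$ enter), and rewrite via $\Proj_2((x_2)^{y_1})=(x_2)^{Q(y_1)x_2}$ from Proposition~\ref{projprop}. You also correctly delegate the intertwining property to Corollary~\ref{integral_expression} and the convergence/extension issues to Theorems~\ref{conti_ext} and~\ref{extend}, which is how the paper organizes matters.
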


\begin{Remark}
For $w\in\fp^+$, we define the differential operator $\cB_\tau(w)$ on $\cP(\overline{\fp^+},V)$ by
\begin{gather*} \cB_\tau(w)f(z):=\sum_{\alpha\beta}\frac{1}{2}(Q(e_\alpha,e_\beta)w|z)_{\fp^+}
\frac{\partial^2f}{\partial\bar{z}_\alpha\partial\bar{z}_\beta}(z)
+\sum_{\alpha}{\rm d}\tau([e_\alpha,\vartheta w])\frac{\partial f}{\partial \bar{z}_\alpha}(z), \end{gather*}
where $\{e_\alpha\}$ is a basis of $\fp^+$, with the dual basis $\{e_\alpha^\vee\}$,
and $\frac{\partial}{\partial\bar{z}_\alpha}$ is the anti-holomorphic directional derivative along $e_\alpha^\vee$.
Then this is a generalization of the Bessel operator $\cB_\nu$ in~\cite{D} or \cite[Section~XV.2]{FK}.
Then for $w_1\in\fp^+_1$, $\cB_\tau(w_1)$ annihilates $F_{\tau\rho}^*(z)$, because
\begin{gather*}
 (\cB_\tau(w_1))_zF_{\tau\rho}^*(z)
=(\cB_\tau(w_1))_z\big\langle {\rm e}^{(x|z)_{\fp^+}}I_V,\rK(\Proj_2(x))\big\rangle_{\hat{\tau},x} \\
 =\big\langle ((Q(x)w_1|z)_{\fp^+}+{\rm d}\tau([x,\vartheta w_1])){\rm e}^{(x|z)_{\fp^+}}I_V,\rK(\Proj_2(x))\big\rangle_{\hat{\tau},x} \\
 =\big\langle {\rm d}\hat{\tau}(-\vartheta w_1)_x{\rm e}^{(x|z)_{\fp^+}}I_V,\rK(\Proj_2(x))\big\rangle_{\hat{\tau},x}
=\big\langle {\rm e}^{(x|z)_{\fp^+}}I_V,{\rm d}\hat{\tau}(w_1)_x\rK(\Proj_2(x))\big\rangle_{\hat{\tau},x} \\
 =\left\langle {\rm e}^{(x|z)_{\fp^+}}I_V,\left.\frac{{\rm d}}{{\rm d}t}\right|_{t=0}\rK(\Proj_2(x-tw_1))\right\rangle_{\hat{\tau},x}
=\left\langle {\rm e}^{(x|z)_{\fp^+}}I_V,\left.\frac{{\rm d}}{{\rm d}t}\right|_{t=0}\rK(\Proj_2(x))\right\rangle_{\hat{\tau},x}=0.
\end{gather*}
This differential equation coincides with $\widehat{{\rm d}\pi_\mu}$ on $\fn_+$ appeared in Proposition 3.10 or Section 4.4, Step 1
of \cite{KP1}, and thus the operator $\cF_{\tau\rho}^*$ coincides with the one given by the F-method.
\end{Remark}
These operators are defined as maps from the space of polynomials $\cH_\tau(D,V)_{\tilde{K}}$ resp.\ \linebreak $\cH_\rho(D_1,W)_{\tilde{K}_1}$,
but in fact these are well-defined as maps between
$\cO_\tau(D,V)$ and $\cO_\rho(D_1,W)$ in the following sense.
\begin{Theorem}\label{extend}\quad
\begin{enumerate}\itemsep=0pt
\item[$(1)$] $\cF_{\tau\rho}^*$ is well-defined as the map $\cF_{\tau\rho}^*\colon \cO_\tau(D,V)\to\cO_\rho(D_1,W)$.
\item[$(2)$] Assume $\cH_\rho(D_1\!{,}W)$ is a holomorphic discrete series representation. Then for  $f{\in} \cO_\rho(D_1\!{,}W)$,
$\cF_{\tau\rho}f(x)$ converges uniformly on every compact subset in $\{x=x_1+x_2\in D\colon |x_1|_\infty+|x_2|_\infty\allowbreak <1\}$,
and it continues holomorphically on whole $D$. Especially $\cF_{\tau\rho}$ is well-defined as the map
$\cF_{\tau\rho}\colon \cO_\rho(D_1,W)\to \cO_\tau(D,V)$.
\end{enumerate}
\end{Theorem}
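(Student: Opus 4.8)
The first statement is immediate from Theorem~\ref{main}(1): there $\cF_{\tau\rho}^{*}$ is the map $f\mapsto F_{\tau\rho}^{*}\big(\overline{\tfrac{\partial}{\partial x_{1}}},\overline{\tfrac{\partial}{\partial x_{2}}}\big)f\big|_{x_{2}=0}$ with $F_{\tau\rho}^{*}\in\cP(\overline{\fp^{+}},\Hom(V,W))$ a fixed polynomial, i.e.\ a $\Hom(V,W)$-valued constant-coefficient holomorphic differential operator of finite order composed with restriction to $\fp^{+}_{1}$; since holomorphic differential operators preserve holomorphy and $D\cap\fp^{+}_{1}=D_{1}$ (Proposition~\ref{BSD_intersection}), this carries $\cO_\tau(D,V)$ into $\cO_\rho(D_{1},W)$ and is continuous for the compact-open topologies, and the $(\fg_{1},\tilde{K}_{1})$-intertwining property known on the dense subspace $\cP(\fp^{+},V)$ of $\tilde{K}$-finite vectors passes to the closure.

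For part~(2) I would proceed as follows. Expand the kernel of Theorem~\ref{main}(2) into homogeneous parts in the anti-holomorphic variable, $F_{\tau\rho}(x_{2};w_{1})=\sum_{k\ge 0}F_{\tau\rho}^{(k)}(x_{2};w_{1})$ with $F_{\tau\rho}^{(k)}(x_{2};\cdot)$ anti-homogeneous of degree $k$ in $w_{1}$ and holomorphic in $x_{2}\in D\cap\fp^{+}_{2}$, so that the operator of Theorem~\ref{main}(2) reads $\cF_{\tau\rho}f(x_{1},x_{2})=\sum_{k\ge 0}F_{\tau\rho}^{(k)}\big(x_{2};\overline{\tfrac{\partial}{\partial x_{1}}}\big)f(x_{1})$; the goal is to show this series converges uniformly on compacta of $\Omega:=\{x_{1}+x_{2}\in D:|x_{1}|_{\infty}+|x_{2}|_{\infty}<1\}$. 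First I would establish a homogeneity balance: using the $K^{\BC}$-covariance \eqref{Bergman_k} of the Bergman operator and the resulting behaviour of $h(x_{2},y_{1})$ and of the quasi-inverse $(x_{2})^{Q(y_{1})x_{2}}=\Proj_{2}((x_{2})^{y_{1}})$ (Proposition~\ref{projprop}(1)) under the real scaling action on $\fp^{+}$, one checks that $F_{\tau\rho}^{(k)}(x_{2};w_{1})$ vanishes to order $\ge k$ at $x_{2}=0$, hence a Schwarz-lemma estimate on compacta of $D\cap\fp^{+}_{2}$ produces a factor $|x_{2}|_{\infty}^{k}$ in $\big|F_{\tau\rho}^{(k)}(x_{2};w_{1})\big|$. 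Secondly I would write $F_{\tau\rho}^{(k)}$ through its integral formula — legitimate since $\cH_\rho(D_{1},W)$ is holomorphic discrete, so that $C_{\rho}\int_{D_{1}}\big|\rho(B(y_{1})^{-1})\big|_{\mathrm{op}}h_{1}(y_{1})^{-p_{1}}{\rm d}y_{1}<\infty$ — and pair it against the $k$-th order Taylor term of $f$ at $x_{1}$, which by the Cauchy inequality on $D_{1}$ is controlled, as a degree-$k$ polynomial, by $\sup_{|z|_{\infty}\le r}|f|$ for some $r<1$ together with a factor $(1-|x_{1}|_{\infty})^{-k}$. Combining these, the factor $1/\sqrt{k!}$ from the reproducing kernel of $\cP_{\bm}(\fp^{+}_{1})$ cancels the factor $\sqrt{k!}$ in the Cauchy bound for $f$, and the $k$-th term is majorized geometrically on compacta of $\Omega$, uniformly over $f$ in bounded subsets of $\cO_\rho(D_{1},W)$. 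Hence $\sum_{k}F_{\tau\rho}^{(k)}\big(x_{2};\overline{\tfrac{\partial}{\partial x_{1}}}\big)f(x_{1})$ converges uniformly on compacta of $\Omega$ to a holomorphic $V$-valued function and depends continuously on $f$.

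It then remains to continue this to all of $D$. By Corollary~\ref{integral_expression}(2) and Theorem~\ref{conti_ext}(2) the integral operator $\cF_{\tau\rho}$ is a continuous map $\cO_\rho(D_{1},W)\to\cO_\tau(D,V)$, and by Theorem~\ref{main}(2) it agrees on the polynomials $\cP(\fp^{+}_{1},W)$ — where the sum above is finite — with the differential-operator series; since $\cP(\fp^{+}_{1},W)$ is dense in $\cO_\rho(D_{1},W)$, the series on $\Omega$ coincides with $(\cF_{\tau\rho}f)\big|_{\Omega}$, the restriction of an element of $\cO_\tau(D,V)$. This gives the holomorphic continuation to all of $D$ and the well-definedness of $\cF_{\tau\rho}\colon\cO_\rho(D_{1},W)\to\cO_\tau(D,V)$. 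The hard part will be the convergence estimate of part~(2): forcing the order-$k$ vanishing in $x_{2}$ and the $(1-|x_{1}|_{\infty})^{-k}$ growth in $x_{1}$ to combine, with the correct constants, into convergence on exactly $\{|x_{1}|_{\infty}+|x_{2}|_{\infty}<1\}$; a naive estimate of the moments $\int_{D_{1}}|y_{1}|^{k}(\cdots)\,{\rm d}y_{1}$ loses a factor depending on $\rank\fg_{1}$, so the argument will likely need either a reduction exploiting the $\tilde{G}_{1}$-action or a finer, spectral-norm analysis of the kernel $\tau(B(x_{2},y_{1}))\rK((x_{2})^{Q(y_{1})x_{2}})$.
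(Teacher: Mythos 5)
Your proposal is correct and follows essentially the same route as the paper: part (1) is immediate from finite order, and for part (2) the paper likewise expands $F_{\tau\rho}(x_2;w_1)=\sum_n F_n(x_2;w_1)$ into $w_1$-homogeneous pieces, implements your ``homogeneity balance'' via the explicit covariance $F_n(x_2;w_1)=\chi_\rho({\rm e}^{t\hbar})\chi_\tau({\rm e}^{-t\hbar})F_n({\rm e}^tx_2;{\rm e}^{-t}w_1)$, bounds $\big\Vert(y_1\,|\,\overline{\partial/\partial x_1})^nf\big\Vert_{\hat\rho}$ by a Cauchy-integral estimate producing $n!\,(s-|x_1|_\infty)^{-n}\sup_{|\xi_1|_\infty\le s}|f|_\rho$ (the $n!$ cancelling the $1/n!$ exactly, with no rank-dependent moment loss since $|y_1|_\infty\le 1$ on $D_1$ and the $\hat\rho$-norm is controlled by the sup-norm), sums the geometric series on $\{|x_1|_\infty+|x_2|_\infty<1\}$, and invokes Theorem~\ref{conti_ext} to continue to all of $D$. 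The concerns raised in your last paragraph are resolved by exactly the estimate you sketch, so no genuinely different idea is needed.
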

\begin{proof}
(1) Clear since $\cF_{\tau\rho}^*$ is a finite-order differential operator.

(2) First we decompose $F_{\tau\rho}(x_2;w_1)$ as the sum of homogeneous polynomials in $w_1$.
\begin{align*}
F_{\tau\rho}(x_2;w_1) &=\big\langle {\rm e}^{(y_1|w_1)_{\fp^+_1}}I_W,\bigl(\tau(B(x_2,y_1))\rK(\Proj_2((x_2)^{y_1}))\bigr)^*\big\rangle_{\hat{\rho},y_1} \\
&=\sum_{n=0}^\infty \frac{1}{n!}\big\langle (y_1|w_1)_{\fp^+_1}^nI_W, \hat{\rK}(x_2;y_1)^*\big\rangle_{\hat{\rho},y_1} =:\sum_{n=0}^\infty F_n(x_2;w_1).
\end{align*}
Then $\cF_{\tau\rho}$ is written as
\begin{gather*} (\cF_{\tau\rho}f)(x)=\sum_{n=0}^\infty F_n\left(x_2;\overline{\frac{\partial}{\partial x_1}}\right)f(x_1). \end{gather*}
Now by the $\tilde{G}_1$-invariance of $\hat{\rK}$, for $k\in \tilde{K}_1$ we have
\begin{gather*} F_n(x_2;w_1)=\tau(k)F_n\big(k^{-1}x_2;k^*w_1\big)\rho(k)^{-1}. \end{gather*}
Therefore for $k={\rm e}^{-t\hbar}$ we have
\begin{gather}\label{invariance_Fn}
F_n(x_2;w_1)=\chi_\rho\big({\rm e}^{t\hbar}\big)\chi_\tau\big({\rm e}^{-t\hbar}\big) F_n\big({\rm e}^tx_2;\overline{{\rm e}^{-t}}w_1\big),
\end{gather}
where the notations are the same as in the previous subsection.
Now we fix $t>0$, and let $|x_2|_\infty<{\rm e}^{-t}<1$. Then for $v\in V$ we have
\begin{gather*}
 \left|\left(F_n\left(x_2;\overline{\frac{\partial}{\partial x_1}}\right)f(x_1),v\right)_\tau\right| =\left|\left(\chi_\rho\big({\rm e}^{t\hbar}\big)\chi_\tau\big({\rm e}^{-t\hbar}\big)
F_n\left({\rm e}^tx_2;\overline{{\rm e}^{-t}\frac{\partial}{\partial x_1}}\right)f(x_1),v\right)_\tau\right| \\
\qquad{} =\frac{\chi_\rho({\rm e}^{t\hbar})\chi_\tau({\rm e}^{-t\hbar})}{n!}\left|\left\langle {\rm e}^{-nt}
\left(y_1\,\middle|\overline{\frac{\partial}{\partial x_1}}\right)_{\fp^+_1}^nf(x_1),
\hat{\rK}\big({\rm e}^tx_2;y_1\big)^*v\right\rangle_{\hat{\rho},y_1}\right| \\
\qquad{} \le \chi_\rho\big({\rm e}^{t\hbar}\big)\chi_\tau\big({\rm e}^{-t\hbar}\big)\frac{{\rm e}^{-nt}}{n!}
\left\Vert\left(y_1\,\middle|\overline{\frac{\partial}{\partial x_1}}\right)_{\fp^+_1}^nf(x_1)\right\Vert_{\hat{\rho},y_1}
\big\Vert\hat{\rK}\big({\rm e}^tx_2;y_1\big)^*v\big\Vert_{\hat{\rho},y_1}.
\end{gather*}
Next we estimate
$\ds \left\Vert\left(y_1\,\middle|\overline{\frac{\partial}{\partial x_1}}\right)_{\fp^+_1}^nf(x_1)\right\Vert_{\hat{\rho},y_1}$.
For $x_1,y_1\in D_1$, we take $R>0$ such that $x_1+R{\rm e}^{\sqrt{-1}\theta}y_1\in D_1$ for any $\theta\in\BR$. Then
\begin{align*}
\left|\left(y_1\,\middle|\,\overline{\frac{\partial}{\partial x_1}}\right)_{\fp^+_1}^nf(x_1)\right|_\rho
&=\left|\left.\frac{{\rm d}^n}{{\rm d}t^n}\right|_{t=0}f(x_1+ty_1)\right|_\rho
=\left|\frac{n!}{2\pi\sqrt{-1}}\oint_{|z|=R}\frac{f(x_1+zy_1)}{z^{n+1}}{\rm d}z\right|_\rho \\
&=\left|\frac{n!}{2\pi\sqrt{-1}}\int_0^{2\pi}\frac{f\big(x_1+R{\rm e}^{\sqrt{-1}\theta}y_1\big)}{R^{n+1}{\rm e}^{\sqrt{-1}(n+1)\theta}}
\sqrt{-1}R{\rm e}^{\sqrt{-1}\theta}{\rm d}\theta\right|_\rho \\
&=\left|\frac{n!}{2\pi}\int_0^{2\pi}\frac{f\big(x_1+R{\rm e}^{\sqrt{-1}\theta}y_1\big)}{R^n{\rm e}^{\sqrt{-1}n\theta}}{\rm d}\theta\right|_\rho\\
& \le \frac{n!}{R^n}\max_{\theta\in[0,2\pi]}\big|f\big(x_1+R{\rm e}^{\sqrt{-1}\theta}y_1\big)\big|_\rho.
\end{align*}
Now fix $0<s<1$, let $|x_1|_\infty<s$, and we set $\ds R:=\frac{s-|x_1|_\infty}{|y_1|_\infty}$. Then we have
\begin{gather*} \big|x_1+R{\rm e}^{\sqrt{-1}\theta}y_1\big|_\infty\le |x_1|_\infty+R|y_1|_\infty=s, \end{gather*}
and therefore
\begin{align*}
\left|\left(y_1\,\middle|\,\overline{\frac{\partial}{\partial x_1}}\right)_{\fp^+_1}^nf(x_1)\right|_\rho
&\le n!\left(\frac{|y_1|_\infty}{s-|x_1|_\infty}\right)^n\sup_{|\xi_1|_\infty\le s}|f(\xi_1)|_\rho \\
&\le n!\left(\frac{1}{s-|x_1|_\infty}\right)^n\sup_{|\xi_1|_\infty\le s}|f(\xi_1)|_\rho
\end{align*}
holds. Hence we have
\begin{gather}
 \left\Vert\left(y_1\,\middle|\overline{\frac{\partial}{\partial x_1}}\right)_{\fp^+_1}^nf(x_1)\right\Vert_{\hat{\rho},y_1}^2 \notag \\
\qquad{} =C_\rho\int_{D_1} \left(\rho\big(B(x_1)^{-1}\big)\left(y_1\,\middle|\,\overline{\frac{\partial}{\partial x_1}}\right)_{\fp^+_1}^nf(x_1)
,\left(y_1\,\middle|\,\overline{\frac{\partial}{\partial x_1}}\right)_{\fp^+_1}^nf(x_1)\right)_\rho h_1(y_1)^{-p_1}{\rm d}y_1 \notag \\
\qquad{} \le C_\rho\int_{D_1} \big|\rho\big(B(x_1)^{-1}\big)\big|_{\rho,\mathrm{op}}h_1(y_1)^{-p_1}{\rm d}y_1
\sup_{y_1\in D_1}\left|\left(y_1\,\middle|\,\overline{\frac{\partial}{\partial x_1}}\right)_{\fp^+_1}^nf(x_1)\right|_\rho^2 \notag \\
\qquad{} \le C_\rho^{\prime 2}(n!)^2\left(\frac{1}{s-|x_1|_\infty}\right)^{2n}\sup_{|\xi_1|_\infty\le s}|f(\xi_1)|_\rho^2. \label{higherdiff}
\end{gather}
Therefore we have
\begin{gather*}
 \left|\left(F_n\left(x_2;\overline{\frac{\partial}{\partial x_1}}\right)f(x_1),v\right)_\tau\right| \\
 \le C_\rho'\chi_\rho\big({\rm e}^{t\hbar}\big)\chi_\tau\big({\rm e}^{-t\hbar}\big)
\left(\frac{{\rm e}^{-t}}{s-|x_1|_\infty}\right)^n\sup_{|\xi_1|_\infty\le s}|f(\xi_1)|_\rho
\big(\big\langle\hat{\rK}\big({\rm e}^tx_2;\cdot\big)^*,
\hat{\rK}\big({\rm e}^tx_2;\cdot\big)^*\big\rangle_{\hat{\rho}}v,v\big)_\tau^{1/2}.
\end{gather*}
As a consequence, if we assume ${\rm e}^{-t}<s-|x_1|_\infty$, we get
\begin{gather*}
 \left|\left((\cF_{\tau\rho}f)(x),v\right)_\tau\right|\le\sum_{n=0}^\infty
\left|\left(F_n\left(x_2;\overline{\frac{\partial}{\partial x_1}}\right)f(x_1),v\right)_\tau\right| \\
 \le C_\rho'\chi_\rho\big({\rm e}^{t\hbar}\big)\chi_\tau\big({\rm e}^{-t\hbar}\big)
\sum_{n=0}^\infty \left(\frac{{\rm e}^{-t}}{s-|x_1|_\infty}\right)^n\!\sup_{|\xi_1|_\infty\le s}\!|f(\xi_1)|_\rho
\big(\big\langle\hat{\rK}\big({\rm e}^tx_2;\cdot\big)^* ,
\hat{\rK}\big({\rm e}^tx_2;\cdot\big)^*\big\rangle_{\hat{\rho}}v,v\big)_\tau^{1/2} \\
 =C_\rho'\chi_\rho\big({\rm e}^{t\hbar}\big)\chi_\tau\big({\rm e}^{-t\hbar}\big)\frac{s-|x_1|_\infty}{s-|x_1|_\infty-{\rm e}^{-t}}\sup_{|\xi_1|_\infty\le s}|f(\xi_1)|_\rho
\big(\big\langle\hat{\rK}\big({\rm e}^tx_2;\cdot\big)^*,
\hat{\rK}\big({\rm e}^tx_2;\cdot\big)^*\big\rangle_{\hat{\rho}}v,v\big)_\tau^{1/2}.
\end{gather*}
This estimate holds if $0<{\rm e}^{-t}<s<1$, $|x_1|_\infty<s-{\rm e}^{-t}$ and $|x_2|_\infty<{\rm e}^{-t}$, and thus
\begin{gather*}
 ((\cF_{\tau\rho}f)(x),v )_\tau=\sum_{n=0}^\infty\left(F_n\left(x_2;\overline{\frac{\partial}{\partial x_1}}\right)f(x_1),v\right)_\tau
\end{gather*}
converges absolutely and uniformly on every compact subset in $\{|x_1|_\infty<s-{\rm e}^{-t},|x_2|_\infty<{\rm e}^{-t}\}$ for any $0<{\rm e}^{-t}<s<1$,
and hence also does on every compact subset in $\{|x_1|_\infty+|x_2|_\infty<1\}$.
By Theorem \ref{conti_ext}, $\cF_{\tau\rho}$ is a continuous operator from $\cO_\rho(D_1,W)$ to $\cO_\tau(D,V)$,
and therefore $\cF_{\tau\rho} f(x)$ must extend to whole $D$.
\end{proof}

\subsection{Analytic continuation of intertwining operators}
\looseness=-1 In this subsection we assume $G$ to be simple and that $(\tau,V)$ is of the form $(\tau,V)=\big(\tau_0\otimes\chi^{-\lambda},V\big)$.
Then we may assume $(\rho,W)$ is also of the form $(\rho,W)=\big(\rho_0\otimes\chi|_{\tilde{K}_1}^{-\lambda},W\big)$.
In this section we denote the representation of $\tilde{G}_1$
with minimal $\tilde{K}_1$-type $\big(\rho_0\otimes\chi|_{\tilde{K}_1}^{-\lambda},W\big)$ by $\cH_\lambda(D_1,W)$ and $\cO_\lambda(D_1,W)$
(this notation has not the same meaning as the one in Section \ref{HDS}, since we do not assume $G_1$ to be simple,
and even if $G_1$ is simple, $\chi|_{\tilde{K}_1}$ is not normalized as (\ref{char}) for $G_1$ in general).
Then the intertwining operators $\cF_{\tau\rho}^*\colon \cH_\lambda(D,V)\to\cH_\lambda(D_1,W)$ and $\cF_{\tau\rho}\colon \cH_\lambda(D_1,W)\to\cH_\lambda(D,V)$ depend
holomorphically on the parameter $\lambda$, and as the operators between the space of polynomials, these continue meromorphically for
all $\lambda\in\BC$, and define the intertwining operators $\cF_{\tau\rho}^*\colon \cO_\lambda(D,V)_{\tilde{K}}\to\cO_\lambda(D_1,W)_{\tilde{K}_1}$ and
$\cF_{\tau\rho}\colon \cO_\lambda(D_1,W)_{\tilde{K}_1}\to\cO_\lambda(D,V)_{\tilde{K}}$ if $\lambda$ is not a~pole.
Since $\cF_{\tau\rho}^*$ is a finite-order differential operator, this is clearly well-defined as the map
$\cF_{\tau\rho}^*\colon \cO_\lambda(D,V)\to\cO_\lambda(D_1,W)$ for all $\lambda\in\BC$ except for the poles. On the other hand,
for the infinite-order differential operator $\cF_{\tau\rho}$, it is not clear. The goal of this subsection is to prove that
$\cF_{\tau\rho}\colon \cO_\lambda(D_1,W)\to\cO_\lambda(D,V)$ is well-defined if $\lambda$ is not a pole.
In this subsection we write $\cF_{\tau\rho}=\cF_{\lambda,\rho}$.

To do this, we consider the $\tilde{K}_1$-type decomposition of $\cO_\lambda(D_1,W)_{\tilde{K}_1}$ as
\begin{align*}
\cO_\lambda(D_1,W)_{\tilde{K}_1}&\simeq \cP(\fp^+_1,W)\otimes(\chi|_{\tilde{K}_1})^{-\lambda}
\simeq \bigoplus_{m=0}^\infty \bigoplus_{j=1}^{N_m} W_{m,j}\otimes(\chi|_{\tilde{K}_1})^{-\lambda},
\end{align*}
where $W_{m,j}\subset\cP_m(\fp^+_1,W)=\{W\text{-valued homogeneous polynomials on }\fp^+_1\text{ of degree }m\}$, and $N_m\in\BN$.
We assume that the norm of $\cH_\lambda(D_1,W)$ is expanded as
\begin{gather}\label{assumption_decomposition}
\Vert f\Vert_{\lambda,\rho_0}^2=\sum_{m=0}^\infty \sum_{j=1}^{N_m}p_{m,j}(\lambda)\Vert f_{m,j}\Vert_{F,\rho_0}^2
\end{gather}
as (\ref{norm_compare}), where $f_{m,j}$ is the orthogonal projection of $f$ onto $W_{m,j}$, $\Vert f_{m,j}\Vert_{F,\rho_0}$ is the Fischer norm,
and $p_{m,j}(\lambda)$ depends meromorphically on $\lambda$.

Next we additionally assume that $\hat{\rK}(x_2;y_1)^*=\hat{\rK}_\lambda(x_2;y_1)^*$ is expanded as
\begin{gather}\label{assumption_expansion}
\hat{\rK}_\lambda(x_2;y_1)^*=\sum_{m=0}^\infty \sum_{j=1}^{N_m}q_{m,j}(\lambda)\hat{\rK}_{m,j}(x_2;y_1)^*,
\end{gather}
where $\hat{\rK}_{m,j}(x_2;y_1)^*\in \big(\overline{\cP(\fp_2^+,V)_{x_2}}\otimes (W_{m,j})_{y_1}\big)^{\tilde{K}_1}$
does not depend on $\lambda$, and $q_{m,j}(\lambda)$ depends meromorphically on $\lambda$.
We note that $\hat{\rK}_{m,j}(x_2;y_1)^*$ is non-zero only if $W_{m,j}$ appears commonly in the decomposition of both
$\cP(\fp_2^+,V|_{\tilde{K}_1})$ and $\cP(\fp^+_1,W)$ since $\hat{\rK}_{m,j}(x_2;y_1)^*$ is $\tilde{K}_1^\BC$-invariant.
We also note that if both $\cP(\fp_2^+,V|_{\tilde{K}_1})$ and $\cP(\fp^+_1,W)$ are multiplicity-free under $\tilde{K}_1^\BC$,
then we can always expand $\hat{\rK}_\lambda(x_2;y_1)^*$ as~(\ref{assumption_expansion}) since
$\big(\overline{\cP(\fp_2^+,V)_{x_2}}\otimes (W_{m,j})_{y_1}\big)^{\tilde{K}_1}$ is 1-dimensional.
Then by~(\ref{exp_onD}) $F_{\tau\rho}(x_2;w_1)=F_{\lambda,\rho}(x_2;w_1)$ is given by
\begin{align}
F_{\lambda,\rho}(x_2;w_1)&=\big\langle {\rm e}^{(\cdot|w_1)_{\fp^+_1}}I_W,\hat{\rK}_\lambda(x_2;\cdot)^*\big\rangle_{\lambda,\rho_0}
=\sum_{m=0}^\infty \sum_{j=1}^{N_m}q_{m,j}(\lambda)\big\langle {\rm e}^{(\cdot|w_1)_{\fp^+_1}}I_W,\hat{\rK}_{m,j}(x_2;\cdot)^*\big\rangle_{\lambda,\rho_0} \notag \\
&=\sum_{m=0}^\infty \sum_{j=1}^{N_m}p_{m,j}(\lambda)q_{m,j}(\lambda)\hat{\rK}_{m,j}(x_2;w_1), \label{ext_of_FW}
\end{align}
and $\cF_{\lambda,\rho}$ is given by substituting $w_1$ with $\overline{\frac{\partial}{\partial x_1}}$.
This continues meromorphically for all $\lambda$, and defines an intertwining operator from $\cO_\lambda(D_1,W)_{\tilde{K}_1}$
to $\cO_\lambda(D,V)_{\tilde{K}}$ if $\lambda$ is not a pole of $p_{m,j}(\lambda)q_{m,j}(\lambda)$.
In fact, this is well-defined as a map from $\cO_\lambda(D_1,W)$ to $\cO_\lambda(D,V)$ under some assumption.
\begin{Theorem}\label{continuation}
Assume \eqref{assumption_decomposition}, \eqref{assumption_expansion} holds, and also assume that
for any $\lambda\in\BC$ which is not a pole of $p_{m,j}(\lambda)$, $p_{m,j}(\lambda)q_{m,j}(\lambda)$,
there exist $\mu>p_1-1$, $C>0$, $k\ge 0$ such that $\left|\frac{p_{m,j}(\lambda)}{p_{m,j}(\mu)}\right|<C(1+m^k)$,
$\left|\frac{p_{m,j}(\lambda)q_{m,j}(\lambda)}{p_{m,j}(\mu)q_{m,j}(\mu)}\right|<C(1+m^k)$ holds.
Then if $\lambda$ is not a pole of $p_{m,j}(\lambda)q_{m,j}(\lambda)$, then for $f\in\cO_\lambda(D_1,W)$,
$\cF_{\lambda,\rho}f(x)$ converges uniformly on every compact subset in $\{x=x_1+x_2\in D\colon |x_1|_\infty+|x_2|_\infty<1\}$,
and it continues holomorphically on whole $D$. Especially $\cF_{\lambda,\rho}$ defines a~continuous map
$\cF_{\lambda,\rho}\colon \cO_\lambda(D_1,W)\to \cO_\lambda(D,V)$.
\end{Theorem}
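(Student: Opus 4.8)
The plan is to run the proof of Theorem~\ref{extend}(2) almost verbatim, but with the positive-definite inner product $\langle\cdot,\cdot\rangle_{\hat\rho}$ of a holomorphic discrete series representation replaced throughout by $\langle\cdot,\cdot\rangle_{\hat\mu}$ at the auxiliary parameter $\mu$. Since $\mu>p_1-1$, the representation $\cH_\mu(D_1,W)$ is a holomorphic discrete series representation, so the Cauchy estimate~(\ref{higherdiff}), the convergence of $\int_{D_1}|\rho(B(y_1)^{-1})|_{\rho,\mathrm{op}}h_1(y_1)^{-p_1}{\rm d}y_1$, and the real-analyticity of $x_2\mapsto\langle\hat{\rK}_\mu(x_2;\cdot)^*,\hat{\rK}_\mu(x_2;\cdot)^*\rangle_{\hat\mu}$ are all available at $\mu$. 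The mismatch between the parameter $\lambda$, carried by the coefficients, and the parameter $\mu$, carried by the Hilbert-space estimates, will be absorbed by the hypothesised polynomial growth of $p_{m,j}(\lambda)/p_{m,j}(\mu)$ and $p_{m,j}(\lambda)q_{m,j}(\lambda)/(p_{m,j}(\mu)q_{m,j}(\mu))$.

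First I would split $F_{\lambda,\rho}(x_2;w_1)=\sum_{n\ge 0}F_n^{(\lambda)}(x_2;w_1)$ into its $w_1$-homogeneous parts, so that by~(\ref{ext_of_FW}) $F_n^{(\lambda)}=\sum_{j=1}^{N_n}p_{n,j}(\lambda)q_{n,j}(\lambda)\hat{\rK}_{n,j}$ and $(\cF_{\lambda,\rho}f)(x)=\sum_n F_n^{(\lambda)}(x_2;\overline{\frac{\partial}{\partial x_1}})f(x_1)$. Each $\hat{\rK}_{n,j}(x_2;\cdot)^*$ is a polynomial in $y_1$ lying in $W_{n,j}\subset\cP_n(\fp^+_1,W)=\cH_\mu(D_1,W)_{\tilde K_1}\cap\cP_n$, so by~(\ref{exp_onD}) applied at the parameter $\mu$ one can rewrite $F_n^{(\lambda)}(x_2;\overline{\frac{\partial}{\partial x_1}})f(x_1)$ as $\frac1{n!}\big\langle\big(y_1\,\big|\,\overline{\frac{\partial}{\partial x_1}}\big)^n_{\fp^+_1}f(x_1),\,\hat{\rK}_n^{[\lambda]}(x_2;\cdot)^*\big\rangle_{\hat\mu,y_1}$, where $\hat{\rK}_n^{[\lambda]}$ is obtained from the degree-$n$ part of $\hat{\rK}_\mu$ by the diagonal operator acting on $W_{n,j}$ through the scalar $p_{n,j}(\lambda)q_{n,j}(\lambda)/(p_{n,j}(\mu)q_{n,j}(\mu))$. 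Since $\cH_\mu(D_1,W)_{\tilde K_1}=\bigoplus_{m,j}W_{m,j}$ is an orthogonal decomposition for $\langle\cdot,\cdot\rangle_{\hat\mu}$ and these scalars are bounded by $C(1+n^k)$, the Pythagorean identity gives the key bound $\|\hat{\rK}_n^{[\lambda]}(x_2;\cdot)^*v\|_{\hat\mu}\le C(1+n^k)\|\hat{\rK}_\mu(x_2;\cdot)^*v\|_{\hat\mu}$ for all $v\in V$.

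Given this, I would copy the estimates of the proof of Theorem~\ref{extend}(2): the scaling identity~(\ref{invariance_Fn}) still holds for $F_n^{(\lambda)}$, because $\hat{\rK}_\lambda$ continues to satisfy~(\ref{G1-invariance}); combining it with the bound above, with the Cauchy estimate~(\ref{higherdiff}) evaluated at $\mu$, and with the Cauchy--Schwarz inequality in $\langle\cdot,\cdot\rangle_{\hat\mu}$ yields, for $0<{\rm e}^{-t}<s<1$, $|x_1|_\infty<s-{\rm e}^{-t}$, $|x_2|_\infty<{\rm e}^{-t}$ and $v\in V$,
\begin{gather*}
\left|\left(F_n^{(\lambda)}\left(x_2;\overline{\frac{\partial}{\partial x_1}}\right)f(x_1),v\right)_\tau\right|
\le C'\chi_\rho({\rm e}^{t\hbar})\chi_\tau({\rm e}^{-t\hbar})(1+n^k)\left(\frac{{\rm e}^{-t}}{s-|x_1|_\infty}\right)^{\!n}\sup_{|\xi_1|_\infty\le s}|f(\xi_1)|_\rho\,\big(\langle\hat{\rK}_\mu({\rm e}^tx_2;\cdot)^*,\hat{\rK}_\mu({\rm e}^tx_2;\cdot)^*\rangle_{\hat\mu}v,v\big)_\tau^{1/2}.
\end{gather*}
As $\sum_n(1+n^k)r^n<\infty$ whenever $r<1$, the series $\sum_n(F_n^{(\lambda)}(x_2;\overline{\frac{\partial}{\partial x_1}})f(x_1),v)_\tau$ converges absolutely and uniformly on compact subsets of $\{|x_1|_\infty<s-{\rm e}^{-t},\,|x_2|_\infty<{\rm e}^{-t}\}$ for every admissible $t,s$, hence on compact subsets of $\{x=x_1+x_2\in D\colon|x_1|_\infty+|x_2|_\infty<1\}$, where it defines a holomorphic $V$-valued function depending linearly and continuously on $f\in\cO_\lambda(D_1,W)$.

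To continue $\cF_{\lambda,\rho}f$ holomorphically to all of $D$ I cannot invoke Theorem~\ref{conti_ext}, since $\cH_\lambda(D,V)$ need not be unitarizable; instead I would use $\tilde G_1$-equivariance. On $\tilde K_1$-finite vectors $\cF_{\lambda,\rho}$ maps $\cP(\fp^+_1,W)$ into $\cP(\fp^+,V)$ and intertwines the $(\fg_1,\tilde K_1)$-action (Theorem~\ref{main}(2), meromorphically continued); since it is continuous into the space of holomorphic functions on the region of convergence and polynomials are dense, it intertwines the $\fg_1$-action on all of $\cO_\lambda(D_1,W)$ there, whence $\cF_{\lambda,\rho}(g\cdot f)=g\cdot(\cF_{\lambda,\rho}f)$ for $g\in\tilde G_1$. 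The region $\{|x_1|_\infty+|x_2|_\infty<1\}$ is $\tilde K_1$-invariant, and its $\tilde G_1$-translates exhaust $D$ (already $\tilde G_1\cdot(D\cap\fp^+_2)=D$), so one defines $(\cF_{\lambda,\rho}f)(x)$ for arbitrary $x\in D$ by picking $g\in\tilde G_1$ with $g^{-1}x$ in the region of convergence and using the intertwining relation; the identity theorem makes the value independent of $g$, the extension is holomorphic in $x$, and continuity in $f$ is inherited, giving the continuous map $\cF_{\lambda,\rho}\colon\cO_\lambda(D_1,W)\to\cO_\lambda(D,V)$. The step I expect to be the real obstacle is the reduction in the second and third paragraphs: because the transition scalars $p_{n,j}(\lambda)q_{n,j}(\lambda)/(p_{n,j}(\mu)q_{n,j}(\mu))$ genuinely depend on the multiplicity index $j$, no scalar factors out of $F_n^{(\lambda)}$, and it is exactly the orthogonality of the $W_{n,j}$ in $\cH_\mu(D_1,W)$ together with the polynomial-growth hypothesis that lets one dominate $\hat{\rK}_n^{[\lambda]}$ by a polynomial multiple of $\hat{\rK}_\mu$ and thus keep the series essentially geometric; checking that~(\ref{invariance_Fn}) and the covering $\tilde G_1\cdot(D\cap\fp^+_2)=D$ persist at the non-unitary parameter $\lambda$ is a routine but necessary sanity check.
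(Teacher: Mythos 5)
Your estimates for the series itself (the first three paragraphs) reproduce the paper's treatment of the differential expression essentially verbatim: the paper likewise transfers every Hilbert-space bound to the auxiliary holomorphic discrete parameter $\mu$, uses the orthogonality of the $W_{m,j}$ in $\Vert\cdot\Vert_{\mu,\rho_0}$ to dominate $\Vert q_{m,j}(\mu)\hat{\rK}_{m,j}\big({\rm e}^tx_2;\cdot\big)^*v\Vert_{\mu,\rho_0}$ by $\Vert\hat{\rK}_\mu\big({\rm e}^tx_2;\cdot\big)^*v\Vert_{\mu,\rho_0}$, absorbs the $\lambda$-dependence through the polynomial-growth hypothesis, and arrives at the same majorant $N_m\big(1+m^k\big)\big(\tfrac{{\rm e}^{-t}}{s-|x_1|_\infty}\big)^m$. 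Whether you aggregate the transition scalars into a modified kernel and apply Pythagoras, as you do, or keep the components separate, as the paper does, is immaterial. That half of the proposal is sound.

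The gap is in the last step: the continuation from $\{|x_1|_\infty+|x_2|_\infty<1\}$ to all of $D$ and the continuity into $\cO_\lambda(D,V)$. The paper does not argue by equivariance here. It first proves, by the same $\mu$-comparison applied to the pairing expression $((\cF_{\lambda,\rho}f)(x),v)_\tau=\sum_{m,j}p_{m,j}(\lambda)\big\langle f_{m,j}\big({\rm e}^{-t}y_1\big),\hat{\rK}_\lambda\big({\rm e}^tx;y_1\big)^*_{m,j}v\big\rangle_{F,\rho_0,y_1}$, that $\cF_{\lambda,\rho}$ is already a continuous operator $\cO_\lambda(D_1,W)\to\cO_\lambda(D,V)$; this expression only requires $|x|_\infty<{\rm e}^{-t}<1$, hence converges for every $x\in D$, and the differential expression, agreeing with it where both converge, must extend. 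Your substitute rests on two claims you do not establish. First, the covering $\tilde G_1\cdot\{|x_1|_\infty+|x_2|_\infty<1\}=D$, equivalently $D=G_1\cdot(D\cap\fp^+_2)$: this polar-type decomposition is true for symmetric pairs of holomorphic type, but it is proved nowhere in the paper and not by you. Second, and more seriously, the passage from the infinitesimal $(\fg_1,\tilde K_1)$-intertwining relation on polynomials to the group relation $\cF_{\lambda,\rho}(\hat\rho(g)f)=\hat\tau(g)\cF_{\lambda,\rho}f$ for a function that is so far defined only on part of $D$: since $\hat\tau(g)$ evaluates at $g^{-1}x$, the relation only makes sense when both $x$ and $g^{-1}x$ lie in the region of convergence, and upgrading the $\fg_1$-relation there to consistent, well-defined $\tilde G_1$-translates on overlaps (for $g$ not close to the identity) needs a connectedness/monodromy argument that you only gesture at with the word ``whence''. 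Neither obstacle is fatal in principle, but as written the extension step is not a proof, and the paper's integral-expression route is precisely what lets it avoid both.
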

In Section \ref{sect_examples}, we compute $\cF_{\lambda,\rho}$ explicitly when $(\tau,V)$ is 1-dimensional.
In these cases, $p_{m,j}(\lambda)$ and $p_{m,j}(\lambda)q_{m,j}(\lambda)$ are given by inverse of products of Pochhammer symbols,
and the polynomial-growth condition is satisfied by Stirling's formula.
Therefore, explicit intertwining operators in Section \ref{sect_examples} are well-defined as operators
between spaces of all holomorphic functions for any $\lambda\in\BC$ except for poles.
\begin{proof}
First we note that this theorem says that $\cF_{\lambda,\rho}f(x)$ converges if $\lambda$ is not a pole of $p_{m,j}(\lambda)q_{m,j}(\lambda)$,
but by continuity, it is enough to prove it when $\lambda$ is not a pole of either $p_{m,j}(\lambda)$ or $p_{m,j}(\lambda)q_{m,j}(\lambda)$.
First we prove the continuity as a map from $\cO_\lambda(D_1,W)$ to $\cO_\lambda(D,V)$ in the integral expression,
and second we prove the uniform convergence in the differential expression.

First we work with the integral expression. As in the holomorphic discrete series case,
for $f\in\cO_\lambda(D_1,W)_{\tilde{K}_1}=\cP(\fp^+_1,W)$, if $|x|_\infty<{\rm e}^{-t}$ then
\begin{align*}
((\cF_{\lambda,\rho}f)(x),v)_\tau&=\big\langle f(y_1),\hat{\rK}_\lambda(x;y_1)^*v\big\rangle_{\lambda,\rho_0,y_1} \\
&=\chi_\rho\big({\rm e}^{t\hbar}\big)\chi_\tau\big({\rm e}^{-t\hbar}\big)\big\langle f\big({\rm e}^{-t}y_1\big),
\hat{\rK}_\lambda\big({\rm e}^tx;y_1\big)^*v\big\rangle_{\lambda,\rho_0,y_1}
\end{align*}
holds. Then by assumption (\ref{assumption_decomposition}), we have
\begin{gather*}
 ((\cF_{\lambda,\rho}f)(x),v)_\tau
=\chi_\rho\big({\rm e}^{t\hbar}\big)\chi_\tau\big({\rm e}^{-t\hbar}\big)\sum_{m=0}^\infty \sum_{j=1}^{N_m}
p_{m,j}(\lambda)\big\langle f_{m,j}\big({\rm e}^{-t}y_1\big),\hat{\rK}_\lambda\big({\rm e}^tx;y_1\big)^*_{m,j}v\big\rangle_{F,\rho_0,y_1},
\end{gather*}
where $f_{m,j}$ is the orthogonal projection of $f$ onto $W_{m,j}$. Hence for $1<s<{\rm e}^t$,
\begin{gather*}
 \chi_\rho\big({\rm e}^{-t\hbar}\big)\chi_\tau\big({\rm e}^{t\hbar}\big)|((\cF_{\lambda,\rho}f)(x),v)_\tau| \\
\qquad{}\le \sum_{m=0}^\infty \sum_{j=1}^{N_m}
|p_{m,j}(\lambda)|\big\Vert f_{m,j}\big({\rm e}^{-t}y_1\big)\big\Vert_{F,\rho_0,y_1}\big\Vert\hat{\rK}_\lambda\big({\rm e}^tx;y_1\big)^*_{m,j}v\big\Vert_{F,\rho_0,y_1} \\
\qquad{}\le C\sum_{m=0}^\infty \sum_{j=1}^{N_m}\big(1+m^k\big)
p_{m,j}(\mu)\big\Vert f_{m,j}\big({\rm e}^{-t}y_1\big)\big\Vert_{F,\rho_0,y_1}\big\Vert\hat{\rK}_\lambda\big({\rm e}^tx;y_1\big)^*_{m,j}v\big\Vert_{F,\rho_0,y_1} \\
\qquad{}=C\sum_{m=0}^\infty \sum_{j=1}^{N_m}\big(1+m^k\big)
\big\Vert f_{m,j}\big({\rm e}^{-t}y_1\big)\big\Vert_{\mu,\rho_0,y_1}\big\Vert\hat{\rK}_\lambda\big({\rm e}^tx;y_1\big)^*_{m,j}v\big\Vert_{\mu,\rho_0,y_1} \\
\qquad{}=C\sum_{m=0}^\infty \sum_{j=1}^{N_m}\big(1+m^k\big)
s^{-m}\big\Vert f_{m,j}\big(s{\rm e}^{-t}y_1\big)\big\Vert_{\mu,\rho_0,y_1}\big\Vert\hat{\rK}_\lambda\big({\rm e}^tx;y_1\big)^*_{m,j}v\big\Vert_{\mu,\rho_0,y_1} \\
\qquad{}\le C\sum_{m=0}^\infty \sum_{j=1}^{N_m}\big(1+m^k\big)
s^{-m}\big\Vert f\big(s{\rm e}^{-t}y_1\big)\big\Vert_{\mu,\rho_0,y_1}\big\Vert\hat{\rK}_\lambda\big({\rm e}^tx;y_1\big)^*v\big\Vert_{\mu,\rho_0,y_1} \\
\qquad{}\le C\sum_{m=0}^\infty N_m\big(1+m^k\big)
s^{-m}\big\Vert f\big(s{\rm e}^{-t}y_1\big)\big\Vert_{\mu,\rho_0,y_1}\big\Vert\hat{\rK}_\lambda\big({\rm e}^tx;y_1\big)^*v\big\Vert_{\mu,\rho_0,y_1} \\
\qquad{}\le C'\sum_{m=0}^\infty N_m\big(1+m^k\big)
s^{-m}\sup_{|y_1|_\infty\le s{\rm e}^{-t}}|f(y_1)|_{\rho_0}\sup_{y_1\in D_1}\big|\hat{\rK}_\lambda({\rm e}^tx;y_1)^*v\big|_{\rho_0},
\end{gather*}
and thus we get{\samepage
\begin{gather*}
 |((\cF_{\lambda,\rho}f)(x),v)_\tau|\\
 \le C'\chi_\rho\big({\rm e}^{t\hbar}\big)\chi_\tau\big({\rm e}^{-t\hbar}\big)\sum_{m=0}^\infty N_m(1+m^k)
s^{-m}\sup_{|y_1|_\infty\le s{\rm e}^{-t}}|f(y_1)|_{\rho_0}\sup_{y_1\in D_1}\big|\hat{\rK}_\lambda\big({\rm e}^tx;y_1\big)^*v\big|_{\rho_0}.
\end{gather*}
Therefore $\cF_{\lambda,\rho}$ extends as a continuous operator from $\cO_\lambda(D_1,W)$ to $\cO_\lambda(D,V)$.}

Next we work with the differential expression. By (\ref{ext_of_FW}), as in (\ref{invariance_Fn}),
\begin{align*}
F_{\lambda,\rho}(x_2;w_1)
&=\sum_{m=0}^\infty \sum_{j=1}^{N_m}p_{m,j}(\lambda)q_{m,j}(\lambda)\hat{\rK}_{m,j}(x_2;w_1) \\
&=\chi_\rho\big({\rm e}^{t\hbar}\big)\chi_\tau\big({\rm e}^{-t\hbar}\big)
\sum_{m=0}^\infty \sum_{j=1}^{N_m}p_{m,j}(\lambda)q_{m,j}(\lambda)\hat{\rK}_{m,j}\big({\rm e}^tx_2;{\rm e}^{-t}w_1\big),
\end{align*}
and for $f\in\cO(D_1,W)$, $v\in V$,
\begin{gather*}
 |(\cF_{\lambda,\rho}f(x),v)_\tau|=\left|\left(F_{\lambda,\rho}\left(x_2;\overline{\frac{\partial}{\partial x_1}}\right)f(x_1),v\right)_\tau\right| \\
 =\left|\chi_\rho\big({\rm e}^{t\hbar}\big)\chi_\tau\big({\rm e}^{-t\hbar}\big)\sum_{m=0}^\infty \sum_{j=1}^{N_m}p_{m,j}(\lambda)q_{m,j}(\lambda)
\left(\hat{\rK}_{m,j}\left({\rm e}^tx_2;{\rm e}^{-t}\overline{\frac{\partial}{\partial x_1}}\right)f(x_1),v\right)_\tau\right| \\
 \le C\chi_\rho\big({\rm e}^{t\hbar}\big)\chi_\tau\big({\rm e}^{-t\hbar}\big)\sum_{m=0}^\infty \sum_{j=1}^{N_m}\big(1+m^k\big)p_{m,j}(\mu)q_{m,j}(\mu)
\left|\left(\hat{\rK}_{m,j}\left({\rm e}^tx_2;{\rm e}^{-t}\overline{\frac{\partial}{\partial x_1}}\right)f(x_1),v\right)_\tau\right|.
\end{gather*}
As in the holomorphic discrete series case, if ${\rm e}^{-t}<s-|x_2|_\infty$ then
\begin{gather*}
 p_{m,j}(\mu)q_{m,j}(\mu)\left|\left(\hat{\rK}_{m,j}\left({\rm e}^tx_2;{\rm e}^{-t}\overline{\frac{\partial}{\partial x_1}}\right)f(x_1),
v\right)_\tau\right| \\
\qquad{}=\frac{p_{m,j}(\mu)q_{m,j}(\mu)}{m!}\left|\left\langle {\rm e}^{-mt}
\left(y_1\,\middle|\overline{\frac{\partial}{\partial x_1}}\right)_{\fp^+_1}^mf(x_1),
\hat{\rK}_{m,j}\big({\rm e}^tx_2;y_1\big)^*v\right\rangle_{F,\rho_0,y_1}\right| \\
\qquad{}=\frac{1}{m!}\left|\left\langle {\rm e}^{-mt}
\left(y_1\,\middle|\overline{\frac{\partial}{\partial x_1}}\right)_{\fp^+_1}^mf(x_1),
q_{m,j}(\mu)\hat{\rK}_{m,j}\big({\rm e}^tx_2;y_1\big)^*v\right\rangle_{\mu,\rho_0,y_1}\right| \\
\qquad{}\le \frac{{\rm e}^{-mt}}{m!}
\left\Vert\left(y_1\,\middle|\overline{\frac{\partial}{\partial x_1}}\right)_{\fp^+_1}^mf(x_1)\right\Vert_{\mu,\rho_0,y_1}
\big\Vert q_{m,j}(\mu)\hat{\rK}_{m,j}\big({\rm e}^tx_2;y_1\big)^*v\big\Vert_{\mu,\rho_0,y_1} \\
\qquad{}\le \frac{{\rm e}^{-mt}}{m!}
\left\Vert\left(y_1\,\middle|\overline{\frac{\partial}{\partial x_1}}\right)_{\fp^+_1}^mf(x_1)\right\Vert_{\mu,\rho_0,y_1}
\big\Vert \hat{\rK}_\mu\big({\rm e}^tx_2;y_1\big)^*v\big\Vert_{\mu,\rho_0,y_1} \\
\qquad{}\le C_\rho'\left(\frac{{\rm e}^{-t}}{s-|x_1|_\infty}\right)^m\sup_{|\xi_1|_\infty\le s}|f(\xi_1)|_{\rho_0}
\big(\big\langle\hat{\rK}_\mu({\rm e}^tx_2;\cdot)^*,\hat{\rK}_\mu\big({\rm e}^tx_2;\cdot\big)^*\big\rangle_{\mu,\rho_0}v,v\big)_\tau^{1/2}.
\end{gather*}
where we have used (\ref{higherdiff}). Therefore we have
\begin{gather*}
 |(\cF_{\lambda,\rho}f(x),v)_\tau|\\
 \le C\chi_\rho\big({\rm e}^{t\hbar}\big)\chi_\tau\big({\rm e}^{-t\hbar}\big)
\sum_{m=0}^\infty \sum_{j=1}^{N_m}\big(1+m^k\big)p_{m,j}(\mu)q_{m,j}(\mu)
\left|\left(\hat{\rK}_{m,j}\left({\rm e}^tx_2;{\rm e}^{-t}\overline{\frac{\partial}{\partial x_1}}\right)f(x_1),v\right)_\tau\right| \\
 \le CC_\rho'\chi_\rho\big({\rm e}^{t\hbar}\big)\chi_\tau\big({\rm e}^{-t\hbar}\big)
\sum_{m=0}^\infty N_m\big(1+m^k\big)\left(\frac{{\rm e}^{-t}}{s-|x_1|_\infty}\right)^m \\
\quad{}\times\sup_{|\xi_1|_\infty\le s}|f(\xi_1)|_{\rho_0}\big(\big\langle\hat{\rK}_\mu\big({\rm e}^tx_2;\cdot\big)^*,
\hat{\rK}_\mu\big({\rm e}^tx_2;\cdot\big)^*\big\rangle_{\mu,\rho_0}v,v\big)_\tau^{1/2}.
\end{gather*}
This converges if ${\rm e}^{-t}<s-|x_1|_\infty$, and as in the holomorphic discrete series case, this estimate holds if $|x_1|_\infty+|x_2|_\infty<1$.
Since $\cF_{\lambda,\rho}$ is continuous as an operator from $\cO_\lambda(D_1,W)$ to $\cO_\lambda(D,V)$,
$\cF_{\lambda,\rho}f(x)$ must extend holomorphically to whole $D$.
\end{proof}

\section{Preliminaries for explicit calculation}\label{section4}
\subsection[Parametrization of representations of classical $K^\BC$]{Parametrization of representations of classical $\boldsymbol{K^\BC}$}
In this subsection we fix the realization of root systems and parametrization of irreducible finite-dimensional representations of $K^\BC$
when it is classical.
First we set $K^\BC:={\rm GL}(r,\BC)$ or $\operatorname{SO}(n,\BC)$. We take a Cartan subalgebra $\fh^\BC\subset\fk^\BC$,
and take a basis $\{t_1,\ldots,t_r\}\subset\fh^\BC$, with the dual basis $\{\varepsilon_1,\ldots,\varepsilon_r\}\subset\big(\fh^\BC\big)^\vee$,
where $r=\big\lfloor\frac{n}{2}\big\rfloor$ when $K^\BC=\operatorname{SO}(n,\BC)$, such that the positive root system $\Delta_+\big(\fk^\BC,\fh^\BC\big)$ is given by
\begin{gather*} \Delta_+(\fk^\BC,\fh^\BC)=\begin{cases}\{\varepsilon_j-\varepsilon_k\colon 1\le j<k\le r\},& K^\BC={\rm GL}(r,\BC) ,\\
\{\varepsilon_j\pm \varepsilon_k\colon 1\le j<k\le r\},& K^\BC=\operatorname{SO}(2r,\BC) ,\\
\{\varepsilon_j\pm \varepsilon_k\colon 1\le j<k\le r\}\cup\{\varepsilon_j\colon 1\le j\le r\},& K^\BC=\operatorname{SO}(2r+1,\BC) .\end{cases} \end{gather*}
For $\bm\in\BZ^r$ with $m_1\ge\cdots\ge m_r$, we denote
the irreducible representation of ${\rm GL}(r,\BC)$ with highest weight $m_1\varepsilon_1+\cdots+m_r\varepsilon_r$ by
$\big(\tau_\bm^{(r)},V_\bm^{(r)}\big)$,
the irreducible representation of ${\rm GL}(r,\BC)$ with lowest weight $-m_1\varepsilon_1-\cdots-m_r\varepsilon_r$ by
$\big(\tau_\bm^{(r)\vee},V_\bm^{(r)\vee}\big)$,
and for $\bm\in\BZ^r$ with $m_1\ge\cdots\ge m_{r-1}\ge|m_r|$ (when $n=2r$) or with $m_1\ge\cdots\ge m_r\ge 0$ (when $n=2r+1$), we denote
the irreducible representation of $\operatorname{SO}(n,\BC)$ with lowest weight $-m_1\varepsilon_1-\cdots-m_r\varepsilon_r$ by
$\big(\tau_\bm^{[n]\vee},V_\bm^{[n]\vee}\big)$.
We omit the superscript $(r)$ and $[n]$ if there is no confusion.

Next we set $G:=\operatorname{Sp}(r,\BR)$, $U(q,s)$, $\operatorname{SO}^*(2s)$, or $\operatorname{SO}_0(2,n)$, and let $K^\BC$ be the complexification of their maximal compact subgroups,
that is, $K^\BC={\rm GL}(r,\BC)$, ${\rm GL}(q,\BC)\times {\rm GL}(s,\BC)$, ${\rm GL}(s,\BC)$ or $\operatorname{SO}(2,\BC)\times \operatorname{SO}(n,\BC)$ respectively.
Then the irreducible finite-dimensional representations of~$K^\BC$ are of the form
$V_\bm^{(r)}$, $V_\bm^{(q)}\boxtimes V_\bn^{(s)\vee}$, $V_\bm^{(s)}$, or $\BC_{m_0}\boxtimes V_\bm^{[n]}$ respectively,
where we normalize the representation $(\chi^{m_0},\BC_{m_0})$ of $\operatorname{SO}(2,\BC)$ later as in (\ref{chi_so(2n)}).
Also, under the suitable ordering of $\Delta\big(\fg^\BC,\fh^\BC\big)$, $\cP_\bm(\fp^+)$ in Theorem \ref{HKS} is given by
\begin{gather*} \cP_\bm(\fp^+)\simeq\begin{cases}
V_{(2m_1,2m_2,\ldots,2m_r)}^{(r)\vee}=:V_{2\bm}^{(r)\vee}, & G=\operatorname{Sp}(r,\BC),\ \bm\in\BZ_{++}^r,\\
V_\bm^{(q)\vee}\boxtimes V_\bm^{(s)}, & G=U(q,s),\ \bm\in\BZ_{++}^{\min\{q,s\}},\\
V_{(m_1,m_1,m_2,m_2,\ldots,m_{\lfloor s/2\rfloor},m_{\lfloor s/2\rfloor}(,0))}^{(s)\vee}=:V_{\bm^2}^{(s)\vee},
& G=\operatorname{SO}^*(2s),\ \bm\in\BZ_{++}^{\lfloor s/2\rfloor},\\
\BC_{-m_1-m_2}\boxtimes V_{(m_1-m_2,0,0,\ldots,0)}^{[n]\vee}, & G=\operatorname{SO}_0(2,n),\ \bm\in\BZ_{++}^2, \end{cases} \end{gather*}
where, when $s<q$ and $\bm\in\BZ_{++}^s$, we denote $V_{(m_1,\ldots,m_s,0,\ldots,0)}^{(q)}=:V_\bm^{(q)}$ etc.,
and the character of~$K^\BC$ normalized as (\ref{char}) is given by
\begin{gather*} \chi\simeq\begin{cases}
V_{(1,1,\ldots,1)}^{(r)}, & G=\operatorname{Sp}(r,\BC),\\
V_{\frac{s}{q+s}(1,1,\ldots,1)}^{(q)}\boxtimes V_{\frac{q}{q+s}(1,1,\ldots,1)}^{(s)\vee}, & G=\operatorname{SU}(q,s),\\
V_{\left(\frac{1}{2},\frac{1}{2},\ldots,\frac{1}{2}\right)}^{(s)}, & G=\operatorname{SO}^*(2s),\\
\BC_{1}\boxtimes V_{(0,0,\ldots,0)}^{[n]}, &G=\operatorname{SO}_0(2,n). \end{cases} \end{gather*}

We have the local isomorphism $\operatorname{SO}^*(6)\simeq \operatorname{SU}(1,3)$. Accordingly, we identify the representation
\begin{gather*} V_{(m_1,m_2,m_3)}^{(3)\vee}=V_{\frac{1}{3}(2m_1-m_2-m_3,-m_1+2m_2-m_3,-m_1-m_2+2m_3)}^{(3)\vee}\otimes\chi_{\operatorname{SO}^*(6)}^{-\frac{2}{3}|\bm|} \end{gather*}
of $U(3)\subset \operatorname{SO}^*(6)$ and the representation
\begin{gather*}
 \big(V_0^{(1)\vee}\boxtimes V_{\frac{1}{3}(2m_1-m_2-m_3,-m_1+2m_2-m_3,-m_1-m_2+2m_3)}^{(3)\vee}\big)\otimes\chi_{\operatorname{SU}(1,3)}^{-\frac{2}{3}|\bm|}\\
 \qquad {} =V_{\frac{1}{2}|\bm|}^{(1)\vee}\boxtimes V_{\frac{1}{2}(m_1-m_2-m_3,-m_1+m_2-m_3,-m_1-m_2+m_3)}^{(3)\vee}
\end{gather*}
of $S(U(1)\times U(3))\subset \operatorname{SU}(1,3)$. Also we write $V_{n_0}^{(1)\vee}\boxtimes V_{(n_1,n_2,n_3)}^{(3)\vee}=:V_{(n_0;n_1,n_2,n_3)}^{(1,3)\vee}
\simeq V_{\substack{(n_0+a;n_1+a,\ \\ \ n_2+a,n_3+a)}}^{(1,3)\vee}$ for any $a\in\BR$, so that
\begin{gather*} V_{(m_1,m_2,m_3)}^{(3)\vee}\simeq V_{\frac{1}{2}(|\bm|;m_1-m_2-m_3,-m_1+m_2-m_3,-m_1-m_2+m_3)}^{(1,3)\vee}
\simeq V_{(0;-m_2-m_3,-m_1-m_3,-m_1-m_2)}^{(1,3)\vee}. \end{gather*}

\subsection{Explicit realization of classical groups and bounded symmetric domains}\label{realize}
In this subsection, we review and fix the explicit realization of groups
\begin{gather*} G=\operatorname{Sp}(r,\BR),\; U(q,s),\; \operatorname{SO}^*(2s),\; \operatorname{SO}_0(2,n). \end{gather*}
First we deal with $G=\operatorname{Sp}(r,\BR)$, $U(q,s)$, and $\operatorname{SO}^*(2s)$. For these groups we have
\begin{gather*} (r,n,d,p)=\begin{cases}
\big(r,\frac{1}{2}r(r+1),1,r+1\big),& G=\operatorname{Sp}(r,\BR) ,\\
(\min\{q,s\},qs,2,q+s),& G=U(q,s) ,\\
\big(\big\lfloor\frac{s}{2}\big\rfloor,\frac{1}{2}s(s-1),4,2(s-1)\big), & G=\operatorname{SO}^*(2s). \end{cases} \end{gather*}
We realize these groups as
\begin{gather*}
\operatorname{Sp}(r,\BR) :=\left\{ g\in {\rm GL}(2r,\BC)\colon g\begin{pmatrix}0&I_r\\-I_r&0\end{pmatrix}{}^t\hspace{-1pt}g=\begin{pmatrix}0&I_r\\-I_r&0\end{pmatrix},\;
g\begin{pmatrix}0&I_r\\I_r&0\end{pmatrix}=\begin{pmatrix}0&I_r\\I_r&0\end{pmatrix}\bar{g}\right\},\\
U(q,s) :=\left\{ g\in {\rm GL}(q+s,\BC)\colon g\begin{pmatrix}I_q&0\\0&-I_s\end{pmatrix}g^*=\begin{pmatrix}I_q&0\\0&-I_s\end{pmatrix}\right\},\\
\operatorname{SO}^*(2s) :=\left\{ g\in {\rm GL}(2s,\BC)\colon g\begin{pmatrix}0&I_s\\I_s&0\end{pmatrix}{}^t\hspace{-1pt}g=\begin{pmatrix}0&I_s\\I_s&0\end{pmatrix},\;
g\begin{pmatrix}0&I_s\\-I_s&0\end{pmatrix}=\begin{pmatrix}0&I_s\\-I_s&0\end{pmatrix}\bar{g}\right\}.
\end{gather*}
Then $K$ is isomorphic to $U(r)$, $U(q)\times U(s)$, and $U(s)$ respectively. We embed $K$ into $G$ as
\begin{alignat*}{3}
& k \mapsto \begin{pmatrix}k&0\\0&{}^t\hspace{-1pt}k^{-1}\end{pmatrix}, \qquad & & G=\operatorname{Sp}(r,\BR),\; \operatorname{SO}^*(2s),& \\
& (k_1,k_2) \mapsto \begin{pmatrix}k_1&0\\0&k_2\end{pmatrix}, \qquad & &G=U(q,s).&
\end{alignat*}
Clearly these extend to the embeddings of complexified Lie groups $K^\BC\to G^\BC$.
When $G=\operatorname{Sp}(r,\BR)$ or $\operatorname{SO}^*(2s)$, we sometimes write the elements of $K$ or $K^\BC$ as $\big(k,{}^t\hspace{-1pt}k^{-1}\big)$,
and deal with these inclusions in a unified way. Similarly, $\fp^+$ is isomorphic to $\Sym(r,\BC)$, $M(q,s;\BC)$ and $\Skew(s,\BC)$ respectively.
We embed $\fp^+$ into $\fg^\BC$ as $x\mapsto \left(\begin{smallmatrix}0&x\\0&0\end{smallmatrix}\right)$.
Then the rational action of $G$ on~$\fp^+$ is given by
\begin{gather*} \begin{pmatrix}a&b\\c&d\end{pmatrix}x=(ax+b)(cx+d)^{-1},\qquad
 \begin{pmatrix}a&b\\c&d\end{pmatrix}\in G,\quad x\in\fp^+ . \end{gather*}
The Jordan triple system structure on $\fp^+$ is given by
\begin{gather*} Q(x)y=xy^*x, \qquad x,y\in\fp^+, \end{gather*}
the inner product (\ref{innerprod}) is given by
\begin{gather*} (x|y)_{\fp^+}=\frac{1}{\varepsilon}\Tr(xy^*), \qquad x,y\in\fp^+, \qquad
\varepsilon=\begin{cases} 1, & G=\operatorname{Sp}(r,\BR),\; U(q,s),\\ 2, &G=\operatorname{SO}^*(2s),\end{cases} \end{gather*}
the Bergman operator $B\colon D\times \overline{D}\to K^\BC$ is given by
\begin{gather*} B(x,y)=\big(I-xy^*,(I-y^*x)^{-1}\big), \qquad x,y\in\fp^+, \end{gather*}
the quasi-inverse is given by
\begin{gather*} x^y=x(I-y^*x)^{-1}=(I-xy^*)^{-1}x, \qquad x,y\in\fp^+, \end{gather*}
and the bounded symmetric domain $D$ is given by
\begin{gather*} D=\{x\in\fp^+\colon I-xx^*\text{ is positive definite}\}. \end{gather*}

Let $(\tau,V)$ be an irreducible representation of $\tilde{K}^\BC$ with $\tilde{K}$-invariant inner product $(\cdot,\cdot)_\tau$.
Then~$\tilde{G}$ acts on $\cO(D,V)$ as
\begin{gather*} \hat{\tau}\left(\begin{pmatrix}a&b\\c&d\end{pmatrix}^{-1}\right)f(w)
=\tau\big(a^*+xb^*,(cx+d)^{-1}\big)f\big((ax+b)(cx+d)^{-1}\big), \end{gather*}
where we regard $\big(a^*+xb^*,(cx+d)^{-1}\big)$ as the lift on $\tilde{K}^\BC$, and this action preserves the inner product
\begin{gather*} \langle f,g\rangle_{\hat{\tau}}=C_\tau\int_D
\big(\tau\big((I-xx^*)^{-1},I-x^*x\big)f(x),g(x)\big)_\tau \det(I-xx^*)^{-p/\varepsilon}{\rm d}x. \end{gather*}

Especially, for $G=\operatorname{Sp}(r,\BR)$ or $\operatorname{SO}^*(2s)$, let $(\tau,V)=\big(\chi^{-\lambda},\BC\big)$ be a 1-dimensional representation of $\tilde{K}^\BC$,
normalized as in (\ref{char}), that is,
\begin{gather*} \chi(k):=\det(k)^{1/\varepsilon}. \end{gather*}
Then the $\tilde{G}$-invariant inner product on $\cH_\tau(D,\BC)=\cH_\lambda(D)$ is given by
\begin{gather}\label{inner_prod_scalar_type}
\langle f,g\rangle_\lambda=C_\lambda\int_D f(x)\overline{g(x)} \det(I-xx^*)^{(\lambda-p)/\varepsilon}{\rm d}x,
\end{gather}
which converges for any polynomial $f$, $g$ if $\lambda>p-1$. When $G=U(q,s)$, we define $\big(\chi^{-\lambda_1-\lambda_2},\BC\big)$ as
\begin{gather}\label{charuqs}
\chi^{-\lambda_1-\lambda_2}(k_1,k_2):=\det(k_1)^{-\lambda_1}\det(k_2)^{\lambda_2},
\end{gather}
and write the corresponding representation of $\tilde{G}$ as $\cH_{\lambda_1+\lambda_2}(D)$. Then again the $\tilde{G}$-invariant inner product is given by~(\ref{inner_prod_scalar_type}) with $\lambda=\lambda_1+\lambda_2$.

Next we deal with $G=\operatorname{SO}_0(2,n)$ case with $n\ge 3$. In this case, we have
\begin{gather*} (r,n,d,p)=(2,n,n-2,n). \end{gather*}
We realize this group as
\begin{gather*} \operatorname{SO}_0(2,n):=\left\{ g\in {\rm SL}(2+n,\BR)\colon g\begin{pmatrix}I_2&0\\0&-I_n\end{pmatrix}{}^t\hspace{-1pt}g=\begin{pmatrix}I_2&0\\0&-I_n\end{pmatrix}
\right\}_0 \end{gather*}
as usual, where the subscript 0 means the identity component.
We have $K\simeq \operatorname{SO}(2)\times \operatorname{SO}(n)$, embedded into $G$ as $(k_1,k_2)\mapsto \left(\begin{smallmatrix}k_1&0\\0&k_2\end{smallmatrix}\right)$,
and $\fp^+\simeq\BC^n$, embedded into $\fg^\BC$ as
\begin{gather*} x\mapsto \begin{pmatrix}0&0&{}^t\hspace{-1pt}x\\0&0&\sqrt{-1}\hspace{1pt}{}^t\hspace{-1pt}x\\x&\sqrt{-1}x&0\end{pmatrix}, \end{gather*}
where we regard $x$ as a column vector. For $x={}^t\hspace{-1pt}(x_1,\ldots,x_n),y={}^t\hspace{-1pt}(y_1,\ldots,y_n)\in\fp^+$, we write
\begin{gather}\label{quadra}
q(x):=x_1^2+\cdots+x_n^2,\qquad q(x,y):=x_1y_1+\cdots+x_ny_n.
\end{gather}
Then the Jordan triple system structure on $\fp^+$ is given by
\begin{gather*} Q(x)y=2q(x,\overline{y})x-q(x)\overline{y}, \qquad x,y\in\fp^+, \end{gather*}
the inner product (\ref{innerprod}) is given by
\begin{gather*} (x|y)_{\fp^+}=2q(x,\overline{y}), \qquad x,y\in\fp^+, \end{gather*}
the generic norm is given by
\begin{gather*} h(x,y)=1-2q(x,\overline{y})+q(x)\overline{q(y)}, \qquad x,y\in\fp^+, \end{gather*}
the Bergman operator $B\colon D\times \overline{D}\to \End(\fp^+)\simeq M(n,\BC)$ is given by
\begin{gather*} B(x,y)=h(x,y)I-2(1-q(x,\overline{y}))\big(xy^*-\overline{y}\hspace{1pt}{}^t\hspace{-1pt}x\big)
+2\big(xy^*-\overline{y}\hspace{1pt}{}^t\hspace{-1pt}x\big)^2, \qquad x,y\in\fp^+, \end{gather*}
the quasi-inverse is given by
\begin{gather*} x^y=h(x,y)^{-1}(x-q(x)\overline{y}), \qquad x,y\in\fp^+, \end{gather*}
and the bounded symmetric domain $D$ is the connected component of $\{h(x,x)>0\}$ which contains the origin.

Let $(\tau,V)=\big(\chi^{-\lambda},\BC\big)$ be a 1-dimensional representation of $\tilde{K}^\BC$,
where $\chi$ is normalized as in~(\ref{char}), that is,
\begin{gather}\label{chi_so(2n)}
\chi\left(\exp\left(a\begin{pmatrix}0&-\sqrt{-1}\\\sqrt{-1}&0\end{pmatrix}\right),k_2\right)={\rm e}^a,
\qquad a\in\BC,\quad k_2\in \operatorname{SO}(n,\BC).
\end{gather}
Then the $\tilde{G}$-action on $\cO(D)$ preserves the inner product
\begin{gather}\label{inn_prod_so(2n)}
\langle f,g\rangle_\lambda=C_\lambda\int_D f(x)\overline{g(x)}\big(1-2q(x,\overline{x})+|q(x)|^2\big)^{\lambda-n}{\rm d}x.
\end{gather}
By Theorem \ref{HKS}, the space of $\tilde{K}$-finite vectors $\cO(D)_{\tilde{K}}=\cP(\fp^+)$ is decomposed as
\begin{gather}\label{poly_rank2}
\cP(\fp^+)=\bigoplus_{\bm\in\BZ_{++}^2}\cP_\bm(\fp^+)
\simeq\bigoplus_{\bm\in\BZ_{++}^2}\chi^{-m_1-m_2}\boxtimes V_{(m_1-m_2,0,\ldots,0)}^{[n]\vee},
\end{gather}
and by (\ref{scalar_norm}), for $f=f_\bm\in\cP_\bm(\fp^+)$, the ratio of this norm and the Fischer norm (\ref{Fischer}) is given by
\begin{gather}\label{norm_rank2}
\frac{\Vert f_\bm\Vert_\lambda^2}{\Vert f_\bm\Vert_F^2}=\frac{1}{(\lambda)_{\bm,n-2}}
=\frac{1}{(\lambda)_{m_1}\big(\lambda-\frac{n-2}{2}\big)_{m_2}}.
\end{gather}
When $n=1,2$, we have $\mathfrak{so}(2,1)\simeq\mathfrak{sl}(2,\BR)$, which is of real rank 1,
or $\mathfrak{so}(2,2)\simeq\mathfrak{sl}(2,\BR)\oplus\mathfrak{sl}(2,\BR)$,
which is not simple, and thus their properties are a bit different from those of $n\ge 3$ cases.
However, for convenience, we use the same inner product as (\ref{inn_prod_so(2n)}), so that
\begin{gather*} \cH_\lambda(D_{\operatorname{SO}_0(2,1)})\simeq \cH_{2\lambda}(D_{{\rm SL}(2,\BR)}),\qquad
\cH_\lambda(D_{\operatorname{SO}_0(2,2)})\simeq \cH_\lambda(D_{{\rm SL}(2,\BR)}) \hboxtimes \cH_\lambda(D_{{\rm SL}(2,\BR)}). \end{gather*}
When $n=1$, the space of $\tilde{K}$-finite vectors $\cO(D)_{\tilde{K}}=\cP(\fp^+)$ is decomposed as
\begin{gather*} \cP(\fp^+)=\BC[x]=\bigoplus_{l=0}^\infty\BC x^l
=:\bigoplus_{l=0}^\infty\cP_{\left(\left\lceil\frac{l}{2}\right\rceil,\left\lfloor\frac{l}{2}\right\rfloor\right)}(\fp^+) \end{gather*}
(so that we only allow $m_1=m_2$ and $m_1=m_2+1$ cases).
Then for $f=f_l\in\cP_{\left(\left\lceil\frac{l}{2}\right\rceil,\left\lfloor\frac{l}{2}\right\rfloor\right)}(\fp^+)$,
the ratio of two norms is given by
\begin{gather*} \frac{\Vert f_l\Vert_\lambda^2}{\Vert f_l\Vert_F^2}
=\frac{1}{(\lambda)_{\left\lceil\frac{l}{2}\right\rceil}\big(\lambda+\frac{1}{2}\big)_{\left\lfloor\frac{l}{2}\right\rfloor}}
=\frac{2^l}{(2\lambda)_l}. \end{gather*}
When $n=2$, the space of $\tilde{K}$-finite vectors $\cO(D)_{\tilde{K}}=\cP(\fp^+)$ is decomposed as
\begin{gather*} \cP(\fp^+)=\BC[x_1,x_2]=\bigoplus_{\bm\in(\BZ_{\ge 0})^2}\BC\big(x_1+\sqrt{-1}x_2\big)^{m_1}\big(x_1-\sqrt{-1}x_2\big)^{m_2}
=:\bigoplus_{\bm\in(\BZ_{\ge 0})^2}\cP_\bm(\fp^+) \end{gather*}
(we do not assume $m_1\ge m_2$), and for $f=f_\bm\in\cP_\bm(\fp^+)$, the ratio of two norms is given by
\begin{gather*} \frac{\Vert f_\bm\Vert_\lambda^2}{\Vert f_\bm\Vert_F^2}=\frac{1}{(\lambda)_{m_1}(\lambda)_{m_2}}. \end{gather*}
That is, when $n=1,2$ the range of summation in (\ref{poly_rank2}) changes, but the formula of the norm~(\ref{norm_rank2}) does not change.

\subsection{Root systems of exceptional Lie algebras}
First we consider the Lie algebra $\mathfrak{e}_{7(-25)}$. We take a Cartan subalgebra
$\fh\subset\mathfrak{so}(2)\oplus\mathfrak{e}_6\subset\mathfrak{e}_{7(-25)}$, and we take three kinds of basis
$\{\gamma_1,\gamma_2,\gamma_3,\ve_1,\ve_2,\ve_3,\ve_4\}\subset\big(\fh^\BC\big)^\vee$,
$\big\{\delta_1^{(i)},\ldots,\delta_8^{(i)}\big\}\subset\big(\fh^\BC\big)^\vee\oplus\BC$ $(i=1,2)$
such that the simple system of positive roots is given by
\begin{gather*}
\alpha_1 =\tfrac12 (\gamma_1-\gamma_2)+\tfrac{1}{2}(-\ve_1-\ve_2-\ve_3-\ve_4) \\
\hphantom{\alpha_1}{} =\tfrac{1}{2}\big({-}\delta_1^{(i)}+\delta_2^{(i)}+\delta_3^{(i)}+\delta_4^{(i)}-\delta_5^{(i)}-\delta_6^{(i)}-\delta_7^{(i)}+\delta_8^{(i)}\big),\\
\alpha_2 =\ve_3-\ve_4 =\delta_5^{(i)}-\delta_6^{(i)},\qquad
\alpha_3 =\ve_3+\ve_4 =\delta_7^{(i)}-\delta_8^{(i)},\\
\alpha_4 =\ve_2-\ve_3
 =\tfrac{1}{2}\big(\delta_1^{(i)}-\delta_2^{(i)}-\delta_3^{(i)}+\delta_4^{(i)}-\delta_5^{(i)}+\delta_6^{(i)}-\delta_7^{(i)}+\delta_8^{(i)}\big),\\
\alpha_5 =\ve_1-\ve_2 =\delta_3^{(i)}-\delta_4^{(i)},\\
\alpha_6 =\tfrac12 (\gamma_2-\gamma_3)-\ve_1
 =\tfrac{1}{2}\big({-}\delta_1^{(i)}+\delta_2^{(i)}-\delta_3^{(i)}+\delta_4^{(i)}+\delta_5^{(i)}+\delta_6^{(i)}-\delta_7^{(i)}-\delta_8^{(i)}\big),\\
\beta_{\mathfrak{e}_{7(-25)}} =\gamma_3
 =\tfrac{1}{2}\big(\delta_1^{(1)}+\delta_2^{(1)}-\delta_3^{(1)}-\delta_4^{(1)}-\delta_5^{(1)}-\delta_6^{(1)}+\delta_7^{(1)}+\delta_8^{(1)}\big)
 =\delta_7^{(2)}+\delta_8^{(2)},
\end{gather*}
where $\beta_{\mathfrak{e}_{7(-25)}}$ is the unique non-compact simple root. Here, the expression in the basis $\{\gamma_1,\gamma_2,\gamma_3,\allowbreak\ve_1,\ve_2,\ve_3,\ve_4\}\subset\big(\fh^\BC\big)^\vee$
is a modification of the one used in \cite{Y2}. Next let
\begin{gather*}
\alpha_{134} =\alpha_1+\alpha_3+\alpha_4 =\delta_4^{(i)}-\delta_5^{(i)},\qquad
\alpha_{456} =\alpha_4+\alpha_5+\alpha_6 =\delta_6^{(i)}-\delta_7^{(i)},\\
\alpha_{23445} =\alpha_2+\alpha_3+2\alpha_4+\alpha_5 =\delta_1^{(i)}-\delta_2^{(i)},
\end{gather*}
and
\begin{gather*}
\beta_{\mathfrak{sl}(2,\BR)} =\gamma_1,\qquad\!\!
\beta_{\mathfrak{so}(2,10)} =\gamma_3=\beta_{\mathfrak{e}_{7(-25)}},\qquad\!\!
\beta_{\mathfrak{e}_{6(-14)}} =\tfrac{1}{2}(\gamma_2+\gamma_3)+\tfrac{1}{2}(-\ve_1-\ve_2-\ve_3+\ve_4),\\
\beta_{\mathfrak{su}(2,6)} =\delta_2^{(1)}-\delta_3^{(1)}
=\tfrac{1}{2}\big({-}\delta_1^{(2)}+\delta_2^{(2)}-\delta_3^{(2)}+\delta_4^{(2)}+\delta_5^{(2)}+\delta_6^{(2)}+\delta_7^{(2)}+\delta_8^{(2)}\big),\\
\beta_{\mathfrak{so}^*(12)}
 =\tfrac{1}{2}\big(\delta_1^{(1)}+\delta_2^{(1)}-\delta_3^{(1)}-\delta_4^{(1)}-\delta_5^{(1)}-\delta_6^{(1)}+\delta_7^{(1)}+\delta_8^{(1)}\big)
=\delta_7^{(2)}+\delta_8^{(2)}=\beta_{\mathfrak{e}_{7(-25)}}.
\end{gather*}
We realize $\mathfrak{sl}(2,\BR)\oplus\mathfrak{so}(2,10),\mathfrak{u}(1)\oplus\mathfrak{e}_{6(-14)},
\mathfrak{su}(2,6),\mathfrak{su}(2)\oplus\mathfrak{so}^*(12)\subset\mathfrak{e}_{7(-25)}$
such that the simple systems of positive roots are given by
\begin{gather*}
\Pi(\mathfrak{sl}(2,\BR)\oplus\mathfrak{so}(2,10)) =\{\beta_{\mathfrak{sl}(2,\BR)}\}
\cup\{\alpha_2,\alpha_3,\alpha_4,\alpha_5,\alpha_6,\beta_{\mathfrak{so}(2,10)}\},\\
\Pi(\mathfrak{u}(1)\oplus\mathfrak{e}_{6(-14)}) =\{\alpha_2,\alpha_3,\alpha_4,\alpha_5,\alpha_6,\beta_{\mathfrak{e}_{6(-14)}}\},\\
\Pi(\mathfrak{su}(2,6)) =\{\alpha_{23445},\beta_{\mathfrak{su}(2,6)},\alpha_5,\alpha_{134},\alpha_2,\alpha_{456},\alpha_3\},\\
\Pi(\mathfrak{su}(2)\oplus\mathfrak{so}^*(12)) =\{\alpha_{23445}\}
\cup\{\alpha_5,\alpha_{134},\alpha_2,\alpha_{456},\alpha_3,\beta_{\mathfrak{so}^*(12)}\},
\end{gather*}
where for $\fg$ simple of Hermitian type and for a choice of a system of positive roots $\beta_{\fg}$ is the unique non-compact simple root.
Next let
\begin{gather*}
\beta_{\mathfrak{so}(2,8)} =\tfrac12 (\gamma_1+\gamma_2)+\tfrac{1}{2}(-\ve_1-\ve_2-\ve_3-\ve_4),\\
\beta_{\mathfrak{so}(2,8)'} =\tfrac12 (\gamma_1+\gamma_3)+\tfrac{1}{2}(-\ve_1-\ve_2-\ve_3+\ve_4),\\
\beta_{\mathfrak{su}(2,4)} =\delta_2^{(1)}-\delta_5^{(1)}=\beta_{\mathfrak{e}_{6(-14)}},\\
\beta_{\mathfrak{su}(4,2)}
 =\tfrac{1}{2}\big(\delta_1^{(1)}+\delta_2^{(1)}-\delta_5^{(1)}-\delta_6^{(1)}-\delta_7^{(1)}+\delta_8^{(1)}-\delta_3^{(1)}+\delta_4^{(1)}\big).
\end{gather*}
We realize $\mathfrak{u}(1)\oplus\mathfrak{so}(2,8),\mathfrak{u}(1)\oplus\mathfrak{so}(2,8)',
\mathfrak{su}(2,4)\oplus\mathfrak{su}(2),\mathfrak{su}(2)\oplus\mathfrak{su}(4,2)\subset\mathfrak{e}_{6(-14)}$ such that
the simple systems of positive roots are given by
\begin{gather*}
\Pi(\mathfrak{u}(1)\oplus\mathfrak{so}(2,8)) =\{\alpha_2,\alpha_3,\alpha_4,\alpha_5,\beta_{\mathfrak{so}(2,8)}\},\\
\Pi(\mathfrak{u}(1)\oplus\mathfrak{so}(2,8)') =\{\alpha_2,\alpha_3,\alpha_4,\alpha_5,\beta_{\mathfrak{so}(2,8)'}\},\\
\Pi(\mathfrak{su}(2,4)\oplus\mathfrak{su}(2)) =\{\alpha_{23445},\beta_{\mathfrak{su}(2,4)},\alpha_2,\alpha_{456},\alpha_3\}\cup\{\alpha_5\},\\
\Pi(\mathfrak{su}(2)\oplus\mathfrak{su}(4,2)) =\{\alpha_{23445}\}\cup\{\alpha_2,\alpha_{456},\alpha_3,\beta_{\mathfrak{su}(4,2)},\alpha_5\}.
\end{gather*}
Then $\mathfrak{su}(2,4)\oplus\mathfrak{su}(2)=\mathfrak{e}_{6(-14)}\cap\mathfrak{su}(2,6)$,
$\mathfrak{su}(2)\oplus\mathfrak{su}(4,2)=\mathfrak{e}_{6(-14)}\cap(\mathfrak{su}(2)\oplus\mathfrak{so}^*(12))$ holds.
Next we take another simple system of positive roots of $\mathfrak{e}_{7(-25)}$ as
\begin{gather*}
\alpha_1' =\delta_1^{(i)}-\delta_2^{(i)}=\alpha_{23445},\qquad
\alpha_2' =\delta_3^{(i)}-\delta_4^{(i)}=\alpha_5,\\
\alpha_3' =\tfrac{1}{2}\big({-}\delta_1^{(i)}+\delta_2^{(i)}-\delta_3^{(i)}-\delta_4^{(i)}+\delta_5^{(i)}+\delta_6^{(i)}+\delta_7^{(i)}
-\delta_8^{(i)}\big),\\
\alpha_4' =\delta_4^{(i)}-\delta_5^{(i)}=\alpha_{134},\qquad
\alpha_5' =\delta_5^{(i)}-\delta_6^{(i)}=\alpha_2,\qquad
\alpha_6' =\delta_6^{(i)}-\delta_7^{(i)}=\alpha_{456},\\
\beta_{\mathfrak{e}_{7(-25)}}
 =\tfrac{1}{2}\big(\delta_1^{(1)}+\delta_2^{(1)}-\delta_3^{(1)}-\delta_4^{(1)}-\delta_5^{(1)}-\delta_6^{(1)}+\delta_7^{(1)}+\delta_8^{(1)}\big)
=\delta_7^{(2)}+\delta_8^{(2)},
\end{gather*}
and let
\begin{gather*}
\alpha_{\substack{1233\\445}}' =\alpha_1'+\alpha_2'+2\alpha_3'+2\alpha_4'+\alpha_5'=\delta_7^{(i)}-\delta_8^{(i)}=\alpha_3,\\
\beta_{\mathfrak{e}_{6(-14)}'} =\delta_1^{(1)}-\delta_3^{(1)}
=\tfrac{1}{2}\big(\delta_1^{(2)}-\delta_2^{(2)}-\delta_3^{(2)}+\delta_4^{(2)}+\delta_5^{(2)}+\delta_6^{(2)}+\delta_7^{(2)}+\delta_8^{(2)}\big),
\end{gather*}
To this choice corresponds another realization of $\mathfrak{e}_{6(-14)}'\subset\mathfrak{e}_{7(-25)}$ such that
the simple system of positive roots is given by
\begin{gather*} \Pi(\mathfrak{e}_{6(-14)}')=\big\{\alpha_2',\alpha_3',\alpha_4',\alpha_5',\alpha_6',\beta_{\mathfrak{e}_{6(-14)}'}\big\}. \end{gather*}
In addition let
\begin{gather*}
\beta_{\mathfrak{sl}(2,\BR)'} =\delta_2^{(1)}-\delta_8^{(1)}
=\tfrac{1}{2}\big({-}\delta_1^{(2)}+\delta_2^{(2)}+\delta_3^{(2)}+\delta_4^{(2)}+\delta_5^{(2)}+\delta_6^{(2)}+\delta_7^{(2)}-\delta_8^{(2)}\big),\\
\beta_{\mathfrak{su}(1,5)} =\delta_1^{(1)}-\delta_3^{(1)}
=\tfrac{1}{2}\big(\delta_1^{(2)}-\delta_2^{(2)}-\delta_3^{(2)}+\delta_4^{(2)}+\delta_5^{(2)}+\delta_6^{(2)}+\delta_7^{(2)}+\delta_8^{(2)}\big)
=\beta_{\mathfrak{e}_{6(-14)}'},\\
\beta_{\mathfrak{so}^*(10)}
 =\tfrac{1}{2}\big(\delta_1^{(1)}+\delta_2^{(1)}-\delta_3^{(1)}-\delta_4^{(1)}-\delta_5^{(1)}+\delta_6^{(1)}+\delta_7^{(1)}-\delta_8^{(1)}\big)
=\delta_6^{(2)}+\delta_7^{(2)},
\end{gather*}
and we realize $\mathfrak{sl}(2,\BR)\oplus\mathfrak{su}(1,5),\mathfrak{u}(1)\oplus\mathfrak{so}^*(10)\subset\mathfrak{e}_{6(-14)}'$
such that the systems of positive roots are given by
\begin{gather*}\begin{split}&
\Pi(\mathfrak{sl}(2,\BR)\oplus\mathfrak{su}(1,5)) =\{\beta_{\mathfrak{sl}(2,\BR)'}\}
\cup\{\beta_{\mathfrak{su}(1,5)},\alpha_2',\alpha_4',\alpha_5',\alpha_6'\},\\
& \Pi(\mathfrak{u}(1)\oplus\mathfrak{so}^*(10))
 =\{\alpha_2',\alpha_4',\alpha_5',\alpha_6',\beta_{\mathfrak{so}^*(10)}\}.\end{split}
\end{gather*}
Then $\mathfrak{sl}(2,\BR)\oplus\mathfrak{su}(1,5)=\mathfrak{e}_{6(-14)}'\cap\mathfrak{su}(2,6)$, $\mathfrak{u}(1)\oplus\mathfrak{so}^*(10)=\mathfrak{e}_{6(-14)}'\cap(\mathfrak{su}(2)\oplus\mathfrak{so}^*(12))$ holds.
The Vogan diagrams for each Lie algebra are as in Fig.~\ref{Vogan_diag}, and the roots in $\Delta_{\fp^+}(\mathfrak{e}_{7(-25)})$ are described in Fig.~\ref{Deltafp} (quoted from~\cite[Appendix]{J}), where each arrow with label $j$ means that adding the simple root $\alpha_j$ to the root at the source of the arrow we get the root at the target of the arrow. The pattern of the vertices represents the roots in $\Delta_{\fp^+}$ of each subalgebra.

\renewcommand{\textfraction}{0.01}
\renewcommand{\topfraction}{0.99}

\begin{figure}[t!]
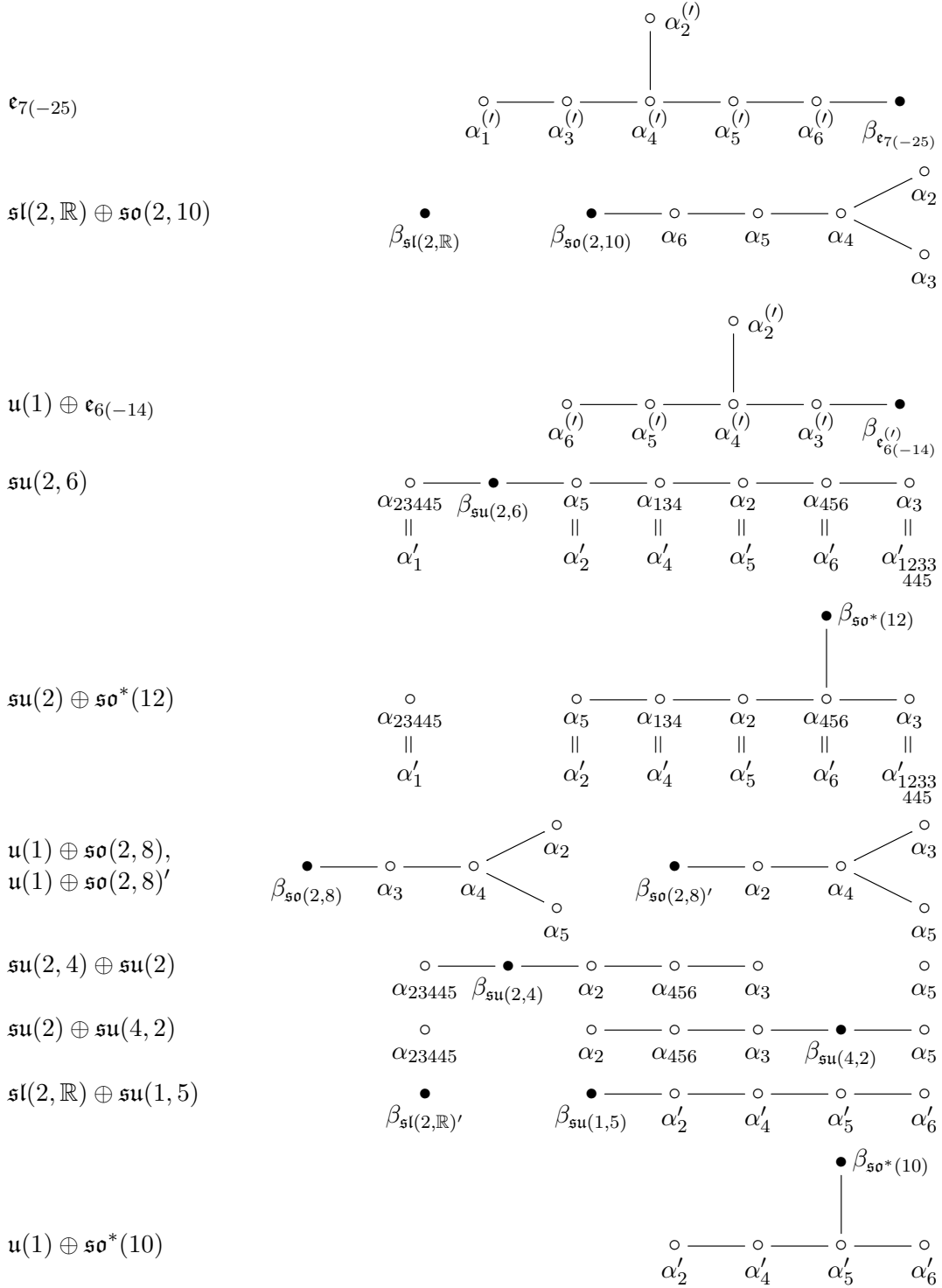

\begin{align*}
&\mathfrak{e}_{7(-25)} &\xygraph{ \circ ([]!{+(0,-.3)} {\alpha_1^{(\prime)}}) - [r]
\circ ([]!{+(0,-.3)} {\alpha_3^{(\prime)}}) - [r]
\circ ([]!{+(0,-.3)} {\alpha_4^{(\prime)}}) (
- [u] \circ ([]!{+(.4,0)} {\alpha_2^{(\prime)}}),
- [r] \circ ([]!{+(0,-.3)} {\alpha_5^{(\prime)}})
- [r] \circ ([]!{+(0,-.3)} {\alpha_6^{(\prime)}})
- [r] \bullet ([]!{+(0,-.4)} {\beta_{\mathfrak{e}_{7(-25)}}}))}\\[-1mm]
&\mathfrak{sl}(2,\BR)\oplus\mathfrak{so}(2,10) &\xygraph{ \bullet ([]!{+(0,-.3)} {\beta_{\mathfrak{sl}(2,\BR)}}) [r] [r]
\bullet ([]!{+(0,-.3)} {\beta_{\mathfrak{so}(2,10)}}) - [r]
\circ ([]!{+(0,-.3)} {\alpha_6}) - [r]
\circ ([]!{+(0,-.3)} {\alpha_5}) - [r]
\circ ([]!{+(0,-.3)} {\alpha_4}) (
- []!{+(1,.5)} \circ ([]!{+(0,-.3)} {\alpha_2}),
- []!{+(1,-.5)} \circ ([]!{+(0,-.3)} {\alpha_3}))}\\[-1mm]
&\mathfrak{u}(1)\oplus\mathfrak{e}_{6(-14)} &\xygraph{ \circ ([]!{+(0,-.3)} {\alpha_6^{(\prime)}}) - [r]
\circ ([]!{+(0,-.3)} {\alpha_5^{(\prime)}}) - [r]
\circ ([]!{+(0,-.3)} {\alpha_4^{(\prime)}}) (
- [u] \circ ([]!{+(.4,0)} {\alpha_2^{(\prime)}}),
- [r] \circ ([]!{+(0,-.3)} {\alpha_3^{(\prime)}})
- [r] \bullet ([]!{+(0,-.4)} {\beta_{\mathfrak{e}_{6(-14)}^{(\prime)}}}))}\\[-1mm]
&\mathfrak{su}(2,6) &\xygraph{ \circ ([]!{+(0,-.6)} {\shortstack{$\alpha_{23445}$\\$\rotatebox{90}{=}$\\$\alpha_1'$}}) - [r]
\bullet ([]!{+(0,-.3)} {\beta_{\mathfrak{su}(2,6)}}) - [r]
\circ ([]!{+(0,-.6)} {\shortstack{$\alpha_5$\\$\rotatebox{90}{=}$\\$\alpha_2'$}}) - [r]
\circ ([]!{+(0,-.6)} {\shortstack{$\alpha_{134}$\\$\rotatebox{90}{=}$\\$\alpha_4'$}}) - [r]
\circ ([]!{+(0,-.6)} {\shortstack{$\alpha_2$\\$\rotatebox{90}{=}$\\$\alpha_5'$}}) - [r]
\circ ([]!{+(0,-.6)} {\shortstack{$\alpha_{456}$\\$\rotatebox{90}{=}$\\$\alpha_6'$}}) - [r]
\circ ([]!{+(0,-.7)} {\shortstack{$\alpha_3$\\$\rotatebox{90}{=}$\\$\alpha_{\substack{1233\\445}}'$}})} \\[-1mm]
&\mathfrak{su}(2)\oplus\mathfrak{so}^*(12) &
\xygraph{ \circ ([]!{+(0,-.6)} {\shortstack{$\alpha_{23445}$\\$\rotatebox{90}{=}$\\$\alpha_1'$}}) [r] [r]
\circ ([]!{+(0,-.6)} {\shortstack{$\alpha_5$\\$\rotatebox{90}{=}$\\$\alpha_2'$}}) - [r]
\circ ([]!{+(0,-.6)} {\shortstack{$\alpha_{134}$\\$\rotatebox{90}{=}$\\$\alpha_4'$}}) - [r]
\circ ([]!{+(0,-.6)} {\shortstack{$\alpha_2$\\$\rotatebox{90}{=}$\\$\alpha_5'$}}) - [r]
\circ ([]!{+(0,-.6)} {\shortstack{$\alpha_{456}$\\$\rotatebox{90}{=}$\\$\alpha_6'$}}) (
- [u] \bullet ([]!{+(.6,0)} {\beta_{\mathfrak{so}^*(12)}}),
- [r] \circ ([]!{+(0,-.7)} {\shortstack{$\alpha_3$\\$\rotatebox{90}{=}$\\$\alpha_{\substack{1233\\445}}'$}}))} \\[-1mm]
&\begin{matrix}\mathfrak{u}(1)\oplus\mathfrak{so}(2,8),\\ \mathfrak{u}(1)\oplus\mathfrak{so}(2,8)' \end{matrix}
&\xygraph{ \bullet ([]!{+(0,-.3)} {\beta_{\mathfrak{so}(2,8)}}) - [r]
\circ ([]!{+(0,-.3)} {\alpha_3}) - [r]
\circ ([]!{+(0,-.3)} {\alpha_4}) (
- []!{+(1,.5)} \circ ([]!{+(0,-.3)} {\alpha_2}),
- []!{+(1,-.5)} \circ ([]!{+(0,-.3)} {\alpha_5}))} \qquad
\xygraph{ \bullet ([]!{+(0,-.3)} {\beta_{\mathfrak{so}(2,8)'}}) - [r]
\circ ([]!{+(0,-.3)} {\alpha_2}) - [r]
\circ ([]!{+(0,-.3)} {\alpha_4}) (
- []!{+(1,.5)} \circ ([]!{+(0,-.3)} {\alpha_3}),
- []!{+(1,-.5)} \circ ([]!{+(0,-.3)} {\alpha_5}))} \\[-1mm]
&\mathfrak{su}(2,4)\oplus\mathfrak{su}(2) &\xygraph{ \circ ([]!{+(0,-.3)} {\alpha_{23445}}) - [r]
\bullet ([]!{+(0,-.3)} {\beta_{\mathfrak{su}(2,4)}}) - [r]
\circ ([]!{+(0,-.3)} {\alpha_2}) - [r]
\circ ([]!{+(0,-.3)} {\alpha_{456}}) - [r]
\circ ([]!{+(0,-.3)} {\alpha_3}) [r] [r]
\circ ([]!{+(0,-.3)} {\alpha_5}))} \\[-1mm]
&\mathfrak{su}(2)\oplus\mathfrak{su}(4,2) &\xygraph{ \circ ([]!{+(0,-.3)} {\alpha_{23445}}) [r] [r]
\circ ([]!{+(0,-.3)} {\alpha_2}) - [r]
\circ ([]!{+(0,-.3)} {\alpha_{456}}) - [r]
\circ ([]!{+(0,-.3)} {\alpha_3}) - [r]
\bullet ([]!{+(0,-.3)} {\beta_{\mathfrak{su}(4,2)}}) - [r]
\circ ([]!{+(0,-.3)} {\alpha_5}))} \\[-1mm]
&\mathfrak{sl}(2,\BR)\oplus\mathfrak{su}(1,5) &\xygraph{ \bullet ([]!{+(0,-.3)} {\beta_{\mathfrak{sl}(2,\BR)'}}) [r] [r]
\bullet ([]!{+(0,-.3)} {\beta_{\mathfrak{su}(1,5)}}) - [r]
\circ ([]!{+(0,-.3)} {\alpha_2'}) - [r]
\circ ([]!{+(0,-.3)} {\alpha_4'}) - [r]
\circ ([]!{+(0,-.3)} {\alpha_5'}) - [r]
\circ ([]!{+(0,-.3)} {\alpha_6'})} \\[-1mm]
&\mathfrak{u}(1)\oplus\mathfrak{so}^*(10) &\xygraph{ \circ ([]!{+(0,-.3)} {\alpha_2'}) - [r]
\circ ([]!{+(0,-.3)} {\alpha_4'}) - [r]
\circ ([]!{+(0,-.3)} {\alpha_5'}) (
- [u] \bullet ([]!{+(.6,0)} {\beta_{\mathfrak{so}^*(10)}}),
- [r] \circ ([]!{+(0,-.3)} {\alpha_6'}))}
\end{align*}
\caption{Vogan diagrams.}\label{Vogan_diag}\vspace{-3mm}
\end{figure}

\begin{figure}[pt!]
\begin{gather*} \xymatrix{\boxdot&\otimes\ar[l]_1&\otimes\ar[l]_3&\boxtimes\ar[l]_4&\boxtimes\ar[l]_5&\oplus\ar[l]_6&&& \\
&&&\boxtimes\ar[u]^2&\boxtimes\ar[u]^2\ar[l]_5&\oplus\ar[u]^2\ar[l]_6&&& \\
&&&&\otimes\ar[u]^4&\boxplus\ar[u]^4\ar[l]_6&\boxplus\ar[l]_5&& \\
&&&&\otimes\ar[u]^3&\boxplus\ar[u]^3\ar[l]_6&\boxplus\ar[u]^3\ar[l]_5&\oplus\ar[l]_4&\oplus\ar[l]_2 \\
&&&&\boxdot\ar[u]^1&\odot\ar[u]^1\ar[l]_6&\odot\ar[u]^1\ar[l]_5&\boxdot\ar[u]^1\ar[l]_4&\boxdot\ar[u]^1\ar[l]_2 \\
&&&&&&&\boxdot\ar[u]^3&\boxdot\ar[u]^3\ar[l]_2 \\
&&&&&&&&\odot\ar[u]^4 \\ &&&&&&&&\odot\ar[u]^5 \\ &&&&&&&&\boxdot\ar[u]^6} \end{gather*}
\vspace{-160pt}
\begin{align*}
\Delta_{\fp^+}(\mathfrak{sl}(2,\BR)\oplus\mathfrak{so}(2,10))&=\{\boxdot,\odot\}\hspace{140pt}\\
\Delta_{\fp^+}(\mathfrak{u}(1)\oplus\mathfrak{e}_{6(-14)})&=\{\boxtimes,\boxplus,\otimes,\oplus\}\\
\Delta_{\fp^+}(\mathfrak{su}(2,6))&=\{\odot,\otimes,\oplus\}\\
\Delta_{\fp^+}(\mathfrak{su}(2)\oplus\mathfrak{so}^*(12))&=\{\boxdot,\boxtimes,\boxplus\}\\
\Delta_{\fp^+}(\mathfrak{u}(1)\oplus\mathfrak{so}(2,8))&=\{\boxtimes,\otimes\}\\
\Delta_{\fp^+}(\mathfrak{u}(1)\oplus\mathfrak{so}(2,8)')&=\{\boxplus,\oplus\}\\
\Delta_{\fp^+}(\mathfrak{su}(2,4)\oplus\mathfrak{su}(2))&=\{\otimes,\oplus\}\\
\Delta_{\fp^+}(\mathfrak{su}(2)\oplus\mathfrak{su}(4,2))&=\{\boxtimes,\boxplus\}
\end{align*}
\begin{gather*} \xymatrix{\odot&\otimes\ar[l]_{1'}&\boxtimes\ar[l]_{3'}&\boxtimes\ar[l]_{4'}&\boxtimes\ar[l]_{5'}&\boxplus\ar[l]_{6'}&&& \\
&&&\boxtimes\ar[u]^{2'}&\boxtimes\ar[u]^{2'}\ar[l]_{5'}&\boxplus\ar[u]^{2'}\ar[l]_{6'}&&& \\
&&&&\boxtimes\ar[u]^{4'}&\boxplus\ar[u]^{4'}\ar[l]_{6'}&\boxplus\ar[l]_{5'}&& \\
&&&&\otimes\ar[u]^{3'}&\oplus\ar[u]^{3'}\ar[l]_{6'}&\oplus\ar[u]^{3'}\ar[l]_{5'}&\oplus\ar[l]_{4'}&\oplus\ar[l]_{2'} \\
&&&&\odot\ar[u]^{1'}&\odot\ar[u]^{1'}\ar[l]_{6'}&\odot\ar[u]^{1'}\ar[l]_{5'}&\odot\ar[u]^{1'}\ar[l]_{4'}&\odot\ar[u]^{1'}\ar[l]_{2'} \\
&&&&&&&\boxdot\ar[u]^{3'}&\boxdot\ar[u]^{3'}\ar[l]_{2'} \\
&&&&&&&&\boxdot\ar[u]^{4'} \\ &&&&&&&&\boxdot\ar[u]^{5'} \\ &&&&&&&&\boxdot\ar[u]^{6'}} \end{gather*}
\vspace{-130pt}
\begin{align*}
\Delta_{\fp^+}(\mathfrak{u}(1)\oplus\mathfrak{e}_{6(-14)}')&=\{\boxtimes,\boxplus,\otimes,\oplus\}\hspace{130pt}\\
\Delta_{\fp^+}(\mathfrak{su}(2,6))&=\{\odot,\otimes,\oplus\}\\
\Delta_{\fp^+}(\mathfrak{su}(2)\oplus\mathfrak{so}^*(12))&=\{\boxdot,\boxtimes,\boxplus\}\\
\Delta_{\fp^+}(\mathfrak{sl}(2,\BR)'\oplus\mathfrak{su}(1,5))&=\{\otimes,\oplus\}\\
\Delta_{\fp^+}(\mathfrak{u}(1)\oplus\mathfrak{so}^*(10))&=\{\boxtimes,\boxplus\}
\end{align*}
\caption{Description of $\Delta_\fp^+(\mathfrak{e}_{7(-25)})$.}\label{Deltafp}
\end{figure}

Next we give the set of strongly orthogonal roots $\{\gamma_1,\ldots,\gamma_r\}$ and the central character ${\rm d}\chi$ of each Lie algebra.
First, for $\mathfrak{e}_{7(-25)}$ we have
\begin{gather*}
\gamma_1(\mathfrak{e}_{7(-25)}) =\gamma_1
=\tfrac{1}{2}\big(\delta_1^{(1)}+\delta_2^{(1)}+\delta_3^{(1)}+\delta_4^{(1)}-\delta_5^{(1)}-\delta_6^{(1)}-\delta_7^{(1)}-\delta_8^{(1)}\big)
=\delta_3^{(2)}+\delta_4^{(2)},\\
\gamma_2(\mathfrak{e}_{7(-25)}) =\gamma_2
=\tfrac{1}{2}\big(\delta_1^{(1)}+\delta_2^{(1)}-\delta_3^{(1)}-\delta_4^{(1)}+\delta_5^{(1)}+\delta_6^{(1)}-\delta_7^{(1)}-\delta_8^{(1)}\big)
=\delta_5^{(2)}+\delta_6^{(2)},\\
\gamma_3(\mathfrak{e}_{7(-25)}) =\gamma_3
=\tfrac{1}{2}\big(\delta_1^{(1)}+\delta_2^{(1)}-\delta_3^{(1)}-\delta_4^{(1)}-\delta_5^{(1)}-\delta_6^{(1)}+\delta_7^{(1)}+\delta_8^{(1)}\big)
=\delta_7^{(2)}+\delta_8^{(2)},\\
{\rm d}\chi_{\mathfrak{e}_{7(-25)}} =\tfrac{1}{2}(\gamma_1+\gamma_2+\gamma_3)
=\tfrac{1}{4}\big(3\delta_1^{(1)}+3\delta_2^{(1)}-\delta_3^{(1)}-\delta_4^{(1)}-\delta_5^{(1)}-\delta_6^{(1)}-\delta_7^{(1)}-\delta_8^{(1)}\big)\\
\hphantom{{\rm d}\chi_{\mathfrak{e}_{7(-25)}}}{}
=\tfrac{1}{2}\big(\delta_3^{(2)}+\delta_4^{(2)}+\delta_5^{(2)}+\delta_6^{(2)}+\delta_7^{(2)}+\delta_8^{(2)}\big).
\end{gather*}
Next, for $\mathfrak{sl}(2,\BR)$, $\mathfrak{so}(2,10)$ and $\mathfrak{e}_{6(-14)}$ we have
\begin{gather*}
\gamma_1(\mathfrak{sl}(2,\BR))=\gamma_1,\qquad
\gamma_1(\mathfrak{so}(2,10))=\gamma_2,\qquad
\gamma_2(\mathfrak{so}(2,10))=\gamma_3,\\
\gamma_1(\mathfrak{e}_{6(-14)})=\tfrac 12 (\gamma_1+\gamma_2)+\tfrac{1}{2}(\ve_1+\ve_2+\ve_3+\ve_4),\\
\gamma_2(\mathfrak{e}_{6(-14)})=\tfrac12 (\gamma_1+\gamma_2)-\tfrac{1}{2}(\ve_1+\ve_2+\ve_3+\ve_4),\\
{\rm d}\chi_{\mathfrak{sl}(2,\BR)}=\tfrac{1}{2}\gamma_1,\qquad
{\rm d}\chi_{\mathfrak{so}(2,10)}=\tfrac{1}{2}(\gamma_2+\gamma_3),\qquad
{\rm d}\chi_{\mathfrak{e}_{6(-14)}}=\tfrac{1}{3}(2\gamma_1+\gamma_2+\gamma_3),
\end{gather*}
and the character of $\fz_{\mathfrak{e}_{7(-25)}}(\mathfrak{e}_{6(-14)})\simeq\mathfrak{u}(1)$ is given by
\begin{gather*} {\rm d}\chi_{\mathfrak{u}(1)}=\tfrac{1}{6}(-\gamma_1+\gamma_2+\gamma_3). \end{gather*}
We write $\frac{1}{2}(\gamma_2-\gamma_3)=:\ve_0$. Then we have
\begin{gather*}
\gamma_1(\mathfrak{sl}(2,\BR)) =2{\rm d}\chi_{\mathfrak{sl}(2,\BR)}={\rm d}\chi_{\mathfrak{e}_{6(-14)}}-2{\rm d}\chi_{\mathfrak{u}(1)},\\
\gamma_1(\mathfrak{so}(2,10)) ={\rm d}\chi_{\mathfrak{so}(2,10)}+\ve_0=\tfrac{1}{2}({\rm d}\chi_{\mathfrak{e}_{6(-14)}}+4{\rm d}\chi_{\mathfrak{u}(1)})+\ve_0,\\
\gamma_2(\mathfrak{so}(2,10)) ={\rm d}\chi_{\mathfrak{so}(2,10)}-\ve_0=\tfrac{1}{2}({\rm d}\chi_{\mathfrak{e}_{6(-14)}}+4{\rm d}\chi_{\mathfrak{u}(1)})-\ve_0,\\
\gamma_1(\mathfrak{e}_{6(-14)}) ={\rm d}\chi_{\mathfrak{sl}(2,\BR)}+\tfrac12 {{\rm d}\chi_{\mathfrak{so}(2,10)}} +\tfrac{1}{2}(\ve_0+\ve_1+\ve_2+\ve_3+\ve_4)\\
\hphantom{\gamma_1(\mathfrak{e}_{6(-14)})}{} =\tfrac{3}{4}{\rm d}\chi_{\mathfrak{e}_{6(-14)}}+\tfrac{1}{2}(\ve_0+\ve_1+\ve_2+\ve_3+\ve_4),\\
\gamma_2(\mathfrak{e}_{6(-14)}) ={\rm d}\chi_{\mathfrak{sl}(2,\BR)}+\tfrac12 {{\rm d}\chi_{\mathfrak{so}(2,10)}}+\tfrac{1}{2}(\ve_0-\ve_1-\ve_2-\ve_3-\ve_4)\\
 \hphantom{\gamma_2(\mathfrak{e}_{6(-14)})}{} =\tfrac{3}{4}{\rm d}\chi_{\mathfrak{e}_{6(-14)}}+\tfrac{1}{2}(\ve_0-\ve_1-\ve_2-\ve_3-\ve_4).
\end{gather*}

Let $V_{(a_0,a_1,a_2,a_3,a_4)}^{[10]\vee}$ be the irreducible representation of $\mathfrak{so}(10)$ with lowest weight
$-a_0\ve_0-a_1\ve_1-a_2\ve_2-a_3\ve_3-a_4\ve_4$. Then as $\mathfrak{u}(1)\oplus\mathfrak{u}(1)\oplus\mathfrak{so}(10)$-modules we have
\begin{gather*}
\cP_m(\fp^+(\mathfrak{sl}(2,\BR))) \simeq -2m{\rm d}\chi_{\mathfrak{sl}(2,\BR)}\simeq -m({\rm d}\chi_{\mathfrak{e}_{6(-14)}}-2{\rm d}\chi_{\mathfrak{u}(1)}),\\
\cP_{(m_1,m_2)}(\fp^+(\mathfrak{so}(2,10))) \simeq -(m_1+m_2){\rm d}\chi_{\mathfrak{so}(2,10)}\boxtimes V_{(m_1-m_2,0,0,0,0)}^{[10]\vee}\\
\hphantom{\cP_{(m_1,m_2)}(\fp^+(\mathfrak{so}(2,10)))}{} \simeq -\tfrac{1}{2} (m_1+m_2)({\rm d}\chi_{\mathfrak{e}_{6(-14)}}+4{\rm d}\chi_{\mathfrak{u}(1)})\boxtimes V_{(m_1-m_2,0,0,0,0)}^{[10]\vee},\\
\cP_{(m_1,m_2)}(\fp^+(\mathfrak{e}_{6(-14)})) \simeq -(m_1+m_2)\big({\rm d}\chi_{\mathfrak{sl}(2,\BR)}+\tfrac12 {{\rm d}\chi_{\mathfrak{so}(2,10)}}\big) \\
 \hphantom{\cP_{(m_1,m_2)}(\fp^+(\mathfrak{e}_{6(-14)})) \simeq}{} \boxtimes V_{\left(\frac{m_1+m_2}{2},\frac{m_1-m_2}{2},\frac{m_1-m_2}{2},\frac{m_1-m_2}{2},\frac{m_1-m_2}{2}\right)}^{[10]\vee}\\
 \hphantom{\cP_{(m_1,m_2)}(\fp^+(\mathfrak{e}_{6(-14)}))}{} \simeq -\tfrac{3}{4}(m_1+m_2){\rm d}\chi_{\mathfrak{e}_{6(-14)}}
\boxtimes V_{\left(\frac{m_1+m_2}{2},\frac{m_1-m_2}{2},\frac{m_1-m_2}{2},\frac{m_1-m_2}{2},\frac{m_1-m_2}{2}\right)}^{[10]\vee}.
\end{gather*}
Next we consider $\mathfrak{su}(2,6)$, $\mathfrak{su}(2)\oplus\mathfrak{so}^*(12)$. We have
\begin{gather*}
\gamma_1(\mathfrak{su}(2,6))=\delta_1^{(1)}-\delta_8^{(1)}
=\tfrac{1}{2}\big(\delta_1^{(2)}-\delta_2^{(2)}+\delta_3^{(2)}+\delta_4^{(2)}+\delta_5^{(2)}+\delta_6^{(2)}+\delta_7^{(2)}-\delta_8^{(2)}\big),\\
\gamma_2(\mathfrak{su}(2,6))=\delta_2^{(1)}-\delta_7^{(1)}
=\tfrac{1}{2}\big({-}\delta_1^{(2)}+\delta_2^{(2)}+\delta_3^{(2)}+\delta_4^{(2)}+\delta_5^{(2)}+\delta_6^{(2)}-\delta_7^{(2)}+\delta_8^{(2)}\big),\\
\gamma_1(\mathfrak{so}^*(12))
=\tfrac{1}{2}\big(\delta_1^{(1)}+\delta_2^{(1)}+\delta_3^{(1)}+\delta_4^{(1)}-\delta_5^{(1)}-\delta_6^{(1)}-\delta_7^{(1)}-\delta_8^{(1)}\big)
=\delta_3^{(2)}+\delta_4^{(2)},\\
\gamma_2(\mathfrak{so}^*(12))
=\tfrac{1}{2}\big(\delta_1^{(1)}+\delta_2^{(1)}-\delta_3^{(1)}-\delta_4^{(1)}+\delta_5^{(1)}+\delta_6^{(1)}-\delta_7^{(1)}-\delta_8^{(1)}\big)
=\delta_5^{(2)}+\delta_6^{(2)},\\
\gamma_3(\mathfrak{so}^*(12))
=\tfrac{1}{2}\big(\delta_1^{(1)}+\delta_2^{(1)}-\delta_3^{(1)}-\delta_4^{(1)}-\delta_5^{(1)}-\delta_6^{(1)}+\delta_7^{(1)}+\delta_8^{(1)}\big)
=\delta_7^{(2)}+\delta_8^{(2)},\\
{\rm d}\chi_{\mathfrak{su}(2,6)}={\rm d}\chi_{\mathfrak{so}^*(12)}={\rm d}\chi_{\mathfrak{e}_7(-25)}
=\tfrac{1}{4}\big(3\delta_1^{(1)}+3\delta_2^{(1)}-\delta_3^{(1)}-\delta_4^{(1)}-\delta_5^{(1)}-\delta_6^{(1)}-\delta_7^{(1)}-\delta_8^{(1)}\big)\\
\hphantom{{\rm d}\chi_{\mathfrak{su}(2,6)}={\rm d}\chi_{\mathfrak{so}^*(12)}={\rm d}\chi_{\mathfrak{e}_7(-25)}}{} =\tfrac{1}{2}\big(\delta_3^{(2)}+\delta_4^{(2)}+\delta_5^{(2)}+\delta_6^{(2)}+\delta_7^{(2)}+\delta_8^{(2)}\big).
\end{gather*}
Let $V_{(a_1,a_2;a_3,\ldots,a_8)}^{(2,6)\vee}$, $V_{(b_1,b_2)}^{(2)\vee}\boxtimes V_{(b_3,\ldots,b_8)}^{(6)\vee}$ be the irreducible
$\mathfrak{s}(\mathfrak{u}(2)\oplus\mathfrak{u}(6))\simeq\mathfrak{su}(2)\oplus\mathfrak{u}(6)$-module with lowest weight
$-\big(a_1\delta_1^{(1)}+\cdots+a_8\delta_8^{(1)}\big)$, $-\big(b_1\delta_1^{(2)}+\cdots+b_8\delta_8^{(2)}\big)$ respectively.
We also write $V_{(a_1,a_2;a_3,\ldots,a_8)}^{(2,6)\vee}\simeq V_{(a_1+c,a_2+c;a_3+c\ldots,a_8+c)}^{(2,6)\vee}$,
$V_{(b_1,b_2)}^{(2)\vee}\boxtimes V_{(b_3,\ldots,b_8)}^{(6)\vee}\simeq V_{(b_1+d,b_2+d)}^{(2)\vee}\boxtimes V_{(b_3,\ldots,b_8)}^{(6)\vee}$
for any $c,d\in\BR$. Then as $\mathfrak{s}(\mathfrak{u}(2)\oplus\mathfrak{u}(6))\simeq\mathfrak{su}(2)\oplus\mathfrak{u}(6)$-modules we have
\begin{gather*}
\cP_{(m_1,m_2)}(\fp^+(\mathfrak{su}(2,6))) \simeq V_{(m_1,m_2;0,0,0,0,-m_2,-m_1)}^{(2,6)\vee}\\
\hphantom{\cP_{(m_1,m_2)}(\fp^+(\mathfrak{su}(2,6)))}{} \simeq V_{\left(\frac{m_1-m_2}{2},\frac{-m_1+m_2}{2}\right)}^{(2)\vee}\boxtimes
V_{\substack{\bigl(\frac{m_1+m_2}{2},\frac{m_1+m_2}{2},\frac{m_1+m_2}{2},\frac{m_1+m_2}{2},\hspace{40pt}\\
\hspace{95pt}\frac{m_1-m_2}{2},\frac{-m_1+m_2}{2}\bigr)}}^{(6)\vee}\\
\hphantom{\cP_{(m_1,m_2)}(\fp^+(\mathfrak{su}(2,6)))}{} \simeq V_{(m_1-m_2,0)}^{(2)\vee}\boxtimes
V_{\left(\frac{m_1+m_2}{2},\frac{m_1+m_2}{2},\frac{m_1+m_2}{2},\frac{m_1+m_2}{2},\frac{m_1-m_2}{2},\frac{-m_1+m_2}{2}\right)}^{(6)\vee},\\
\cP_{(m_1,m_2,m_3)}(\fp^+(\mathfrak{so}^*(12))) \simeq
V_{\substack{\left(\frac{m_1+m_2+m_3}{2},\frac{m_1+m_2+m_3}{2};\frac{m_1-m_2-m_3}{2},\frac{m_1-m_2-m_3}{2},\right.\qquad\qquad \\
\left.\frac{-m_1+m_2-m_3}{2},\frac{-m_1+m_2-m_3}{2},\frac{-m_1-m_2+m_3}{2},\frac{-m_1-m_2+m_3}{2}\right)}}^{(2,6)\vee}\\
\hphantom{\cP_{(m_1,m_2,m_3)}(\fp^+(\mathfrak{so}^*(12)))}{} \simeq V_{(0,0;-m_2-m_3,-m_2-m_3,-m_1-m_3,-m_1-m_3,-m_1-m_2,-m_1-m_2)}^{(2,6)\vee}\\
\hphantom{\cP_{(m_1,m_2,m_3)}(\fp^+(\mathfrak{so}^*(12)))}{}\simeq V_{(0,0)}^{(2)\vee}\boxtimes V_{(m_1,m_1,m_2,m_2,m_3,m_3)}^{(6)\vee}.
\end{gather*}
Next we consider $\mathfrak{u}(1)\oplus\mathfrak{so}(2,8)$. We have
\begin{gather*}
\gamma_1(\mathfrak{so}(2,8))=\tfrac12 (\gamma_1+\gamma_2)+\tfrac{1}{2}(\ve_1+\ve_2+\ve_3+\ve_4),\\
\gamma_2(\mathfrak{so}(2,8))=\tfrac12 (\gamma_1+\gamma_2) -\tfrac{1}{2}(\ve_1+\ve_2+\ve_3+\ve_4),\\
\gamma_1(\mathfrak{so}(2,8)')=\tfrac12 (\gamma_1+\gamma_3)+\tfrac{1}{2}(\ve_1+\ve_2+\ve_3-\ve_4),\\
\gamma_2(\mathfrak{so}(2,8)')=\tfrac12 (\gamma_1+\gamma_3)-\tfrac{1}{2}(\ve_1+\ve_2+\ve_3-\ve_4),\\
{\rm d}\chi_{\mathfrak{so}(2,8)}=\tfrac12 (\gamma_1+\gamma_2),\qquad {\rm d}\chi_{\mathfrak{so}(2,8)'}=\tfrac12 (\gamma_1+\gamma_3),
\end{gather*}
and the character of $\fz_{\mathfrak{e}_{6(-14)}}(\mathfrak{so}(2,8))\simeq\mathfrak{u}(1)$ is given by
\begin{gather*} {\rm d}\chi_{\mathfrak{u}(1)}=\tfrac{1}{6}(\gamma_1-\gamma_2+2\gamma_3). \end{gather*}
Especially we have
\begin{gather*}
\gamma_1(\mathfrak{so}(2,8))={\rm d}\chi_{\mathfrak{so}(2,8)}+\tfrac{1}{2}(\ve_1+\ve_2+\ve_3+\ve_4),\\
\gamma_2(\mathfrak{so}(2,8))={\rm d}\chi_{\mathfrak{so}(2,8)}-\tfrac{1}{2}(\ve_1+\ve_2+\ve_3+\ve_4),\\
\gamma_1(\mathfrak{so}(2,8)')=\tfrac{1}{2}({\rm d}\chi_{\mathfrak{so}(2,8)}+3{\rm d}\chi_{\mathfrak{u}(1)})+\tfrac{1}{2}(\ve_1+\ve_2+\ve_3-\ve_4),\\
\gamma_2(\mathfrak{so}(2,8)')=\tfrac{1}{2}({\rm d}\chi_{\mathfrak{so}(2,8)}+3{\rm d}\chi_{\mathfrak{u}(1)})-\tfrac{1}{2}(\ve_1+\ve_2+\ve_3-\ve_4).
\end{gather*}
Let $V_{(a_1,a_2,a_3,a_4)}^{[8]\vee}$ be the irreducible representation of $\mathfrak{so}(8)$ with lowest weight
$-a_1\ve_1-a_2\ve_2-a_3\ve_3-a_4\ve_4$. Then as $\mathfrak{u}(1)\oplus\mathfrak{u}(1)\oplus\mathfrak{so}(8)$-modules we have
\begin{gather*}
\cP_{(m_1,m_2)}(\fp^+(\mathfrak{so}(2,8))) \simeq -(m_1+m_2){\rm d}\chi_{\mathfrak{so}(2,8)}
\boxtimes V_{\left(\frac{m_1-m_2}{2},\frac{m_1-m_2}{2},\frac{m_1-m_2}{2},\frac{m_1-m_2}{2}\right)}^{[10]\vee},\\
\cP_{(m_1,m_2)}(\fp^+(\mathfrak{so}(2,8)')) \!\simeq\! -\tfrac12 (m_1 \!+\! m_2)({\rm d}\chi_{\mathfrak{so}(2,8)}\! + \!3{\rm d}\chi_{\mathfrak{u}(1)})
\! \boxtimes\! V_{\left(\!\frac{m_1{-}m_2}{2}{,}\frac{m_1{-}m_2}{2}{,}\frac{m_1{-}m_2}{2}{,}\frac{-m_1{+}m_2}{2}\!\right)}^{[10]\vee}\!.
\end{gather*}
Next we consider $\mathfrak{su}(2,4)\oplus\mathfrak{su}(2)$. We have
\begin{gather*}
\gamma_1(\mathfrak{su}(2,4))=\delta_1^{(1)}-\delta_8^{(1)},\qquad
\gamma_2(\mathfrak{su}(2,4))=\delta_2^{(1)}-\delta_7^{(1)},\\
\gamma_1(\mathfrak{su}(4,2))=\tfrac{1}{2}\big(\delta_1^{(1)}+\delta_2^{(1)}+\delta_5^{(1)}-\delta_6^{(1)}-\delta_7^{(1)}-\delta_8^{(1)}+\delta_3^{(1)}-\delta_4^{(1)}\big),\\
\gamma_2(\mathfrak{su}(4,2))
=\tfrac{1}{2}\big(\delta_1^{(1)}+\delta_2^{(1)}-\delta_5^{(1)}+\delta_6^{(1)}-\delta_7^{(1)}-\delta_8^{(1)}-\delta_3^{(1)}+\delta_4^{(1)}\big),\\
{\rm d}\chi_{\mathfrak{su}(2,4)}={\rm d}\chi_{\mathfrak{su}(4,2)}={\rm d}\chi_{\mathfrak{e}_{6(-14)}}
=\tfrac{1}{3}\big(2\delta_1^{(1)}+2\delta_2^{(1)}-\delta_5^{(1)}-\delta_6^{(1)}-\delta_7^{(1)}-\delta_8^{(1)}\big).
\end{gather*}
Let $V_{(a_1,a_2;a_5,a_6,a_7,a_8)}^{(2,4)\vee}\boxtimes V_{(a_3,a_4)}^{(2)\vee}$ be the irreducible
$\mathfrak{s}(\mathfrak{u}(2)\oplus\mathfrak{u}(4))\oplus\mathfrak{su}(2)$-module with lowest weight
$-\big(a_1\delta_1^{(1)}+\cdots+a_8\delta_8^{(1)}\big)$.
We also write $V_{(a_1,a_2;a_5,a_6,a_7,a_8)}^{(2,4)\vee}\boxtimes V_{(a_3,a_4)}^{(2)\vee}
\simeq V_{\substack{(a_1+b,a_2+b;a_5+b,\ \\ \ a_6+b,a_7+b,a_8+b)}}^{(2,4)\vee}\boxtimes V_{(a_3+c,a_4+c)}^{(2)\vee}$ for any $b,c\in\BR$.
Then as $\mathfrak{s}(\mathfrak{u}(2)\oplus\mathfrak{u}(4))\oplus\mathfrak{su}(2)$-modules we have
\begin{gather*}
\cP_{(m_1,m_2)}(\fp^+(\mathfrak{su}(2,4))) \simeq V_{(m_1,m_2;0,0,-m_2,-m_1)}^{(2,4)\vee}\boxtimes V_{(0,0)}^{(2)\vee},\\
\cP_{(m_1,m_2)}(\fp^+(\mathfrak{su}(4,2))) \simeq
V_{\substack{\bigl(\frac{m_1+m_2}{2},\frac{m_1+m_2}{2};\hspace{80pt}\\
\; \frac{m_1-m_2}{2},\frac{-m_1+m_2}{2},\frac{-m_1-m_2}{2},\frac{-m_1-m_2}{2}\bigr)}}^{(2,4)\vee}
\boxtimes V_{\left(\frac{m_1-m_2}{2},\frac{-m_1+m_2}{2}\right)}^{(2)\vee}\\
\hphantom{\cP_{(m_1,m_2)}(\fp^+(\mathfrak{su}(4,2)))}{} \simeq V_{(0,0;-m_2,-m_1,-m_1-m_2,-m_1-m_2)}^{(2,4)\vee}\boxtimes V_{(m_1-m_2,0)}^{(2)\vee}.
\end{gather*}
Finally we consider $\mathfrak{sl}(2,\BR)\oplus\mathfrak{su}(1,5)$, $\mathfrak{u}(1)\oplus\mathfrak{so}^*(10)$. We have
\begin{gather*}
\gamma_1(\mathfrak{sl}(2,\BR)) =\delta_2^{(1)}-\delta_8^{(1)}
=\tfrac{1}{2}\big({-}\delta_1^{(2)}+\delta_2^{(2)}+\delta_3^{(2)}+\delta_4^{(2)}+\delta_5^{(2)}+\delta_6^{(2)}+\delta_7^{(2)}-\delta_8^{(2)}\big),\\
\gamma_1(\mathfrak{su}(1,5)) =\delta_1^{(1)}-\delta_7^{(1)}
=\tfrac{1}{2}\big(\delta_1^{(2)}-\delta_2^{(2)}+\delta_3^{(2)}+\delta_4^{(2)}+\delta_5^{(2)}+\delta_6^{(2)}-\delta_7^{(2)}+\delta_8^{(2)}\big),\\
\gamma_1(\mathfrak{so}^*(10))
 =\tfrac{1}{2}\big(\delta_1^{(1)}+\delta_2^{(1)}+\delta_3^{(1)}+\delta_4^{(1)}-\delta_5^{(1)}-\delta_6^{(1)}-\delta_7^{(1)}-\delta_8^{(1)}\big)
=\delta_3^{(2)}+\delta_4^{(2)},\\
\gamma_2(\mathfrak{so}^*(10))
 =\tfrac{1}{2}\big(\delta_1^{(1)}+\delta_2^{(1)}-\delta_3^{(1)}-\delta_4^{(1)}+\delta_5^{(1)}+\delta_6^{(1)}-\delta_7^{(1)}-\delta_8^{(1)}\big)
=\delta_5^{(2)}+\delta_6^{(2)},\\
{\rm d}\chi_{\mathfrak{sl}(2,\BR)} =\tfrac{1}{2}(\delta_2^{(1)}-\delta_8^{(1)})
=\tfrac{1}{4}\big({-}\delta_1^{(2)}+\delta_2^{(2)}+\delta_3^{(2)}+\delta_4^{(2)}+\delta_5^{(2)}+\delta_6^{(2)}+\delta_7^{(2)}-\delta_8^{(2)}\big),\\
{\rm d}\chi_{\mathfrak{su}(1,5)} =\tfrac{1}{6}\big(5\delta_1^{(1)}-\delta_3^{(1)}-\delta_4^{(1)}-\delta_5^{(1)}-\delta_6^{(1)}-\delta_7^{(1)}\big)\\
\hphantom{{\rm d}\chi_{\mathfrak{su}(1,5)}}{} =\tfrac{1}{12}\big(5\delta_1^{(2)}-5\delta_2^{(2)}+3\delta_3^{(2)}+3\delta_4^{(2)}+3\delta_5^{(2)}+3\delta_6^{(2)}+3\delta_7^{(2)}+5\delta_8^{(2)}\big),\\
{\rm d}\chi_{\mathfrak{so}^*(10)}
 =\tfrac{1}{8}\big(5\delta_1^{(1)}+5\delta_2^{(1)}-\delta_3^{(1)}-\delta_4^{(1)}-\delta_5^{(1)}-\delta_6^{(1)}-\delta_7^{(1)}-5\delta_8^{(1)}\big)\\
\hphantom{{\rm d}\chi_{\mathfrak{so}^*(10)}}{} =\tfrac{1}{2}\big(\delta_3^{(2)}+\delta_4^{(2)}+\delta_5^{(2)}+\delta_6^{(2)}+\delta_7^{(2)}\big),\\
{\rm d}\chi_{\mathfrak{u}(1)}
 =\tfrac{1}{24}\big(5\delta_1^{(1)}-3\delta_2^{(1)}-\delta_3^{(1)}-\delta_4^{(1)}-\delta_5^{(1)}-\delta_6^{(1)}-\delta_7^{(1)}+3\delta_8^{(1)}\big)
 =\tfrac{1}{6}\big(\delta_1^{(2)}-\delta_2^{(2)}+\delta_8^{(2)}\big),
\end{gather*}
where ${\rm d}\chi_{\mathfrak{u}(1)}$ is the character of $\fz_{\mathfrak{e}_{6(-14)}'}(\mathfrak{so}^*(10))$. Especially we have
\begin{gather*}
\gamma_1(\mathfrak{sl}(2,\BR)) =2{\rm d}\chi_{\mathfrak{sl}(2,\BR)}
=-3{\rm d}\chi_{\mathfrak{u}(1)}+\tfrac{1}{2}\big(\delta_3^{(2)}+\delta_4^{(2)}+\delta_5^{(2)}+\delta_6^{(2)}+\delta_7^{(2)}\big),\\
\gamma_1(\mathfrak{su}(1,5)) =\delta_1^{(1)}-\delta_7^{(1)}
=3{\rm d}\chi_{\mathfrak{u}(1)}+\tfrac{1}{2}\big(\delta_3^{(2)}+\delta_4^{(2)}+\delta_5^{(2)}+\delta_6^{(2)}-\delta_7^{(2)}\big),\\
\gamma_1(\mathfrak{so}^*(10))
 ={\rm d}\chi_{\mathfrak{sl}(2,\BR)}+\tfrac{1}{2}\big(\delta_1^{(1)}+\delta_3^{(1)}+\delta_4^{(1)}-\delta_5^{(1)}-\delta_6^{(1)}-\delta_7^{(1)}\big)
=\delta_3^{(2)}+\delta_4^{(2)},\\
\gamma_2(\mathfrak{so}^*(10))
 ={\rm d}\chi_{\mathfrak{sl}(2,\BR)}+\tfrac{1}{2}\big(\delta_1^{(1)}-\delta_3^{(1)}-\delta_4^{(1)}+\delta_5^{(1)}+\delta_6^{(1)}-\delta_7^{(1)}\big)
=\delta_5^{(2)}+\delta_6^{(2)}.
\end{gather*}
Let $V_{(a_1;a_3,\ldots,a_7)}^{(1,5)\vee}$, $V_{(b_3,\ldots,b_7)}^{(5)\vee}$ be the irreducible
$\mathfrak{s}(\mathfrak{u}(1)\oplus\mathfrak{u}(5))\simeq\mathfrak{u}(5)$-module with lowest weight
$-\big(a_1\delta_1^{(1)}+a_3\delta_3^{(1)}+\cdots+a_7\delta_7^{(1)}\big)$, $-\big(b_3\delta_3^{(2)}+\cdots+b_7\delta_7^{(2)}\big)$ respectively.
We also write $V_{(a_1;a_3,\ldots,a_7)}^{(1,5)\vee}\simeq V_{(a_1+c;a_3+c\ldots,a_7+c)}^{(1,5)\vee}$ for any $c\in\BR$.
Then as $\mathfrak{u}(1)\oplus\mathfrak{s}(\mathfrak{u}(1)\oplus\mathfrak{u}(5))\simeq\mathfrak{u}(1)\oplus\mathfrak{u}(5)$-modules we have
\begin{gather*}
\cP_m(\fp^+(\mathfrak{sl}(2,\BR))) \simeq -2m{\rm d}\chi_{\mathfrak{sl}(2,\BR)}
\simeq 3m{\rm d}\chi_{\mathfrak{u}(1)}\boxtimes V_{\left(\frac{m}{2},\frac{m}{2},\frac{m}{2},\frac{m}{2},\frac{m}{2}\right)}^{(5)\vee},\\
\cP_m(\fp^+(\mathfrak{su}(1,5))) \simeq V_{(m;0,0,0,0,-m)}^{(1,5)\vee}
\simeq -3m{\rm d}\chi_{\mathfrak{u}(1)}\boxtimes V_{\left(\frac{m}{2},\frac{m}{2},\frac{m}{2},\frac{m}{2},-\frac{m}{2}\right)}^{(5)\vee},\\
\cP_{(m_1,m_2)}(\fp^+(\mathfrak{so}^*(10))) \simeq -(m_1+m_2){\rm d}\chi_{\mathfrak{sl}(2,\BR)}\boxtimes
V_{\substack{\bigl(\frac{m_1+m_2}{2};\hspace{150pt}\\
\; \frac{m_1-m_2}{2},\frac{m_1-m_2}{2},\frac{-m_1+m_2}{2},\frac{-m_1+m_2}{2},\frac{-m_1-m_2}{2}\bigr)}}^{(1,5)\vee}\\
\hphantom{\cP_{(m_1,m_2)}(\fp^+(\mathfrak{so}^*(10)))}{} \simeq -(m_1+m_2){\rm d}\chi_{\mathfrak{sl}(2,\BR)}\boxtimes V_{(0;-m_2,-m_2,-m_1,-m_1,-m_1-m_2)}^{(1,5)\vee}\\
\hphantom{\cP_{(m_1,m_2)}(\fp^+(\mathfrak{so}^*(10)))}{}\simeq V_{(m_1,m_1,m_2,m_2,0)}^{(5)\vee}.
\end{gather*}

\subsection{Exceptional Jordan triple systems}\label{EJTS}

When $\fg=\mathfrak{e}_{7(-25)}$, we have $\fp^+=\Herm(3,\BO)^\BC$. In this subsection we consider the Jordan triple system structure
of $\Herm(3,\BO)^\BC$ and its subsystems. For $x\in\Herm(3,\BO)^\BC$, the adjoint element~$x^\sharp$ is defined by
\begin{gather}\label{adjoint3}
\begin{pmatrix}\xi_1&x_3&\hat{x}_2\\\hat{x}_3&\xi_2&x_1\\x_2&\hat{x}_1&\xi_3\end{pmatrix}^\sharp\!
:=\!\begin{pmatrix}\xi_2\xi_3-x_1\hat{x}_1&\hat{x}_2\hat{x}_1-\xi_3x_3&x_3x_1-\xi_2\hat{x}_2\\
x_1x_2-\xi_3\hat{x}_3&\xi_3\xi_1-x_2\hat{x}_2&\hat{x}_3\hat{x}_2-\xi_1x_1\\
\hat{x}_1\hat{x}_3-\xi_2x_2&x_2x_3-\xi_1\hat{x}_1&\xi_1\xi_2-x_3\hat{x}_3\end{pmatrix}\!, \quad \xi_i\in\BC,\ \ x_i\in\BO^\BC,
\end{gather}
where $x\mapsto\hat{x}$ is the ($\BC$-linear) conjugate in the octonion $\BO$, so that $\frac{1}{2}\big(xx^\sharp+x^\sharp x\big)=(\det x)I$ holds,
where $\det x=\frac{1}{3}\Tr\big(xx^\sharp\big)$ is the determinant polynomial in the sense of Jordan algebras.
Then for $x,y\in\Herm(3,\BO)^\BC$ the \textit{Freudenthal product} $x\times y$ is defined by
\begin{gather}\label{Freudenthal_def}
x\times y:=(x+y)^\sharp-x^\sharp-y^\sharp
\end{gather}
so that $x\times x=2x^\sharp$. Also let $(x|y):=\Tr(x\overline{y})$, where $y\mapsto\overline{y}$ is the complex conjugate with respect to the
real form $\Herm(3,\BO)\subset\Herm(3,\BO)^\BC$. Then the Jordan triple system structure of $\Herm(3,\BO)^\BC$ is given by
\begin{gather*} Q(x)y:=(x|y)x-x^\sharp\times\overline{y}, \end{gather*}
and the generic norm $h(x,y)$ is given by
\begin{gather*} h(x,y):=1-(x|y)+(x^\sharp|y^\sharp)-(\det x)\overline{(\det y)}. \end{gather*}
We have the linear isomorphism
\begin{gather}\label{Herm3O}
\BC\oplus M(1,2;\BO)^\BC\oplus\Herm(2,\BO)^\BC\xrightarrow{\sim}\Herm(3,\BO)^\BC,\qquad\!\!
(x_{11},x_{12},x_{22})\mapsto \begin{pmatrix}x_{11}&x_{12}\\{}^t\hspace{-1pt}\hat{x}_{12}&x_{22}\end{pmatrix},\!\!\!
\end{gather}
and $\Herm(2,\BO)^\BC\simeq\BC^{10}$ as Jordan algebras, on which $\operatorname{SO}(10)$ acts such that it preserves the quadratic form
$\det\left(\begin{smallmatrix}\xi_1&x\\\hat{x}&\xi_2\end{smallmatrix}\right)=\xi_1\xi_2-x\hat{x}$, and the sesqui-linear inner product
induced from $\Herm(3,\BO)^\BC$. For $x_{12}\in M(1,2;\BO)^\BC$ and $y_{11}\in\BC$, $y_{22}\in\Herm(2,\BO)^\BC$, we have
\begin{gather*} Q\left(\begin{pmatrix}0&x_{12}\\{}^t\hspace{-1pt}\hat{x}_{12}&0\end{pmatrix}\right)\begin{pmatrix}y_{11}&0\\0&y_{22}\end{pmatrix}
=\begin{pmatrix}\Re_\BO(x_{12}(\overline{y_{22}}{}^t\hspace{-1pt}\hat{x}_{12}))&0\\
0&\overline{y_{11}}{}^t\hspace{-1pt}\hat{x}_{12}x_{12}\end{pmatrix}, \end{gather*}
where $\Re_\BO x$ is the real part of $x$ in the sense of the octonion $\BO$ ($x\mapsto \Re_\BO x$ is $\BC$-linear).

Next, for a while we consider simple Euclidean Jordan algebras of rank 3, $\Herm(3,\BK)$, where $\BK=\BR, \BC, \BH, \BO$.
Then we have the linear isomorphism
\begin{gather*}
M(1,3;\BK')\oplus\Herm(3,\BK')\simeq \Skew(3,\BK')\oplus\Herm(3,\BK')\xrightarrow{\sim}\Herm(3,\BK), \\
\left((a_1,a_2,a_3),\begin{pmatrix}\xi_1&x_3&\hat{x}_2\\\hat{x}_3&\xi_2&x_1\\x_2&\hat{x}_1&\xi_3\end{pmatrix}\right)
\mapsto \begin{pmatrix}\xi_1&x_3+a_3j&\hat{x}_2-a_2j\\\hat{x}_3-a_3j&\xi_2&x_1+a_1j\\x_2+a_2j&\hat{x}_1-a_1j&\xi_3\end{pmatrix},
\end{gather*}
where $(\BK,\BK')=(\BC,\BR)$, $(\BH,\BC)$, $(\BO,\BH)$, $j\in \BK$ is an imaginary unit of the Cayley-Dickson extension $\BK=\BK'\oplus\BK' j$,
and $x\mapsto \hat{x}$ is the conjugate in $\BK'$.
Let $x\times y$ and $(x|y)$ be the Freudenthal product and the inner product on $\Herm(3,\BK)$ defined by the same formula
as in the case of $\Herm(3,\BO)$.
Then by \cite{Y1,Y2} we have
\begin{gather*}
(a,x)\times (b,y)=\big({-}(ay+bx),x\times y-\big({}^t\hspace{-1pt}\hat{a}b+{}^t\hspace{-1pt}\hat{b}a\big)\big),\\
((a,x)|(b,y))=2\Re_{\BK'}\big(a{}^t\hspace{-1pt}\hat{b}\big)+\Re_{\BK'}\Tr(xy),
\end{gather*}
where $\Re_{\BK'} x$ is the real part of $x$ in $\BK'$.
We note that in \cite{Y1,Y2} the Freudenthal product is normalized such that $x\times x=x^\sharp$,
but in this paper we use the different normalization.
Then since
\begin{gather}\label{Freudenthal}
Q(x)y=(x|y)x-x^\sharp\times y=xyx, \qquad x,y\in\Herm(3,\BK'),\quad \BK'=\BR,\BC,\BH
\end{gather}
holds, we have
\begin{gather*}
Q((a,x))(b,y)=\bigl(a{}^t\hspace{-1pt}ba-axy+bx^\sharp+\Re_{\BK'}\Tr(xy)a,\\
\hphantom{Q((a,x))(b,y)=\bigl(}{} xyx+\big({}^t\hspace{-1pt}aa\big)\times y+2\Re_{\BK'}\Tr\big(a{}^t\hspace{-1pt}\hat{b}\big)x-x{}^t\hspace{-1pt}\hat{a}b-{}^t\hspace{-1pt}\hat{b}ax\bigr),
\end{gather*}
and especially we have
\begin{gather*} Q((a,0))(0,y)=\big(0,\big({}^t\hspace{-1pt}aa\big)\times y\big),\qquad Q((0,x))(b,0)=\big(bx^\sharp,0\big). \end{gather*}
Now we return to the case $(\BK,\BK')=(\BO,\BH)$, and extend the above formula holomorphically in $(a,x)$, anti-holomorphically in $(b,y)$.
Since we have the isomorphism $\BH\simeq\{a\in M(2,\BC)\colon aJ_2=J_2\overline{a}\}$
where $J_2:=\left(\begin{smallmatrix}0&1\\-1&0\end{smallmatrix}\right)$,
$M(1,3;\BH)$ and $\Herm(3,\BH)$ are naturally identified with
\begin{gather*}\begin{split}&
M(1,3;\BH) \simeq \{ a\in M(2,6;\BC)\colon aJ_6=J_2\overline{a}\}\subset M(2,6;\BC), \\
& \Herm(3,\BH) \simeq \{ x\in \Herm(6,\BC)\colon xJ_6=J_6\overline{x}\}=:\Herm(3,\BH)',\end{split}
\end{gather*}
where $J_6:=\left(\begin{smallmatrix}J_2&0&0\\0&J_2&0\\0&0&J_2\end{smallmatrix}\right)$,
and we again identify $\Herm(3,\BH)'$ with
\begin{gather*} \Herm(3,\BH)'\simeq \{ x\in \Skew(6,\BC)\colon xJ_6=J_6\overline{x}\}\subset\Skew(6,\BC) \end{gather*}
via $x\mapsto xJ_6$. Now we define a quadratic map $\Skew(6,\BC)\to\Skew(6,\BC)$, $x\mapsto x^\#$ by
\begin{gather}\label{adjoint6}
(x^\#)_{kl}:=(-1)^{k+l}\Pf\big((x_{ij})_{i,j\in\{1,\ldots,6\}\setminus\{k,l\}}\big), \qquad 1\le k<l\le 6 ,
\end{gather}
so that $xx^\#=x^\#x=\Pf(x)I$ holds. Then $\sharp$ in $\Herm(3,\BH)'$ and $\#$ in $\Skew(6,\BC)$ are related as
\begin{gather*} (x^\sharp)J_6^{-1}=(J_6x)^\#,\qquad J_6^{-1}(x^\sharp)=(xJ_6)^\#, \qquad x\in\Herm(3,\BH)'. \end{gather*}
For $x,y\in\Skew(6,\BC)$ we define
\begin{gather*} x\dottimes y:=(x+y)^\#-x^\#-y^\#. \end{gather*}
Then we have the equality
\begin{gather*} \tfrac{1}{2}\Tr(xy)x+x^\#\dottimes y=xyx. \end{gather*}
Now we identify $\Herm(3,\BO)^\BC\simeq M(2,6;\BC)\oplus\Skew(6,\BC)$ via the above isomorphism. Then the Jordan triple system structure
is induced on $M(2,6;\BC)\oplus\Skew(6,\BC)$ as
\begin{gather*}
Q((a,x))(b,y)=\big(ab^*a-axy^*+J_2\overline{b}x^\#+\tfrac{1}{2}\Tr(xy^*)a,\\
\hphantom{Q((a,x))(b,y)=\big(}{} xy^*x+({}^t\hspace{-1pt}aJ_2a)\dottimes y^*+\Tr(ab^*)x-x{}^t\hspace{-1pt}a\overline{b}-b^*ax\big).
\end{gather*}
Especially we have
\begin{gather*} Q((a,0))(0,y)=(0,({}^t\hspace{-1pt}aJ_2a)\dottimes y^*),\qquad Q((0,x))(b,0)=(J_2\overline{b}x^\#,0). \end{gather*}
The group $S(U(2)\times U(6))\simeq \operatorname{SU}(2)\times U(6)$ acts on $M(2,6;\BC)\oplus\Skew(6,\BC)$ by
\begin{alignat*}{3}
& (k_1,k_2)(a,x)=\big(k_1ak_2^{-1},\det(k_2)^{-1}k_2x{}^t\hspace{-1pt}k_2\big), \qquad && (k_1,k_2)\in S(U(2)\times U(6)),&\\
&(l_1,l_2)(a,x)=\big(\det(l_2)^{1/2}l_1al_2^{-1},l_2x{}^t\hspace{-1pt}l_2\big), \qquad && (l_1,l_2)\in \operatorname{SU}(2)\times U(6).&
\end{alignat*}

Next we consider $M(1,2;\BO)^\BC$, which is the $\fp^+$-part of $\fg=\mathfrak{e}_{6(-14)}$.
The Jordan triple system structure and the generic norm are the restriction of those of $\Herm(3,\BO)^\BC$ via the identification~(\ref{Herm3O}),
so that for $x,y,z\in M(1,2;\BO)^\BC$,
\begin{gather*} Q(x)y=x\big({}^t\hspace{-1pt}\overline{\hat{y}}x\big),\qquad
B(x,y)z=z-x\big({}^t\hspace{-1pt}\overline{\hat{y}}z\big)-z\big({}^t\hspace{-1pt}\overline{\hat{y}}x\big)
+x\big(\big(\big({}^t\hspace{-1pt}\overline{\hat{y}}z\big){}^t\hspace{-1pt}\overline{\hat{y}}\big)x\big) \end{gather*}
holds. Then we have the isomorphism $M(1,2;\BO)^\BC\simeq\BO^\BC\oplus\BO^\BC\simeq\BC^8\oplus\BC^8$.
Similarly, we have the isomorphism
\begin{gather*} M(1,2;\BO)^\BC\simeq M(2,4;\BC)\oplus M(4,2;\BC)\subset M(2,6;\BC)\oplus\Skew(6,\BC), \end{gather*}
where the inclusion is given by $(a,x)\mapsto\left((0,a),\left(\begin{smallmatrix}0&-{}^t\hspace{-1pt}x\\x&0\end{smallmatrix}\right)\right)$.
Then the Jordan triple system structure of $M(2,4;\BC)\oplus M(4,2;\BC)$ is given by
\begin{gather*}
Q((a,x))(b,y)=\big(ab^*a-J_2\overline{b}\big(xJ_2{}^t\hspace{-1pt}x\big)^\#+\Tr(xy^*)a-axy^*,\\
\hphantom{Q((a,x))(b,y)=\big(}{} xy^*x-\big({}^t\hspace{-1pt}aJ_2a\big)^\#\overline{y}J_2+\Tr(ab^*)x-b^*ax\big),
\end{gather*}
where we define the linear map $\Skew(4,\BC)\to\Skew(4,\BC)$, $x\mapsto x^\#$ by
\begin{gather}\label{adjoint4}
\begin{pmatrix}0&a&b&c\\-a&0&d&e\\-b&-d&0&f\\-c&-e&-f&0\end{pmatrix}^\#
:=\begin{pmatrix}0&-f&e&-d\\f&0&-c&b\\-e&c&0&-a\\d&-b&a&0\end{pmatrix}.
\end{gather}
Especially we have
\begin{gather*} Q((a,0))(0,y)=\big(0,-\big({}^t\hspace{-1pt}aJ_2a\big)^\#\overline{y}J_2\big),\qquad Q((0,x))(b,0)=\big({-}J_2\overline{b}\big(xJ_2{}^t\hspace{-1pt}x\big)^\#,0\big). \end{gather*}
The group $S(U(2)\times U(4))\times \operatorname{SU}(2)$ acts on $M(2,4;\BC)\oplus M(4,2;\BC)$ by
\begin{gather*} (k_1,k_2,k_3)(a,x)=\big(k_1ak_2^{-1},\det(k_2)^{-1}k_2xk_3^{-1}\big). \end{gather*}
Finally, let $M(1,2;\BO)^{\BC\prime}\subset\Herm(3,\BO)^\BC$ be the $\fp^+$-part of $\mathfrak{e}_{6(-14)}'\subset\mathfrak{e}_{7(-25)}$.
Then we have the isomorphism
\begin{gather*} M(1,2;\BO)^{\BC\prime}\simeq\BC\oplus\Skew(5,\BC)\oplus M(1,5;\BC)\subset M(2,6;\BC)\oplus\Skew(6,\BC)\simeq\Herm(3,\BO)^\BC, \end{gather*}
where the inclusion is given by
$(\alpha,x,a)\mapsto \left(\left(\begin{smallmatrix}x&0\\0&0\end{smallmatrix}\right),\left(\begin{smallmatrix}a&0\\0&\alpha\end{smallmatrix}\right)\right)$.
Then the Jordan triple system structure is given by
\begin{gather*}
Q((\alpha,x,a))(\beta,y,b)=\bigg(\alpha\overline{\beta}\alpha+\tfrac{1}{2}\Tr(xy^*)\alpha+\overline{b}\,{}^t\mathbf{Pf}(x),\\
\hphantom{Q((\alpha,x,a))(\beta,y,b)=\bigg(}{} xy^*x+\alpha\Proj\left(\begin{pmatrix}y^*&-{}^t\hspace{-1pt}a\\a&0\end{pmatrix}^\#\right)
+(\alpha\overline{\beta}+ab^*)x-x{}^t\hspace{-1pt}a\overline{b}-b^*ax,\\
\hphantom{Q((\alpha,x,a))(\beta,y,b)=\bigg(}{} ab^*a+\tfrac{1}{2}\Tr(xy^*)a-axy^*+\overline{\beta}\mathbf{Pf}(x)\bigg),
\end{gather*}
where $\Proj\colon \Skew(6,\BC)\to\Skew(5,\BC)$ is defined by $\left(\begin{smallmatrix}x&-{}^t\hspace{-1pt}a\\a&0\end{smallmatrix}\right)\mapsto x$,
and for $x\in\Skew(5,\BC)$, $\mathbf{Pf}(x)\in M(1,5;\BC)$ is defined by
\begin{gather}\label{Pfaff_vector}
(\mathbf{Pf}(x))_k:=(-1)^k\Pf\big((x_{ij})_{i,j\in\{1,\ldots,5\}\setminus\{k\}}\big).
\end{gather}
Especially we have
\begin{gather*} Q((0,x,0))(\beta,0,b)=\big(\overline{b}\,{}^t\mathbf{Pf}(x),0,\overline{\beta}\mathbf{Pf}(x)\big). \end{gather*}
The group $U(1)\times S(U(1)\times U(5))\simeq U(1)\times U(5)$ acts on $\BC\oplus\Skew(5,\BC)\oplus M(1,5;\BC)$ by
\begin{gather*}
\big(k_1,\det(k_2)^{-1},k_2\big)(\alpha,x,a) =\big(k_1^2\alpha,k_1\det(k_2)^{-1}k_2x{}^t\hspace{-1pt}k_2,\det(k_2)^{-1}ak_2^{-1}\big),\\
\hphantom{\big(k_1,\det(k_2)^{-1},k_2\big)(\alpha,x,a) =}{} \ (k_1,\det(k_2)^{-1},k_2)\in U(1)\times S(U(1)\times U(5)),\\
(l_1,l_2)(\alpha,x,a) =\big(l_1^{-3}\det(l_2)^{1/2}\alpha,l_2x{}^t\hspace{-1pt}l_2,l_1^3\det(l_2)^{1/2}al_2^{-1}\big),\qquad
 (l_1,l_2)\in U(1)\times U(5).
\end{gather*}

\section{Explicit calculation of intertwining operators}\label{sect_examples}
\subsection{Normal derivative case}\label{sect_normal_derivative}
In this subsection, we find a sufficient condition for $\cF_{\tau\rho}^*$ to become a normal derivative,
that is, a differential operator in the direction of~$\fp^+_2$,
and a sufficient condition for $\cF_{\tau\rho}$ to become a~multiplication operator.
Let $G\supset G_1$ be two real reductive groups of Hermitian type satisfying the assumption~(\ref{assumption}),
$(\tau,V)$ be an irreducible finite-dimensional representation of $\tilde{K}^\BC$ such that~$\cH_\tau(D,V)$ is holomorphic discrete,
and let $(\rho,W)\subset\cP(\fp^+_2,V)$ be an irreducible submodule of~$\tilde{K}^\BC_1$.
Let $\rK(x_2)\in W\otimes\overline{W}\subset\cP(\fp^+_2,V)\otimes\overline{W}\simeq\cP(\fp^+_2,\Hom(W,V))$
be the $\tilde{K}_1^\BC$-invariant polynomial in the sense of~(\ref{K-invariance}).
Here we regard $W$ both as a submodule of $\cP(\fp^+_2,V)$ and as an abstract $\tilde{K}^\BC_1$-module. Then the following holds.
\begin{Theorem}\label{normal_derivative}\quad
\begin{enumerate}\itemsep=0pt
\item[$(1)$] Assume that there exists an irreducible subrepresentation $V'\subset\cP(\fp^+,V)$ of $\tilde{K}$ such that
$W\subset V'\cap \cP(\fp^+_2,V)\subset\cP(\fp^+,V)$. Then the linear map
\begin{gather*}
\cF_{\tau\rho}^*\colon \ \cO_\tau(D,V)\to\cO_\rho(D_1,W), \qquad
(\cF_{\tau\rho}^*f)(x_1)=\rK\left.\left(\overline{\frac{\partial}{\partial x_2}}\right)^*\right|_{x_2=0}f(x_1,x_2)
\end{gather*}
intertwines the $\tilde{G}_1$-action.
\item[$(2)$] Suppose $(G,G_1)$ is a symmetric pair. We take a subrepresentation $V_1\subset V$ of $\tilde{K}_1$ such that
$W\subset\cP(\fp^+_2,V_1)\subset\cP(\fp^+_2,V)$.
Assume that $(x_2)^{Q(y_1)x_2}=x_2$, and $\tau(B(x_2,y_1))|_{V_1}=I_{V_1}$ for any $x_2\in\fp^+_2$, $y_1\in\fp^+_1$.
Then the linear map
\begin{gather*}
\cF_{\tau\rho}\colon \ \cO_\rho(D_1,W)\to \cO_\tau(D,V), \qquad
(\cF_{\tau\rho}f)(x_1,x_2)=\rK(x_2)f(x_1)
\end{gather*}
intertwines the $\tilde{G}_1$-action.
\end{enumerate}
\end{Theorem}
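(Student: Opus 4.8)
The statement is a specialization of Theorem~\ref{main}, so the strategy is to start from the general differential expression for $\cF_{\tau\rho}^*$ and $\cF_{\tau\rho}$ there and check that under the stated hypotheses the formulas simplify as claimed. For part~(1), the plan is to compute $F_{\tau\rho}^*(z_1,z_2) = \langle \mathrm{e}^{(\cdot|z)_{\fp^+}}I_V, \rK(\Proj_2(\cdot))\rangle_{\hat\tau}$ and show it depends only on $z_2$ (so that the resulting operator is a normal derivative in the $\fp^+_2$-directions) and in fact equals $\rK(z_2)^*$ up to the intended normalization. The hypothesis that $W \subset V' \cap \cP(\fp^+_2,V)$ for an irreducible $\tilde K$-subrepresentation $V' \subset \cP(\fp^+,V)$ is what forces this: since $\rK(x_2)$ takes values in $W \subset V'$, and $\langle\cdot,\cdot\rangle_{\hat\tau}$ restricted to $V'$ is a scalar multiple of the Fischer inner product (because both are $\tilde K$-invariant and $V'$ is $\tilde K$-irreducible), the pairing against $\mathrm{e}^{(\cdot|z)_{\fp^+}}I_V$ picks out exactly the component of $\rK(\Proj_2(\cdot))$, giving a polynomial in $\overline{z_2}$ alone. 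Substituting $\overline{\partial/\partial x_2}$ and setting $x_2 = 0$ then gives the stated operator; the intertwining property is inherited from Theorem~\ref{main}(1), whose hypothesis $\cH_\tau(D,V)_{\tilde K} = \cP(\fp^+,V)$ holds since $\cH_\tau(D,V)$ is holomorphic discrete. One should also invoke Theorem~\ref{extend}(1) (finite-order differential operator) to get the extension to $\cO_\tau(D,V) \to \cO_\rho(D_1,W)$, and note $\cO_\rho(D_1,W)$ is the right target since $W$ appears in $\cP(\fp^+_2,V)$.

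For part~(2), the plan is to start from the formula
\begin{gather*}
F_{\tau\rho}(x_2;w_1) = \big\langle \mathrm{e}^{(y_1|w_1)_{\fp^+_1}}I_W, \big(\tau(B(x_2,y_1))\rK((x_2)^{Q(y_1)x_2})\big)^* \big\rangle_{\hat\rho,y_1}
\end{gather*}
from Theorem~\ref{main}(2) (applicable since $(G,G_1)$ is symmetric and $\cH_\tau(D,V)_{\tilde K} = \cP(\fp^+,V)$). Under the hypothesis $(x_2)^{Q(y_1)x_2} = x_2$ we replace $\rK((x_2)^{Q(y_1)x_2})$ by $\rK(x_2)$, which takes values in $W \subset \cP(\fp^+_2, V_1)$, i.e.\ really in $V_1 \otimes \cP(\fp^+_2) \otimes \overline{W}$ in a suitable sense; and since $\tau(B(x_2,y_1))$ acts as the identity on $V_1$, the factor $\tau(B(x_2,y_1))$ disappears from $\big(\tau(B(x_2,y_1))\rK(x_2)\big)^*$. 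What remains is $\langle \mathrm{e}^{(y_1|w_1)_{\fp^+_1}} I_W, \rK(x_2)^* \rangle_{\hat\rho, y_1}$; but $\rK(x_2)^*$, viewed in the $y_1$-variable, is a constant function on $D_1$ with values in $\overline W \otimes V_1 \otimes \overline{\cP(\fp^+_2)}$, and the reproducing property of $\hat\rho$ against $\mathrm{e}^{(\cdot|w_1)_{\fp^+_1}}I_W$ evaluates this constant function at $w_1$, i.e.\ returns $\rK(x_2)$ (the minimal $\tilde K_1$-type / constant term is preserved by the normalization $C_\rho$). Hence $F_{\tau\rho}(x_2;w_1) = \rK(x_2)$ independently of $w_1$, so the infinite-order operator $F_{\tau\rho}(x_2;\overline{\partial/\partial x_1})$ collapses to the multiplication operator $f(x_1) \mapsto \rK(x_2)f(x_1)$. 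The intertwining property and the extension to $\cO_\rho(D_1,W) \to \cO_\tau(D,V)$ then come from Theorem~\ref{main}(2) together with Theorem~\ref{extend}(2), or can be seen directly since multiplication by a polynomial is manifestly continuous on spaces of holomorphic functions.

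The main obstacle is the careful bookkeeping of how $\rK(x_2) \in W \otimes \overline W$ sits inside the various tensor-product identifications $\cP(\fp^+_2,V)\otimes\overline W \simeq \cP(\fp^+_2,\Hom(W,V))$ and, for part~(2), how the condition ``$\tau(B(x_2,y_1))|_{V_1} = I_{V_1}$'' interacts with the adjoint $(\,\cdot\,)^*$ and with the $\hat\rho$-pairing in the $y_1$-variable --- in particular making rigorous the claim that pairing the constant (in $y_1$) function $\rK(x_2)^*$ against $\mathrm{e}^{(\cdot|w_1)_{\fp^+_1}} I_W$ reproduces $\rK(x_2)$ with the correct constant. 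This uses the normalization of $C_\rho$ (which makes the reproducing kernel evaluate constants/minimal-$\tilde K_1$-type vectors correctly) and the fact, recorded in Section~\ref{HDS}, that $\hat\rho(B(\cdot,w_1))$ is the reproducing kernel; concretely one expands $\mathrm{e}^{(y_1|w_1)_{\fp^+_1}}$ and uses that only the degree-zero term contributes when paired against a constant function. Once this is set up the rest is a direct substitution into the already-established general formulas.
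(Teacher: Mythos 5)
Your proposal is correct and follows essentially the same route as the paper's own proof: for (1) the paper likewise pairs $\rK(\Proj_2(\cdot))\in V'\otimes\overline{W}$ against the $V'$-component of the reproducing kernel and invokes Schur's lemma on the irreducible $V'$ to conclude $F_{\tau\rho}^*(z)=\bar C\,\rK(z_2)^*$, and for (2) it uses the two hypotheses to reduce $F_{\tau\rho}(x_2;w_1)$ to the pairing of ${\rm e}^{(y_1|w_1)_{\fp^+_1}}I_W$ with the $y_1$-constant function $\rK(x_2)^*$, which the normalized reproducing property returns as $\rK(x_2)$. The remaining points (intertwining from Theorem~\ref{main}, extension to all holomorphic functions because the operators are a finite-order differential operator and a multiplication operator respectively) are handled exactly as you indicate.
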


\begin{proof}(1) Since ${\rm e}^{(x|z)_{\fp^+}}I_V$ is the reproducing kernel of $\cP(\fp^+,V)$ with respect to the inner product $\langle\cdot,\cdot\rangle_F$,
the projection of ${\rm e}^{(x|z)_{\fp^+}}I_V$ onto any subrepresentation of $\cP(\fp^+,V)$ is non-zero.
Let $\bK_{V'}(x,z)\in\cP(\fp^+\times\overline{\fp^+},\End(V))$ be the orthogonal projection of ${\rm e}^{(x|z)_{\fp^+}}I_V$ onto $V'$
with respect to the inner product $\langle\cdot,\cdot\rangle_{\hat{\tau}}$. Then we have
\begin{gather*}
F_{\tau\rho}^*(z_1,z_2)^* =\big\langle \rK(\Proj_2(\cdot)),{\rm e}^{(\cdot|z)_{\fp^+}}I_V\big\rangle_{\hat{\tau}}
=\big\langle \rK(\Proj_2(\cdot)),\bK_{V'}(\cdot,z)\big\rangle_{\hat{\tau}}.
\end{gather*}
Then since the map $f\mapsto \big\langle f,\bK_{V'}(\cdot,z)\big\rangle_{\hat{\tau}}$ in $\End(V')$ intertwines the $\tilde{K}$-action,
by Schur's lemma, there exists a constant $C$ such that
\begin{gather*} F_{\tau\rho}^*(z_1,z_2)^*=C\rK(\Proj_2(z_1,z_2)),\qquad \therefore F_{\tau\rho}^*(z_1,z_2)=\bar{C}\rK(z_2)^*. \end{gather*}
Since the intertwining property does not change by scalar multiplication, we may omit $\bar{C}$.
Then the corresponding $\cF_{\tau\rho}^*$ intertwines the $(\fg_1,\tilde{K}_1)$-action.
Since this is a finite-order differential operator, this extends to the operator between the spaces of all holomorphic functions,
and the claim follows.

(2) By the assumption, we have
\begin{align*}
F_{\tau\rho}(x_2;w_1)
&=\big\langle {\rm e}^{(y_1|w_1)_{\fp^+}}I_W,\bigl(\tau(B(x_2,y_1))\rK\big((x_2)^{Q(y_1)x_2}\big)\bigr)^*\big\rangle_{\hat{\rho},y_1}\\
&=\big\langle {\rm e}^{(y_1|w_1)_{\fp^+}}I_W,\rK(x_2)^*\big\rangle_{\hat{\rho},y_1}.
\end{align*}
Then since $\rK(x_2)^*\in\overline{\cP(\fp^+_2,\Hom(W,V))}\simeq \Hom(\cP(\fp^+_2,V),W)$ and the orthogonal projection of
${\rm e}^{(y_1|w_1)_{\fp^+}}I_W$ onto $W\subset\cP(\fp^+_2,W)$ is $I_W$, we have
\begin{gather*} F_{\tau\rho}(x_2;w_1)=\big\langle I_W,\rK(x_2)^*\big\rangle_{\hat{\rho},y_1}=\rK(x_2). \end{gather*}
Then the corresponding $\cF_{\tau\rho}$ intertwines the $(\fg_1,\tilde{K}_1)$-action.
Since this is a multiplication operator, this extends to the operator between the spaces of all holomorphic functions,
and the claim follows.
\end{proof}

Especially, if $G$ is simple and $(\tau,V)$, $(\rho,W)$ are of the form $(\tau,V)=\big(\tau_0\otimes\chi^{-\lambda},V\big)$,
$(\rho,W)=\big(\rho_0\otimes\chi|_{\tilde{K}_1}^{-\lambda}{,}W\big)$ respectively, then since
$\cP\big(\fp^+_2{,}\Hom\big(W\otimes\chi|_{\tilde{K}_1}^{-\lambda}{,}V\otimes\chi^{-\lambda}\big)\big)^{\tilde{K}_1}
\!\simeq\!\cP(\fp^+_2{,}\Hom(W,V))^{\tilde{K}_1}$ holds for any $\lambda$, $\rK(x_2)$ does not depend on $\lambda$.
Therefore $\cF_{\tau\rho}^*$ and $\cF_{\tau\rho}$ intertwine $\tilde{G}_1$-action for any $\lambda$.

The condition in Theorem \ref{normal_derivative}(1) is the same as \cite[Lemma 5.5(3)]{KP2} when $(G,G_1)$ is of split rank 1
(i.e., $(G,G_1)=(U(q,s),U(q,s-1)\times U(1))$, $(\operatorname{SO}^*(2s),\operatorname{SO}^*(2(s-1))\times \operatorname{SO}(2))$, or $(\operatorname{SO}_0(2,2s),U(1,s))$),
and $(\tau,V)$ is 1-dimensional.
That is also satisfied when $(G,G_1)=(U(q,s),U(q,s')\times U(s''))$ with $s'+s''=s$, $(\operatorname{SO}^*(2s),U(s-1,1))$,
or $(E_{6(-14)},U(1)\times \operatorname{Spin}_0(2,8))$ (up to covering), and $(\tau,V)$ is 1-dimensional. That is,
\begin{Corollary}\label{normal_der_1}
Let $(G,G_1)=(U(q,s),U(q,s')\times U(s''))$, $(\operatorname{SO}^*(2s),\operatorname{SO}^*(2(s-1))\times \operatorname{SO}(2))$, $(\operatorname{SO}^*(2s),U(s-1,1))$, $(\operatorname{SO}_0(2,2s),U(1,s))$
or $(E_{6(-14)},U(1)\times \operatorname{Spin}_0(2,8))$ $($up to covering$)$, and $(\tau,V)=\big(\chi^{-\lambda},\BC\big)$ be $1$-dimensional.
Then for any subrepresentation $W\subset\cP(\fp^+_2)$ of $\tilde{K}^\BC_1$, the differential operator
\begin{gather*}
\cF_{\tau\rho}^*\colon \ \cO_\lambda(D)\to\cO_\rho(D_1,W), \qquad
(\cF_{\tau\rho}^*f)(x_1)=\rK\left.\left(\overline{\frac{\partial}{\partial x_2}}\right)^*\right|_{x_2=0}f(x_1,x_2)
\end{gather*}
intertwines the $\tilde{G}_1$-action.
\end{Corollary}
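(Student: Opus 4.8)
The plan is to derive the corollary from Theorem~\ref{normal_derivative}(1), whose hypothesis I must verify for each of the five pairs. Since $(\tau,V)=\big(\chi^{-\lambda},\BC\big)$ is one-dimensional we have $\cP(\fp^+,V)=\cP(\fp^+)$, which by Hua--Kostant--Schmid (Theorem~\ref{HKS}) decomposes as $\cP(\fp^+)=\bigoplus_{\bm\in\BZ_{++}^r}\cP_\bm(\fp^+)$ into pairwise inequivalent irreducible $\tilde K^\BC$-modules. Hence the only candidates for the subrepresentation $V'$ of Theorem~\ref{normal_derivative}(1) are the $\cP_\bm(\fp^+)$, and the hypothesis ``there exists irreducible $V'\subset\cP(\fp^+)$ with $W\subset V'\cap\cP(\fp^+_2)$'' becomes exactly:
\[
\text{every irreducible }\tilde K^\BC_1\text{-submodule }W\subset\cP(\fp^+_2)\text{ is contained in a single }\cP_\bm(\fp^+).
\]
Once this is established, Theorem~\ref{normal_derivative}(1) gives the claimed intertwining operator for $\lambda\gg 0$ (so that $\cH_\tau(D,V)$ is holomorphic discrete), and since $\rK(x_2)$ is independent of $\lambda$ and the intertwining identity is polynomial in $\lambda$, it persists for all $\lambda\in\BC$, as noted after Theorem~\ref{normal_derivative}.

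To prove the displayed claim I would use a single mechanism, uniform over the five pairs. All of them are symmetric pairs, so $\fp^+_2$ is the $(-1)$-eigenspace of the extended involution $\sigma$ and is $\tilde K^\BC_1$-stable; consequently $\cP(\fp^+_2)\subset\cP(\fp^+)$ (pullback along $\Proj_2$) is $\tilde K^\BC_1$-stable and graded by homogeneous degree, so an irreducible $W$ is homogeneous of some degree $m$ and can only meet those $\cP_\bm(\fp^+)$ with total degree $m$. Moreover, in each case $\tilde K^\BC_1$ contains a ``linear'' factor $L^\BC$ already sitting inside $\tilde K^\BC$ and acting on $\fp^+$ in the same way: $L^\BC={\rm GL}(q,\BC)$ for $(U(q,s),U(q,s')\times U(s''))$, $L^\BC={\rm GL}(s-1,\BC)$ for the two $\operatorname{SO}^*(2s)$-cases, and $L^\BC=\operatorname{SO}(2,\BC)$ (the central factor of $K$) for $(\operatorname{SO}_0(2,2s),U(1,s))$ and for $(E_{6(-14)},U(1)\times\operatorname{Spin}_0(2,8))$. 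The irreducible $W$ carries a single $L^\BC$-isotypic type; the Hua--Kostant--Schmid projection $\cP(\fp^+)\to\cP_\bm(\fp^+)$ is $\tilde K^\BC$-equivariant, hence $L^\BC$-equivariant; and using the explicit $L^\BC$-module structure of $\cP_\bm(\fp^+)$ (read off from the weights in Theorem~\ref{HKS} and the realizations in Sections~\ref{realize} and~\ref{EJTS}) together with the classical ${\rm GL}$-branching rules, one checks that this $L^\BC$-type, combined with the degree $m$, occurs in at most one $\cP_\bm(\fp^+)$. Therefore the projection of $W$ onto $\cP_\bm(\fp^+)$ vanishes for all but that index, i.e.\ $W\subset\cP_\bm(\fp^+)$.

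The final step is a short case-by-case check of the ``at most one $\cP_\bm$'' statement. For $(U(q,s),U(q,s')\times U(s''))$ one has $\fp^+=M(q,s;\BC)$ with $\fp^+_1,\fp^+_2$ the column blocks, so $\cP(\fp^+_2)=\bigoplus_\bn V_\bn^{(q)\vee}\boxtimes V_0^{(s')}\boxtimes V_\bn^{(s'')}$ while $\cP_\bm(\fp^+)\simeq V_\bm^{(q)\vee}\boxtimes V_\bm^{(s)}$; the ${\rm GL}(q,\BC)$-type $V_\bn^{(q)\vee}$ forces $\bm=\bn$, and the required inclusion reduces to the Littlewood--Richardson identity $c^\bn_{0,\bn}=1$. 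For the $\operatorname{SO}^*(2s)$-cases ($\fp^+=\Skew(s,\BC)$, $\cP_\bm(\fp^+)\simeq V_{\bm^2}^{(s)\vee}$, with $\fp^+_2\simeq\BC^{s-1}$ resp.\ $\fp^+_2\simeq\Skew(s-1,\BC)$) uniqueness of $\bm$ follows from the ${\rm GL}(s-1,\BC)$-branching-by-interlacing together with the homogeneity degree; for $(\operatorname{SO}_0(2,2s),U(1,s))$ ($\fp^+\simeq\BC^{2s}$, $\cP_\bm(\fp^+)\simeq\chi^{-m_1-m_2}\boxtimes V_{(m_1-m_2,0,\dots,0)}^{[2s]\vee}$) it follows from the $\operatorname{SO}(2,\BC)$-charge $m_1+m_2$ together with the degree; and for $(E_{6(-14)},U(1)\times\operatorname{Spin}_0(2,8))$ one argues similarly with $\fp^+=M(1,2;\BO)^\BC\simeq\BO^\BC\oplus\BO^\BC$, $\fp^+_2$ the second summand, using the $\mathfrak{u}(1)\oplus\mathfrak{u}(1)\oplus\mathfrak{so}(8)$-decomposition of $\cP_\bm(\fp^+(\mathfrak{e}_{6(-14)}))$ recorded in Section~\ref{realize}. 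I expect the $E_{6(-14)}$ case and the $(\operatorname{SO}^*(2s),U(s-1,1))$ case to be the most delicate, since there the passage from $\tilde K^\BC$-types to $\tilde K^\BC_1$-types must be handled through the exceptional, resp.\ skew-symmetric, Jordan structure rather than plain ${\rm GL}$-combinatorics; in the split-rank-one instances this is precisely the content recovered from \cite[Lemma~5.5(3)]{KP2}.
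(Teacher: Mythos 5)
Your overall route is the paper's: reduce to Theorem~\ref{normal_derivative}(1), observe that by Theorem~\ref{HKS} the only candidates for $V'$ are the $\cP_\bm(\fp^+)$, and then verify case by case, via branching from $\tilde K^\BC$ to $\tilde K_1^\BC$, that each irreducible $W\subset\cP(\fp^+_2)$ sits inside a single $\cP_\bm(\fp^+)$. The paper does exactly this for $(U(q,s),U(q,s')\times U(s''))$ (via the ${\rm GL}(q,\BC)$-type, as you do), for $(\operatorname{SO}^*(2s),U(s-1,1))$ (via $U(s)\downarrow U(s-1)$ interlacing), and for $E_{6(-14)}$ (via the full $\mathfrak{so}(2)\oplus\mathfrak{so}(8)$-branching of \cite{T} and a comparison of complete lowest weights), and simply cites \cite{KP2} for the three split-rank-one pairs; your proposal to handle all five uniformly is a legitimate variant. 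The remark that the concrete (not merely abstract) containment $W\subset\cP_\bm(\fp^+)$ follows once the relevant $K_1$-type occurs in only one summand of the right degree, and the extension to all $\lambda$ because $\rK$ is $\lambda$-independent, are both exactly as in the paper.

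However, one step of your ``uniform mechanism'' fails as written. For $(\operatorname{SO}_0(2,2s),U(1,s))$ you take $L^\BC=\operatorname{SO}(2,\BC)$, the central factor of $K^\BC$; but this factor acts on $\cP_\bm(\fp^+)\simeq\chi^{-m_1-m_2}\boxtimes V^{[2s]\vee}_{(m_1-m_2,0,\dots,0)}$ through the character $\chi^{-(m_1+m_2)}$, and $m_1+m_2$ is nothing but the polynomial degree. So ``the $\operatorname{SO}(2,\BC)$-charge together with the degree'' carries a single piece of information, $m_1+m_2=\deg W$, and does not distinguish among the $\lfloor\deg W/2\rfloor+1$ indices $\bm$ with that total; the same objection applies to your naming $L^\BC=\operatorname{SO}(2,\BC)$ for the $E_{6(-14)}$ case in the middle paragraph. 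To close the gap you must use a subgroup of $K_1^\BC$ that is not central in $K^\BC$: for $(\operatorname{SO}_0(2,2s),U(1,s))$ take the ${\rm GL}(s,\BC)$-factor inside $\operatorname{SO}(2s,\BC)$ and check that the type of $W=\cP_n(\fp^+_2)$ occurs in $V^{[2s]\vee}_{(k,0,\dots,0)}\big|_{{\rm GL}(s,\BC)}$ only for $k=n$, forcing $\bm=(n,0)$ (equivalently: $\fp^+_2$ is isotropic for $q$, so every element of $\cP(\fp^+_2)$ is harmonic); for $E_{6(-14)}$ one needs the full $\mathfrak u(1)\oplus\mathfrak u(1)\oplus\mathfrak{so}(8)$-decomposition and a comparison of entire lowest weights (including the $\ve_0$- and $\ve_4$-components), which is what your final sentence gestures at and what the paper carries out using \cite[Theorem~1.1]{T}. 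With those corrections the argument goes through.
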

\begin{proof}
Since it is already proved for $(G,G_1)=(U(q,s),U(1)\times U(q-1,s))$, $(\operatorname{SO}^*(2s),\allowbreak \operatorname{SO}^*(2(s-1))\times \operatorname{SO}(2))$,
or $(\operatorname{SO}_0(2,2s),U(1,s))$ in \cite{KP2},
we only deal with $(G,G_1)=(U(q,s),U(q,s')\times U(s''))$, $(\operatorname{SO}^*(2s),U(s-1,1))$ and $(E_{6(-14)},U(1)\times \operatorname{Spin}_0(2,8))$.
In the first case we have $\fp^+=M(q,s;\BC)$, $\fp^+_1=M(q,s';\BC)$, $\fp^+_2=M(q,s'';\BC)$, and
\begin{gather*}
\cP(\fp^+)=\bigoplus_{\bm\in\BZ_{++}^{\min\{q,s\}}}\cP_\bm(\fp^+)
=\bigoplus_{\bm\in\BZ_{++}^{\min\{q,s\}}}V_\bm^{(q)\vee}\boxtimes V_\bm^{(s)},\\
\cP(\fp^+_2)=\bigoplus_{\bm\in\BZ_{++}^{\min\{q,s''\}}}\cP_\bm(\fp^+_2)
=\bigoplus_{\bm\in\BZ_{++}^{\min\{q,s''\}}}V_\bm^{(q)\vee}\boxtimes V_\bm^{(s'')}.
\end{gather*}
Then by comparing the weights for ${\rm GL}(q,\BC)$, we get $\cP_\bm(\fp^+_2)\subset \cP_\bm(\fp^+)$.
Therefore for any $W=\cP_\bm(\fp^+_2)\otimes\chi^{-\lambda}\subset\cP(\fp^+_2,\chi^{-\lambda})$,
if we set $V'=\cP_\bm(\fp^+)\otimes\chi^{-\lambda}\subset\cP(\fp^+,\chi^{-\lambda})$, then $W\subset V'$ holds, and the condition in
Theorem~\ref{normal_derivative}(1) is satisfied. In the second case we have $\fp^+=\Skew(s,\BC)$, $\fp^+_1=\BC^{s-1}$,
$\fp^+_2=\Skew(s-1,\BC)$, and
\begin{gather*}
\cP(\fp^+) =\bigoplus_{\bm\in\BZ_{++}^{\lfloor s/2\rfloor}}\cP_\bm(\fp^+)
=\bigoplus_{\bm\in\BZ_{++}^{\lfloor s/2\rfloor}}V_{(m_1,m_1,m_2,m_2,\ldots,m_{\lfloor s/2\rfloor},m_{\lfloor s/2\rfloor}(,0))}^{(s)\vee},\\
\cP(\fp^+_2) =\bigoplus_{\bn\in\BZ_{++}^{\lfloor (s-1)/2\rfloor}}\cP_\bn(\fp^+_2)
=\bigoplus_{\bn\in\BZ_{++}^{\lfloor (s-1)/2\rfloor}}
V_{(n_1,n_1,n_2,n_2,\ldots,n_{\lfloor (s-1)/2\rfloor},n_{\lfloor (s-1)/2\rfloor}(,0))}^{(s-1)\vee}.
\end{gather*}
Then by the branching law of $U(s)\downarrow U(s-1)$, we can show that abstractly $\cP_\bn(\fp^+_2)\subset\cP_\bm(\fp^+)$ implies
\begin{gather*}
 m_1\ge n_1\ge m_1\ge n_1\ge m_2\ge\cdots\ge m_r\ge n_r\ge m_r\ge n_r\ge 0, \\
 \therefore (m_1,\ldots,m_r)=(n_1,\ldots,n_r), \qquad s=2r+1,\\
m_1\ge n_1\ge m_1\ge n_1\ge m_2\ge\cdots\ge m_{r-1}\ge n_{r-1}\ge m_r\ge 0\ge m_r, \\
 \therefore (m_1,\ldots,m_r)=(n_1,\ldots,n_{r-1},0), \qquad s=2r.
\end{gather*}
Therefore $(W=)\cP_\bm(\fp^+_2)\otimes\chi^{-\lambda}\subset\cP_\bm(\fp^+)\otimes\chi^{-\lambda}(=V')$ holds as a concrete submodule,
and the condition in Theorem~\ref{normal_derivative}(1) is satisfied.
In the third case $\cP_{(m_1,m_2)}(\fp^+)$ is isomorphic to
\begin{gather*} \cP_{(m_1,m_2)}(\fp^+)\simeq \chi_{\mathfrak{e}_{6(-14)}}^{-\frac{3}{4}(m_1+m_2)}\boxtimes
V_{\left(\frac{m_1+m_2}{2},\frac{m_1-m_2}{2},\frac{m_1-m_2}{2},\frac{m_1-m_2}{2},\frac{m_1-m_2}{2}\right)}^{[10]\vee}, \end{gather*}
and by \cite[Theorem 1.1]{T} we can show
\begin{gather*}
\left.V_{\left(\frac{m_1+m_2}{2},\frac{m_1-m_2}{2},\frac{m_1-m_2}{2},\frac{m_1-m_2}{2},\frac{m_1-m_2}{2}\right)}^{[10]\vee}
\right|_{\mathfrak{so}(2)\oplus\mathfrak{so}(8)}\\
\qquad{} \simeq\bigoplus_{k_1=0}^{m_2}\bigoplus_{\substack{|k_2|\le\frac{m_1-m_2}{2}\\k_2-\frac{m_1-m_2}{2}\in\BZ}}
\bigoplus_{\substack{|l-k_2|\le m_2-k_1\\ l-k_2-m_2+k_1\in 2\BZ}}
V_l^{[2]\vee}\boxtimes V_{\left(\frac{m_1-m_2}{2}+k_1,\frac{m_1-m_2}{2},\frac{m_1-m_2}{2},k_2\right)}^{[8]\vee}.
\end{gather*}
Therefore a $\mathfrak{u}(1)\oplus\mathfrak{u}(1)\oplus\mathfrak{so}(8)$-submodule in $\cP_{(m_1,m_2)}(\fp^+)$ has a lowest weight of the form
\begin{gather*}
 -\tfrac{3}{4}(m_1+m_2){\rm d}\chi_{\mathfrak{e}_{6(-14)}}-l\ve_0
-\big(\tfrac12 (m_1-m_2)+k_1\big)\ve_1\\
\qquad\quad{} -\tfrac12 (m_1-m_2) \ve_2-\tfrac12 (m_1-m_2)\ve_3-k_2\ve_4\\
\qquad {} =-\tfrac{1}{4}(m_1+m_2)(2\gamma_1+\gamma_2+\gamma_3)-\tfrac{1}{2}l(\gamma_2-\gamma_3)\\
\qquad\quad{} -\big(\tfrac12 (m_1-m_2)+k_1\big)\ve_1-\tfrac12 (m_1-m_2) \ve_2-\tfrac12 (m_1-m_2)\ve_3-k_2\ve_4\\
\qquad{} =-\tfrac{1}{2}(m_1+m_2)\gamma_1-\tfrac{1}{4}(m_1+m_2+2l)\gamma_2-\tfrac{1}{4}(m_1+m_2-2l)\gamma_3\\
\qquad\quad{} -\big(\tfrac12 (m_1-m_2)+k_1\big)\ve_1-\tfrac12 (m_1-m_2)\ve_2-\tfrac12 (m_1-m_2)\ve_3-k_2\ve_4.
\end{gather*}
On the other hand, $\cP_{(n_1,n_2)}(\fp^+_2)$ has the lowest weight
\begin{gather*} -\tfrac{1}{2}(n_1+n_2)(\gamma_1+\gamma_3)-\tfrac{1}{2}(n_1-n_2)(\ve_1+\ve_2+\ve_3-\ve_4). \end{gather*}
Therefore if $\cP_{(n_1,n_2)}(\fp^+_2)\subset\cP_{(m_1,m_2)}(\fp^+)$ abstractly, then we have
\begin{gather*} n_1+n_2=m_1+m_2,\qquad l=-\tfrac12 (m_1+m_2),\qquad k_1=0,\\ n_1-n_2=m_1-m_2,\qquad k_2=-\tfrac12 (m_1-m_2),
\end{gather*}
and especially $(n_1,n_2)=(m_1,m_2)$ holds. Therefore
$(W=)\cP_\bm(\fp^+_2)\otimes\chi^{-\lambda}\subset\cP_\bm(\fp^+)\otimes\chi^{-\lambda}(=V')$ holds as a concrete submodule,
and the condition in Theorem~\ref{normal_derivative}(1) is also satisfied.
\end{proof}

Next we consider $\cF_{\tau\rho}$. We again consider
\begin{gather*} (G,G_1)=\begin{cases} (U(q,s),U(q,s')\times U(s''))&(\text{Case }1),\\
(\operatorname{SO}^*(2s),\operatorname{SO}^*(2(s-1))\times \operatorname{SO}(2))&(\text{Case }2),\\
(\operatorname{SO}^*(2s),U(s-1,1))&(\text{Case }3),\\
(\operatorname{SO}_0(2,2s),U(1,s))&(\text{Case }4),\\
(E_{6(-14)},U(1)\times \operatorname{Spin}_0(2,8))&(\text{Case }5) \end{cases} \end{gather*}
(up to covering). Then $\fp^+=M(q,s;\BC)$, $\Skew(s,\BC)$, $\Skew(s,\BC)$, $\BC^{2s}$ and $M(1,2;\BO)^\BC$ respectively.
We realize $G_1\subset G$ such that
\begin{align*}
\fp^+_1=\fg_1\cap\fp^+&=\begin{cases}
\left\{y_1=\begin{pmatrix}y&0\end{pmatrix}\colon y\in M(q,s';\BC)\right\}& (\text{Case }1),\vspace{1mm}\\
\left\{y_1=\begin{pmatrix}y&0\\0&0\end{pmatrix}\colon y\in \Skew(s-1,\BC)\right\}& (\text{Case }2),\vspace{1mm}\\
\left\{y_1=\begin{pmatrix}0&y\\-{}^t\hspace{-1pt}y&0\end{pmatrix}\colon y\in M(s-1,1;\BC)\right\}& (\text{Case }3),\vspace{1mm}\\
\left\{y_1={\vphantom{\bigl(}}^t\!\!\left(\frac{1}{2}{}^t\hspace{-1pt}y,\frac{\sqrt{-1}}{2}{}^t\hspace{-1pt}y\right)\colon
y\in \BC^s\right\}& (\text{Case }4),\vspace{1mm}\\
\left\{y_1=(y,0)\colon y\in\BO^\BC\right\}& (\text{Case }5), \end{cases}\\
\fp^+_2=(\fp^+_1)^\bot&=\begin{cases}
\left\{x_2=\begin{pmatrix}0&x\end{pmatrix}\colon x\in M(q,s'';\BC)\right\}& (\text{Case }1),\vspace{1mm}\\
\left\{x_2=\begin{pmatrix}0&x\\-{}^t\hspace{-1pt}x&0\end{pmatrix}\colon x\in M(s-1,1;\BC)\right\}& (\text{Case }2),\vspace{1mm}\\
\left\{x_2=\begin{pmatrix}x&0\\0&0\end{pmatrix}\colon x\in \Skew(s-1,\BC)\right\}& (\text{Case }3),\vspace{1mm}\\
\left\{x_2={\vphantom{\bigl(}}^t\!\!\left(\frac{1}{2}{}^t\hspace{-1pt}x,-\frac{\sqrt{-1}}{2}{}^t\hspace{-1pt}x\right)\colon
x\in \BC^s\right\}& (\text{Case }4),\\
\left\{x_2=(0,x)\colon x\in\BO^\BC\right\}& (\text{Case }5). \end{cases}
\end{align*}
Then for $(y_1,x_2)\in\fp^+_1\times\fp^+_2$, $(x_2)^{Q(y_1)x_2}=x_2$ holds since
\begin{align*}
Q(y_1)x_2=\begin{cases}
\begin{pmatrix}y&0\end{pmatrix}\begin{pmatrix}0\\x^*\end{pmatrix}\begin{pmatrix}y&0\end{pmatrix}=0& (\text{Case }1),\\
\begin{pmatrix}y&0\\0&0\end{pmatrix}\begin{pmatrix}0&-\bar{x}\\x^*&0\end{pmatrix}
\begin{pmatrix}y&0\\0&0\end{pmatrix}=0& (\text{Case }2),\\
\begin{pmatrix}0&y\\-{}^t\hspace{-1pt}y&0\end{pmatrix}
\begin{pmatrix}x^*&0\\0&0\end{pmatrix}\begin{pmatrix}0&y\\-{}^t\hspace{-1pt}y&0\end{pmatrix}=0& (\text{Case }3),\\
2q(y_1,\overline{x_2})y_1-q(y_1)\overline{x_2}=0& (\text{Case }4),\\
\begin{pmatrix}y&0\end{pmatrix}\left(\begin{pmatrix}0\\\overline{\hat{x}}\end{pmatrix}\begin{pmatrix}y&0\end{pmatrix}\right)=0& (\text{Case }5),
\end{cases}
\end{align*}
and the Bergman operators are computed as
\begin{gather*}
 B(x_2,y_1)=\left(I_q-\begin{pmatrix}0&x\end{pmatrix}\begin{pmatrix}y^*\\0\end{pmatrix},
\left(I_s-\begin{pmatrix}y^*\\0\end{pmatrix}\begin{pmatrix}0&x\end{pmatrix}\right)^{-1}\right) \\
 \hphantom{B(x_2,y_1)=}{} =\left(I_q,\begin{pmatrix}I_{s'}&-y^*x\\0&I_{s''}\end{pmatrix}^{-1}\right)\in K_1^\BC \hspace{62mm} (\text{Case }1), \\
 B(x_2,y_1)=I_s-\begin{pmatrix}0&x\\-{}^t\hspace{-1pt}x&0\end{pmatrix}\begin{pmatrix}y^*&0\\0&0\end{pmatrix}
=\begin{pmatrix}I_{s-1}&0\\{}^t\hspace{-1pt}xy^*&1\end{pmatrix}\in K_1^\BC\hspace{39mm} (\text{Case }2), \\
 B(x_2,y_1)=I_s-\begin{pmatrix}x&0\\0&0\end{pmatrix}\begin{pmatrix}0&-\bar{y}\\y^*&0\end{pmatrix}
=\begin{pmatrix}I_{s-1}&x\bar{y}\\0&1\end{pmatrix}\in K_1^\BC\hspace{38mm} (\text{Case }3), \\
B(x_2,y_1) =h(x_2,y_1)I_{2s}-2(1-q(x_2,\overline{y}_1))(x_2y^*_1-\overline{y}_1{}^t\hspace{-1pt}x_2)
+2(x_2y^*_1-\overline{y}_1{}^t\hspace{-1pt}x_2)^2 \\
\hphantom{B(x_2,y_1)}{} =I_{2s}-2(x_2y^*_1-\overline{y}_1{}^t\hspace{-1pt}x_2)
=I_{2s}-\frac{1}{2}\begin{pmatrix}xy^*-\overline{y}\hspace{1pt}{}^t\hspace{-1pt}x &
-\sqrt{-1}(xy^*-\overline{y}\hspace{1pt}{}^t\hspace{-1pt}x) \\
-\sqrt{-1}(xy^*-\overline{y}\hspace{1pt}{}^t\hspace{-1pt}x) &
-(xy^*-\overline{y}\hspace{1pt}{}^t\hspace{-1pt}x) \end{pmatrix} \\
\hphantom{B(x_2,y_1)}{}
 =\frac{1}{2}\begin{pmatrix}1&1\\\sqrt{-1}&-\sqrt{-1}\end{pmatrix}\!
\begin{pmatrix}I_s& 0\\ -(xy^*-\overline{y}\hspace{1pt}{}^t\hspace{-1pt}x)& I_s\end{pmatrix}\!
\begin{pmatrix}1&-\sqrt{-1}\\1&\sqrt{-1}\end{pmatrix}\in\End(\fp^+)\qquad\!\! (\text{Case }4),
\end{gather*}
for Cases 1--4, and
\begin{gather*}
B(x_2,y_1)z =\begin{pmatrix}z_1& z_2\end{pmatrix}
-\begin{pmatrix}0& x\end{pmatrix}\left(\begin{pmatrix}{}^t\hspace{-1pt}\overline{\hat{y}}\\0\end{pmatrix}
\begin{pmatrix}z_1& z_2\end{pmatrix}\right)
-\begin{pmatrix}z_1& z_2\end{pmatrix}\left(\begin{pmatrix}{}^t\hspace{-1pt}\overline{\hat{y}}\\0\end{pmatrix}
\begin{pmatrix}0& x\end{pmatrix}\right)\\
\hphantom{B(x_2,y_1)z =}{} +\begin{pmatrix}0& x\end{pmatrix}\left(\left(\left(\begin{pmatrix}{}^t\hspace{-1pt}\overline{\hat{y}}\\0\end{pmatrix}
\begin{pmatrix}z_1& z_2\end{pmatrix}\right)\begin{pmatrix}{}^t\hspace{-1pt}\overline{\hat{y}}\\0\end{pmatrix}\right)
\begin{pmatrix}0& x\end{pmatrix}\right)\\
\hphantom{B(x_2,y_1)z }{} =\begin{pmatrix}z_1& z_2-z_1(\overline{\hat{y}}x)\end{pmatrix}
=\begin{pmatrix}z_1& z_2\end{pmatrix}\begin{pmatrix}1&-\overline{\hat{y}}x\\0&1\end{pmatrix},\qquad
 z=\begin{pmatrix}z_1& z_2\end{pmatrix}\in \fp^+
\end{gather*}
for Case 5. That is, each $B(x_2,y_1)$ sits in the nilpotent radical of the parabolic subgroup of $K^\BC$ whose Levi subgroup is $K^\BC_1$.
Therefore, for the representation
\begin{gather*} V=\begin{cases} \chi^{-\lambda_1-\lambda_2}_{U(q,s)}\otimes \big(V_\bk^{(q)\vee}\boxtimes V_\bm^{(s)}\big) & (\text{Case }1),\\
\chi^{-\lambda}_{\operatorname{SO}^*(2s)}\otimes V_\bm^{(s)\vee} & (\text{Cases }2,3),\\
\chi^{-\lambda}_{\operatorname{SO}_0(2,2s)}\otimes V_\bm^{[2s]\vee} & (\text{Case 4}),\\
\chi^{-\lambda}_{E_{6(-14)}}\otimes V_{(m_0,m_1,\ldots,m_4)}^{[10]\vee} & (\text{Case 5})\end{cases} \end{gather*}
of $\tilde{K}^\BC$, if we take the subrepresentation
\begin{align*}
V_1&=\begin{cases} \chi^{-\lambda_1-\lambda_2}_{U(q,s)}\otimes \big(V_\bk^{(q)\vee}\boxtimes V_{(m_1,\ldots,m_s')}^{(s')}
\boxtimes V_{(m_{s'+1},\ldots,m_{s})}^{(s'')}\big) & (\text{Case }1),\\
\chi^{-\lambda}_{\operatorname{SO}^*(2s)}\otimes \big(V_{(m_1,\ldots,m_{s-1})}^{(s-1)\vee}\boxtimes \BC_{-m_s}\big) & (\text{Case }2),\\
\chi^{-\lambda}_{\operatorname{SO}^*(2s)}\otimes \big(V_{(m_2,\ldots,m_s)}^{(s-1)\vee}\boxtimes \BC_{-m_1}\big) & (\text{Case }3),\\
\chi^{-\lambda}_{\operatorname{SO}_0(2,2s)}\otimes V_\bm^{(s)\vee} & (\text{Case }4),\\
\chi^{-\lambda}_{E_{6(-14)}}\otimes V_{(m_0;m_1,\ldots,m_4)}^{[2,8]\vee} & (\text{Case }5),\end{cases} \\
&=\begin{cases} \big(\chi^{-\lambda_1-\lambda_2}_{U(q,s')}\otimes \big(V_\bk^{(q)\vee}\boxtimes V_{(m_1,\ldots,m_s')}^{(s')}\big)\big)
\boxtimes V_{(\lambda_2+m_{s'+1},\ldots,\lambda_2+m_{s})}^{(s'')} & (\text{Case }1),\\
\big(\chi^{-\lambda}_{\operatorname{SO}^*(2(s-1))}\otimes V_{(m_1,\ldots,m_{s-1})}^{(s-1)\vee}\big)\boxtimes \BC_{-\frac{\lambda}{2}-m_s} & (\text{Case }2),\\
\chi^{-\frac{\lambda}{2}-\frac{\lambda}{2}}_{U(s-1,1)}\otimes \big(V_{(m_2,\ldots,m_s)}^{(s-1)\vee}\boxtimes \BC_{-m_1}\big) & (\text{Case }3),\\
\chi^{-\lambda-0}_{U(1,s)}\otimes (\BC\boxtimes V_\bm^{(s)\vee}) & (\text{Case }4),\\
\big(\chi^{-\lambda-\frac{m_0}{2}}_{\operatorname{Spin}_0(2,8)}\otimes V_{(m_1,\ldots,m_4)}^{[8]\vee}\big)
\boxtimes \chi^{-\lambda+\frac{3}{2}m_0}_{U(1)} & (\text{Case }5)\end{cases}\end{align*}
of $\tilde{K}^\BC_1$, then $\tau(B(x_2,y_1))|_{V_1}=I_{V_1}$ holds.
Thus we have proved the following.
\begin{Corollary}\label{normal_der_2}\quad
\begin{enumerate}\itemsep=0pt
\item[$(1)$] Let $(G,G_1)=(U(q,s),U(q,s')\times U(s''))$,
and $(\tau,V)=\big(\chi^{-\lambda_1-\lambda_2}\otimes\big(\tau_\bk^{(q)\vee}\boxtimes \tau_\bm^{(s)}\big),V_\bk^{(q)\vee}\otimes V_\bm^{(s)}\big)$.
Then for any subrepresentation $W\subset\cP\big(\fp^+_2,V_\bk^{(q)\vee}\boxtimes V_{(m_1,\ldots,m_s')}^{(s')}
\boxtimes V_{(m_{s'+1},\ldots,m_{s})}^{(s'')}\big)$ of~$\tilde{K}^\BC_1$, the multiplication operator
$\cF_{\tau\rho}\colon \cO_{\lambda_1+\lambda_2}(D_1,W)\to\cO_{\lambda_1+\lambda_2}(D,V)$, $(\cF_{\tau\rho}f)(x_1,x_2)$ $=\rK(x_2)f(x_1)$ intertwines the $\tilde{G}_1$-action.
\item[$(2)$] Let $(G,G_1)=(\operatorname{SO}^*(2s),\operatorname{SO}^*(2(s-1))\times \operatorname{SO}(2))$,
and $(\tau,V)=\big(\chi^{-\lambda}\otimes\tau_\bm^{(s)\vee},V_\bm^{(s)\vee}\big)$.
Then for any subrepresentation $W\subset\cP\big(\fp^+_2,V_{(m_1,\ldots,m_{s-1})}^{(s-1)\vee}\boxtimes \BC_{-m_s}\big)$ of~$\tilde{K}^\BC_1$,
the multiplication operator
$\cF_{\tau\rho}\colon \cO_\lambda(D_1,W)\to\cO_\lambda(D,V)$, $(\cF_{\tau\rho}f)(x_1,x_2)=\rK(x_2)f(x_1)$ intertwines the $\tilde{G}_1$-action.
\item[$(3)$] Let $(G,G_1)=(\operatorname{SO}^*(2s),U(s-1,1))$,
and $(\tau,V)=\big(\chi^{-\lambda}\otimes\tau_\bm^{(s)\vee},V_\bm^{(s)\vee}\big)$.
Then for any subrepresentation $W\subset\cP\big(\fp^+_2,V_{(m_2,\ldots,m_s)}^{(s-1)\vee}\boxtimes \BC_{-m_1}\big)$ of $\tilde{K}^\BC_1$,
the multiplication operator
$\cF_{\tau\rho}\colon \cO_\lambda(D_1,W)\to\cO_{\frac{\lambda}{2}+\frac{\lambda}{2}}(D,V)$,
$(\cF_{\tau\rho}f)(x_1,x_2)=\rK(x_2)f(x_1)$ intertwines the $\tilde{G}_1$-action.
\item[$(4)$] Let $(G,G_1)=(\operatorname{SO}_0(2,2s),U(1,s))$, and $(\tau,V)=\big(\chi^{-\lambda}\otimes \tau_\bm^{[2s]\vee},V_\bm^{[2s]\vee}\big)$,
Then for any subrepresentation $W\subset\cP\big(\fp^+_2,V_\bm^{(s)\vee}\big)$ of~$\tilde{K}^\BC_1$,
the multiplication operator $\cF_{\tau\rho}\colon \cO_\lambda(D_1,W)\to\cO_{\lambda+0}(D,V)$, $(\cF_{\tau\rho}f)(x_1,x_2)=\rK(x_2)f(x_1)$ intertwines the $\tilde{G}_1$-action.
\item[$(5)$] Let $(G,G_1)=(E_{6(-14)},U(1)\times \operatorname{Spin}_0(2,8))$ $($up to covering$)$,
and $(\tau,V)=\big(\chi^{-\lambda}\otimes \tau_{\bm}^{[10]\vee},\allowbreak V_{\bm}^{[10]\vee}\big)$.
Then for any subrepresentation $W\subset\cP\big(\fp^+_2,V_{(m_0;m_1,\ldots,m_4)}^{[2,8]\vee}\big)$ of~$\tilde{K}^\BC_1$,
the multiplication operator $\cF_{\tau\rho}\colon \cO_\lambda(D_1,W)\to\cO_\lambda(D,V)$, $(\cF_{\tau\rho}f)(x_1,x_2)=\rK(x_2)f(x_1)$ intertwines the $\tilde{G}_1$-action.
\end{enumerate}
\end{Corollary}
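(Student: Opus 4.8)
The plan is to deduce the whole statement from Theorem~\ref{normal_derivative}(2), so that the argument reduces, in each of the five families, to verifying the two hypotheses of that theorem: (a) that $(x_2)^{Q(y_1)x_2}=x_2$ for all $y_1\in\fp^+_1$, $x_2\in\fp^+_2$, and (b) that there is a $\tilde{K}^\BC_1$-subrepresentation $V_1\subset V$ with $W\subset\cP(\fp^+_2,V_1)\subset\cP(\fp^+_2,V)$ on which $\tau(B(x_2,y_1))$ acts as the identity. Both are purely $\tilde{K}^\BC$-/Jordan-theoretic, so no analytic input beyond Theorem~\ref{normal_derivative} is needed; once they hold, the resulting $\cF_{\tau\rho}$ is a multiplication operator and therefore automatically extends to a map between the spaces of all holomorphic functions.

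For hypothesis~(a) I would use the explicit matrix (resp.\ octonionic) models of $\fp^+$, $\fp^+_1$, $\fp^+_2$ from Sections~\ref{realize} and~\ref{EJTS}, with $G_1\subset G$ realized so that $\fp^+_1$ and $\fp^+_2$ sit in complementary off-diagonal blocks. Then $Q(y_1)x_2$ is computed directly from the Jordan triple product: in Cases~1--3 and~5 it is $y_1x_2^*y_1$ (resp.\ the analogous octonionic triple product on $M(1,2;\BO)^\BC$), which vanishes because the relevant product of complementary blocks is zero, and in Case~4 it is $2q(y_1,\overline{x_2})y_1-q(y_1)\overline{x_2}$, which vanishes because $\fp^+_1$ and $\fp^+_2$ are complementary $q$-isotropic subspaces. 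Hence $Q(y_1)x_2=0$, so $(x_2)^{Q(y_1)x_2}=(x_2)^0=x_2$.

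For hypothesis~(b) I would compute $B(x_2,y_1)$ explicitly in each case and observe that it is unipotent, lying in the unipotent radical $N^\BC$ of the maximal parabolic subgroup of $K^\BC$ whose Levi factor is $K^\BC_1$ (for instance $B(x_2,y_1)=\big(I_q,\big(\begin{smallmatrix}I_{s'}&-y^*x\\0&I_{s''}\end{smallmatrix}\big)^{-1}\big)$ in Case~1, and similarly block-unipotent expressions in the other cases; its canonical lift to $\tilde{K}^\BC$ is $\exp(\xi)$ with $\xi$ in the Lie algebra $\fn^\BC$ of $N^\BC$). I would then take $V_1:=V^{\fn^\BC}$, the subspace annihilated by $\fn^\BC$; since $V$ is irreducible over $\tilde{K}^\BC$ this is an irreducible $\tilde{K}^\BC_1$-module (the extreme $\tilde{K}_1$-type of $V$), and by construction $\tau(B(x_2,y_1))|_{V_1}=I_{V_1}$. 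It remains to identify this extreme piece with the module $V_1$ written in the statement, which is a branching computation: for the classical cases it is the top $\tilde{K}_1$-type in the branching of $\tau_\bm$ along $U(s)\downarrow U(s')\times U(s'')$, $\operatorname{SO}(2s)\downarrow\operatorname{SO}(2(s-1))\times\operatorname{SO}(2)$, $\operatorname{SO}(2s)\downarrow U(s-1)$ and $\operatorname{SO}_0(2,2s)\downarrow U(1,s)$, together with the $\chi$-twist (which produces the $\lambda_2$-, $\tfrac{\lambda}{2}$- and $m_0$-shifts in the central characters), and for Case~5 it is the description of $V_{(m_0,\dots,m_4)}^{[10]\vee}$ and its $\fn^\BC$-invariants under $\mathfrak{so}(2)\oplus\mathfrak{so}(8)$ read off from the Jordan triple structure of $M(1,2;\BO)^\BC$. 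With $V_1$ so identified and $W\subset\cP(\fp^+_2,V_1)$ by assumption, Theorem~\ref{normal_derivative}(2) applies in all five cases.

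The main obstacle is precisely this last step: confirming that the $\fn^\BC$-fixed $\tilde{K}^\BC_1$-submodule of $V$ is exactly the $V_1$ of the statement and not a proper sub- or supermodule. This is conceptually routine (it is the top piece of a parabolic restriction), but it requires care with (i) the normalization of the strongly orthogonal roots and of $\chi$ in each realization, so that the $\mathfrak{u}(1)$- and $\det$-twists come out correctly, and (ii) the exceptional Case~5, where one must combine the explicit octonionic formulas for $Q$ and $B$ on $M(1,2;\BO)^\BC$ with the $\mathfrak{so}(10)\downarrow\mathfrak{so}(2)\oplus\mathfrak{so}(8)$ branching to check that $\operatorname{Spin}_0(2,8)$ acts on the invariants through the stated $[2,8]$-type. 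Once this bookkeeping is organized case by case, no further difficulty arises.
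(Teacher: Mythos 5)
Your proposal is correct and follows essentially the same route as the paper: one checks $Q(y_1)x_2=0$ (hence $(x_2)^{Q(y_1)x_2}=x_2$) in the explicit block realizations, observes that $B(x_2,y_1)$ lies in the unipotent radical of the parabolic of $K^\BC$ with Levi $K_1^\BC$, takes for $V_1$ the extreme $\tilde K_1$-type of $V$ (which the paper simply writes down explicitly, with the same $\chi$-twists you describe), and applies Theorem~\ref{normal_derivative}(2).
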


\subsection[$\cF_{\tau\rho}^*$ for $(G,G_1)=(G_0\times G_0, \Delta G_0)$]{$\boldsymbol{\cF_{\tau\rho}^*}$ for $\boldsymbol{(G,G_1)=(G_0\times G_0, \Delta G_0)}$}
In this subsection we find the operator $\cF_{\tau\rho}^*$ for $(G,G_1)=(G_0\times G_0, \Delta G_0)$,
where $G_0$ is a simple Lie group of Hermitian type, although it is already done by Peng--Zhang \cite{PZ} (see also, e.g., \cite{BCK,OR,P}).
We denote the complexified Lie algebra of $G_0$ by $\fg_0^\BC=\fp^+_0\oplus\fk^\BC_0\oplus\fp^-_0$.
Similarly, we denote the objects such as $D\subset\fp^+$, $h(x,y)\in\cP(\fp^+\times\overline{\fp^+})$, $p\in\BZ$ for $G_0$
by writing the subscript~$0$.
Then we have
\begin{gather*} \fp^+_1=\{(x_0,x_0)\colon x_0\in\fp^+_0\},\qquad \fp^+_2=\{(x_0,-x_0)\colon x_0\in\fp^+_0\}\subset\fp^+=\fp^+_0\oplus\fp^+_0. \end{gather*}
We identify $\fp^+_0$ and $\fp^+_1$, $\fp^+_2$ via $x_0\mapsto (x_0,x_0)$ and $x_0\mapsto (x_0,-x_0)$ respectively.
Then for $x=(x_L,x_R)\in\fp^+$, the projection onto $\fp^+_2$ is given by
\begin{gather*} x_2=\Proj_2((x_L,x_R))=\tfrac{1}{2}(x_L-x_R). \end{gather*}

Let $(\tau,V)=(\tau_L\boxtimes\tau_R,V_L\otimes V_R)$ be a finite-dimensional irreducible representation
of $\tilde{K}=\tilde{K}_0\times \tilde{K}_0$.
We take an irreducible $\tilde{K}_1^\BC\simeq\tilde{K}_0^\BC$-submodule $W\subset\cP(\fp^+_2,V)$.
Let $\rK(x_2)\in\cP(\fp^+_2,\allowbreak\Hom(W,V))$ be a~$\tilde{K}^\BC_0$-invariant polynomial in the sense of~(\ref{K-invariance}).
Then the function $F_{\tau\rho}^*(z_L,z_R)\in\cP(\overline{\fp^+},\Hom(V,W))$ in Theorem~\ref{main}(1) is given by
\begin{gather*} F_{\tau\rho}^*(z_L,z_R)=\big\langle {\rm e}^{(x_L|z_L)_{\fp^+_0}+(x_R|z_R)_{\fp^+_0}}I_V,\rK\big(\tfrac{1}{2}(x_L-x_R)\big)
\big\rangle_{\hat{\tau}_L\boxtimes\hat{\tau}_R,x_L,x_R}. \end{gather*}
We rewrite $\rK\left(\frac{x_2}{2}\right)$ as $\rK(x_2)$, so that
\begin{gather*} F_{\tau\rho}^*(z_L,z_R)=\big\langle {\rm e}^{(x_L|z_L)_{\fp^+_0}+(x_R|z_R)_{\fp^+_0}}I_V,\rK(x_L-x_R)
\big\rangle_{\hat{\tau}_L\boxtimes\hat{\tau}_R,x_L,x_R}. \end{gather*}
Now we assume $(\tau,V)=\big(\chi^{-\lambda}_0\boxtimes \chi^{-\mu}_0,\BC\big)$ is 1-dimensional, with $\lambda,\mu>p_0-1$ so that
$\cH_\lambda(D_0)$ and $\cH_\mu(D_0)$ are holomorphic discrete,
and let $W:=\cP_\bk(\fp^+_0)\otimes\chi^{-\lambda-\mu}_0$ with $\bk\in\BZ_{++}^{r_0}$.
We realize~$W$ in $\cP(\fp^+_0)\otimes\chi^{-\lambda-\mu}$ with the variable $y_2$,
and write $\rK(x_2)=\rK(x_2,y_2)\in\cP(\fp^+_0\times\overline{\fp^+_0})$.
Then if $\rK(x_L-x_R,y_2)\in\cP(\fp^+_0)\otimes\cP(\fp^+_0)\otimes\overline{\cP_\bk(\fp^+_0)}$ is expanded as
\begin{align*}
\rK(x_L-x_R,y_2)={} &\sum_{\bm\in\BZ_{++}^{r_0}}\sum_{\bn\in\BZ_{++}^{r_0}}\cK_{\bm,\bn}(x_L,x_R;y_2) \\
&{}\in\bigoplus_{\bm\in\BZ_{++}^{r_0}}\bigoplus_{\bn\in\BZ_{++}^{r_0}}\cP_\bm(\fp^+_0)\otimes\cP_\bn(\fp^+_0)\otimes\overline{\cP_\bk(\fp^+_0)},
\end{align*}
then by (\ref{exp_onD}) and (\ref{scalar_norm}) we have
\begin{align*}
F_{\tau\rho}^*(z_L,z_R;y_2)
&=\sum_{\bm\in\BZ_{++}^{r_0}}\sum_{\bn\in\BZ_{++}^{r_0}}\big\langle {\rm e}^{(x_L|z_L)_{\fp^+_0}+(x_R|z_R)_{\fp^+_0}},
\cK_{\bm,\bn}(x_L,x_R;y_2)\big\rangle_{\hat{\tau}_L\boxtimes\hat{\tau}_R,x_L,x_R}\\
&=\sum_{\bm\in\BZ_{++}^{r_0}}\sum_{\bn\in\BZ_{++}^{r_0}}\frac{1}{(\lambda)_{\bm,d_0}(\mu)_{\bn,d_0}}\overline{\cK_{\bm,\bn}(z_L,z_R;y_2)}.
\end{align*}
Now we write $\overline{\cK_{\bm,\bn}(z_L,z_R;y_2)}=:\cK_{\bm,\bn}(y_2;z_L,z_R)$. Then by Theorem~\ref{main}, the linear map
\begin{gather*}
\cF_{\tau\rho}^*\colon \cH_\lambda(D_0)_{\tilde{K}_0}\boxtimes\cH_\mu(D_0)_{\tilde{K}_0}\to \cH_{\lambda+\mu}(D_0,\cP_\bk(\fp^+_0))_{\tilde{K}_0}, \\
\cF_{\lambda,\mu,k}^*f(y_1,y_2)
:=\sum_{\bm\in\BZ_{++}^{r_0}}\sum_{\bn\in\BZ_{++}^{r_0}}\frac{1}{(\lambda)_{\bm,d_0}(\mu)_{\bn,d_0}}
\cK_{\bm,\bn}\left.\left(y_2;\overline{\frac{\partial}{\partial x_L}},\overline{\frac{\partial}{\partial x_R}}\right)
\right|_{x_L=x_R=y_1} f(x_L,x_R)
\end{gather*}
intertwines the $\Delta(\fg_0,\tilde{K}_0)$-action.
Since this is a finite-order differential operator because \linebreak $\cK_{\bm,\bn}(y_2;z_L,z_R)=0$ unless $|\bm|+|\bn|=\deg \rK$,
this is well-defined as an operator between the space of all holomorphic functions,
and this is meromorphically continued for all $\lambda,\mu\in\BC$.

Therefore in order to compute the intertwining operator, we want to compute the expansion of
$\rK(x_L-x_R,y_2)\in\cP(\fp^+_0)\otimes\cP(\fp^+_0)\otimes\overline{\cP_\bk(\fp^+_0)}$.
Then since this is $\tilde{K}^\BC_0$-invariant, its orthogonal projection
$\cK_{\bm,\bn}(x_L,x_R;y_2)\in \cP_\bm(\fp^+_0)\otimes\cP_\bn(\fp^+_0)\otimes\overline{\cP_\bk(\fp^+_0)}$ is also $\tilde{K}^\BC_0$-invariant,
that is,
\begin{gather*} \cK_{\bm,\bn}(lx_L,lx_R;y_2)=\cK_{\bm,\bn}(x_L,x_R;l^*y_2), \qquad x_L,x_R,y_2\in\fp^+_0, \quad l\in \tilde{K}^\BC_0. \end{gather*}
Such polynomials are uniquely determined by the values on $\fp^+_{\rT,0}\oplus\fp^+_{\rT,0}\oplus\overline{\fp^+_{\rT,0}}$.
\begin{Lemma}
If $\cK_1(x_L,x_R;y_2)$, $\cK_2(x_L,x_R;y_2)\in \cP(\fp^+_0\oplus\fp^+_0\oplus\overline{\fp^+_0})$ satisfy
\begin{alignat*}{3}
& \cK_j(lx_L,lx_R;y_2)=\cK_j(x_L,x_R;l^*y_2), \qquad && x_L,x_R,y_2\in\fp^+_0, \quad l\in \tilde{K}^\BC_0,\quad j=1,2,& \\
& \cK_1(x_L,x_R;y_2)=\cK_2(x_L,x_R;y_2),\qquad && x_L,x_R,y_2\in\fp^+_{\rT,0},&
\end{alignat*}
then $\cK_1(x_L,x_R;y_2)=\cK_2(x_L,x_R;y_2)$ holds for any $x_L,x_R,y_2\in\fp^+_0$.
\end{Lemma}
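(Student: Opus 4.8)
The statement is a uniqueness assertion for $\tilde{K}^\BC_0$-equivariant polynomials: a polynomial on $\fp^+_0\oplus\fp^+_0\oplus\overline{\fp^+_0}$ that is equivariant in the stated sense is determined by its restriction to $\fp^+_{\rT,0}\oplus\fp^+_{\rT,0}\oplus\overline{\fp^+_{\rT,0}}$. The plan is to reduce this to the density of the $\tilde{K}^\BC_0$-orbit (more precisely, the $\tilde{K}_0$-orbit) of $\fa^+_0\subset\fp^+_{\rT,0}$ in $\fp^+_0$, combined with a holomorphy argument. It suffices to show that $\cK_1-\cK_2$ vanishes identically; set $\cK:=\cK_1-\cK_2$, which again satisfies the equivariance $\cK(lx_L,lx_R;y_2)=\cK(x_L,x_R;l^*y_2)$ for all $l\in\tilde{K}^\BC_0$ and vanishes on $\fp^+_{\rT,0}\times\fp^+_{\rT,0}\times\overline{\fp^+_{\rT,0}}$.

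First I would use the spectral decomposition recalled in the paper: every $x_0\in\fp^+_0$ can be written as $x_0=k_0\sum_j a_j e_{j,0}$ with $k_0\in K_0$ and $a_j\in\BR_{\ge0}$, so that in particular $K_0\cdot\fa^+_0=\fp^+_0$, and $\fa^+_0\subset\fp^+_{\rT,0}$. The catch is that $\cK$ depends on three arguments simultaneously, and the single $k_0$ diagonalizing $x_L$ does not diagonalize $x_R$ or $y_2$. So the correct route is: fix $x_L,x_R,y_2\in\fp^+_0$ arbitrary; choose $k_0\in K_0$ with $k_0^{-1}x_L=\sum_j a_j e_{j,0}\in\fa^+_0\subset\fp^+_{\rT,0}$; by equivariance,
\begin{gather*}
\cK(x_L,x_R;y_2)=\cK\big(k_0^{-1}x_L,k_0^{-1}x_R;k_0^*y_2\big),
\end{gather*}
so it is enough to treat the case $x_L\in\fp^+_{\rT,0}$. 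Repeat this for $x_R$: the subgroup of $\tilde{K}^\BC_0$ fixing $x_L$ pointwise still acts; but here it is cleaner to argue by holomorphy. Namely, once $x_L\in\fp^+_{\rT,0}$ is fixed, the function $(x_R,y_2)\mapsto\cK(x_L,x_R;y_2)$ is holomorphic in $x_R$ and anti-holomorphic in $y_2$, and by hypothesis it vanishes whenever $x_R\in\fp^+_{\rT,0}$ and $y_2\in\fp^+_{\rT,0}$. If $\fp^+_{\rT,0}$ were a complex-linear subspace of $\fp^+_0$, this would immediately fail to force vanishing — so the real content is that the union $\bigcup_{k\in\tilde{K}^\BC_0}k\cdot\fp^+_{\rT,0}$ is \emph{Zariski dense} (indeed all of $\fp^+_0$), which is exactly what the spectral decomposition gives via $K_0$.

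So the cleanest organization is: (i) for each of the three slots apply an element of $\tilde{K}^\BC_0$ (or $K_0$, then extend holomorphically) to move that slot into $\fa^+_0\subset\fp^+_{\rT,0}$; the equivariance relation $\cK(lx_L,lx_R;y_2)=\cK(x_L,x_R;l^*y_2)$ lets one move $x_L$ and $x_R$ by the \emph{same} $l$ while $y_2$ transforms by $l^*$, and one checks that by composing three such moves — first bringing $x_L$ to $\fa^+_0$, then acting by the stabilizer $\tilde{K}^\BC_{0,\fl}$-type subgroup to bring $x_R$ into $\fp^+_{\rT,0}$, then similarly for $y_2$ — one lands in a configuration where the vanishing hypothesis applies; (ii) conclude $\cK(x_L,x_R;y_2)=0$ for all arguments. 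Alternatively, and perhaps more transparently, invoke the fact that $\cP(\fp^+_0)$ is \emph{multiplicity-free} and spanned by the $\Phi^{(d_0,r_0)}_\bm$, $K_0$-invariant polynomials determined by their values on $\fa^+_0$ (Theorem \ref{HKS} and the discussion after Proposition \ref{Kmxe}): expand $\cK$ into $\tilde{K}^\BC_0$-isotypic components in each variable, match them up using Schur's lemma, and observe each scalar coefficient is a function on $\fa^+_0\times\fa^+_0\times\overline{\fa^+_0}$, hence determined by the restriction. The main obstacle is bookkeeping in approach (i) — verifying that the three successive $\tilde{K}^\BC_0$-moves can indeed be chosen so that all three arguments simultaneously lie in $\fp^+_{\rT,0}$ — whereas approach (ii) trades this for carefully identifying the relevant invariant bilinear/sesquilinear pairings on $\cP_\bm(\fp^+_0)\otimes\cP_\bn(\fp^+_0)\otimes\overline{\cP_\bk(\fp^+_0)}$; I expect the holomorphy-plus-density argument of approach (i) to be the shorter one to write out in full.
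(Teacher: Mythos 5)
There is a genuine gap, and it sits exactly where you flagged ``bookkeeping'': the successive moves cannot in general be chosen so that all three arguments land in $\fp^+_{\rT,0}$ simultaneously. The equivariance only lets you act on $x_L$ and $x_R$ by the \emph{same} element $l$, and the saturation $\bigcup_{l}\big(l\fp^+_{\rT,0}\times l\fp^+_{\rT,0}\big)$ is a \emph{proper} subvariety of $\fp^+_0\times\fp^+_0$ whenever $G_0$ is not of tube type. Concretely, for $G_0=\operatorname{SU}(1,2)$ one has $\fp^+_0=\BC^2$, $\fp^+_{\rT,0}=\BC e_1$, and $K^\BC_0$ acts through ${\rm GL}(2,\BC)$ by linear maps; two linearly independent vectors $x_L$, $x_R$ can never be sent into the one-dimensional subspace $\BC e_1$ by a single linear map, so the set of triples reachable from $\fp^+_{\rT,0}\times\fp^+_{\rT,0}\times\overline{\fp^+_{\rT,0}}$ is nowhere dense and no density-plus-holomorphy argument starting from the hypothesis can close the proof. (Note also that $\fp^+_{\rT,0}$ \emph{is} a complex-linear subspace of $\fp^+_0$ --- it is a sum of root spaces --- so, as your own parenthetical concedes, vanishing on it does not propagate by holomorphy alone.) Your alternative (ii) is not a repair: $\cP_\bm(\fp^+_0)\otimes\cP_\bn(\fp^+_0)$ is not multiplicity-free, and the assertion that each coefficient ``is a function on $\fa^+_0\times\fa^+_0\times\overline{\fa^+_0}$, hence determined by the restriction'' is precisely the lemma restated on an isotypic component, so the argument is circular as written.

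The paper closes this gap by a degeneration rather than a density argument. After using the open dense $K^\BC_0$-orbit to reduce to $y_2\in\fp^+_{\rT,0}$ (this single-variable move is fine and matches your first step), it considers the Bergman operators $B(te,te)\in\operatorname{Ad}|_{\fp^+_0}(K^\BC_0)$ for $|t|\ne 1$, which are self-adjoint and act by the scalar $(1-|t|^2)^2$ on $\fp^+_{\rT,0}$ and by $(1-|t|^2)$ on $(\fp^+_{\rT,0})^\bot$. The rescaled element $l=(1-|t|^2)^2B(te,te)^{-1}$ therefore fixes $y_2\in\fp^+_{\rT,0}$ while multiplying the $\bot$-components of $x_L$ and $x_R$ by $(1-|t|^2)$; since $l=l^*$, equivariance gives $\cK_j(x_L,x_R;y_2)=\cK_j\big(x_{L\rT}+(1-|t|^2)x_{L\bot},\,x_{R\rT}+(1-|t|^2)x_{R\bot};y_2\big)$, and letting $|t|\to 1$ (using that $\cK_j$ is a polynomial, hence continuous) yields $\cK_j(x_L,x_R;y_2)=\cK_j(x_{L\rT},x_{R\rT};y_2)$. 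This independence of the $\bot$-components is the key mechanism your proposal is missing, and it cannot be replaced by orbit density.
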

Therefore it suffices to compute the expansion on $\fp^+_{\rT,0}$.
\begin{proof}
Since $\tilde{K}^\BC_0$ acts transitively on an open dense subset of $\fp^+_0$, by $\tilde{K}^\BC_0$-invariance of $\cK_j$
it suffices to show $\cK_1=\cK_2$ on $\fp^+_0\oplus\fp^+_0\oplus\overline{\fp^+_{\rT,0}}$.
We consider $B(te,te)\in\End(\fp^+_0)$, where $t\in\BC$ and $e\in\fp^+_{\rT,0}\subset\fp^+_0$ is a maximal tripotent.
Then $B(te,te)=B(te,te)^*\in \operatorname{Ad}|_{\fp^+_0}(K^\BC_0)\subset\End(\fp^+_0)$ holds if $|t|\ne 1$.
Moreover, for $x=x_\rT+x_\bot\in\fp^+_{\rT,0}\oplus(\fp^+_{\rT,0})^\bot=\fp^+_0$ we have
\begin{gather*} B(te,te)(x_\rT+x_\bot)=\big(1-|t|^2\big)^2x_\rT+\big(1-|t|^2\big)x_\bot. \end{gather*}
Therefore, for $|t|\ne 1$, $x_L,x_R\in\fp^+_0$, $y_2\in\fp^+_{\rT,0}$ we have
\begin{align*}
\cK_j(x_L,x_R;y_2)&=\cK_j\big(x_L,x_R;\big(1-|t|^2\big)^2B(te,te)^{-1}y_2\big) \\
&=\cK_j\big(\big(1-|t|^2\big)^2B(te,te)^{-1}x_L,\big(1-|t|^2\big)^2B(te,te)^{-1}x_R;y_2\big) \\
&=\cK_j\big(x_{L\rT}+\big(1-|t|^2\big)x_{L\bot},x_{R\rT}+\big(1-|t|^2\big)x_{R\bot};y_2\big),
\end{align*}
where we write $x_L=x_{L\rT}+x_{L\bot}$, $x_R=x_{R\rT}+x_{R\bot}\in\fp^+_{\rT,0}\oplus(\fp^+_{\rT,0})^\bot=\fp^+_0$.
Especially, by taking a limit $|t|\to 1$, we have
\begin{gather*} \cK_j(x_L,x_R;y_2)=\cK_j (x_{L\rT},x_{R\rT};y_2 ). \end{gather*}
Therefore, $\cK_1=\cK_2$ on $\fp^+_{\rT,0}\oplus\fp^+_{\rT,0}\oplus\overline{\fp^+_{\rT,0}}$ implies
$\cK_1=\cK_2$ on $\fp^+_0\oplus\fp^+_0\oplus\overline{\fp^+_{\rT,0}}$, and also on $\fp^+_0\oplus\fp^+_0\oplus\overline{\fp^+_0}$.
\end{proof}

Now we additionally assume that $\bk=(k,\ldots,k)$ with $k\in\BZ_{\ge 0}$.
Then up to constant we have $\rK(x_2,y_2)=\Delta(x_2)^k\overline{\Delta(y_2)^k}$ if $x_2,y_2\in\fp^+_{\rT,0}$.
Then for $x_L,x_R,y_2\in\fp^+_{\rT,0}$, by (\ref{det_norm}), Proposition \ref{Kmxe} and (\ref{ext_of_h}) we have
\begin{align*}
\rK(x_L-x_R,y_2)&=\Delta(x_L-x_R)^k\overline{\Delta(y_2)^k}
=\Delta(x_L)^k\overline{\Delta(y_2)^k}\Delta\big(e-P\big(x^{-1/2}_L\big)x_R\big)^k\\
&=\Delta(x_L)^k\overline{\Delta(y_2)^k}
\sum_{\bm\in\BZ_{++}^{r_0}}(-k)_{\bm,d_0}\frac{d_\bm^{(d_0,r_0,b_0)}}{\big(\frac{n_0}{r_0}\big)_{\bm,d_0}}
\Phi_\bm^{(d_0,r_0)}\big(P\big(x^{-1/2}_L\big)x_R\big).
\end{align*}
By \cite[Lemma XIV.1.2]{FK},
\begin{gather*} \Delta(x_L)^k\Phi_\bm^{(d_0,r_0)}\big(P\big(x^{-1/2}_L\big)x_R\big)=\Delta(x_L)^k\Phi_\bm^{(d_0,r_0)}\big(P\big(x^{1/2}_R\big)x_L^{-1}\big)\end{gather*} holds.
This lies in $\cP_\bm(\fp^+_{\rT,0})$ as a polynomial in~$x_R$, and lies in $\cP_{k-\bm^*}(\fp^+_{\rT,0})$ as a polynomial in $x_L$,
where $k-\bm^*:=(k-m_{r_0},k-m_{r_0-1},\ldots,k-m_1)$.
Now let $\Psi_{k-\bm^*,\bm}^{(d_0,r_0)}(x_L,x_R;y_2)\in \cP(\fp^+_0\times\fp^+_0\times\overline{\fp^+_0})$ be the polynomial satisfying
\begin{alignat*}{3}
&\Psi_{k-\bm^*,\bm}^{(d_0,r_0)}(lx_L,lx_R;y_2)=\Psi_{k-\bm^*,\bm}^{(d_0,r_0)}(x_L,x_R;l^*y_2),\quad && x_L,x_R,y_2\in\fp^+_0,\ l\in K^\BC_0,& \\
& \Psi_{k-\bm^*,\bm}^{(d_0,r_0)}(x_L,x_R;y_2)=\Delta(x_L)^k\overline{\Delta(y_2)^k}\Phi_\bm^{(d_0,r_0)}\big(P\big(x^{-1/2}_L\big)x_R\big),\qquad && x_L,x_R,y_2\in\fp^+_{\rT,0}.&
\end{alignat*}
Such polynomials are unique by the previous lemma. Then we have
\begin{align*}
\rK(x_L-x_R,y_2)=\sum_{\bm\in\BZ_{++}^{r_0}}(-k)_{\bm,d_0}\frac{d_\bm^{(d_0,r_0,b_0)}}{\big(\frac{n_0}{r_0}\big)_{\bm,d_0}}
\Psi_{k-\bm^*,\bm}^{(d_0,r_0)}(x_L,x_R;y_2).
\end{align*}
We write
\begin{gather*} \overline{\Psi_{k-\bm^*,\bm}^{(d_0,r_0)}(x_L,x_R;y_2)}=:\Psi_{k-\bm^*,\bm}^{(d_0,r_0)}(y_2;x_L,x_R). \end{gather*}
Then using this expansion, we get
\begin{gather*} F_{\tau\rho}^*(z_L,z_R;y_2)
=\sum_{\bm\in\BZ_{++}^{r_0}}\frac{(-k)_{\bm,d_0}}{(\lambda)_{k-\bm^*,d_0}(\mu)_{\bm,d_0}}
\frac{d_\bm^{(d_0,r_0,b_0)}}{\big(\frac{n_0}{r_0}\big)_{\bm,d_0}}
\Psi_{k-\bm^*,\bm}^{(d_0,r_0)}(y_2;z_L,z_R). \end{gather*}
We note that the sum is finite because $(-k)_{\bm,d_0}=0$ if $m_1>k$,
and the above formula is symmetric under the exchange of $(z_L,\lambda)$ and $(z_R,\mu)$ up to signature, because
\begin{gather*}
\Psi_{k-\bm^*,\bm}^{(d_0,r_0)}(y_2;z_L,z_R)=\Psi_{\bm,k-\bm^*}^{(d_0,r_0)}(y_2;z_R,z_L),\\
(-k)_{\bm,d_0}\frac{d_\bm^{(d_0,r_0,b_0)}}{\big(\frac{n_0}{r_0}\big)_{\bm,d_0}}
=(-k)_{\bm,d_0}\frac{d_\bm^{(d_0,r_0,0)}}{\big(\frac{n_{0,\rT}}{r_0}\big)_{\bm,d_0}}
=(-1)^{kr}(-k)_{k-\bm^*,d_0}\frac{d_{k-\bm^*}^{(d_0,r_0,0)}}{\big(\frac{n_{0,\rT}}{r_0}\big)_{k-\bm^*,d_0}}\\
\hphantom{(-k)_{\bm,d_0}\frac{d_\bm^{(d_0,r_0,b_0)}}{\big(\frac{n_0}{r_0}\big)_{\bm,d_0}}}{}
=(-1)^{kr}(-k)_{k-\bm^*,d_0}\frac{d_{k-\bm^*}^{(d_0,r_0,b_0)}}{\big(\frac{n_0}{r_0}\big)_{k-\bm^*,d_0}},
\end{gather*}
the latter of which follows from the proof of \cite[Proposition 2.6]{N}.
Therefore we have proved the following.
\begin{Theorem}\label{thm_tensor}
Let $k\in\BZ_{\ge 0}$. Then the linear map
\begin{gather*}
\cF_{\lambda,\mu,k}^*\colon \ \cO_\lambda(D_0)\hboxtimes\cO_\mu(D_0)\to \cO_{\lambda+\mu}(D_0,\cP_{(k,\ldots,k)}(\fp^+_0)), \\
\cF_{\lambda,\mu,k}^*f(y_1,y_2) :=\sum_{\bm\in\BZ_{++}^{r_0}}\frac{(-k)_{\bm,d_0}}{(\lambda)_{k-\bm^*,d_0}(\mu)_{\bm,d_0}}
\frac{d_\bm^{(d_0,r_0,b_0)}}{\big(\frac{n_0}{r_0}\big)_{\bm,d_0}}\\
\hphantom{\cF_{\lambda,\mu,k}^*f(y_1,y_2) :=}{} \times\Psi_{k-\bm^*,\bm}^{(d_0,r_0)}
\left.\left(y_2;\overline{\frac{\partial}{\partial x_L}},\overline{\frac{\partial}{\partial x_R}}\right)
\right|_{x_L=x_R=y_1}f(x_L,x_R)
\end{gather*}
$(y_1\in D_0, y_2\in\fp^+_2)$ intertwines the $\Delta \tilde{G}_0$-action.
\end{Theorem}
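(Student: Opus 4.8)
The statement is the conclusion of the computations carried out in this subsection, organized as follows. The plan is first to invoke Theorem~\ref{main}(1), which says that for the $\tilde K_1^\BC$-invariant polynomial $\rK$ the operator $\cF_{\tau\rho}^*$ determined by $F_{\tau\rho}^*(z)=\langle {\rm e}^{(\cdot|z)_{\fp^+}}I_V,\rK(\Proj_2(\cdot))\rangle_{\hat\tau}$ intertwines the $(\fg_1,\tilde K_1)$-action; here $(G,G_1)=(G_0\times G_0,\Delta G_0)$, $(\tau,V)=(\chi_0^{-\lambda}\boxtimes\chi_0^{-\mu},\BC)$ with $\lambda,\mu>p_0-1$ so that $\cH_\lambda(D_0)$, $\cH_\mu(D_0)$ are holomorphic discrete, and $W=\cP_{(k,\ldots,k)}(\fp^+_0)\otimes\chi_0^{-\lambda-\mu}$. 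Since $\langle\cdot,\cdot\rangle_{\hat\tau}$ factors as $\langle\cdot,\cdot\rangle_{\hat\tau_L}\boxtimes\langle\cdot,\cdot\rangle_{\hat\tau_R}$ and $\Proj_2(x_L,x_R)=\tfrac12(x_L-x_R)$, after the harmless rescaling of the argument of $\rK$ one is reduced to computing $F_{\tau\rho}^*(z_L,z_R;y_2)=\langle {\rm e}^{(x_L|z_L)_{\fp^+_0}+(x_R|z_R)_{\fp^+_0}}I_W,\rK(x_L-x_R,y_2)\rangle_{\hat\tau_L\boxtimes\hat\tau_R}$, where by Theorem~\ref{HKS} (and the accompanying identification $\Phi_{(k,\ldots,k)}^{(d_0,r_0)}=\Delta^k$) one has, up to a scalar, $\rK(x_2,y_2)=\Delta(x_2)^k\overline{\Delta(y_2)^k}$ for $x_2,y_2\in\fp^+_{\rT,0}$.

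The core step is the decomposition of $\rK(x_L-x_R,y_2)$ into $\tilde K_0^\BC$-types. For $x_L,x_R\in\fp^+_{\rT,0}$ with $x_L$ invertible I would factor $x_L-x_R=P(x_L^{1/2})\big(e-P(x_L^{-1/2})x_R\big)$ and use the transformation rule of the determinant polynomial to get $\Delta(x_L-x_R)^k=\Delta(x_L)^k\Delta\big(e-P(x_L^{-1/2})x_R\big)^k$; expanding $\Delta(e-w)^k=h_0(w,e)^k$ by (\ref{det_norm}), (\ref{ext_of_h}) (with parameter $-k$) and Proposition~\ref{Kmxe} yields $\Delta(e-w)^k=\sum_{\bm\in\BZ_{++}^{r_0}}(-k)_{\bm,d_0}\frac{d_\bm^{(d_0,r_0,b_0)}}{(n_0/r_0)_{\bm,d_0}}\Phi_\bm^{(d_0,r_0)}(w)$. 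Each term $\Delta(x_L)^k\Phi_\bm^{(d_0,r_0)}\big(P(x_L^{-1/2})x_R\big)$, after moving the inner quadratic representation via \cite[Lemma XIV.1.2]{FK}, is homogeneous of the $K_L$-invariant type $\cP_\bm$ in $x_R$ and of type $\cP_{k-\bm^*}$ in $x_L$ (here $m_1\le k$ for every surviving $\bm$, since $(-k)_{\bm,d_0}=0$ otherwise, so $k-\bm^*\in\BZ_{++}^{r_0}$). The uniqueness lemma for $\tilde K_0^\BC$-invariant polynomials on $\fp^+_0\times\fp^+_0\times\overline{\fp^+_0}$ proved just above then shows that each such term is the restriction to $\fp^+_{\rT,0}$ of a unique $\tilde K_0^\BC$-invariant polynomial $\Psi_{k-\bm^*,\bm}^{(d_0,r_0)}(x_L,x_R;y_2)$ on $\fp^+_0$, giving the global identity $\rK(x_L-x_R,y_2)=\sum_{\bm}(-k)_{\bm,d_0}\frac{d_\bm^{(d_0,r_0,b_0)}}{(n_0/r_0)_{\bm,d_0}}\Psi_{k-\bm^*,\bm}^{(d_0,r_0)}(x_L,x_R;y_2)$.

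It remains to pair this against the exponential kernel and pass to the differential expression. By (\ref{exp_onD}) together with the scalar-type norm formula (\ref{scalar_norm}), applied separately on each tensor factor, the pairing of ${\rm e}^{(x_L|z_L)_{\fp^+_0}+(x_R|z_R)_{\fp^+_0}}$ with $\Psi_{k-\bm^*,\bm}^{(d_0,r_0)}$ in $\langle\cdot,\cdot\rangle_{\hat\tau_L\boxtimes\hat\tau_R}$ contributes the factor $\big((\lambda)_{k-\bm^*,d_0}(\mu)_{\bm,d_0}\big)^{-1}$ times $\overline{\Psi_{k-\bm^*,\bm}^{(d_0,r_0)}(z_L,z_R;y_2)}$; with the convention $\overline{\Psi_{k-\bm^*,\bm}^{(d_0,r_0)}(x_L,x_R;y_2)}=:\Psi_{k-\bm^*,\bm}^{(d_0,r_0)}(y_2;x_L,x_R)$, summing over $\bm$ produces the closed form for $F_{\tau\rho}^*(z_L,z_R;y_2)$. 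The operator $\cF_{\lambda,\mu,k}^*$ is then obtained, exactly as in the derivation preceding Theorem~\ref{main}, by substituting $z_L\mapsto\overline{\partial/\partial x_L}$, $z_R\mapsto\overline{\partial/\partial x_R}$ and restricting to the locus $x_L=x_R=y_1$ (that is, $x_2=0$). Because $(-k)_{\bm,d_0}=0$ unless $m_1\le k$, the sum is finite, so $\cF_{\lambda,\mu,k}^*$ is a finite-order differential operator; in particular it is defined and continuous on spaces of all holomorphic functions and its coefficients are rational in $(\lambda,\mu)$, so it continues meromorphically in the parameters. Finally, Theorem~\ref{main}(1) gives the $\Delta(\fg_0,\tilde K_0)$-equivariance for $\lambda,\mu$ large, and since $\cF_{\lambda,\mu,k}^*$ is continuous and the $\tilde K_0$-finite vectors are dense, this upgrades to $\Delta\tilde G_0$-equivariance and persists at every $(\lambda,\mu)$ away from the poles.

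The point requiring the most care is the $\tilde K_0^\BC$-type expansion of $\Delta(x_L-x_R)^k$: one must track the bidegrees $(k-\bm^*,\bm)$ correctly, justify the existence and uniqueness of the extensions $\Psi_{k-\bm^*,\bm}^{(d_0,r_0)}$ through the transitivity of $\tilde K_0^\BC$ on a dense subset of $\fp^+_0$ and the explicit action of $B(te,te)$ on $\fp^+_{\rT,0}\oplus(\fp^+_{\rT,0})^\bot$, and keep the normalizations $\frac{d_\bm^{(d_0,r_0,b_0)}}{(n_0/r_0)_{\bm,d_0}}$ and the Pochhammer factors $(-k)_{\bm,d_0}$ consistent with the conventions of Proposition~\ref{Kmxe} and (\ref{ext_of_h}); the remaining steps are formal.
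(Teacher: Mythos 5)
Your proposal is correct and follows essentially the same route as the paper's own derivation: Theorem~\ref{main}(1) for the general intertwining framework, the factorization $\Delta(x_L-x_R)^k=\Delta(x_L)^k\Delta\big(e-P\big(x_L^{-1/2}\big)x_R\big)^k$ expanded via (\ref{det_norm}), (\ref{ext_of_h}) and Proposition~\ref{Kmxe}, the uniqueness lemma for $\tilde K_0^\BC$-invariant polynomials to define $\Psi_{k-\bm^*,\bm}^{(d_0,r_0)}$, and the pairing against the exponential kernel via (\ref{exp_onD}) and (\ref{scalar_norm}) to produce the Pochhammer coefficients. The bidegree bookkeeping, the finiteness of the sum from $(-k)_{\bm,d_0}=0$ for $m_1>k$, and the passage to the differential expression all match the paper.
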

This gives essentially the same result as in \cite{PZ}.
If $G_0$ is of tube type, i.e., $G_0=G_{0,\rT}$, then $\cP_{(k,\ldots,k)}(\fp^+_0)$ is 1-dimensional, and we have
$\cO_{\lambda+\mu}(D_0,\cP_{(k,\ldots,k)}(\fp^+_0))\simeq \cO_{\lambda+\mu+2k}(D_0)$ via $f\Delta(y)^k\mapsto f$,
and thus it gives the intertwining operator $\cF_{\lambda,\mu,k}^{\prime*}\colon \cO_\lambda(D_0)\hboxtimes\cO_\mu(D_0)\to\linebreak
\cO_{\lambda+\mu+2k}(D_0)$,
\begin{gather*} \cF_{\lambda,\mu,k}^{\prime *}f(y)
:=\sum_{\bm\in\BZ_{++}^{r_0}}\frac{(-k)_{\bm,d_0}}{(\lambda)_{k-\bm^*,d_0}(\mu)_{\bm,d_0}}
\frac{d_\bm^{(d_0,r_0,b_0)}}{\big(\frac{n_0}{r_0}\big)_{\bm,d_0}}
\Phi_{k-\bm^*,\bm}^{(d_0,r_0)}
\left.\left(\frac{\partial}{\partial x_L},\frac{\partial}{\partial x_R}\right)
\right|_{x_L=x_R=y}\hspace{-10pt}f(x_L,x_R), \end{gather*}
where we write
\begin{gather*} \Phi_{k-\bm^*,\bm}^{(d_0,r_0)}(x_L,x_R):=
\overline{\Delta(y_2)^{-k}}\Psi_{k-\bm^*,\bm}^{(d_0,r_0)}(x_L,x_R;y_2)=\Delta(x_L)^k\Phi_\bm^{(d_0,r_0)}\big(P\big(x^{-1/2}_L\big)x_R\big). \end{gather*}
Also, if $G_0=U(s,1)$, then $\Psi_{k-m,m}^{(2,1)}(y_2;x_L,x_R)
=\big({}^t\hspace{-1pt}y_2\overline{x_L}\big)^{k-m}\big({}^t\hspace{-1pt}y_2\overline{x_R}\big)^m$ holds, and thus
$\cF_{\lambda,\mu,k}^*\colon$ $ \cO_\lambda(D_0)\hboxtimes\cO_\mu(D_0)\to \cO_{\lambda+\mu}(D_0,\cP_{k}(\BC^s))$ becomes
\begin{gather*} \cF_{\lambda,\mu,k}^*f(y_1,y_2)
:=\sum_{m=0}^\infty\frac{(-k)_m}{(\lambda)_{k-m}(\mu)_m}\frac{1}{m!}
\left({}^t\hspace{-1pt}y_2\frac{\partial}{\partial x_L}\right)^{k-m}\left.\left({}^t\hspace{-1pt}y_2\frac{\partial}{\partial x_R}\right)^m
\right|_{x_L=x_R=y_1}f(x_L,x_R). \end{gather*}
This coincides with the Rankin--Cohen bidifferential operator (see \cite[Theorem~7.1]{C}, \cite[Theorem~8.1(2)]{KP2}).

\subsection[$\cF_{\tau\rho}$ for $(G,G_1)=(\operatorname{Sp}(s,\BR), \operatorname{Sp}(s',\BR)\times \operatorname{Sp}(s'',\BR))$, \\
$(U(q,s), U(q',s')\times U(q'',s''))$, $(\operatorname{SO}^*(2s), \operatorname{SO}^*(2s')\times \operatorname{SO}^*(2s''))$, \\
$(E_{6(-14)}, {\rm SL}(2,\BR)\times \operatorname{SU}(1,5))$, $(E_{7(-25)}, {\rm SL}(2,\BR)\times \operatorname{Spin}_0(2,10))$]{$\boldsymbol{\cF_{\tau\rho}}$ for $\boldsymbol{(G,G_1)=(\operatorname{Sp}(s,\BR), \operatorname{Sp}(s',\BR)\times \operatorname{Sp}(s'',\BR))}$, \\
$\boldsymbol{(U(q,s), U(q',s')\times U(q'',s''))}$, $\boldsymbol{(\operatorname{SO}^*(2s), \operatorname{SO}^*(2s')\times \operatorname{SO}^*(2s''))}$, \\
$\boldsymbol{(E_{6(-14)}, {\rm SL}(2,\BR)\times \operatorname{SU}(1,5))}$, $\boldsymbol{(E_{7(-25)}, {\rm SL}(2,\BR)\times \operatorname{Spin}_0(2,10))}$}

In this subsection we set
\begin{align*}
(G,G_1)&=(G,G_{11}\times G_{22})\\
&= \begin{cases}(\operatorname{Sp}(s,\BR), \operatorname{Sp}(s',\BR)\times \operatorname{Sp}(s'',\BR)) \quad (s=s'+s'')& (\text{Case }d=1),\\
(U(q,s), U(q',s')\times U(q'',s'')) \quad \ \ \ (q=q'+q'',s=s'+s'')& (\text{Case }d=2),\\
(\operatorname{SO}^*(2s), \operatorname{SO}^*(2s')\times \operatorname{SO}^*(2s'')) \quad (s=s'+s'')& (\text{Case }d=4),\\
(E_{6(-14)}, {\rm SL}(2,\BR)\times \operatorname{SU}(1,5)) & (\text{Case }d=6),\\
(E_{7(-25)},{\rm SL}(2,\BR)\times \operatorname{Spin}_0(2,10)) & (\text{Case }d=8) \end{cases}
\end{align*}
(up to covering). Then the maximal compact subgroups $(K,K_1)=(K,K_{11}\times K_{22})\subset (G,G_{11}\times G_{22})$ are given by
\begin{align*}
(K,K_1)&=(K,K_{11}\times K_{22})\\
&=\begin{cases} (U(s), U(s')\times U(s''))& (\text{Cases }d=1,4),\\
(U(q)\times U(s), (U(q')\times U(s'))\times (U(q'')\times U(s'')))& (\text{Case }d=2),\\
(U(1)\times \operatorname{Spin}(10), U(1)\times S(U(1)\times U(5)))& (\text{Case }d=6),\\
(U(1)\times E_6,U(1)\times (U(1)\times \operatorname{Spin}(10)))& (\text{Case }d=8)	 \end{cases}
\end{align*}
(up to covering). Also we have
\begin{gather*} \fp^+=\begin{cases}\Sym(s,\BC) & (\text{Case }d=1),\\ M(q,s;\BC) & (\text{Case }d=2),\\ \Skew(s,\BC) & (\text{Case }d=4),\\
M(1,2;\BO)^\BC & (\text{Case }d=6),\\ \Herm(3,\BO)^\BC & (\text{Case }d=8), \end{cases} \end{gather*}
and $\fp^+_1=\fp^+_{11}\oplus\fp^+_{22}:=\fg_1^\BC\cap\fp^+$, $\fp^+_2=\fp^+_{12}:=(\fp^+_1)^\bot$ are realized as
\begin{gather*} (\fp^+_{11},\fp^+_{12},\fp^+_{22})=\begin{cases}
(\Sym(s',\BC), M(s',s'';\BC), \Sym(s'',\BC)) & (\text{Case }d=1),\\
(M(q',s';\BC), (M(q',s'';\BC)\oplus M(q'',s';\BC)), M(q'',s'';\BC))\! & (\text{Case }d=2),\\
(\Skew(s',\BC), M(s',s'';\BC), \Skew(s'',\BC)) & (\text{Case }d=4),\\
(\BC, \Skew(5,\BC), M(1,5;\BC)) & (\text{Case }d=6),\\
\big(\BC, M(1,2;\BO)^\BC, \Herm(2,\BO)^\BC\big) & (\text{Case }d=8). \end{cases} \end{gather*}
For $d=1,4,6,8$, let $\chi$, $\chi_{11}$ and $\chi_{22}$ be the characters of $K^\BC$, $K_{11}^\BC$ and $K_{22}^\BC$ respectively,
normalized as (\ref{char}). Then we have $\chi|_{K_{jj}}=\chi_{jj}$ $(j=1,2)$.
Similarly, for $d=2$, let $\chi^{-\lambda_1-\lambda_2}$, $\chi_{11}^{-\lambda_1-\lambda_2}$ and $\chi_{22}^{-\lambda_1-\lambda_2}$ be
the characters of $K^\BC$, $K_{11}^\BC$ and $K_{22}^\BC$ respectively, as~(\ref{charuqs}).
Then similarly we have $\chi^{-\lambda_1-\lambda_2}|_{K_{jj}}=\chi_{jj}^{-\lambda_1-\lambda_2}$ $(j=1,2)$.

Now let $(\tau,V)=\big(\chi^{-\lambda},\BC\big)=\big(\chi^{-\lambda_1-\lambda_2},\BC\big)$ with $\lambda$ sufficiently large,
$W\subset\cP(\fp^+_{12})\otimes\chi^{-\lambda}$ be an irreducible $\tilde{K}^\BC_1=\tilde{K}^\BC_{11}\times \tilde{K}^\BC_{22}$-submodule,
and $\rK(x_2)\in \cP\big(\fp^+_{12},\Hom\big(W,\chi^{-\lambda}\big)\big)$ be the $\tilde{K}^\BC_1$-invariant polynomial in the sense of~(\ref{K-invariance}).
For $x_2=x_{12}\in\fp^+_2=\fp^+_{12}$, $w_1=w_{11}+w_{22}\in\fp^+_1=\fp^+_{11}\oplus\fp^+_{22}$,
we want to compute
\begin{align*}
F_{\tau\rho}(x_2;w_1)&=F_{\tau\rho}(x_{12};w_{11},w_{22})\\
& =\big\langle {\rm e}^{(y_1|w_1)_{\fp^+_1}}I_W,
\big(h(Q(x_2)y_1,y_1)^{-\lambda/2}\rK\big((x_2)^{Q(y_1)x_2}\big)\big)^*\big\rangle_{\hat{\rho},y_1}\!\\
&=\bigl\langle {\rm e}^{(y_{11}|w_{11})_{\fp^+_{11}}+(y_{22}|w_{22})_{\fp^+_{22}}}I_W,\\
& \qquad \big(h(Q(x_{12})(y_{11}+y_{22}),y_{11}+y_{22})^{-\lambda/2}
\rK\big((x_{12})^{Q(y_{11}+y_{22})x_{12}}\big)\big)^*\bigr\rangle_{\hat{\rho},y_{11},y_{22}}.
\end{align*}
Now since we have $Q(x_{12})y_{11}\in\fp^+_{22}$, $Q(x_{12})y_{22}\in\fp^+_{11}$, it holds that
\begin{align}
h(Q(x_{12})(y_{11}+y_{22}),y_{11}+y_{22}) &=\chi(B(Q(x_{12})y_{11}+Q(x_{12})y_{22},y_{11}+y_{22})) \notag\\
& =\chi(B(Q(x_{12})y_{22},y_{11}))\chi(B(Q(x_{12})y_{11},y_{22})) \notag\\
& =h(Q(x_{12})y_{22},y_{11})h(Q(x_{12})y_{11},y_{22}) \notag\\
& =h(Q(x_{12})y_{11},y_{22})^2=h_{22}(Q(x_{12})y_{11},y_{22})^2,\label{generic_norm_square}
\end{align}
where we have used (\ref{Bergman_decomp}) at the 2nd equality, \cite[Part V, Propositions IV.3.4 and IV.3.5]{FKKLR} at the 4th equality,
and $\chi|_{K_{22}}=\chi_{22}$ at the last equality. Moreover we have the following.
\begin{Lemma}
\begin{gather*} (x_{12})^{Q(y_{11}+y_{22})x_{12}}=B(Q(x_{12})y_{11},y_{22})^{-1}x_{12}. \end{gather*}
\end{Lemma}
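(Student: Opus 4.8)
The plan is to prove the identity $(x_{12})^{Q(y_{11}+y_{22})x_{12}}=B(Q(x_{12})y_{11},y_{22})^{-1}x_{12}$ by combining the quasi-inverse addition formula~\eqref{quasiinv_add} with Lemma~\ref{projlemma}(2) and the decomposition properties of the $\fp^+_{jj}$. First I would write $Q(y_{11}+y_{22})x_{12} = Q(y_{11})x_{12} + Q(y_{11},y_{22})x_{12} + Q(y_{22})x_{12}$ and observe, using the grading $\fp^+=\fp^+_{11}\oplus\fp^+_{12}\oplus\fp^+_{22}$ together with $D(\fp^+_{11},\fp^+_{22})=\{0\}$ (which holds because $(G,G_{11}\times G_{22})$ is a symmetric pair and $\fp^+_{11},\fp^+_{22}$ sit in distinct eigenspaces of the relevant involution), that $Q(y_{11})x_{12}\in\fp^+_{22}$, $Q(y_{22})x_{12}\in\fp^+_{11}$, and the cross term $Q(y_{11},y_{22})x_{12}\in\fp^+_{12}$; in fact one expects $Q(y_{11},y_{22})x_{12}=0$ since $\operatorname{ad}(y_{11})\operatorname{ad}(y_{22})\vartheta$ maps $\fp^+_{12}$ outside $\fp^+$ by weight reasons, so only the two "diagonal" pieces $Q(y_{11})x_{12}$ and $Q(y_{22})x_{12}$ survive. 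Moreover, since $x_{12}\in\fp^+_{12}$ and $y_{22}\in\fp^+_{22}$, one has $(x_{12})^{Q(y_{22})x_{12}}$-type terms that also simplify, so the upshot is that $Q(y_{11}+y_{22})x_{12}=Q(x_{12})y_{11}+Q(x_{12})y_{22}$ with the first summand in $\fp^+_{22}$ and the second in $\fp^+_{11}$ — wait, more carefully, $Q(x_{12})y_{11}\in\fp^+_{22}$ and $Q(x_{12})y_{22}\in\fp^+_{11}$ as already noted in the excerpt.

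Given that, I would apply~\eqref{quasiinv_add} to write $(x_{12})^{Q(x_{12})y_{11}+Q(x_{12})y_{22}} = \bigl((x_{12})^{Q(x_{12})y_{11}}\bigr)^{Q(x_{12})y_{22}}$, provided quasi-invertibility holds (which it does in a neighborhood of $0$, hence everywhere by analytic continuation on the relevant domain). The key sub-claim is then that $(x_{12})^{Q(x_{12})y_{11}}=x_{12}$: this should follow from Lemma~\ref{projlemma}(2) applied with $x=x_{12}$, $y=y_{11}$, namely $x_{12}^{y_{11}}=(Q(x_{12})y_{11})^{y_{11}}+x_{12}^{Q(y_{11})x_{12}}$, combined with the fact — to be checked using the grading and $D(\fp^+_{11},\fp^+_{22})=\{0\}$ — that $Q(y_{11})x_{12}=0$ because $\operatorname{ad}(y_{11})^2\vartheta$ annihilates $\fp^+_{12}$ (this is precisely the "$Q(y_1)x_2=0$" phenomenon flagged after Corollary~\ref{normal_der_2} in the split-rank-one direction, but here applied to the single factor $\fp^+_{11}$). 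Thus $x_{12}^{Q(y_{11})x_{12}}=x_{12}^{\,0}=x_{12}$, and re-reading Lemma~\ref{projlemma}(2) with roles adjusted gives $(x_{12})^{Q(x_{12})y_{11}}=x_{12}$ after projecting away the $\fp^+_{11}$-part, which vanishes here since $(Q(x_{12})y_{11})^{y_{11}}\in\fp^+_{22}$ lies in a complementary direction.

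Finally, I would compute the outermost quasi-inverse: $(x_{12})^{Q(x_{12})y_{22}}$. Since $Q(x_{12})y_{22}\in\fp^+_{11}$, the same mechanism as above (Lemma~\ref{projlemma}(2) with $x=x_{12}$, $y=y_{22}$ — or rather the general identity for $x^{z}$ with $z$ of this special form) yields $(x_{12})^{Q(x_{12})y_{22}} = B(x_{12},y_{22})^{-1}\cdot$ (something), but the cleaner route is to use~\eqref{quasiinv_twice} or the definition directly: $(x_{12})^{Q(x_{12})y_{11}} = x_{12}$ feeds into $\bigl(x_{12}\bigr)^{Q(x_{12})y_{22}}$, and one identifies this with $B(Q(x_{12})y_{11},y_{22})^{-1}x_{12}$ by invoking Lemma~\ref{projlemma}(1), which gives $B(-Q(x_{12})y_{11},y_{22})B(Q(x_{12})y_{11},y_{22})=B(x_{12},Q(y_{22})x_{12})$ — the Bergman operator appearing in the desired right-hand side. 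I expect the main obstacle to be bookkeeping the various $\fp^+_{jj}$-components and verifying the vanishing statements $Q(y_{jj})x_{12}=0$ and $Q(y_{11},y_{22})x_{12}=0$ rigorously from the Jordan triple axioms plus $D(\fp^+_{11},\fp^+_{22})=\{0\}$, rather than any deep structural input; once those are in hand, the identity is a short chain of~\eqref{quasiinv_add},~\eqref{quasiinv_twice} and Lemma~\ref{projlemma}.
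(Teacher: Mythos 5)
Your proposal breaks down at its central vanishing claim. Writing $Q(y_{11}+y_{22})=Q(y_{11})+Q(y_{11},y_{22})+Q(y_{22})$, the pure terms do vanish ($Q(y_{jj})x_{12}=0$), but the cross term $Q(y_{11},y_{22})x_{12}$ does \emph{not}: for $G=U(q,s)$ with $y_{11}=\left(\begin{smallmatrix}a&0\\0&0\end{smallmatrix}\right)$, $y_{22}=\left(\begin{smallmatrix}0&0\\0&d\end{smallmatrix}\right)$, $x_{12}=\left(\begin{smallmatrix}0&b\\c&0\end{smallmatrix}\right)$ one gets $Q(y_{11},y_{22})x_{12}=y_{11}x_{12}^*y_{22}+y_{22}x_{12}^*y_{11}=\left(\begin{smallmatrix}0&ac^*d\\ db^*a&0\end{smallmatrix}\right)\ne 0$. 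This cross term is precisely what makes the lemma nontrivial: if it vanished, the left-hand side would be $(x_{12})^0=x_{12}$, while the right-hand side carries the nontrivial operator $B(Q(x_{12})y_{11},y_{22})^{-1}$, so your premises are already inconsistent with the statement. Relatedly, your displayed ``upshot'' $Q(y_{11}+y_{22})x_{12}=Q(x_{12})y_{11}+Q(x_{12})y_{22}$ confuses the inner and outer arguments of $Q$: what is true is that $Q(x_{12})y_{11}\in\fp^+_{22}$ and $Q(x_{12})y_{22}\in\fp^+_{11}$ (with $x_{12}$ in the outer slot), whereas the actual exponent $Q(y_{11}+y_{22})x_{12}=Q(y_{11},y_{22})x_{12}$ lies in $\fp^+_{12}$. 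The chain of \eqref{quasiinv_add} and Lemma~\ref{projlemma}(2) you then build computes a quasi-inverse with the wrong exponent and never touches $(x_{12})^{Q(y_{11}+y_{22})x_{12}}$.

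The paper's proof does not decompose the quasi-inverse at all. It applies the definition $x^y=B(x,y)^{-1}(x-Q(x)y)$ directly, converts $B(x_{12},Q(y_{11}+y_{22})x_{12})$ into $B(Q(x_{12})(y_{11}+y_{22}),y_{11}+y_{22})$ via the identity $B(x,Q(y)x)=B(Q(x)y,y)$ of Lemma~\ref{projlemma}, factors the latter as $B(Q(x_{12})y_{11},y_{22})\,B(Q(x_{12})y_{22},y_{11})$ using \eqref{Bergman_decomp} and the block placements $Q(x_{12})y_{jj}\in\fp^+_{3-j,3-j}$, and then absorbs the surviving factor together with the cross term through the direct Jordan-triple computation $B(Q(x_{12})y_{22},y_{11})x_{12}=x_{12}-Q(x_{12})Q(y_{11},y_{22})x_{12}$ (using (J3.1$'$) of \cite{FKKLR}). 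Any repair of your argument must treat the nonzero element $Q(y_{11},y_{22})x_{12}\in\fp^+_{12}$ honestly, at which point you are essentially forced into this computation.
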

\begin{proof}By the definition of the quasi-inverse, we have
\begin{align*}
&(x_{12})^{Q(y_{11}+y_{22})x_{12}}=B(x_{12},Q(y_{11}+y_{22})x_{12})^{-1}(x_{12}-Q(x_{12})Q(y_{11}+y_{22})x_{12})\\
&=B(Q(x_{12})(y_{11}+y_{22}),y_{11}+y_{22})^{-1}(x_{12}-Q(x_{12})(Q(y_{11})+Q(y_{11},y_{22})+Q(y_{22}))x_{12})\\
&=B(Q(x_{12})y_{11}+Q(x_{12})y_{22},y_{11}+y_{22})^{-1}(x_{12}-Q(x_{12})Q(y_{11},y_{22})x_{12})\\
&=B(Q(x_{12})y_{11},y_{22})^{-1}B(Q(x_{12})y_{22},y_{11})^{-1}(x_{12}-Q(x_{12})Q(y_{11},y_{22})x_{12}),
\end{align*}
where we have used Lemma \ref{projlemma}(2), (\ref{Bergman_decomp}),
and $Q(x_{12})y_{jj}\in\fp^+_{3-j,3-j}$, $Q(y_{jj})x_{12}=0$ $(j=1,2)$, which follows from case-by-case analysis.
Thus it suffices to show
\begin{gather*} B(Q(x_{12})y_{22},y_{11})^{-1}(x_{12}-Q(x_{12})Q(y_{11},y_{22})x_{12})=x_{12}. \end{gather*}
This follows from
\begin{align*}
B(Q(x_{12})y_{22},y_{11})x_{12}&=x_{12}-D(Q(x_{12})y_{22},y_{11})x_{12}+Q(Q(x_{12})y_{22})Q(y_{11})x_{12}\\
&=x_{12}-Q(Q(x_{12})y_{22},x_{12})y_{11}+Q(Q(x_{12})y_{22})0\\
&=x_{12}-Q(x_{12})D(y_{22},x_{12})y_{11}\\
& =x_{12}-Q(x_{12})Q(y_{11},y_{22})x_{12},
\end{align*}
where we have used \cite[Part V, Proposition I.2.1 (J3.1$'$)]{FKKLR} at the 3rd equality.
\end{proof}

Therefore we have
\begin{gather*}
F_{\tau\rho}(x_{12};w_{11},w_{22}) =\bigl\langle {\rm e}^{(y_{11}|w_{11})_{\fp^+_{11}}}{\rm e}^{(y_{22}|w_{22})_{\fp^+_{22}}}I_W,\\
\hphantom{F_{\tau\rho}(x_{12};w_{11},w_{22}) =\bigl\langle}{} \big(h_{22}(Q(x_{12})y_{11},y_{22})^{-\lambda}
\rK\big(B(Q(x_{12})y_{11},y_{22})^{-1}x_{12}\big)\big)^*\bigr\rangle_{\hat{\rho},y_{11},y_{22}}.
\end{gather*}
Now we write $(\rho,W)=(\rho_{11}\boxtimes\rho_{22},W_{11}\otimes W_{22})$. Then we have
\begin{gather*}
F_{\tau\rho}(x_{12};w_{11},w_{22})
=\bigl\langle {\rm e}^{(y_{11}|w_{11})_{\fp^+_{11}}}{\rm e}^{(y_{22}|w_{22})_{\fp^+_{22}}}I_{W_{11}\boxtimes W_{22}},\\
\quad \big(h_{22}(Q(x_{12})y_{11},y_{22})^{-\lambda}
\rK\big(B(Q(x_{12})y_{11},y_{22})^{-1}x_{12}\big)\big)^*\bigr\rangle_{\hat{\rho}_{11}\boxtimes\hat{\rho}_{22},y_{11},y_{22}}\\
=\bigl\langle {\rm e}^{(y_{11}|w_{11})_{\fp^+_{11}}}{\rm e}^{(y_{22}|w_{22})_{\fp^+_{22}}}I_{W_{11}\boxtimes W_{22}},
\big(\rK(x_{12})\rho_{22}(B(Q(x_{12})y_{11},y_{22}))\big)^*\bigr\rangle_{\hat{\rho}_{11}\boxtimes\hat{\rho}_{22},y_{11},y_{22}}\\
=\rK(x_{12})\bigl\langle {\rm e}^{(y_{11}|w_{11})_{\fp^+_{11}}}{\rm e}^{(y_{22}|w_{22})_{\fp^+_{22}}}I_{W_{11}\boxtimes W_{22}},
I_{W_{11}}\boxtimes\rho_{22}(B(y_{22},Q(x_{12})y_{11}))\bigr\rangle_{\hat{\rho}_{11}\boxtimes\hat{\rho}_{22},y_{11},y_{22}}\\
=\rK(x_{12})\bigl(\bigl\langle {\rm e}^{(y_{11}|w_{11})_{\fp^+_{11}}}I_{W_{11}},
{\rm e}^{(w_{22}|Q(x_{12})y_{11})_{\fp^+_{22}}}I_{W_{11}}\bigr\rangle_{\hat{\rho}_{11},y_{11}}\boxtimes I_{W_{22}}\bigr)\\
=\rK(x_{12})\bigl(\bigl\langle {\rm e}^{(y_{11}|w_{11})_{\fp^+_{11}}}I_{W_{11}},
{\rm e}^{(y_{11}|Q(x_{12})w_{22})_{\fp^+_{11}}}I_{W_{11}}\bigr\rangle_{\hat{\rho}_{11},y_{11}}\boxtimes I_{W_{22}}\bigr),
\end{gather*}
where we have used (\ref{K-invariance}) at the 2nd equality, and the 4th equality holds since $\rho_{22}(B(y_{22},\allowbreak z_{22}))$
is the reproducing kernel of $\cH_{\rho_{22}}(D_{22},W_{22})$, where $z_{22}=Q(x_{12})y_{11}$.
In the following we omit~$\boxtimes I_{W_{22}}$.
Now we assume $s'\le s''$ when $d=1$, $q'\le s''$ when $d=2$, $2\le s'\le s''$ when $d=4$, and set $W=W_{11}\boxtimes W_{22}$ as
\begin{align*}
W&=\cP_{(\underbrace{\scriptstyle k+1,\ldots,k+1}_l,k,\ldots,k)}(M(s',s'';\BC))\otimes\chi^{-\lambda} \\
&\simeq \big(V_{\langle l\rangle}^{(s')\vee}\otimes\chi_{11}^{-\lambda-k}\big)
\boxtimes \Bigl(V_{(\underbrace{\scriptstyle k+1,\ldots,k+1}_l,\underbrace{\scriptstyle k,\ldots,k}_{s'-l},
\underbrace{\scriptstyle 0,\ldots,0}_{s''-s'})}^{(s'')\vee}
\otimes\chi_{22}^{-\lambda}\Bigr) & (d=1), \\
W&=\cP_{(k,\ldots,k)}(M(q',s'';\BC))\boxtimes\cP_\bl(M(q'',s';\BC))\otimes\chi^{-\lambda_1-\lambda_2} \\
&\simeq \big(\BC^{(q')}\boxtimes V_\bl^{(s')}\otimes\chi_{11}^{-(\lambda_1+k)-\lambda_2}\big)\boxtimes
\Bigl(V_\bl^{(q'')\vee}\boxtimes V_{(\underbrace{\scriptstyle k,\ldots,k}_{q'},0,\ldots,0)}^{(s'')}\otimes\chi_{22}^{-\lambda_1-\lambda_2}\Bigr)
& (d=2), \\
W&=\begin{cases}\cP_{(k+l,k\ldots,k)}(M(s',s'';\BC))\otimes\chi^{-\lambda} & (1) \\
\cP_{(k+l,\ldots,k+l,k)}(M(s',s'';\BC))\otimes\chi^{-\lambda} & (2) \end{cases} \\
&\simeq\begin{cases}\big(V_{(l,0,\ldots,0)}^{(s')\vee}\otimes\chi_{11}^{-\lambda-2k}\big)
\boxtimes \Bigl(V_{(\underbrace{\scriptstyle k+l,k,\ldots,k}_{s'},0,\ldots,0)}^{(s'')\vee}
\otimes\chi_{22}^{-\lambda}\Bigr) & (1) \\
\big(V_{(l,\ldots,l,0)}^{(s')\vee}\otimes\chi_{11}^{-\lambda-2k}\big)
\boxtimes \Bigl(V_{(\underbrace{\scriptstyle k+l,\ldots,k+l,k}_{s'},0,\ldots,0)}^{(s'')\vee}
\otimes\chi_{22}^{-\lambda}\Bigr) & (2) \end{cases} & \begin{pmatrix}d=4,\\ s'\ne 3\end{pmatrix}, \\
W&=\cP_{(k_1,k_2,k_3)}(M(3,s'';\BC))\otimes\chi^{-\lambda} \\
&\simeq\big(V_{(k_1,k_2,k_3)}^{(3)\vee}\otimes\chi_{11}^{-\lambda}\big)
\boxtimes \big(V_{(k_1,k_2,k_3,0,\ldots,0)}^{(s'')\vee}\otimes\chi_{22}^{-\lambda}\big) \hspace{-30pt}\\
&\simeq\big(V_{(0;-k_2-k_3,-k_1-k_3,-k_1-k_2)}^{(1,3)\vee}\otimes\chi_{11}^{-\lambda}\big)
\boxtimes \big(V_{(k_1,k_2,k_3,0,\ldots,0)}^{(s'')\vee}\otimes\chi_{22}^{-\lambda}\big)
& \begin{pmatrix}d=4,\\ s'=3\end{pmatrix}, \\
W&=\cP_{(k_1,k_2)}(\Skew(5,\BC))\otimes\chi^{-\lambda}\\
&\simeq\chi_{11}^{-\lambda-k_1-k_2}\boxtimes\big(V_{(0;-k_2,-k_2,-k_1,-k_1,-k_1-k_2)}^{(1,5)\vee}\otimes\chi_{22}^{-\lambda}\big)
& (d=6), \\
W&=\cP_{(k_1,k_2)}\big(M(1,2;\BO)^\BC\big)\otimes\chi^{-\lambda} \\
&\simeq\chi_{11}^{-\lambda-k_1-k_2}\boxtimes \Bigl(\chi_{22}^{-\lambda-\frac{k_1+k_2}{2}}
\otimes V_{\big(\frac{k_1+k_2}{2},\frac{k_1-k_2}{2},\frac{k_1-k_2}{2},\frac{k_1-k_2}{2},\frac{k_1-k_2}{2}\big)}^{[10]\vee}\Bigr) & (d=8),
\end{align*}
where $k\in\BZ_{\ge 0}$, and $l\in\{0,1,\ldots,s'-1\}$ if $d=1$, $\bl\in\BZ_{++}^{\min\{ q'',s'\}}$ if $d=2$,
$l\in\BZ_{\ge 0}$ if $d=4$ with $s'\ne 3$, $(k_1,k_2,k_3)\in\BZ_{++}^3$ if $d=4$ with $s'=3$,
$(k_1,k_2)\in\BZ_{++}^2$ if $d=6,8$, and we denote $\langle l\rangle:=(\underbrace{1,\ldots,1}_l,0,\ldots,0)$.
We note that when $d=4$, $s'=3$, we identify $\operatorname{SO}^*(6)\simeq \operatorname{SU}(1,3)$ up to covering. We write
\begin{gather*} \rK(x_{12})=\begin{cases}
\rK_{k\langle s'\rangle+\langle l\rangle}^{(2)}(x_{12}) & (d=1),\\
\rK_{(k,\ldots,k)}^{(2)}(x_{12})\rK_\bl^{(2)}(x_{21}) & (d=2),\\
\rK_{(k+l,k,\ldots,k)}^{(2)}(x_{12}) & (d=4,\,(1)),\\
\rK_{(k+l,\ldots,k+l,k)}^{(2)}(x_{12}) & (d=4,\,(2))\\
\rK_{(k_1,k_2,k_3)}^{(2)}(x_{12}) & (d=4,\,s'=3),\\
\rK_{(k_1,k_2)}^{(4)}(x_{12}) & (d=6),\\
\rK_{(k_1,k_2)}^{(6)}(x_{12}) & (d=8),
\end{cases}\quad\in \cP\big(\fp^+_{12},\Hom\big(W,\chi^{-\lambda}\big)\big), \end{gather*}
where the superscripts mean $d_2=d_{12}$. Then $\cP(\fp^+_{11})\otimes W_{11}$ is decomposed under $\tilde{K}_{11}^\BC$ as
\begin{align*}
\cP(\fp^+_{11})\otimes W_{11}
&\simeq\bigoplus_{\bm\in\BZ_{++}^{s'}} V_{2\bm}^{(s')\vee}\otimes V_{\langle l\rangle}^{(s')\vee}\otimes\chi_{11}^{-\lambda-k}\\
& \simeq\bigoplus_{\bm\in\BZ_{++}^{s'}}\bigoplus_{\substack{\bl\in\{0,1\}^{s'},\; |\bl|=l\\ \bm+\bl\in\BZ_{++}^{s'}}}
V_{2\bm+\bl}^{(s')\vee}\otimes \chi_{11}^{-\lambda-k} && (d=1),\\
\cP(\fp^+_{11})\otimes W_{11}
&\simeq\bigoplus_{\bm\in\BZ_{++}^{\min\{q',s'\}}} \big(V_{\bm}^{(q')\vee}\boxtimes V_{\bm}^{(s')}\big)
\otimes \big(\BC^{(q')}\boxtimes V_\bl^{(s')}\otimes\chi_{11}^{-(\lambda_1+k)-\lambda_2}\big) \\
&\simeq\bigoplus_{\bm\in\BZ_{++}^{\min\{q',s'\}}}\bigoplus_{\bn\in\BZ_{++}^{s'}}
c_{\bm,\bl}^\bn V_{\bm}^{(q')\vee}\boxtimes V_\bn^{(s')}\otimes\chi_{11}^{-(\lambda_1+k)-\lambda_2} &&(d=2),\\
\cP(\fp^+_{11})\otimes W_{11}
&\simeq\bigoplus_{\bm\in\BZ_{++}^{\lfloor s'/2\rfloor}} V_{\bm^2}^{(s')\vee}\otimes V_{(l,0,\ldots,0)}^{(s')\vee}\otimes\chi_{11}^{-\lambda-2k}\\
&\simeq\bigoplus_{\bm\in\BZ_{++}^{\lfloor s'/2\rfloor}}
\bigoplus_{\substack{\bl\in(\BZ_{\ge 0})^{\lceil s'/2\rceil},\; |\bl|=l\\ 0\le l_j\le m_{j-1}-m_j}}
V_{\substack{(m_1+l_1,m_1,m_2+l_2,\\
m_2,\ldots,m_{\lfloor s'/2\rfloor}+l_{\lfloor s'/2\rfloor},\\ m_{\lfloor s'/2\rfloor}(,l_{\lceil s'/2\rceil}))}}^{(s')\vee}
\otimes\chi_{11}^{-\lambda-2k} &&(d=4\ (1)), \\
\cP(\fp^+_{11})\otimes W_{11}
&\simeq\bigoplus_{\bm\in\BZ_{++}^{\lfloor s'/2\rfloor}} V_{\bm^2}^{(s')\vee}\otimes V_{(l,\ldots,l,0)}^{(s')\vee}
\otimes \chi_{11}^{-\lambda-2k}\\
&\simeq\bigoplus_{\bm\in\BZ_{++}^{\lfloor s'/2\rfloor}}
\bigoplus_{\substack{\bl\in(\BZ_{\ge 0})^{\lceil s'/2\rceil},\; |\bl|=l\\ 0\le l_j\le m_j-m_{j+1}\\
0\le l_{\lceil s'/2\rceil} \text{ if }s'\colon \text{odd}}}
V_{\substack{(m_1+l,m_1+l-l_1,m_2+l,\\
m_2+l-l_2,\ldots, \\ m_{\lfloor s'/2\rfloor}+l,\\ m_{\lfloor s'/2\rfloor}+l-l_{\lfloor s'/2\rfloor}
(,l-l_{\lceil s'/2\rceil}))}}^{(s')\vee}
\otimes \chi_{11}^{-\lambda-2k}\! &&(d=4\ (2)), \\
\cP(\fp^+_{11})\otimes W_{11}
&\simeq\bigoplus_{m=0}^\infty V_{(m,m,0)}^{(3)\vee}\otimes V_{(k_1,k_2,k_3)}^{(3)\vee}\otimes\chi_{11}^{-\lambda}\\
&\simeq\bigoplus_{m=0}^\infty \;\bigoplus_{\substack{\bm\in(\BZ_{\ge 0})^3,\;|\bm|=m \\ 0\le m_j\le k_j-k_{j+1}}}
V_{(k_1+m-m_1,k_2+m-m_2,k_3+m-m_3)}^{(3)\vee}\otimes\chi_{11}^{-\lambda}\\
&\simeq\bigoplus_{\substack{\bm\in(\BZ_{\ge 0})^3\\ 0\le m_j\le k_j-k_{j+1}}}
V_{(k_1+m_2+m_3,k_2+m_1+m_3,k_3+m_1+m_2)}^{(3)\vee}\otimes\chi_{11}^{-\lambda} &&\begin{pmatrix}d=4\\ s'=3\end{pmatrix},\\
\cP(\fp^+_{11})\otimes W_{11}&\simeq \bigoplus_{m=0}^\infty \chi_{11}^{-\lambda-k_1-k_2-2m} &&(d=6,8),
\end{align*}
where $c_{\bm,\bl}^\bn$ are the Littlewood--Richardson coefficients, and for $d=4$ we write $m_0=+\infty$, $m_{\lfloor s'/2\rfloor+1}=0$.
Also, when $d=4$, $s'=3$, under the identification $\operatorname{SO}^*(6)\simeq \operatorname{SU}(1,3)$ up to covering, we have
\begin{gather*}
 V_{(k_1+m_2+m_3,k_2+m_1+m_3, k_3+m_1+m_2)}^{(3)\vee} \\
 \qquad{} \simeq V_{(0;-k_2-k_3-2m_1-m_2-m_3, -k_1-k_3-m_1-2m_2-m_3, -k_1-k_2-m_1-m_2-2m_3)}^{(1,3)\vee} \\
\qquad{} \simeq V_{(m_1+m_2+m_3;-k_2-k_3-m_1, -k_1-k_3-m_2,-k_1-k_2-m_3)}^{(1,3)\vee}.
\end{gather*}
Then we expand ${\rm e}^{(z_{11}|y_{11})_{\fp^+_{11}}}I_{W_{11}} \in \cO\big(\fp^+_{11}\times\overline{\fp^+_{11}},\End(W_{11})\big)$ according to the above decomposition~as
\begin{align*}
{\rm e}^{(z_{11}|y_{11})_{\fp^+_{11}}}I_{W_{11}}=\begin{cases}
\ds \sum_{\bm,\bl}\bK^{(1)}_{\bm,\bl}(z_{11};y_{11}) & (d=1),\\
\ds \sum_{\bm,\bn}\bK^{(2)}_{\bm,\bn}(z_{11};y_{11}) & (d=2),\\
\ds \sum_{\bm,\bl}\bK^{(4)}_{\bm,\bl}(z_{11};y_{11}) & (d=4\ (1)),\\
\ds \sum_{\bm,\bl}\bK^{(4)}_{\bm,-\bl}(z_{11};y_{11}) & (d=4\ (2)),\\
\ds \sum_{\bm}\bK^{(2)\prime}_{\bm,\bk}(z_{11};y_{11}) & (d=4,\, s'=3),\\
\ds \sum_{m=0}^\infty \frac{1}{m!}(z_{11}\overline{y_{11}})^m & (d=6,8). \end{cases}
\end{align*}
Therefore by \cite{N2} and (\ref{exp_onD}), we have
\begin{align*}
&F_{\tau\rho}(x_{12};w_{11},w_{22})=\rK(x_{12})\bigl\langle {\rm e}^{(y_{11}|w_{11})_{\fp^+_{11}}}I_{W_{11}},
{\rm e}^{(y_{11}|Q(x_{12})w_{22})_{\fp^+_{11}}}I_{W_{11}}\bigr\rangle_{\hat{\rho}_{11},y_{11}}\\
&=\begin{cases}
\ds \sum_{\bm\in\BZ_{++}^{s'}}\sum_{\substack{\bl\in\{0,1\}^{s'},\; |\bl|=l\\ \bm+\bl\in\BZ_{++}^{s'}}}
\frac{1}{(\lambda+k+\langle l\rangle)_{\bm+\bl-\langle l\rangle,1}}\\
\hspace{100pt}\times \rK_{k\langle s'\rangle+\langle l\rangle}^{(2)}(x_{12})
\bK_{\bm,\bl}^{(1)}(x_{12}w_{22}^*{}^t\hspace{-1pt}x_{12};w_{11}) & (d=1), \\
\ds \sum_{\bm\in\BZ_{++}^{\min\{q',s'\}}}\sum_\bn \frac{1}{(\lambda+k+\bl)_{\bn-\bl,2}}\\
\hspace{100pt}\times \rK_{(k,\ldots,k)}^{(2)}(x_{12})\rK_\bl^{(2)}(x_{21})\bK_{\bm,\bn}^{(2)}(x_{12}w_{22}^*x_{21};w_{11}) & (d=2), \\
\ds \sum_{\bm\in\BZ_{++}^{\lfloor s'/2\rfloor}}
\sum_{\substack{\bl\in(\BZ_{\ge 0})^{\lceil s'/2\rceil},\; |\bl|=l\\ 0\le l_j\le m_{j-1}-m_j}}
\frac{1}{(\lambda+2k+(l,0,\ldots,0))_{\bm+\bl-(l,0,\ldots,0),4}},\\
\hspace{100pt}\times \rK_{(k+l,k,\ldots,k)}^{(2)}(x_{12})
\bK_{\bm,\bl}^{(4)}(-x_{12}w_{22}^*{}^t\hspace{-1pt}x_{12};w_{11}) & (d=4,\,(1)),\\
\ds \sum_{\bm\in\BZ_{++}^{\lfloor s'/2\rfloor}}
\sum_{\substack{\bl\in(\BZ_{\ge 0})^{\lceil s'/2\rceil},\; |\bl|=l\\ 0\le l_j\le m_j-m_{j+1}}}
\frac{1}{(\lambda+2k+l+(l,\ldots,l,0))_{\bm-\bl-(0,\ldots,0,-l),4}},\\
\hspace{100pt}\times \rK_{(k+l,\ldots,k+l,k)}^{(2)}(x_{12})
\bK_{\bm,-\bl}^{(4)}(-x_{12}w_{22}^*{}^t\hspace{-1pt}x_{12};w_{11})
& \!\!\begin{pmatrix}d=4,\,(2)\\s\colon \text{even}\end{pmatrix},
\end{cases}\\
& \hphantom{=} \begin{cases}
\ds \sum_{\bm\in\BZ_{++}^{\lfloor s'/2\rfloor}}
\sum_{\substack{\bl\in(\BZ_{\ge 0})^{\lceil s'/2\rceil},\; |\bl|=l\\ 0\le l_j\le m_j-m_{j+1}\\ 0\le l_{\lceil s'/2\rceil}}}
\frac{1}{(\lambda+2k+2l)_{\bm-\bl',4}\left(\lambda+2k+l-2\left\lfloor\frac{s}{2}\right\rfloor+1\right)_{l-l_{\lceil s/2\rceil}}}\hspace{-30pt}\\
\hspace{100pt}\times \rK_{(k+l,\ldots,k+l,k)}^{(2)}(x_{12})
\bK_{\bm,-\bl}^{(4)}(-x_{12}w_{22}^*{}^t\hspace{-1pt}x_{12};w_{11})
& \!\!\begin{pmatrix}d=4,\,(2)\\s\colon \text{odd}\end{pmatrix},\\
\ds \sum_{\substack{\bm\in(\BZ_{\ge 0})^3\\ 0\le m_j\le k_j-k_{j+1}}}
\frac{1}{(\lambda+(k_1+k_2,k_1+k_3,k_2+k_3))_{(m_3,m_2,m_1),2}} \\
\hspace{100pt}\times\rK_{(k_1,k_2,k_3)}^{(2)}(x_{12})\bK_{\bm,\bk}^{(2)\prime}(-x_{12}w_{22}^*{}^t\hspace{-1pt}x_{12};w_{11})
& \!\!\begin{pmatrix}d=4\\ s'=3\end{pmatrix},\\
\ds \sum_{m=0}^\infty \frac{1}{(\lambda+k_1+k_2)_m}\frac{1}{m!}\left(\overline{w_{11}}\overline{w_{22}}
\hspace{1pt}{}^t\mathbf{Pf}(x_{12})\right)^m\rK_{(k_1,k_2)}^{(4)}(x_{12}) & (d=6),\\
\ds \sum_{m=0}^\infty \frac{1}{(\lambda+k_1+k_2)_m}\frac{1}{m!}\left(\overline{w_{11}}
\Re_\BO\left(x_{12}\left(\overline{w_{22}}\hspace{1pt}{}^t\hspace{-1pt}\hat{x}_{12}\right)\right)\right)^m\rK_{(k_1,k_2)}^{(6)}(x_{12}) & (d=8),
\end{cases}
\end{align*}
where for $d=4$ with $s'$ odd, we identify $\bm=(m_1,\ldots,m_{\lfloor s'/2\rfloor})\in\BZ_{++}^{\lfloor s'/2\rfloor}$ with $(m_1,\ldots,\allowbreak m_{\lfloor s'/2\rfloor},0)\in\BZ_{++}^{\lceil s'/2\rceil}$, and for $\bl=(l_1,\ldots,l_{\lfloor s'/2\rfloor},l_{\lceil s'/2\rceil})\in(\BZ_{\ge 0})^{\lceil s'/2\rceil}$, we write $\bl'=(l_1,\ldots,l_{\lfloor s'/2\rfloor})\allowbreak\in(\BZ_{\ge 0})^{\lfloor s'/2\rfloor}$. Also, for $d=6$, $\mathbf{Pf}(x_{12})$ is defined in~(\ref{Pfaff_vector}). By Theorem~\ref{main}, by substitu\-ting $w_{11}$, $w_{22}$ with $\overline{\frac{\partial}{\partial x_{11}}}$, $\overline{\frac{\partial}{\partial x_{22}}}$, we get the intertwining operator from $(\cH_1)_{\tilde{K}_1}$ to $\cH_{\tilde{K}}$, and by Theorem~\ref{extend}, this extends to the intertwining operator between the spaces of all holomorphic functions if $\cH_1$ is holomorphic discrete. Moreover, for the cases when the norm of $\cH_1$ is computed in \cite{N2}, then by Theorem~\ref{continuation}, this continues meromorphically for all $\lambda\in\BC$. Therefore we have the following.
\begin{Theorem}\label{main1}\quad
\begin{enumerate}\itemsep=0pt
\item[$(1)$] Let $(G,G_1)=(\operatorname{Sp}(s,\BR), \operatorname{Sp}(s',\BR)\times \operatorname{Sp}(s'',\BR))$ with $s=s'+s''$, $s'\le s''$.
Let $k\in\BZ_{\ge 0}$, and $l\in\{0,\ldots,s'-1\}$. Then the linear map
\begin{gather*}
\cF_{\lambda,k,l}\colon \ \cO_{\lambda+k}\big(D_{11},V_{\langle l\rangle}^{(s')\vee}\big)_{\tilde{K}_{11}}\boxtimes
\cO_{\lambda}(D_{22},V_{(\underbrace{\scriptstyle k+1,\ldots,k+1}_l,\underbrace{\scriptstyle k,\ldots,k}_{s'-l},
\underbrace{\scriptstyle 0,\ldots,0}_{s''-s'})}^{(s'')\vee})_{\tilde{K}_{22}}
\longrightarrow \cO_\lambda(D)_{\tilde{K}},\\
\cF_{\lambda,k,l}f\begin{pmatrix}x_{11}&x_{12}\\{}^t\hspace{-1pt}x_{12}&x_{22}\end{pmatrix}
=\sum_{\bm\in\BZ_{++}^{s'}}\sum_{\substack{\bl\in\{0,1\}^{s'},\; |\bl|=l\\ \bm+\bl\in\BZ_{++}^{s'}}}
\frac{1}{(\lambda+k+\langle l\rangle)_{\bm+\bl-\langle l\rangle,1}}\\
\hphantom{\cF_{\lambda,k,l}f\begin{pmatrix}x_{11}&x_{12}\\{}^t\hspace{-1pt}x_{12}&x_{22}\end{pmatrix}=}{}
\times \rK_{k\langle s'\rangle+\langle l\rangle}^{(2)}(x_{12})
\bK_{\bm,\bl}^{(1)}\left(x_{12}\frac{\partial}{\partial x_{22}}{}^t\hspace{-1pt}x_{12};
\overline{\frac{\partial}{\partial x_{11}}}\right)f(x_{11},x_{22})
\end{gather*}
intertwines the $\big(\fg_1,\tilde{K}_1\big)$-action. If $s'=s''$ or $k=0$ or $(k,l)=(1,0)$ or $\lambda>s''$,
then this extends to the map between the spaces of all holomorphic functions.
\item[$(2)$] Let $(G,G_1)=(U(q,s), U(q',s')\times U(q'',s''))$ with $q=q'+q''$, $s=s'+s''$, $q'\le s''$.
Let $k\in\BZ_{\ge 0}$, and $\bl\in\BZ_{++}^{\min\{q'',s'\}}$.
Then the linear map
\begin{gather*}
\cF_{\lambda,k,\bl}\colon \ \cO_{(\lambda_1+k)+\lambda_2}\big(D_{11},\BC\boxtimes V_\bl^{(s')}\big)_{\tilde{K}_{11}}\\
\qquad {} \boxtimes
\cO_{\lambda_1+\lambda_2}\big(D_{22},V_\bl^{(q'')\vee} \boxtimes V_{(\underbrace{\scriptstyle k,\ldots,k}_{q'},0,\ldots,0)}^{(s'')}\big)_{\tilde{K}_{22}}
\longrightarrow \cO_{\lambda_1+\lambda_2}(D)_{\tilde{K}},\\
\cF_{\lambda,k,\bl}f\begin{pmatrix}x_{11}&x_{12}\\x_{21}&x_{22}\end{pmatrix}
=\sum_{\bm\in\BZ_{++}^{\min\{q',s'\}}}\sum_\bn \frac{1}{(\lambda_1+\lambda_2+k+\bl)_{\bn-\bl,2}}\\
\qquad{} \times \rK_{(k,\ldots,k)}^{(2)}(x_{12})\rK_\bl^{(2)}(x_{21})\bK_{\bm,\bn}^{(2)}
\left(x_{12}{\vphantom{\biggl(}}^t\!\!\left(\frac{\partial}{\partial x_{22}}\right)x_{21};
\overline{\frac{\partial}{\partial x_{11}}}\right)f(x_{11},x_{22})
\end{gather*}
intertwines the $(\fg_1,\tilde{K}_1)$-action. If $q'=s''$ or $k=0$ or $\bl=(0,\ldots,0)$ or ``$s'\ge q''$ and $\bl=(l,\ldots,l)$''
or ``$\lambda_1+\lambda_2+k+l_{s'}>q'+s'-1$ and $\lambda_1+\lambda_2+l_{q''}>q''+s''-1$'',
then this extends to the map between the spaces of all holomorphic functions.
\item[$(3)$] Let $(G,G_1)=(\operatorname{SO}^*(2s), \operatorname{SO}^*(2s')\times \operatorname{SO}^*(2s''))$ with $s=s'+s''$, $2\le s'\le s''$.
Let $k,l\in\BZ_{\ge 0}$. Then the linear map
\begin{gather*}
\cF_{\lambda,k,l}\colon \ \cO_{\lambda+2k}\big(D_{11},V_{(l,0,\ldots,0)}^{(s')\vee}\big)_{\tilde{K}_{11}}\boxtimes
\cO_\lambda\big(D_{22},V_{(\underbrace{\scriptstyle k+l,k,\ldots,k}_{s'},0,\ldots,0)}^{(s'')\vee}\big)_{\tilde{K}_{22}}
\longrightarrow \cO_\lambda(D)_{\tilde{K}},\\
\cF_{\lambda,k,l}f\begin{pmatrix}x_{11}&x_{12}\\-{}^t\hspace{-1pt}x_{12}&x_{22}\end{pmatrix}
=\sum_{\bm\in\BZ_{++}^{\lfloor s'/2\rfloor}}\sum_{\substack{\bl\in(\BZ_{\ge 0})^{\lceil s'/2\rceil},\; |\bl|=l\\ 0\le l_j\le m_{j-1}-m_j}}
\frac{1}{(\lambda\!+\!2k\!+\!(l,0,\ldots,0))_{\bm+\bl-(l,0,\ldots,0),4}}\\
\hphantom{\cF_{\lambda,k,l}f\begin{pmatrix}x_{11}&x_{12}\\-{}^t\hspace{-1pt}x_{12}&x_{22}\end{pmatrix}=}{}
\times \rK_{(k+l,k,\ldots,k)}^{(2)}(x_{12})
\bK_{\bm,\bl}^{(4)}\left(x_{12}\frac{\partial}{\partial x_{22}}{}^t\hspace{-1pt}x_{12};
\overline{\frac{\partial}{\partial x_{11}}}\right)f(x_{11},x_{22})
\end{gather*}
intertwines the $(\fg_1,\tilde{K}_1)$-action. Here we identify $\bm=(m_1,\ldots,m_{\lfloor s'/2\rfloor})\in\BZ_{++}^{\lfloor s'/2\rfloor}$
with $(m_1,\ldots,m_{\lfloor s'/2\rfloor},0)\in\BZ_{++}^{\lceil s'/2\rceil}$ when $s'$ is odd.
If $s'=s''$ or $k=0$ or ``$s''=s'+1$ and $l=0$''
or $\lambda>2s''-3$ or ``$s''=s'+1$ and $\lambda+k>2s''-3$'',
then this extends to the map between the spaces of all holomorphic functions.
\item[$(4)$] Let $(G,G_1)=(\operatorname{SO}^*(2s), \operatorname{SO}^*(2s')\times \operatorname{SO}^*(2s''))$ with $s=s'+s''$, $3\le s'\le s''$.
Let $k\in\BZ_{\ge 0}$, and $l\in\BZ_{>0}$. Then the linear maps
\begin{gather*}
\cF_{\lambda,k,l}\colon \ \cO_{\lambda+2k}\big(D_{11},V_{(l,\ldots,l,0)}^{(s')\vee}\big)_{\tilde{K}_{11}}\boxtimes
\cO_\lambda\big(D_{22},V_{(\underbrace{\scriptstyle k+l,\ldots,k+l,k}_{s'},0,\ldots,0)}^{(s'')\vee}\big)_{\tilde{K}_{22}}
\longrightarrow \cO_\lambda(D)_{\tilde{K}}, \\
\cF_{\lambda,k,l}f\begin{pmatrix}x_{11}&x_{12}\\-{}^t\hspace{-1pt}x_{12}&x_{22}\end{pmatrix} \\
\qquad{} =\sum_{\bm\in\BZ_{++}^{\lfloor s'/2\rfloor}}\sum_{\substack{\bl\in(\BZ_{\ge 0})^{\lceil s'/2\rceil},\; |\bl|=l\\ 0\le l_j\le m_j-m_{j+1}}}
\frac{1}{(\lambda+2k+l+(l,\ldots,l,0))_{\bm-\bl-(0,\ldots,0,-l),4}}\\
\qquad\quad{} \times \rK_{(k+l,\ldots,k+l,k)}^{(2)}(x_{12})
\bK_{\bm,-\bl}^{(4)}\left(x_{12}\frac{\partial}{\partial x_{22}}{}^t\hspace{-1pt}x_{12};
\overline{\frac{\partial}{\partial x_{11}}}\right)f(x_{11},x_{22}) \quad (s'\colon \text{even}),\\
\cF_{\lambda,k,l}f\begin{pmatrix}x_{11}&x_{12}\\-{}^t\hspace{-1pt}x_{12}&x_{22}\end{pmatrix} \\
\qquad{} =\sum_{\bm\in\BZ_{++}^{\lfloor s'/2\rfloor}}
\sum_{\substack{\bl\in(\BZ_{\ge 0})^{\lceil s'/2\rceil},\; |\bl|=l\\ 0\le l_j\le m_j-m_{j+1}\\ 0\le l_{\lceil s'/2\rceil}}}
\frac{1}{(\lambda+2k+2l)_{\bm-\bl',4}\left(\lambda+2k+l-2\left\lfloor\frac{s}{2}\right\rfloor+1\right)_{l-l_{\lceil s'/2\rceil}}}\\
\qquad\quad{} \times \rK_{(k+l,\ldots,k+l,k)}^{(2)}(x_{12})
\bK_{\bm,-\bl}^{(4)}\left(x_{12}\frac{\partial}{\partial x_{22}}{}^t\hspace{-1pt}x_{12};
\overline{\frac{\partial}{\partial x_{11}}}\right)f(x_{11},x_{22}) \quad (s'\colon \text{odd})
\end{gather*}
intertwine the $\big(\fg_1,\tilde{K}_1\big)$-action. Here if $s'$ is odd, then for $\bl=(l_1,\ldots,l_{\lfloor s'/2\rfloor},l_{\lceil s'/2\rceil})\in(\BZ_{\ge 0})^{\lceil s'/2\rceil}$,
we write $\bl'=(l_1,\ldots,l_{\lfloor s'/2\rfloor})\in(\BZ_{\ge 0})^{\lfloor s'/2\rfloor}$.
If $s'=s''$ or $\lambda>2s''-3$ or ``$s''=s'+1$ and $\lambda+k>2s''-3$'',
then this extends to the map between the spaces of all holomorphic functions.
\item[$(5)$] Let $(G,G_1)=(\operatorname{SO}^*(2(3+s'')), \operatorname{SO}^*(6)\times \operatorname{SO}^*(2s''))$ with $3\le s''$.
Let $(k_1,k_2,k_3)\in\BZ_{++}^3$. Then the linear maps
\begin{gather*}
\cF_{\lambda,k_1,k_2,k_3}\colon \ \cO_\lambda\big(D_{11},V_{(k_1,k_2,k_3)}^{(3)\vee}\big)_{\tilde{K}_{11}}\boxtimes
\cO_\lambda\big(D_{22},V_{(k_1,k_2,k_3,0,\ldots,0)}^{(s'')\vee}\big)_{\tilde{K}_{22}} \longrightarrow \cO_\lambda(D)_{\tilde{K}},\\
\cF_{\lambda,k_1,k_2,k_3}f\begin{pmatrix}x_{11}&x_{12}\\-{}^t\hspace{-1pt}x_{12}&x_{22}\end{pmatrix}
= \sum_{\substack{\bm\in(\BZ_{\ge 0})^3\\ 0\le m_j\le k_j-k_{j+1}}}\!\!\!
\frac{1}{(\lambda+(k_1+k_2,k_1+k_3,k_2+k_3))_{(m_3,m_2,m_1),2}} \\
\hphantom{\cF_{\lambda,k_1,k_2,k_3}f\begin{pmatrix}x_{11}&x_{12}\\-{}^t\hspace{-1pt}x_{12}&x_{22}\end{pmatrix}=}{} \times\rK_{(k_1,k_2,k_3)}^{(2)}(x_{12})\bK_{\bm,\bk}^{(2)\prime}
\left(x_{12}\frac{\partial}{\partial x_{22}}{}^t\hspace{-1pt}x_{12};
\overline{\frac{\partial}{\partial x_{11}}}\right)f(x_{11},x_{22})
\end{gather*}
intertwine the $\big(\fg_1,\tilde{K}_1\big)$-action.
If $s''=3$ or $k_2=k_3=0$ or ``$s''=4$ and $k_1=k_2=k_3$'' or $\lambda>2s''-3$ or ``$s''=4$ and $\lambda+k_3>2s''-3$'',
then this extends to the map between the spaces of all holomorphic functions.
\item[$(6)$] Let $(G,G_1)=(E_{6(-14)}, {\rm SL}(2,\BR)\times \operatorname{SU}(1,5))$ $($up to covering$)$.
Let $(k_1,k_2)\in\BZ_{++}^2$. Then the linear map
\begin{gather*}
\cF_{\lambda,k_1,k_2}\colon \ \cO_{\lambda+k_1+k_2}(D_{11})\hboxtimes
\cO_\lambda\big(D_{22},V_{(0;-k_2,-k_2,-k_1,-k_1,-k_1-k_2)}^{(1,5)\vee}\big) \longrightarrow \cO_\lambda(D),\\
\cF_{\lambda,k_1,k_2}f(x_{11},x_{12},x_{22})\\
\qquad{} =\sum_{m=0}^\infty \frac{1}{(\lambda+k_1+k_2)_m}\frac{1}{m!}\left(\frac{\partial}{\partial x_{11}}
\frac{\partial}{\partial x_{22}}{}^t\mathbf{Pf}(x_{12})\right)^m\rK_{(k_1,k_2)}^{(4)}(x_{12})f(x_{11},x_{22})
\end{gather*}
$(x_{11}\in\BC,\; x_{12}\in\Skew(5,\BC),\; x_{22}\in M(1,5;\BC))$ intertwines the $\tilde{G}_1$-action.
\item[$(7)$] Let $(G,G_1)=(E_{7(-25)}, {\rm SL}(2,\BR)\times \operatorname{Spin}_0(2,10))$ $($up to covering$)$.
Let $(k_1,k_2)\in\BZ_{++}^2$. Then the linear map
\begin{gather*}
\cF_{\lambda,k_1,k_2}\colon \ \cO_{\lambda+k_1+k_2}(D_{11})_{\tilde{K}_{11}}\\
\qquad{} \boxtimes\cO_{\lambda+\frac{k_1+k_2}{2}}\Bigl(D_{22},
V_{\left(\frac{k_1+k_2}{2},\frac{k_1-k_2}{2},\frac{k_1-k_2}{2},\frac{k_1-k_2}{2},\frac{k_1-k_2}{2}\right)}^{[10]\vee}\Bigr)_{\tilde{K}_{22}} \longrightarrow \cO_\lambda(D)_{\tilde{K}},\\
\cF_{\lambda,k_1,k_2}f\begin{pmatrix}x_{11}&x_{12}\\{}^t\hspace{-1pt}\hat{x}_{12}&x_{22}\end{pmatrix}
=\sum_{m=0}^\infty \frac{1}{(\lambda+k_1+k_2)_m} \\
\qquad {} \times\frac{1}{m!}\left(\frac{\partial}{\partial x_{11}}
\Re_\BO\left(x_{12}\left(\frac{\partial}{\partial x_{22}}{}^t\hspace{-1pt}\hat{x}_{12}\right)\right)\right)^m
\rK_{(k_1,k_2)}^{(6)}(x_{12})f(x_{11},x_{22})
\end{gather*}
intertwines the $\big(\fg_1,\tilde{K}_1\big)$-action. If $k_2=0$ or $\lambda>9$,
then this extends to the map between the spaces of all holomorphic functions.
\end{enumerate}
\end{Theorem}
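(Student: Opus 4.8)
All seven formulas arise from Theorem~\ref{main}(2) by evaluating the kernel $F_{\tau\rho}(x_2;w_1)$ explicitly; the computation is in fact the one carried out in the paragraphs immediately preceding the statement, so the task is to assemble those ingredients and to record, for each family, the precise range of $\lambda$ in which the resulting (infinite‑order) differential operator is defined on all holomorphic functions. First I would fix, in each case, the $\tilde K_1^\BC$‑invariant operator‑valued polynomial $\rK\in\cP(\fp^+_2,\Hom(W,V))^{\tilde K_1^\BC}$ of \eqref{K-invariance}: since $(\rho,W)$ occurs in $\cP(\fp^+_2)\otimes V|_{\tilde K_1^\BC}$ such a nonzero $\rK$ exists and is unique up to scalar, and I take it to be the polynomial written $\rK^{(d)}_{\bullet}$ in the statement. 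By Theorem~\ref{main}(2) (whose hypothesis ``$\cH_\tau(D,V)_{\tilde K}=\cP(\fp^+,V)$'' holds for $\lambda$ large) the map $(\cF_{\tau\rho}f)(x)=F_{\tau\rho}(x_2;\overline{\partial/\partial x_1})f(x_1)$ with $F_{\tau\rho}(x_2;w_1)=\bigl\langle \mathrm e^{(y_1|w_1)_{\fp^+_1}}I_W,\bigl(\tau(B(x_2,y_1))\rK((x_2)^{Q(y_1)x_2})\bigr)^*\bigr\rangle_{\hat{\rho},y_1}$ is $(\fg_1,\tilde K_1)$‑intertwining. Writing $\fp^+_1=\fp^+_{11}\oplus\fp^+_{22}$ and $w_1=w_{11}+w_{22}$, $y_1=y_{11}+y_{22}$, I would substitute the structural identities already established: $Q(x_{12})y_{jj}\in\fp^+_{3-j,3-j}$ and $Q(y_{jj})x_{12}=0$ (verified from the explicit Jordan triple structures, including the exceptional ones of Section~\ref{EJTS}), the generic‑norm identity \eqref{generic_norm_square}, and the Lemma $(x_{12})^{Q(y_{11}+y_{22})x_{12}}=B(Q(x_{12})y_{11},y_{22})^{-1}x_{12}$. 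These collapse $F_{\tau\rho}$ to $\rK(x_{12})\,\bigl\langle \mathrm e^{(y_{11}|w_{11})_{\fp^+_{11}}}I_{W_{11}},\mathrm e^{(y_{11}|Q(x_{12})w_{22})_{\fp^+_{11}}}I_{W_{11}}\bigr\rangle_{\hat{\rho}_{11},y_{11}}$ once $\rho_{22}(B(y_{22},\cdot))$ is recognised as the reproducing kernel of $\cH_{\rho_{22}}(D_{22},W_{22})$.

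\textbf{The core computation.} The remaining inner product is then evaluated by expanding $\mathrm e^{(z_{11}|y_{11})_{\fp^+_{11}}}I_{W_{11}}$ into its $\tilde K_{11}^\BC$‑isotypic components $\bK^{(d)}_{\bm,\bullet}(z_{11};y_{11})$, according to the decomposition of $\cP(\fp^+_{11})\otimes W_{11}$ recorded just above the statement (multiplicity‑free in most families, governed by Littlewood--Richardson coefficients for $d=2$), and applying \eqref{exp_onD}: for each present choice of $W_{11}$ the norm ratio $p_{\bm}(\lambda)$ of the component indexed by $\bm$ is the inverse of an explicit product of Pochhammer symbols, by the norm computations of Faraut--Kor\'anyi~\cite{FK0} and of the author~\cite{N2}. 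Collecting the coefficients produces exactly the stated series for $F_{\tau\rho}(x_{12};w_{11},w_{22})$, and the substitution $w_{jj}\mapsto\overline{\partial/\partial x_{jj}}$ yields the displayed operators; the equalities $\chi|_{K_{jj}}=\chi_{jj}$ (resp.\ the normalisation \eqref{charuqs} for $d=2$) are what make the source and target representations in the statement come out correctly.

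\textbf{Extension to holomorphic functions, continuation, and the main difficulty.} When $\cH_1$ is a genuine holomorphic discrete series representation, Theorems~\ref{conti_ext}(2) and~\ref{extend}(2) already give that $\cF_{\tau\rho}f$ converges on $\{|x_1|_\infty+|x_2|_\infty<1\}$, extends holomorphically to $D$, and defines a continuous map $\cO_\rho(D_1,W)\to\cO_\tau(D,V)$; this handles the ``$\lambda>\dots$'' ranges in the statement. For the remaining exceptional ranges ($s'=s''$, $k=0$, $(k,l)=(1,0)$, and so on) one leaves the holomorphic discrete range, and I would instead invoke Theorem~\ref{continuation}: in exactly these cases $W$ is chosen so that $\cH_1$ satisfies \eqref{assumption_orth}, whence the Fischer‑orthogonal decomposition \eqref{assumption_decomposition} and the kernel expansion \eqref{assumption_expansion} hold, and both $p_{m,j}(\lambda)$ and $p_{m,j}(\lambda)q_{m,j}(\lambda)$ are ratios of Pochhammer symbols, so the polynomial‑growth hypothesis of Theorem~\ref{continuation} follows from Stirling's formula; hence $\cF_{\lambda,\dots}$ is well‑defined and continuous $\cO_\rho(D_1,W)\to\cO_\tau(D,V)$ for every $\lambda$ not a pole of the Pochhammer denominators displayed. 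The genuinely laborious part is not this general machinery but the case‑by‑case bookkeeping: verifying the Jordan‑triple facts $Q(x_{12})y_{jj}\in\fp^+_{3-j,3-j}$ and \eqref{generic_norm_square} in the exceptional realizations of Section~\ref{EJTS}; computing the $\tilde K_{11}^\BC$‑branching of $\cP(\fp^+_{11})\otimes W_{11}$ with the specific highest weights listed (Littlewood--Richardson for $d=2$, the various $\operatorname{SO}^*(2s)$ subcases for $s'\ge 3$); matching each isotypic component to the correct Pochhammer factor from \cite{N2}; and pinning down, in each family, which hypothesis ($\cP(\fp^+_1,W)$ multiplicity‑free, a $U(q,s)$‑type exception, or $\lambda$ large) places one under Theorem~\ref{extend} or Theorem~\ref{continuation}. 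I expect the branching and norm‑matching bookkeeping for $(U(q,s),U(q',s')\times U(q'',s''))$ and for the $\operatorname{SO}^*(2s)$ families with $s'\ge 3$ to be the most delicate step.
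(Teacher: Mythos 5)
Your proposal is correct and follows essentially the same route as the paper: reduce $F_{\tau\rho}$ via the quasi-inverse lemma and the identity \eqref{generic_norm_square} to $\rK(x_{12})$ times a reproducing-kernel pairing on $\fp^+_{11}$, expand by the $\tilde K_{11}^\BC$-isotypic decomposition of $\cP(\fp^+_{11})\otimes W_{11}$, insert the Pochhammer norm ratios from \cite{FK0,N2} via \eqref{exp_onD}, and then invoke Theorems~\ref{main}, \ref{extend} and \ref{continuation} for the intertwining property and the extension to all holomorphic functions. Your reading of the exceptional parameter ranges (multiplicity-free versus holomorphic-discrete hypotheses) also matches the paper's.
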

When $d=4$ and $s'=3$, for $(k_1,k_2,k_3)=(k+l,k,k)$ and $m,l_1,l_2\in\BZ_{\ge 0}$, $l_1+l_2=l$ we have
\begin{gather*}
 \frac{1}{(\lambda+2k+(l,0))_{(m+l_1-l,l_2),4}}\bK_{m,(l_1,l_2)}^{(4)}(z_{11};y_{11}) \\
\qquad{} =\frac{1}{(\lambda+(2k+l,2k+l,2k))_{(m+l_1-l,0,l_2),2}}\bK_{(l_2,0,m+l_1-l),(k+l,k,k)}^{(2)\prime}(z_{11};y_{11}) \\
\qquad\quad {} \in V_{(m+l_1,m,l_2)}^{(3)}\otimes \overline{V_{(m+l_1,m,l_2)}^{(3)}}
\simeq V_{(k+m+l_1,k+m,k+l_2)}^{(3)}\otimes \overline{V_{(k+m+l_1,k+m,k+l_2)}^{(3)}},
\end{gather*}
and therefore the results in (3) and (5) coincide. Similarly, for $(k_1,k_2,k_3)=(k+l,k+l,k)$ and $m,l_1,l_2\in\BZ_{\ge 0}$, $l_1+l_2=l$ we have
\begin{gather*}
 \frac{1}{(\lambda+2k+2l)_{m-l_1}(\lambda+2k+l-1)_{l-l_2}}\bK_{m,-(l_1,l_2)}^{(4)}(z_{11};y_{11}) \\
\qquad{} =\frac{1}{(\lambda+(2k+2l,2k+l,2k+l))_{(m-l_1,l-l_2,0),2}}\bK_{(0,l-l_2,m-l_1),(k+l,k+l,k)}^{(2)\prime}(z_{11};y_{11}) \\
\qquad\quad{} \in V_{(m+l,m+l-l_1,l-l_2)}^{(3)}\otimes \overline{V_{(m+l,m+l-l_1,l-l_2)}^{(3)}} \\
\qquad{} \simeq V_{(k+m+l,k+m+l-l_1,k+l-l_2)}^{(3)}\otimes \overline{V_{(k+m+l,k+m+l-l_1,k+l-l_2)}^{(3)}},
\end{gather*}
and therefore the results in (4) and (5) coincide.

When $d=1,4$, if ``$s'=s''$, $l=0$'' or ``$k=l=0$'', then we have $\rK(x_{12})=\det(x_{12})^k$,
and when $d=2$, if $\bl=(l,\ldots,l)$ and ``$q'=s''$ or $k=0$'' and ``$q''=s'$ or $l=0$'',
then we have $\rK(x_{12},x_{21})=\det(x_{12})^k\det(x_{21})^l$, and in these cases ${\rm e}^{(z_{11}|y_{11})_{\fp^+_{11}}}$ is expanded as
\begin{gather*} {\rm e}^{(z_{11}|y_{11})_{\fp^+_{11}}}=\sum_{\bm}\bK_\bm^{(d)}(z_{11},y_{11})=\sum_{\bm}\tilde{\Phi}_\bm^{(d)}(z_{11}y_{11}^*), \end{gather*}
where $\tilde{\Phi}_\bm^{(d)}(z_{11}y_{11}^*)$ is defined in (\ref{Phitilde1}), (\ref{Phitilde2}).
Therefore, when $(G,G_1)=(\operatorname{Sp}(s,\BR), \operatorname{Sp}(s',\BR)\times \operatorname{Sp}(s'',\BR))$, the intertwining operator is reduced to
\begin{gather}
\cF_{\lambda,k}\colon \ \cO_{\lambda+k}(D_{11}) \hboxtimes \cO_{\lambda+k}(D_{22})\to \cO_{\lambda}(D), \label{Sp-SpSp} \\
(\cF_{\lambda,k}f)\begin{pmatrix}x_{11}&x_{12}\\{}^t\hspace{-1pt}x_{12}&x_{22}\end{pmatrix}
=\det(x_{12})^k\sum_{\bm\in\BZ_{++}^{s'}}\frac{1}{(\lambda+k)_{\bm,1}}
\tilde{\Phi}_\bm^{(1)}\left(x_{12}\frac{\partial}{\partial x_{22}}{}^t\hspace{-1pt}x_{12}\frac{\partial}{\partial x_{11}}\right)f(x_{11},x_{22})\notag
\end{gather}
($k=0$ if $s'\ne s''$), when $(G,G_1)=(U(q,s),U(q',s')\times U(q'',s''))$,
\begin{gather}
\cF_{\lambda,k,l}\colon \ \cO_{(\lambda_1+k)+(\lambda_2+l)}(D_{11}) \hboxtimes \cO_{(\lambda_1+l)+(\lambda_2+k)}(D_{22})
\to \cO_{\lambda_1+\lambda_2}(D), \notag \\
(\cF_{\lambda,k,l}f)\begin{pmatrix}x_{11}&x_{12}\\x_{21}&x_{22}\end{pmatrix}
 =\det(x_{12})^k\det(x_{21})^l
\sum_{\bm\in\BZ_{++}^{\min\{q',s'\}}}\frac{1}{(\lambda_1+\lambda_2+k+l)_{\bm,2}}\notag \\
\hphantom{(\cF_{\lambda,k,l}f)\begin{pmatrix}x_{11}&x_{12}\\x_{21}&x_{22}\end{pmatrix}=}{}
\times\tilde{\Phi}_\bm^{(2)}
\left(x_{12}{\vphantom{\biggl(}}^t\!\!\left(\frac{\partial}{\partial x_{22}}\right)x_{21}
{\vphantom{\biggl(}}^t\!\!\left(\frac{\partial}{\partial x_{11}}\right)\right)f(x_{11},x_{22})
\label{U-UU}
\end{gather}
($k=0$ if $q'\ne s''$, $l=0$ if $q''\ne s'$), and when $(G,G_1)=(\operatorname{SO}^*(2s),\operatorname{SO}^*(2s')\times \operatorname{SO}^*(2s''))$,
\begin{gather}
\cF_{\lambda,k}\colon \ \cO_{\lambda+2k}(D_{11}) \hboxtimes \cO_{\lambda+2k}(D_{22})\to \cO_{\lambda}(D), \notag \\
(\cF_{\lambda,k}f)\begin{pmatrix}x_{11}&x_{12}\\-{}^t\hspace{-1pt}x_{12}&x_{22}\end{pmatrix}
=\det(x_{12})^k
\sum_{\bm\in\BZ_{++}^{\lfloor s'/2\rfloor}}\frac{1}{(\lambda+2k)_{\bm,4}}\notag\\
\hphantom{(\cF_{\lambda,k}f)\begin{pmatrix}x_{11}&x_{12}\\-{}^t\hspace{-1pt}x_{12}&x_{22}\end{pmatrix}=}{} \times\tilde{\Phi}_\bm^{(4)}
\left(-x_{12}\frac{\partial}{\partial x_{22}}{}^t\hspace{-1pt}x_{12}\frac{\partial}{\partial x_{11}}\right)f(x_{11},x_{22})
\label{SO*-SO*SO*}
\end{gather}
($k=0$ if $s'\ne s''$).

\subsection[$\cF_{\tau\rho}$ for $(G,G_1)=(\operatorname{Sp}(s,\BR), U(s',s''))$, $(\operatorname{SO}^*(2s), U(s',s''))$, \\
$(E_{6(-14)}, U(1)\times \operatorname{SO}^*(10))$, $(E_{7(-25)}, U(1)\times E_{6(-14)})$]{$\boldsymbol{\cF_{\tau\rho}}$ for $\boldsymbol{(G,G_1)=(\operatorname{Sp}(s,\BR), U(s',s''))}$, $\boldsymbol{(\operatorname{SO}^*(2s), U(s',s''))}$, \\ $\boldsymbol{(E_{6(-14)}, U(1)\times \operatorname{SO}^*(10))}$, $\boldsymbol{(E_{7(-25)}, U(1)\times E_{6(-14)})}$}
In this subsection we set
\begin{gather*} (G,G_1)= \begin{cases}(\operatorname{Sp}(s,\BR), U(s',s'')) \qquad (s=s'+s'')& (\text{Case }d=1),\\
(\operatorname{SO}^*(2s), U(s',s'')) \qquad (s=s'+s'')& (\text{Case }d=4),\\
(E_{6(-14)}, U(1)\times \operatorname{SO}^*(10)) & (\text{Case }d=6),\\
(E_{7(-25)}, U(1)\times E_{6(-14)}) & (\text{Case }d=8) \end{cases} \end{gather*}
(up to covering). Then the maximal compact subgroups $(K,K_1)=(K,K_{11}\times K_{22})\subset (G,G_1)$ are given by
\begin{gather*}
(K,K_1) =(K,K_{11}\times K_{22})
 =\begin{cases} (U(s), U(s')\times U(s''))& (\text{Cases }d=1,4),\\
(U(1)\times \operatorname{Spin}(10), U(1)\times U(5))& (\text{Case }d=6),\\
(U(1)\times E_6,U(1)\times U(1)\times \operatorname{Spin}(10))& (\text{Case }d=8). \end{cases}
\end{gather*}
(up to covering). Also we have
\begin{gather*} \fp^+=\begin{cases}\Sym(s,\BC) & (\text{Case }d=1),\\ \Skew(s,\BC) & (\text{Case }d=4),\\
M(1,2;\BO)^\BC & (\text{Case }d=6),\\ \Herm(3,\BO)^\BC & (\text{Case }d=8), \end{cases} \end{gather*}
and $\fp^+_1=\fp^+_{12}\colon =\fg_1^\BC\cap\fp^+$, $\fp^+_2=\fp^+_{11}\oplus\fp^+_{22}:=(\fp^+_1)^\bot$ are realized as
\begin{gather*} (\fp^+_{11},\fp^+_{12},\fp^+_{22})=\begin{cases}
(\Sym(s',\BC), M(s',s'';\BC), \Sym(s'',\BC)) & (\text{Case }d=1),\\
(\Skew(s',\BC), M(s',s'';\BC), \Skew(s'',\BC)) & (\text{Case }d=4),\\
(\BC, \Skew(5,\BC), M(1,5;\BC)) & (\text{Case }d=6),\\
\big(\BC, M(1,2;\BO)^\BC, \Herm(2,\BO)^\BC\big) & (\text{Case }d=8). \end{cases} \end{gather*}

Now let $(\tau,V)=\big(\chi^{-\lambda},\BC\big)$ with $\lambda$ sufficiently large,
$W=W_{11}\boxtimes W_{22}\subset(\cP(\fp^+_{11})\boxtimes\cP(\fp^+_{22}))\otimes\chi^{-\lambda}$
be an irreducible $\tilde{K}^\BC_1=\tilde{K}^\BC_{11}\times \tilde{K}^\BC_{22}$-submodule,
and $\rK(x_2)=\rK(x_{11}+x_{22})\in\cP\big(\fp^+_{11}\oplus\fp^+_{22},\Hom\big(W,\chi^{-\lambda}\big)\big)$
be the $\tilde{K}^\BC_1$-invariant polynomial in the sense of (\ref{K-invariance}).
For $x_2=x_{11}+x_{22}\in\fp^+_2=\fp^+_{11}\oplus\fp^+_{22}$, $w_1=w_{12}\in\fp^+_1=\fp^+_{12}$,
we want to compute
\begin{gather*}
 F_{\tau\rho}(x_2;w_1)=F_{\tau\rho}(x_{11},x_{22};w_{12})
 =\big\langle {\rm e}^{(y_1|w_1)_{\fp^+_1}}I_W,
\big(h(x_2,Q(y_1)x_2)^{-\lambda/2}\rK\big((x_2)^{Q(y_1)x_2}\big)\big)^*\big\rangle_{\hat{\rho},y_1}\\
 =\bigl\langle {\rm e}^{(y_{12}|w_{12})_{\fp^+_{12}}}I_W, \big(h(x_{11}\!+x_{22},Q(y_{12})(x_{11}+x_{22}))^{-\lambda/2}
\rK\big((x_{11}\!+x_{22})^{Q(y_{12})(x_{11}+x_{22})}\big)\big)^*\bigr\rangle_{\hat{\rho},y_{12}}\!\\
 =\bigl\langle {\rm e}^{(y_{12}|w_{12})_{\fp^+_{12}}}I_W,
\big(h_{22}(x_{22},Q(y_{12})x_{11})^{-\lambda}
\rK\big((x_{11})^{Q(y_{12})x_{22}}+(x_{22})^{Q(y_{12})x_{11}}\big)\big)^*\bigr\rangle_{\hat{\rho},y_{12}},
\end{gather*}
where we have used the similar argument to~(\ref{generic_norm_square}) at the last equality.
Now we assume that $W=W_{11}\boxtimes W_{22}\subset\cP(\fp^+_2)\otimes\chi^{-\lambda}$ is of the form
\begin{gather*} W=W_{11}\boxtimes W_{22}=\big(\cP_{(k,\ldots,k)}(\fp^+_{11})\otimes\chi_{11}^{-\lambda}\big)
\boxtimes\big(\cP_\bl(\fp^+_{22})\otimes\chi_{22}^{-\lambda}\big), \end{gather*}
where $\chi_{11}$, $\chi_{22}$ are as in the previous subsection, and
\begin{gather*} k\begin{cases}\in\BZ_{\ge 0} & (d=1,6,8 \text{ or }d=4, s'\colon \text{even}),\\ =0 & (d=4, s'\colon \text{odd}),\end{cases}\qquad
\bl\in\BZ_{++}^{r''},\quad r''=\begin{cases} s'' & (d=1), \\ \lfloor s''/2\rfloor & (d=4), \\ 1 & (d=6), \\ 2 & (d=8). \end{cases} \end{gather*}
Then the polynomial $\mathrm{K}$ is of the form
\begin{gather*} \rK(x_{11}+x_{22})=\Delta(x_{11})^k\rK_\bl^{(d)}(x_{22}), \end{gather*}
where $\Delta(x_{11})$ is the determinant polynomial of the Jordan algebra $\fp^+_{11}$ when $d=1,6,8$ or $d=4$ with $s'$ even,
and $\rK_\bl^{(d)}(x_{22})\in\cP\big(\fp^+_{22},\Hom\big(\cP_{\bl}(\fp^+_{22})\otimes\chi^{-\lambda}_{22},\chi^{-\lambda}_{22}\big)\big)$ is
$\tilde{K}_{22}^\BC$-invariant in the sense of~(\ref{K-invariance}). Then we have
\begin{gather*}
 F_{\tau\rho}(x_{11},x_{22};w_{12})\\
{}= \big\langle {\rm e}^{(y_{12}|w_{12})_{\fp^+_{12}}}I_{W_{22}},\big(h_{22}(x_{22},Q(y_{12})x_{11})^{-\lambda}
\Delta\big((x_{11})^{Q(y_{12})x_{22}}\big)^k
\rK_\bl^{(d)}\big((x_{22})^{Q(y_{12})x_{11}}\big)\big)^*\big\rangle_{\hat{\rho},y_{12}}.
\end{gather*}
Now since
\begin{align*}
\Delta\big((x_{11})^{Q(y_{12})x_{22}}\big)&=\chi_{11}\big(P\big((x_{11})^{Q(y_{12})x_{22}}\big)\big)
=\chi_{11}\big(B(x_{11},Q(y_{12})x_{22})^{-1}P(x_{11})\big) \\
&=h_{11}(x_{11},Q(y_{12})x_{22})^{-1}\Delta(x_{11})=h(x_{11},Q(y_{12})x_{22})^{-1}\Delta(x_{11}) \\
&=h(x_{22},Q(y_{12})x_{11})^{-1}\Delta(x_{11})=h_{22}(x_{22},Q(y_{12})x_{11})^{-1}\Delta(x_{11}),
\end{align*}
where $P$ is as (\ref{quad_repn}), and we have used \cite[Part~V, Proposition III.3.1, (J6.1)]{FKKLR} at the 2nd equality,
$\chi|_{K_{jj}}=\chi_{jj}$ at the 4th, 6th equalities, and \cite[Part V, Propositions IV.3.4 and~IV.3.5]{FKKLR} at the 5th equality, we have
\begin{gather*}
 F_{\tau\rho}(x_{11},x_{22};w_{12})\\
\qquad{} =\Delta(x_{11})^k\bigl\langle {\rm e}^{(y_{12}|w_{12})_{\fp^+_{12}}}I_{W_{22}},
\big(h_{22}(x_{22},Q(y_{12})x_{11})^{-\lambda-k} \rK_\bl^{(d)}\big((x_{22})^{Q(y_{12})x_{11}}\big)\big)^*\bigr\rangle_{\hat{\rho},y_{12}}.
\end{gather*}
Next we put $\lambda+k=:\mu$, $Q(y_{12})x_{11}=:z_{22}$, and we want to find the expansion formula of $h(x_{22},z_{22})^{-\mu}\rK_\bl^{(d)}\big((x_{22})^{z_{22}}\big)$. For a while we omit the subscript 22. We realize $\overline{W_{22}}=\overline{\cP_\bl(\fp^+_{22})\otimes\chi_{22}^{-\lambda}}$ as a space of polynomials in $y$, and write $\rK_\bl^{(d)}(x)=\bK_\bl^{(d)}(x,y)\in\cP(\fp^+_{22}\times\overline{\fp^+_{22}})$.
We normalize $\bK_\bl^{(d)}(x,y)$ as
\begin{gather*} \bK_\bl^{(d)}(x,y)=\Proj_{\bl,x}\big({\rm e}^{(x|y)_{\fp^+_{22}}}\big)=\Proj_{\bl,\overline{y}}\big({\rm e}^{(x|y)_{\fp^+_{22}}}\big), \end{gather*}
and for $x,y,z\in\fp^+_{22}$ and $\bn,\bl\in\BZ_{++}^{r''}$, we define
$\cK_{\bn,\bl}^{(d)}(x;y,z)\in\cP_\bn(\fp^+_{22})\boxtimes\overline{\cP_\bl(\fp^+_{22})\boxtimes\cP(\fp^+_{22})}$ by
\begin{gather*}
\cK_{\bn,\bl}^{(d)}(x;y,z) :=\Proj_{\bn,x}\Proj_{\bl,\bar{y}}\big({\rm e}^{(x|y+z)_{\fp^+_{22}}}\big)
 =\Proj_{\bl,\bar{y}}\big(\bK_\bn^{(d)}(x,y+z)\big) \\
 \hphantom{\cK_{\bn,\bl}^{(d)}(x;y,z)}{}
 =\Proj_{\bn,x}\big(\bK_\bl^{(d)}(x,y){\rm e}^{(x|z)_{\fp^+_{22}}}\big),
\end{gather*}
where $\Proj_{\bn,x}$ is the orthogonal projection onto $\cP_\bn(\fp^+_{22})$ with respect to the variable $x$.
Clearly $\cK_{\bn,\bl}^{(d)}$ is non-zero only if $\cP_\bn$ appears abstractly in the decomposition of $\cP_\bl\otimes\cP$.
Then the following holds.
\begin{Proposition}
\begin{align*}
h(x,z)^{-\mu}\bK_\bl^{(d)}(x^z,y)
&=\sum_{\bn\in\BZ_{++}^{r''}}\frac{(\mu)_{\bn,d}}{(\mu)_{\bl,d}}\cK_{\bn,\bl}^{(d)}(x;y,z)
=\sum_{\bn\in\BZ_{++}^{r''}}(\mu+\bl)_{\bn-\bl,d}\cK_{\bn,\bl}^{(d)}(x;y,z).
\end{align*}
\end{Proposition}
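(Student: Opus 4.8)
The plan is to reduce the claimed identity to the scalar Faraut--Kor\'anyi expansion~\eqref{ext_of_h} together with the transformation law of the reproducing kernels $\bK_\bl^{(d)}$ under the quasi-inverse. First I would expand the left-hand side in the variable $x$ into its isotypic components: since $h(x,z)^{-\mu}\bK_\bl^{(d)}(x^z,y)$ is, for each fixed $y,z$, a holomorphic function of $x$ lying in $\cH_\mu(D_{22},W_{22})$-type space, it decomposes as $\sum_{\bn}c_{\bn}(\mu)\,\cK_{\bn,\bl}^{(d)}(x;y,z)$ for some coefficients $c_{\bn}(\mu)$, because $\cK_{\bn,\bl}^{(d)}(x;y,z)$ is by construction the orthogonal projection onto $\cP_\bn(\fp^+_{22})$ of the product $\bK_\bl^{(d)}(x,y)\,{\rm e}^{(x|z)_{\fp^+_{22}}}$, and these exhaust the $\tilde K_{22}^\BC$-components. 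The content of the proposition is thus the identification $c_{\bn}(\mu)=(\mu)_{\bn,d}/(\mu)_{\bl,d}=(\mu+\bl)_{\bn-\bl,d}$ (the second equality being immediate from the definition~\eqref{Pochhammer} of the Pochhammer symbol).

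To pin down $c_{\bn}(\mu)$ I would pair both sides against the monomial-type kernel. Concretely, apply the Fischer inner product in $x$ against $\bK_\bn^{(d)}(x,x')$ for a generic $x'$, or equivalently evaluate the $\bn$-component at $x'$: on the right one gets $c_\bn(\mu)\,\cK_{\bn,\bl}^{(d)}(x';y,z)$, and on the left one uses the reproducing property together with the $\tilde K^\BC$-equivariance of the quasi-inverse~\eqref{quasiinv_k} and the cocycle identity for $B$. The cleanest route is to first treat the special case $\bl=(0,\ldots,0)$, where $\bK_{(0,\ldots,0)}^{(d)}\equiv 1$: there the claimed identity becomes exactly $h(x,z)^{-\mu}=\sum_\bn(\mu)_{\bn,d}\bK_\bn^{(d)}(x,z)$, which is~\eqref{ext_of_h} applied to $\fp^+_{22}$ (this is where~\eqref{scalar_norm} enters). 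Then for general $\bl$ I would use the multiplicativity of the quasi-inverse and the factorization $\bK_\bl^{(d)}(x^z,y)=\Delta$-type transformation behaviour: more precisely, the identity~\eqref{Bergman_left}--\eqref{Bergman_right} and the relation $(x^z)^{w}=x^{z+w}$ from~\eqref{quasiinv_add} let one write $\bK_\bl^{(d)}(x^z,y)$ in terms of $\bK_\bl^{(d)}(x,y)$ times a ratio of generic norms, reducing the general case to the scalar one after multiplying through by the appropriate power of $h$.

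An alternative, and probably the shortest, derivation is representation-theoretic: both sides, as functions of $x$ valued in $\overline{\cP_\bl(\fp^+_{22})}\otimes\cP(\fp^+_{22})$, are $\tilde K_{22}^\BC$-equivariant in the combined variables, and the space of such equivariant maps into a fixed $\cP_\bn$-isotypic piece is one-dimensional whenever $\cP_\bn$ occurs in $\cP_\bl\otimes\cP$; hence $c_\bn(\mu)$ is scalar, and it suffices to compute it on the one-parameter subgroup generated by a maximal tripotent, exactly as in the lemma preceding Theorem~\ref{thm_tensor}. On the flat torus $\fa^+_{22}$ the quasi-inverse is coordinatewise, $\bK_\bl^{(d)}$ factors through Jack/Schur-type polynomials, and $h$ is a product $\prod_j(1-a_j\overline{b_j})$, so the identity collapses to a product of rank-one Pochhammer expansions $(1-t)^{-\mu}=\sum_m(\mu)_m t^m/m!$ matched against the definition of $\cK_{\bn,\bl}^{(d)}$; comparing leading coefficients yields $c_\bn(\mu)=(\mu)_{\bn,d}/(\mu)_{\bl,d}$.

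The main obstacle I anticipate is bookkeeping the shift by $\bl$ correctly: one must verify that the restriction of $\bK_\bl^{(d)}(x^z,y)$ to $\fa^+_{22}$ really does produce the Pochhammer symbols $(\mu)_{\bn,d}$ with the \emph{same} parameter $d/2$-shifts relative to $\bl$ rather than relative to $(0,\ldots,0)$, i.e.\ that the normalization $\bK_\bl^{(d)}(x,y)=\Proj_{\bl,x}({\rm e}^{(x|y)})$ is compatible with the Gindikin Gamma-factor combinatorics used in~\eqref{ext_of_h}. This amounts to checking the identity $(\mu)_{\bn,d}=(\mu)_{\bl,d}\,(\mu+\bl)_{\bn-\bl,d}$ termwise, which is true by~\eqref{Pochhammer} but requires that the summation index $\bn$ be constrained to $\bn\supseteq\bl$ (componentwise) — and indeed $\cK_{\bn,\bl}^{(d)}$ vanishes otherwise since $\cP_\bn$ must appear in $\cP_\bl\otimes\cP(\fp^+_{22})$, so the Pieri-type constraint $\bn\supseteq\bl$ holds automatically. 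Once this combinatorial compatibility is in place, the rest is the uniqueness-of-equivariant-map argument plus the rank-one reduction, both routine.
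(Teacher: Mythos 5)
Your proposal correctly identifies the two ingredients the proof needs --- the Faraut--Kor\'anyi expansion~(\ref{ext_of_h}) and the multiplicativity coming from (\ref{Bergman_left})/(\ref{quasiinv_add}) --- but neither of the two concrete strategies you sketch actually closes the argument, and the paper's proof goes a different, much shorter way.

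First, your opening reduction (``the left-hand side decomposes as $\sum_\bn c_\bn(\mu)\,\cK_{\bn,\bl}^{(d)}(x;y,z)$ for scalars $c_\bn(\mu)$'') is not automatic: the space of $K^\BC$-invariants in $\cP_\bn\otimes\overline{\cP_\bl\otimes\cP_\bk}$ has dimension equal to the multiplicity of $\cP_\bn$ in $\cP_\bl\otimes\cP_\bk$, which is in general larger than one (this is exactly why the Littlewood--Richardson coefficients $c_{\bm,\bl}^{\bn}$ appear elsewhere in the paper). So the $\cP_\bn$-component of the left-hand side need not a priori be proportional to the particular invariant $\cK_{\bn,\bl}^{(d)}$, and the same multiplicity issue defeats the ``one-dimensionality of equivariant maps'' shortcut in your alternative argument. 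Second, the step ``write $\bK_\bl^{(d)}(x^z,y)$ as $\bK_\bl^{(d)}(x,y)$ times a ratio of generic norms'' has no basis: such a factorization holds only for $\bl=(l,\ldots,l)$, where $\bK_\bl^{(d)}$ is essentially a power of $\Delta$ and one can use $\Delta(x^z)=h(x,z)^{-1}\Delta(x)$; for general $\bl$ the kernel does not transform by a scalar cocycle under the quasi-inverse. Likewise, on the torus $\fa^+$ the kernels $\bK_\bl^{(d)}$ are Jack-type polynomials that do not factor coordinatewise, so the identity does not collapse to a product of rank-one binomial expansions.

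The paper's actual proof sidesteps all of this: sum the asserted identity over $\bl$ with weights $(\mu)_{\bl,d}$. By (\ref{ext_of_h}) the left-hand sides telescope to $h(x,z)^{-\mu}h(x^z,y)^{-\mu}$, which equals $h(x,y+z)^{-\mu}$ by (\ref{Bergman_left}); expanding this again by (\ref{ext_of_h}) gives $\sum_\bn(\mu)_{\bn,d}\bK_\bn^{(d)}(x,y+z)=\sum_\bn\sum_\bl(\mu)_{\bn,d}\cK_{\bn,\bl}^{(d)}(x;y,z)$ directly from the definition of $\cK_{\bn,\bl}^{(d)}$ as $\Proj_{\bl,\bar y}\bigl(\bK_\bn^{(d)}(x,y+z)\bigr)$. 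Projecting in $\bar y$ onto $\cP_\bl$ (which isolates the single term $\bl$ on the left, since $\bK_\bl^{(d)}(x^z,y)$ lies in $\overline{\cP_\bl(\fp^+_{22})}$ as a function of $y$) and dividing by $(\mu)_{\bl,d}$ gives the result, with the multiplicity problem never arising. The missing idea in your proposal is precisely this summation-over-$\bl$ trick together with the identity $h(x,z)h(x^z,y)=h(x,y+z)$, which you never write down.
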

\begin{proof}
By (\ref{ext_of_h}) we have
\begin{gather*}
 \sum_{\bl\in\BZ_{++}^{r''}}(\mu)_{\bl,d}h(x,z)^{-\mu}\bK_\bl^{(d)}(x^z,y)
=h(x,z)^{-\mu}h(x^z,y)^{-\mu}=h(x,y+z)^{-\mu}\\
\qquad{} =\sum_{\bn\in\BZ_{++}^{r''}}(\mu)_{\bn,d}\bK_\bn^{(d)}(x,y+z)
=\sum_{\bn\in\BZ_{++}^{r''}}\sum_{\bl\in\BZ_{++}^{r''}}(\mu)_{\bn,d}\cK_{\bn,\bl}^{(d)}(x;y,z),
\end{gather*}
where the 2nd equality follows from (\ref{Bergman_left}). Then by projecting both sides to $\cP_\bl$ with respect to the variable $\bar{y}$ and dividing by $(\mu)_{\bl,d}$, we get the desired formula.
\end{proof}

\begin{Corollary}\label{ext_of_hK}
When we define $\cK_{\bn,\bl}^{(d)}(x;z)\in\cP(\fp^+_{22}\times\overline{\fp^+_{22}},\Hom(\cP_{\bl}(\fp^+_{22})\otimes\chi^{-\lambda}_{22},
\chi^{-\lambda}_{22}))$ as
\begin{gather*} \cK_{\bn,\bl}^{(d)}(x;z)=\Proj_{\bn,x}\big({\rm e}^{(x|z)_{\fp^+_{22}}}\rK_\bl^{(d)}(x)\big), \end{gather*}
then it holds that
\begin{gather*} h(x,z)^{-\mu}\rK_\bl^{(d)}(x^z)=\sum_{\bn\in\BZ_{++}^{r''}}(\mu+\bl)_{\bn-\bl,d}\cK_{\bn,\bl}^{(d)}(x;z). \end{gather*}
\end{Corollary}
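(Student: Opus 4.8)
The plan is to obtain Corollary~\ref{ext_of_hK} from the preceding Proposition essentially by projecting away the ``dummy'' variable $y$ that was used to record the $\cP_\bl$-isotypic component as a space of polynomials. Recall that $\rK_\bl^{(d)}(x) = \bK_\bl^{(d)}(x,y)$ under the identification of $\overline{W_{22}}\simeq\overline{\cP_\bl(\fp^+_{22})}$ with anti-holomorphic polynomials in $y$, and that by definition $\cK_{\bn,\bl}^{(d)}(x;z) = \Proj_{\bn,x}\big({\rm e}^{(x|z)_{\fp^+_{22}}}\rK_\bl^{(d)}(x)\big)$, which under the same identification equals $\Proj_{\bn,x}\big(\bK_\bl^{(d)}(x,y)\,{\rm e}^{(x|z)_{\fp^+_{22}}}\big) = \cK_{\bn,\bl}^{(d)}(x;y,z)$, the quantity appearing in the Proposition. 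So the two families of objects are literally the same once one agrees to view the $\cP_\bl$-component as valued in $\Hom(\cP_\bl(\fp^+_{22})\otimes\chi_{22}^{-\lambda},\chi_{22}^{-\lambda})$ rather than as a polynomial in $y$.

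First I would record this dictionary carefully: the polynomial $\rK_\bl^{(d)}(x)$, regarded as an element of $\cP(\fp^+_{22},\Hom(\cP_\bl(\fp^+_{22})\otimes\chi_{22}^{-\lambda},\chi_{22}^{-\lambda}))$, corresponds under the pairing induced by the Fischer inner product on $\cP_\bl(\fp^+_{22})$ to $\bK_\bl^{(d)}(x,y)\in\cP(\fp^+_{22}\times\overline{\fp^+_{22}})$ with the $y$-dependence lying in $\overline{\cP_\bl(\fp^+_{22})}$; this is exactly the identification $\overline{W}\simeq\Hom(W,\BC)$ used throughout Section~\ref{section3}. Next I would apply the Proposition with $y$ kept as a formal variable to get
\begin{gather*}
h(x,z)^{-\mu}\bK_\bl^{(d)}(x^z,y)=\sum_{\bn\in\BZ_{++}^{r''}}(\mu+\bl)_{\bn-\bl,d}\,\cK_{\bn,\bl}^{(d)}(x;y,z),
\end{gather*}
and then simply translate the left-hand side: $\bK_\bl^{(d)}(x^z,y)$ is precisely $\rK_\bl^{(d)}(x^z)$ in the $\Hom$-valued picture, and each $\cK_{\bn,\bl}^{(d)}(x;y,z)$ becomes $\cK_{\bn,\bl}^{(d)}(x;z)$. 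This yields the claimed identity $h(x,z)^{-\mu}\rK_\bl^{(d)}(x^z)=\sum_{\bn\in\BZ_{++}^{r''}}(\mu+\bl)_{\bn-\bl,d}\cK_{\bn,\bl}^{(d)}(x;z)$. One should also check that the definition $\cK_{\bn,\bl}^{(d)}(x;z)=\Proj_{\bn,x}\big({\rm e}^{(x|z)_{\fp^+_{22}}}\rK_\bl^{(d)}(x)\big)$ indeed lands in $\cP_\bn(\fp^+_{22})\otimes\overline{\Hom(\cP_\bl(\fp^+_{22})\otimes\chi_{22}^{-\lambda},\chi_{22}^{-\lambda})}$ as asserted, which is immediate since ${\rm e}^{(x|z)}\rK_\bl^{(d)}(x)$ is $\Hom$-valued and $\Proj_{\bn,x}$ acts only in $x$; in particular $\cK_{\bn,\bl}^{(d)}$ vanishes unless $\cP_\bn$ occurs in $\cP_\bl(\fp^+_{22})\otimes\cP(\fp^+_{22})$, matching the remark in the Proposition.

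I do not expect any serious obstacle here; the content is entirely bookkeeping about which variable the projection acts on, together with the identification $\overline{W}\simeq\Hom(W,\BC)$. The one point deserving a line of care is making sure the normalization $\bK_\bl^{(d)}(x,y)=\Proj_{\bl,x}({\rm e}^{(x|y)_{\fp^+_{22}}})=\Proj_{\bl,\overline y}({\rm e}^{(x|y)_{\fp^+_{22}}})$ is compatible with both the definition of $\cK_{\bn,\bl}^{(d)}(x;y,z)$ (two successive projections, in $x$ then in $\bar y$) and the definition of $\cK_{\bn,\bl}^{(d)}(x;z)$ (a single projection in $x$ applied to ${\rm e}^{(x|z)}\rK_\bl^{(d)}(x)$), so that the translation $\cK_{\bn,\bl}^{(d)}(x;y,z)\leftrightarrow\cK_{\bn,\bl}^{(d)}(x;z)$ is an honest equality and not merely a proportionality; this follows because $\rK_\bl^{(d)}(x)=\bK_\bl^{(d)}(x,y)$ is by construction already the $\cP_\bl$-projection in the $\bar y$-variable, so no further projection in $\bar y$ is needed once one passes to the $\Hom$-valued picture. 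With that checked, substituting $w_1$-derivatives later (in the application to $F_{\tau\rho}$) is routine.
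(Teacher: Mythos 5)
Your proof is correct and matches the paper's treatment: the Corollary is stated there without a separate proof precisely because, as you observe, under the identification $\rK_\bl^{(d)}(x)=\bK_\bl^{(d)}(x,y)$ the definition $\cK_{\bn,\bl}^{(d)}(x;z)=\Proj_{\bn,x}\big({\rm e}^{(x|z)_{\fp^+_{22}}}\rK_\bl^{(d)}(x)\big)$ coincides with the third expression $\Proj_{\bn,x}\big(\bK_\bl^{(d)}(x,y){\rm e}^{(x|z)_{\fp^+_{22}}}\big)=\cK_{\bn,\bl}^{(d)}(x;y,z)$ from the Proposition, so the Corollary is the Proposition verbatim in the $\Hom$-valued picture. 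Your extra care about the normalization (no second $\bar y$-projection being needed) is exactly the right point to check.
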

Therefore we have
\begin{gather*}
 F_{\tau\rho}(x_{11},x_{22};w_{12}) \\
 \qquad{} =\Delta(x_{11})^k\sum_{\bn\in\BZ_{++}^{r''}}(\lambda+k+\bl)_{\bn-\bl,d}
\bigl\langle {\rm e}^{(y_{12}|w_{12})_{\fp^+_{12}}}I_{W_{22}},
\cK_{\bn,\bl}^{(d)}(x_{22};Q(y_{12})x_{11})^*\bigr\rangle_{\hat{\rho},y_{12}}.
\end{gather*}
From now on we consider the cases $d=1,4$. In these cases we have
\begin{gather*} W=\big(\cP_{(k,\ldots,k)}(\fp^+_{11})\otimes\chi_{11}^{-\lambda}\big)\boxtimes\big(\cP_\bl(\fp^+_{22})\otimes\chi_{22}^{-\lambda}\big)
\simeq\begin{cases} \big(\BC\boxtimes V_{2\bl}^{(s'')}\big)\otimes \chi_1^{-(\lambda+2k)-\lambda} & (d=1), \\
\big(\BC\boxtimes V_{\bl^2}^{(s'')}\big)\otimes \chi_1^{-\left(\frac{\lambda}{2}+k\right)-\frac{\lambda}{2}} & (d=4). \end{cases} \end{gather*}
By the ${\rm GL}(s'',\BC)$-invariance, the representation containing
$\overline{\cK_{\bn,\bl}^{(d)}(x_{22};y_{21}x_{11}^*y_{12})}$ as functions on $(y_{12},y_{21})$ is isomorphic to that
as functions on $\overline{x_{22}}$, namely $\cP_\bn(\fp^+_{22})$, which has the lowest weight~$-2\bn$ when $d=1$ case, $-\bn^2$ when $d=4$ case. Moreover, $\cK_{\bn,\bl}(x_{22};y_{21}x_{11}^*y_{12})$ is non-zero only if~$\cP_\bn(\fp^+_{22})$ appears abstractly in the decomposition of $\cP(\fp^+_{12})\otimes\cP_\bl(\fp^+_{22})$.
Therefore by the result of~\cite{N2} and~(\ref{exp_onD}), we get
\begin{gather*}
 F_{\tau\rho}(x_{11},x_{22};w_{12})\\
 =\begin{cases}\displaystyle \det(x_{11})^k\sum_{\bn\in\BZ_{++}^{s''}}
\frac{(\lambda+k+\bl)_{\bn-\bl,1}}{(2(\lambda+k+\bl))_{2(\bn-\bl),2}}\cK_{\bn,\bl}^{(1)}\big(x_{22};{}^t\hspace{-1pt}w_{12}x_{11}^*w_{12}\big)&
 (d=1),\\
\displaystyle \Pf(x_{11})^k\sum_{\bn\in\BZ_{++}^{\lfloor s''/2\rfloor}}
\frac{(\lambda+k+\bl)_{\bn-\bl,4}}{(\lambda+k+\bl)_{(\bn-\bl)^2,2}}\cK_{\bn,\bl}^{(4)}\big(x_{22};-{}^t\hspace{-1pt}w_{12}x_{11}^*w_{12}\big)
&(d=4)\end{cases}\\
 =\begin{cases}\displaystyle \det(x_{11})^k\sum_{\bn\in\BZ_{++}^{s''}}
\frac{1}{2^{2|\bn-\bl|}\left(\lambda+k+\bl+\frac{1}{2}\right)_{\bn-\bl,1}}
\cK_{\bn,\bl}^{(1)}\big(x_{22};{}^t\hspace{-1pt}w_{12}\overline{x_{11}}w_{12}\big)
&(d=1),\\
\displaystyle \Pf(x_{11})^k\sum_{\bn\in\BZ_{++}^{\lfloor s''/2\rfloor}}
\frac{1}{(\lambda+k+\bl-1)_{\bn-\bl,4}}\cK_{\bn,\bl}^{(4)}\big(x_{22};{}^t\hspace{-1pt}w_{12}\overline{x_{11}}w_{12}\big)
&(d=4)\end{cases}\\
 =\begin{cases}\displaystyle \det(x_{11})^k\hspace{-15pt}\sum_{\substack{\bn\in\BZ_{++}^{s''}\\ l_j\le n_j\; (1\le j\le s') \\
l_j\le n_j\le l_{j-s'}\; (s'+1\le j\le s'')}}\hspace{-15pt}
\frac{1}{\left(\lambda+k+\bl+\frac{1}{2}\right)_{\bn-\bl,1}}
\cK_{\bn,\bl}^{(1)}\big(x_{22};\tfrac{1}{4}{}^t\hspace{-1pt}w_{12}\overline{x_{11}}w_{12}\big)&(d=1),\\
\displaystyle \Pf(x_{11})^k\hspace{-30pt}\sum_{\substack{\bn\in\BZ_{++}^{\lfloor s''/2\rfloor}\\ l_j\le n_j\; (1\le j\le \lfloor s'/2\rfloor) \\
l_j\le n_j\le l_{j-\lfloor s'/2\rfloor}\; (\lfloor s'/2\rfloor+1\le j\le \lfloor s''/2\rfloor)}}\hspace{-30pt}
\frac{1}{(\lambda+k+\bl-1)_{\bn-\bl,4}}\cK_{\bn,\bl}^{(4)}\big(x_{22};{}^t\hspace{-1pt}w_{12}\overline{x_{11}}w_{12}\big)
&(d=4).\end{cases}
\end{gather*}
Here the condition $l_j\le n_j\le l_{j-s'}$ $(s'+1\le j\le s'')$ or $l_j\le n_j\le l_{j-\lfloor s'/2\rfloor}$
$\bigl(\big\lfloor \frac{s'}{2}\big\rfloor+1\le j\le \big\lfloor \frac{s''}{2}\big\rfloor\bigr)$ appears only when
$s'< s''$ or $\big\lfloor \frac{s'}{2}\big\rfloor< \big\lfloor \frac{s''}{2}\big\rfloor$ respectively.
Next we consider $d=6$ case. In this case we have
\begin{align*}
W=(\cP_k(\BC)\boxtimes\cP_l(M(1,5;\BC)))\otimes\chi^{-\lambda}
&\simeq \Big(V_{\left(\frac{l}{2},\frac{l}{2},\frac{l}{2},\frac{l}{2},-\frac{l}{2}\right)}^{(5)\vee}
\otimes\chi_{\operatorname{SO}^*(10)}^{-\lambda-k}\Big)\boxtimes\chi_{U(1)}^{-\lambda+3k-3l} \\
&\simeq \Big(V_{(l,l,l,l,0)}^{(5)\vee}\otimes\chi_{\operatorname{SO}^*(10)}^{-\lambda-k+l}\Big)\boxtimes\chi_{U(1)}^{-\lambda+3k-3l}.
\end{align*}
Then $x_{11}^k{\rm e}^{(x_{22}|Q(y_{12})x_{11})_{\fp^+_{22}}}\rK_l(x_{22})$ is decomposed as
\begin{align*}
x_{11}^k{\rm e}^{(x_{22}|Q(y_{12})x_{11})_{\fp^+_{22}}}\rK_l(x_{22})
&=x_{11}^k\sum_{m=0}^\infty \frac{1}{m!}(x_{22}|Q(y_{12})x_{11})_{\fp^+_{22}}^m\rK_l(x_{22}) \\
&=x_{11}^k\sum_{n=l}^\infty \frac{1}{(n-l)!}(x_{22}|Q(y_{12})x_{11})_{\fp^+_{22}}^{n-l}\rK_l(x_{22}),
\end{align*}
and as a function of $(x_{11},x_{22})$, we have
\begin{gather*}
 x_{11}^k\cK_{n,l}^{(6)}(x_{22};Q(y_{12})x_{11})
=x_{11}^k\frac{1}{(n-l)!}(x_{22}|Q(y_{12})x_{11})_{\fp^+_{22}}^{n-l}\rK_l(x_{22}) \\
\qquad{} \in(\cP_{k+n-l}(\fp^+_{11})\boxtimes\cP_n(\fp^+_{22}))\otimes\chi^{-\lambda}
\simeq V_{(n,n,n,n,0)}^{(5)\vee}\otimes\chi^{-\lambda-k+l}_{\operatorname{SO}^*(10)}\boxtimes\chi_{U(1)}^{-\lambda+3k-3l},
\end{gather*}
and by the $K_1^\BC$-invariance, as a function of $y_{12}$, $x_{11}^k\frac{1}{(n-l)!}(x_{22}|Q(y_{12})x_{11})_{\fp^+_{22}}^{n-l}\rK_l(x_{22})$ sits in the space with the same lowest weight.
Therefore we have
\begin{gather*}
 F_{\tau\rho}(x_{11},x_{22};w_{12})\\
 =x_{11}^k\sum_{n=l}^\infty(\lambda+k+l)_{n-l}\left\langle {\rm e}^{(y_{12}|w_{12})_{\fp^+_{12}}}I_{W_{22}},
\left(\frac{1}{(n-l)!}(x_{22}|Q(y_{12})x_{11})_{\fp_{22}}^{n-l}\rK_l(x_{22})\right)^*\right\rangle_{\hat{\rho},y_{12}} \\
 =x_{11}^k\sum_{m=0}^\infty \frac{(\lambda+k+l)_m}{(\lambda+k+l)_{(m,m),4}}\frac{1}{m!}
\big(x_{11}x_{22}{}^t\mathbf{Pf}(\overline{w_{12}})\big)^m\rK_l(x_{22}) \\
 =x_{11}^k\sum_{m=0}^\infty\frac{1}{(\lambda+k+l-2)_m}\frac{1}{m!}\big(x_{11}x_{22}{}^t\mathbf{Pf}(\overline{w_{12}})\big)^m\rK_l(x_{22}),
\end{gather*}
where $\mathbf{Pf}(w_{12})$ is as (\ref{Pfaff_vector}).
Next we consider $d=8$ case. In this case we have
\begin{gather*} W=\big(\cP_k(\BC)\boxtimes\cP_\bl\big(\Herm(2,\BO)^\BC\big)\big)\otimes\chi^{-\lambda}
\simeq V_{(l_1-l_2,0,0,0,0)}^{[10]\vee}
\boxtimes\chi_{E_{6(-14)}}^{-\lambda-k-\frac{|\bl|}{2}}\boxtimes\chi_{U(1)}^{-\lambda+2k-2|\bl|}. \end{gather*}
Then $x_{11}^k{\rm e}^{(x_{22}|Q(y_{12})x_{11})_{\fp^+_{22}}}\rK_{(l_1,l_2)}^{(8)}(x_{22})
=x_{11}^k\Delta(x_{22})^{l_2}{\rm e}^{(x_{22}|Q(y_{12})x_{11})_{\fp^+_{22}}}\rK_{(l_1-l_2,0)}^{(8)}(x_{22})$
(under suitable normalization) is decomposed as
\begin{align*}
x_{11}^k{\rm e}^{(x_{22}|Q(y_{12})x_{11})_{\fp^+_{22}}}\rK_{(l_1,l_2)}^{(8)}(x_{22})
&=x_{11}^k\sum_{\bn\in\BZ_{++}^2}\cK_{\bn,\bl}^{(8)}(x_{22};Q(y_{12})x_{11}) \\
&=x_{11}^k\Delta(x_{22})^{l_2}\sum_{\bn\in\BZ_{++}^2}\cK_{(n_1-l_2,n_2-l_2),(l_1-l_2,0)}^{(8)}(x_{22};Q(y_{12})x_{11}),
\end{align*}
and as a function of $(x_{11},x_{22})$, we have
\begin{gather*}
 x_{11}^k\cK_{\bn,\bl}^{(8)}(x_{22};Q(y_{12})x_{11})
=x_{11}^k\Delta(x_{22})^{l_2}\cK_{(n_1-l_2,n_2-l_2),(l_1-l_2,0)}^{(8)}(x_{22};Q(y_{12})x_{11}) \\
\qquad {} \in (\cP_{k+|\bn|-|\bl|}(\fp^+_{11})\boxtimes\cP_\bn(\fp^+_{22}))\otimes\chi^{-\lambda}
\simeq V_{(n_1-n_2,0,0,0,0)}^{[10]\vee}\boxtimes\chi_{E_{6(-14)}}^{-\lambda-k+|\bl|-\frac{3}{2}|\bn|}
\boxtimes\chi_{U(1)}^{-\lambda+2k-2|\bl|}.
\end{gather*}
On the other hand, by using the multi-minuscule rule \cite[Corollary~2.16]{S}, we can show that
$\cH_{\lambda+k+\frac{|\bl|}{2}}(D_1,V_{(l_1-l_2,0,0,0,0)}^{[10]\vee})_{\tilde{K}_1}$ is decomposed as
\begin{gather*}
 \cP(\fp_{12}^+)\otimes V_{(l_1-l_2,0,0,0,0)}^{[10]\vee}\boxtimes \chi_{E_{6(-14)}}^{-\lambda-k-\frac{|\bl|}{2}} \\
\qquad{} \simeq \bigoplus_{\bm\in\BZ_{++}^2}
V_{\left(\frac{m_1+m_2}{2},\frac{m_1-m_2}{2},\frac{m_1-m_2}{2},\frac{m_1-m_2}{2},\frac{m_1-m_2}{2}\right)}^{[10]\vee}
\otimes V_{(l_1-l_2,0,0,0,0)}^{[10]\vee}\boxtimes \chi_{E_{6(-14)}}^{-\lambda-k-\frac{|\bl|}{2}-\frac{3}{4}|\bm|} \\
\qquad{} \simeq \bigoplus_{\bm\in\BZ_{++}^2}
\bigoplus_{\substack{\bk\in(\BZ_{\ge 0})^4,\; |\bk|=l_1-l_2\\ k_2+k_4\le m_2\\ k_3\le m_1-m_2}}
V_{\substack{\bigl(\frac{m_1+m_2}{2}+k_1-k_4,\frac{m_1-m_2}{2}+k_2, \hspace{10pt} \\
\hspace{15pt} \frac{m_1-m_2}{2},\frac{m_1-m_2}{2},\frac{m_1-m_2}{2}-k_3\bigr)}}^{[10]\vee}
\boxtimes \chi_{E_{6(-14)}}^{-\lambda-k-\frac{|\bl|}{2}-\frac{3}{4}|\bm|}.
\end{gather*}
Therefore $\cK_{\bn,\bl}^{(8)}(x_{22};Q(y_{12})x_{11})$ is non-zero only if $(n_1,n_2)$ is of the form
\begin{gather*} n_1-l_2=m+k_1,\; n_2-l_2=k_4 \qquad (m,k_1,k_4\in\BZ_{\ge 0},\; k_1+k_4=l_1-l_2,\; k_4\le m) \end{gather*}
(in this case $m_1=m_2=m$, $k_2=k_3=0$).
Therefore, if we assume \cite[Conjecture 5.11]{N2} on the norms of holomorphic discrete series representations of $E_{6(-14)}$ is true,
then we have
\begin{gather*}
 F_{\tau\rho}(x_{11},x_{22};w_{12}) \\
 =x_{11}^k\sum_{\bn\in\BZ_{++}^{2}}(\lambda+k+\bl)_{\bn-\bl,8}\Bigl\langle {\rm e}^{(y_{12}|w_{12})_{\fp^+}}I_{W_{22}},
\cK_{\bn,\bl}^{(8)}(x_{22};y_{22},Q(y_{12})x_{11})^*\Bigr\rangle_{\hat{\rho},y_{12}} \\
=x_{11}^k\sum_{m=0}^\infty \sum_{\substack{k_1,k_4\in\BZ_{\ge 0}\\ k_1+k_4=l_1-l_2\\ k_4\le m}} \\
\quad{}\times\frac{(\lambda+k+l_1)_{m+k_1-l_1+l_2}(\lambda+k+l_2-4)_{k_4}}
{(\lambda+k+l_1)_{m+k_1-l_1+l_2}(\lambda+k+l_1-3)_{m+k_1-l_1+l_2}(\lambda+k+l_2-4)_{k_4}(\lambda+k+l_2-7)_{k_4}}\\
\quad{} \times \Delta(x_{22})^{l_2}\cK_{(m+k_1,k_4),(l_1-l_2,0)}^{(8)} (x_{22};Q(w_{12})x_{11} )\\
=x_{11}^k\Delta(x_{22})^{l_2}\sum_{m=0}^\infty \sum_{\substack{k_1,k_4\in\BZ_{\ge 0}\\ k_1+k_4=l_1-l_2\\ k_4\le m}}
\frac{1}{(\lambda+k+l_1-3)_{m-k_4}(\lambda+k+l_2-7)_{k_4}} \\
\quad{} \times \cK_{(m+k_1,k_4),(l_1-l_2,0)}^{(8)}
\big(x_{22};\overline{x_{11}}\hspace{1pt}{}^t\hspace{-1pt}\hat{w}_{12}w_{12}\big).
\end{gather*}
By Theorem \ref{main}, by substituting $w_{12}$ with $\overline{\frac{\partial}{\partial x_{12}}}$,
we get the intertwining operator from $(\cH_1)_{\tilde{K}_1}$ to $\cH_{\tilde{K}}$, and by Theorem~\ref{extend},
this extends to the intertwining operator between the spaces of all holomorphic functions if $\cH_1$ is holomorphic discrete.
Also, by Theorem \ref{continuation}, this continues meromorphically for all $\lambda\in\BC$.
Therefore we get the following.
\begin{Theorem}\label{main2}\quad
\begin{enumerate}\itemsep=0pt
\item[$(1)$] Let $(G,G_1)=(\operatorname{Sp}(s,\BR), U(s',s''))$ with $s=s'+s''$. Let $k\in\BZ_{\ge 0}$, $\bl\in\BZ_{++}^{s''}$.
Then the linear map
\begin{gather*}
\cF_{\lambda,k,\bl}\colon \ \cO_{(\lambda+2k)+\lambda}(D_1,\BC\boxtimes V_{2\bl}^{(s'')})\to \cO_\lambda(D), \\
(\cF_{\lambda,k,\bl}f)\begin{pmatrix}x_{11}&x_{12}\\{}^t\hspace{-1pt}x_{12}&x_{22}\end{pmatrix}
=\det(x_{11})^k\hspace{-15pt}\sum_{\substack{\bn\in\BZ_{++}^{s''}\\ l_j\le n_j\; (1\le j\le s') \\
l_j\le n_j\le l_{j-s'}\; (s'+1\le j\le s'')}}\hspace{-15pt}
\frac{1}{\left(\lambda+k+\bl+\frac{1}{2}\right)_{\bn-\bl,1}}\\
\hphantom{(\cF_{\lambda,k,\bl}f)\begin{pmatrix}x_{11}&x_{12}\\{}^t\hspace{-1pt}x_{12}&x_{22}\end{pmatrix}=}{} \times \cK_{\bn,\bl}^{(1)}\left(x_{22};\frac{1}{4}\overline{{\vphantom{\biggl(}}^t\!\!\left(
\frac{\partial}{\partial x_{12}}\right)x_{11}\frac{\partial}{\partial x_{12}}}\right)f(x_{12})
\end{gather*}
intertwines the $\tilde{G}_1$-action.
\item[$(2)$] Let $(G,G_1)=(\operatorname{SO}^*(2s), U(s',s''))$ with $s=s'+s''$.
Let $k\in\BZ_{\ge 0}$ if $s'$ is even, $k=0$ if $s'$ is odd, and $\bl\in\BZ_{++}^{\lfloor s''/2\rfloor}$. Then the linear map
\begin{gather*}
\cF_{\lambda,k,\bl}\colon \ \cO_{\left(\frac{\lambda}{2}+k\right)+\frac{\lambda}{2}}\big(D_1,\BC\boxtimes V_{\bl^2}^{(s'')}\big)
\to \cO_\lambda(D), \\
(\cF_{\lambda,k,\bl}f)\begin{pmatrix}x_{11}&x_{12}\\-{}^t\hspace{-1pt}x_{12}&x_{22}\end{pmatrix}
=\Pf(x_{11})^k\hspace{-30pt}\sum_{\substack{\bn\in\BZ_{++}^{\lfloor s''/2\rfloor}\\ l_j\le n_j\; (1\le j\le \lfloor s'/2\rfloor) \\
l_j\le n_j\le l_{j-\lfloor s'/2\rfloor}\; (\lfloor s'/2\rfloor+1\le j\le \lfloor s''/2\rfloor)}}\hspace{-30pt}
\frac{1}{(\lambda+k+\bl-1)_{\bn-\bl,4}}\\
\hphantom{(\cF_{\lambda,k,\bl}f)\begin{pmatrix}x_{11}&x_{12}\\-{}^t\hspace{-1pt}x_{12}&x_{22}\end{pmatrix}=}{} \times \cK_{\bn,\bl}^{(4)}\left(x_{22};
\overline{{\vphantom{\biggl(}}^t\!\!\left(\frac{\partial}{\partial x_{12}}\right)x_{11}\frac{\partial}{\partial x_{12}}}\right)
f(y_{12})
\end{gather*}
intertwines the $\tilde{G}_1$-action.
\item[$(3)$] Let $(G,G_1)=(E_{6(-14)}, U(1)\times \operatorname{SO}^*(10))$ $($up to covering$)$. Let $k,l\in\BZ_{\ge 0}$.
Then the linear map
\begin{gather*}
\cF_{\lambda,k,l}\colon \ \cO_{\lambda+k}\big(D_1,\BC\boxtimes V_{\left(\frac{l}{2},\frac{l}{2},\frac{l}{2},\frac{l}{2},-\frac{l}{2}\right)}^{(5)\vee}\big)
\boxtimes\chi_{U(1)}^{-\lambda+3k-3l}\to \cO_\lambda(D), \\
(\cF_{\lambda,k,l}f)(x_{11},x_{12},x_{22})=x_{11}^k\sum_{m=0}^\infty\frac{1}{(\lambda+k+l-2)_m}\\
\hphantom{(\cF_{\lambda,k,l}f)(x_{11},x_{12},x_{22})=}{} \times \frac{1}{m!}\left(x_{11}x_{22}{}^t\mathbf{Pf}\left(\frac{\partial}{\partial x_{12}}\right)\right)^m\rK_l(x_{22})f(x_{12})
\end{gather*}
$(x_{11}\in\BC,\; x_{12}\in\Skew(5,\BC),\; x_{22}\in M(1,5;\BC))$ intertwines the $\tilde{G}_1$-action.
\item[$(4)$] Let $(G,G_1)=(E_{7(-25)}, U(1)\times E_{6(-14)})$ $($up to covering$)$. Let $k\in\BZ_{\ge 0}$, $\bl\in\BZ_{++}^2$.
We assume that the conjecture on the norms of holomorphic discrete series representations of~$E_{6(-14)}$ {\rm \cite[Conjecture 5.11]{N2}} is true.
Then the linear map
\begin{gather*}
\cF_{\lambda,k,\bl}\colon \ \cO_{\lambda+k+\frac{|\bl|}{2}}\big(D_1,V_{(l_1-l_2,0,0,0,0)}^{[10]\vee}\big)
\boxtimes\chi_{U(1)}^{-\lambda+2k-2|\bl|}\to \cO_\lambda(D), \\
(\cF_{\lambda,k,\bl}f)\begin{pmatrix}x_{11}&x_{12}\\{}^t\hspace{-1pt}\hat{x}_{12}&x_{22}\end{pmatrix} \\
\qquad{} =x_{11}^k\det(x_{22})^{l_2}\sum_{m=0}^\infty \sum_{\substack{k_1,k_4\in\BZ_{\ge 0}\\ k_1+k_4=l_1-l_2\\ k_4\le m}}
\frac{1}{(\lambda+k+l_1-3)_{m-k_4}(\lambda+k+l_2-7)_{k_4}}\\
\qquad\quad {}\times \cK_{(m+k_1,k_4),(l_1-l_2,0)}^{(8)}
\left(x_{22};\overline{x_{11}{\vphantom{\biggl(}}^t\!\!\biggl(\widehat{\frac{\partial}{\partial x_{12}}}\biggr)
\frac{\partial}{\partial x_{12}}}\right)f(x_{12})
\end{gather*}
intertwines the $\tilde{G}_1$-action.
\end{enumerate}
\end{Theorem}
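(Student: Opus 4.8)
The plan is to run the same machine already developed in Sections~\ref{section3} and in the earlier parts of Section~\ref{sect_examples}, and to carry it through uniformly for the four pairs $(\operatorname{Sp}(s,\BR),U(s',s''))$, $(\operatorname{SO}^*(2s),U(s',s''))$, $(E_{6(-14)},U(1)\times\operatorname{SO}^*(10))$ and $(E_{7(-25)},U(1)\times E_{6(-14)})$. The four realizations of $\fp^+$, $\fp^+_1=\fp^+_{12}$, $\fp^+_2=\fp^+_{11}\oplus\fp^+_{22}$ fixed at the beginning of this subsection (from Sections~\ref{realize} and~\ref{EJTS}) are to be taken as given, together with the explicit formulas for $Q$, $B$ and $h$ on these Jordan triple systems. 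By Theorem~\ref{main}(2), the operator $\cF_{\tau\rho}$ with $(\tau,V)=(\chi^{-\lambda},\BC)$ and $(\rho,W)=\rK$ as chosen is the one obtained by substituting $w_{12}\mapsto\overline{\partial/\partial x_{12}}$ into $F_{\tau\rho}(x_{11},x_{22};w_{12})$, so everything reduces to computing $F_{\tau\rho}$ in closed form. The computation of $F_{\tau\rho}$ has already been carried out in the running text immediately above the statement: one uses the factorization $h(x_{11}+x_{22},Q(y_{12})(x_{11}+x_{22}))^{-\lambda/2}=h_{22}(x_{22},Q(y_{12})x_{11})^{-\lambda}$ (a consequence of \eqref{Bergman_decomp} together with \cite[Part~V, Propositions IV.3.4 and~IV.3.5]{FKKLR}, exactly as in \eqref{generic_norm_square}), the identity $(x_{11}+x_{22})^{Q(y_{12})(x_{11}+x_{22})}=(x_{11})^{Q(y_{12})x_{22}}+(x_{22})^{Q(y_{12})x_{11}}$, the separation $\rK(x_{11}+x_{22})=\Delta(x_{11})^k\rK^{(d)}_\bl(x_{22})$, the relation $\Delta((x_{11})^{Q(y_{12})x_{22}})=h_{22}(x_{22},Q(y_{12})x_{11})^{-1}\Delta(x_{11})$ via \cite[Part~V, Proposition III.3.1, (J6.1)]{FKKLR}, and then Corollary~\ref{ext_of_hK} to expand $h(x_{22},z_{22})^{-\mu}\rK^{(d)}_\bl((x_{22})^{z_{22}})$ with $\mu=\lambda+k$, $z_{22}=Q(y_{12})x_{11}$.

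From that point the remaining steps are: first, insert the explicit form of $Q(y_{12})x_{11}$ in each of the four cases, so that the argument of $\cK^{(d)}_{\bn,\bl}$ becomes ${}^t w_{12}\overline{x_{11}}w_{12}$ (up to the normalizing constants $\tfrac14$ for $d=1$ and the sign for $d=4$), respectively $x_{11}x_{22}\,{}^t\mathbf{Pf}(\overline{w_{12}})$ and $\overline{x_{11}}\,{}^t\hat w_{12}w_{12}$; second, use \eqref{exp_onD} together with the explicit norm computations of $\cH_1$ from \cite{N2} to pass from $\langle e^{(y_{12}|w_{12})},\cdot\rangle_{\hat\rho}$ to the corresponding summands, thereby converting the Pochhammer factors $(\mu+\bl)_{\bn-\bl,d}$ from Corollary~\ref{ext_of_hK} into the $p_{m,j}(\lambda)$-type factors and, after cancellation, into the monic products of Pochhammer symbols $(\lambda+k+\bl+\tfrac12)_{\bn-\bl,1}$, $(\lambda+k+\bl-1)_{\bn-\bl,4}$, $(\lambda+k+l-2)_m$, $(\lambda+k+l_1-3)_{m-k_4}(\lambda+k+l_2-7)_{k_4}$ displayed in the four parts of the theorem; third, determine the exact range of summation over $\bn$ (or $m$, $k_1$, $k_4$) from the abstract branching laws recorded in Section~\ref{section4}: for $d=1,4$ from the decomposition of $\cP(\fp^+_{12})\otimes\cP_\bl(\fp^+_{22})$ under $\tilde K^\BC_{22}$, and for $d=8$ from the multi-minuscule rule \cite[Corollary~2.16]{S} applied to $\cP(\fp^+_{12})\otimes V^{[10]\vee}_{(l_1-l_2,0,0,0,0)}$. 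Finally, the intertwining property itself is granted by Theorem~\ref{main}(2); the extension to all holomorphic functions follows from Theorem~\ref{extend}(2) when $\cH_1$ is holomorphic discrete, and the meromorphic continuation in $\lambda$ follows from Theorem~\ref{continuation}, whose polynomial-growth hypothesis is met here because every $p_{m,j}(\lambda)$ and every $p_{m,j}(\lambda)q_{m,j}(\lambda)$ is an inverse of a product of Pochhammer symbols, for which Stirling's formula gives the required bound.

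The main obstacle, and the one place where the four cases genuinely diverge, is the passage through \cite{N2}: the $E_{7(-25)}$ case $(4)$ requires the norm formula for holomorphic discrete series of $E_{6(-14)}$, which is only conjectural (\cite[Conjecture~5.11]{N2}), so part~$(4)$ is necessarily stated conditionally; and for $d=1,4$ the identification of $\cK^{(d)}_{\bn,\bl}$ with the honest reproducing-kernel-type projections requires that the relevant $\cP(\fp^+_{22})$ (or the tensor product $\cP(\fp^+_{12})\otimes\cP_\bl(\fp^+_{22})$) satisfy assumption~\eqref{assumption_orth}, which is exactly what makes the norm of $\cH_1$ computable by the formulas of \cite{N2}. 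Beyond that, the genuine but routine work is the bookkeeping in step three — matching the constraints $l_j\le n_j$, resp.\ $l_j\le n_j\le l_{j-s'}$, resp.\ $l_j\le n_j\le l_{j-\lfloor s'/2\rfloor}$, resp.\ $k_1+k_4=l_1-l_2$ and $k_4\le m$ — to the abstract branching, and checking that the $\tilde K^\BC_1$-invariance of $\cK^{(d)}_{\bn,\bl}$ (inherited from that of $\rK$ and the reproducing kernels) forces the $y_{12}$-argument to enter only through $Q(y_{12})x_{11}$. Everything else is a specialization of the general Theorems~\ref{main}, \ref{extend}, \ref{continuation} already proved.
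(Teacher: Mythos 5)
Your proposal is correct and follows essentially the same route as the paper: the factorization of the generic norm as in \eqref{generic_norm_square}, the splitting of the quasi-inverse, the relation $\Delta\big((x_{11})^{Q(y_{12})x_{22}}\big)=h_{22}(x_{22},Q(y_{12})x_{11})^{-1}\Delta(x_{11})$, the expansion via Corollary~\ref{ext_of_hK}, the evaluation through \eqref{exp_onD} and the norm formulas of \cite{N2} (conjectural for $E_{6(-14)}$, hence the conditional part~(4)), and the appeal to Theorems~\ref{main}, \ref{extend} and~\ref{continuation} are exactly the steps of the paper's argument. The only remaining work is the case-by-case bookkeeping you already identify, which matches the paper's computation.
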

For $d=1,4,8$, if $\bl=(l,\ldots,l)$ (when $d=4$ and $s''$ is odd, additionally assume $l=0$), we have $\rK_\bl^{(d)}(x_{22})=\Delta(x_{22})^l$,
and $\cK_{\bn,\bl}^{(d)}(x_{22};z_{22})=\Delta(x_{22})^l\bK_{\bn-\bl}^{(d)}(x_{22},z_{22})
=\Delta(x_{22})^l\tilde{\Phi}_{\bn-\bl}^{(d)}(x_{22}z_{22}^*)$ holds,
where $\tilde{\Phi}_\bm^{(d)}(x_{22}z_{22}^*)$ is defined in (\ref{Phitilde1}), (\ref{Phitilde2}).
Therefore by replacing $\bn-\bl=\bm$, the intertwining operators are rewritten as, when $(G,G_1)=(\operatorname{Sp}(s,\BR), U(s',s''))$,
\begin{gather}
\cF_{\lambda,k,l}\colon \ \cO_{(\lambda+2k)+(\lambda+2l)}(D_1)\to \cO_\lambda(D),\notag \\
(\cF_{\lambda,k,l}f)\begin{pmatrix}x_{11}&x_{12}\\{}^t\hspace{-1pt}x_{12}&x_{22}\end{pmatrix}=\det(x_{11})^k\det(x_{22})^l\nonumber\\
\qquad{} \times\sum_{\bm\in\BZ_{++}^{\min\{s',s''\}}}
\frac{1}{\left(\lambda+k+l+\frac{1}{2}\right)_{\bm,1}}\tilde{\Phi}_\bm^{(1)}
\left(\frac{1}{4}x_{11}\frac{\partial}{\partial x_{12}}x_{22}{\vphantom{\biggl(}}^t\!\!\left(\frac{\partial}{\partial x_{12}}\right)\right)
f(x_{12}),\label{Sp-U}
\end{gather}
when $(G,G_1)=(\operatorname{SO}^*(2s), U(s',s''))$,
\begin{gather}
\cF_{\lambda,k,l}\colon \ \cO_{\left(\frac{\lambda}{2}+k\right)+\left(\frac{\lambda}{2}+l\right)}(D_1)\to \cO_\lambda(D), \notag \\
(\cF_{\lambda,k,l}f)\begin{pmatrix}x_{11}&x_{12}\\-{}^t\hspace{-1pt}x_{21}&x_{22}\end{pmatrix}
=\Pf(x_{11})^k\Pf(x_{22})^l\notag\\
\qquad{} \times\sum_{\bm\in\BZ_{++}^{\min\{\lfloor s'/2\rfloor,\lfloor s''/2\rfloor\}}}
\frac{1}{(\lambda+k+l-1)_{\bm,4}}\tilde{\Phi}_\bm^{(4)}
\left(-x_{11}\frac{\partial}{\partial x_{12}}x_{22}{\vphantom{\biggl(}}^t\!\!\left(\frac{\partial}{\partial x_{12}}\right)\right)f(x_{12}),\label{SO*-U}
\end{gather}
and when $(G,G_1)=(E_{7(-25)},U(1)\times E_{6(-14)})$,
\begin{gather*}
\cF_{\lambda,k,l}\colon \ \cO_{\lambda+k+l}(D_1)\boxtimes\chi_{U(1)}^{-\lambda+2k-4l}\to \cO_\lambda(D), \\
(\cF_{\lambda,k,l}f)\begin{pmatrix}x_{11}&x_{12}\\{}^t\hspace{-1pt}\hat{x}_{12}&x_{22}\end{pmatrix} \\
\qquad {} =x_{11}^k\det(x_{22})^l\sum_{m=0}^\infty \frac{1}{(\lambda+k+l-3)_m}
\frac{1}{m!}\left(x_{11}\Re_\BO\left(\frac{\partial}{\partial x_{12}}
\left(x_{22}{\vphantom{\biggl(}}^t\!\!\biggl(\widehat{\frac{\partial}{\partial x_{12}}}\biggr)\right)\right)\right)^m\!\!f(x_{12}).
\end{gather*}
(This holds without the assumption \cite[Conjecture 5.11]{N2} since the norm of $\cH_{\lambda+k+l}(D_1)$ is computed in~\cite{FK0}.)

\subsection[$\cF_{\tau\rho}$ for $(G,G_1)=(\operatorname{SU}(3,3), \operatorname{SO}^*(6))$, $(E_{7(-25)}, \operatorname{SU}(2,6))$]{$\boldsymbol{\cF_{\tau\rho}}$ for $\boldsymbol{(G,G_1)=(\operatorname{SU}(3,3), \operatorname{SO}^*(6))}$, $\boldsymbol{(E_{7(-25)}, \operatorname{SU}(2,6))}$}
In this subsection we set
\begin{gather*} (G,G_1)=\begin{cases} (\operatorname{SU}(3,3), \operatorname{SO}^*(6))\simeq (\operatorname{SU}(3,3), \operatorname{SU}(1,3)) & (\text{Case }d_2=1), \\
(E_{7(-25)}, \operatorname{SU}(2,6)) & (\text{Case }d_2=4) \end{cases} \end{gather*}
(up to covering). We can also compute for $(G,G_1)=(\operatorname{SO}^*(12), \operatorname{SO}^*(6)\times \operatorname{SO}^*(6))$ in a similar way,
but we omit this case since this is contained in Theorem \ref{main1} (5).
Then the maximal compact subgroups are
\begin{gather*} (K,K_1)=\begin{cases} (S(U(3)\times U(3)), U(3))\simeq (S(U(3)\times U(3)), S(U(1)\times U(3)))\! & (\text{Case }d_2=1), \\
(U(1)\times E_6, S(U(2)\times U(6))) & (\text{Case }d_2=4) \end{cases} \end{gather*}
(up to covering), and $\fp^+$, $\fp^+_1:=\fg_1^\BC\cap\fp^+$, $\fp^+_2:=(\fp^+_1)^\bot$ are realized as
\begin{gather*}
\fp^+ =\begin{cases} \Herm(3,\BC)^\BC\simeq M(3,\BC) & (\text{Case }d_2=1), \\ \Herm(3,\BO)^\BC & (\text{Case }d_2=4), \end{cases} \\
\fp^+_1 =\begin{cases} \Skew(3,\BR)^\BC\simeq \big(\BR^3\big)^\BC\simeq M(1,3;\BC) & (\text{Case }d_2=1), \\
\Skew(3,\BH)^\BC\simeq \big(\BH^3\big)^\BC\simeq M(2,6;\BC) & (\text{Case }d_2=4), \end{cases} \\
\fp^+_2 =\begin{cases} \Sym(3,\BR)^\BC\simeq \Sym(3,\BC) & (\text{Case }d_2=1), \\
\Herm(3,\BH)^\BC\simeq \Skew(6,\BC) & (\text{Case }d_2=4). \end{cases}
\end{gather*}
$K_1\simeq S(U(1)\times U(3))$ resp. $S(U(2)\times U(6))$ acts on $\fp^+_1\oplus\fp^+_2\simeq M(1,3;\BC)\oplus \Sym(3,\BC)$ resp.
$M(2,6;\BC)\oplus\Skew(6,\BC)$ by
\begin{gather*} (k_1,k_2).(x_1,x_2)=\big(k_1x_1k_2^{-1},\det(k_2)^{-2/\varepsilon}k_2x_2{}^t\hspace{-1pt}k_2\big), \end{gather*}
where $\varepsilon=1$ if $d_2=1$, $\varepsilon=2$ if $d_2=4$.
Let $\chi$, $\chi_1$ be the characters of $K^\BC$, $K_1^\BC$ respectively, normalized as~(\ref{char}),
and also let $\chi_2$ be the character of $K_1^\BC$ normalized as~(\ref{char}) with respect to the
Lie algebra $\fp^+_2\oplus\fk_1^\BC\oplus\fp^-_2$. Then $\chi|_{K_1}=\chi_1^{2/\varepsilon}=\chi_2$ holds.

Now let $(\tau,V)=\big(\chi^{-\lambda},\BC\big)$ with $\lambda$ sufficiently large,
$W\subset\cP(\fp^+_2)\otimes\chi^{-\lambda}$ be an irreducible $\tilde{K}^\BC_1$-submodule,
and $\rK(x_2)\in \cP(\fp^+_2,\Hom(W,\chi^{-\lambda}))$ be the $\tilde{K}^\BC_1$-invariant polynomial in the sense of (\ref{K-invariance}).
For $x_2\in\fp^+_2$, $w_1\in\fp^+_1$, we want to compute
\begin{gather*}
F_{\tau\rho}(x_2;w_1)
=\big\langle {\rm e}^{(y_1|w_1)_{\fp^+_1}}I_W,
\big(h(Q(x_2)y_1,y_1)^{-\lambda/2}\rK\big((x_2)^{Q(y_1)x_2}\big)\big)^*\big\rangle_{\hat{\rho},y_1}.
\end{gather*}
First we compute $h(Q(x_2)y_1,y_1)^{-\lambda/2}$ and $(x_2)^{Q(y_1)x_2}$.
\begin{Lemma}
For $x_2\in\Sym(3,\BC)$, $y_1\in M(1,3;\BC)$,
\begin{enumerate}\itemsep=0pt
\item[$(1)$] $h(Q(x_2)y_1,y_1)=\det\big(I-x_2^\sharp y_1^*\overline{y_1}\big)^2$.
\item[$(2)$] $(x_2)^{Q(y_1)x_2}=\det(x_2)^{-1}\big((I-x_2^\sharp y_1^*\overline{y_1})^{-1}x_2^\sharp \big)^\sharp$.
\end{enumerate}
For $x_2\in\Skew(6,\BC)$, $y_1\in M(2,6;\BC)$,
\begin{enumerate}\itemsep=0pt
\item[$(3)$] $h(Q(x_2)y_1,y_1)=\det\big(I-x_2^\#y_1^*J_2\overline{y_1}\big)$.
\item[$(4)$] $(x_2)^{Q(y_1)x_2}=\Pf(x_2)^{-1}\big((I-x_2^\#y_1^*J_2\overline{y_1})^{-1}x_2^\#\big)^\#$.
\end{enumerate}
\end{Lemma}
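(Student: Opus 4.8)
The plan is to prove all four identities by direct computation in the Jordan triple systems $\fp^+=\Herm(3,\BC)^\BC$ (case $d_2=1$) and $\fp^+=\Herm(3,\BO)^\BC$ (case $d_2=4$), using the explicit realizations of Section~\ref{EJTS} (the cases $\BK=\BC$ and $\BK=\BO$ of the rank-three discussion there), the matrix formulas of Section~\ref{realize}, and classical determinant/adjugate and Pfaffian identities. Two structural facts are used throughout. First, since the involution $\sigma$ commutes with the Lie bracket and with $\vartheta$ and acts by $+1$ on $\fp^+_1$ and $-1$ on $\fp^+_2$, one has $Q(\sigma z)\sigma w=\sigma Q(z)w$, hence $Q(x_2)y_1\in\fp^+_1$ and $Q(y_1)x_2\in\fp^+_2$ whenever $x_2\in\fp^+_2$, $y_1\in\fp^+_1$. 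Second, $\fp^+_2$ is a Jordan triple subsystem of $\fp^+$, so the Bergman operator and quasi-inverse of $\fp^+$ restrict to those of $\fp^+_2$; concretely $\fp^+_2$ is $\Sym(3,\BC)$ (the $\fp^+$ of $\mathrm{Sp}(3,\BR)$) resp.\ $\Skew(6,\BC)$ (the $\fp^+$ of $\mathrm{SO}^*(12)$), on which the quasi-inverse is the matrix quasi-inverse $u^v=u(I-v^*u)^{-1}$ of Section~\ref{realize}.

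For parts (1) and (3) I would first invoke Proposition~\ref{projprop}(2): as $(G,G_1)$ is symmetric and $G$ simple, $h(Q(x_2)y_1,y_1)=h(x_2,y_1)^2$, so it suffices to compute $h(x_2,y_1)$. Using the closed form $h(x,y)=1-(x|y)+(x^\sharp|y^\sharp)-(\det x)\overline{\det y}$: orthogonality of $\fp^+_1$ and $\fp^+_2$ gives $(x_2|y_1)=0$, and $y_1$ has rank at most two in $\fp^+$ so $\det y_1=0$, whence $h(x_2,y_1)=1+(x_2^\sharp|y_1^\sharp)$. One then identifies $(x_2^\sharp|y_1^\sharp)$ with the asserted expression; in matrix terms this is the elementary computation $h(x_2,y_1)=\det(I-x_2y_1^*)=1+\Tr(\mathrm{adj}(x_2y_1^*))$ (using $\Tr(x_2y_1^*)=0$ and $\det(x_2y_1^*)=0$), combined with the anti-multiplicativity $\Tr(\mathrm{adj}(x_2y_1^*))=\Tr\bigl(x_2^\sharp\,\mathrm{adj}(y_1^*)\bigr)$ and the conversion of $\mathrm{adj}(y_1^*)$ to the rank-one matrix $y_1^*\overline{y_1}$ resp.\ $y_1^*J_2\overline{y_1}$ under the identification $\fp^+_1\cong M(1,3;\BC)$ resp.\ $M(2,6;\BC)$ of Section~\ref{EJTS}. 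Since the answer is squared the sign of $h(x_2,y_1)$ is immaterial; in case (3) one also notes that $\det(I-x_2^\#y_1^*J_2\overline{y_1})$ is a genuine square, being the square of the Pfaffian of $\left(\begin{smallmatrix}x_2^\#&-I\\I&-y_1^*J_2\overline{y_1}\end{smallmatrix}\right)$.

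For parts (2) and (4) the two arguments of $(x_2)^{Q(y_1)x_2}$ both lie in $\fp^+_2$ by the first structural fact, so the quasi-inverse may be computed inside $\fp^+_2$ as $(x_2)^{v}=x_2(I-v^*x_2)^{-1}$ with $v=Q(y_1)x_2$, and $v$ is read off from the explicit formulas of Section~\ref{EJTS} ($Q((y_1,0))(0,x_2)=(0,({}^ty_1y_1)\times\overline{x_2})$ in case $d_2=1$, $Q((y_1,0))(0,x_2)=(0,({}^ty_1J_2y_1)\dottimes x_2^*)$ in case $d_2=4$, up to the conjugation conventions fixed there). Substituting and clearing denominators reduces each claimed identity to a finite polynomial identity in $x_2,y_1$, which I would check using $x_2x_2^\sharp=(\det x_2)I$ resp.\ $x_2x_2^\#=\Pf(x_2)I$, the anti-multiplicativity $(AB)^\sharp=B^\sharp A^\sharp$ and $\mathrm{adj}(\mathrm{adj}\,A)=(\det A)A$ for $3\times3$ matrices, and the Pfaffian analogue $(MX\,{}^tM)^\#=(\det M)\,{}^tM^{-1}X^\#M^{-1}$; for instance these identities already rewrite the right-hand side of (2) in the equivalent form $\bigl(x_2-\det(x_2)\,y_1^*\overline{y_1}\bigr)/\det(I-x_2^\sharp y_1^*\overline{y_1})$, and similarly for (4).

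The main obstacle will be the exceptional case $d_2=4$. Since $\fp^+=\Herm(3,\BO)^\BC$ carries no matrix realization of its own, the computation has to be carried out in the model $\Herm(3,\BO)^\BC\simeq M(2,6;\BC)\oplus\Skew(6,\BC)$ of Section~\ref{EJTS}; this forces one to track carefully the relation between the Jordan adjoint $\sharp$ on $\Herm(3,\BH)^\BC$ and the Pfaffian adjoint $\#$ on $\Skew(6,\BC)$ (they differ by conjugation with $J_6$), between the $\dottimes$-product and the $\#$-adjoint, and the various $J_2$'s and complex conjugations entering both the $M(2,6;\BC)$ description of $\fp^+_1$ and the $\Skew(6,\BC)$ description of $\fp^+_2$. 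Verifying cleanly that the generic norm and quasi-inverse of $\fp^+$ restrict as claimed to these subsystems, and that the inner-product normalizations agree with those fixed in Sections~\ref{root_sys} and~\ref{realize}, is the other point requiring care; once these are settled, parts (1)--(4) follow from the computations sketched above.
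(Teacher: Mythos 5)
Your strategy would yield the lemma, but it is organized differently from the paper's proof, and one of your shortcuts is a genuine trap. For (1) and (3) the paper does not pass through $h(x_2,y_1)^2$ at all: it computes $Q(x_2)y_1$ from the explicit formulas of Section~\ref{EJTS} (it lands in $\fp^+_1\simeq M(1,3;\BC)$ resp.\ $M(2,6;\BC)$), and then evaluates $h(Q(x_2)y_1,y_1)=h_1(Q(x_2)y_1,y_1)^{2/\varepsilon}$ using $\chi|_{K_1}=\chi_1^{2/\varepsilon}$, so everything reduces to the $1\times1$ resp.\ $2\times2$ determinant $\det(I-uv^*)$ on the subsystem $\fp^+_1$; this bypasses both the octonionic closed form of $h$ on $\Herm(3,\BO)^\BC$ and the perfect-square discussion you need for (3). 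Your route via Proposition~\ref{projprop}(2) and $h(x,y)=1-(x|y)+(x^\sharp|y^\sharp)-\det x\,\overline{\det y}$ is legitimate, but the remark that ``since the answer is squared the sign of $h(x_2,y_1)$ is immaterial'' is false: the ambiguity is not an overall sign of $h$ (the constant term is pinned to $1$) but the sign of the non-constant term, and $(1+t)^2\neq(1-t)^2$. Concretely, $\fp^+_1$ sits inside $\Herm(3,\BK)^\BC$ as $j$ times the skew pattern (the imaginary unit $j$ of the Cayley--Dickson extension is part of the embedding fixed in Section~\ref{EJTS}), and it is exactly this factor that converts $\mathrm{adj}(y_1^*)$ into $-y_1^*\overline{y_1}$ rather than $+y_1^*\overline{y_1}$ and produces the correct $h(x_2,y_1)=1-\overline{y_1}x_2^\sharp y_1^*$; dropping the $j$ gives $(1+\overline{y_1}x_2^\sharp y_1^*)^2$, which is wrong.

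For (2) and (4) your reduction is essentially the paper's: your simplified right-hand side of (2), namely $\det\big(I-x_2^\sharp y_1^*\overline{y_1}\big)^{-1}\big(x_2-\det(x_2)y_1^*\overline{y_1}\big)$, is exactly the intermediate expression $\big(1-y_1x_2^\sharp{}^t\hat{y}_1\big)^{-1}x_2\big(I-x_2^\sharp{}^t\hat{y}_1y_1\big)$ the paper arrives at. The difference is that the paper performs the whole computation uniformly in the real Jordan algebras $\Herm(3,\BK')$, $\BK'=\BR,\BH$, where $Q(x)y=xyx$ and $(x|y)x-x^\sharp\times y=xyx$ hold, inverts $I-\big(({}^t\hat{y}_1y_1)\times x_2\big)x_2$ there by a short direct verification, and only complexifies (holomorphically in $x_2$, anti-holomorphically in $y_1$) at the very end; you propose to work directly in the complexified models $\Sym(3,\BC)$ and $\Skew(6,\BC)$, which is workable but forces you to carry the $J_2$'s, the $\dottimes$-identities and the conjugations through the entire computation --- precisely the bookkeeping you flag as the main obstacle, and which the real-form trick is designed to avoid. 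In short: correct approach, different packaging, and one incorrect sign claim in (1)/(3) that careful execution must repair.
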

Here $x^\sharp$ on $\Sym(3,\BC)$ is defined in (\ref{adjoint3}), $x^\#$ on $\Skew(6,\BC)$ is defined in (\ref{adjoint6}),
and $J_2=\left(\begin{smallmatrix}0&1\\-1&0\end{smallmatrix}\right)$.
\begin{proof}
(1), (3) Let $x_2\in \Sym(3,\BC)$ resp. $\Skew(6,\BC)$, $y_1\in M(1,3;\BC)$ resp. $M(2,6;\BC)$.
Then we have $Q(x_2)y_1=\overline{y_1}x_2^\sharp\in M(1,3;\BC)$ resp.~$J_2\overline{y_1}x_2^\#\in M(2,6;\BC)$,
and since $\chi|_{K_1}=\chi_1^{2/\varepsilon}$, we get
\begin{align*}
h(Q(x_2)y_1,y_1)&=h_1(Q(x_2)y_1,y_1)^{2/\varepsilon} \\
&=\begin{cases} \big(1-\overline{y_1}x_2^\sharp y_1^*\big)^2=\det\big(I-x_2^\sharp y_1^*\overline{y_1}\big)^2 & (d_2=1), \\
\det\big(I-J_2\overline{y_1}x_2^\#y_1^*\big)=\det\big(I-x_2^\#y_1^*J_2\overline{y_1}\big) & (d_2=4). \end{cases}
\end{align*}
(2), (4) Let $x_2\in \Herm(3,\BK')$, $y_1\in M(1,3;\BK')$ with $\BK'=\BR,\BH$.
Then we have $Q(y_1)x_2=\big({}^t\hspace{-1pt}\hat{y}_1y_1\big)\times x_2\in\Herm(3,\BK')$,
where $x\times y$ is as (\ref{Freudenthal_def}), and hence
\begin{gather*} (x_2)^{Q(y_1)x_2}=x_2\big(I-\big(\big({}^t\hspace{-1pt}\hat{y}_1y_1\big)\times x_2\big)x_2\big)^{-1}. \end{gather*}
By (\ref{Freudenthal}), it holds that
\begin{align*}
\big({}^t\hspace{-1pt}\hat{y}_1y_1\big)\times x_2&=\det(x_2)^{-1}\big({}^t\hspace{-1pt}\hat{y}_1y_1\big)\times(x_2^\sharp)^\sharp
 =\det(x_2)^{-1}\big({-}x_2^\sharp{}^t\hspace{-1pt}\hat{y}_1y_1x_2^\sharp +\Re_{\BK'}\Tr(x_2^\sharp{}^t\hspace{-1pt}\hat{y}_1y_1)x_2^\sharp\big) \\
& =-x_2^\sharp{}^t\hspace{-1pt}\hat{y}_1y_1x_2^{-1}+\big(y_1x_2^\sharp{}^t\hspace{-1pt}\hat{y}_1\big)x_2^{-1},
\end{align*}
where $\det(x_2)$ means the determinant polynomial in the sense of Jordan algebras when $\BK'=\BH$, and thus we have
\begin{gather*} I-\big(\big({}^t\hspace{-1pt}\hat{y}_1y_1\big)\times x_2\big)x_2=I+x_2^\sharp{}^t\hspace{-1pt}\hat{y}_1y_1-\big(y_1x_2^\sharp{}^t\hspace{-1pt}\hat{y}_1\big)I. \end{gather*}
Now it holds that
\begin{gather*} \big(I-\big(\big({}^t\hspace{-1pt}\hat{y}_1y_1\big)\times x_2\big)x_2\big)^{-1}
=\big(1-y_1x_2^\sharp{}^t\hspace{-1pt}\hat{y}_1\big)^{-1}\big(I-x_2^\sharp{}^t\hspace{-1pt}\hat{y}_1y_1\big), \end{gather*}
since
\begin{gather*}
 \big(I+x_2^\sharp{}^t\hspace{-1pt}\hat{y}_1y_1-\big(y_1x_2^\sharp{}^t\hspace{-1pt}\hat{y}_1\big)I\big)
\big(I-x_2^\sharp{}^t\hspace{-1pt}\hat{y}_1y_1\big) \\
\qquad{} =I-x_2^\sharp{}^t\hspace{-1pt}\hat{y}_1y_1x_2^\sharp{}^t\hspace{-1pt}\hat{y}_1y_1
-\big(y_1x_2^\sharp{}^t\hspace{-1pt}\hat{y}_1\big)\big(I-x_2^\sharp{}^t\hspace{-1pt}\hat{y}_1y_1\big) \\
\qquad{} =I-\big(y_1x_2^\sharp{}^t\hspace{-1pt}\hat{y}_1\big)x_2^\sharp{}^t\hspace{-1pt}\hat{y}_1y_1
-\big(y_1x_2^\sharp{}^t\hspace{-1pt}\hat{y}_1\big)\big(I-x_2^\sharp{}^t\hspace{-1pt}\hat{y}_1y_1\big)
=\big(1-y_1x_2^\sharp{}^t\hspace{-1pt}\hat{y}_1\big)I.
\end{gather*}
Therefore we get
\begin{align*}
(x_2)^{Q(y_1)x_2}&=x_2\big(I-\big(\big({}^t\hspace{-1pt}\hat{y}_1y_1\big)\times x_2\big)x_2\big)^{-1}
=\big(1-y_1x_2^\sharp{}^t\hspace{-1pt}\hat{y}_1\big)^{-1}x_2\big(I-x_2^\sharp{}^t\hspace{-1pt}\hat{y}_1y_1\big) \\
&=\det(x_2)^{-1}\det\big(\big(I-x_2^\sharp{}^t\hspace{-1pt}\hat{y}_1y_1\big)^{-1}x_2^\sharp\big)
\big(\big(I-x_2^\sharp{}^t\hspace{-1pt}\hat{y}_1y_1\big)^{-1}x_2^\sharp\big)^{-1} \\
&=\det(x_2)^{-1}\big(\big(I-x_2^\sharp{}^t\hspace{-1pt}\hat{y}_1y_1\big)^{-1}x_2^\sharp\big)^\sharp.
\end{align*}
Then by complexifying holomorphically in $x_2$, anti-holomorphically in $y_1$
(as in Section \ref{EJTS} for $\BK'=\BH$), we get the desired formulas.
\end{proof}

Now we set
\begin{align*}
W&=\cP_\bk(\fp^+_2)\otimes\chi^{-\lambda} \\
&\simeq \begin{cases} V_{2(k_1,k_2,k_3)}^{(3)\vee}\otimes\chi_1^{-2\lambda}
\simeq V_{2(0;-k_2-k_3,-k_1-k_3,-k_1-k_2)}^{(1,3)\vee}\otimes\chi_1^{-2\lambda} & (d_2=1), \\
V_{(0,0;-k_2-k_3,-k_2-k_3,-k_1-k_3,-k_1-k_3,-k_1-k_2,-k_1-k_2)}^{(2,6)\vee}\otimes\chi_1^{-\lambda} & (d_2=4), \end{cases}
\end{align*}
and denote $\rK(x_2)=\rK_{(k_1,k_2,k_3)}^{(d_2)}(x_2)$. Then by the previous lemma,
\begin{gather*}
 h(Q(x_2)y_1,y_1)^{-\lambda/2}\rK_{(k_1,k_2,k_3)}^{(d_2)}\big((x_2)^{Q(y_1)x_2}\big) \\
 =\begin{cases} \det\big(I-y_1^*\overline{y_1}x_2^\sharp\big)^{-\lambda}\rK_{(k_1,k_2,k_3)}^{(1)}
\big(\det(x_2)^{-1}\big(\big(I-x_2^\sharp y_1^*\overline{y_1}\big)^{-1}x_2^\sharp\big)^\sharp\big) & (d_2=1),\\
\det\big(I-y_1^*J_2\overline{y_1}x_2^\#\big)^{-\lambda/2}\rK_{(k_1,k_2,k_3)}^{(4)}
\big(\Pf(x_2)^{-1}\big(\big(I-x_2^\#y_1^*J_2\overline{y_1}\big)^{-1}x_2^\#\big)^\#\big) & (d_2=4) \end{cases} \\
 =\begin{cases} \det(x_2)^{-|\bk|}\det\big(I-y_1^*\overline{y_1}x_2^\sharp\big)^{-\lambda}\rK_{(k_1,k_2,k_3)}^{(1)}
\big(\big(\big(I-x_2^\sharp y_1^*\overline{y_1}\big)^{-1}x_2^\sharp\big)^\sharp\big) & (d_2=1),\\
\Pf(x_2)^{-|\bk|}\det\big(I-y_1^*J_2\overline{y_1}x_2^\#\big)^{-\lambda/2}\rK_{(k_1,k_2,k_3)}^{(4)}
\big(\big(\big(I-x_2^\#y_1^*J_2\overline{y_1}\big)^{-1}x_2^\#\big)^\#\big) & (d_2=4). \end{cases}
\end{gather*}
Then since the map $f(x_2)\mapsto f(x_2^{\sharp(\#)})$ yields
$\cP_{(k_1,k_2,k_3)}(\fp^+_2)\to \cP_{(k_1+k_2,k_1+k_3,k_2+k_3)}(\fp^+_2)$,
we write $\rK_{(k_1,k_2,k_3)}^{(d_2)}(x_2^{\sharp(\#)})=\rK_{(k_1+k_2,k_1+k_3,k_2+k_3)}^{(d_2)}(x_2)$. Then we have
\begin{gather*}
 h(Q(x_2)y_1,y_1)^{-\lambda/2}\rK_{(k_1,k_2,k_3)}^{(d_2)}\big((x_2)^{Q(y_1)x_2}\big) \\
 =\begin{cases} \det(x_2)^{-|\bk|}\det\!\big(I-y_1^*\overline{y_1}x_2^\sharp\big)^{-\lambda}\rK_{(k_1+k_2,k_1+k_3,k_2+k_3)}^{(1)}
\big(\big(I-x_2^\sharp y_1^*\overline{y_1}\big)^{-1}x_2^\sharp\big) & (d_2=1), \\
\Pf(x_2)^{-|\bk|}\det\!\big(I-y_1^*J_2\overline{y_1}x_2^\#\big)^{-\lambda/2}\rK_{(k_1+k_2,k_1+k_3,k_2+k_3)}^{(4)}
\big(\big(I-x_2^\#y_1^*J_2\overline{y_1}\big)^{-1}x_2^\#\big)\!\!\!\!\! & (d_2=4). \end{cases}
\end{gather*}
Now for $x_2,z_2\in\fp^+_2=\Sym(3,\BC)$ resp. $\Skew(6,\BC)$ let
\begin{gather*}
\cK_{\bn,(k_1+k_2,k_1+k_3,k_2+k_3)}^{(d_2)}(x_2;z_2)
:=\Proj_{\bn,x}\big({\rm e}^{\frac{1}{\varepsilon}\tr(x_2z_2^*)}\rK_{(k_1+k_2,k_1+k_3,k_2+k_3)}^{(d_2)}(x_2)\big) \\
\hphantom{\cK_{\bn,(k_1+k_2,k_1+k_3,k_2+k_3)}^{(d_2)}(x_2;z_2):=}{} \in\cP\big(\fp^+_2\times\overline{\fp^+_2},\Hom\big(W,\chi^{-\lambda}\big)\big).
\end{gather*}
This is non-zero only if $n_1\ge k_1+k_2$, $n_2\ge k_1+k_3$, $n_3\ge k_2+k_3$ holds.
Then by Corollary \ref{ext_of_hK} we have
\begin{gather*}
 h(Q(x_2)y_1,y_1)^{-\lambda/2}\rK_{(k_1,k_2,k_3)}^{(d_2)}\big((x_2)^{Q(y_1)x_2}\big) \\
\qquad{}=\sum_{\bn\in\BZ_{++}^3}(\lambda+(k_1+k_2,k_1+k_3,k_2+k_3))_{(n_1-k_1-k_2,n_2-k_1-k_3,n_3-k_2-k_3),d_2} \\
\qquad\quad{} \times \begin{cases} \det(x_2)^{-|\bk|}\cK_{\bn,(k_1+k_2,k_1+k_3,k_2+k_3)}^{(1)}\big(x_2^\sharp;{}^t\hspace{-1pt}y_1y_1\big) & (d_2=1), \\
\Pf(x_2)^{-|\bk|}\cK_{\bn,(k_1+k_2,k_1+k_3,k_2+k_3)}^{(4)}\big(x_2^\#;-{}^t\hspace{-1pt}y_1J_2y_1\big) & (d_2=4). \end{cases}
\end{gather*}
We define $\cK_{\bm,\bk}^{(d_2)\prime}(x_2;y_1)\in\cP\big(\fp^+_2\times\overline{\fp^+_1},\Hom\big(W,\chi^{-\lambda}\big)\big)$ by
\begin{gather*}
\cK_{\bm,\bk}^{(1)\prime}(x_2;y_1) :=\det(x_2)^{-|\bk|}
\cK_{\substack{(k_1+k_2+m_3,k_1+k_3+m_2,k_2+k_3+m_1),\quad \\ \hspace{70pt}(k_1+k_2,k_1+k_3,k_2+k_3)}}^{(1)}
\big(x_2^\sharp;{}^t\hspace{-1pt}y_1y_1\big), \\
\cK_{\bm,\bk}^{(4)\prime}(x_2;y_1) :=\Pf(x_2)^{-|\bk|}
\cK_{\substack{(k_1+k_2+m_3,k_1+k_3+m_2,k_2+k_3+m_1),\quad \\ \hspace{70pt}(k_1+k_2,k_1+k_3,k_2+k_3)}}^{(4)}
\big(x_2^\#;-{}^t\hspace{-1pt}y_1J_2y_1\big),
\end{gather*}
so that
\begin{align*}
\sum_{\bm\in(\BZ_{\ge 0})^3}\cK_{\bm,\bk}^{(d_2)\prime}(x_2;y_1)
&=\begin{cases}\det(x_2)^{-|\bk|}{\rm e}^{\tr(x_2^\sharp y_1^*\overline{y_1})}\rK_{(k_1+k_2,k_1+k_3,k_2+k_3)}^{(1)}\big(x_2^\sharp\big) & (d_1=1), \\
\Pf(x_2)^{-|\bk|}{\rm e}^{\frac{1}{2}\tr(x_2^\#y_1^*J_2\overline{y_1})}\rK_{(k_1+k_2,k_1+k_3,k_2+k_3)}^{(4)}\big(x_2^\#\big) & (d_2=4) \end{cases} \\
&=\begin{cases}{\rm e}^{\overline{y_1}x_2^\sharp y_1^*}\rK_{(k_1,k_2,k_3)}^{(1)}(x_2) & (d_1=1), \\
{\rm e}^{\frac{1}{2}\tr(J_2\overline{y_1}x_2^\#y_1^*)}\rK_{(k_1,k_2,k_3)}^{(4)}(x_2) & (d_2=4). \end{cases}
\end{align*}
Then we get
\begin{gather*}
 h(Q(x_2)y_1,y_1)^{-\lambda/2}\rK_{(k_1,k_2,k_3)}^{(d_2)}\big((x_2)^{Q(y_1)x_2}\big) \\
\qquad{} =\sum_{\bm\in(\BZ_{\ge 0})^3}(\lambda+(k_1+k_2,k_1+k_3,k_2+k_3))_{(m_3,m_2,m_1),d_2}\cK_{\bm,\bk}^{(d_2)\prime}(x_2;y_1)
\end{gather*}
Now as a function of $x_2\in\fp^+_2$, $\cK_{\bm,\bk}^{(d_2)\prime}(x_2;y_1)$ sits in
\begin{gather*}
 \cP_{(k_1+m_2+m_3,k_2+m_1+m_3,k_3+m_1+m_2)}(\fp^+_2)
\simeq V_{2(k_1+m_2+m_3,k_2+m_1+m_3,k_3+m_1+m_2)}^{(3)\vee} \\
\qquad{} \simeq V_{2(0;-k_2-k_3-2m_1-m_2-m_3,-k_1-k_3-m_1-2m_2-m_3,-k_1-k_2-m_1-m_2-2m_3)}^{(1,3)\vee} \\
\qquad{} \simeq V_{2(|\bm|;-k_2-k_3-m_1,-k_1-k_3-m_2,-k_1-k_2-m_3)}^{(1,3)\vee}
\end{gather*}
for $d_2=1$ case, and
\begin{gather*}
 \cP_{(k_1+m_2+m_3,k_2+m_1+m_3,k_3+m_1+m_2)}(\fp^+_2)\\
\qquad{} \simeq V_{(0;-k_2-k_3-2m_1-m_2-m_3,-k_1-k_3-m_1-2m_2-m_3,-k_1-k_2-m_1-m_2-2m_3)^2}^{(2,6)\vee}\\
\qquad{} \simeq V_{(|\bm|,|\bm|;-k_2-k_3-m_1,-k_2-k_3-m_1,-k_1-k_3-m_2,-k_1-k_3-m_2,-k_1-k_2-m_3,-k_1-k_2-m_3)}^{(2,6)\vee}
\end{gather*}
for $d_2=4$ case, and by the $K_1$-invariance, as a function of $y_1$, $\cK_{\bm,\bk}^{(d_2)\prime}(x_2;y_1)$ sits in the space with the same lowest weight,
\begin{gather*}
 V_{2(|\bm|;-k_2-k_3-m_1,-k_1-k_3-m_2,-k_1-k_2-m_3)}^{(1,3)\vee} \\
\qquad{} \subset V_{2(|\bm|;0,0,-|\bm|)}^{(1,3)\vee}\otimes V_{2(0;-k_2-k_3,-k_1-k_3,-k_1-k_2)}^{(1,3)\vee}
\simeq \cP_{2|\bm|}(\fp^+_1)\otimes\cP_{(k_1,k_2,k_3)}(\fp^+_2)
\end{gather*}
for $d_2=1$ case, and
\begin{gather*}
 V_{(|\bm|,|\bm|;-k_2-k_3-m_1,-k_2-k_3-m_1,-k_1-k_3-m_2,-k_1-k_3-m_2,-k_1-k_2-m_3,-k_1-k_2-m_3)}^{(2,6)\vee}\\
\qquad{} \subset V_{(|\bm|,|\bm|;0,0,0,0,-|\bm|,-|\bm|)}^{(2,6)\vee}\otimes V_{(0;-k_2-k_3,-k_1-k_3,-k_1-k_2)^2}^{(2,6)\vee}\\
\qquad{} \simeq \cP_{(|\bm|,|\bm|)}(\fp^+_1)\otimes\cP_{(k_1,k_2,k_3)}(\fp^+_2)
\end{gather*}
for $d_2=4$ case. $\cK_{\bm,\bk}^{(d_2)\prime}(x_2;y_1)$ is non-zero only if these inclusions hold, that is,
$0\le m_1\le k_1-k_2$, $0\le m_2\le k_2-k_3$, $0\le m_3$ hold.
Therefore by the result of~\cite{N2} and~(\ref{exp_onD}) we get
\begin{gather*}
 F_{\tau\rho}(x_2;w_1)\\
 = \sum_{\substack{\bm\in(\BZ_{\ge 0})^3\\ 0\le m_j\le k_j-k_{j+1}}} \!\!\!\!
(\lambda+(k_1+k_2,k_1+k_3,k_2+k_3))_{(m_3,m_2,m_1),d_2}
\big\langle {\rm e}^{(y_1|w_1)_{\fp^+_1}}I_W,\cK_{\bm,\bk}^{(d_2)\prime}(x_2;y_1)^*\big\rangle_{\hat{\rho},y_1}\\
 =\begin{cases}\ds \sum_{\substack{\bm\in(\BZ_{\ge 0})^3\\ 0\le m_j\le k_j-k_{j+1}}}\frac{(\lambda+(k_1+k_2,k_1+k_3,k_2+k_3))_{(m_3,m_2,m_1),1}}
{(2\lambda+2(k_1+k_2,k_1+k_3,k_2+k_3))_{2(m_3,m_2,m_1),2}}\cK_{\bm,\bk}^{(1)\prime}(x_2;w_1) & (d_2=1), \\
\ds \sum_{\substack{\bm\in(\BZ_{\ge 0})^3\\ 0\le m_j\le k_j-k_{j+1}}}\frac{(\lambda+(k_1+k_2,k_1+k_3,k_2+k_3))_{(m_3,m_2,m_1),4}}
{(\lambda+(k_1+k_2,k_1+k_3,k_2+k_3)^2)_{(m_3,m_2,m_1)^2,2}}\cK_{\bm,\bk}^{(4)\prime}(x_2;w_1) & (d_2=4), \end{cases}\\
 =\begin{cases}\ds \sum_{\substack{\bm\in(\BZ_{\ge 0})^3\\ 0\le m_j\le k_j-k_{j+1}}}\!\!\!
\frac{1}{\left(\lambda+(k_1+k_2,k_1+k_3,k_2+k_3)+\frac{1}{2}\right)_{(m_3,m_2,m_1),2}}
\cK_{\bm,\bk}^{(1)\prime}\left(\!x_2;\frac{1}{2}w_1\!\right) \hspace{-18pt}& (d_2=1), \\
\ds \sum_{\substack{\bm\in(\BZ_{\ge 0})^3\\ 0\le m_j\le k_j-k_{j+1}}}
\frac{1}{(\lambda+(k_1+k_2,k_1+k_3,k_2+k_3)-1)_{(m_3,m_2,m_1),2}}\cK_{\bm,\bk}^{(4)\prime}(x_2;w_1) & (d_2=4). \end{cases}
\end{gather*}
By Theorem \ref{main}, by substituting $w_1$ with $\overline{\frac{\partial}{\partial x_1}}$, we get the intertwining operator from $(\cH_1)_{\tilde{K}_1}$ to $\cH_{\tilde{K}}$, and by Theorem~\ref{extend},
this extends to the intertwining operator between the spaces of all holomorphic functions if $\cH_1$ is holomorphic discrete. Also, by Theorem~\ref{continuation}, this continues meromorphically for all~$\lambda\in\BC$.
Therefore we have the following.
\begin{Theorem}\label{main3}\quad
\begin{enumerate}\itemsep=0pt
\item[$(1)$] Let $(G,G_1)=(\operatorname{SU}(3,3), \operatorname{SO}^*(6))$, and $\bk\in\BZ_{++}^3$.
Then the linear map
\begin{gather*}
\cF_{\lambda,\bk}\colon \ \cO_\lambda(D_1,V_{2(k_1,k_2,k_3)}^{(3)\vee})\to \cO_\lambda(D), \\
(\cF_{\lambda,\bk}f)(x_1,x_2)=\sum_{\substack{\bm\in(\BZ_{\ge 0})^3\\ 0\le m_j\le k_j-k_{j+1}}}
\frac{1}{\left(\lambda+(k_1+k_2,k_1+k_3,k_2+k_3)+\frac{1}{2}\right)_{(m_3,m_2,m_1),2}}\\
\hphantom{(\cF_{\lambda,\bk}f)(x_1,x_2)=}{} \times \cK_{\bm,\bk}^{(1)\prime}\left(x_2;\frac{1}{2}\overline{\frac{\partial}{\partial x_1}}\right)f(x_1)
\end{gather*}
$(x_1\in \Skew(3,\BC)\simeq M(1,3;\BC),\; x_2\in\Sym(3,\BC))$ intertwines the $\tilde{G}_1$-action.
\item[$(2)$] Let $(G,G_1)=(E_{7(-25)}, \operatorname{SU}(2,6))$ (up to covering), and $\bk\in\BZ_{++}^3$.
Then the linear map
\begin{gather*}
\cF_{\lambda,\bk}\colon \ \cO_\lambda(D_1,V_{(0,0;-k_2-k_3,-k_2-k_3,-k_1-k_3,-k_1-k_3,-k_1-k_2,-k_1-k_2)}^{(2,6)\vee})
\to \cO_\lambda(D), \\
(\cF_{\lambda,\bk}f)(x_1,x_2)=\sum_{\substack{\bm\in(\BZ_{\ge 0})^3\\ 0\le m_j\le k_j-k_{j+1}}}
\frac{1}{(\lambda+(k_1+k_2,k_1+k_3,k_2+k_3)-1)_{(m_3,m_2,m_1),2}}\\
\hphantom{(\cF_{\lambda,\bk}f)(x_1,x_2)=}{} \times \cK_{\bm,\bk}^{(4)\prime}\left(x_2;\overline{\frac{\partial}{\partial x_1}}\right)f(x_1)
\end{gather*}
$(x_1\in M(2,6;\BC),\; x_2\in\Skew(6,\BC))$ intertwines the $\tilde{G}_1$-action.
\end{enumerate}
\end{Theorem}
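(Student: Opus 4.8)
The plan is to obtain Theorem~\ref{main3} by specializing the general construction of Section~\ref{section3} to the two pairs $(\operatorname{SU}(3,3),\operatorname{SO}^*(6))$ and $(E_{7(-25)},\operatorname{SU}(2,6))$ and carrying out the explicit evaluation of the kernel $F_{\tau\rho}$. Concretely, I would take $(\tau,V)=(\chi^{-\lambda},\BC)$ with $\lambda$ large, set $W=\cP_\bk(\fp^+_2)\otimes\chi^{-\lambda}$ — which is irreducible under $\tilde K^\BC_1$, and for which both $\cP(\fp^+_2)\otimes V$ and $\cP(\fp^+_1,W)$ are multiplicity-free, so the hypotheses of Theorem~\ref{main}(2) and of Theorem~\ref{continuation} hold — and let $\rK(x_2)=\rK^{(d_2)}_{(k_1,k_2,k_3)}(x_2)$ be the $\tilde K^\BC_1$-invariant polynomial of~(\ref{K-invariance}). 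Theorem~\ref{main}(2) then presents the operator $\cF_{\tau\rho}$ as the one obtained from
\[
F_{\tau\rho}(x_2;w_1)=\bigl\langle {\rm e}^{(y_1|w_1)_{\fp^+_1}}I_W,\bigl(h(Q(x_2)y_1,y_1)^{-\lambda/2}\rK\bigl((x_2)^{Q(y_1)x_2}\bigr)\bigr)^*\bigr\rangle_{\hat\rho,y_1}
\]
by the substitution $w_1\mapsto\overline{\partial/\partial x_1}$, so the entire task reduces to evaluating this pairing in closed form.

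First I would prove the Jordan-theoretic identities in the Lemma preceding the theorem: $h(Q(x_2)y_1,y_1)=\det(I-x_2^\sharp y_1^*\overline{y_1})^2$ and $(x_2)^{Q(y_1)x_2}=\det(x_2)^{-1}((I-x_2^\sharp y_1^*\overline{y_1})^{-1}x_2^\sharp)^\sharp$ in the $\Sym(3,\BC)$ case, and the $\Skew(6,\BC)$ analogues with $\#$ and $J_2$. The strategy is to compute first in the Euclidean forms $\Herm(3,\BK')$, $\BK'=\BR,\BH$, using $Q(y_1)x_2=({}^t\hat y_1 y_1)\times x_2$, the identity $Q(x)y=(x|y)x-x^\sharp\times y=xyx$ of~(\ref{Freudenthal}), and the linearization $(x^\sharp)^\sharp=\det(x)\,x$, to rewrite $I-\bigl((({}^t\hat y_1 y_1)\times x_2)x_2\bigr)$ and, after inversion, factor it as $(1-y_1x_2^\sharp{}^t\hat y_1)^{-1}(I-x_2^\sharp{}^t\hat y_1 y_1)$; then complexify holomorphically in $x_2$ and anti-holomorphically in $y_1$, using for $\BK'=\BH$ the realizations $\Herm(3,\BH)'\subset\Herm(6,\BC)$ and $M(1,3;\BH)\subset M(2,6;\BC)$ fixed in Section~\ref{EJTS}. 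The relation $\chi|_{K_1}=\chi_1^{2/\varepsilon}=\chi_2$ accounts for the square/no-square discrepancy between the two cases.

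Next I would absorb the $\det(x_2)^{-|\bk|}$ (resp.\ $\Pf(x_2)^{-|\bk|}$) factors and use that $f(x_2)\mapsto f(x_2^\sharp)$ sends $\cP_{(k_1,k_2,k_3)}(\fp^+_2)$ to $\cP_{(k_1+k_2,k_1+k_3,k_2+k_3)}(\fp^+_2)$, so that the integrand becomes $h(x_2^\sharp,{}^ty_1y_1)^{-\lambda}\rK^{(d_2)}_{(k_1+k_2,k_1+k_3,k_2+k_3)}\bigl((x_2^\sharp)^{{}^ty_1y_1}\bigr)$ (with the exponent bookkeeping via $\varepsilon$ matching the $d_2=4$ case). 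Corollary~\ref{ext_of_hK} then gives the expansion $\sum_{\bn}(\lambda+\bl)_{\bn-\bl,d_2}\cK_{\bn,\bl}^{(d_2)}(x_2^\sharp;{}^ty_1y_1)$, which I reindex by $\bm$ into the polynomials $\cK^{(d_2)\prime}_{\bm,\bk}(x_2;y_1)$. A weight computation, using the identifications $\cP_{(m_1,m_2,m_3)}(\fp^+_2)\simeq V_{2(m_1,m_2,m_3)}^{(3)\vee}$ together with the $\operatorname{SO}^*(6)\simeq\operatorname{SU}(1,3)$ (resp.\ $\mathfrak s(\mathfrak u(2)\oplus\mathfrak u(6))$) labels of Section~\ref{realize}, shows $\cK^{(d_2)\prime}_{\bm,\bk}$ is nonzero only when $0\le m_j\le k_j-k_{j+1}$ and then lies in the one-dimensional $K_1$-invariant space inside $\cP_{2|\bm|}(\fp^+_1)\otimes\cP_\bk(\fp^+_2)$ (resp.\ $\cP_{(|\bm|,|\bm|)}(\fp^+_1)\otimes\cP_\bk(\fp^+_2)$). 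Finally, the norm formula of~\cite{N2} for $\operatorname{SO}^*(6)$ (resp.\ $\operatorname{SU}(2,6)$) together with~(\ref{exp_onD}) converts the $\hat\rho$-pairing into the stated product of Pochhammer symbols; Theorem~\ref{main} gives the intertwining property on $\tilde K_1$-finite vectors, Theorem~\ref{extend} the extension to holomorphic functions, and Theorem~\ref{continuation} the meromorphic continuation in $\lambda$ (its polynomial-growth hypothesis being immediate from Stirling's formula, as $p_{m,j}$ and $p_{m,j}q_{m,j}$ are inverses of products of Pochhammer symbols).

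The main obstacle I anticipate is the chain of Jordan identities in the Lemma — in particular tracking the quasi-inverse $(x_2)^{Q(y_1)x_2}$ through the Freudenthal product and justifying the holomorphic/anti-holomorphic complexification in the octonionic case $\BK'=\BH$ — together with the simultaneous bookkeeping that aligns the $d_2=1$ and $d_2=4$ computations via $\chi|_{K_1}=\chi_1^{2/\varepsilon}=\chi_2$. The representation-theoretic weight matching and the invocation of~\cite{N2} and of Theorems~\ref{main}, \ref{extend}, \ref{continuation} are then essentially routine.
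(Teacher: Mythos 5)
Your proposal is correct and follows essentially the same route as the paper's own computation in Section~\ref{sect_examples}: the same Jordan-algebra lemma for $h(Q(x_2)y_1,y_1)$ and $(x_2)^{Q(y_1)x_2}$ proved via the Euclidean forms $\Herm(3,\BK')$ and complexification, the same shift $\bk\mapsto(k_1+k_2,k_1+k_3,k_2+k_3)$ through $x_2\mapsto x_2^{\sharp}$, the expansion via Corollary~\ref{ext_of_hK}, the weight-matching argument yielding the range $0\le m_j\le k_j-k_{j+1}$, and the norm formulas of~\cite{N2} with~(\ref{exp_onD}) producing the Pochhammer factors. No gaps to report.
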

Especially, if $\bk=(k,k,k)$, $\bm=(0,0,m)$, then we have
\begin{gather*}
 \cK_{\bm,\bk}^{(d_2)\prime}(x_2;y_1) \\
 =\begin{cases} \ds \det(x_2)^{-3k}\det\big(x_2^\sharp\big)^{2k}\tilde{\Phi}_{(m,0,0)}^{(1)}\big(x_2^\sharp y_1^*\overline{y_1}\big)
=\frac{1}{m!}\det(x_2)^k\big(\overline{y_1}x_2^\sharp y_1^*\big)^m & (d_2=1), \\
\ds \Pf(x_2)^{-3k}\Pf\big(x_2^\#\big)^{2k}\tilde{\Phi}_{(m,0,0)}^{(4)}\big(x_2^\#y_1^*J_2\overline{y_1}\big)
=\frac{(-1)^m}{m!}\Pf(x_2)^k\Pf\big(\overline{y_1}x_2^\#y_1^*\big)^m & (d_2=4). \end{cases}
\end{gather*}
Therefore, for $(G,G_1)=(\operatorname{SU}(3,3),\operatorname{SO}^*(6))$ we get
\begin{gather*}
\cF_{\lambda,k}\colon \ \cO_{\lambda+2k}(D_1)\to \cO_\lambda(D), \\
(\cF_{\lambda,k}f)(x_1,x_2)=\det(x_2)^k\sum_{m=0}^\infty \frac{1}{\left(\lambda+2k+\frac{1}{2}\right)_m}\frac{1}{m!}
\left(\frac{1}{4}\frac{\partial}{\partial x_1}x_2^\sharp
{\vphantom{\biggl(}}^t\!\!\left(\frac{\partial}{\partial x_1}\right)\right)^mf(x_1)
\end{gather*}
$(x_1\in M(1,3;\BC), \;x_2 \in \Sym(3,\BC))$, and for $(G,G_1)=(E_{7(-25)},\operatorname{SU}(2,6))$ we get
\begin{gather*}
\cF_{\lambda,k}\colon \ \cO_{\lambda+2k}(D_1)\to \cO_\lambda(D), \\
(\cF_{\lambda,k}f)(x_1,x_2)
=\Pf(x_2)^k\sum_{m=0}^\infty \frac{1}{(\lambda+2k-1)_m}\frac{(-1)^m}{m!}
\Pf\left(\frac{\partial}{\partial x_1}x_2^\#
{\vphantom{\biggl(}}^t\!\!\left(\frac{\partial}{\partial x_1}\right)\right)^mf(x_1)
\end{gather*}
$(x_1\in M(2,6;\BC), \;x_2 \in \Skew(6,\BC))$.

\subsection[$\cF_{\tau\rho}$ for $(G,G_1)=(\operatorname{SU}(s,s), \operatorname{Sp}(s,\BR))$, $(\operatorname{SU}(s,s), \operatorname{SO}^*(2s))$]{$\boldsymbol{\cF_{\tau\rho}}$ for $\boldsymbol{(G,G_1)=(\operatorname{SU}(s,s), \operatorname{Sp}(s,\BR))}$, $\boldsymbol{(\operatorname{SU}(s,s), \operatorname{SO}^*(2s))}$}
In this subsection we set
\begin{gather*} (G,G_1)=\begin{cases}(\operatorname{SU}(s,s), \operatorname{Sp}(s,\BR))& (\text{Case }1),\\
(\operatorname{SU}(s,s), \operatorname{SO}^*(2s))& (\text{Cases }2,3) \end{cases} \end{gather*}
with $s\ge 2$. Then the maximal compact subgroups are $(K,K_1)=(S(U(s)\times U(s)), U(s))$, and~$\fp^+$, $\fp^+_1:=\fg_1^\BC\cap\fp^+$, $\fp^+_2:=(\fp^+_1)^\bot$ are realized as
\begin{gather*} \fp^+=M(s,\BC),\qquad (\fp^+_1,\fp^+_2)=\begin{cases}
(\Sym(s,\BC),\Skew(s,\BC))& (\text{Case }1),\\ (\Skew(s,\BC),\Sym(s,\BC))& (\text{Cases }2,3). \end{cases} \end{gather*}
Let $\chi$, $\chi_1$ be the characters of $K^\BC$, $K_1^\BC$ respectively, normalized as (\ref{char}).
Then for $k\in K_1=U(s)$, $\chi(k)=\det(k)$, $\chi_1(k)=\det(k)^{1/\varepsilon}$ holds, where $\varepsilon=1$ for Case~1,
$\varepsilon=2$ for Cases 2,~3.

Now let $(\tau,V)=\big(\chi^{-\lambda},\BC\big)$ with $\lambda$ sufficiently large,
$W\subset\cP(\fp^+_2)\otimes\chi^{-\lambda}$ be an irreducible $\tilde{K}^\BC_1$-submodule,
and $\rK(x_2)\in \cP(\fp^+_2,\Hom(W,\chi^{-\lambda}))$ be the $\tilde{K}^\BC_1$-invariant polynomial in the sense of (\ref{K-invariance}).
For $x_2\in\fp^+_2$, $w_1\in\fp^+_1$, we want to compute
\begin{align*}
F_{\tau\rho}(x_2;w_1)
&=\big\langle {\rm e}^{(y_1|w_1)_{\fp^+_1}}I_W,
\big(h(Q(x_2)y_1,y_1)^{-\lambda/2}\rK\big((x_2)^{Q(y_1)x_2}\big)\big)^*\big\rangle_{\hat{\rho},y_1} \\
&=\big\langle {\rm e}^{(y_1|w_1)_{\fp^+_1}}I_W,
\big(\det(I-x_2y_1^*x_2y_1^*)^{-\lambda/2}\rK\big(x_2(I-y_1^*x_2y_1^*x_2)^{-1}\big)\big)^*\big\rangle_{\hat{\rho},y_1}.
\end{align*}
Now we consider $W$ of the form
\begin{gather*} W=\begin{cases}\cP_{(\underbrace{\scriptstyle k+1,\ldots,k+1}_l,k,\ldots,k)}(\Skew(s,\BC))\otimes\chi^{-\lambda}
\simeq V_{\langle 2l\rangle}^{(s)\vee}\otimes\chi_1^{-\lambda-k} &\\
\hspace{48pt} \left(k\in\BZ_{\ge 0}\text{ if }s\colon \text{even},\; k=0\text{ if }s\colon \text{odd},\;
l=0,1,\ldots,\left\lceil\frac{s}{2}\right\rceil-1\right)&(\text{Case }1),\\
\cP_{(k+l,k,\ldots,k)}(\Sym(s,\BC))\otimes\chi^{-\lambda}
\simeq V_{(2l,0,\ldots,0)}^{(s)\vee}\otimes\chi_1^{-2\lambda-4k} \qquad (k,l\in\BZ_{\ge 0})&(\text{Case }2),\\
\cP_{(k+l,\ldots,k+l,k)}(\Sym(s,\BC))\otimes\chi^{-\lambda}
\simeq V_{(2l,\ldots,2l,0)}^{(s)\vee}\otimes\chi_1^{-2\lambda-4k} \quad (k,l\in\BZ_{\ge 0})&(\text{Case }3),\end{cases} \end{gather*}
where we denote $(\underbrace{1,\ldots,1}_l,0,\ldots,0)=:\langle l\rangle$.
Then $\cH_{\varepsilon\lambda}(D_1,W)_{\tilde{K}_1}$ becomes multiplicity-free under~$\tilde{K}_1$.
However, when $(G,G_1)=(\operatorname{SU}(3,3),\operatorname{SO}^*(6))$ this list does not exhaust all $\tilde{K}_1$-multipli\-city-free submodules of $\cH_\lambda(D)$.
For this pair see Theorem \ref{main3}(1).
We write the polynomial $\rK(x_2)$ as
\begin{alignat*}{3}
&\rK(x_2)=\Pf(x_2)^k \rK_{\langle l\rangle}^{(4)}(x_2)\qquad & &(\text{Case }1),&\\
&\rK(x_2)=\det(x_2)^k \rK_{(l,0,\ldots,0)}^{(1)}(x_2)\qquad & &(\text{Case }2),&\\
&\rK(x_2)=\det(x_2)^k \rK_{(l,\ldots,l,0)}^{(1)}(x_2)\qquad & &(\text{Case }3).&
\end{alignat*}
Let $W_{(l)}'$ stand for $V_{\langle 2l\rangle}^{(s)\vee}$, $V_{(2l,0,\ldots,0)}^{(s)\vee}$ and $V_{(2l,\ldots,2l,0)}^{(s)\vee}$,
and let $\rK_{(l)}'(x_2)$ stand for $\rK_{\langle l\rangle}^{(4)}(x_2)$, \linebreak $\rK_{(l,0,\ldots,0)}^{(1)}(x_2)$ and
$\rK_{(l,\ldots,l,0)}^{(1)}(x_2)$ respectively, so that
\begin{gather*}
 F_{\tau\rho}(x_2;w_1)=\det(x_2)^{k\varepsilon/2}\\
 \qquad{}\times\big\langle {\rm e}^{(y_1|w_1)_{\fp^+_1}}I_W,
\big(\det(I-x_2y_1^*x_2y_1^*)^{-(\lambda+k\varepsilon)/2}\rK_{(l)}'\big(x_2(I-y_1^*x_2y_1^*x_2)^{-1}\big)\big)^*\big\rangle_{\hat{\rho},y_1}.
\end{gather*}
Now we put $\lambda+k\varepsilon=:\mu$, and find the expansion of $\det(I-x_2y_1^*x_2y_1^*)^{-\mu/2}\rK_{(l)}'\big(x_2(I-y_1^*x_2y_1^*x_2)^{-1}\big)$
by using the expansion of ${\rm e}^{\frac{1}{2}\tr(x_2y_1^*x_2y_1^*)}\rK_{(l)}'(x_2)$.
Since $\cP(\fp^+_2)$ and $\cP(\fp^+_1)\otimes W_{(l)}'$ are decomposed under $K_1$ as
\begin{alignat*}{3}
& \cP(\fp^+_2) \simeq\bigoplus_{\bm\in\BZ_{++}^{\lfloor s/2\rfloor}}\cP_\bm(\Skew(s,\BC))
\simeq\bigoplus_{\bm\in\BZ_{++}^{\lfloor s/2\rfloor}}V_{\bm^2}^{(s)\vee} \quad & &(\text{Case }1),& \\
& \cP(\fp^+_2) \simeq\bigoplus_{\bm\in\BZ_{++}^s}\cP_\bm(\Sym(s,\BC))
\simeq\bigoplus_{\bm\in\BZ_{++}^s}V_{2\bm}^{(s)\vee} \quad & &(\text{Cases }2,3), \\
& \cP(\fp^+_1)\otimes W_{(l)}'
 \simeq \bigoplus_{\bm\in\BZ_{++}^s}V_{2\bm}^{(s)\vee}\otimes V_{\langle 2l\rangle}^{(s)\vee}
\simeq \bigoplus_{\bm\in\BZ_{++}^s}\bigoplus_{\substack{\bl\in\{0,1\}^s,\; |\bl|=2l\\ \bm+\bl\in\BZ_{++}^s}}V_{2\bm+\bl}^{(s)\vee}\quad &&(\text{Case }1),& \\
& \cP(\fp^+_1)\otimes W_{(l)}' \simeq \bigoplus_{\bm\in\BZ_{++}^{\lfloor s/2\rfloor}}V_{\bm^2}^{(s)\vee}\otimes V_{(2l,0,\ldots,0)}^{(s)\vee}&&& \\
&\hphantom{\cP(\fp^+_1)\otimes W_{(l)}'}{} \simeq \bigoplus_{\bm\in\BZ_{++}^{\lfloor s/2\rfloor}}
\bigoplus_{\substack{\bl\in(\BZ_{\ge 0})^{\lceil s/2\rceil},\; |\bl|=2l\\ 0\le l_j\le m_{j-1}-m_j}}
V_{\substack{(m_1+l_1,m_1,m_2+l_2,m_2,\ldots,\hspace{30pt}\\ m_{\lfloor s/2\rfloor}+l_{\lfloor s/2\rfloor},m_{\lfloor s/2\rfloor}
(,l_{\lceil s/2\rceil}))}}^{(s)\vee} \quad & &(\text{Case }2), & \\
&\cP(\fp^+_1)\otimes W_{(l)}' \simeq \bigoplus_{\bm\in\BZ_{++}^{\lfloor s/2\rfloor}}V_{\bm^2}^{(s)\vee}\otimes V_{(2l,\ldots,2l,0)}^{(s)\vee}&&&\\
&\hphantom{\cP(\fp^+_1)\otimes W_{(l)}'}{} \simeq \bigoplus_{\bm\in\BZ_{++}^{\lfloor s/2\rfloor}}
\bigoplus_{\substack{\bl\in(\BZ_{\ge 0})^{\lceil s/2\rceil},\; |\bl|=2l\\ 0\le l_j\le m_j-m_{j+1}\\
0\le l_{\lceil s/2\rceil} \text{ if }s\colon \text{odd}}}
V_{\substack{(m_1+2l,m_1+2l-l_1,m_2+2l,m_2+2l-l_2,\ldots,\hspace{10pt}\\
m_{\lfloor s/2\rfloor}+2l,m_{\lfloor s/2\rfloor}+2l-l_{\lfloor s/2\rfloor}(,2l-l_{\lceil s/2\rceil}))}}^{(s)\vee} \quad\! && (\text{Case }3),&
\end{alignat*}
and since ${\rm e}^{\frac{1}{2}\tr(x_2y_1^*x_2y_1^*)}\rK_{(l)}'(x_2)$ is $K_1^\BC={\rm GL}(s,\BC)$-invariant, this is decomposed as
\begin{gather*} {\rm e}^{\frac{1}{2}\tr(x_2y_1^*x_2y_1^*)}\rK_{(l)}'(x_2)=\sum_{W''} \cK_{W''}(x_2;y_1), \end{gather*}
where $\cK_{W''}(x_2;y_1)\in (W''\boxtimes \overline{W''})^{K_1^\BC}$, and $W''$ runs over all irreducible $K_1^\BC$-modules
which appear commonly in $\cP(\fp^+_2)$ and $\cP(\fp^+_1)\otimes W_{(l)}'$, namely,
\begin{align}
{\rm e}^{\frac{1}{2}\tr(x_2y_1^*x_2y_1^*)}\rK_{(l)}'(x_2)&=\sum_{\bm\in\BZ_{++}^{\lfloor s/2\rfloor}}\sum_\bl \cK_{\bm,\bl}(x_2;y_1) \notag \\
&=\begin{cases}
\ds \sum_{\bm\in\BZ_{++}^{\lfloor s/2\rfloor}}
\sum_{\substack{\bl\in\{0,1\}^{\lfloor s/2\rfloor},\; |\bl|=l\\ \bm+\bl\in\BZ_{++}^{\lfloor s/2\rfloor}}}
\cK_{\bm,\bl}^{(4,1)}(x_2;y_1) & (\text{Case }1), \\
\ds \sum_{\bm\in\BZ_{++}^{\lfloor s/2\rfloor}}
\sum_{\substack{\bl\in(\BZ_{\ge 0})^{\lceil s/2\rceil},\; |\bl|=l\\ 0\le l_j\le m_{j-1}-m_j}}
\cK_{\bm,\bl}^{(1,4)}(x_2;y_1) & (\text{Case }2), \\
\ds \sum_{\bm\in\BZ_{++}^{\lfloor s/2\rfloor}}
\sum_{\substack{\bl\in(\BZ_{\ge 0})^{\lceil s/2\rceil},\; |\bl|=l\\
0\le l_j\le m_j-m_{j+1}\\ 0\le l_{\lceil s/2\rceil} \text{ if }s\colon \text{odd}}}
\cK_{\bm,-\bl}^{(1,4)}(x_2;y_1) & (\text{Case }3),
\end{cases} \label{indexl}
\end{align}
where for Cases 2, 3, we put $m_0=+\infty$, $m_{\lfloor s/2\rfloor+1}=0$,
and $\cK_{\bm,\bl}(x_2;y_1)$ sits in
\begin{alignat*}{3}
&\left(V_{(2\bm+\bl)^2}^{(s)\vee}\otimes \overline{V_{(2\bm+\bl)^2}^{(s)\vee}}\right)^{K_1^\BC}&&&\\
&\quad {} \subset \cP_{2\bm+\bl}(\Skew(s,\BC))_{x_2}\otimes\overline{\cP_{\bm^2}(\Sym(s,\BC))_{y_1}\otimes V_{\langle 2l\rangle}^{(s)\vee}}
& &(\text{Case }1),& \\
&\left(V_{\substack{2(m_1+l_1,m_1,m_2+l_2,m_2,\ldots,\hspace{15pt}\\ \quad m_{\lfloor s/2\rfloor}+l_{\lfloor s/2\rfloor},m_{\lfloor s/2\rfloor}
(,l_{\lceil s/2\rceil}))}}^{(s)\vee}\otimes \overline{
V_{\substack{2(m_1+l_1,m_1,m_2+l_2,m_2,\ldots,\hspace{15pt}\\ \quad m_{\lfloor s/2\rfloor}+l_{\lfloor s/2\rfloor},m_{\lfloor s/2\rfloor}
(,l_{\lceil s/2\rceil}))}}^{(s)\vee}}\right)^{K_1^\BC}&&&\\
&\quad{} \subset\cP_{\substack{(m_1+l_1,m_1,m_2+l_2,m_2,\ldots,\hspace{20pt}\\ \; m_{\lfloor s/2\rfloor}+l_{\lfloor s/2\rfloor},m_{\lfloor s/2\rfloor}
(,l_{\lceil s/2\rceil}))}}(\Sym(s,\BC))_{x_2}
\otimes\overline{\cP_{2\bm}(\Skew(s,\BC))_{y_1}\otimes V_{(2l,0,\ldots,0)}^{(s)\vee}}\ && (\text{Case }2),&\\
&\left(V_{\substack{2(m_1+l,m_1+l-l_1,m_2+l,m_2+l-l_2,\ldots,\hspace{10pt}\\ \quad
m_{\lfloor s/2\rfloor}+l,m_{\lfloor s/2\rfloor}+l-l_{\lfloor s/2\rfloor}(,l-l_{\lceil s/2\rceil}))}}^{(s)\vee}\otimes\overline{
V_{\substack{2(m_1+l,m_1+l-l_1,m_2+l,m_2+l-l_2,\ldots,\hspace{10pt}\\ \quad
m_{\lfloor s/2\rfloor}+l,m_{\lfloor s/2\rfloor}+l-l_{\lfloor s/2\rfloor}(,l-l_{\lceil s/2\rceil}))}}^{(s)\vee}}\right)^{K_1^\BC}\hspace{-30pt} &&&\\
&\quad {} \subset \cP_{\substack{(m_1+l,m_1+l-l_1,m_2+l,m_2+l-l_2,\ldots,\hspace{15pt}\\ \;
m_{\lfloor s/2\rfloor}+l,m_{\lfloor s/2\rfloor}+l-l_{\lfloor s/2\rfloor}(,l-l_{\lceil s/2\rceil}))}}(\Sym(s,\BC))_{x_2}&&&\\
& \quad\quad {} \otimes\overline{\cP_{2\bm}(\Skew(s,\BC))_{y_1}\otimes V_{(2l,\ldots,2l,0)}^{(s)\vee}} && (\text{Case }3).&
\end{alignat*}
Then $\det(I-x_2y_1^*x_2y_1^*)^{-\mu/2}\rK_{(l)}'\big(x_2(I-y_1^*x_2y_1^*x_2)^{-1}\big)$ is expanded as follows.
\begin{Proposition}\label{exp_susp}\quad
\begin{enumerate}\itemsep=0pt
\item[$(1)$] For Case~{\rm 1},
\begin{gather*}
 \det(I-x_2y_1^*x_2y_1^*)^{-\mu/2}\rK_{\langle l\rangle}^{(4)}\big(x_2(I-y_1^*x_2y_1^*x_2)^{-1}\big)\\
 =\sum_{\bm\in\BZ_{++}^{\lfloor s/2\rfloor}}\sum_\bl \frac{(\mu)_{\bm+\bl,2}}{(\mu)_{\langle l\rangle,2}}\cK_{\bm,\bl}^{(4,1)}(x_2;y_1)
=\sum_{\bm\in\BZ_{++}^{\lfloor s/2\rfloor}}\sum_\bl (\mu+\langle l\rangle)_{\bm+\bl-\langle l\rangle,2}\cK_{\bm,\bl}^{(4,1)}(x_2;y_1).
\end{gather*}
\item[$(2)$] For Case~{\rm 2},
\begin{gather*}
 \det(I-x_2y_1^*x_2y_1^*)^{-\mu/2}\rK_{(l,0,\ldots,0)}^{(1)}\big(x_2(I-y_1^*x_2y_1^*x_2)^{-1}\big)\\
 \qquad{}=\sum_{\bm\in\BZ_{++}^{\lfloor s/2\rfloor}}\sum_\bl \frac{(\mu)_{\bm+\bl,2}}{(\mu)_{(l,0,\ldots,0),2}}\cK_{\bm,\bl}^{(1,4)}(x_2;y_1)\\
 \qquad{} =\sum_{\bm\in\BZ_{++}^{\lfloor s/2\rfloor}}\sum_\bl (\mu+(l,0,\ldots,0))_{\bm+\bl-(l,0,\ldots,0),2}\cK_{\bm,\bl}^{(1,4)}(x_2;y_1),
\end{gather*}
where we identify $\bm=(m_1,\ldots,m_{\lfloor s/2\rfloor})\in\BZ_{++}^{\lfloor s/2\rfloor}$ with
$(m_1,\ldots,m_{\lfloor s/2\rfloor},0)\in\BZ_{++}^{\lceil s/2\rceil}$ if $s$ is odd.
\item[$(3)$] For Case~{\rm 3} with $s$ even,
\begin{gather*}
 \det(I-x_2y_1^*x_2y_1^*)^{-\mu/2}\rK_{(\underbrace{\scriptstyle l,\ldots,l}_{s-1},0)}^{(1)}\big(x_2(I-y_1^*x_2y_1^*x_2)^{-1}\big)\\
\qquad{} =\sum_{\bm\in\BZ_{++}^{s/2}}\sum_\bl \frac{(\mu+l)_{\bm-\bl+(\smash{\overbrace{\scriptstyle l,\ldots,l}^{s/2}}),2}}
{(\mu+l)_{(\underbrace{\scriptstyle l,\ldots,l}_{s/2-1},0),2}}\cK_{\bm,-\bl}^{(1,4)}(x_2;y_1)\\
\qquad{} =\sum_{\bm\in\BZ_{++}^{s/2}}\sum_\bl (\mu+l+(\underbrace{l,\ldots,l}_{s/2-1},0))_{\bm-\bl+(\underbrace{\scriptstyle 0,\ldots,0}_{s/2-1},l),2}
\cK_{\bm,-\bl}^{(1,4)}(x_2;y_1).
\end{gather*}
\item[$(4)$] For Case~{\rm 3} with $s$ odd, there exist monic polynomials $\varphi_{\bm,-\bl}(\mu)\in\BC[\mu]$ of degree $l-l_{\lceil s/2\rceil}$
such that
\begin{gather*}
 \det(I-x_2y_1^*x_2y_1^*)^{-\mu/2}\rK_{(\underbrace{\scriptstyle l,\ldots,l}_{s-1},0)}^{(1)}\big(x_2(I-y_1^*x_2y_1^*x_2)^{-1}\big)\\
\qquad{} =\sum_{\bm\in\BZ_{++}^{\lfloor s/2\rfloor}}\sum_\bl
\frac{(\mu+l)_{\bm-\bl'+\smash{(\overbrace{\scriptstyle l,\dots,l}^{\lfloor s/2\rfloor}}),2}\varphi_{\bm,-\bl}(\mu)}
{(\mu+l)_{(\underbrace{\scriptstyle l,\ldots,l}_{\lfloor s/2\rfloor}),2}}
\cK_{\bm,-\bl}^{(1,4)}(x_2;y_1)\\
\qquad{} =\sum_{\bm\in\BZ_{++}^{\lfloor s/2\rfloor}}\sum_\bl (\mu+2l)_{\bm-\bl',2}\varphi_{\bm,-\bl}(\mu)\cK_{\bm,-\bl}^{(1,4)}(x_2;y_1),
\end{gather*}
where for $\bl=(l_1,\ldots,l_{\lfloor s/2\rfloor},l_{\lceil s/2\rceil})\in(\BZ_{\ge 0})^{\lceil s/2\rceil}$, we put
$\bl'=(l_1,\ldots,l_{\lfloor s/2\rfloor})\in(\BZ_{\ge 0})^{\lfloor s/2\rfloor}$.
\end{enumerate}
\end{Proposition}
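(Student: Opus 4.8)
The plan is to run the argument in complete parallel with the proof of Corollary~\ref{ext_of_hK} (and of the Proposition immediately preceding it): both rely only on the expansion~(\ref{ext_of_h}) of $h^{-\mu}$ and on the Bergman cocycle identity~(\ref{Bergman_left}). The one genuinely new input is the observation that the generic norm and the quasi-inverse which actually occur here are those of the \emph{ambient} Jordan triple system $\fp^+=M(s,\BC)$, not of $\fp^+_2$; this is precisely what forces the subscript $2$ to appear in all four Pochhammer symbols of the statement.

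First I would record the required Jordan-theoretic identities for $G=\operatorname{SU}(s,s)$, using the explicit formulas of Section~\ref{realize} ($Q(x)y=xy^*x$, $x^y=x(I-y^*x)^{-1}$, $h(x,y)=\det(I-xy^*)$) together with Proposition~\ref{projprop}. Namely: $Q(y_1)x_2=y_1x_2^*y_1$ is fixed by $-\sigma$, hence lies in $\fp^+_2$; the element $x_2(I-y_1^*x_2y_1^*x_2)^{-1}$ equals $\Proj_2\bigl((x_2)^{y_1}\bigr)$, where $(x_2)^{y_1}$ is the quasi-inverse computed in $M(s,\BC)$ (Proposition~\ref{projprop}(1)); and, crucially,
\begin{gather*}
\det(I-x_2y_1^*x_2y_1^*)=h\bigl(Q(x_2)y_1,\,y_1\bigr)=h(x_2,y_1)^{2}
\end{gather*}
by Proposition~\ref{projprop}(2) --- equivalently, since the transpose of $x_2y_1^*$ is conjugate to $-x_2y_1^*$, so that its eigenvalues occur in pairs $\pm t$ (plus a single $0$ when $s$ is odd) and both sides are polynomials. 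Hence $\det(I-x_2y_1^*x_2y_1^*)^{-\mu/2}=h(x_2,y_1)^{-\mu}$, with $h$ the generic norm of $M(s,\BC)$ (whose characteristic multiplicity is $d=2$), and $\rK_{(l)}'\bigl(x_2(I-y_1^*x_2y_1^*x_2)^{-1}\bigr)=\widetilde{\rK}\bigl((x_2)^{y_1}\bigr)$, where $\widetilde{\rK}:=\rK_{(l)}'\circ\Proj_2$ is a homogeneous polynomial of degree $l$ on $M(s,\BC)$.

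Next I would expand $h(x_2,y_1)^{-\mu}\,\widetilde{\rK}\bigl((x_2)^{y_1}\bigr)$: decompose $\widetilde{\rK}=\sum_{|\bk|=l}\widetilde{\rK}_\bk$ into its $K^\BC={\rm GL}(s,\BC)\times{\rm GL}(s,\BC)$-isotypic pieces $\widetilde{\rK}_\bk\in\cP_\bk(M(s,\BC))$ and apply the Proposition preceding Corollary~\ref{ext_of_hK} (for the Jordan triple system $M(s,\BC)$, $d=2$) to each summand $h(x_2,y_1)^{-\mu}\widetilde{\rK}_\bk\bigl((x_2)^{y_1}\bigr)$, obtaining a series with coefficients of the form $(\mu+\bk)_{\bn-\bk,2}$. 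Restricting to $x_2\in\fp^+_2$, $y_1\in\fp^+_1$, projecting the value onto $\overline{W_{(l)}'}$, and then regrouping the series according to the $\tilde{K}_1^\BC={\rm GL}(s,\BC)$-isotypic decomposition of $\cP(\fp^+_1)\otimes W_{(l)}'$ --- identifying the homogeneous pieces with the $\cK_{\bm,\bl}^{(4,1)}$, respectively the $\cK_{\bm,\bl}^{(1,4)}$, of~(\ref{indexl}) through $\cP_\bn(\Skew(s,\BC))\simeq V_{\bn^2}^{(s)\vee}$ and $\cP_\bn(\Sym(s,\BC))\simeq V_{2\bn}^{(s)\vee}$, and collapsing the resulting sums of Pochhammer symbols into the single factor displayed in the statement --- is the same kind of bookkeeping as in~\cite{N2} and in the proof of~\cite[Proposition~2.6]{N}. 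This disposes of parts~(1), (2) and~(3) with $s$ even.

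The hard part will be part~(4), i.e.\ Case~$3$ with $s$ odd. There $\fp^+_2=\Sym(s,\BC)$ has odd rank $s=2\lfloor s/2\rfloor+1$, so a ${\rm GL}(s,\BC)$-type occurring in $\cP(\fp^+_2)$ carries an ``unpaired'' part indexed by $l_{\lceil s/2\rceil}$ that the doubling pattern $2\bm+\bl$ reflects only partially, and the sum of Pochhammer symbols that collapses to the coefficient of $\cK_{\bm,-\bl}^{(1,4)}$ no longer reduces to a single Pochhammer symbol but only to a product $(\mu+2l)_{\bm-\bl',2}\,\varphi_{\bm,-\bl}(\mu)$, the second factor being the ``leftover''. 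Isolating $\varphi_{\bm,-\bl}$ and showing that it is a product of exactly $l-l_{\lceil s/2\rceil}$ linear polynomials in $\mu$ --- hence monic of that degree, which amounts to matching the $\Gamma$-factors produced by~(\ref{ext_of_h}) on the two sides of the identity --- is the delicate point, and I expect it to be the main obstacle.
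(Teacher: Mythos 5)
Your reduction of the left-hand side to $h(x_2,y_1)^{-\mu}\,\rK_{(l)}'\bigl(\Proj_2((x_2)^{y_1})\bigr)$, with $h$ the generic norm of the ambient $\fp^+=M(s,\BC)$, is correct (it is Proposition~\ref{projprop} together with the explicit formulas of Section~\ref{realize}, and is already how the paper arrives at the expression to be expanded). The gap is in the step you describe as ``collapsing the resulting sums of Pochhammer symbols into the single factor displayed'': this is not bookkeeping, it is the entire content of the Proposition. After you decompose the extension $\widetilde{\rK}$ into ambient $K^\BC={\rm GL}(s)\times{\rm GL}(s)$-types $\bk$ and apply the expansion of $h(x,z)^{-\mu}\bK_\bk(x^z,y)$, each target $\cK_{\bm,\bl}$ receives contributions from \emph{several} pairs $(\bn,\bk)$ carrying \emph{distinct} Pochhammer factors $(\mu+\bk)_{\bn-\bk,2}$, weighted by branching coefficients for $\cP_\bn(M(s,\BC))|_{\fp^+_2}$ and for the $y_1$-variable that you never compute. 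That these weighted sums recombine into a single Pochhammer symbol is exactly what must be proved, and the fact that in Case~3 with $s$ odd they demonstrably do \emph{not} fully recombine (whence the extra monic factor $\varphi_{\bm,-\bl}$, whose degree and monicity your sketch also does not establish) shows the collapse is not automatic even in the cases where it happens.

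The paper's proof supplies precisely the missing input by a different route: for $s=2r$ even it restricts everything to anti-diagonal block matrices $\bigl(\begin{smallmatrix}0&x\\ \mp{}^t\hspace{-1pt}x&0\end{smallmatrix}\bigr)$, identifies both sides with restrictions of the known $U(r,r)$ reproducing-kernel expansion on $M(r,\BC)\oplus M(r,\BC)$ from~\cite{N2} under the substitution $(x,z;y,w)\mapsto(x,x;yx^*y,w)$, and reads off the coefficients using the linear-independence statement of Lemma~\ref{lemma_suss}, itself proved by evaluating against Schur polynomials via the maps $\alpha$, $\beta$; for $s$ odd it restricts the even-rank identity from $s+1$ to $s$, which is clean in Cases~1 and~2 but in Case~3 smears a single $\cK^{(1,4)(2r+2)}_{\bm,-\bk}$ over several $\cK^{(1,4)(2r+1)}_{\bm,-\bl}$ with coefficients summing to~$1$ --- this is where $\varphi_{\bm,-\bl}$ and its degree come from. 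To make your approach work you would need to replace Lemma~\ref{lemma_suss} and this restriction analysis with an explicit computation of the $M(s,\BC)\downarrow\Sym/\Skew$ branching multiplicities and the corresponding projection constants; as written, the proposal assumes the conclusion at its central step.
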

First we prove Proposition \ref{exp_susp} when $s$ is even. Let $s=2r$. We realize
\begin{gather*} \overline{W_{(l)}'}=\begin{cases}
\overline{V_{\langle 2l\rangle}^{(2r)\vee}}\simeq \overline{\cP_{\langle l\rangle}(\Skew(2r,\BC))}&(\text{Case }1),\\
\overline{V_{(2l,0,\ldots,0)}^{(2r)\vee}}\simeq \overline{\cP_{(l,0,\ldots,0)}(\Sym(2r,\BC))}&(\text{Case }2),\\
\overline{V_{(2l,\ldots,2l,0)}^{(2r)\vee}}\simeq \overline{\cP_{(l,\ldots,l,0)}(\Sym(2r,\BC))}&(\text{Case }3)\end{cases} \end{gather*}
as a space of polynomials in $w_2$, and write $\rK_{(l)}'(x_2)=\rK_{(l)}'(x_2,w_2)\in\cP(\fp^+_2\times\overline{\fp^+_2})$,
$\cK_{\bm,\bl}(x_2;y_1)=\cK_{\bm,\bl}(x_2;y_1,w_2)\in\cP(\fp^+_2\times\overline{\fp^+_1\times\fp^+_2})$.
Now we define $\Rest\colon \cP(\fp^+_2)\to \cP(M(r,\BC))$ by
\begin{gather*} (\Rest f)(x):=f\begin{pmatrix} 0&x\\ \mp {}^t\hspace{-1pt}x&0 \end{pmatrix}. \end{gather*}
We find $\Rest(W_{(l)}')$.
\begin{Lemma}\quad
\begin{enumerate}\itemsep=0pt
\item[$(1)$] For Case~{\rm 1}, $\Rest(\cP_{\langle l\rangle}(\Skew(2r,\BC)))=\cP_{\langle l\rangle}(M(r,\BC))$.
\item[$(2)$] For Case~{\rm 2}, $\Rest(\cP_{(l,0,\ldots,0)}(\Sym(2r,\BC)))=\cP_{(l,0,\ldots,0)}(M(r,\BC))$. \vspace{2mm}

\item[$(3)$] For Case~{\rm 3}, $\Rest(\cP_{(\smash{\overbrace{\scriptstyle l,\ldots,l}^{2r-1}},0)}(\Sym(2r,\BC)))
=\cP_{(\smash{\overbrace{\scriptstyle 2l,\ldots,2l}^{r-1}},l)}(M(r,\BC))$.
\end{enumerate}
\end{Lemma}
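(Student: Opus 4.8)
The plan is to prove each of the three assertions by realizing the relevant irreducible $\tilde K_1^\BC={\rm GL}(2r,\BC)$-module $\cP_\bm(\fp^+_2)$ through an explicit spanning set of polynomials, restricting those polynomials by hand along the slice $\iota(x)=\left(\begin{smallmatrix}0&x\\ \mp{}^tx&0\end{smallmatrix}\right)$ defining $\Rest$, and identifying the span of the resulting polynomials on $M(r,\BC)$. I will use that $\Rest$ is degree-preserving and equivariant for the block-diagonal subgroup ${\rm GL}(r,\BC)\times{\rm GL}(r,\BC)\hookrightarrow{\rm GL}(2r,\BC)=\tilde K_1^\BC$, which acts on the slice by $(a,b)\cdot\iota(x)=\iota(ax{}^tb)$, together with the explicit $K_1^\BC$-module structures displayed above, namely $\cP_{\langle l\rangle}(\Skew(2r,\BC))\simeq\Lambda^{2l}((\BC^{2r})^*)$, $\cP_{(l,0,\ldots,0)}(\Sym(2r,\BC))\simeq\Sym^{2l}((\BC^{2r})^*)$, and $\cP_{(l,0,\ldots,0)}(M(r,\BC))\simeq\Sym^l((\BC^r)^*)\otimes\Sym^l(\BC^r)$. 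Since each of the ambient polynomial spaces $\cP(\Skew(2r,\BC))$, $\cP(\Sym(2r,\BC))$, $\cP(M(r,\BC))$ is multiplicity-free by Theorem \ref{HKS}, it will be enough to exhibit one submodule of the correct isomorphism type.

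For Case 1 the copy of $\Lambda^{2l}((\BC^{2r})^*)$ in $\cP(\Skew(2r,\BC))$ is spanned by the $2l\times 2l$ sub-Pfaffians $X\mapsto\Pf(X_S)$, $|S|=2l$. If $S=A\sqcup B$ with $A\subseteq\{1,\ldots,r\}$ and $B\subseteq\{r+1,\ldots,2r\}$, then $(\iota(x))_S$ is block anti-diagonal, so its Pfaffian vanishes unless $|A|=|B|=l$ and in that case equals $\pm\det(x_{A,\,B-r})$. Hence $\Rest$ carries this spanning set onto the $l\times l$ minors of $x$, whose span is exactly $\cP_{\langle l\rangle}(M(r,\BC))$ (the $(1^l)\boxtimes(1^l)$-component), proving Case 1. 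For Case 2 the copy of $\Sym^{2l}((\BC^{2r})^*)$ in $\cP(\Sym(2r,\BC))$ is spanned by $X\mapsto({}^tuXu)^l$, $u\in\BC^{2r}$; writing $u={}^t({}^tp,{}^tq)$ with $p,q\in\BC^r$ and $X=\iota(x)$ gives ${}^tuXu=2\,{}^tpxq$, so $\Rest$ carries this set onto $\{({}^tpxq)^l:p,q\in\BC^r\}$, whose span is the Cartan component $\Sym^l((\BC^r)^*)\otimes\Sym^l(\BC^r)=\cP_{(l,0,\ldots,0)}(M(r,\BC))$, proving Case 2.

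Case 3 I would reduce to Case 2 via the Jordan-algebra inversion (adjugate) duality: on a Euclidean Jordan algebra of rank $n$ with determinant $\Delta$ and $\operatorname{adj}(x)=\Delta(x)x^{-1}$, the map $f\mapsto f\circ\operatorname{adj}$ is an isomorphism $\cP_\bm\to\cP_{m_1\bone-\bm^*}$ of the corresponding polynomial spaces whenever $\bm=(l,0,\ldots,0)$ (so $\deg f=m_1=l$ and $m_1\bone-\bm^*=(l^{n-1},0)$); this is the standard inversion identity for spherical polynomials (cf.\ \cite{FK}, and compare the $\Phi^{(d_0,r_0)}_{k-\bm^*,\bm}$ appearing earlier in this section). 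Applied to $\Sym(2r,\BC)$ this identifies $\cP_{(l^{2r-1},0)}(\Sym(2r,\BC))=\{f\circ\operatorname{adj}:f\in\cP_{(l,0,\ldots,0)}(\Sym(2r,\BC))\}$, and applied to $M(r,\BC)$ it identifies $\cP_{(l^{r-1},0)}(M(r,\BC))=\{g\circ\operatorname{adj}:g\in\cP_{(l,0,\ldots,0)}(M(r,\BC))\}$. An elementary block computation gives $\det(\iota(x))=(-1)^r\det(x)^2$ and $\operatorname{adj}(\iota(x))=(-1)^r\det(x)\,\iota(\operatorname{adj}(x))$, so for $f$ homogeneous of degree $l$, $\Rest(f\circ\operatorname{adj})(x)=f(\operatorname{adj}(\iota(x)))=(-1)^{rl}\det(x)^l\,\bigl((\Rest f)\circ\operatorname{adj}\bigr)(x)$.

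Letting $f$ run over $\cP_{(l,0,\ldots,0)}(\Sym(2r,\BC))$ and using Case 2 followed by the $M(r,\BC)$-version of the adjugate isomorphism, one gets $\Rest\bigl(\cP_{(l^{2r-1},0)}(\Sym(2r,\BC))\bigr)=\det(x)^l\cdot\cP_{(l^{r-1},0)}(M(r,\BC))=\cP_{((2l)^{r-1},l)}(M(r,\BC))$, the last equality because multiplication by $\det^l$ (the $l$-th power of the Jordan determinant of $M(r,\BC)$) is injective and ${\rm GL}(r,\BC)\times{\rm GL}(r,\BC)$-equivariant up to a character, hence maps the irreducible $\cP_\bn(M(r,\BC))$ onto the irreducible $\cP_{\bn+(l,\ldots,l)}(M(r,\BC))$. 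I expect the main obstacle to be precisely Case 3 — verifying that the adjugate pullback genuinely realizes $\cP_{(l^{2r-1},0)}(\Sym(2r,\BC))$ inside $\cP(\Sym(2r,\BC))$ and that it intertwines $\Rest$ with the correct bookkeeping of signs and powers of $\det$; Cases 1 and 2 are short once the explicit spanning sets are written down.
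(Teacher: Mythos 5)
Your argument is correct in substance, but it takes a genuinely different and much more computational route than the paper's. The paper's proof is a three-line branching-law argument: $\Rest$ intertwines the block-diagonal $U(r)\times U(r)$-actions, so its image is a sum of constituents common to the restriction of $W_{(l)}'$ to $U(r)\times U(r)$ and to $\cP(M(r,\BC))\simeq\bigoplus_\bm V_\bm^{(r)\vee}\boxtimes V_\bm^{(r)}$; the displayed (multiplicity-free) branchings of $V_{\langle 2l\rangle}^{(2r)\vee}$, $V_{(2l,0,\ldots,0)}^{(2r)\vee}$ and $V_{(2l,\ldots,2l,0)}^{(2r)\vee}$ show there is exactly one such common constituent, namely the $k=l$ summand, which is the asserted target in each case. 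That handles all three cases uniformly, at the cost of implicitly assuming $\Rest(W_{(l)}')\ne\{0\}$. Your explicit spanning sets (sub-Pfaffians mapping to $l\times l$ minors, $({}^t\hspace{-1pt}uXu)^l$ mapping to $2^l({}^t\hspace{-1pt}pxq)^l$) supply that nonvanishing for free and give concrete formulas for the restricted vectors, but they force you into the adjugate detour for Case~3, which is exactly where the paper's uniform argument costs nothing extra.

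One slip in Case 3: since $\iota(x)^{-1}=\iota\big(({}^t\hspace{-1pt}x)^{-1}\big)$, the correct identity is $\operatorname{adj}(\iota(x))=(-1)^r\det(x)\,\iota\big({}^t\hspace{-1pt}\operatorname{adj}(x)\big)$ rather than $(-1)^r\det(x)\,\iota(\operatorname{adj}(x))$, so that $\Rest(f\circ\operatorname{adj})(x)=(-1)^{rl}\det(x)^l\,(\Rest f)\big({}^t\hspace{-1pt}\operatorname{adj}(x)\big)$. This is harmless: $\cP_{(l,0,\ldots,0)}(M(r,\BC))$ is stable under $g\mapsto g\circ{}^t$ (transposition merely swaps $p$ and $q$ in the spanning set $({}^t\hspace{-1pt}pxq)^l$), so the image is unchanged and the conclusion $\det(x)^l\cdot\cP_{(l,\ldots,l,0)}(M(r,\BC))=\cP_{(2l,\ldots,2l,l)}(M(r,\BC))$ stands.
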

\begin{proof}
Since $W_{(l)}'$ and $\cP(M(r,\BC))$ are decomposed under $U(r)\times U(r)$ as
\begin{gather*}
\cP_{\langle l\rangle}(\Skew(2r,\BC))\simeq V_{\langle 2l\rangle}^{(2r)\vee}
\simeq \bigoplus_{k=0}^{2l} V_{\langle k\rangle}^{(r)\vee}\boxtimes V_{\langle 2l-k\rangle}^{(r)},\\
\cP_{(l,0,\ldots,0)}(\Sym(2r,\BC))\simeq V_{(2l,0,\ldots,0)}^{(2r)\vee}
\simeq \bigoplus_{k=0}^{2l} V_{(k,0,\ldots,0)}^{(r)\vee}\boxtimes V_{(2l-k,0,\ldots,0)}^{(r)},\\
\cP_{(\underbrace{\scriptstyle l,\ldots,l}_{2r-1},0)}(\Sym(2r,\BC))\simeq V_{(\underbrace{\scriptstyle 2l,\ldots,2l}_{2r-1},0)}^{(2r)\vee}
\simeq \bigoplus_{k=0}^{2l} V_{(\underbrace{\scriptstyle 2l,\ldots,2l}_{r-1},k)}^{(r)\vee}
\boxtimes V_{(\underbrace{\scriptstyle 2l,\ldots,2l}_{r-1},2l-k)}^{(r)},\\
\cP(M(r,\BC))\simeq \bigoplus_{\bm\in\BZ_{++}^r}\cP_\bm(M(r,\BC))
\simeq \bigoplus_{\bm\in\BZ_{++}^r} V_\bm^{(r)\vee}\boxtimes V_\bm^{(r)},
\end{gather*}
and $\Rest(W_{(l)}')$ appears commonly in both decomposition, only $k=l$ component remains.
\end{proof}

Now we write $\Rest(W_{(l)}')=:W_{(l)}''$, $\rK_{(l)}'\!\left(\!\Big(\begin{smallmatrix} 0&x\\\mp{}^t\hspace{-1pt}x&0 \end{smallmatrix}\Big),
\Big(\begin{smallmatrix} 0&y\\\mp{}^t\hspace{-1pt}y&0 \end{smallmatrix}\Big)\!\right)=:\rK_{(l)}''(x,y)
\!\in\! (W_{(l)}''\boxtimes\overline{W_{(l)}''})^{U(r)\times U(r)}$.
Next we consider $\cP(M(r,\BC)\oplus M(r,\BC))$, on which $U(r)\times U(r)\times U(r)=:K_{xL}\times K_{zL}\times K_R$ acts by
\begin{gather*} f(x,z)\mapsto f\big(k_{xL}^{-1}xk_R,k_{zL}^{-1}zk_R\big) \qquad ((k_{xL},k_{zL},k_R)\in K_{xL}\times K_{zL}\times K_R). \end{gather*}
Under this action we expand ${\rm e}^{\tr(xy^*)}\rK_{(l)}''(z,w)$ as
\begin{gather*} {\rm e}^{\tr(xy^*)}\rK_{(l)}''(z,w)=\sum_{\bm\in\BZ_{++}^r}\sum_\bl \bK_{\bm,\bl}^{(2)}(x,z;y,w), \end{gather*}
where $\bl$ runs over the same sets as (\ref{indexl}), and as functions on $(x,z)$, $\bK_{\bm,\bl}^{(2)}(x,z;y,w)$ sits in
\begin{alignat*}{3}
&V_{\bm,xL}^{(r)\vee}\boxtimes V_{\langle l\rangle,zL}^{(r)\vee}\boxtimes V_{\bm+\bl,R}^{(r)}
\subset \cP_\bm(M(r,\BC))_x\otimes\cP_{\langle l\rangle}(M(r,\BC))_z\quad & &(\text{Case }1),& \\
&V_{\bm,xL}^{(r)\vee}\boxtimes V_{(l,0,\ldots,0),zL}^{(r)\vee}\boxtimes V_{\bm+\bl,R}^{(r)}
\subset \cP_\bm(M(r,\BC))_x\otimes\cP_{(l,0,\ldots,0)}(M(r,\BC))_z \quad & &(\text{Case }2),& \\
&V_{\bm,xL}^{(r)\vee}\boxtimes V_{(2l,\ldots,2l,l),zL}^{(r)\vee}\boxtimes V_{\bm-\bl+2l,R}^{(r)}
\subset \cP_\bm(M(r,\BC))_x\otimes\cP_{(2l,\ldots,2l,l)}(M(r,\BC))_z \quad & &(\text{Case }3).&
\end{alignat*}
Now we claim the following.
\begin{Lemma}\label{lemma_suss}
$\cK_{\bm,\bl}\left(\Big(\begin{smallmatrix}0&x\\ \mp{}^t\hspace{-1pt}x&0\end{smallmatrix}\Big);
\Big(\begin{smallmatrix}0&y\\ \pm{}^t\hspace{-1pt}y&0\end{smallmatrix}\Big),\Big(\begin{smallmatrix}0&w\\ \mp{}^t\hspace{-1pt}w&0\end{smallmatrix}\Big)\right)
=\bK_{\bm,\bl}^{(2)}(x,x;yx^*y,w)$ holds, and these are linearly independent.
\end{Lemma}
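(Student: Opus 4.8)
The statement asserts two things: an identity relating the $K_1^\BC$-invariant building blocks $\cK_{\bm,\bl}$ appearing in the expansion of ${\rm e}^{\frac12\tr(x_2y_1^*x_2y_1^*)}\rK_{(l)}'(x_2)$ to the $(K_{xL}\times K_{zL}\times K_R)$-invariant blocks $\bK_{\bm,\bl}^{(2)}$, and a linear-independence assertion. The plan is to establish the identity first by a uniqueness argument on invariants, and then to deduce linear independence as a corollary of the fact that, under the decomposition of $\cP(M(r,\BC)\oplus M(r,\BC))$, each $\bK_{\bm,\bl}^{(2)}$ lies in a distinct isotypic component for $(K_{xL},K_{zL},K_R)$, hence the $\bK_{\bm,\bl}^{(2)}$ are automatically linearly independent, and the restriction map does not collapse them.

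First I would set up the substitution precisely. Writing $x_1 = \left(\begin{smallmatrix}0&x\\ \pm{}^t\hspace{-1pt}x&0\end{smallmatrix}\right)\in\fp^+_1$ and $x_2 = \left(\begin{smallmatrix}0&x\\ \mp{}^t\hspace{-1pt}x&0\end{smallmatrix}\right)\in\fp^+_2$ (with the sign conventions matching Cases 1 versus 2,3), I would compute $x_2 y_1^* x_2$ as a block matrix: it collapses into a block of the form $\left(\begin{smallmatrix}0& x y^* x\\ *&0\end{smallmatrix}\right)$ (up to signs and transposes), which is why the argument $yx^*y$ appears on the right-hand side of the claimed identity. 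The key structural point is that the left-hand object $\cK_{\bm,\bl}\big(x_2; y_1, w_2\big)$, restricted to this subvariety of pairs parametrized by $(x,y,w)\in M(r,\BC)^3$, is invariant under the residual symmetry group $K_{xL}\times K_{zL}\times K_R = U(r)^3$ — this follows from the $\tilde K_1^\BC = {\rm GL}(s,\BC)$-invariance of $\cK_{\bm,\bl}$ by restricting to the block subgroup $\mathrm{GL}(r)\times\mathrm{GL}(r)\subset\mathrm{GL}(2r)$ together with the compatibility of $\rK_{(l)}'$ with $\Rest$ established in the preceding two lemmas. Since $\bK_{\bm,\bl}^{(2)}(x,z;y,w)$ is the unique (up to scalar) such invariant living in the matching $(K_{xL},K_{zL},K_R)$-isotypic component — the component being pinned down by the weight data recorded in the display just before the lemma — both sides agree up to a constant, and evaluating at a convenient point (e.g.\ $x=y=w=e$, a maximal tripotent) fixes the constant to be $1$, using that both sides are normalized so that the $\bm$-isotypic projection of ${\rm e}^{(\cdot|\cdot)}$ reproduces $\bK^{(d)}$.

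For the linear independence: the $\bK_{\bm,\bl}^{(2)}(x,z;y,w)$ with varying $(\bm,\bl)$ lie in pairwise non-isomorphic $(K_{xL}\times K_{zL}\times K_R)$-isotypic subspaces of $\cP(M(r,\BC)\oplus M(r,\BC))$ — this is exactly the content of the displayed list of weights immediately preceding the lemma, since the triple of lowest weights $(\bm, \langle l\rangle \text{ or } (l,0,\ldots)\text{ etc.}, \bm\pm\bl)$ determines $(\bm,\bl)$. Therefore they are linearly independent in $\cP(M(r,\BC)\oplus M(r,\BC))$, and since the identity of the first part realizes each $\cK_{\bm,\bl}$ as $\bK_{\bm,\bl}^{(2)}$ composed with the substitution $(x,z,y,w)\mapsto(x,x,yx^*y,w)$, it remains to check this substitution map is ``generically injective enough'' that the pullbacks remain independent — e.g.\ by restricting to $z=x$ generic and noting the map $y\mapsto yx^*y$ is dominant onto a set large enough to separate the relevant $K_R$-types, or more directly by observing that the generating function $\sum\cK_{\bm,\bl} = {\rm e}^{\frac12\tr(x_2y_1^*x_2y_1^*)}\rK_{(l)}'(x_2)$ already exhibits the $\cK_{\bm,\bl}$ as the distinct multi-homogeneous/isotypic components of a single genuine function, so a vanishing linear combination forces all coefficients to vanish. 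I expect the main obstacle to be bookkeeping the sign conventions ($\pm$ versus $\mp$) and the transposes consistently across the three cases while computing $x_2 y_1^* x_2$, and verifying that the restriction map $\Rest$ indeed carries the $K_1^\BC$-isotypic component indexed by $(\bm,\bl)$ onto the $(K_{xL},K_{zL},K_R)$-isotypic component indexed the same way, rather than mixing several of them; this is where the two preceding lemmas (computing $\Rest(W_{(l)}')$) do the essential work, and I would lean on them together with the branching rule $U(2r)\downarrow U(r)\times U(r)$ to match the labels.
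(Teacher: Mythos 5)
Your overall strategy (match both families of invariants slot by slot, then project the common generating function ${\rm e}^{\tr(xy^*xy^*)}\rK_{(l)}''(x,w)={\rm e}^{\tr(x(yx^*y)^*)}\rK_{(l)}''(x,w)$ onto each slot) is the right shape, but the step you flag as "remains to check" is exactly where the real work lies, and neither of your two suggested mechanisms closes it. The difficulty is twofold. First, the restriction of $\cK_{\bm,\bl}(x_2;y_1,w_2)$ to the block subvariety $x_2=\Bigl(\begin{smallmatrix}0&x\\ \mp{}^t\hspace{-1pt}x&0\end{smallmatrix}\Bigr)$, $y_1=\Bigl(\begin{smallmatrix}0&y\\ \pm{}^t\hspace{-1pt}y&0\end{smallmatrix}\Bigr)$ is a genuine restriction to a proper subspace of $\fp^+_2\times\fp^+_1$, so "the $\cK_{\bm,\bl}$ are isotypic components of a single function" no longer applies after restriction: distinct components could a priori collapse or become dependent. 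Second, $\bK_{\bm,\bl}^{(2)}(x,x;yx^*y,w)$ is a composite of an invariant with the non-equivariant substitution $(x,z;y,w)\mapsto(x,x;yx^*y,w)$; after setting $z=x$ and replacing $y$ by the quadratic expression $yx^*y$, these functions do not sit in distinguishable $(K_{xL}\times K_{zL}\times K_R)$-isotypic components, so your "unique invariant in the matching isotypic component, hence equal up to a scalar fixed at $e$" argument has no component left to match against, and dominance of $y\mapsto yx^*y$ does not by itself separate the relevant types.

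The paper resolves both issues with one device you are missing: the two evaluation maps $\alpha\colon f(x;y,w)\mapsto f(I;y,I)$ and $\beta\colon f(x;y,w)\mapsto f(x;I,x^*)$. Using that $\cP_{\bm^2}(\Sym(2r,\BC))^{\operatorname{Sp}(r,\BC)}$ and $\cP_{2\bm}(\Skew(2r,\BC))^{O(2r,\BC)}$ are one-dimensional, spanned by $\tilde{\Phi}_\bm^{(2)\prime}$, one checks that \emph{both} the restricted $\cK_{\bm,\bl}$ and the substituted $\bK_{\bm,\bl}^{(2)}(x,x;yx^*y,w)$ land in $\alpha^{-1}\bigl(\overline{\BC\tilde{\Phi}_\bm^{(2)}(y^2)}\bigr)\cap\beta^{-1}\bigl(\BC\tilde{\Phi}_{\bm+(2l-)\bl}^{(2)}(x^2)\bigr)$; since the $\tilde{\Phi}_\bm^{(2)}$ are linearly independent, these slots form a direct sum indexed by $(\bm,\bl)$, and projecting the generating-function identity onto each slot yields both the equality and the linear independence in one stroke, with no normalization constant to chase. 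Without $\alpha$ and $\beta$ (or an equivalent way of pinning each function to a one-dimensional slot), your argument does not go through.
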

To prove this, we prepare some notations. For $\bm\in\BZ_{++}^r$, as (\ref{Schur}) let
\begin{gather}\label{Schur2}
\tilde{\Phi}_\bm^{(2)}(t_1,\ldots,t_r)=\frac{\prod\limits_{i<j}(m_i-m_j-i+j)}{\prod\limits_{i=1}^r (m_i+r-i)!}
\frac{\det\big(\big(t_i^{m_j+r-j}\big)_{i,j}\big)}{\det\big(\big(t_i^{r-j}\big)_{i,j}\big)}
\end{gather}
be the renormalized Schur polynomial, so that for $x\in M(r,\BC)$, $\tilde{\Phi}_\bm^{(2)}(x)=\tilde{\Phi}_\bm^{(2)}(t_1,\ldots,t_r)
\in\cP_\bm(M(r,\BC))^{\Delta U(r)}$ holds, where $t_1,\ldots,t_r$ are the eigenvalues of $x$.
Next, for $(y,z)\in \Sym(2r,\BC)\times\Skew(2r,\BC)$ (resp.~$\Sym(2r+1,\BC)\times\Skew(2r+1,\BC)$), when the eigenvalues of $yz$ are
$t_1,-t_1,t_2,\allowbreak-t_2,\ldots,t_r,-t_r$ (resp.~$t_1,-t_1,t_2,-t_2,\ldots,t_r,-t_r,0$), we define
\begin{gather}\label{Schur3}
\tilde{\Phi}_\bm^{(2)\prime}\big((yz)^2\big)=\tilde{\Phi}_\bm^{(2)\prime}\big((zy)^2\big):=\tilde{\Phi}_\bm^{(2)}\big(t_1^2,\ldots,t_r^2\big).
\end{gather}
For a while we realize $\operatorname{Sp}(r,\BC)$, $O(2r,\BC)$ as all linear automorphisms on $\BC^{2r}$
preserving the bilinear forms $\left(\begin{smallmatrix}0&I\\-I&0\end{smallmatrix}\right)$ and
$\left(\begin{smallmatrix}0&I\\I&0\end{smallmatrix}\right)$ respectively. Then the following holds.
\begin{Lemma}\quad
\begin{enumerate}\itemsep=0pt
\item[$(1)$] For $x\in\Sym(2r,\BC)$,
\begin{gather*} \tilde{\Phi}_\bm^{(2)\prime}\left(\left(x\begin{pmatrix}0&I\\-I&0\end{pmatrix}\right)^2\right)
\in\cP_{\bm^2}(\Sym(2r,\BC))^{\operatorname{Sp}(r,\BC)}\simeq \big(V_{(2\bm)^2}^{(2r)\vee}\big)^{\operatorname{Sp}(r,\BC)}. \end{gather*}
\item[$(2)$] For $x\in\Skew(2r,\BC)$,
\begin{gather*} \tilde{\Phi}_\bm^{(2)\prime}\left(\left(x\begin{pmatrix}0&I\\I&0\end{pmatrix}\right)^2\right)
\in\cP_{2\bm}(\Skew(2r,\BC))^{O(2r,\BC)}\simeq \big(V_{(2\bm)^2}^{(2r)\vee}\big)^{O(2r,\BC)}. \end{gather*}
\end{enumerate}
\end{Lemma}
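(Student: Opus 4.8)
The plan is to deduce both assertions from the classical identifications $\Sym(2r,\BC)\cong\mathfrak{sp}(r,\BC)$ and $\Skew(2r,\BC)\cong\mathfrak{so}(2r,\BC)$ of these Jordan triple systems with the symplectic and orthogonal Lie algebras, and then to pin down the isotypic component by a leading--term argument along a Cartan subalgebra.

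First I would record the elementary linear algebra. Put $J:=\left(\begin{smallmatrix}0&I_r\\-I_r&0\end{smallmatrix}\right)$ and $J':=\left(\begin{smallmatrix}0&I_r\\I_r&0\end{smallmatrix}\right)$. One checks directly that ${}^t(xJ)\,J=-J\,(xJ)$ for $x\in\Sym(2r,\BC)$ and ${}^t(xJ')\,J'=-J'\,(xJ')$ for $x\in\Skew(2r,\BC)$; since the dimensions agree, $x\mapsto xJ$ and $x\mapsto xJ'$ are linear isomorphisms onto $\mathfrak{sp}(r,\BC)$ and $\mathfrak{so}(2r,\BC)$ (the subalgebras of $\mathfrak{gl}(2r,\BC)$ preserving $J$, resp.\ $J'$). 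Using ${}^tgJg=J$ for $g\in\operatorname{Sp}(r,\BC)$ (resp.\ ${}^tgJ'g=J'$ for $g\in O(2r,\BC)$), these isomorphisms intertwine the action $x\mapsto gx\,{}^tg$ — which is the $K^\BC$-action on $\fp^+=\Sym(2r,\BC)$ (resp.\ $\Skew(2r,\BC)$) restricted along $\operatorname{Sp}(r,\BC)\hookrightarrow{\rm GL}(2r,\BC)$ (resp.\ $O(2r,\BC)\hookrightarrow{\rm GL}(2r,\BC)$) — with the adjoint action. Since every element of $\mathfrak{sp}(r,\BC)$ or of $\mathfrak{so}(2r,\BC)$ has an even characteristic polynomial, the eigenvalues of $xJ$, resp.\ $xJ'$, occur in pairs $\pm t_1,\dots,\pm t_r$, so by the definition (\ref{Schur3}) the quantities $\tilde{\Phi}_\bm^{(2)\prime}\bigl((xJ)^2\bigr)$ and $\tilde{\Phi}_\bm^{(2)\prime}\bigl((xJ')^2\bigr)$ are well defined and equal $\tilde{\Phi}_\bm^{(2)}(t_1^2,\dots,t_r^2)$. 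Being symmetric in the $t_i^2$, and since $\det(\mu I-(xJ)^2)=\prod_i(\mu-t_i^2)^2$ has a square root that is a polynomial in $\mu$ with coefficients polynomial in the entries of $x$ (likewise for $xJ'$), these are holomorphic polynomials on $\Sym(2r,\BC)$, resp.\ $\Skew(2r,\BC)$, homogeneous of degree $2|\bm|$ in the entries of $x$ ($xJ$, resp.\ $xJ'$, being linear in $x$), and they are $\operatorname{Sp}(r,\BC)$-, resp.\ $O(2r,\BC)$-, invariant because they depend on $xJ$, resp.\ $xJ'$, only through its adjoint orbit.

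Next I would locate the ${\rm GL}(2r,\BC)$-isotypic component. By Theorem~\ref{HKS} and the parametrization in Section~\ref{section4}, $\cP_\bn(\Sym(2r,\BC))\simeq V_{2\bn}^{(2r)\vee}$ sits in polynomial degree $|\bn|$ in $x\in\Sym(2r,\BC)$, and $\cP_\bn(\Skew(2r,\BC))\simeq V_{\bn^2}^{(2r)\vee}$ sits in polynomial degree $|\bn|$ in $x\in\Skew(2r,\BC)$, each with multiplicity one. By Littlewood's restriction rules ${\rm GL}(2r,\BC)\downarrow\operatorname{Sp}(r,\BC)$ and ${\rm GL}(2r,\BC)\downarrow O(2r,\BC)$, the space $(V_\lambda^{(2r)\vee})^{\operatorname{Sp}(r,\BC)}$ is nonzero (then one-dimensional) exactly when $\lambda$ has consecutive rows equal, i.e.\ $\lambda=\bl^2$ for some partition $\bl$, and $(V_\lambda^{(2r)\vee})^{O(2r,\BC)}$ is nonzero (then one-dimensional) exactly when all parts of $\lambda$ are even. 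Applied to $\lambda=2\bn$, resp.\ $\lambda=\bn^2$, together with the degree count of the previous paragraph, this shows that $\tilde{\Phi}_\bm^{(2)\prime}\bigl((xJ)^2\bigr)=\sum_{\bl\in\BZ_{++}^r,\;|\bl|=|\bm|}c_\bl\,\psi_\bl$ with $\psi_\bl$ spanning $\cP_{\bl^2}(\Sym(2r,\BC))^{\operatorname{Sp}(r,\BC)}$, and similarly $\tilde{\Phi}_\bm^{(2)\prime}\bigl((xJ')^2\bigr)=\sum_{\bl\in\BZ_{++}^r,\;|\bl|=|\bm|}c_\bl'\,\psi_\bl'$ with $\psi_\bl'$ spanning $\cP_{2\bl}(\Skew(2r,\BC))^{O(2r,\BC)}$.

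Finally I would isolate the term $\bl=\bm$ by a leading-term argument along a Cartan subalgebra $\ft$ of $\mathfrak{sp}(r,\BC)$, resp.\ $\mathfrak{so}(2r,\BC)$. Restriction to $\ft$ is injective on invariant polynomials (Chevalley) and identifies the invariant rings with $\BC[t_1,\dots,t_r]^{W}$, where $W$ is the group of signed permutations — for $O(2r,\BC)$ the non-identity component supplies the odd sign changes missing from the Weyl group of $\mathrm{D}_r$ — so both sides become symmetric polynomials in the $t_i^2$. The invariant of Step~1 restricts on $\ft$ to $\tilde{\Phi}_\bm^{(2)}(t_1^2,\dots,t_r^2)$, which by (\ref{Schur2}) is a nonzero multiple of the Schur polynomial $s_\bm(t_1^2,\dots,t_r^2)$ and hence has dominance-leading monomial $t_1^{2m_1}\cdots t_r^{2m_r}$; the restriction of $\psi_\bl$, resp.\ $\psi_\bl'$, is the symplectic, resp.\ orthogonal, zonal polynomial attached to $\bl$, a nonzero multiple of the Jack polynomial $P_\bl^{(1/2)}(t_1^2,\dots,t_r^2)$, resp.\ $P_\bl^{(2)}(t_1^2,\dots,t_r^2)$, whose leading monomial is $t_1^{2l_1}\cdots t_r^{2l_r}$. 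As these leading monomials are pairwise distinct over $\{\bl:|\bl|=|\bm|\}$, matching leading monomials forces all coefficients to vanish except the one with $\bl=\bm$, which is nonzero; this yields $\tilde{\Phi}_\bm^{(2)\prime}\bigl((xJ)^2\bigr)\in\cP_{\bm^2}(\Sym(2r,\BC))^{\operatorname{Sp}(r,\BC)}$ and $\tilde{\Phi}_\bm^{(2)\prime}\bigl((xJ')^2\bigr)\in\cP_{2\bm}(\Skew(2r,\BC))^{O(2r,\BC)}$, the isomorphisms with $\bigl(V_{(2\bm)^2}^{(2r)\vee}\bigr)^{\operatorname{Sp}(r,\BC)}$ and $\bigl(V_{(2\bm)^2}^{(2r)\vee}\bigr)^{O(2r,\BC)}$ being those of the parametrization. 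The main obstacle is exactly this last step: one must know that the $\operatorname{Sp}(r,\BC)$- (resp.\ $O(2r,\BC)$-) spherical vector in $\cP_{\bl^2}(\Sym)$ (resp.\ $\cP_{2\bl}(\Skew)$) restricts on $\ft$ to a polynomial with leading monomial $\prod_i t_i^{2l_i}$ — the classical identification of such spherical vectors with zonal (Jack at parameter $\tfrac12$, resp.\ $2$) polynomials; if one prefers to stay self-contained, one can instead verify directly, from the weights of $V_{(2\bl)^2}^{(2r)\vee}$ restricted to $\ft$ inside a maximal torus of $\mathfrak{gl}(2r,\BC)$, that $2\bl$ is the $W$-dominant top weight that occurs and that the spherical vector has nonzero component there.
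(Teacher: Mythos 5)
Your Steps 1 and 2 are sound and in fact more explicit than the paper, which disposes of this lemma by citing Zhang's Propositions 7.6 and 8.3: the identifications $x\mapsto xJ$, $x\mapsto xJ'$ of $\Sym(2r,\BC)$ and $\Skew(2r,\BC)$ with $\mathfrak{sp}(r,\BC)$ and $\mathfrak{so}(2r,\BC)$, the resulting invariance and polynomiality, and the Littlewood count showing that the invariants in degree $2|\bm|$ form a space with basis $\{\psi_\bl\}_{|\bl|=|\bm|}$, one vector per isotypic component, are all correct. The gap is in your final step, and it is twofold. First, the identification you invoke is the wrong one: the Jack parameters $\tfrac12$ and $2$ belong to the zonal polynomials of $({\rm GL}(2r),\operatorname{Sp}(r))$ and $({\rm GL}(n),O(n))$ restricted to the eigenvalues of the symmetric or skew matrix itself, whereas here you restrict to a Cartan subalgebra of $\mathfrak{sp}(r,\BC)\simeq\Sym(2r,\BC)$, resp.\ $\mathfrak{so}(2r,\BC)\simeq\Skew(2r,\BC)$. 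The relevant real forms $\operatorname{Sp}(r,\BC)/\operatorname{Sp}(r)$ and $\operatorname{SO}(2r,\BC)/\operatorname{SO}(2r)$ are symmetric spaces of \emph{complex} groups, with all restricted root multiplicities equal to $2$, so the restricted spherical vectors are Schur polynomials $s_\bl(t_1^2,\dots,t_r^2)$ (Jack at parameter $1$); that exact identification is the nontrivial content of the propositions of Zhang that the paper cites, and it is what makes the lemma true.

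Second, and independently of the parameter, the inference ``these leading monomials are pairwise distinct, so matching leading monomials forces all coefficients to vanish except the one with $\bl=\bm$'' is not valid. If $\rho(\psi_\bl)=m_\bl(t^2)+(\text{dominance-lower terms})$, then writing $s_\bm(t^2)=m_\bm(t^2)+\sum_{\bl\prec\bm}K_{\bm\bl}\,m_\bl(t^2)$ in the basis $\{\rho(\psi_\bl)\}$ yields $\rho(\psi_\bm)$ \emph{plus} a combination of the $\rho(\psi_\bl)$ with $\bl\prec\bm$; triangularity with distinct leading monomials determines those lower coefficients recursively but gives no reason for them to vanish. Concretely, with your parameter one would have $s_{(2)}(u_1,u_2)=m_{(2)}+m_{(1,1)}=P^{(\alpha)}_{(2)}+\bigl(1-\tfrac{2}{\alpha+1}\bigr)P^{(\alpha)}_{(1,1)}$, whose second coefficient is nonzero for $\alpha\neq 1$ -- so if the restricted spherical vectors really were Jack at $\alpha=\tfrac12$ or $2$, the lemma would be false. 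Your fallback argument via weights of $V_{(2\bl)^2}^{(2r)\vee}$ only addresses the nonvanishing of the \emph{leading} coefficient of $\rho(\psi_\bl)$ (and even that needs the bijectivity of the Chevalley restriction between equidimensional spaces, not weights alone), which is not what is at stake: the conclusion requires the full identity $\rho(\psi_\bl)\in\BC\, s_\bl(t^2)$, i.e.\ the vanishing of all sub-leading contributions, and that is precisely the input the paper imports from Zhang.
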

\begin{proof}
(1) Follows from \cite[Proposition 7.6]{Z} with $(G,K,H,L)=(\operatorname{Sp}(2r,\BR),U(2r),\operatorname{Sp}(r,\BC),\allowbreak \operatorname{Sp}(r))$.
(2) Follows from \cite[Proposition 8.3]{Z} with $(G,K,H,L)=(\operatorname{SO}^*(4r),U(2r),\operatorname{SO}(2r,\BC),\allowbreak \operatorname{SO}(2r))$.
\end{proof}

\begin{proof}[Proof of Lemma \ref{lemma_suss}]
We define two linear maps $\alpha$, $\beta$ by
\begin{align*}
&\alpha\colon \ \cP(M(r,\BC))\boxtimes\overline{\cP(M(r,\BC))\otimes\cP(M(r,\BC))}\longrightarrow \overline{\cP(M(r,\BC))},&
&f(x;y,w)\mapsto f(I;y,I),\\
&\beta\colon \ \cP(M(r,\BC))\boxtimes\overline{\cP(M(r,\BC))\otimes\cP(M(r,\BC))}\longrightarrow \cP(M(r,\BC)),&
&f(x;y,w)\mapsto f(x;I,x^*).
\end{align*}
Then by the $U(r)\times U(r)\times U(r)$-invariance of $\bK_{\bm,\bl}^{(2)}$, we have
\begin{gather*} \bK_{\bm,\bl}^{(2)}(I,I;y,I)\in \overline{\cP_\bm(M(r,\BC))^{\Delta U(r)}}=\overline{\BC\tilde{\Phi}_\bm^{(2)}(y)}, \end{gather*}
where $\tilde{\Phi}_\bm^{(2)}(y)$ is defined in (\ref{Phitilde1}), (\ref{Schur}). Therefore it holds that
\begin{gather*} \alpha\big(\bK_{\bm,\bl}^{(2)}(x,x;yx^*y,w)\big)=\bK_{\bm,\bl}^{(2)}\big(I,I;y^2,I\big)\in\overline{\BC\tilde{\Phi}_\bm^{(2)}\big(y^2\big)}. \end{gather*}
Next, as a function of $x$, under the action of $K_L=\Delta U(r)\subset K_{xL}\times K_{zL}$ and $K_R$,
$\bK_{\bm,\bl}^{(2)}(x,x;y,y)$ sits in
\begin{gather*} \bK_{\bm,\bl}^{(2)}(x,x;y,y)\in\begin{cases}
\big(V_{\bm}^{(r)\vee}\otimes V_{\langle l\rangle}^{(r)\vee}\big)_L\boxtimes V_{\bm+\bl,R}^{(r)} & (\text{Case }1),\\
\big(V_{\bm}^{(r)\vee}\otimes V_{(l,0,\ldots,0)}^{(r)\vee}\big)_L\boxtimes V_{\bm+\bl,R}^{(r)} & (\text{Case }2),\\
\big(V_{\bm}^{(r)\vee}\otimes V_{(2l,\ldots,2l,l)}^{(r)\vee}\big)_L\boxtimes V_{\bm-\bl+2l,R}^{(r)} & (\text{Case }3), \end{cases} \end{gather*}
and also in $\cP(M(r,\BC))_x$. Hence it holds that
\begin{gather*} \bK_{\bm,\bl}^{(2)}(x,x;y,y)\in \begin{cases}
\big(\cP_{\bm+\bl}(M(r,\BC))_x\boxtimes\overline{\cP_{\bm+\bl}(M(r,\BC))_y}\big)^{\Delta U(r)}\\
\hspace{105pt} {}=\BC \bK_{\bm+\bl}^{(2)}(x,y)\quad (\text{Cases 1, 2}),\\
\big(\cP_{\bm-\bl+2l}(M(r,\BC))_x\boxtimes\overline{\cP_{\bm-\bl+2l}(M(r,\BC))_y}\big)^{\Delta U(r)}\\
\hspace{105pt} {}=\BC \bK_{\bm-\bl+2l}^{(2)}(x,y)\quad (\text{Case }3). \end{cases} \end{gather*}
Therefore we get
\begin{gather*} \beta\big(\bK_{\bm,\bl}^{(2)}(x,x;yx^*y,w)\big)\\
\qquad{} =\bK_{\bm,\bl}^{(2)}(x,x;x^*,x^*)\in \begin{cases}
\BC \bK_{\bm+\bl}^{(2)}(x,x^*)=\BC\tilde{\Phi}_{\bm+\bl}^{(2)}\big(x^2\big) \quad & (\text{Cases 1, 2}),\\
\BC \bK_{\bm-\bl+2l}^{(2)}(x,x^*)=\BC\tilde{\Phi}_{\bm-\bl+2l}^{(2)}\big(x^2\big) \quad & (\text{Case }3). \end{cases} \end{gather*}
Next we consider $\cK_{\bm,\bl}(x_2;y_1,w_2)$. We have
\begin{gather*}
\cK_{\bm,\bl}\left(\begin{pmatrix}0&I\\-I&0\end{pmatrix};y_1,\begin{pmatrix}0&I\\-I&0\end{pmatrix}\right)
 \in\overline{\cP_{\bm^2}(\Sym(2r,\BC))^{\operatorname{Sp}(r,\BC)}}\\
 \qquad{}=\overline{\BC\tilde{\Phi}_\bm^{(2)\prime}
\left(\left(y_1\begin{pmatrix}0&I\\-I&0\end{pmatrix}\right)^2\right)} \quad (\text{Case 1}),\\
\cK_{\bm,\bl}\left(\begin{pmatrix}0&I\\I&0\end{pmatrix};y_1,\begin{pmatrix}0&I\\I&0\end{pmatrix}\right)
 \in\overline{\cP_{2\bm}(\Skew(2r,\BC))^{O(2r,\BC)}}\\
 \qquad{} =\overline{\BC\tilde{\Phi}_\bm^{(2)\prime}
\left(\left(y_1\begin{pmatrix}0&I\\I&0\end{pmatrix}\right)^2\right)} \quad (\text{Cases 2, 3}).
\end{gather*}
Especially we get
\begin{gather*}
 \alpha\left(\cK_{\bm,\bl}\left(\begin{pmatrix}0&x\\ \mp{}^t\hspace{-1pt}x&0\end{pmatrix};
\begin{pmatrix}0&y\\ \pm{}^t\hspace{-1pt}y&0\end{pmatrix},\begin{pmatrix}0&w\\ \mp{}^t\hspace{-1pt}w&0\end{pmatrix}\right)\right)\\
\qquad{} =\cK_{\bm,\bl}\left(\begin{pmatrix}0&I\\ \mp I&0\end{pmatrix};
\begin{pmatrix}0&y\\ \pm{}^t\hspace{-1pt}y&0\end{pmatrix},\begin{pmatrix}0&I\\ \mp I&0\end{pmatrix}\right)
\in \overline{\BC\Phi_\bm^{(2)}\big(y^2\big)}.
\end{gather*}
Next, we regard $\overline{\cP_\bm(\fp^+_2)}$ as the complex conjugate representation of $K_1^\BC={\rm GL}(2r,\BC)$ with respect to the real form
\begin{alignat*}{3}
& \left\{k\in {\rm GL}(2r,\BC)\colon k\begin{pmatrix}0&I\\I&0\end{pmatrix}=\begin{pmatrix}0&I\\I&0\end{pmatrix}\overline{k}\right\}
 \simeq {\rm GL}(2r,\BR) \quad & &(\text{Case 1}),& \\
&\left\{k\in {\rm GL}(2r,\BC)\colon k\begin{pmatrix}0&I\\-I&0\end{pmatrix}=\begin{pmatrix}0&I\\-I&0\end{pmatrix}\overline{k}\right\}
\simeq {\rm GL}(r,\BH) \quad & &(\text{Cases 2, 3}).&
\end{alignat*}
Then the linear map $\overline{f(y)}\mapsto \overline{f\left(\left(\begin{smallmatrix}0&I\\\pm I&0\end{smallmatrix}\right)y^*
\left(\begin{smallmatrix}0&I\\\pm I&0\end{smallmatrix}\right)\right)}$ induces
the linear isomorphism $\overline{\cP_\bm(\fp^+_2)}\allowbreak\simeq \cP_\bm(\fp^+_2)$. Therefore,
\begin{alignat*}{3}
&\cK_{\bm,\bl}^{(4,1)}\left(x_2;\begin{pmatrix}0&I\\I&0\end{pmatrix},
\begin{pmatrix}0&I\\I&0\end{pmatrix}x_2^*\begin{pmatrix}0&I\\I&0\end{pmatrix}\right)&&&\\
&\qquad\quad{} \in\left(V_{(2\bm+\bl)^2}^{(2r)\vee}\otimes V_{\langle 2l\rangle}^{(2r)\vee}\right)^{O(2r,\BC)}=\left(V_{(2(\bm+\bl))^2}^{(2r)\vee}\right)^{O(2r,\BC)}&&& \\
&\qquad{} =\cP_{2(\bm+\bl)}(\Skew(2r,\BC))^{O(2r,\BC)}
=\BC\tilde{\Phi}_{\bm+\bl}^{(2)\prime}\left(\left(x_2\begin{pmatrix}0&I\\I&0\end{pmatrix}\right)^2\right) \quad & &(\text{Case }1),& \\
&\cK_{\bm,\bl}^{(1,4)}\left(x_2;\begin{pmatrix}0&I\\-I&0\end{pmatrix},
\begin{pmatrix}0&I\\-I&0\end{pmatrix}x_2^*\begin{pmatrix}0&I\\-I&0\end{pmatrix}\right)&&&\\
&\qquad \quad {} \in\big(V_{\substack{2(m_1+l_1,m_1,m_2+l_2,m_2,\ldots,m_r+l_r,m_r)}}^{(2r)\vee}
\otimes V_{(2l,0,\ldots,0)}^{(2r)\vee}\big)^{\operatorname{Sp}(r,\BC)}
=\big(V_{(2(\bm+\bl))^2}^{(2r)\vee}\big)^{\operatorname{Sp}(r,\BC)}\!\!\! &&& \\
&\qquad {} =\cP_{(\bm+\bl)^2}(\Sym(2r,\BC))^{\operatorname{Sp}(r,\BC)}
=\BC\tilde{\Phi}_{\bm+\bl}^{(2)\prime}\left(\left(x_2\begin{pmatrix}0&I\\-I&0\end{pmatrix}\right)^2\right)\quad & &(\text{Case 2}),& \\
&\cK_{\bm,-\bl}^{(1,4)}\left(x_2;\begin{pmatrix}0&I\\-I&0\end{pmatrix},
\begin{pmatrix}0&I\\-I&0\end{pmatrix}x_2^*\begin{pmatrix}0&I\\-I&0\end{pmatrix}\right)&&&\\
&\qquad\quad {} \in\big( V_{\substack{2(m_1+l,m_1+l-l_1,m_2+l,m_2+l-l_2,\ldots,m_r+l,m_r+l-l_r)}}^{(2r)\vee}
\otimes V_{(2l,\ldots,2l,l)}^{(2r)\vee}\big)^{\operatorname{Sp}(r,\BC)} &&&\\
& \qquad{} =\big(V_{(2(\bm-\bl+2l))^2}^{(2r)\vee}\big)^{\operatorname{Sp}(r,\BC)} &&& \\
&\qquad {}=\cP_{(\bm-\bl+2l)^2}(\Sym(2r,\BC))^{\operatorname{Sp}(r,\BC)}
=\BC\tilde{\Phi}_{\bm-\bl+2l}^{(2)\prime}\left(\left(x_2\begin{pmatrix}0&I\\-I&0\end{pmatrix}\right)^2\right) \quad & &(\text{Case 3}).&
\end{alignat*}
Especially, we have
\begin{gather*}
 \beta\left(\cK_{\bm,\bl}\left(\begin{pmatrix}0&x\\ \mp{}^t\hspace{-1pt}x&0\end{pmatrix};
\begin{pmatrix}0&y\\ \pm{}^t\hspace{-1pt}y&0\end{pmatrix},\begin{pmatrix}0&w\\ \mp{}^t\hspace{-1pt}w&0\end{pmatrix}\right)\right)\\
\qquad{} =\cK_{\bm,\bl}\left(\begin{pmatrix}0&x\\ \mp{}^t\hspace{-1pt}x&0\end{pmatrix};
\begin{pmatrix}0&I\\ \pm I&0\end{pmatrix},\begin{pmatrix}0&x^*\\ \mp \overline{x}&0\end{pmatrix}\right)\in\begin{cases}
\BC\tilde{\Phi}_{\bm+\bl}^{(2)}\big(x^2\big) & (\text{Cases 1, 2}),\\
\BC\tilde{\Phi}_{\bm-\bl+2l}^{(2)}\big(x^2\big) & (\text{Case }3). \end{cases}
\end{gather*}
Therefore both $\cK_{\bm,\bl}\left(\Big(\begin{smallmatrix}0&x\\ \mp{}^t\hspace{-1pt}x&0\end{smallmatrix}\Big);
\Big(\begin{smallmatrix}0&y\\ \pm{}^t\hspace{-1pt}y&0\end{smallmatrix}\Big),
\Big(\begin{smallmatrix}0&w\\ \mp{}^t\hspace{-1pt}w&0\end{smallmatrix}\Big)\right)$
and $\bK_{\bm,\bl}^{(2)}(x,x;yx^*y,w)$ sits in
\begin{gather*} \alpha^{-1}\big(\overline{\BC\Phi_\bm^{(2)}\big(y^2\big)}\big)\cap \beta^{-1}\big(\BC\Phi_{\bm+(2l-)\bl}^{(2)}\big(x^2\big)\big), \end{gather*}
and since $\{\Phi_\bm(x^2)\}_\bm$ are linearly independent,
\begin{gather*} \bigoplus_{\bm\in\BZ_{++}^r}\bigoplus_\bl
\alpha^{-1}\big(\overline{\BC\Phi_\bm^{(2)}\big(y^2\big)}\big)\cap \beta^{-1}\big(\BC\Phi_{\bm+(2l-)\bl}^{(2)}\big(x^2\big)\big) \end{gather*}
is a direct sum. Finally, we have
\begin{gather*}
 {\rm e}^{\frac{1}{2}\tr\left(\Big(\begin{smallmatrix}0&x\\ \mp{}^t\hspace{-1pt}x&0\end{smallmatrix}\Big)
\Big(\begin{smallmatrix}0&y\\ \pm{}^t\hspace{-1pt}y&0\end{smallmatrix}\Big)^*
\Big(\begin{smallmatrix}0&x\\ \mp{}^t\hspace{-1pt}x&0\end{smallmatrix}\Big)
\Big(\begin{smallmatrix}0&y\\ \pm{}^t\hspace{-1pt}y&0\end{smallmatrix}\right)^*\Big)}
\rK_{(l)}'\left(\begin{pmatrix}0&x\\ \mp{}^t\hspace{-1pt}x&0\end{pmatrix},
\begin{pmatrix}0&w\\ \mp{}^t\hspace{-1pt}w&0\end{pmatrix}\right)\\
\qquad{} ={\rm e}^{\tr(xy^*xy^*)}\rK_{(l)}''(x,w)={\rm e}^{\tr(x(yx^*y)^*)}\rK_{(l)}''(x,w),
\end{gather*}
and projecting both sides to each component, we get the claim.
\end{proof}

\begin{proof}[Proof of Proposition \ref{exp_susp} ($\boldsymbol{s=2r}$: even)]
We write
\begin{gather*} \det(I-x_2y_1^*x_2y_1^*)^{-\mu/2}\rK_{(l)}'\big(x_2(I-y_1^*x_2y_1^*x_2)^{-1},y_2\big)
=\sum_{\bm\in\BZ_{++}^r}\sum_\bl C_{\bm,\bl}(\mu)\cK_{\bm,\bl}(x_2;y_1,y_2), \end{gather*}
and find the functions $C_{\bm,\bl}(\mu)$. From this expression, we have
\begin{gather*}
 \det(I-xy^*xy^*)^{-\mu}\rK_{(l)}''\big(x(I-y^*xy^*x)^{-1},w\big)\\
 =\det\left(I-\begin{pmatrix}0&x\\ \mp{}^t\hspace{-1pt}x&0\end{pmatrix}
\begin{pmatrix}0&y\\ \pm{}^t\hspace{-1pt}y&0\end{pmatrix}\!\!{\vphantom{\biggr)}}^*\!
\begin{pmatrix}0&x\\ \mp{}^t\hspace{-1pt}x&0\end{pmatrix}
\begin{pmatrix}0&y\\ \pm{}^t\hspace{-1pt}y&0\end{pmatrix}\!\!{\vphantom{\biggr)}}^*\right)^{-\mu/2}\\
\quad{} \times \rK_{(l)}'\left(\begin{pmatrix}0&x\\ \mp{}^t\hspace{-1pt}x&0\end{pmatrix}\left(I
-\begin{pmatrix}0&y\\ \pm{}^t\hspace{-1pt}y&0\end{pmatrix}\!\!{\vphantom{\biggr)}}^*\!
\begin{pmatrix}0&x\\ \mp{}^t\hspace{-1pt}x&0\end{pmatrix}
\begin{pmatrix}0&y\\ \pm{}^t\hspace{-1pt}y&0\end{pmatrix}\!\!{\vphantom{\biggr)}}^*\!
\begin{pmatrix}0&x\\ \mp{}^t\hspace{-1pt}x&0\end{pmatrix}\right)^{-1},
\begin{pmatrix}0&w\\ \mp{}^t\hspace{-1pt}w&0\end{pmatrix}\right)\\
 =\sum_{\bm\in\BZ_{++}^r}\sum_\bl C_{\bm,\bl}(\mu)\cK_{\bm,\bl}\left(\begin{pmatrix}0&x\\ \mp{}^t\hspace{-1pt}x&0\end{pmatrix};
\begin{pmatrix}0&y\\ \pm{}^t\hspace{-1pt}y&0\end{pmatrix},\begin{pmatrix}0&w\\ \mp{}^t\hspace{-1pt}w&0\end{pmatrix}\right).
\end{gather*}
On the other hand, we can regard $\det(I-xy^*)^{-\mu}\rK_{(l)}''\big(z(I-y^*x)^{-1},w\big)$ as the reproducing kernel of
the holomorphic discrete series representation of $\widetilde{U}(r,r)$. Thus by \cite{N2} it holds that
\begin{alignat*}{3}
& \det(I-xy^*)^{-\mu}\bK_{\langle l\rangle}^{(2)}\big(z(I-y^*x)^{-1},w\big)&&&\\
& \qquad {} =\sum_{\bm\in\BZ_{++}^r}\sum_\bl \frac{(\mu)_{\bm+\bl,2}}{(\mu)_{\langle l\rangle,2}}\bK_{\bm,\bl}^{(2)}(x,z;y,w)\quad && (\text{Case }1),& \\
& \det(I-xy^*)^{-\mu}\bK_{(l,0,\ldots,0)}^{(2)}\big(z(I-y^*x)^{-1},w\big)&&&\\
&\qquad {} =\sum_{\bm\in\BZ_{++}^r}\sum_\bl \frac{(\mu)_{\bm+\bl,2}}{(\mu)_{(l,0,\ldots,0),2}}\bK_{\bm,\bl}^{(2)}(x,z;y,w) \quad&& (\text{Case }2)& \\
& \det(I-xy^*)^{-\mu}\bK_{(\smash{\underbrace{\scriptstyle 2l,\ldots,2l}_{r-1}},l)}^{(2)}\big(z(I-y^*x)^{-1},w\big)&&&\\[5mm]
& \qquad {} = \sum_{\bm\in\BZ_{++}^r}\!\sum_\bl \frac{(\mu+l)_{\bm-\bl+(\smash{\overbrace{\scriptstyle l,\ldots,l}^r}),2}}
{(\mu+l)_{(\smash{\underbrace{\scriptstyle l,\ldots,l}_{r-1}},0),2}}\bK_{\bm,\bl}^{(2)}(x,z;y,w) \quad&& (\text{Case }3).&
\end{alignat*}
Therefore substituting $(x,z;y,w)$ with $(x,x;yx^*y,w)$ and comparing with the above formula, we can determine $C_{\bm,\bl}(\mu)$.
\end{proof}

When $s=2r+1$ is odd, we compute the expansion by restricting the function for $s=2r+2$ to $s=2r+1$.
We redefine the restriction map
\begin{gather*}
 \Rest\colon \ \cP(\Sym(2r+2,\BC))\longrightarrow\cP(\Sym(2r+1,\BC)),\\
 \Rest\colon \ \cP(\Skew(2r+2,\BC))\longrightarrow\cP(\Skew(2r+1,\BC))
\end{gather*}
(we use the same symbol) by $(\Rest f)(x):=f\left(\begin{smallmatrix}x&0\\0&0\end{smallmatrix}\right)$, and let
\begin{gather*} \Proj\colon \ V_{\langle 2l\rangle}^{(2r+2)\vee}\longrightarrow V_{\langle 2l\rangle}^{(2r+1)\vee},\qquad
V_{(2l,0,\ldots,0)}^{(2r+2)\vee}\longrightarrow V_{(2l,0,\ldots,0)}^{(2r+1)\vee},\qquad
V_{(2l,\ldots,2l,0)}^{(2r+2)\vee}\longrightarrow V_{(2l,\ldots,2l,0)}^{(2r+1)\vee} \end{gather*}
(we use the same symbol) be the $U(2r+1)$-equivariant orthogonal projection.
\begin{proof}[Proof of Proposition \ref{exp_susp} ($\boldsymbol{s=2r+1}$: odd)]
First we deal with Case 1. Since
\begin{gather*} \Rest(\cP_{2\bm+\bl}(\Skew(2r+2,\BC)))=\begin{cases}
\cP_{2\bm+\bl}(\Skew(2r+1,\BC)) & (2m_{r+1}+l_{r+1}=0), \\ \{0\} & (2m_{r+1}+l_{r+1}>0), \end{cases} \end{gather*}
by projecting ${\rm e}^{\frac{1}{2}\tr(x_2y_1^*x_2y_1^*)}\rK_{\langle l\rangle}^{(4)}(x_2)$ to each component
with respect to $x_2\in\Skew(2r+2,\BC)$, if $2m_{r+1}+l_{r+1}=0$ we get
\begin{gather*} \Rest\otimes\overline{\Rest\otimes\Proj}\big(\cK_{\bm,\bl}^{(4,1)(2r+2)}(x_2;y_1)\big)=\cK_{\bm,\bl}^{(4,1)(2r+1)}(x_2;y_1). \end{gather*}
Therefore by restricting $\det(I-x_2y_1^*x_2y_1^*)^{-\mu/2}\rK_{\langle l\rangle}^{(4)}\big(x_2(I-y_1^*x_2y_1^*x_2)^{-1}\big)$
for $s=2r+2$ to $s=2r+1$, we get the claim. Next we deal with Case 2. In this case we have
\begin{gather*}
 \Rest\big(\cP_{\substack{(m_1+l_1,m_1,m_2+l_2,m_2,\ldots,\\\hspace{25pt} m_{r+1}+l_{r+1},m_{r+1})}}(\Sym(2r+2,\BC))\big) \\
\qquad{} \subset\bigoplus_{\substack{\bn\in\BZ_{++}^{2r+1}\\ m_j+l_j\ge n_{2j-1}\ge m_j\\ m_j\ge n_{2j}\ge m_{j+1}+l_{j+1}}}\cP_\bn(\Sym(2r+1,\BC))
=\bigoplus_{\substack{\bn\in\BZ_{++}^{2r+1}\\ m_j+l_j\ge n_{2j-1}\ge m_j\\ m_j\ge n_{2j}\ge m_{j+1}+l_{j+1}}}V_{2\bn}^{(2r+1)\vee},
\end{gather*}
and if $m_{r+1}=0$,
\begin{gather*}
 \Rest\otimes\Proj\big(\cP_{2\bm}(\Skew(2r+2,\BC))\otimes V_{(2l,0,\ldots,0)}^{(2r+2)\vee}\big)\\
\qquad{}=\cP_{2\bm}(\Skew(2r+1,\BC))\otimes V_{(2l,0,\ldots,0)}^{(2r+1)\vee}\\
\qquad{} =\bigoplus_{\substack{\bk\in(\BZ_{\ge 0})^{r+1},\; |\bk|=2l\\ 0\le k_j\le m_{j-1}-m_j }}
V_{(2m_1+k_1,2m_1,2m_2+k_2,2m_2,\ldots,2m_r+k_r,2m_r,k_{r+1})}^{(2r+1)\vee},
\end{gather*}
and only $V_{(2m_1+2l_1,2m_1,2m_2+2l_2,2m_2,\ldots,2m_r+2l_r,2m_r,2l_{r+1})}^{(2r+1)\vee}$ appears commonly in both decomposition. Therefore when $m_{r+1}=0$ we have
\begin{gather*}
 \Rest\otimes\overline{\Rest\otimes\Proj}\Big(\Big(
\cP_{\substack{(m_1+l_1,m_1,m_2+l_2,m_2,\ldots,\;\\\hspace{30pt} m_r+l_r,m_r,l_{r+1},0)}}(\Sym(2r+2,\BC))_{x_2}\\
\qquad\quad{} \otimes\overline{\cP_{2\bm}(\Skew(2r+2,\BC))_{y_1}
\otimes V_{(2l,0,\ldots,0)}^{(2r+2)\vee}}\Big)^{{\rm GL}(2r+2,\BC)}\Big)\\
\qquad{} =\Big(\cP_{\substack{(m_1+l_1,m_1,m_2+l_2,m_2,\ldots,\\\hspace{35pt} m_r+l_r,m_r,l_{r+1})}}(\Sym(2r+1,\BC))_{x_2}\\
\qquad\quad {} \otimes\overline{\cP_{2\bm}(\Skew(2r+1,\BC))_{y_1}
\otimes V_{(2l,0,\ldots,0)}^{(2r+1)\vee}}\Big)^{{\rm GL}(2r+1,\BC)},
\end{gather*}
and hence
\begin{gather*} \Rest\otimes\overline{\Rest\otimes\Proj} \big(\cK_{\bm,\bl}^{(1,4)(2r+2)}(x_2;y_1)\big)=\cK_{\bm,\bl}^{(1,4)(2r+1)}(x_2;y_1) \end{gather*}
holds. Therefore the claim follows. Finally we deal with Case~3. In this case we have
\begin{gather*}
 \Rest\big(\cP_{\substack{(m_1+l,m_1+l-k_1,m_2+l,m_2+l-k_2,\ldots,\;\\\hspace{45pt} m_{r+1}+l,m_{r+1}+l-k_{r+1})}}(\Sym(2r+2,\BC))\big) \\
\qquad{} \subset\bigoplus_{\substack{\bn\in\BZ_{++}^{2r+1}\\ m_j+l\ge n_{2j-1}\ge m_j+l-k_j\\ m_j+l-k_j\ge n_{2j}\ge m_{j+1}+l}}\cP_\bn(\Sym(2r+1,\BC))
=\bigoplus_{\substack{\bn\in\BZ_{++}^{2r+1}\\ m_j+l\ge n_{2j-1}\ge m_j+l-k_j\\ m_j+l-k_j\ge n_{2j}\ge m_{j+1}+l}}V_{2\bn}^{(2r+1)\vee},
\end{gather*}
and if $m_{r+1}=0$,
\begin{gather*}
 \Rest\otimes\Proj\big(\cP_{2\bm}(\Skew(2r+2,\BC))\otimes V_{(2l,\ldots,2l,0)}^{(2r+2)\vee}\big) =\cP_{2\bm}(\Skew(2r+1,\BC))\otimes V_{(2l,\ldots,2l,0)}^{(2r+1)\vee}\\
\qquad{} =\bigoplus_{\substack{\bl\in(\BZ_{\ge 0})^{r+1},\; |\bl|=2l\\ 0\le l_j\le m_j-m_{j+1}\\ 0\le l_{r+1}}}
V_{(2m_1+2l,2m_1+2l-l_1,2m_2+2l,2m_2+2l-l_2,\ldots,2m_r+2l,2m_r+2l-l_r,2l-l_{r+1})}^{(2r+1)\vee}.
\end{gather*}
Therefore when $m_{r+1}=0$ we have
\begin{gather*}
 \Rest\otimes\overline{\Rest\otimes\Proj}\Big(\Big(
\cP_{\substack{(m_1+l,m_1+l-k_1,m_2+l,m_2+l-k_2,\ldots,\;\\\hspace{45pt} m_r+l,m_r+l-k_r,l,l-k_{r+1})}}(\Sym(2r+2,\BC))_{x_2}\\
\qquad\quad {} \otimes\overline{\cP_{2\bm}(\Skew(2r+2,\BC))_{y_1}
\otimes V_{(2l,\ldots,2l,0)}^{(2r+2)\vee}}\Big)^{{\rm GL}(2r+2,\BC)}\Big)\\
\qquad{} \subset\bigoplus_{\substack{\bl\in(\BZ_{\ge 0})^{r+1},\; |\bl|=l\\ k_j\le l_j\le m_j-m_{j+1}\\ 0\le l_{r+1}\le k_{r+1}}}
\Big(\cP_{\substack{(m_1+l,m_1+l-l_1,m_2+l,m_2+l-l_2,\ldots,\;\\\hspace{45pt} m_r+l,m_r+l-l_r,l-l_{r+1})}}(\Sym(2r+1,\BC))_{x_2}\\
\qquad\quad{} \otimes\overline{\cP_{2\bm}(\Skew(2r+1,\BC))_{y_1}
\otimes V_{(2l,\ldots,2l,0)}^{(2r+1)\vee}}\Big)^{{\rm GL}(2r+1,\BC)},
\end{gather*}
and hence there exist constants $c_{\bm,\bk,\bl}$ such that
\begin{gather*} \Rest\otimes\overline{\Rest\otimes\Proj} \big(\cK_{\bm,-\bk}^{(1,4)(2r+2)}(x_2;y_1)\big)
=\sum_{\substack{\bl\in(\BZ_{\ge 0})^{r+1},\; |\bl|=l\\ k_j\le l_j\le m_j-m_{j+1}\\ 0\le l_{r+1}\le k_{r+1}}}
c_{\bm,\bk,\bl}\cK_{\bm,-\bl}^{(1,4)(2r+1)}(x_2;y_1) \end{gather*}
holds. Comparing two expansion of ${\rm e}^{\frac{1}{2}\tr(x_2y_1^*x_2y_1^*)}\rK_{(l,\ldots,l,0)}^{(1)}(x_2)$,
\begin{gather*}
 {\rm e}^{\frac{1}{2}\tr(x_2y_1^*x_2y_1^*)}\rK_{(l,\ldots,l,0)}^{(1)(2r+1)}(x_2)
=\Rest\otimes\overline{\Rest\otimes\Proj}\big({\rm e}^{\frac{1}{2}\tr(x_2y_1^*x_2y_1^*)}\rK_{(l,\ldots,l,0)}^{(1)(2r+2)}(x_2)\big)\\
\qquad{}=\sum_{\bm\in\BZ_{++}^r}\sum_{\substack{\bk\in(\BZ_{\ge 0})^{r+1},\; |\bk|=l\\ 0\le k_j\le m_j-m_{j+1}\\ 0\le k_{r+1}}}
\Rest\otimes\overline{\Rest\otimes\Proj}\big(\cK_{\bm,-\bk}^{(1,4)(2r+2)}(x_2;y_1)\big) \\
\qquad{} =\sum_{\bm\in\BZ_{++}^r}\sum_{\substack{\bk\in(\BZ_{\ge 0})^{r+1},\; |\bk|=l\\ 0\le k_j\le m_j-m_{j+1}\\ 0\le k_{r+1}}}
\;\sum_{\substack{\bl\in(\BZ_{\ge 0})^{r+1},\; |\bl|=l\\ k_j\le l_j\le m_j-m_{j+1}\\ 0\le l_{r+1}\le k_{r+1}}}
c_{\bm,\bk,\bl}\cK_{\bm,-\bl}^{(1,4)(2r+1)}(x_2;y_1) \\
\qquad{} =\sum_{\bm\in\BZ_{++}^r}\sum_{\substack{\bl\in(\BZ_{\ge 0})^{r+1},\; |\bl|=l\\ 0\le l_j\le m_j-m_{j+1}\\ 0\le l_{r+1}}}
\;\sum_{\substack{\bk\in(\BZ_{\ge 0})^{r+1},\; |\bk|=l\\ 0\le k_j\le l_j\\ l_{r+1}\le k_{r+1}}}
c_{\bm,\bk,\bl}\cK_{\bm,-\bl}^{(1,4)(2r+1)}(x_2;y_1)
\end{gather*}
and
\begin{gather*} {\rm e}^{\frac{1}{2}\tr(x_2y_1^*x_2y_1^*)}\rK_{(l,\ldots,l,0)}^{(1)(2r+1)}(x_2)
=\sum_{\bm\in\BZ_{++}^r}\sum_{\substack{\bl\in(\BZ_{\ge 0})^{r+1},\; |\bl|=l\\ 0\le l_j\le m_j-m_{j+1}\\ 0\le l_{r+1}}}
\cK_{\bm,-\bl}^{(1,4)(2r+1)}(x_2;y_1), \end{gather*}
we get
\begin{gather*} \sum_{\substack{\bk\in(\BZ_{\ge 0})^{r+1},\; |\bk|=l\\ 0\le k_j\le l_j\\ l_{r+1}\le k_{r+1}}}c_{\bm,\bk,\bl}=1. \end{gather*}
Therefore we have
\begin{gather*}
 \det(I-x_2y_1^*x_2y_1^*)^{-\mu/2}\rK_{(l,\ldots,l,0)}^{(1)(2r+1)}\big(x_2(I-y_1^*x_2y_1^*x_2)^{-1}\big) \\
 \qquad{} =\Rest\otimes\overline{\Rest\otimes\Proj}\big(
\det(I-x_2y_1^*x_2y_1^*)^{-\mu/2}\rK_{(l,\ldots,l,0)}^{(1)(2r+2)}\big(x_2(I-y_1^*x_2y_1^*x_2)^{-1}\big)\big) \\
\qquad{} =\sum_{\bm\in\BZ_{++}^r}\sum_{\substack{\bk\in(\BZ_{\ge 0})^{r+1},\; |\bk|=l\\ 0\le k_j\le m_j-m_{j+1}\\ 0\le k_{r+1}}}
(\mu+2l)_{\bm-\bk',2}(\mu+l-r)_{l-k_{r+1}}\\
\qquad\quad{} \times\Rest\otimes\overline{\Rest\otimes\Proj}\big(\cK_{\bm,-\bk}^{(1,4)(2r+2)}(x_2;y_1)\big) \\
\qquad{} =\sum_{\bm\in\BZ_{++}^r}\sum_{\substack{\bk\in(\BZ_{\ge 0})^{r+1},\; |\bk|=l\\ 0\le k_j\le m_j-m_{j+1}\\ 0\le k_{r+1}}}
\;\sum_{\substack{\bl\in(\BZ_{\ge 0})^{r+1},\; |\bl|=l\\ k_j\le l_j\le m_j-m_{j+1}\\ 0\le l_{r+1}\le k_{r+1}}}
c_{\bm,\bk,\bl}(\mu+2l)_{\bm-\bk',2}(\mu+l-r)_{l-k_{r+1}}\\
\qquad\quad{} \times\cK_{\bm,-\bl}^{(1,4)(2r+1)}(x_2;y_1) \\
\qquad{} =\sum_{\bm\in\BZ_{++}^r}\sum_{\substack{\bl\in(\BZ_{\ge 0})^{r+1},\; |\bl|=l\\ 0\le l_j\le m_j-m_{j+1}\\ 0\le l_{r+1}}}
\;\sum_{\substack{\bk\in(\BZ_{\ge 0})^{r+1},\; |\bk|=l\\ 0\le k_j\le l_j\\ l_{r+1}\le k_{r+1}}}
c_{\bm,\bk,\bl}(\mu+2l)_{\bm-\bk',2}(\mu+l-r)_{l-k_{r+1}}\\
\qquad\quad{} \times\cK_{\bm,-\bl}^{(1,4)(2r+1)}(x_2;y_1) \\
\qquad{}=\sum_{\bm\in\BZ_{++}^r}\sum_{\substack{\bl\in(\BZ_{\ge 0})^{r+1},\; |\bl|=l\\ 0\le l_j\le m_j-m_{j+1}\\ 0\le l_{r+1}}}(\mu+2l)_{\bm-\bl',2}\\
\qquad\quad{} \times\sum_{\substack{\bk\in(\BZ_{\ge 0})^{r+1},\; |\bk|=l\\ 0\le k_j\le l_j\\ l_{r+1}\le k_{r+1}}}
c_{\bm,\bk,\bl}(\mu+2l+\bm-\bl')_{\bl'-\bk',2}(\mu+l-r)_{l-k_{r+1}}\cK_{\bm,-\bl}^{(1,4)(2r+1)}(x_2;y_1),
\end{gather*}
where for $\bk=(k_1,\ldots,k_r,k_{r+1})\in(\BZ_{\ge 0})^{r+1}$, we put $\bk'=(k_1,\ldots,k_r)\in(\BZ_{\ge 0})^r$, and similar for~$\bl$,~$\bl'$. Now we put
\begin{gather*} \varphi_{\bm,-\bl}(\mu):=\sum_{\substack{\bk\in(\BZ_{\ge 0})^{r+1},\; |\bk|=l\\ 0\le k_j\le l_j\\ l_{r+1}\le k_{r+1}}}
c_{\bm,\bk,\bl}(\mu+2l+\bm-\bl')_{\bl'-\bk',2}(\mu+l-r)_{l-k_{r+1}}. \end{gather*}
Then this is a monic polynomial of degree $l-l_{r+1}$, and we have
\begin{gather*}
 \det(I-x_2y_1^*x_2y_1^*)^{-\mu/2}\rK_{(l,\ldots,l,0)}^{(1)(2r+1)}\big(x_2(I-y_1^*x_2y_1^*x_2)^{-1}\big) \\
\qquad{} =\sum_{\bm\in\BZ_{++}^r}\sum_{\substack{\bl\in(\BZ_{\ge 0})^{r+1},\; |\bl|=l\\ 0\le l_j\le m_j-m_{j+1}\\ 0\le l_{r+1}}}
(\mu+2l)_{\bm-\bl',2}\varphi_{\bm,-\bl}(\mu)\cK_{\bm,-\bl}^{(1,4)(2r+1)}(x_2;y_1).
\end{gather*}
Therefore the claim follows.
\end{proof}

Therefore, by using the results of \cite{N2} and (\ref{exp_onD}), for Case 1,
\begin{gather*}
 F_{\tau\rho}(x_2;w_1) =\Pf(x_2)^k\! \sum_\bm\!\sum_\bl (\lambda\!+\!k\!+\!\langle l\rangle)_{\bm{+}\bl{-}\langle l\rangle,2}
\big\langle {\rm e}^{\tr(y_1w_1^*)}I_W,\overline{\cK_{\bm,\bl}^{(4,1)}(x_2;y_1)} \big\rangle_{\cH_{\lambda{+}k}(D_1,W_{(l)}'),y_1} \\
\qquad{} =\Pf(x_2)^k\sum_\bm\sum_\bl \frac{(\lambda+k+\langle l\rangle)_{\bm+\bl-\langle l\rangle,2}}{(\lambda+k+\langle 2l\rangle)_{(\bm+\bl-\langle l\rangle)^2,1}}
\cK_{\bm,\bl}^{(4,1)}(x_2;w_1) \\
\qquad{} =\Pf(x_2)^k\sum_\bm\sum_\bl \frac{1}{\left(\lambda+k+\langle l\rangle-\frac{1}{2}\right)_{\bm+\bl-\langle l\rangle,2}}\cK_{\bm,\bl}^{(4,1)}(x_2;w_1),
\end{gather*}
for Case 2,
\begin{gather*}
 F_{\tau\rho}(x_2;w_1) =\det(x_2)^{k}\sum_\bm\sum_\bl (\lambda+2k+(l,0,\ldots,0))_{\bm+\bl-(l,0,\ldots,0),2}\\
\qquad\quad{} \times\bigl\langle {\rm e}^{\frac{1}{2}\tr(y_1w_1^*)}I_W,\overline{\cK_{\bm,\bl}^{(1,4)}(x_2;y_1)}
\bigr\rangle_{\cH_{2\lambda+4k}(D_1,W_{(l)}'),y_1} \\
\qquad{} =\det(x_2)^{k}\sum_\bm\sum_\bl
\frac{(\lambda+2k+(l,0,\ldots,0))_{\bm+\bl-(l,0,\ldots,0),2}}{(2\lambda+4k+(2l,0,\ldots,0))_{2(\bm+\bl)-(2l,0,\ldots,0),4}}
\cK_{\bm,\bl}^{(1,4)}(x_2;w_1)\\
\qquad{} =\det(x_2)^{k}\sum_\bm\sum_\bl
\frac{1}{2^{2|\bm|}\left(\lambda+2k+(l,0,\ldots,0)+\frac{1}{2}\right)_{\bm+\bl-(l,0,\ldots,0),2}}\cK_{\bm,\bl}^{(1,4)}(x_2;w_1)\\
\qquad{} =\det(x_2)^{k}\sum_\bm\sum_\bl
\frac{1}{\left(\lambda+2k+(l,0,\ldots,0)+\frac{1}{2}\right)_{\bm+\bl-(l,0,\ldots,0),2}}\cK_{\bm,\bl}^{(1,4)}\left(x_2;\frac{1}{2}w_1\right),
\end{gather*}
for Case 3 with $s$ even,
\begin{gather*}
 F_{\tau\rho}(x_2;w_1,w_2) =\det(x_2)^{k}\sum_\bm\sum_\bl (\lambda+2k+l+(\smash{\overbrace{l,\ldots,l}^{s/2-1}},0))_{\bm-\bl
+(\smash{\overbrace{\scriptstyle 0,\ldots,0}^{s/2-1}},l),2}\\
\qquad\quad{} \times\bigl\langle {\rm e}^{\frac{1}{2}\tr(y_1w_1^*)}I_W,\overline{\cK_{\bm,-\bl}^{(1,4)}(x_2;y_1)}
\bigr\rangle_{\cH_{2\lambda+4k}(D_1,W_{(l)}'),y} \\
\qquad{} =\det(x_2)^{k}\sum_\bm\sum_\bl
\frac{(\lambda+2k+l+(l,\ldots,l,0))_{\bm-\bl-(0,\ldots,0,l),2}}{(2\lambda+4k+2l+(2l,\ldots,2l,0))_{2(\bm-\bl)+(0,\ldots,0,2l),4}}
\cK_{\bm,-\bl}^{(1,4)}(x_2;w_1)\\
\qquad{} =\det(x_2)^{k}\sum_\bm\sum_\bl
\frac{1}{2^{2|\bm|}\left(\lambda+2k+l+(l,\ldots,l,0)+\frac{1}{2}\right)_{\bm-\bl+(0,\ldots,0,l),2}}\cK_{\bm,-\bl}^{(1,4)}(x_2;w_1)\\
\qquad{} =\det(x_2)^{k}\sum_\bm\sum_\bl
\frac{1}{\left(\lambda+2k+l+(l,\ldots,l,0)+\frac{1}{2}\right)_{\bm-\bl+(0,\ldots,0,l),2}}\cK_{\bm,-\bl}^{(1,4)}\big(x_2;\tfrac{1}{2}w_1\big).
\end{gather*}
and for Case 3 with $s$ odd,
\begin{gather*}
 F_{\tau\rho}(x_2;w_1,w_2) =\det(x_2)^{k}\sum_\bm\sum_\bl (\lambda+2k+2l)_{\bm-\bl',2}\varphi_{\bm,-\bl}(\lambda+2k)\\
\qquad\quad{} \times\bigl\langle {\rm e}^{\frac{1}{2}\tr(y_1w_1^*)}I_W,\overline{\cK_{\bm,-\bl}^{(1,4)}(x_2;y_1)}
\bigr\rangle_{\cH_{2\lambda+4k}(D_1,W_{(l)}'),y} \\
\qquad{} =\det(x_2)^{k}\sum_\bm\sum_\bl \frac{(\lambda+2k+2l)_{\bm-\bl',2}\varphi_{\bm,-\bl}(\lambda+2k)}
{(2\lambda+4k+4l)_{2(\bm-\bl'),4}\left(2\lambda+4k+2l-2\left\lfloor\frac{s}{2}\right\rfloor+1\right)_{2l-2l_{\lceil s/2\rceil}}}\\
\qquad\quad{} \times\cK_{\bm,-\bl}^{(1,4)}(x_2;w_1)\\
\qquad{} =\det(x_2)^{k}\sum_\bm\sum_\bl \frac{1}{2^{2|\bm|}\left(\lambda+2k+2l+\frac{1}{2}\right)_{\bm-\bl',2}
\left(\lambda+2k+l-\left\lfloor\frac{s}{2}\right\rfloor+1\right)_{l-l_{\lceil s/2\rceil}}}\\
\qquad\quad{} \times\frac{\varphi_{\bm,-\bl}(\lambda+2k)}
{\left(\lambda+2k+l-\left\lfloor\frac{s}{2}\right\rfloor+\frac{1}{2}\right)_{l-l_{\lceil s/2\rceil}}}
\cK_{\bm,-\bl}^{(1,4)}(x_2;w_1)\\
\qquad{} =\det(x_2)^{k}\sum_\bm\sum_\bl \frac{1}{\left(\lambda+2k+2l+\frac{1}{2}\right)_{\bm-\bl',2}
\left(\lambda+2k+l-\left\lfloor\frac{s}{2}\right\rfloor+1\right)_{l-l_{\lceil s/2\rceil}}}\\
\qquad\quad{} \times\frac{\varphi_{\bm,-\bl}(\lambda+2k)}
{\left(\lambda+2k+l-\left\lfloor\frac{s}{2}\right\rfloor+\frac{1}{2}\right)_{l-l_{\lceil s/2\rceil}}}
\cK_{\bm,-\bl}^{(1,4)}\big(x_2;\tfrac{1}{2}w_1\big).
\end{gather*}
By Theorem~\ref{main}, by substituting $w_1$ with $\overline{\frac{\partial}{\partial x_1}}$, we get the intertwining operator from $(\cH_1)_{\tilde{K}_1}$ to $\cH_{\tilde{K}}$, and by Theorem~\ref{extend}, this extends to the intertwining operator between the spaces of all holomorphic functions if $\cH_1$ is holomorphic discrete. Also, by Theorem~\ref{continuation}, this continues meromorphically for all $\lambda\in\BC$. Therefore we have the following.
\begin{Theorem}\label{main4}\quad
\begin{enumerate}\itemsep=0pt
\item[$(1)$] Let $(G,G_1)=(\operatorname{SU}(s,s), \operatorname{Sp}(s,\BR))$ with $s\ge 2$. Let $k\in\BZ_{\ge 0}$ if $s$ is even, $k=0$ if $s$ is odd,
and $l\in\left\{0,\ldots,\left\lceil\frac{s}{2}\right\rceil-1\right\}$. Then the linear map
\begin{gather*}
\cF_{\lambda,k,l}\colon \ \cO_{\lambda+k}\big(D_1,V_{\langle 2l\rangle}^{(s)\vee}\big)\to \cO_\lambda(D), \\
(\cF_{\lambda,k,l}f)(x_1+x_2) =\Pf(x_2)^k\sum_{\bm\in\BZ_{++}^{\lfloor s/2\rfloor}}
\sum_{\substack{\bl\in\{0,1\}^{\lfloor s/2\rfloor},\; |\bl|=l\\ \bm+\bl\in\BZ_{++}^{\lfloor s/2\rfloor}}}
\frac{1}{\left(\lambda+k+\langle l\rangle-\frac{1}{2}\right)_{\bm+\bl-\langle l\rangle,2}}\\
\hphantom{(\cF_{\lambda,k,l}f)(x_1+x_2) =}{} \times
\cK_{\bm,\bl}^{(4,1)}\left(x_2;\overline{\frac{\partial}{\partial x_1}}\right)f(x_1)
\end{gather*}
$(x_1\in\Sym(s,\BC), x_2\in\Skew(s,\BC))$ intertwines the $\tilde{G}_1$-action.
\item[$(2)$] Let $(G,G_1)=(\operatorname{SU}(s,s), \operatorname{SO}^*(2s))$ with $s\ge 2$, and $k,l\in\BZ_{\ge 0}$. Then the linear map
\begin{gather*}
\cF_{\lambda,k,l}\colon \ \cO_{2\lambda+4k}\big(D_1,V_{(2l,0,\ldots,0)}^{(s)\vee}\big)\to \cO_\lambda(D), \\
(\cF_{\lambda,k,l}f)(x_1+x_2)=\det(x_2)^k\sum_{\bm\in\BZ_{++}^{\lfloor s/2\rfloor}}
\sum_{\substack{\bl\in(\BZ_{\ge 0})^{\lceil s/2\rceil},\; |\bl|=l\\ 0\le l_j\le m_{j-1}-m_j}}\\
\qquad{} \times\frac{1}{\left(\lambda+2k+(l,0,\ldots,0)+\frac{1}{2}\right)_{\bm+\bl-(l,0,\ldots,0),2}}
\cK_{\bm,\bl}^{(1,4)}\left(x_2;\frac{1}{2}\overline{\frac{\partial}{\partial x_1}}\right)f(y_1)
\end{gather*}
$(x_1\in\Skew(s,\BC), x_2\in\Sym(s,\BC))$ intertwines the $\tilde{G}_1$-action. Here we identify $\bm=(m_1,\ldots,m_{\lfloor s/2\rfloor})\in\BZ_{++}^{\lfloor s/2\rfloor}$ with
$(m_1,\ldots,m_{\lfloor s/2\rfloor},0)\in\BZ_{++}^{\lceil s/2\rceil}$ if $s$ is odd.
\item[$(3)$] Let $(G,G_1)=(\operatorname{SU}(s,s), \operatorname{SO}^*(2s))$ with $s\ge 2$ even, and $k,l\in\BZ_{\ge 0}$. Then the linear map
\begin{gather*}
\cF_{\lambda,k,l}\colon \ \cO_{2\lambda+4k}\big(D_1,V_{(2l,\dots,2l,0)}^{(s)\vee}\big)\to \cO_\lambda(D), \\
(\cF_{\lambda,k,l}f)(x_1+x_2)=\det(x_2)^k\sum_{\bm\in\BZ_{++}^{s/2}}
\sum_{\substack{\bl\in(\BZ_{\ge 0})^{s/2},\; |\bl|=l\\ 0\le l_j\le m_j-m_{j+1}}}\\
\qquad{} \times\frac{1}{\left(\lambda+2k+l+(l,\ldots,l,0)+\frac{1}{2}\right)_{\bm-\bl+(0,\ldots,0,l),2}}
\cK_{\bm,-\bl}^{(1,4)}\left(x_2;\frac{1}{2}\overline{\frac{\partial}{\partial x_1}}\right)f(x_1)
\end{gather*}
$(x_1\in\Skew(s,\BC), x_2\in\Sym(s,\BC))$ intertwines the $\tilde{G}_1$-action.
\end{enumerate}
\end{Theorem}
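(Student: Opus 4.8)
The plan is to specialize the general differential expression of Theorem~\ref{main}(2) to the two families $(G,G_1)=(\operatorname{SU}(s,s),\operatorname{Sp}(s,\BR))$ and $(\operatorname{SU}(s,s),\operatorname{SO}^*(2s))$ and simply read off the symbol from the Jordan-theoretic data of Section~\ref{realize}. Recall that for a symmetric pair of holomorphic type with $\cH_\rho(D_1,W)$ holomorphic discrete, $\cF_{\tau\rho}$ on $\tilde K$-finite vectors is the (a~priori infinite-order) differential operator $(\cF_{\tau\rho}f)(x)=F_{\tau\rho}\big(x_2;\overline{\partial/\partial x_1}\big)f(x_1)$ with
\begin{gather*}
F_{\tau\rho}(x_2;w_1)=\big\langle {\rm e}^{(y_1|w_1)_{\fp^+_1}}I_W,\bigl(\tau(B(x_2,y_1))\rK\big((x_2)^{Q(y_1)x_2}\big)\bigr)^*\big\rangle_{\hat{\rho},y_1},
\end{gather*}
so the whole problem is to evaluate this pairing when $(\tau,V)=(\chi^{-\lambda},\BC)$ and $W\subset\cP(\fp^+_2)\otimes\chi^{-\lambda}$ is one of the three $\tilde K_1^\BC$-types $\cP_{(\overbrace{\scriptstyle k+1,\dots,k+1}^{l},k,\dots,k)}(\fp^+_2)\otimes\chi^{-\lambda}$, $\cP_{(k+l,k,\dots,k)}(\fp^+_2)\otimes\chi^{-\lambda}$, $\cP_{(k+l,\dots,k+l,k)}(\fp^+_2)\otimes\chi^{-\lambda}$ listed in parts (1), (2), (3).

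First I would use the realizations $\fp^+=M(s,\BC)$, $(\fp^+_1,\fp^+_2)=(\Sym(s,\BC),\Skew(s,\BC))$ or $(\Skew(s,\BC),\Sym(s,\BC))$, the Jordan structure $Q(x)y=xy^*x$, the quasi-inverse formula, and Proposition~\ref{projprop}(2), to compute $(x_2)^{Q(y_1)x_2}=x_2(I-y_1^*x_2y_1^*x_2)^{-1}$ and $h(x_2,y_1)^2=h(Q(x_2)y_1,y_1)=\det(I-x_2y_1^*x_2y_1^*)$, hence $\tau(B(x_2,y_1))=h(x_2,y_1)^{-\lambda}=\det(I-x_2y_1^*x_2y_1^*)^{-\lambda/2}$. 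Writing the $\cP_{(k,\dots,k)}$-part of $\rK$ as a determinant (resp.\ Pfaffian) power and pulling the factor $\Delta\big(x_2(I-y_1^*x_2y_1^*x_2)^{-1}\big)$ out of $\rK$, I would set $\mu:=\lambda+k\varepsilon$ (with $\varepsilon\in\{1,2\}$ the normalization constant attached to $G_1$) and reduce to expanding $\det(I-x_2y_1^*x_2y_1^*)^{-\mu/2}\rK_{(l)}'\big(x_2(I-y_1^*x_2y_1^*x_2)^{-1}\big)$, where $\rK_{(l)}'$ is the $\tilde K_1^\BC$-invariant polynomial attached to the $\tilde K_1^\BC$-type $W_{(l)}'=V^{(s)\vee}_{\langle 2l\rangle}$, $V^{(s)\vee}_{(2l,0,\dots,0)}$ or $V^{(s)\vee}_{(2l,\dots,2l,0)}$ respectively.

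The next step is exactly Proposition~\ref{exp_susp}: its parts (1), (2), (3) give this expansion as $\sum_{\bm,\bl}(\text{Pochhammer factor})\,\cK_{\bm,\pm\bl}(x_2;y_1)$, with $\cK_{\bm,\pm\bl}(x_2;y_1)$ the $\tilde K_1^\BC$-spherical vector in $W''\boxtimes\overline{W''}$ for the appropriate type $W''=\cP_{2\bm+\bl}(\Skew(s,\BC))$ or $\cP_{\bm+\dots}(\Sym(s,\BC))$. I would then expand the reproducing kernel of $\cH_1$ into its $\tilde K_1$-isotypic pieces using the explicit norm formulas of~\cite{N2} for classical $G_1$, and apply the reproducing identity~\eqref{exp_onD}: the Hermitian pairing $\big\langle{\rm e}^{(\cdot|w_1)_{\fp^+_1}}I_W,\overline{\cK_{\bm,\pm\bl}(x_2;\cdot)}\big\rangle_{\hat{\rho}}$ becomes $p_{\bm,\pm\bl}(\varepsilon\lambda)^{-1}\cK_{\bm,\pm\bl}(x_2;w_1)$, where $p_{\bm,\pm\bl}$ is again a product of Pochhammer symbols. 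Multiplying the two Pochhammer factors, simplifying, and rescaling $w_1$ to absorb the residual powers of $2$ (exactly as in the scalar specializations displayed just after the theorem), yields the closed forms for $F_{\tau\rho}(x_2;w_1)$ in parts (1)--(3); part~(3) is restricted to $s$ even precisely so that the correction polynomials $\varphi_{\bm,-\bl}$ of Proposition~\ref{exp_susp}(4) do not enter.

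Finally, substituting $w_1\mapsto\overline{\partial/\partial x_1}$ via Theorem~\ref{main}(2) gives a $(\fg_1,\tilde K_1)$-intertwining operator on $\tilde K$-finite vectors; Theorem~\ref{extend}(2) upgrades it to a continuous map $\cO_1\to\cO_\lambda(D)$ when $\cH_1$ is holomorphic discrete, and since every $p_{\bm,\pm\bl}$ and every Pochhammer factor is polynomial in $\lambda$, the polynomial-growth hypotheses of Theorem~\ref{continuation} hold by Stirling's formula, so $\cF_{\lambda,k,l}$ continues meromorphically and remains well-defined between full spaces of holomorphic functions away from its poles; continuity plus $(\fg_1,\tilde K_1)$-equivariance then gives $\tilde G_1$-equivariance. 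The main obstacle is not conceptual but bookkeeping: pinning down the precise weights of the auxiliary $\tilde K_1^\BC$-types $W''$ produced by Proposition~\ref{exp_susp}, matching the normalization constants with those of~\cite{N2}, and tracking the index ranges (the constraints $l_j\le n_j\le l_{j-\lfloor s'/2\rfloor}$, etc.) so that the summations in the final formulas are stated correctly; all the structural ingredients are already assembled in the preceding subsection.
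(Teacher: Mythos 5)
Your proposal is correct and follows essentially the same route as the paper: specialize the differential expression of Theorem~\ref{main}(2) using the Jordan-theoretic formulas for $\operatorname{SU}(s,s)$ (Proposition~\ref{projprop} giving $h(Q(x_2)y_1,y_1)=\det(I-x_2y_1^*x_2y_1^*)$ and $(x_2)^{Q(y_1)x_2}=x_2(I-y_1^*x_2y_1^*x_2)^{-1}$), invoke Proposition~\ref{exp_susp} for the key expansion, pair against the reproducing kernel via~\eqref{exp_onD} and the norm formulas of~\cite{N2}, and conclude with Theorems~\ref{extend} and~\ref{continuation}. The remaining work you flag (matching normalizations, absorbing powers of~$2$ into $\tfrac12\overline{\partial/\partial x_1}$, and tracking index ranges) is exactly the bookkeeping the paper carries out.
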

\begin{Proposition}\label{main_prop}Let $(G,G_1)=(\operatorname{SU}(s,s), \operatorname{SO}^*(2s))$ with $s\ge 2$ odd, and $k,l\in\BZ_{\ge 0}$. Then there exist monic polynomials $\varphi_{\bm,-\bl}(\mu)\in\BC[\mu]$ of degree $l-l_{\lceil s/2\rceil}$ such that the linear map
\begin{gather*}
\cF_{\lambda,k,l}\colon \ \cO_{2\lambda+4k}\big(D_1,V_{(2l,\dots,2l,0)}^{(s)\vee}\big)\to \cO_\lambda(D), \\
(\cF_{\lambda,k,l}f)(x_1+x_2)=\det(x_2)^k\sum_{\bm\in\BZ_{++}^{\lfloor s/2\rfloor}}
\sum_{\substack{\bl\in(\BZ_{\ge 0})^{\lceil s/2\rceil},\; |\bl|=l\\ 0\le l_j\le m_j-m_{j+1}\\ 0\le l_{\lceil s/2\rceil}}}\\
\quad {} \times\frac{\varphi_{\bm,-\bl}(\lambda+2k)}{\left(\lambda+2k+2l+\frac{1}{2}\right)_{\bm-\bl',2}
\left(\lambda+2k+l-\left\lfloor\frac{s}{2}\right\rfloor+1\right)_{l-l_{\lceil s/2\rceil}}
\left(\lambda+2k+l-\left\lfloor\frac{s}{2}\right\rfloor+\frac{1}{2}\right)_{l-l_{\lceil s/2\rceil}}} \\
\quad{}\times\cK_{\bm,-\bl}^{(1,4)}\left(x_2;\frac{1}{2}\overline{\frac{\partial}{\partial x_1}}\right)f(x_1)
\end{gather*}
$(x_1\in\Skew(s,\BC), x_2\in\Sym(s,\BC))$ intertwines the $\tilde{G}_1$-action.
Here for $\bl=(l_1,\ldots,l_{\lfloor s/2\rfloor},\allowbreak l_{\lceil s/2\rceil})\in(\BZ_{\ge 0})^{\lceil s/2\rceil}$, we put
$\bl'=(l_1,\ldots,l_{\lfloor s/2\rfloor})\in(\BZ_{\ge 0})^{\lfloor s/2\rfloor}$.
\end{Proposition}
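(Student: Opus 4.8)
The plan is to read off the explicit form of $F_{\tau\rho}(x_2;w_1)$ for $(G,G_1)=(\operatorname{SU}(s,s),\operatorname{SO}^*(2s))$ with $s$ odd and $W=\cP_{(k+l,\ldots,k+l,k)}(\Sym(s,\BC))\otimes\chi^{-\lambda}$, and then to pass to the differential expression exactly as in the proof of Theorem~\ref{main4}. Starting from the formula for $F_{\tau\rho}$ in Theorem~\ref{main}(2), in the realization of $\operatorname{SU}(s,s)$ fixed in Section~\ref{realize} we have $h(Q(x_2)y_1,y_1)=\det(I-x_2y_1^*x_2y_1^*)$ and $(x_2)^{Q(y_1)x_2}=x_2(I-y_1^*x_2y_1^*x_2)^{-1}$, while $\rK(x_2)=\det(x_2)^k\rK_{(l,\ldots,l,0)}^{(1)}(x_2)$. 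Factoring out $\det(x_2)^k$ and setting $\mu:=\lambda+2k$, Proposition~\ref{exp_susp}(4) provides the expansion of $\det(I-x_2y_1^*x_2y_1^*)^{-\mu/2}\rK_{(l,\ldots,l,0)}^{(1)}\big(x_2(I-y_1^*x_2y_1^*x_2)^{-1}\big)$ into the building blocks $\cK_{\bm,-\bl}^{(1,4)}(x_2;y_1)$, with coefficients $(\mu+2l)_{\bm-\bl',2}\,\varphi_{\bm,-\bl}(\mu)$, where $\varphi_{\bm,-\bl}$ is monic of degree $l-l_{\lceil s/2\rceil}$. The polynomials $\varphi_{\bm,-\bl}$ appearing in the present statement are precisely these, so their existence and degree claim are inherited verbatim from Proposition~\ref{exp_susp}(4); no new identity is needed on that side.

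Next I would apply the reproducing identity~(\ref{exp_onD}). Each $\cK_{\bm,-\bl}^{(1,4)}(x_2;\cdot)$, as a function of $y_1$, lies in a single $\tilde K_1$-type $W'_{(l)}=V_{(2l,\ldots,2l,0)}^{(s)\vee}\subset\cP(\fp^+_2)$ tensored with the conjugate of a $\tilde K_1$-type of $\cP(\fp^+_1)\otimes W$, so pairing ${\rm e}^{(y_1|w_1)_{\fp^+_1}}$ against $\overline{\cK_{\bm,-\bl}^{(1,4)}(x_2;y_1)}$ in the inner product of $\cH_{2\lambda+4k}(D_1,W'_{(l)})$ returns $p_{\bm,-\bl}(\lambda)\,\cK_{\bm,-\bl}^{(1,4)}(x_2;w_1)$, where $p_{\bm,-\bl}(\lambda)$ is the ratio of the $\cH_{2\lambda+4k}(D_1,W'_{(l)})$-norm to the Fischer norm on that $\tilde K_1$-type. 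Substituting the explicit value of $p_{\bm,-\bl}(\lambda)$ from \cite{N2} for the $\operatorname{SO}^*$-branching case (the $d=4$ norm, odd-$s$ subcase), multiplying by $(\mu+2l)_{\bm-\bl',2}\,\varphi_{\bm,-\bl}(\mu)$, and simplifying the resulting products of Pochhammer symbols — absorbing a factor $2^{-2|\bm|}$ into the argument of the homogeneous polynomial $\cK_{\bm,-\bl}^{(1,4)}$, i.e. replacing $w_1$ by $\tfrac12 w_1$ — yields exactly the coefficient displayed in the statement. This is the computation already carried out in the body text immediately preceding Theorem~\ref{main4} under the heading ``Case~3 with $s$ odd''.

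Finally I would invoke the three structural results of Section~\ref{section3}: Theorem~\ref{main}(2) to substitute $w_1\mapsto\overline{\partial/\partial x_1}$ and conclude that the operator intertwines the $(\fg_1,\tilde K_1)$-action on $\tilde K$-finite vectors; Theorem~\ref{extend}(2) to extend it to a continuous map $\cO_{2\lambda+4k}(D_1,W'_{(l)})\to\cO_\lambda(D)$ whenever $\cH_1$ is holomorphic discrete; and Theorem~\ref{continuation} for the meromorphic continuation in $\lambda$. I expect the only genuine obstacle to be verifying the polynomial-growth hypothesis of Theorem~\ref{continuation} for the sequences $p_{\bm,-\bl}(\lambda)$ and $p_{\bm,-\bl}(\lambda)q_{\bm,-\bl}(\lambda)$: these are inverses of products of Pochhammer symbols times the monic polynomials $\varphi_{\bm,-\bl}$, whose degree is bounded by $l$ and whose non-leading coefficients grow at most polynomially in $|\bm|$, so Stirling's formula delivers the required estimate — but one must choose the auxiliary parameter $\mu>p_1-1$ in Theorem~\ref{continuation} so that none of the $\varphi_{\bm,-\bl}(\mu+2k)$ vanishes and the ratios $\varphi_{\bm,-\bl}(\lambda+2k)/\varphi_{\bm,-\bl}(\mu+2k)$ stay uniformly bounded. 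This last point is the one feature absent from the even-$s$ case of Theorem~\ref{main4}(3), where no $\varphi$-factor occurs.
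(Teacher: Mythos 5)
Your proposal is correct and follows essentially the same route as the paper: the expansion of Proposition~\ref{exp_susp}(4) supplies the monic polynomials $\varphi_{\bm,-\bl}$ of degree $l-l_{\lceil s/2\rceil}$, the reproducing identity~(\ref{exp_onD}) together with the norm formulas of~\cite{N2} produces the displayed Pochhammer coefficients (with the factor $2^{-2|\bm|}$ absorbed into $w_1\mapsto\tfrac12 w_1$), and Theorems~\ref{main}(2), \ref{extend}(2) and~\ref{continuation} give the intertwining property and its extension/continuation. Your closing remark about verifying the polynomial-growth hypothesis of Theorem~\ref{continuation} in the presence of the $\varphi$-factors is a fine point the paper passes over silently, but it causes no difficulty since $\deg\varphi_{\bm,-\bl}\le l$ is bounded and its coefficients grow at most polynomially in $|\bm|$.
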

Later we prove $\varphi_{\bm,-\bl}(\mu)=\left(\mu+l-\left\lfloor\frac{s}{2}\right\rfloor+\frac{1}{2}\right)_{l-l_{\lceil s/2\rceil}}$
in Section \ref{sect_remaining}.

In Theorem \ref{main4}, if $l=0$, then these maps are reduced to
\begin{gather}
\cF_{\lambda,k}\colon \ \cO_{\lambda+k}(D_1)\to \cO_\lambda(D), \notag \\
(\cF_{\lambda,k}f)(x_1+x_2)=\Pf(x_2)^k
\sum_{\bm\in\BZ_{++}^{\lfloor s/2\rfloor}}\frac{1}{\left(\lambda+k-\frac{1}{2}\right)_{\bm,2}}
\tilde{\Phi}_\bm^{(2)\prime}\left(\left(x_2\frac{\partial}{\partial x_1}\right)^2\right)f(x_1) \label{SU-Sp}
\end{gather}
when $G_1=\operatorname{Sp}(s,\BR)$, and
\begin{gather}
\cF_{\lambda,k}\colon \ \cO_{2\lambda+4k}(D_1)\to \cO_\lambda(D), \notag \\
(\cF_{\lambda,k}f)(x_1+x_2)=\det(x_2)^k
\sum_{\bm\in\BZ_{++}^{\lfloor s/2\rfloor}}\frac{1}{\left(\lambda+2k+\frac{1}{2}\right)_{\bm,2}}
\tilde{\Phi}_\bm^{(2)\prime}\left(\left(\frac{1}{2}x_2\frac{\partial}{\partial x_1}\right)^2\right)f(x_1) \label{SU-SO*}
\end{gather}
when $G_1=\operatorname{SO}^*(2s)$, where $\tilde{\Phi}^{(2)\prime}_\bm\left((x_2w_1)^2\right)$ is defined in~(\ref{Schur2}),~(\ref{Schur3}). We note that the difference between $\frac{\partial}{\partial x_1}$ in $G_1=\operatorname{Sp}(s,\BR)$ case and $\frac{1}{2}\frac{\partial}{\partial x_1}$ in $G_1=\operatorname{SO}^*(2s)$ case is caused by the difference of the normalization of the inner product on $\Sym(s,\BC)$ and $\Skew(s,\BC)$. In fact, on $\Sym(s,\BC)$ we have $\frac{\partial}{\partial x}=\big(\frac{1+\delta_{ij}}{2}\frac{\partial}{\partial x_{ij}}\big)_{ij}$, and on $\Skew(s,\BC)$ we have $ \frac{1}{2}\frac{\partial}{\partial x}=\big(\frac{\sgn(j-i)}{2}\frac{\partial}{\partial x_{ij}}\big)_{ij}$, so both are similar.

\subsection[$\cF_{\tau\rho}$ for $(G,G_1)=(\operatorname{SO}_0(2,n), \operatorname{SO}_0(2,n')\times \operatorname{SO}(n''))$, \\
$(E_{6(-14)}, \operatorname{SU}(2,4)\times \operatorname{SU}(2))$, $(E_{7(-25)}, \operatorname{SU}(2)\times \operatorname{SO}^*(12))$]{$\boldsymbol{\cF_{\tau\rho}}$ for $\boldsymbol{(G,G_1)=(\operatorname{SO}_0(2,n), \operatorname{SO}_0(2,n')\times \operatorname{SO}(n''))}$, \\
$\boldsymbol{(E_{6(-14)}, \operatorname{SU}(2,4)\times \operatorname{SU}(2))}$, $\boldsymbol{(E_{7(-25)}, \operatorname{SU}(2)\times \operatorname{SO}^*(12))}$}
In this subsection we set
\begin{gather*} (G,G_1)= \begin{cases}(\operatorname{SO}_0(2,n), \operatorname{SO}_0(2,n')\times \operatorname{SO}(n'')) \quad (n=n'+n'')& (\text{Case }1), \\
(E_{6(-14)}, \operatorname{SU}(2,4)\times \operatorname{SU}(2)) & (\text{Case }2), \\
(E_{7(-25)}, \operatorname{SU}(2)\times \operatorname{SO}^*(12)) & (\text{Case }3) \end{cases} \end{gather*}
(up to covering). Then the maximal compact subgroups $(K,K_1)\subset (G,G_1)$ are given by
\begin{gather*}
(K,K_1)=\begin{cases} (\operatorname{SO}(2)\times \operatorname{SO}(n), \operatorname{SO}(2)\times \operatorname{SO}(n')\times \operatorname{SO}(n''))& (\text{Case }1),\\
(U(1)\times \operatorname{Spin}(10), S(U(2)\times U(4))\times \operatorname{SU}(2))& (\text{Case }2),\\
(U(1)\times E_6,\operatorname{SU}(2)\times U(6))& (\text{Case }3) \end{cases}
\end{gather*}
(up to covering). Also we have
\begin{gather*} \fp^+=\begin{cases}\BC^n & (\text{Case }1),\\ M(1,2;\BO)^\BC & (\text{Case }2),\\ \Herm(3,\BO)^\BC & (\text{Case }3), \end{cases} \end{gather*}
and $\fp^+_1:=\fg_1^\BC\cap\fp^+$, $\fp^+_2:=(\fp^+_1)^\bot$ are realized as
\begin{gather*} (\fp^+_1,\fp^+_2)=\begin{cases}
\big(\BC^{n'}, \BC^{n''}\big) & (\text{Case }1),\\
(M(2,4;\BC), M(4,2;\BC)) & (\text{Case }2),\\
(\Skew(6,\BC), M(2,6;\BC)) & (\text{Case }3). \end{cases} \end{gather*}
Let $\chi$, $\chi_1$ be the characters of $K^\BC$, $K_1^\BC$ respectively, normalized as (\ref{char}), and also let $\chi_2$ be the character of $K_1^\BC$ normalized as (\ref{char}) with respect to the Lie algebra $\fp^+_2\oplus\fk_1^\BC\oplus\fp^-_2$. Then $\chi|_{K_1}=\chi_1=\chi_2$ holds.

Now let $(\tau,V)=\big(\chi^{-\lambda},\BC\big)$ with $\lambda$ sufficiently large, $W\subset\cP(\fp^+_2)\otimes\chi^{-\lambda}$ be an irreducible $\tilde{K}^\BC_1$-submodule, and $\rK(x_2)\in \cP\big(\fp^+_2,\Hom\big(W,\chi^{-\lambda}\big)\big)$ be the $\tilde{K}^\BC_1$-invariant polynomial in the sense of~(\ref{K-invariance}). For $x_2\in\fp^+_2$, $w_1\in\fp^+_1$, we want to compute
\begin{align*}
F_{\tau\rho}(x_2;w_1)
=\big\langle {\rm e}^{(y_1|w_1)_{\fp^+_1}}I_W,
\big(h(x_2,Q(y_1)x_2)^{-\lambda/2}\rK\big((x_2)^{Q(y_1)x_2}\big)\big)^*\big\rangle_{\hat{\rho},y_1}.
\end{align*}
Now for $y_1\in\fp^+_1$ and $x_2\in\fp^+_2$, $Q(y_1)x_2\in\fp^+_2$ is given by
\begin{gather*} Q(y_1)x_2=\begin{cases} 2q(y_1,\overline{x_2})y_1-q(y_1)\overline{x_2}=-q(y_1)\overline{x_2} & (\text{Case }1),\\
-\big({}^t\hspace{-1pt}y_1J_2y_1\big)^\#\overline{x_2}J_2 & (\text{Case }2),\\
J_2\overline{x_2}(y_1)^\# & (\text{Case }3), \end{cases} \end{gather*}
where $q(\cdot)$ in Case 1 is as (\ref{quadra}), $x^\#$ in Cases 2 and 3 are defined in (\ref{adjoint4}) and (\ref{adjoint6}) respectively,
and $J_2=\left(\begin{smallmatrix}0&1\\-1&0\end{smallmatrix}\right)\in\Skew(2,\BC)$.
Similarly, $h(x_2,Q(y_1)x_2)^{-\lambda/2}=h_2(x_2,Q(y_1)x_2)^{-\lambda/2}$ is given by
\begin{align*}
h_2(x_2,Q(y_1)x_2)^{-\lambda/2}&=\begin{cases}
\big(1-2q(x_2,-q(\overline{y_1})x_2)+q(x_2)q(-q(\overline{y_1})x_2)\big)^{-\lambda/2} & (\text{Case }1),\\
\det\big(I_2+J_2{}^t\hspace{-1pt}x_2(y_1^*J_2\overline{y_1})^\#x_2\big)^{-\lambda/2} & (\text{Case }2),\\
\det\big(I_2+x_2(y_1^*)^\#{}^t\hspace{-1pt}x_2J_2\big)^{-\lambda/2} & (\text{Case }3) \end{cases}\\
&=\begin{cases}
\big(1+2q(x_2)q(\overline{y_1})+q(x_2)^2q(\overline{y_1})^2\big)^{-\lambda/2} & (\text{Case }1),\\
\det\big(J_2-{}^t\hspace{-1pt}x_2(y_1^*J_2\overline{y_1})^\#x_2\big)^{-\lambda/2} & (\text{Case }2),\\
\det\big(J_2-x_2(y_1^*)^\#{}^t\hspace{-1pt}x_2\big)^{-\lambda/2} & (\text{Case }3) \end{cases}\\
&=\begin{cases}
 (1+q(x_2)q(\overline{y_1}) )^{-\lambda} & (\text{Case }1),\\
\big(1-\Pf\big({}^t\hspace{-1pt}x_2(y_1^*J_2\overline{y_1})^\#x_2\big)\big)^{-\lambda} & (\text{Case }2),\\
\left(1-\Pf(x_2(y_1^*)^\#{}^t\hspace{-1pt}x_2)\right)^{-\lambda} & (\text{Case }3) \end{cases}\\
&=\big(1-\tfrac{1}{2}(x_2|Q(y_1)x_2)_{\fp^+}\big)^{-\lambda},
\end{align*}
and $(x_2)^{Q(y_1)x_2}\in\fp^+_2$ is given by
\begin{align*}
(x_2)^{Q(y_1)x_2}&=\begin{cases}
 (1+q(x_2)q(\overline{y_1}) )^{-2} (x_2+q(x_2)q(\overline{y_1})x_2 ) & (\text{Case }1),\\
x_2\big(I_2+J_2{}^t\hspace{-1pt}x_2(y_1^*J_2\overline{y_1})^\#x_2\big)^{-1} & (\text{Case }2),\\
\big(I_2+x_2(y_1^*)^\#{}^t\hspace{-1pt}x_2J_2\big)^{-1}x_2 & (\text{Case }3) \end{cases}\\
&=\begin{cases}
 (1+q(x_2)q(\overline{y_1}) )^{-1}x_2 & (\text{Case }1),\\
\big(1-\Pf({}^t\hspace{-1pt}x_2(y_1^*J_2\overline{y_1})^\#x_2)\big)^{-1}x_2 & (\text{Case }2),\\
\big(1-\Pf(x_2(y_1^*)^\#{}^t\hspace{-1pt}x_2)\big)^{-1}x_2 & (\text{Case }3) \end{cases}\\
&=\big(1-\tfrac{1}{2}(x_2|Q(y_1)x_2)_{\fp^+}\big)^{-1}x_2.
\end{align*}
Therefore if $\rK(x_2)$ is homogeneous of degree $k$, then we have
\begin{gather*}
 h(x_2,Q(y_1)x_2)^{-\lambda/2}\rK\big((x_2)^{Q(y_1)x_2}\big)\\
 \qquad{} =\big(1-\tfrac{1}{2}(x_2|Q(y_1)x_2)_{\fp^+}\big)^{-\lambda}\rK\big(\big(1-\tfrac{1}{2}(x_2|Q(y_1)x_2)_{\fp^+}\big)^{-1}x_2\big)\\
\qquad{}=\big(1-\tfrac{1}{2}(x_2|Q(y_1)x_2)_{\fp^+}\big)^{-\lambda-k}\rK(x_2) =\sum_{m=0}^\infty \frac{(\lambda+k)_m}{m!}\big(\tfrac{1}{2}(x_2|Q(y_1)x_2)_{\fp^+}\big)^m\rK(x_2),
\end{gather*}
and hence we have
\begin{gather*}
F_{\tau\rho}(x_2;w_1)
=\sum_{m=0}^\infty \frac{(\lambda+k)_m}{m!}\big\langle {\rm e}^{(y_1|w_1)_{\fp^+}}I_W,
\big(\big(\tfrac{1}{2}(x_2|Q(y_1)x_2)_{\fp^+}\big)^m\rK(x_2)\big)^*\big\rangle_{\hat{\rho},y_1}.
\end{gather*}
Now we set
\begin{alignat*}{3}
& W=\cP_{(k_1,k_2)}(\fp^+_2)\otimes\chi^{-\lambda}
\simeq\chi_1^{-\lambda-k_1-k_2}\boxtimes \BC^{[n']}\boxtimes V_{(k_1-k_2,0,\ldots,0)}^{[n'']\vee} \quad && (\text{Case }1),& \\
& W=\cP_{(k_1,k_2)}(\fp^+_2)\otimes\chi^{-\lambda}
\simeq\big(\chi_1^{-\lambda}\otimes V_{(0,0;-k_2,-k_1,-k_1-k_2,-k_1-k_2)}^{(2,4)\vee}\big)\boxtimes V_{(k_1-k_2,0)}^{(2)\vee} \quad && (\text{Case }2),& \\
& W=\cP_{(k,0)}(\fp^+_2)\otimes\chi^{-\lambda}
\simeq V_{(k,0)}^{(2)\vee}\boxtimes\big(\chi_1^{-\lambda}\otimes
V_{\left(\frac{k}{2},\frac{k}{2},\frac{k}{2},\frac{k}{2},\frac{k}{2},-\frac{k}{2}\right)}^{(6)\vee}\big) \quad && (\text{Case }3),&
\end{alignat*}
where for Case 1, if $n''=1$ we assume $k_1=k_2$ or $k_1=k_2+1$ so that $(k_1,k_2)=\big(\big\lceil\frac{l}{2}\big\rceil,\big\lfloor\frac{l}{2}\big\rfloor\big)$ for some $l\in\BZ_{\ge 0}$, and regard $V_{(k_1-k_2,0,\ldots,0)}^{[n'']\vee}$ as trivial. Also, if $n''=2$ we do not assume $k_1\ge k_2$ (see Section~\ref{realize}). Then $\rK(x_2)=\rK_\bk^{(d_2)}(x_2)$ is homogeneous of degree $|\bk|$, where $\bk=(k_1,k_2)$ for Cases~1,~2, and $\bk=(k,0)$ for Case~3. Also, we have
\begin{gather*} \big(\tfrac{1}{2}(x_2|Q(y_1)x_2)_{\fp^+}\big)^m\rK(x_2)
\in\Big(\bigoplus_{\substack{\bm\in\BZ_{++}^2\\ |\bm|=2m+|\bk|}}\cP_\bm(\fp^+_2)\otimes
\overline{\bigoplus_{\substack{\bn\in\BZ_{++}^{r_1}\\ |\bn|=2m}}\cP_\bn(\fp^+_1)\otimes\cP_{\bk}(\fp^+_2)}\Big)^{K_1}, \end{gather*}
where $r_1=2$ for Cases 1, 2, and $r_1=3$ for Case 3. This space is computed as
\begin{gather*}
 \Big(\bigoplus_{\substack{\bm\in\BZ_{++}^2\\ |\bm|=2m+|\bk|}}\chi_1^{-|\bm|}\boxtimes\BC^{[n']}\boxtimes V_{(m_1-m_2,0,\ldots,0)}^{[n'']\vee} \\
\qquad\quad{} \otimes\overline{\bigoplus_{\substack{\bn\in\BZ_{++}^2\\ |\bn|=2m}}
\chi_1^{-|\bn|-|\bk|}\boxtimes V_{(n_1-n_2,0,\ldots,0)}^{[n']\vee}\boxtimes V_{(k_1-k_2,0,\ldots,0)}^{[n'']\vee}}
\Big)^{K_1}\\
 \qquad{} =\Big(\chi_1^{-2m-|\bk|}\boxtimes \BC^{[n']}\boxtimes V_{(k_1-k_2,0,\ldots,0)}^{[n'']\vee}
\otimes\overline{\chi_1^{-2m-|\bk|}\boxtimes \BC^{[n']}\boxtimes V_{(k_1-k_2,0,\ldots,0)}^{[n'']\vee}}\Big)^{K_1}
\end{gather*}
for Case 1 (if $n'$ or $n''$ equals 1 or 2, then the range of $\bm$, $\bn$ changes, but the result does not change),
\begin{gather*}
 \Big(\bigoplus_{\substack{\bm\in\BZ_{++}^2\\ |\bm|=2m+|\bk|}}
V_{(0,0;-m_2,-m_1,-m_1-m_2,-m_1-m_2)}^{(2,4)\vee}\boxtimes V_{(m_1-m_2,0)}^{(2)\vee}\\
\qquad\quad{} \otimes\overline{\bigoplus_{\substack{\bn\in\BZ_{++}^2\\ |\bn|=2m}}
V_{(n_1,n_2;0,0,-n_2,-n_1)}^{(2,4)\vee}\boxtimes\BC^{(2)}
\otimes V_{(0,0;-k_2,-k_1,-k_1-k_2,-k_1-k_2)}^{(2,4)\vee}\boxtimes V_{(k_1-k_2,0)}^{(2)\vee}}
\Big)^{K_1}\\
\qquad{} =\Big(V_{(m,m;-k_2,-k_1,-m-k_1-k_2,-m-k_1-k_2)}^{(2,4)\vee}\boxtimes V_{(k_1-k_2,0)}^{(2)\vee} \\
\qquad\quad{} \otimes\overline{V_{(m,m;-k_2,-k_1,-m-k_1-k_2,-m-k_1-k_2)}^{(2,4)\vee}\boxtimes V_{(k_1-k_2,0)}^{(2)\vee}}\Big)^{K_1}
\end{gather*}
for Case 2 (we note that $V_{\substack{(m,m;-k_2,-k_1,\hspace{40pt}\\ \; -m-k_1-k_2,-m-k_1-k_2)}}^{(2,4)\vee}
\simeq V_{\substack{(0,0;-m-k_2,-m-k_1,\hspace{30pt}\\ \; -2m-k_1-k_2,-2m-k_1-k_2)}}^{(2,4)\vee}$ as $S(U(2)\times U(4))$-modules), and
\begin{gather*}
\Big(\bigoplus_{\substack{\bm\in\BZ_{++}^2\\ |\bm|=2m+k}}V_{(m_1-m_2,0)}^{(2)\vee}\boxtimes
V_{\left(\frac{m_1+m_2}{2},\frac{m_1+m_2}{2},\frac{m_1+m_2}{2},\frac{m_1+m_2}{2},\frac{m_1-m_2}{2},\frac{m_2-m_1}{2}\right)}^{(6)\vee} \\
\qquad\quad{} \otimes\overline{\bigoplus_{\substack{\bn\in\BZ_{++}^3\\ |\bn|=2m}}
\BC^{(2)}\boxtimes V_{(n_1,n_1,n_2,n_2,n_3,n_3)}^{(6)\vee}
\otimes V_{(k,0)}^{(2)\vee}\boxtimes V_{\left(\frac{k}{2},\frac{k}{2},\frac{k}{2},\frac{k}{2},\frac{k}{2},-\frac{k}{2}\right)}^{(6)\vee}}
\Big)^{K_1}\\
\qquad{}=\Big(V_{(k,0)}^{(2)\vee}\boxtimes V_{\left(m+\frac{k}{2},m+\frac{k}{2},m+\frac{k}{2},m+\frac{k}{2},\frac{k}{2},-\frac{k}{2}\right)}^{(6)\vee}
\otimes\overline{V_{(k,0)}^{(2)\vee}\boxtimes
V_{\left(m+\frac{k}{2},m+\frac{k}{2},m+\frac{k}{2},m+\frac{k}{2},\frac{k}{2},-\frac{k}{2}\right)}^{(6)\vee}}\Big)^{K_1}
\end{gather*}
for Case 3. Therefore by using the results of \cite{N2} and (\ref{exp_onD}) we have
\begin{gather*}
 F_{\tau\rho}(x_2;w_1) =\begin{cases}
\ds \sum_{m=0}^{\infty}\frac{(\lambda+k_1+k_2)_m}{(\lambda+k_1+k_2)_{(m,m),n'-2}}\frac{1}{m!}& \\
\quad{}\times \big(\tfrac{1}{2}(x_2|Q(w_1)x_2)_{\fp^+}\big)^m
\rK_{(k_1,k_2)}^{(n''-2)}(x_2) & (\text{Case }1),\\
\ds \sum_{m=0}^\infty \frac{(\lambda+k_1+k_2)_m}{(\lambda+(k_1+k_2,k_1+k_2,k_1,k_2))_{(m,m,0,0),2}}& \\
\quad{}\times
\dfrac{1}{m!}\big(\tfrac{1}{2}(x_2|Q(w_1)x_2)_{\fp^+}\big)^m\rK_{(k_1,k_2)}^{(2)}(x_2),& (\text{Case }2),\\
\ds \sum_{m=0}^\infty \frac{(\lambda+k)_m}{(\lambda+(k,k,0))_{(m,m,0),4}}& \\
\quad{}\times \dfrac{1}{m!}\big(\tfrac{1}{2}(x_2|Q(w_1)x_2)_{\fp^+}\big)^m
\rK_{(k,0)}^{(2)}(x_2) & (\text{Case }3)\end{cases}\\
 \hphantom{F_{\tau\rho}(x_2;w_1)}{} =\begin{cases}
\ds \sum_{m=0}^{\infty}\frac{1}{\left(\lambda+k_1+k_2-\frac{n'-2}{2}\right)_m}\frac{(-1)^m}{m!}q(x_2)^m\overline{q(w_1)}^m
\rK_{(k_1,k_2)}^{(n''-2)}(x_2) & (\text{Case }1),\\
\ds \sum_{m=0}^\infty \frac{1}{(\lambda+k_1+k_2-1)_m}\frac{1}{m!}\Pf\left({}^t\hspace{-1pt}x_2(w_1^*J_2\overline{w_1})^\#x_2\right)^m
\rK_{(k_1,k_2)}^{(2)}(x_2)\!\! & (\text{Case }2),\\
\ds \sum_{m=0}^\infty \frac{1}{(\lambda+k-2)_m}\frac{1}{m!}\Pf\left(x_2(w_1^*)^\#{}^t\hspace{-1pt}x_2\right)^m
\rK_{(k,0)}^{(2)}(x_2) & (\text{Case }3).
\end{cases}
\end{gather*}
By Theorem \ref{main}, by substituting $w_1$ with $\overline{\frac{\partial}{\partial x_1}}$, we get the intertwining operator from $(\cH_1)_{\tilde{K}_1}$ to $\cH_{\tilde{K}}$, and by Theorem~\ref{extend}, this extends to the intertwining operator between the spaces of all holomorphic functions if $\cH_1$ is holomorphic discrete. Also, by Theorem \ref{continuation}, this continues meromorphically for all~$\lambda\in\BC$. Therefore we have the following.
\begin{Theorem}\label{main5}\quad
\begin{enumerate}\itemsep=0pt
\item[$(1)$] Let $(G,G_1)=(\operatorname{SO}_0(2,n), \operatorname{SO}_0(2,n')\times \operatorname{SO}(n''))$ with $n=n'+n''$, and $(k_1,k_2)\in\BZ_{++}^2$ if $n''\ge 3$,
$(k_1,k_2)\in(\BZ_{\ge 0})^2$ if $n''=2$, $(k_1,k_2)=\left(\left\lceil\frac{l}{2}\right\rceil,\left\lfloor\frac{l}{2}\right\rfloor\right)$
for some $l\in\BZ_{\ge 0}$ if $n''=1$. Then the linear map
\begin{gather}
\cF_{\lambda,k_1,k_2}\colon \cO_{\lambda+k_1+k_2}(D_1)\boxtimes V_{(k_1-k_2,0,\ldots,0)}^{[n'']\vee}
\to \cO_\lambda(D), \notag \\
 (\cF_{\lambda,k_1,k_2}f)(x_1,x_2)\notag \\ =\sum_{m=0}^{\infty}
\frac{1}{\left(\lambda+k_1+k_2-\frac{n'-2}{2}\right)_m}\frac{(-1)^m}{m!}q(x_2)^m
q\left(\frac{\partial}{\partial x_1}\right)^m\rK_{(k_1,k_2)}^{(n''-2)}(x_2)f(x_1)\label{SO-SOSO}
\end{gather}
$(x_1\in\BC^{n'}, x_2\in\BC^{n''})$ intertwines the $\tilde{G}_1$-action.
\item[$(2)$] Let $(G,G_1)=(E_{6(-14)}, \operatorname{SU}(2,4)\times \operatorname{SU}(2))$ $($up to covering$)$, and $(k_1,k_2)\in\BZ_{++}^2$.
Then the linear map
\begin{gather*}
\cF_{\lambda,k_1,k_2}\colon \ \cO_{\lambda}\big(D_1,V_{(0,0;-k_2,-k_1,-k_1-k_2,-k_1-k_2)}^{(2,4)\vee}\big)
\boxtimes V_{(k_1-k_2,0)}^{(2)\vee}\to \cO_\lambda(D), \\
 (\cF_{\lambda,k_1,k_2}f)(x_1,x_2)\\
=\sum_{m=0}^\infty \frac{1}{(\lambda+k_1+k_2-1)_m}\frac{1}{m!}
\Pf\left({}^t\hspace{-1pt}x_2\left({\vphantom{\Bigl(}}^t\!\!\left(\frac{\partial}{\partial x_1}\right)J_2\frac{\partial}{\partial x_1}
\right)^\#x_2\right)^m\rK_{(k_1,k_2)}^{(2)}(x_2)f(x_1)
\end{gather*}
$(x_1\in M(2,4;\BC),x_2\in M(4,2;\BC))$ intertwines the $\tilde{G}_1$-action.
\item[$(3)$] Let $(G,G_1)=(E_{7(-25)}, \operatorname{SU}(2)\times \operatorname{SO}^*(12))$ $($up to covering$)$, and $k\in\BZ_{\ge 0}$.
Then the linear map
\begin{gather*}
\cF_{\lambda,k}\colon \ \cO_{\lambda}\big(D_1,V_{\left(\frac{k}{2},\frac{k}{2},\frac{k}{2},\frac{k}{2},\frac{k}{2},-\frac{k}{2}\right)}^{(6)\vee}\big)
\boxtimes V_{(k,0)}^{(2)\vee}\to \cO_\lambda(D), \\
(\cF_{\lambda,k}f)(x_1,x_2)=\sum_{m=0}^\infty \frac{1}{(\lambda+k-2)_m}\frac{1}{m!}
\Pf\left(x_2{\vphantom{\Bigl(}}^t\!\!\left(\frac{\partial}{\partial x_1}\right)^\#{}^t\hspace{-1pt}x_2\right)^m\rK_{(k,0)}^{(2)}(x_2)f(x_1)
\end{gather*}
$(x_1\in\Skew(6,\BC),x_2\in M(2,6;\BC))$ intertwines the $\tilde{G}_1$-action.
\end{enumerate}
\end{Theorem}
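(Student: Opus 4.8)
The argument will run along the lines of the proofs of Theorems~\ref{main1}--\ref{main4}, specialized to the three symmetric pairs at hand, whose common feature is that $Q(y_1)x_2$ spans at most a rank-one piece of $\fp^+_2$ for $y_1\in\fp^+_1$, $x_2\in\fp^+_2$. The plan is to start from Theorem~\ref{main}(2): with $(\tau,V)=(\chi^{-\lambda},\BC)$ and $\rK(x_2)\in\cP(\fp^+_2,\Hom(W,\chi^{-\lambda}))$ the $\tilde K_1^\BC$-invariant polynomial attached to $W=\cP_\bk(\fp^+_2)\otimes\chi^{-\lambda}$, homogeneous of degree $|\bk|$, and using Proposition~\ref{projprop} to replace $\tau(B(x_2,y_1))$ by $h(x_2,Q(y_1)x_2)^{-\lambda/2}$, the operator is $\cF_{\tau\rho}f(x)=F_{\tau\rho}(x_2;\overline{\partial/\partial x_1})f(x_1)$ with
\[ F_{\tau\rho}(x_2;w_1)=\big\langle {\rm e}^{(y_1|w_1)_{\fp^+_1}}I_W,\big(h(x_2,Q(y_1)x_2)^{-\lambda/2}\rK((x_2)^{Q(y_1)x_2})\big)^*\big\rangle_{\hat\rho,y_1}. \]

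The structural heart is the explicit evaluation, case by case, of $Q(y_1)x_2$, $h(x_2,Q(y_1)x_2)$ and $(x_2)^{Q(y_1)x_2}$, using the realizations of Section~\ref{realize} for $\fp^+=\BC^n$ and the Jordan triple formulas of Section~\ref{EJTS} for $M(2,4;\BC)\oplus M(4,2;\BC)\subset\fp^+(\mathfrak{e}_{6(-14)})$ and for $\Skew(6,\BC),M(2,6;\BC)\subset\Herm(3,\BO)^\BC$. I would verify the uniform identities $h(x_2,Q(y_1)x_2)=\big(1-\tfrac12(x_2|Q(y_1)x_2)_{\fp^+}\big)^2$ and $(x_2)^{Q(y_1)x_2}=\big(1-\tfrac12(x_2|Q(y_1)x_2)_{\fp^+}\big)^{-1}x_2$ (for instance, in the $\operatorname{SO}_0(2,n)$ case $Q(y_1)x_2=-q(y_1)\overline{x_2}$ and $(x_2)^{Q(y_1)x_2}=(1+q(x_2)\overline{q(y_1)})^{-1}x_2$). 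By homogeneity of $\rK$ this collapses the integrand to
\[ h(x_2,Q(y_1)x_2)^{-\lambda/2}\rK((x_2)^{Q(y_1)x_2})=\sum_{m\ge0}\frac{(\lambda+|\bk|)_m}{m!}\big(\tfrac12(x_2|Q(y_1)x_2)_{\fp^+}\big)^m\rK(x_2). \]

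Next comes the branching bookkeeping. For each $m$, the summand $(\tfrac12(x_2|Q(y_1)x_2)_{\fp^+})^m\rK(x_2)$ is $\tilde K_1^\BC$-invariant in $(x_2,y_1)$, hence lies in $(W'_m\otimes\overline{W'_m})^{\tilde K_1}$ for a single $\tilde K_1$-type $W'_m$; I would identify $W'_m$ and check that this isotypic space is one-dimensional, using the $\mathfrak{so}(n')\oplus\mathfrak{so}(n'')$-branching of $\mathfrak{so}(n)$-harmonics, the $S(U(2)\times U(4))\times\operatorname{SU}(2)$-decomposition of $\cP(\fp^+(\mathfrak{e}_{6(-14)}))$, and the $\operatorname{SU}(2)\times\operatorname{SO}^*(12)$-branching of $\cP(\Herm(3,\BO)^\BC)$. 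Then (\ref{exp_onD}) evaluates the inner product as $p_{W'_m}$ times the summand at $y_1=w_1$, where $p_{W'_m}$ is the norm ratio of $\cH_\rho(D_1,W)$ on $W'_m$ — an inverse product of Pochhammer symbols (of type $d=n'-2$, $2$, $4$ respectively), read off from~\cite{N2}. The product $\frac{(\lambda+|\bk|)_m}{m!}\,p_{W'_m}$ then telescopes to the coefficients displayed in the statement, and substituting $w_1=\overline{\partial/\partial x_1}$, so that $(x_2|Q(w_1)x_2)_{\fp^+}$ becomes the explicit differential expression given by the inner-product formula in each case, yields the operators on $\tilde K_1$-finite vectors; they intertwine the $(\fg_1,\tilde K_1)$-action by Theorem~\ref{main}(2).

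Finally, Theorem~\ref{extend}(2) promotes $\cF_{\tau\rho}$ to a map between all holomorphic functions when $\cH_\rho(D_1,W)$ is holomorphic discrete, and since $p_m(\lambda)$ and $p_m(\lambda)q_m(\lambda)$ are inverses of products of Pochhammer symbols — hence of polynomial growth by Stirling's formula — Theorem~\ref{continuation} gives the meromorphic continuation in $\lambda$ and the intertwining property at all $\lambda$ that are not poles. I expect the main obstacle to be the first two steps: extracting the uniform scalar form of the quasi-inverse and generic norm from the three rather different Jordan triple models, and the case-by-case verification that each power $(\tfrac12(x_2|Q(y_1)x_2)_{\fp^+})^m\rK(x_2)$ picks out exactly one multiplicity-free $\tilde K_1$-type whose norm ratio is known; once these are in place, the remainder is the standard norm-comparison machinery used throughout Section~\ref{sect_examples}.
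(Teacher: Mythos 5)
Your proposal is correct and follows essentially the same route as the paper: Theorem~\ref{main}(2) combined with the case-by-case verification that $Q(y_1)x_2$ makes $h(x_2,Q(y_1)x_2)^{-\lambda/2}$ and $(x_2)^{Q(y_1)x_2}$ collapse to powers of the scalar $1-\tfrac12(x_2|Q(y_1)x_2)_{\fp^+}$, the resulting binomial expansion via homogeneity of $\rK$, the identification of a single multiplicity-free $\tilde K_1$-type per term, and the norm formulas of~\cite{N2} together with Theorems~\ref{extend} and~\ref{continuation}. The only cosmetic difference is that the paper verifies the scalar collapse by computing $Q(y_1)x_2$, $h_2$, and the quasi-inverse separately in each of the three models rather than stating a uniform identity first, but the content is identical.
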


\section[Behavior of $\cF_{\tau\rho}$ when $\lambda$ is a pole]{Behavior of $\boldsymbol{\cF_{\tau\rho}}$ when $\boldsymbol{\lambda}$ is a pole}\label{section6}

In this section, we look at the behavior of $\cF_\lambda$ when $\lambda$ is a pole.
For simplicity we only treat the case that both $G$ and $G_1$ are classical and both $\cH$ and $\cH_1$ are of scalar type.
In this case, the underlying $(\fg,\tilde{K})$-module of the holomorphic discrete series representation $\cH_\lambda(D)$ of scalar type
is decomposed as
\begin{gather*} \cH_\lambda(D)_{\tilde{K}}=\cO_\lambda(D)_{\tilde{K}}=\cP(\fp^+)=\bigoplus_{\bm\in\BZ_{++}^r}\cP_\bm(\fp^+), \end{gather*}
and by (\ref{norm_compare}), (\ref{scalar_norm}), its analytic continuation $\cO_\lambda(D)_{\tilde{K}}$ is reducible if and only if
\begin{gather*} \begin{cases} \lambda\in\frac{1}{2}\BZ,\; \lambda\le\frac{1}{2}(r-1) & (G=\operatorname{Sp}(r,\BR),\;r\ge 2,\;(r,d)=(r,1)),\\
\lambda\in\BZ,\; \lambda\le \min\{q,s\}-1 & (G=\operatorname{SU}(q,s),\;(r,d)=(\min\{q,s\},2)),\\
\lambda\in\BZ,\; \lambda\le 2\left(\left\lfloor\frac{s}{2}\right\rfloor-1\right)
& (G=\operatorname{SO}^*(2s),\;(r,d)=(\left\lfloor\frac{s}{2}\right\rfloor,4)),\\
\lambda\in\BZ,\; \lambda\le \frac{n-2}{2} & (G=\operatorname{SO}_0(2,n),\; n\colon \text{even},\; (r,d)=(2,n-2)),\\
\lambda\in\BZ,\; \lambda\le 0 \text{ or }\lambda\in\BZ+\frac{1}{2},\; \lambda\le \frac{n-2}{2} & (G=\operatorname{SO}_0(2,n),\; n\colon \text{odd},\; (r,d)=(2,n-2)).
\end{cases} \end{gather*}
For these $\lambda$ and for $i=1,2,\ldots,r$, let
\begin{gather*} M_i(\lambda)=M_i^\fg(\lambda):=\bigoplus_{\substack{\bm\in\BZ_{++}^{r}\\ m_i\le\frac{d}{2}(i-1)-\lambda}}\cP_\bm(\fp^+). \end{gather*}
Also, since $\mathfrak{so}(2,1)\simeq\mathfrak{sl}(2,\BR)$ and $\cO_{\lambda}(D_{\operatorname{SO}_0(2,1)})\simeq\cO_{2\lambda}(D_{{\rm SL}(2,\BR)})$
(see Section~\ref{realize}), for $\lambda\in\BZ_{\le 0}$ we write
\begin{gather*} M_1^{\mathfrak{sl}(2,\BR)}(2\lambda)=\bigoplus_{m=0}^{-2\lambda}\cP_m(\fp^+_{\mathfrak{sl}(2,\BR)})
=\bigoplus_{m=0}^{-2\lambda}\cP_{\left(\left\lceil\frac{m}{2}\right\rceil,\left\lfloor\frac{m}{2}\right\rfloor\right)}(\fp^+_{\mathfrak{so}(2,1)})
=:\begin{cases} M_1^{\mathfrak{so}(2,1)}(\lambda) & (2\lambda\colon \text{even}),\\ M_2^{\mathfrak{so}(2,1)}(\lambda) & (2\lambda\colon \text{odd}).
\end{cases} \end{gather*}
Then the composition series are given by, when $G=\operatorname{Sp}(r,\BR)$ with $r\ge 2$,
\begin{align*}
&\cO_\lambda(D)_{\tilde{K}}\supset M_{2\left\lceil\frac{r}{2}\right\rceil-1}(\lambda)\supset M_{2\left\lceil\frac{r}{2}\right\rceil-3}(\lambda)
\supset\cdots\supset M_{\max\{2\lambda,0\}+1}(\lambda)\supset\{0\} && (\lambda\in\BZ), \\
&\cO_\lambda(D)_{\tilde{K}}\supset M_{2\left\lfloor\frac{r}{2}\right\rfloor}(\lambda)\supset M_{2\left\lfloor\frac{r}{2}\right\rfloor-2}(\lambda)
\supset\cdots\supset M_{\max\{2\lambda,1\}+1}(\lambda)\supset\{0\} && \big(\lambda\in\BZ+ \tfrac{1}{2}\big),
\end{align*}
when $G=\operatorname{SU}(q,s)$,
\begin{gather*} \cO_\lambda(D)_{\tilde{K}}\supset M_{\min\{q,s\}}(\lambda)\supset M_{\min\{q,s\}-1}(\lambda)
\supset\cdots\supset M_{\max\{\lambda,0\}+1}(\lambda)\supset\{0\}, \end{gather*}
when $G=\operatorname{SO}^*(2s)$,
\begin{gather*} \cO_\lambda(D)_{\tilde{K}}\supset M_{\left\lfloor\frac{s}{2}\right\rfloor}(\lambda)\supset M_{\left\lfloor\frac{s}{2}\right\rfloor-1}(\lambda)
\supset\cdots\supset M_{\max\left\{\left\lceil\frac{\lambda}{2}\right\rceil,0\right\}+1}(\lambda)\supset\{0\}, \end{gather*}
and when $G=\operatorname{SO}_0(2,n)$ with $n\ne 2$,
\begin{alignat*}{3}
&\cO_\lambda(D)_{\tilde{K}}\supset M_2(\lambda)\supset M_1(\lambda)\supset\{0\} \qquad & &(n\colon \text{even},\;\lambda\in\BZ,\;\lambda\le 0),&\\
&\cO_\lambda(D)_{\tilde{K}}\supset M_2(\lambda)\supset\{0\}\qquad & &\big(n\colon \text{even},\;\lambda\in\BZ,\;1\le\lambda\le \tfrac{n-2}{2}\big),&\\
&\cO_\lambda(D)_{\tilde{K}}\supset M_1(\lambda)\supset\{0\} \qquad& &(n\colon \text{odd},\;\lambda\in\BZ,\;\lambda\le 0),&\\
&\cO_\lambda(D)_{\tilde{K}}\supset M_2(\lambda)\supset\{0\} \qquad& &\big(n\colon \text{odd},\;\lambda\in\BZ+ \tfrac{1}{2},\;\lambda\le \tfrac{n-2}{2}\big)&
\end{alignat*}
(see \cite{FK0}). For $G=U(q,s)$ case, we use the same symbol $M_i(\lambda_1+\lambda_2)\subset\cO_{\lambda_1+\lambda_2}(D)_{\tilde{K}}$ as in the $G=\operatorname{SU}(q,s)$ case. We also write $M_0(\lambda)=M_{-1}(\lambda)=\{0\}$, $M_{r+1}(\lambda)=M_{r+2}(\lambda)=\cO_\lambda(D)_{\tilde{K}}$. We note that when $G=\operatorname{SO}_0(2,2)$, we have $\cO_\lambda(D_{\operatorname{SO}_0(2,2)})_{\tilde{K}}\simeq \cO_\lambda(D_{{\rm SL}(2,\BR)})_{\tilde{K}}\boxtimes\cO_\lambda(D_{{\rm SL}(2,\BR)})_{\tilde{K}}$, and its submodules are given by tensor products of $M_1^{\mathfrak{sl}(2,\BR)}(\lambda)$ or $\cO_\lambda(D_{{\rm SL}(2,\BR)})_{\tilde{K}}$.

First we consider $(G,G_1)=(G,G_{11}\times G_{22})=(U(q,s),U(q',s')\times U(q'',s''))$, and let $\cH=\cO_{\lambda_1+\lambda_2}(D)$, $\cH_1=\cO_{(\lambda_1+k)+(\lambda_2+l)}(D_{11})\hboxtimes\cO_{(\lambda_1+l)+(\lambda_2+k)}(D_{22})$, where $k=0$ if $q'\ne s''$, $l=0$ if $q''\ne s'$. Without loss of generality we may assume $q'\le q'',s',s''$. Then the intertwining operator
\begin{gather*} \cF_{\lambda,k,l}\colon \ \cO_{(\lambda_1+k)+(\lambda_2+l)}(D_{11}) \hboxtimes \cO_{(\lambda_1+l)+(\lambda_2+k)}(D_{22})
\to \cO_{\lambda_1+\lambda_2}(D) \end{gather*}
is given by (\ref{U-UU}),
\begin{gather*}
(\cF_{\lambda,k,l}f)\begin{pmatrix}x_{11}&x_{12}\\x_{21}&x_{22}\end{pmatrix}
 =\det(x_{12})^k\det(x_{21})^l
\sum_{\bm\in\BZ_{++}^{q'}}\frac{1}{(\lambda_1+\lambda_2+k+l)_{\bm,2}}\\
\hphantom{(\cF_{\lambda,k,l}f)\begin{pmatrix}x_{11}&x_{12}\\x_{21}&x_{22}\end{pmatrix}=}{} \times\tilde{\Phi}_\bm^{(2)}
\left(x_{12}{\vphantom{\biggl(}}^t\!\!\left(\frac{\partial}{\partial x_{22}}\right)x_{21}
{\vphantom{\biggl(}}^t\!\!\left(\frac{\partial}{\partial x_{11}}\right)\right)f(x_{11},x_{22}).
\end{gather*}
The poles are at $\lambda_1+\lambda_2+k+l\in \BZ_{\le q'-1}$. We write $\lambda_1+\lambda_2+k+l=:\mu$. Then the order of poles are $q'-\max\{\mu,0\}$.

Now we consider the residue of $\cF_{\lambda,k,l}$. Since as a function of $w_{11}$ and $w_{22}$, we have
\begin{gather*} \tilde{\Phi}_\bm^{(2)} (x_{12}w_{22}^*x_{21}w_{11}^* )\in\overline{\cP_{\bm}(\fp^+_{11})_{w_{11}}\boxtimes\cP_{\bm}(\fp^+_{22})_{w_{22}}},\end{gather*}
if $f(x_{11},x_{22})\in\cP_\bk(\fp^+_{11})\boxtimes\cP_\bl(\fp^+_{22})$ and $\bm$ satisfies $m_j>k_j$ or $m_j>l_j$ for some $j$, then we have
\begin{gather*} \tilde{\Phi}_\bm^{(2)}
\left(x_{12}{\vphantom{\biggl(}}^t\!\!\left(\frac{\partial}{\partial x_{22}}\right)
x_{21}{\vphantom{\biggl(}}^t\!\!\left(\frac{\partial}{\partial x_{11}}\right)\right)f(x_{11},x_{22})=0. \end{gather*}
Therefore, if $f\in M_{i+1}^{\fg_{11}}(\mu)\boxtimes \cO_{\mu}(D_{22})_{\tilde{K}_{22}}
+\cO_{\mu}(D_{11})_{\tilde{K}_{11}}\boxtimes M_{i+1}^{\fg_{22}}(\mu)$, then it holds that
\begin{gather*}
(\cF_{\lambda,k,l}f)\begin{pmatrix}x_{11}&x_{12}\\x_{21}&x_{22}\end{pmatrix}
 =\det(x_{12})^k\det(x_{21})^l
\sum_{\substack{\bm\in\BZ_{++}^{q'}\\ m_{i+1}\le i-\mu}}\frac{1}{(\lambda_1+\lambda_2+k+l)_{\bm,2}}\\
\hphantom{(\cF_{\lambda,k,l}f)\begin{pmatrix}x_{11}&x_{12}\\x_{21}&x_{22}\end{pmatrix}=}{} \times\tilde{\Phi}_\bm^{(2)}
\left(x_{12}{\vphantom{\biggl(}}^t\!\!\left(\frac{\partial}{\partial x_{22}}\right)x_{21}
{\vphantom{\biggl(}}^t\!\!\left(\frac{\partial}{\partial x_{11}}\right)\right)f(x_{11},x_{22}).
\end{gather*}
This has a pole of order $i-\max\{\mu,0\}$ at $\lambda_1+\lambda_2+k+l=\mu$. Therefore
\begin{gather*}
 \big(\tilde{\cF}^i_{\lambda,k,l}f\big)\begin{pmatrix}x_{11}&x_{12}\\x_{21}&x_{22}\end{pmatrix}
:=\lim_{(\nu_1,\nu_2)\to(\lambda_1,\lambda_2)}(\nu_1+\nu_2-\lambda_1-\lambda_2)^{i-\max\{\mu,0\}}
(\cF_{\nu,k,l}f)\begin{pmatrix}x_{11}&x_{12}\\x_{21}&x_{22}\end{pmatrix} \\
{}=\lim_{(\nu_1,\nu_2)\to(\lambda_1,\lambda_2)}\det(x_{12})^k\det(x_{21})^l \\
\qquad{}\times
\sum_{\substack{\bm\in\BZ_{++}^{q'}\\ m_{i+1}\le i-\mu}}\frac{(\nu_1+\nu_2-\lambda_1-\lambda_2)^{i-\max\{\mu,0\}}}{(\nu_1+\nu_2+k+l)_{\bm,2}}
\tilde{\Phi}_\bm^{(2)}\left(x_{12}{\vphantom{\biggl(}}^t\!\!\left(\frac{\partial}{\partial x_{22}}\right)
x_{21}{\vphantom{\biggl(}}^t\!\!\left(\frac{\partial}{\partial x_{11}}\right)\right)f(x_{11},x_{22})
\end{gather*}
is well-defined for $f\in M_{i+1}^{\fg_{11}}(\mu)\boxtimes \cO_{\mu}(D_{22})_{\tilde{K}_{22}} +\cO_{\mu}(D_{11})_{\tilde{K}_{11}}\boxtimes M_{i+1}^{\fg_{22}}(\mu)$. Similarly, $\tilde{\cF}^{i-1}_{\lambda,k,l}$ is well-defined on $M_{i}^{\fg_{11}}(\mu)\boxtimes \cO_{\mu}(D_{22})_{\tilde{K}_{22}} +\cO_{\mu}(D_{11})_{\tilde{K}_{11}}\boxtimes M_{i}^{\fg_{22}}(\mu)$, and therefore $\tilde{\cF}^i_{\lambda,k,l}$ is trivial on $M_i^{\fg_{11}}(\mu)\boxtimes \cO_{\mu}(D_{22})_{\tilde{K}_{22}}+\cO_{\mu}(D_{11})_{\tilde{K}_{11}}\boxtimes M_i^{\fg_{22}}(\mu)$. That is, the linear map
\begin{gather*}
\tilde{\cF}^i_{\lambda,k,l}\colon \
\big(M_{i+1}^{\fg_{11}}(\mu)\boxtimes \cO_{\mu}(D_{22})_{\tilde{K}_{22}}+\cO_{\mu}(D_{11})_{\tilde{K}_{11}}\boxtimes M_{i+1}^{\fg_{22}}(\mu)\big)\\
\hphantom{\tilde{\cF}^i_{\lambda,k,l}\colon}{} \ /\big(M_i^{\fg_{11}}(\mu)\boxtimes \cO_{\mu}(D_{22})_{\tilde{K}_{22}}+\cO_{\mu}(D_{11})_{\tilde{K}_{11}}\boxtimes M_i^{\fg_{22}}(\mu)\big)
\longrightarrow \cO_{\lambda_1+\lambda_2}(D)_{\tilde{K}}
\end{gather*}
is well-defined. Moreover, if $f\in M_{i+1}^{\fg_{11}}(\mu)\boxtimes M_{i+1}^{\fg_{22}}(\mu)$, then for $(g_{11},g_{22})\in\fg_{11}^\BC\oplus\fg_{22}^\BC$ we have
\begin{gather*}
{\rm d}\big(\rho_{(\nu_1+k)+(\nu_2+l)}^{\fg_{11}}\boxtimes \rho_{(\nu_1+l)+(\nu_2+k)}^{\fg_{22}}\big)(g_{11},g_{22})f \\
 \qquad{} \in M_{i+1}^{\fg_{11}}(\mu)\boxtimes \cO(D_{22})_{\tilde{K}_{22}}+\cO(D_{11})_{\tilde{K}_{11}}\boxtimes M_{i+1}^{\fg_{22}}(\mu)
\end{gather*}
for generic $(\nu_1,\nu_2)$. Therefore taking the limit $(\nu_1,\nu_2)\to(\lambda_1,\lambda_2)$ in the both sides of
\begin{gather*}
 {\rm d}\tau_{\nu_1+\nu_2}(g_{11},g_{22})(\nu_1+\nu_2-\lambda_1-\lambda_2)^{i-\max\{\mu,0\}}\cF_{\nu,k,l}f \\
\qquad{} =(\nu_1+\nu_2-\lambda_1-\lambda_2)^{i-\max\{\mu,0\}}\cF_{\nu,k,l}
{\rm d}\big(\rho_{(\nu_1+k)+(\nu_2+l)}^{\fg_{11}}\boxtimes \rho_{(\nu_1+l)+(\nu_2+k)}^{\fg_{22}}\big)(g_{11},g_{22})f,
\end{gather*}
we get
\begin{gather*} {\rm d}\tau_{\lambda_1+\lambda_2}(g_{11},g_{22})\tilde{\cF}_{\lambda,k,l}^if
=\tilde{\cF}_{\lambda,k,l}^i
{\rm d}\big(\rho_{(\lambda_1+k)+(\lambda_2+l)}^{\fg_{11}}\boxtimes \rho_{(\lambda_1+l)+(\lambda_2+k)}^{\fg_{22}}\big)(g_{11},g_{22})f. \end{gather*}
We note that if $f\in M_{i+1}^{\fg_{11}}(\mu)\boxtimes \cO(D_{22})_{\tilde{K}_{22}}+\cO(D_{11})_{\tilde{K}_{11}}\boxtimes M_{i+1}^{\fg_{22}}(\mu)$ is not in $M_{i+1}^{\fg_{11}}(\mu)\boxtimes M_{i+1}^{\fg_{22}}(\mu)$, then this does not hold since we cannot take the limits of $(\nu_1+\nu_2-\lambda_1-\lambda_2)^{i-\max\{\mu,0\}}\cF_{\nu,k,l}$ and ${\rm d}\big(\rho_{(\nu_1+k)+(\nu_2+l)}^{\fg_{11}}\boxtimes \rho_{(\nu_1+l)+(\nu_2+k)}^{\fg_{22}}\big)(g_{11},g_{22})f$ separately. Therefore the restriction of $\tilde{\cF}_{\lambda,k,l}^i$,
\begin{gather*}
\tilde{\cF}^i_{\lambda,k,l}\colon \ \big(M_{i+1}^{\fg_{11}}(\mu)\boxtimes M_{i+1}^{\fg_{22}}(\mu)\big)
/\big(M_i^{\fg_{11}}(\mu)\boxtimes M_{i+1}^{\fg_{22}}(\mu)+M_{i+1}^{\fg_{11}}(\mu)\boxtimes M_i^{\fg_{22}}(\mu)\big) \\
\hphantom{\tilde{\cF}^i_{\lambda,k,l}\colon}{} \ \simeq \big(M_{i+1}^{\fg_{11}}(\mu)/M_i^{\fg_{11}}(\mu)\big)\boxtimes\big(M_{i+1}^{\fg_{22}}(\mu)/M_i^{\fg_{22}}(\mu)\big)
\longrightarrow \cO_{\lambda_1+\lambda_2}(D)_{\tilde{K}}
\end{gather*}
intertwines the $(\fg_1,\tilde{K}_1)$-action. Similar phenomena also occur for $(G,G_1)=(G,G_{11}\times G_{22}) =(\operatorname{Sp}(s,\BR),\operatorname{Sp}(s',\BR)\times \operatorname{Sp}(s'',\BR))$, $(\operatorname{SO}^*(2s),\operatorname{SO}^*(2s')\times \operatorname{SO}^*(2s''))$, $(\operatorname{SO}_0(2,2+n''),\operatorname{SO}_0(2,2)\times \operatorname{SO}(n''))$.
\begin{Theorem}\quad
\begin{enumerate}\itemsep=0pt
\item[$(1)$] Let $(G,G_1)=(\operatorname{Sp}(s,\BR),\operatorname{Sp}(s',\BR)\times \operatorname{Sp}(s'',\BR))$ with $s=s'+s''$, $s'\le s''$.
Let $k\in\BZ_{\ge 0}$ $(k=0$ if $s'\ne s'')$. We assume $\mu:=\lambda+k\in\frac{1}{2}\BZ$, $\mu\le \frac{1}{2}(s'-1)$.
Then for $s'\ge 1$, for $\mu\in\BZ$, $\max\{0,\lfloor\mu\rfloor\}\le i\le \big\lceil \frac{s'}{2}\big\rceil$,
\begin{gather*}
\tilde{\cF}_{\lambda,k}^i:=\lim_{\nu\to\lambda} (\nu-\lambda)^{i-\max\{0,\lfloor\mu\rfloor\}}\cF_{\nu,k}\colon \\
\qquad{} \big(M_{2i+1}^{\fg_{11}}(\mu)\boxtimes \cO_\mu(D_{22})_{\tilde{K}_{22}}
+\cO_\mu(D_{11})_{\tilde{K}_{11}}\boxtimes M_{2i+1}^{\fg_{22}}(\mu)\big) \\
\qquad\quad {} /\big(M_{2i-1}^{\fg_{11}}(\mu)\boxtimes \cO_\mu(D_{22})_{\tilde{K}_{22}}
+\cO_\mu(D_{11})_{\tilde{K}_{11}}\boxtimes M_{2i-1}^{\fg_{22}}(\mu)\big)
\longrightarrow \cO_\lambda(D)_{\tilde{K}},
\end{gather*}
and for $s'\ge 2$, for $\mu\in\BZ+\frac{1}{2}$, $\max\{0,\lfloor\mu\rfloor\}\le i\le \big\lfloor \frac{s'}{2}\big\rfloor$,
\begin{gather*}
 \tilde{\cF}_{\lambda,k}^i:=\lim_{\nu\to\lambda} (\nu-\lambda)^{i-\max\{0,\lfloor\mu\rfloor\}}\cF_{\nu,k}\colon \\
\qquad{} \big(M_{2i+2}^{\fg_{11}}(\mu)\boxtimes \cO_\mu(D_{22})_{\tilde{K}_{22}}+\cO_\mu(D_{11})_{\tilde{K}_{11}}\boxtimes M_{2i+2}^{\fg_{22}}(\mu)\big) \\
\qquad\quad {} /\big(M_{2i}^{\fg_{11}}(\mu)\boxtimes \cO_\mu(D_{22})_{\tilde{K}_{22}}+\cO_\mu(D_{11})_{\tilde{K}_{11}}\boxtimes M_{2i}^{\fg_{22}}(\mu)\big)
\longrightarrow \cO_\lambda(D)_{\tilde{K}}
\end{gather*}
are well-defined $(\cF_{\lambda,k}$ is as \eqref{Sp-SpSp}$)$, and their restriction
\begin{alignat*}{3}
 & \tilde{\cF}_{\lambda,k}^i\colon \ \big(M_{2i+1}^{\fg_{11}}(\mu)/M_{2i-1}^{\fg_{11}}(\mu)\big)\boxtimes \big(M_{2i+1}^{\fg_{22}}(\mu)/M_{2i-1}^{\fg_{22}}(\mu)\big)
\longrightarrow \cO_\lambda(D)_{\tilde{K}} \quad && (\mu \in\BZ),&\\
&\tilde{\cF}_{\lambda,k}^i\colon \ \big(M_{2i+2}^{\fg_{11}}(\mu)/M_{2i}^{\fg_{11}}(\mu)\big)\boxtimes \big(M_{2i+2}^{\fg_{22}}(\mu)/M_{2i}^{\fg_{22}}(\mu)\big)
\longrightarrow \cO_\lambda(D)_{\tilde{K}} \quad && \big(\mu \in\BZ+\tfrac{1}{2}\big),&
\end{alignat*}
intertwine the $(\fg_1,\tilde{K}_1)$-action.
\item[$(2)$] Let $(G,G_1)=(U(q,s),U(q',s')\times U(q'',s''))$ with $q=q'+q''$, $s'=s'+s''$, $q'\le q'',s',s''$.
Let $k,l\in\BZ_{\ge 0}$ $(k=0$ if $q'\ne s''$, $l=0$ if $q''\ne s')$.
We assume $\mu:=\lambda_1+\lambda_2+k+l\in\BZ$, $\mu\le q'-1$. Let $\max\{0,\mu\}\le i\le q'$. Then
\begin{gather*}
 \tilde{\cF}_{\lambda,k,l}^i:=\lim_{(\nu_1,\nu_2)\to(\lambda_1,\lambda_2)} (\nu_1+\nu_2-\lambda_1-\lambda_2)^{i-\max\{0,\mu\}}\cF_{\nu,k,l} \colon\\
 \qquad{} \big(M_{i+1}^{\fg_{11}}(\mu)\boxtimes \cO_\mu(D_{22})_{\tilde{K}_{22}}+\cO_\mu(D_{11})_{\tilde{K}_{11}}\boxtimes M_{i+1}^{\fg_{22}}(\mu)\big) \\
\qquad\quad {} /\big(M_i^{\fg_{11}}(\mu)\boxtimes \cO_\mu(D_{22})_{\tilde{K}_{22}}+\cO_\mu(D_{11})_{\tilde{K}_{11}}\boxtimes M_i^{\fg_{22}}(\mu)\big)
\longrightarrow \cO_{\lambda_1+\lambda_2}(D)_{\tilde{K}}
\end{gather*}
is well-defined $(\cF_{\lambda,k,l}$ is as \eqref{U-UU}$)$, and its restriction
\begin{gather*} \tilde{\cF}_{\lambda,k,l}^i\colon \ \big(M_{i+1}^{\fg_{11}}(\mu)/M_i^{\fg_{11}}(\mu)\big)\boxtimes \big(M_{i+1}^{\fg_{22}}(\mu)/M_i^{\fg_{22}}(\mu)\big)
\longrightarrow \cO_{\lambda_1+\lambda_2}(D)_{\tilde{K}} \end{gather*}
intertwines the $(\fg_1,\tilde{K}_1)$-action.
\item[$(3)$] Let $(G,G_1)=(\operatorname{SO}^*(2s),\operatorname{SO}^*(2s')\times \operatorname{SO}^*(2s''))$ with $s=s'+s''$, $2\le s'\le s''$.
Let $k\in\BZ_{\ge 0}$ $(k=0$ if $s'\ne s'')$.
We assume $\mu:=\lambda+2k\in\BZ$, $\mu\le 2\big(\big\lfloor\frac{s'}{2}\big\rfloor-1\big)$. Let $\max\big\{0,\big\lceil\frac{\mu}{2}\big\rceil\big\}\le i\le \big\lfloor\frac{s'}{2}\big\rfloor$. Then
\begin{gather*}
 \tilde{\cF}_{\lambda,k}^i:=\lim_{\nu\to\lambda} (\nu-\lambda)^{i-\max\{0,\lceil\mu/2\rceil\}}\cF_{\nu,k}\colon \\
 \qquad {} \big(M_{i+1}^{\fg_{11}}(\mu)\boxtimes \cO_\mu(D_{22})_{\tilde{K}_{22}}+\cO_\mu(D_{11})_{\tilde{K}_{11}}\boxtimes M_{i+1}^{\fg_{22}}(\mu)\big) \\
\qquad \quad {} /\big(M_i^{\fg_{11}}(\mu)\boxtimes \cO_\mu(D_{22})_{\tilde{K}_{22}}+\cO_\mu(D_{11})_{\tilde{K}_{11}}\boxtimes M_i^{\fg_{22}}(\mu)\big)
\longrightarrow \cO_\lambda(D)_{\tilde{K}}
\end{gather*}
is well-defined $(\cF_{\lambda,k}$ is as \eqref{SO*-SO*SO*}$)$, and its restriction
\begin{gather*} \tilde{\cF}_{\lambda,k}^i\colon \ \big(M_{i+1}^{\fg_{11}}(\mu)/M_i^{\fg_{11}}(\mu)\big)\boxtimes \big(M_{i+1}^{\fg_{22}}(\mu)/M_i^{\fg_{22}}(\mu)\big)
\longrightarrow \cO_\lambda(D)_{\tilde{K}} \end{gather*}
intertwines the $(\fg_1,\tilde{K}_1)$-action.
\item[$(4)$] Let $(G,G_1)=(\operatorname{SO}_0(2,2+n''),\operatorname{SO}_0(2,2)\times \operatorname{SO}(n''))\simeq (\operatorname{SO}_0(2,2+n''),{\rm SL}(2,\BR)\times {\rm SL}(2,\BR)\times \operatorname{SO}(n''))$ $($up to covering$)$. Let $(k_1,k_2)\in\BZ_{++}^2$. We assume $\mu:=\lambda+k_1+k_2\in\BZ$, $\mu\le 0$. Then
\begin{gather*}
\tilde{\cF}_{\lambda,k_1,k_2}^0:=\cF_{\lambda,k_1,k_2}\colon \\
\quad \big(M_1^{\fg_{11}}(\mu)\boxtimes \cO_\mu(D_{22})_{\tilde{K}_{22}}+\cO_\mu(D_{11})_{\tilde{K}_{11}}\boxtimes M_1^{\fg_{22}}(\mu)\big)
\boxtimes V_{(k_1-k_2,0,\ldots,0)}^{[n'']\vee} \longrightarrow \cO_\lambda(D)_{\tilde{K}},
\\
\tilde{\cF}_{\lambda,k_1,k_2}^1:=\lim_{\nu\to\lambda} (\nu-\lambda)\cF_{\nu,k_1,k_2} \colon \ \bigl(\big(\cO_\mu(D_{11})_{\tilde{K}_{11}}\boxtimes \cO_\mu(D_{22})_{\tilde{K}_{22}}\big) \\
/\big(M_1^{\fg_{11}}(\mu)\boxtimes \cO_\mu(D_{22})_{\tilde{K}_{22}}+\cO_\mu(D_{11})_{\tilde{K}_{11}}\boxtimes M_1^{\fg_{22}}(\mu)\big)\bigr)
\boxtimes V_{(k_1-k_2,0,\ldots,0)}^{[n'']\vee} \longrightarrow \cO_\lambda(D)_{\tilde{K}}
\end{gather*}
are well-defined $(\cF_{\lambda,k_1,k_2}$ is as \eqref{SO-SOSO}$)$. Moreover,
\begin{gather*}
 \tilde{\cF}_{\lambda,k_1,k_2}^1\colon \ \big(\cO_\mu(D_{11})_{\tilde{K}_{11}}/M_1^{\fg_{11}}(\mu)\big)
\boxtimes \big(\cO_\mu(D_{22})_{\tilde{K}_{22}}/M_1^{\fg_{22}}(\mu)\big)\\
\hphantom{\tilde{\cF}_{\lambda,k_1,k_2}^1\colon}{} \ \boxtimes V_{(k_1-k_2,0,\ldots,0)}^{[n'']\vee} \longrightarrow \cO_\lambda(D)_{\tilde{K}},
\end{gather*}
and the restriction
\begin{gather*} \tilde{\cF}_{\lambda,k_1,k_2}^0\colon \ M_1^{\fg_{11}}(\mu)\boxtimes M_1^{\fg_{22}}(\mu)\boxtimes V_{(k_1-k_2,0,\ldots,0)}^{[n'']\vee}
\longrightarrow \cO_\lambda(D)_{\tilde{K}} \end{gather*}
intertwine the $(\fg_1,\tilde{K}_1)$-action.
\end{enumerate}
\end{Theorem}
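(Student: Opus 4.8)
The four cases are proved by a single mechanism, which I describe once; the differences are confined to the composition-series data recalled at the start of the section and to the precise shape of the defining series. The plan is to begin from the explicit formulas: by \eqref{Sp-SpSp}, \eqref{U-UU}, \eqref{SO*-SO*SO*} and \eqref{SO-SOSO}, each relevant $\cF_{\lambda,\cdot}$ is, up to the harmless prefactor $\det(x_{12})^k$ (respectively $\rK_{(k_1,k_2)}^{(n''-2)}(x_2)$), a series $\sum_{\bm}\frac{1}{(\mu)_{\bm,d}}\tilde{\Phi}_\bm^{(d)}(\cdots)$ — and $\sum_{m\ge 0}\frac{1}{(\mu)_m}(\cdots)$ in the $\operatorname{SO}_0(2,2+n'')$ case — where $\mu$ is the total scalar parameter ($\lambda+k$, $\lambda_1+\lambda_2+k+l$, $\lambda+2k$, $\lambda+k_1+k_2$ respectively). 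The first thing I would record is the vanishing lemma: since, as a function of the substituted variables, $\tilde{\Phi}_\bm^{(d)}\bigl(x_{12}\overline{\frac{\partial}{\partial x_{22}}}x_{21}\overline{\frac{\partial}{\partial x_{11}}}\bigr)$ lies in $\overline{\cP_\bm(\fp^+_{11})\boxtimes\cP_\bm(\fp^+_{22})}$ (exactly what was used to derive Theorems~\ref{main1} and~\ref{main5}), the $\bm$-term of $\cF_{\lambda,\cdot}f$ vanishes whenever $f\in\cP_\bk(\fp^+_{11})\boxtimes\cP_\bl(\fp^+_{22})$ and $\bm\not\le\bk$ or $\bm\not\le\bl$ entrywise; in particular $\cF_{\lambda,\cdot}$ is a finite sum on every $\tilde{K}$-finite vector, so everything below is algebraic at the $(\fg,\tilde{K})$-module level.

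Next I would analyze pole orders. By \eqref{scalar_norm} and \eqref{norm_compare}, the poles of $\cF_{\lambda,\cdot}$ sit at the reducibility points of $\cO_\mu(D)_{\tilde{K}}$, and the vanishing lemma shows that on the submodule $M_{j}^{\fg_{11}}(\mu)\boxtimes\cO_\mu(D_{22})_{\tilde{K}_{22}}+\cO_\mu(D_{11})_{\tilde{K}_{11}}\boxtimes M_{j}^{\fg_{22}}(\mu)$ only those $\bm$ survive with $m_{j}\le\tfrac{d}{2}(j-1)-\mu$. A direct count of which Pochhammer factors $(\mu-\tfrac{d}{2}(\ell-1))_{m_\ell}$ of $(\mu)_{\bm,d}$ can vanish — using $m_\ell\le m_j$ for $\ell\ge j$ — then shows that on this submodule the pole of $\cF_{\nu,\cdot}$ at $\nu=\lambda$ has order at most the exponent occurring in the respective part of the statement, so that $\tilde{\cF}^i:=\lim_{\nu\to\lambda}(\nu-\lambda)^{\mathrm{exp}}\cF_{\nu,\cdot}$ is well defined there; the same count run on the next smaller term of the chain drops the order by one, so $\tilde{\cF}^i$ is trivial there and descends to the indicated quotient. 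This step absorbs the parity phenomenon of the $\operatorname{Sp}$ case: for $d=1$ the factor $(\mu-\tfrac{j-1}{2})_{m_j}$ can vanish only for $j$ of the parity of $2\mu$, which is precisely why the relevant chain is $M_{2i+1}\supset M_{2i-1}\supset\cdots$ for $\mu\in\BZ$ and $M_{2i+2}\supset M_{2i}\supset\cdots$ for $\mu\in\BZ+\tfrac12$, and why the exponent reads $i-\max\{0,\lfloor\mu\rfloor\}$ rather than something with $2\mu$.

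For the intertwining assertion on the subquotients, I would take the $(\fg_1,\tilde{K}_1)$-equivariance identity for $\cF_{\nu,\cdot}$ from Theorems~\ref{main1} and~\ref{main5}, multiply it by $(\nu-\lambda)^{\mathrm{exp}}$, and let $\nu\to\lambda$. What makes both sides admit a limit is that for $f\in M_{j}^{\fg_{11}}(\mu)\boxtimes M_{j}^{\fg_{22}}(\mu)$ and $(g_{11},g_{22})\in\fg_{11}^\BC\oplus\fg_{22}^\BC$ the vector ${\rm d}(\rho^{\fg_{11}}\boxtimes\rho^{\fg_{22}})(g_{11},g_{22})f$ again lies in $M_{j}^{\fg_{11}}(\mu)\boxtimes\cO+\cO\boxtimes M_{j}^{\fg_{22}}(\mu)$ — simply because $g_{11}$ acts on the first tensor factor only, leaving the second inside $M_{j}^{\fg_{22}}(\mu)$, and symmetrically for $g_{22}$ — so $(\nu-\lambda)^{\mathrm{exp}}\cF_{\nu,\cdot}$ stays bounded on both sides. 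Since $M_{j'}^{\fg_{11}}(\mu)$ is a genuine $\fg_{11}$-submodule at the exact parameter $\mu$ (the reducibility point, by \cite{FK0}), $\tilde{\cF}^i$ annihilates $M_{j'}^{\fg_{11}}(\mu)\boxtimes M_{j}^{\fg_{22}}(\mu)+M_{j}^{\fg_{11}}(\mu)\boxtimes M_{j'}^{\fg_{22}}(\mu)$, and the induced map on $\bigl(M_{j}^{\fg_{11}}(\mu)/M_{j'}^{\fg_{11}}(\mu)\bigr)\boxtimes\bigl(M_{j}^{\fg_{22}}(\mu)/M_{j'}^{\fg_{22}}(\mu)\bigr)$ — tensored, in part~(4), with the fixed $\operatorname{SO}(n'')$-type — is $(\fg_1,\tilde{K}_1)$-equivariant. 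Part~(4) is the degenerate instance in which the chain of proper submodules has the single term $M_1$: there $\tilde{\cF}^0=\cF_{\lambda,k_1,k_2}$ is already holomorphic on $M_1\boxtimes\cO+\cO\boxtimes M_1$ because the pole-causing terms (those with $m\ge 1-\mu$) kill every such $f$, while $\tilde{\cF}^1$ is the ordinary residue of $\cF_{\nu,k_1,k_2}$ on the whole space.

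The step I expect to be the main obstacle is the pole-order bookkeeping: one must check, uniformly over $d\in\{1,2,4\}$ and in the $\mathfrak{sl}(2,\BR)$ case, that cutting down to $M_j^{\fg_{11}}(\mu)\boxtimes\cO+\cO\boxtimes M_j^{\fg_{22}}(\mu)$ lowers the order to \emph{exactly} the stated exponent — neither less nor more — and then reconcile this with the four distinct composition-series chains recalled from \cite{FK0} at the beginning of the section; the $\operatorname{Sp}$ parity subtlety and the translation between the indexing of the $M_j$'s and the indexing of the $\tilde{\cF}^i$'s are the delicate points, everything else being the routine limit argument sketched above.
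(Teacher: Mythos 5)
Your proposal follows essentially the same route as the paper: the vanishing of the $\bm$-term of $\tilde{\Phi}_\bm^{(d)}$ on $\cP_\bk(\fp^+_{11})\boxtimes\cP_\bl(\fp^+_{22})$ unless $\bm\le\bk$ and $\bm\le\bl$, the resulting drop in pole order on the submodules $M_j^{\fg_{11}}(\mu)\boxtimes\cO+\cO\boxtimes M_j^{\fg_{22}}(\mu)$ read off from the Pochhammer factors, and the limit of the equivariance identity on $M_j^{\fg_{11}}(\mu)\boxtimes M_j^{\fg_{22}}(\mu)$ (where $g_{jj}$ acts on one tensor factor only, so both sides stay bounded) are exactly the paper's argument, including the caveat that intertwining fails off the tensor-product subquotient. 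The argument is correct and nothing further is needed.
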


Next we consider the case $(G,G_1)=(\operatorname{Sp}(s,\BR),U(s',s''))$, and let $\cH=\cO_\lambda(D)$, $\cH_1=\cO_{(\lambda+2k)+(\lambda+2l)}(D_1)$. Without loss of generality we may assume $s'\le s''$. Then the intertwining operator
\begin{gather*} \cF_{\lambda,k,l}\colon \ \cO_{(\lambda+2k)+(\lambda+2l)}(D_1)\to \cO_\lambda(D) \end{gather*}
is given by (\ref{Sp-U}),
\begin{gather*}
 (\cF_{\lambda,k,l}f)\begin{pmatrix}x_{11}&x_{12}\\{}^t\hspace{-1pt}x_{12}&x_{22}\end{pmatrix}=\det(x_{11})^k\det(x_{22})^l\\
\qquad{} \times\sum_{\bm\in\BZ_{++}^{s'}}\frac{1}{\big(\lambda+k+l+\frac{1}{2}\big)_{\bm,1}}\tilde{\Phi}_\bm^{(1)}
\left(\frac{1}{4}x_{11}\frac{\partial}{\partial x_{12}}x_{22}{\vphantom{\biggl(}}^t\!\!\left(\frac{\partial}{\partial x_{12}}\right)\right)f(x_{12}).
\end{gather*}
The poles are at $2(\lambda+k+l)\in \BZ_{\le s'-2}$ if $s'\ge 2$, at $2(\lambda+k+l)\in 2\BZ_{\le 0}-1$ if $s'=1$. We write $\lambda+k+l=:\mu$. Then the order of poles are
\begin{gather*} \begin{cases} \left\lfloor \dfrac{s'}{2}\right\rfloor-\max\{0,\lceil\mu\rceil\} & (\mu\in \BZ), \vspace{1mm}\\
\left\lceil \dfrac{s'}{2}\right\rceil-\max\{0,\lceil\mu\rceil\} & \big(\mu\in \BZ+\frac{1}{2}\big). \end{cases} \end{gather*}

Now we consider the residue of $\cF_{\lambda,k,l}$. Since as a function of $w_{12}$, $\tilde{\Phi}_\bm^{(1)} (x_{11}\overline{w_{12}}x_{22}w_{12}^* )\in\overline{\cP_{2\bm}(\fp^+_1)_{w_{12}}}$ holds, if $f(x_{12})\in\cP_\bk(\fp^+_1)$ and $\bm$ satisfies $2m_j>k_j$ for some $j$, then we have
\begin{gather*} \tilde{\Phi}_\bm^{(1)}
\left(\frac{1}{4}x_{11}\frac{\partial}{\partial x_{12}}x_{22}{\vphantom{\biggl(}}^t\!\!\left(\frac{\partial}{\partial x_{12}}\right)\right)f(x_{12})=0. \end{gather*}
Therefore, if $s'\ge 2$, $2\mu\le s'-2$ is even and $f\in M_{2i+2}^{\fg_1}(2\mu)$, then it holds that
\begin{gather*}
 (\cF_{\lambda,k,l}f)\begin{pmatrix}x_{11}&x_{12}\\{}^t\hspace{-1pt}x_{12}&x_{22}\end{pmatrix}=\det(x_{11})^k\det(x_{22})^l\\
\qquad{} \times\sum_{\substack{\bm\in\BZ_{++}^{s'}\\ m_{2i+2}\le i-\mu}}\frac{1}{\big(\lambda+k+l+\frac{1}{2}\big)_{\bm,1}}\tilde{\Phi}_\bm^{(1)}
\left(\frac{1}{4}x_{11}\frac{\partial}{\partial x_{12}}x_{22}{\vphantom{\biggl(}}^t\!\!\left(\frac{\partial}{\partial x_{12}}\right)\right)
f(x_{12}),
\end{gather*}
and if $s'\ge 1$, $2\mu\le s'-2$ is odd and $f\in M_{2i+1}^{\fg_1}(2\mu)$, then it holds that
\begin{gather*}
 (\cF_{\lambda,k,l}f)\begin{pmatrix}x_{11}&x_{12}\\{}^t\hspace{-1pt}x_{12}&x_{22}\end{pmatrix}=\det(x_{11})^k\det(x_{22})^l\\
\qquad{}\times\sum_{\substack{\bm\in\BZ_{++}^{s'}\\ m_{2i+1}\le i-\lceil\mu\rceil}}\frac{1}{\big(\lambda+k+l+\frac{1}{2}\big)_{\bm,1}}\tilde{\Phi}_\bm^{(1)}
\left(\frac{1}{4}x_{11}\frac{\partial}{\partial x_{12}}x_{22}{\vphantom{\biggl(}}^t\!\!\left(\frac{\partial}{\partial x_{12}}\right)\right) f(x_{12}),
\end{gather*}
These have a pole of order $i-\max\{0,\lceil \mu\rceil\}$ at $\lambda+k+l=\mu$. Therefore if $\mu\in\BZ$, then
\begin{gather*}
\big(\tilde{\cF}^i_{\lambda,k,l}f\big)\begin{pmatrix}x_{11}&x_{12}\\x_{21}&x_{22}\end{pmatrix}
:=\lim_{\nu\to\lambda}(\nu-\lambda)^{i-\max\{0,\lceil \mu\rceil\}}(\cF_{\nu,k,l}f)\begin{pmatrix}x_{11}&x_{12}\\x_{21}&x_{22}\end{pmatrix}\\
\qquad{}=\lim_{\nu\to\lambda}\det(x_{12})^k\det(x_{21})^l \\
\qquad\quad{}\times \sum_{\substack{\bm\in\BZ_{++}^{s'}\\ m_{2i+2}\le i-\mu}}\frac{(\nu-\lambda)^{i-\max\{0,\lceil \mu\rceil\}}}{\big(\nu+k+l+\frac{1}{2}\big)_{\bm,1}}
\tilde{\Phi}_\bm^{(1)}\left(\frac{1}{4}x_{11}\frac{\partial}{\partial x_{12}}x_{22}{\vphantom{\biggl(}}^t\!\!\left(\frac{\partial}{\partial x_{12}}\right)\right)f(x_{12})
\end{gather*}
is well-defined on $M_{2i+2}^{\fg_1}(2\mu)$, and is trivial on $M_{2i}^{\fg_1}(2\mu)$. That is, the linear map
\begin{gather*} \tilde{\cF}^i_{\lambda,k,l}\colon \ M_{2i+2}^{\fg_1}(2\mu)/ M_{2i}^{\fg_1}(2\mu)\longrightarrow \cO_\lambda(D)_{\tilde{K}} \end{gather*}
is well-defined. Moreover, if $i=\big\lfloor \frac{s'}{2}\big\rfloor$, then
\begin{gather*} \tilde{\cF}^{\lfloor s'/2\rfloor}_{\lambda,k,l}\colon \ \cO_{2\mu}(D_1)_{\tilde{K}_1}
/M_{2\lfloor \frac{s'}{2}\rfloor}^{\fg_1}(2\mu)\longrightarrow \cO_\lambda(D)_{\tilde{K}} \end{gather*}
is clearly intertwining since $(\nu-\lambda)^{\lfloor s'/2\rfloor-\max\{0,\lceil \mu\rceil\}}\cF_{\nu,k,l}$ does not have a pole at $\nu=\lambda$, and even if $i<\big\lfloor \frac{s'}{2}\big\rfloor$,
since ${\rm d}\rho_{(\nu+2k)+(\nu+2l)}(g_1)M_{2i+1}^{\fg_1}(2\mu)\subset M_{2i+2}^{\fg_1}(2\mu)$ holds for any $g_1\in\fg_1^\BC$ and for generic $\nu$, the restriction
\begin{gather*} \tilde{\cF}^i_{\lambda,k,l}\colon \ M_{2i+1}^{\fg_1}(2\mu)/ M_{2i}^{\fg_1}(2\mu)\longrightarrow \cO_\lambda(D)_{\tilde{K}} \end{gather*}
is also intertwining. Similarly, if $\mu\in\BZ+\frac{1}{2}$, then
\begin{gather*}
\big(\tilde{\cF}^i_{\lambda,k,l}f\big)\begin{pmatrix}x_{11}&x_{12}\\x_{21}&x_{22}\end{pmatrix}
:=\lim_{\nu\to\lambda}(\nu-\lambda)^{i-\max\{0,\lceil \mu\rceil\}}(\cF_{\nu,k,l}f)\begin{pmatrix}x_{11}&x_{12}\\x_{21}&x_{22}\end{pmatrix}\\
\qquad{} =\lim_{\nu\to\lambda}\det(x_{12})^k\det(x_{21})^l \\
\qquad\quad{} \times \sum_{\substack{\bm\in\BZ_{++}^{s'}\\ m_{2i+1}\le i-\lceil\mu\rceil}}
\frac{(\nu-\lambda)^{i-\max\{0,\lceil \mu\rceil\}}}{\big(\nu+k+l+\frac{1}{2}\big)_{\bm,1}}
\tilde{\Phi}_\bm^{(1)} \left(\frac{1}{4}x_{11}\frac{\partial}{\partial x_{12}}x_{22}{\vphantom{\biggl(}}^t\!\!\left(\frac{\partial}{\partial x_{12}}\right)\right)f(x_{12})
\end{gather*}
is well-defined on $M_{2i+1}^{\fg_1}(2\mu)/M_{2i-1}^{\fg_1}(2\mu)$, and is intertwining if $i=\big\lceil \frac{s'}{2}\big\rceil$ or if it is restricted on $M_{2i}^{\fg_1}(2\mu)/M_{2i-1}^{\fg_1}(2\mu)$.
Similar phenomena also occur for $(G,G_1)=(\operatorname{SO}^*(2s),U(s',s''))$, $(\operatorname{SO}_0(2,n),\operatorname{SO}_0(2,n')\times \operatorname{SO}(n''))$ ($n'$: even), that is, a residue of $\cF_{\tau\rho}$ gives a well-defined map from some submodule $M\subset\cO(D_1)_{\tilde{K}_1}$, and is trivial on the 2nd smaller submodule, and moreover, the restricted map to the 1st smaller submodule than $M$ is intertwining. On the other hand, for $(G,G_1)=(\operatorname{SU}(s,s),\operatorname{Sp}(s,\BR))$, $(\operatorname{SU}(s,s),\operatorname{SO}^*(2s))$, $(\operatorname{SO}_0(2,n),\operatorname{SO}_0(2,n')\times \operatorname{SO}(n''))$ ($n'$: odd), a residue of $\cF_{\tau\rho}$ gives a well-defined map from some submodule $M\subset\cO(D_1)_{\tilde{K}_1}$, and is trivial on the 1st smaller submodule. In this case this map is not intertwining except for the one from (the quotient module of) whole $\cO(D_1)_{\tilde{K}_1}$.

\begin{Theorem}\quad
\begin{enumerate}\itemsep=0pt
\item[$(1)$] Let $(G,G_1)=(\operatorname{Sp}(s,\BR),U(s',s''))$ with $s=s'+s''$, $s'\le s''$. Let $k,l\in\BZ_{\ge 0}$. We assume $\mu:=\lambda+k+l\in\frac{1}{2}\BZ$, $\mu\le \frac{1}{2}(s'-2)$. Then for $s'\ge 2$, for $\mu\in\BZ$, $\max\{0,\lceil\mu\rceil\}\le i\le \big\lfloor \frac{s'}{2}\big\rfloor$,
\begin{gather*} \tilde{\cF}^i_{\lambda,k,l}=\lim_{\nu\to\lambda}(\nu-\lambda)^{i-\max\{0,\lceil \mu\rceil\}}\cF_{\nu,k,l}
\colon \ M_{2i+2}^{\fg_1}(2\mu)/M_{2i}^{\fg_1}(2\mu)\to \cO_\lambda(D)_{\tilde{K}}, \end{gather*}
and for $s'\ge 1$, for $\mu\in\BZ+\frac{1}{2}$, $\max\{0,\lceil\mu\rceil\}\le i\le \big\lceil \frac{s'}{2}\big\rceil$,
\begin{gather*} \tilde{\cF}^i_{\lambda,k,l}=\lim_{\nu\to\lambda}(\nu-\lambda)^{i-\max\{0,\lceil \mu\rceil\}}\cF_{\nu,k,l}
\colon \ M_{2i+1}^{\fg_1}(2\mu)/M_{2i-1}^{\fg_1}(2\mu)\to \cO_\lambda(D)_{\tilde{K}} \end{gather*}
are well-defined $(\cF_{\lambda,k,l}$ is as~\eqref{Sp-U}$)$. Moreover,
\begin{alignat*}{3}
&\tilde{\cF}^{\lfloor s'/2\rfloor}_{\lambda,k,l}\colon \ \cO_{2\mu}(D_1)_{\tilde{K}_1}
/M_{2\lfloor \frac{s'}{2}\rfloor}^{\fg_1}(2\mu)\longrightarrow \cO_\lambda(D)_{\tilde{K}}\quad && (\mu\in\BZ), &\\
&\tilde{\cF}^{\lceil s'/2\rceil}_{\lambda,k,l}\colon \ \cO_{2\mu}(D_1)_{\tilde{K}_1}
/M_{2\lceil \frac{s'}{2}\rceil-1}^{\fg_1}(2\mu)\longrightarrow \cO_\lambda(D)_{\tilde{K}} \quad & &\big(\mu\in\BZ+\tfrac{1}{2}\big),&
\end{alignat*}
and the restriction
\begin{alignat*}{3}
&\tilde{\cF}^i_{\lambda,k,l}\colon \ M_{2i+1}^{\fg_1}(2\mu)/M_{2i}^{\fg_1}(2\mu)\to \cO_\lambda(D)_{\tilde{K}} \quad && (\mu\in\BZ),& \\
&\tilde{\cF}^i_{\lambda,k,l}\colon \ M_{2i}^{\fg_1}(2\mu)/M_{2i-1}^{\fg_1}(2\mu)\to \cO_\lambda(D)_{\tilde{K}} \quad && \big(\mu\in\BZ+\tfrac{1}{2}\big)&
\end{alignat*}
intertwine the $\big(\fg_1,\tilde{K}_1\big)$-action.
\item[$(2)$] Let $(G,G_1)=(\operatorname{SO}^*(2s),U(s',s''))$ with $s=s'+s''$, $2\le s'\le s''$.
Let $k,l\in\BZ_{\ge 0}$ ($k=0$ if $s'$ is odd, $l=0$ if $s''$ is odd).
We assume $\mu:=\lambda+k+l\in\BZ$, $\mu\le 2\big\lfloor \frac{s'}{2}\big\rfloor-1$.
Let $\max\big\{0,\big\lfloor \frac{\mu}{2}\big\rfloor\big\}\le i\le \big\lfloor\frac{s'}{2}\big\rfloor$. Then
\begin{gather*} \tilde{\cF}_{\lambda,k,l}^i:=\lim_{\nu\to\lambda} (\nu-\lambda)^{i-\max\left\{0,\left\lfloor\frac{\mu}{2}\right\rfloor\right\}}\cF_{\nu,k,l}
\colon \ M_{2i+2}^{\fg_1}(\mu)/M_{2i}^{\fg_1}(\mu)\longrightarrow \cO_\lambda(D)_{\tilde{K}} \end{gather*}
is well-defined $(\cF_{\lambda,k,l}$ is as \eqref{SO*-U}$)$. Moreover,
\begin{gather*} \tilde{\cF}_{\lambda,k,l}^{\lfloor s'/2\rfloor}\colon \
\cO_\mu(D_1)_{\tilde{K}_1}/M_{2\left\lfloor\frac{s'}{2}\right\rfloor}^{\fg_1}(\mu)\longrightarrow \cO_\lambda(D)_{\tilde{K}}, \end{gather*}
and the restriction
\begin{gather*} \tilde{\cF}_{\lambda,k,l}^i\colon \ M_{2i+1}^{\fg_1}(\mu)/M_{2i}^{\fg_1}(\mu)\longrightarrow \cO_\lambda(D)_{\tilde{K}}
\qquad \left(\max\left\{0,\left\lceil\frac{\mu}{2}\right\rceil\right\}\le i\le \left\lfloor\frac{s'}{2}\right\rfloor\right) \end{gather*}
intertwine the $(\fg_1,\tilde{K}_1)$-action. $($We note that $M_{2\left\lfloor\frac{\mu}{2}\right\rfloor+1}^{\fg_1}(\mu)=\{0\}$
for $\mu=1,3,\ldots,\allowbreak 2\big\lfloor \frac{s'}{2}\big\rfloor-1$,
and hence the restriction of $\tilde{\cF}_{\lambda,k,l}^{\lfloor \mu/2\rfloor}$ is trivial for these $\mu.)$
\item[$(3)$] Let $(G,G_1)=(\operatorname{SU}(s,s),\operatorname{Sp}(s,\BR))$ with $s\ge 2$. Let $k\in\BZ_{\ge 0}$ ($k=0$ if $s$ is odd).
We assume $\mu:=\lambda+k\in\BZ+\frac{1}{2}$, $\mu\le \big\lfloor \frac{s}{2}\big\rfloor-\frac{1}{2}$.
Let $\max\big\{0,\mu-\frac{1}{2}\big\}\le i\le \big\lfloor\frac{s}{2}\big\rfloor$. Then
\begin{gather*} \tilde{\cF}_{\lambda,k}^i:=\lim_{\nu\to\lambda} (\nu-\lambda)^{i-\max\left\{0,\mu-\frac{1}{2}\right\}}\cF_{\nu,k}
\colon \ M_{2i+2}^{\fg_1}(\mu)/M_{2i}^{\fg_1}(\mu)\longrightarrow \cO_\lambda(D)_{\tilde{K}} \end{gather*}
is well-defined $(\cF_{\lambda,k}$ is as \eqref{SU-Sp}$)$. Moreover,
\begin{gather*} \tilde{\cF}_{\lambda,k,l}^{\lfloor s/2\rfloor}\colon \
\cO_\mu(D_1)_{\tilde{K}_1}/M_{2\left\lfloor\frac{s}{2}\right\rfloor}^{\fg_1}(\mu)\longrightarrow \cO_\lambda(D)_{\tilde{K}} \end{gather*}
intertwines the $\big(\fg_1,\tilde{K}_1\big)$-action.
\item[$(4)$] Let $(G,G_1)=(\operatorname{SU}(s,s),\operatorname{SO}^*(2s))$ with $s\ge 2$. $k\in\BZ_{\ge 0}$.
We assume $\mu:=\lambda+2k\in\BZ+\frac{1}{2}$, $\mu\le \big\lfloor \frac{s}{2}\big\rfloor-\frac{3}{2}$.
Let $\max\big\{0,\mu+\frac{1}{2}\big\}\le i\le \big\lfloor\frac{s}{2}\big\rfloor$. Then
\begin{gather*} \tilde{\cF}_{\lambda,k}^i:=\lim_{\nu\to\lambda} (\nu-\lambda)^{i-\max\left\{0,\mu+\frac{1}{2}\right\}}\cF_{\nu,k}
\colon \ M_{i+1}^{\fg_1}(2\mu)/M_{i}^{\fg_1}(2\mu)\longrightarrow \cO_\lambda(D)_{\tilde{K}} \end{gather*}
is well-defined $(\cF_{\lambda,k}$ is as \eqref{SU-SO*}$)$. Moreover,
\begin{gather*} \tilde{\cF}_{\lambda,k,l}^{\lfloor s/2\rfloor}\colon \
\cO_{2\mu}(D_1)_{\tilde{K}_1}/M_{\left\lfloor\frac{s}{2}\right\rfloor}^{\fg_1}(2\mu)\longrightarrow \cO_\lambda(D)_{\tilde{K}} \end{gather*}
intertwines the $\big(\fg_1,\tilde{K}_1\big)$-action.
\item[$(5)$] Let $(G,G_1)=(\operatorname{SO}_0(2,n),\operatorname{SO}_0(2,n')\times \operatorname{SO}(n''))$ with $n=n'+n''$, $n'\ne 2$. Let $(k_1,k_2)\in\BZ_{++}^2$.
We assume $\mu:=\lambda+k_1+k_2\le \frac{n'-2}{2}$, $\mu-\frac{n'-2}{2}\in\BZ$. Then
\begin{gather*} \tilde{\cF}_{\lambda,k_1,k_2}^0:=\cF_{\lambda,k_1,k_2}
\colon \ M_2^{\mathfrak{so}(2,n')}(\mu)\boxtimes V_{(k_1-k_2,0,\ldots,0)}^{[n'']\vee}\longrightarrow \cO_\lambda(D)_{\tilde{K}},\\
 \tilde{\cF}_{\lambda,k_1,k_2}^1:=\lim_{\nu\to\lambda} (\nu-\lambda)\cF_{\nu,k_1,k_2}
\colon \ \!\big(\cO_\mu(D_1)_{\tilde{K}_1}\!/M_2^{\mathfrak{so}(2,n')}(\mu)\big)\boxtimes V_{(k_1{-}k_2,0,\ldots,0)}^{[n'']\vee}\!\longrightarrow\! \cO_\lambda(D)_{\tilde{K}}\! \end{gather*}
are well-defined $(\cF_{\lambda,k_1,k_2}$ is as \eqref{SO-SOSO}$)$. Moreover,
\begin{gather*} \tilde{\cF}_{\lambda,k_1,k_2}^1\colon \ \big(\cO_\mu(D_1)_{\tilde{K}_1}/M_2^{\mathfrak{so}(2,n')}(\mu)\big)\boxtimes V_{(k_1-k_2,0,\ldots,0)}^{[n'']\vee}
\longrightarrow \cO_\lambda(D)_{\tilde{K}} \end{gather*}
and the restriction
\begin{gather*} \tilde{\cF}_{\lambda,k_1,k_2}^0\colon \ M_1^{\mathfrak{so}(2,n')}(\mu)\boxtimes V_{(k_1-k_2,0,\ldots,0)}^{[n'']\vee}\longrightarrow \cO_\lambda(D)_{\tilde{K}}
\qquad \left(\mu\le 0, \;n'\colon \text{even}\right) \end{gather*}
intertwine the $\big(\fg_1,\tilde{K}_1\big)$-action.
\end{enumerate}
\end{Theorem}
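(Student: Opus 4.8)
The plan is to treat all five families at once by the mechanism already carried out for $(U(q,s),U(q',s')\times U(q'',s''))$ in the previous theorem, now applied to the explicit formulas \eqref{Sp-U}, \eqref{SO*-U}, \eqref{SU-Sp}, \eqref{SU-SO*}, \eqref{SO-SOSO} for $\cF_{\tau\rho}$ obtained in Theorems \ref{main2}, \ref{main4}, \ref{main5} and Proposition \ref{main_prop}, together with the composition series of $\cO_\mu(D_1)_{\tilde K_1}$ recorded at the beginning of this section (which come from \eqref{norm_compare}, \eqref{scalar_norm} and \cite{FK0}). In each case $\cF_{\lambda,\dots}$ is a (possibly infinite-order) differential operator of the shape $\sum_{\bm} c_\bm(\lambda)\,T_\bm$, where $T_\bm$ is the constant-coefficient operator built from $\tilde{\Phi}_\bm^{(d)}$ or $\cK_{\bm,\bl}^{(d)}$ and $c_\bm(\lambda)$ is the reciprocal of a product of Pochhammer symbols — in cases (3), (4) multiplied in addition by the monic polynomial $\varphi_{\bm,-\bl}(\mu)$ of Proposition \ref{main_prop}. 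First I would locate the poles: they lie exactly at the values of $\mu$ listed (with $\mu$ equal to $\lambda+k+l$, $\lambda+2k$, or $\lambda+k_1+k_2$ according to the pair), and the order of the pole of $\cF_{\lambda,\dots}$ at $\mu$ is the number of $\bm$ in the summation range whose $c_\bm$ is singular there, i.e.\ the quantities $\lfloor s'/2\rfloor-\max\{0,\lceil\mu\rceil\}$, $\lfloor s/2\rfloor-\max\{0,\dots\}$, $1-\max\{0,\dots\}$, and so on.

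The key reduction is the vanishing property of $T_\bm$. As a function of the dual variable — the $\overline{\partial/\partial x_1}$- or $\overline{\partial/\partial x_{12}}$-slot — $T_\bm$ takes values in $\overline{\cP_{\bn(\bm)}(\fp^+_1)}$ for a weight $\bn(\bm)$ that grows monotonically with $\bm$ (here $\bn(\bm)=\bm$ for \eqref{SO-SOSO}, $\bn(\bm)=2\bm$ for \eqref{Sp-U} and \eqref{SU-Sp}, and the ``doubled'' partitions of Section \ref{realize} for \eqref{SO*-U} and \eqref{SU-SO*}). Hence $T_\bm f=0$ whenever $f\in\cP_\bk(\fp^+_1)$ and $\bk$ does not dominate $\bn(\bm)$. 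Consequently, restricting $\cF_{\lambda,\dots}$ to a submodule $M_{i+1}^{\fg_1}(\,\cdot\,)$ (in whichever indexing of the composition series is appropriate for that $G_1$) annihilates all terms with $\bm$ large, leaving a finite sum in which each surviving $c_\bm$ has a pole of order at most $i-\max\{0,\dots\}$. Multiplying by $(\nu-\lambda)^{i-\max\{0,\dots\}}$ and letting $\nu\to\lambda$ then produces the well-defined operators $\tilde{\cF}^i$; running the same count one filtration step lower shows $\tilde{\cF}^i$ kills $M_i^{\fg_1}$ (respectively $M_{2i-1}^{\fg_1}$ in the half-integral subcases), so it factors through the stated subquotient.

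For the intertwining assertions I would pass to the limit $\nu\to\lambda$ in the identity ${\rm d}\tau(g_1)\,\cF_{\nu,\dots}=\cF_{\nu,\dots}\,{\rm d}\rho(g_1)$, valid for generic $\nu$ and all $g_1\in\fg_1^\BC$. This is permissible precisely when the $\fg_1^\BC$-action keeps the relevant domain inside a submodule on which $(\nu-\lambda)^{(\mathrm{order})}\cF_{\nu,\dots}$ has no residual pole; the inclusions ${\rm d}\rho(g_1)M_i^{\fg_1}\subset M_{i+1}^{\fg_1}$ (and, for the half-integral cases, the analogous one) make this go through, giving an intertwining map on $M_{2i+1}^{\fg_1}/M_{2i}^{\fg_1}$ (respectively $M_{2i}^{\fg_1}/M_{2i-1}^{\fg_1}$, or on the whole of $\cO_\mu(D_1)_{\tilde K_1}$ modulo its largest proper $M$). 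For the three pairs $(\operatorname{SU}(s,s),\operatorname{Sp}(s,\BR))$, $(\operatorname{SU}(s,s),\operatorname{SO}^*(2s))$ and $(\operatorname{SO}_0(2,n),\operatorname{SO}_0(2,n')\times\operatorname{SO}(n''))$ with $n'$ odd, the composition series of $\cO_\mu(D_1)_{\tilde K_1}$ for $G_1=\operatorname{Sp}(s,\BR),\operatorname{SO}^*(2s)$ is a single chain whose step size is ``off by one'' relative to the truncation forced by $T_\bm$, and the $\fg_1$-action does not carry $M_{2i}^{\fg_1}$ into $M_{2i+1}^{\fg_1}$; this forces $\tilde{\cF}^i$ to intertwine only out of a quotient of the whole module, which is why in those cases only the top residue, defined out of $\cO_\mu(D_1)_{\tilde K_1}/M_{\lfloor s/2\rfloor}^{\fg_1}$ or its analogue, is asserted to intertwine.

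The step I expect to be the main obstacle is the bookkeeping that matches the $\bm$-truncation coming from the vanishing of $T_\bm$ with the precise definitions of the $M_i^{\fg_1}$, which carry floors and ceilings depending on the parities of $s,s',s'',n'$ and on the identifications $\cP_\bm(\fp^+)\simeq V_{2\bm}^\vee,\,V_{\bm^2}^\vee,\dots$ of Section \ref{realize}; and, in cases (3) and (4), the control of the extra factor $\varphi_{\bm,-\bl}(\mu)$ — one must verify its zeros neither create nor cancel poles in a way that changes the residue order, which is exactly where the value $\varphi_{\bm,-\bl}(\mu)=\bigl(\mu+l-\lfloor s/2\rfloor+\tfrac12\bigr)_{l-l_{\lceil s/2\rceil}}$ promised for Section \ref{sect_remaining} is used. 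A final point to watch is the low-rank exceptional behavior: $\operatorname{SO}_0(2,n)$ with $n'\neq2$ here as opposed to the $\operatorname{SO}_0(2,2)$ case of the previous theorem, and the isomorphism $\mathfrak{so}(2,1)\simeq\mathfrak{sl}(2,\BR)$ together with $\cO_\lambda(D_{\operatorname{SO}_0(2,1)})\simeq\cO_{2\lambda}(D_{{\rm SL}(2,\BR)})$, which shifts the spectral parameter.
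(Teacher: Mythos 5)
Your proposal is correct and follows essentially the same route as the paper: locate the poles of the Pochhammer reciprocals, use the vanishing of the constant-coefficient operators $T_\bm$ on low $\tilde K_1$-types to truncate the sum on each $M_i^{\fg_1}$ and lower the pole order, then pass to the limit in the intertwining relation, with the inclusion ${\rm d}\rho(\fg_1^\BC)M_i^{\fg_1}\subset M_{i+1}^{\fg_1}$ (or its failure to land one step lower) deciding which subquotients actually intertwine. One small remark: since this theorem restricts to scalar-type $\cH_1$, the operators used are the $l=0$ reductions \eqref{SU-Sp}, \eqref{SU-SO*}, so the factor $\varphi_{\bm,-\bl}(\mu)$ you flag as a potential obstacle does not actually enter here.
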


We can also do similar computation when $G$ is exceptional, or when the representation $\cO_\mu(D_1,W)$ of $\tilde{G}_1$ is not of scalar type,
but we omit the detail.
On the other hand, for $(G,G_1)=(U(q,s),U(q,s')\times U(s''))$, $(\operatorname{SO}^*(2s),\operatorname{SO}^*(2(s-1))\times \operatorname{SO}(2))$, $(\operatorname{SO}^*(2s),U(s-1,1))$,
$(\operatorname{SO}_0(2,2s),U(1,s))$ and $(E_{6(-14)},U(1)\times \operatorname{Spin}_0(2,8))$,
as in Section \ref{sect_normal_derivative}, the intertwining operators $\cO_\mu(D_1)\to\cO_\lambda(D)$ are given by
multiplication operators, and do not depend on the parameter $\lambda$.
Therefore there are no poles, and hence such phenomena do not occur.

\section{Explicit calculation of intertwining operators: remaining case}\label{sect_remaining}
In this section we again set $(G,G_1)=(\operatorname{SU}(s,s),\operatorname{SO}^*(2s))$ with $s=2r+1\ge 2$ odd,
and for $k,l\in\BZ_{\ge 0}$, we determine the $\tilde{G}_1$-intertwining operator
\begin{gather*} \cF_{\lambda,k,l}\colon \ \cO_{2\lambda+4k}\big(D_1,V_{(2l,\dots,2l,0)}^{(s)\vee}\big)\to \cO_\lambda(D). \end{gather*}
Let $\rK_{(l,\ldots,l,0)}^{(1)}(x_2)\in \cP\big(\Sym(s,\BC),\Hom\big(V_{(2l,\dots,2l,0)}^{(s)\vee},\BC\big)\big)$ be the $\tilde{K}^\BC_1$-invariant polynomial in the sense of~(\ref{K-invariance}), and let $\cK_{\bm,-\bl}^{(1,4)}(x_2;y_1) \in\cP\big(\Sym(s,\BC)\times\overline{\Skew(s,\BC)},\Hom\big(V_{(2l,\dots,2l,0)}^{(s)\vee},\BC\big)\big)$ be the orthogonal projection of
${\rm e}^{\frac{1}{2}\tr(x_2y_1^*x_2y_1^*)}\rK_{(l,\ldots,l,0)}^{(1)}(x_2)$ onto
\begin{gather*}
 \left(V_{\substack{2(m_1+l,m_1+l-l_1,m_2+l,m_2+l-l_2,\ldots,\;\\ \hspace{50pt}
m_r+l,m_r+l-l_r,l-l_{r+1})}}^{(s)\vee}\otimes\overline{
V_{\substack{2(m_1+l,m_1+l-l_1,m_2+l,m_2+l-l_2,\ldots,\;\\ \hspace{50pt}
m_r+l,m_r+l-l_r,l-l_{r+1})}}^{(s)\vee}}\right)^{K_1^\BC}\\
\qquad{} \subset \cP_{\substack{(m_1+l,m_1+l-l_1,m_2+l,m_2+l-l_2,\ldots,\;\\ \hspace{45pt}
m_r+l,m_r+l-l_r,l-l_{r+1})}}(\Sym(s,\BC))_{x_2}
\otimes\overline{\cP_{2\bm}(\Skew(s,\BC))_{y_1}\otimes V_{(2l,\ldots,2l,0)}^{(s)\vee}}.
\end{gather*}
Then we proved in Proposition \ref{main_prop} that there exist monic polynomials $\varphi_{\bm,-\bl}(\mu)\in\BC[\mu]$ of degree
$l-l_{r+1}$ such that the intertwining operator is given by
\begin{gather*}
 (\cF_{\lambda,k,l}f)(x_1+x_2)=\det(x_2)^k\sum_{\bm\in\BZ_{++}^r}
\sum_{\substack{\bl\in(\BZ_{\ge 0})^{r+1},\; |\bl|=l\\ 0\le l_j\le m_j-m_{j+1}\\ 0\le l_{r+1}}}\\
 \qquad{}\times\frac{\varphi_{\bm,-\bl}(\lambda+2k)}{\left(\lambda+2k+2l+\frac{1}{2}\right)_{\bm-\bl',2}
\left(\lambda+2k+l-r+1\right)_{l-l_{r+1}}\left(\lambda+2k+l-r+\frac{1}{2}\right)_{l-l_{r+1}}} \\
\qquad{} \times\cK_{\bm,-\bl}^{(1,4)}\left(x_2;\frac{1}{2}\overline{\frac{\partial}{\partial x_1}}\right)f(x_1)
\end{gather*}
$(x_1\in\Skew(s,\BC), x_2\in\Sym(s,\BC))$. Here for $\bl=(l_1,\ldots,l_r,l_{r+1})\in(\BZ_{\ge 0})^{r+1}$, we put $\bl'=(l_1,\ldots,l_r)\in(\BZ_{\ge 0})^r$. In this section we want to prove
\begin{Lemma}\label{remaining_eq}
$\varphi_{\bm,-\bl}(\mu)=\big(\mu+l-r+\tfrac{1}{2}\big)_{l-l_{r+1}}$.
\end{Lemma}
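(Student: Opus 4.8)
The plan is to determine the unknown polynomials $\varphi_{\bm,-\bl}(\mu)$ by a \emph{dimension/reduction argument} that compares the odd-rank case $s=2r+1$ with the even-rank case $s=2r+2$, exactly in the spirit of the proof of Proposition~\ref{exp_susp}(4) already carried out in Section~\ref{sect_examples}. Recall that there it was shown
\begin{gather*}
\varphi_{\bm,-\bl}(\mu)=\sum_{\substack{\bk\in(\BZ_{\ge 0})^{r+1},\ |\bk|=l\\ 0\le k_j\le l_j\\ l_{r+1}\le k_{r+1}}}
c_{\bm,\bk,\bl}\,(\mu+2l+\bm-\bl')_{\bl'-\bk',2}\,(\mu+l-r)_{l-k_{r+1}},
\end{gather*}
where the constants $c_{\bm,\bk,\bl}$ are the branching coefficients arising from $\Rest\otimes\overline{\Rest\otimes\Proj}$ applied to $\cK_{\bm,-\bk}^{(1,4)(2r+2)}$, subject only to the normalization $\sum_{\bk}c_{\bm,\bk,\bl}=1$ (sum over the indicated range). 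So the entire content of Lemma~\ref{remaining_eq} is to pin down this weighted sum of shifted Pochhammer products; I expect the clean closed form $\big(\mu+l-r+\frac12\big)_{l-l_{r+1}}$ to come out of a single well-chosen specialization.

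First I would exploit the fact that $\cF_{\lambda,k,l}$ for $s=2r+1$ is already known to be intertwining (Proposition~\ref{main_prop}) and meromorphic in $\lambda$, and test it on a \emph{lowest-weight-type vector} where only one term in the $\bl$-sum survives. Concretely, fix $\bm$ and choose the extreme multi-index $\bl=(m_1-m_2,\dots,m_{r-1}-m_r,m_r,\,0)$, i.e.\ $l_{r+1}=0$, so $\varphi_{\bm,-\bl}$ has full degree $l$; then apply $\cF_{\nu,k,l}$ to a polynomial $f\in\cP_{2\bm}(\fp_1^+)\otimes V_{(2l,\dots,2l,0)}^{(s)\vee}$ realizing the corresponding $K_1^\BC$-type, and read off the coefficient of the single surviving $\cK_{\bm,-\bl}^{(1,4)}$ term in $\cO_\lambda(D)$. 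On the other hand, one can compute $\cF_{\nu,k,l}f$ on this vector \emph{directly} from the integral/series expression: the generic-norm factor $h(Q(x_2)y_1,y_1)^{-\mu/2}$ restricted to the relevant stratum becomes an explicit power, and the norm ratio $p_{m,j}(\nu)$ of $\cH_{2\nu+4k}(D_1,W_{(l)}')$ computed in \cite{N2} supplies exactly the denominator $\big(\lambda+2k+2l+\frac12\big)_{\bm-\bl',2}$ together with the extra scalar factor $\big(\mu+l-r+\frac12\big)_{l}$ coming from the last (rank-$(r+1)$) coordinate — this is where the half-integer shift $+\frac12$ originates, precisely as it does in the $\operatorname{SU}(s,s)\downarrow\operatorname{SO}^*(2s)$ norm formula in Theorem~\ref{main4}(2),(3) (compare the $\big(\lambda+\cdots+\frac12\big)$ Pochhammer symbols there). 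Matching the two computations forces $\varphi_{\bm,-\bl}(\mu)=\big(\mu+l-r+\frac12\big)_{l-l_{r+1}}$ on that extreme $\bl$.

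Next I would promote this from the extreme index to all $\bl$ by an \emph{interpolation in $l_{r+1}$}: since both $\varphi_{\bm,-\bl}(\mu)$ and the candidate $\big(\mu+l-r+\frac12\big)_{l-l_{r+1}}$ are monic of degree $l-l_{r+1}$, and since the restriction map $\Rest$ from $s=2r+2$ to $s=2r+1$ is compatible with further restriction in the last variable, I can argue inductively downward on $l_{r+1}$: knowing $\varphi$ for all larger values of the last coordinate, the normalization identity $\sum_{\bk}c_{\bm,\bk,\bl}=1$ together with the telescoping structure of $(\mu+2l+\bm-\bl')_{\bl'-\bk',2}$ and $(\mu+l-r)_{l-k_{r+1}}$ determines $\varphi_{\bm,-\bl}$ uniquely. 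Alternatively — and this is likely cleaner — I would observe that $\cK^{(1,4)(2r+1)}_{\bm,-\bl}$ for the full range of $\bl$ is obtained from $\cK^{(1,4)(2r+2)}_{\bm,-\bl}$ by the orthogonal projection $\Proj$, and that the same projection applied to the \emph{known} even-rank expansion in Proposition~\ref{exp_susp}(3) (which has the explicit coefficient $(\mu+l+(\underbrace{l,\dots,l}_{r},0))_{\bm-\bl+(\underbrace{0,\dots,0}_{r},l),2}$) produces, after re-collecting terms by the rank-$(r+1)$ Schur restriction rule $U(2r+2)\downarrow U(2r+1)$, exactly the factor $\big(\mu+l-r+\frac12\big)_{l-l_{r+1}}$ multiplying $(\mu+2l)_{\bm-\bl',2}$. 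This last identity is a Pochhammer-symbol calculation using $(a)_{m+n}=(a)_m(a+m)_n$ and the duplication-type relation $(2a)_{2m}=2^{2m}(a)_m(a+\tfrac12)_m$ responsible for the appearance of $+\tfrac12$.

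The main obstacle will be the bookkeeping in the reduction step: tracking how the $U(2r+2)$-type $2(m_1+l,\dots,m_r+l-l_r,\,l,\,l-l_{r+1})$ decomposes under restriction to $U(2r+1)$ and which component is selected by $\Proj$, and then verifying that the coefficients assemble into a single monic Pochhammer symbol rather than a messier sum. In other words, the hard part is showing that the a~priori sum $\sum_{\bk}c_{\bm,\bk,\bl}(\mu+2l+\bm-\bl')_{\bl'-\bk',2}(\mu+l-r)_{l-k_{r+1}}$ collapses; I would handle this either by the interpolation-in-$\mu$ argument (both sides monic of the same degree, agreeing at enough points supplied by the known holomorphic-discrete-series cases $\mu\gg 0$ via Theorem~\ref{continuation}), or by a generating-function identity for the $c_{\bm,\bk,\bl}$. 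Once the collapse is established, substituting back into the formula for $\cF_{\lambda,k,l}$ and comparing with the statement of Lemma~\ref{remaining_eq} completes the proof.
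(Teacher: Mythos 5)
Your proposal correctly identifies that, after the restriction argument of Proposition~\ref{exp_susp}(4), everything reduces to showing that the monic sum $\sum_{\bk}c_{\bm,\bk,\bl}(\mu+2l+\bm-\bl')_{\bl'-\bk',2}(\mu+l-r)_{l-k_{r+1}}$ collapses to $\big(\mu+l-r+\tfrac12\big)_{l-l_{r+1}}$, but none of the three mechanisms you offer for establishing that collapse actually closes the gap. The ``direct computation from the integral expression on an extreme vector'' is not simpler than the original problem: the unknown $\varphi_{\bm,-\bl}$ lives in the expansion of $\det(I-x_2y_1^*x_2y_1^*)^{-\mu/2}\rK^{(1)}_{(l,\dots,l,0)}(x_2(I-y_1^*x_2y_1^*x_2)^{-1})$ itself, not in the norm ratios from~\cite{N2}, so quoting those norms supplies only the factors already present in Proposition~\ref{main_prop} and tells you nothing about $\varphi$. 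The ``interpolation at $\mu\gg 0$ via Theorem~\ref{continuation}'' is circular: that theorem gives meromorphic continuation and convergence, not values of $\varphi_{\bm,-\bl}(\mu)$ at any point. And the ``generating-function identity for the $c_{\bm,\bk,\bl}$'' is asserted, not supplied; the only information available about these branching constants from the restriction construction is the normalization $\sum_{\bk}c_{\bm,\bk,\bl}=1$, which fixes only the leading coefficient.

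The missing idea is where the evaluation points of the monic polynomial come from. The paper finds the $l-l_{r+1}$ roots of $\varphi_{\bm,-\bl}$ representation-theoretically: for each $i=0,\dots,l-1$ it specializes to $\lambda+2k=-l+r-i-\tfrac12$, forms the residue operator $\tilde{\cF}^i_{k,l}=\lim_{\nu\to\lambda}\big(\nu+2k+l-r+\tfrac12\big)_{i+1}\cF_{\nu,k,l}$, in whose formula only the terms with $l-l_{r+1}\ge i+1$ survive with coefficients proportional to $\varphi_{\bm,-\bl}\big({-l}+r-i-\tfrac12\big)$, and then proves $\tilde{\cF}^i_{k,l}\equiv 0$ by combining the composition series of $\cO_{2r-2l-2i-1}\big(D_1,V^{(s)\vee}_{(2l,\dots,2l,0)}\big)_{\tilde K_1}$ (the submodules $M^{2l}_{2i}$ from~\cite{N2}) with the observation that every irreducible constituent of the unitarizable quotient has a $\tilde K_1$-type killed by the operator. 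This forces $\varphi_{\bm,-\bl}\big({-l}+r-i-\tfrac12\big)=0$ for $0\le i\le l-l_{r+1}-1$, and monicity of degree $l-l_{r+1}$ then gives the Pochhammer symbol. Without some substitute for this vanishing argument (or an explicit evaluation of the $c_{\bm,\bk,\bl}$, which is not in the paper either), your outline does not constitute a proof.
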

To do this, first we consider the reducibility of
\begin{gather*} \cO_\mu\big(D_1,V_{(l,\ldots,l,0)}^{(s)\vee}\big)_{\tilde{K}_1}\simeq
\cO_{\mu+l}\big(D_1,V_{\left(\frac{l}{2},\ldots,\frac{l}{2},-\frac{l}{2}\right)}^{(s)\vee}\big)_{\tilde{K}_1}. \end{gather*}
By \cite[Theorem 6.2(6)]{N2}, this is reducible if and only if $\mu\in\BZ$, $\mu\le 2r-l-1$ and $\mu\ne 2r-2l-1$.
Moreover, if $\mu=2r-l-i-1$ with $i=0,1,\ldots,l-1$, then
\begin{gather*} M_i^l:=\bigoplus_{\bm\in\BZ_{++}^r}
\bigoplus_{\substack{\bl\in(\BZ_{\ge 0})^{r+1},\; |\bl|=l\\ 0\le l_j\le m_j-m_{j+1}\\ l-l_{r+1}\le i}}
V_{\substack{(m_1+l,m_1+l-l_1,m_2+l,m_2+l-l_2,\ldots,\;\\
\hspace{45pt}m_r+l,m_r+l-l_r,l-l_{r+1})}}^{(s)\vee} \subset \cO_{2r-l-i-1}\big(D_1,V_{(l,\dots,l,0)}^{(s)\vee}\big)_{\tilde{K}_1} \end{gather*}
is an irreducible $(\fg_1,\tilde{K}_1)$-submodule, and the quotient $\cO_{2r-l-i-1}\big(D_1,V_{(l,\dots,l,0)}^{(s)\vee}\big)_{\tilde{K}_1}/M_i^l$ is infinitesimally unitary.

\begin{proof}[Proof of Lemma \ref{remaining_eq}] For $i=0,1,\ldots,l-1$, let $\lambda=-2k-l+r-i-\frac{1}{2}$, and consider the map
\begin{gather*}
\tilde{\cF}_{k,l}^i:=\lim_{\nu\to\lambda}\big(\nu+2k+l-r+\tfrac{1}{2}\big)_{i+1}
\cF_{\nu,k,l}\colon \cO_{2r-2l-2i-1}\big(D_1,V_{(2l,\dots,2l,0)}^{(s)}\big)_{\tilde{K}_1}\to \cO_\lambda(D)_{\tilde{K}}, \\
(\tilde{\cF}_{k,l}^if)(x_1+x_2)=\det(x_2)^k\sum_{\bm\in\BZ_{++}^{\lfloor s/2\rfloor}}
\sum_{\substack{\bl\in(\BZ_{\ge 0})^{r+1},\; |\bl|=l\\ 0\le l_j\le m_j-m_{j+1}\\ l-l_{r+1}\ge i+1}}\\
\qquad{}\times\frac{\varphi_{\bm,-\bl}\left(-l+r-i-\frac{1}{2}\right)}{\left(r-i+l\right)_{\bm-\bl',2}
\left(\frac{1}{2}-i\right)_{l-l_{r+1}}(1)_{l-l_{r+1}-i-1}}
\cK_{\bm,-\bl}^{(1,4)}\left(x_2;\frac{1}{2}\overline{\frac{\partial}{\partial x_1}}\right)f(x_1).
\end{gather*}
This is well-defined, and intertwines the $\tilde{G}_1$-action. Then since
$\cK_{\bm,-\bl}^{(1,4)}\left(x_2;\frac{1}{2}\overline{\frac{\partial}{\partial x_1}}\right)f(x_1)=0$ holds if
$f\in V_{(n_1+2l,n_1+2l-k_1,n_2+2l,n_2+2l-k_2,\ldots,n_r+2l,n_r+2l-k_r,2l-k_{r+1})}^{(s)\vee}$
with $n_j<2m_j$, $n_j-k_j<2m_j-2l_j$, or $2l-k_{r+1}<2l-2l_{r+1}$, we have
\begin{gather*}
\operatorname{Ker}\tilde{\cF}_{k,l}^i \supset \bigoplus_{\bn\in\BZ_{++}^r}
\bigoplus_{\substack{\bk\in(\BZ_{\ge 0})^{r+1},\; |\bk|=2l\\ 0\le k_j\le n_j-n_{j+1}\\ 2l-k_{r+1}\le 2i+1}}
V_{\substack{(n_1+2l,n_1+2l-k_1,n_2+2l,n_2+2l-k_2,\ldots,\;\\
\hspace{45pt}n_r+2l,n_r+2l-k_r,2l-k_{r+1})}}^{(s)\vee} \supsetneq M_{2i}^{2l}.
\end{gather*}
Now we prove that $\operatorname{Ker}\tilde{\cF}_{k,l}^i=\cO_{2r-2l-2i-1}\big(D_1,V_{(2l,\dots,2l,0)}^{(s)\vee}\big)_{\tilde{K}_1}$ holds. Let $M\subset\cO_{2r-2l-2i-1}\big(D_1,\allowbreak V_{(2l,\dots,2l,0)}^{(s)\vee}\big)_{\tilde{K}_1}/M_{2i}^{2l}$ be any irreducible $\big(\fg_1,\tilde{K}_1\big)$-module, and $\hat{M}\subset \cO_{2r-2l-2i-1}\big(D_1,V_{(2l,\dots,2l,0)}^{(s)\vee}\big)_{\tilde{K}_1}$ be the preimage of~$M$. Then $M$ contains a $\tilde{K}_1$-type which is annihilated by the quotient map of ${\rm d}\rho_{2r-2l-2i-1}(\fp^+_1)$. That is, the preimage $\hat{M}$ contains a $\tilde{K}_1$-type $V_M$ such that
\begin{gather*} V_M\not\subset M_{2i}^{2l}, \qquad {\rm d}\rho_{2r-2l-2i-1}(\fp^+_1)V_M\subset M_{2i}^{2l}. \end{gather*}
Then since the action ${\rm d}\rho_{2r-2l-2i-1}(\fp^+_1)$ is given by 1st order differential operators of constant coefficients, in general we have
\begin{gather*}
 {\rm d}\rho_{2r-2l-2i-1}(\fp^+_1)V_{(n_1+2l,n_1+2l-k_1,n_2+2l,n_2+2l-k_2,\ldots,n_r+2l,n_r+2l-k_r,2l-k_{r+1})}^{(s)\vee} \\
 \qquad{} \subset V_{(n_1+2l,n_1+2l-k_1,n_2+2l,n_2+2l-k_2,\ldots,n_r+2l,n_r+2l-k_r,2l-k_{r+1})}^{(s)\vee}\otimes V_{(0,\ldots,0,-1,-1)}^{(s)\vee} \\
 \qquad{} =\bigoplus_{1\le i<j\le s}V_{(n_1+2l,n_1+2l-k_1,n_2+2l,n_2+2l-k_2,\ldots,n_r+2l,n_r+2l-k_r,2l-k_{r+1})-\be_{ij}}^{(s)\vee}
\end{gather*}
holds, where $\be_{ij}=\big(0,\ldots,\overset{i\text{-th}}{\check{1}},\ldots,\overset{j\text{-th}}{\check{1}},\ldots,0\big)$,
and this is non-zero unless $\bn=(0,\ldots,0)$, $\bk=(0,\ldots,0,-2l)$. Thus $V_M$ must be of the form
\begin{gather*} V_M=V_{(n_1+2l,n_1+2l-k_1,n_2+2l,n_2+2l-k_2,\ldots,n_r+2l,n_r+2l-k_r,2l-k_{r+1})}^{(s)\vee} \end{gather*}
for some $\bn$, $\bk$ with $2l-k_{r+1}=2i+1$, and hence $V_M\subset\operatorname{Ker}\tilde{\cF}_{k,l}^i$ holds. Therefore we have $\big(\operatorname{Ker}\tilde{\cF}_{k,l}^i\big)/M_{2i}^{2l}\cap M\ne \{0\}$,
and by the irreducibility of $M$, we get $\operatorname{Ker}\tilde{\cF}_{k,l}^i\supset \hat{M}$. Since $\cO_{2r-2l-2i-1}\big(D_1,V_{(2l,\dots,2l,0)}^{(s)\vee}\big)_{\tilde{K}_1}/M_{2i}^{2l}$ is infinitesimally unitary,
this is completely reducible, and any $\tilde{K}_1$-type of this module is contained in some irreducible submodule. Therefore we have $\operatorname{Ker}\tilde{\cF}_{k,l}^i=\cO_{2r-2l-2i-1}\big(D_1,V_{(2l,\dots,2l,0)}^{(s)\vee}\big)_{\tilde{K}_1}$, that is, $\tilde{\cF}_{k,l}^i=0$ holds. Thus $\varphi_{\bm,-\bl}\left(-l+r-i-\frac{1}{2}\right)\allowbreak=0$ holds if $l-l_{r+1}\ge i+1$. Since $\varphi_{\bm,-\bl}(\mu)$ is monic of degree $l-l_{r+1}$, we get Lemma~\ref{remaining_eq}.
\end{proof}

Hence we have proved the following.
\begin{Theorem}\label{main6}
Let $(G,G_1)=(\operatorname{SU}(s,s), \operatorname{SO}^*(2s))$ with $s\ge 2$ odd, and $k,l\in\BZ_{\ge 0}$. Then the linear map
\begin{gather*}
\cF_{\lambda,k,l}\colon \ \cO_{2\lambda+4k}\big(D_1,V_{(2l,\dots,2l,0)}^{(s)\vee}\big)\to \cO_\lambda(D), \\
(\cF_{\lambda,k,l}f)(x_1+x_2)=\det(x_2)^k\sum_{\bm\in\BZ_{++}^{\lfloor s/2\rfloor}}
\sum_{\substack{\bl\in(\BZ_{\ge 0})^{\lceil s/2\rceil},\; |\bl|=l\\ 0\le l_j\le m_j-m_{j+1}\\ 0\le l_{\lceil s/2\rceil}}}\\
\qquad{} \times\frac{1}{\left(\lambda+2k+2l+\frac{1}{2}\right)_{\bm-\bl',2}
\left(\lambda+2k+l-\left\lfloor\frac{s}{2}\right\rfloor+1\right)_{l-l_{\lceil s/2\rceil}}}
\cK_{\bm,-\bl}^{(1,4)}\left(x_2;\frac{1}{2}\overline{\frac{\partial}{\partial x_1}}\right)f(x_1)
\end{gather*}
$(x_1\in\Skew(s,\BC), x_2\in\Sym(s,\BC))$ intertwines the $\tilde{G}_1$-action. Here for $\bl=(l_1,\ldots,l_{\lfloor s/2\rfloor},\allowbreak l_{\lceil s/2\rceil})\in(\BZ_{\ge 0})^{\lceil s/2\rceil}$, we put $\bl'=(l_1,\ldots,l_{\lfloor s/2\rfloor})\in(\BZ_{\ge 0})^{\lfloor s/2\rfloor}$.
\end{Theorem}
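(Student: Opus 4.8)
The plan is to assemble Theorem~\ref{main6} as a direct consequence of the two preparatory results already in hand: Proposition~\ref{main_prop}, which gives the form of $\cF_{\lambda,k,l}$ up to the unknown monic polynomials $\varphi_{\bm,-\bl}(\mu)$ of degree $l-l_{\lceil s/2\rceil}$, and Lemma~\ref{remaining_eq}, which pins down $\varphi_{\bm,-\bl}(\mu)=\bigl(\mu+l-\lfloor s/2\rfloor+\tfrac12\bigr)_{l-l_{\lceil s/2\rceil}}$. Once Lemma~\ref{remaining_eq} is proved, one simply substitutes this value of $\varphi_{\bm,-\bl}(\lambda+2k)$ into the formula of Proposition~\ref{main_prop}; the factor $\bigl(\lambda+2k+l-\lfloor s/2\rfloor+\tfrac12\bigr)_{l-l_{\lceil s/2\rceil}}$ in the numerator cancels against the identical factor in the denominator, leaving exactly the stated formula. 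So the work is entirely in Lemma~\ref{remaining_eq}, whose proof is sketched in the excerpt but which I would carry out as follows.

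First I would recall, via \cite[Theorem~6.2(6)]{N2} and the identification $\cO_\mu(D_1,V_{(l,\ldots,l,0)}^{(s)\vee})_{\tilde K_1}\simeq\cO_{\mu+l}(D_1,V_{(l/2,\ldots,l/2,-l/2)}^{(s)\vee})_{\tilde K_1}$, the precise reducibility points of this $(\fg_1,\tilde K_1)$-module when $s=2r+1$: it is reducible exactly when $\mu\in\BZ$, $\mu\le 2r-l-1$, $\mu\ne 2r-2l-1$, and at $\mu=2r-l-i-1$ ($i=0,\ldots,l-1$) the submodule $M_i^l$ (the sum of those $\tilde K_1$-types with $l-l_{r+1}\le i$) is irreducible with infinitesimally unitary quotient. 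The key idea is then to specialize $\lambda$ so that $\mu=\lambda+2k$ hits the value $-l+r-i-\tfrac12$ — equivalently $\lambda=-2k-l+r-i-\tfrac12$ — and examine the normalized residue $\tilde\cF_{k,l}^i:=\lim_{\nu\to\lambda}\bigl(\nu+2k+l-r+\tfrac12\bigr)_{i+1}\cF_{\nu,k,l}$. By Theorem~\ref{continuation} (applicable since the Pochhammer-type growth condition is met) this is a well-defined $\tilde G_1$-intertwining map $\cO_{2r-2l-2i-1}(D_1,V_{(2l,\ldots,2l,0)}^{(s)})_{\tilde K_1}\to\cO_\lambda(D)_{\tilde K}$ whose explicit formula retains only the terms with $l-l_{r+1}\ge i+1$, with coefficient proportional to $\varphi_{\bm,-\bl}\bigl(-l+r-i-\tfrac12\bigr)$.

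Next I would argue that this residue map is identically zero. The mechanism: since $\cK_{\bm,-\bl}^{(1,4)}(x_2;\tfrac12\overline{\partial/\partial x_1})f(x_1)$ vanishes whenever $f$ lies in a $\tilde K_1$-type whose labels are too small (made precise by the branching/Littlewood–Richardson constraints already recorded), one sees $\operatorname{Ker}\tilde\cF_{k,l}^i\supsetneq M_{2i}^{2l}$. To upgrade this to $\operatorname{Ker}\tilde\cF_{k,l}^i=$ everything, I would use the infinitesimal unitarity (hence complete reducibility) of $\cO_{2r-2l-2i-1}(D_1,V_{(2l,\ldots,2l,0)}^{(s)\vee})_{\tilde K_1}/M_{2i}^{2l}$: any irreducible submodule $M$ of this quotient contains a $\tilde K_1$-type $V_M$ killed by the image of ${\rm d}\rho(\fp^+_1)$, and since $\fp^+_1$ acts by constant-coefficient first-order operators one computes that ${\rm d}\rho(\fp^+_1)V_{(\ldots,2l-k_{r+1})}^{(s)\vee}$ is nonzero except at the bottom, forcing $2l-k_{r+1}=2i+1$ for $V_M$; such a $V_M$ lies in $\operatorname{Ker}\tilde\cF_{k,l}^i$, and irreducibility of $M$ plus complete reducibility then gives the whole module. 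Hence $\tilde\cF_{k,l}^i=0$, so $\varphi_{\bm,-\bl}\bigl(-l+r-i-\tfrac12\bigr)=0$ whenever $l-l_{r+1}\ge i+1$. Running $i=0,1,\ldots,l-l_{r+1}-1$ produces $l-l_{r+1}$ distinct roots of the monic degree-$(l-l_{r+1})$ polynomial $\varphi_{\bm,-\bl}$, which therefore equals $\prod_{i=0}^{l-l_{r+1}-1}\bigl(\mu-(-l+r-i-\tfrac12)\bigr)=\bigl(\mu+l-r+\tfrac12\bigr)_{l-l_{r+1}}$, i.e.\ Lemma~\ref{remaining_eq}. The main obstacle I anticipate is the vanishing argument for $\tilde\cF_{k,l}^i$: one must verify carefully that the residue formula really drops all terms with $l-l_{r+1}\le i$ and that the $\tilde K_1$-type analysis of $\operatorname{Ker}\tilde\cF_{k,l}^i$ together with complete reducibility genuinely exhausts the module — the bookkeeping on the labels $(m_1+l,m_1+l-l_1,\ldots)$ versus $M_{2i}^{2l}$ is where errors would creep in — while the final cancellation in the formula of Proposition~\ref{main_prop} is purely routine.
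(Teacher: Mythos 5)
Your proposal is correct and follows essentially the same route as the paper: Theorem~\ref{main6} is obtained from Proposition~\ref{main_prop} once Lemma~\ref{remaining_eq} is established, and your proof of that lemma — specializing to $\lambda=-2k-l+r-i-\tfrac12$, showing the normalized residue $\tilde{\cF}_{k,l}^i$ kills all of $\cO_{2r-2l-2i-1}\big(D_1,V_{(2l,\dots,2l,0)}^{(s)\vee}\big)_{\tilde K_1}$ via the $\fp^+_1$-action on $\tilde K_1$-types and the complete reducibility coming from infinitesimal unitarity of the quotient by $M_{2i}^{2l}$, and then reading off the $l-l_{\lceil s/2\rceil}$ roots of the monic polynomial $\varphi_{\bm,-\bl}$ — is exactly the argument given in Section~\ref{sect_remaining}. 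The final cancellation of $\bigl(\lambda+2k+l-\lfloor s/2\rfloor+\tfrac12\bigr)_{l-l_{\lceil s/2\rceil}}$ against the corresponding denominator factor is as you describe.
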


\subsection*{Acknowledgments}
The author would like to thank his supervisor Professor T.~Kobayashi for a lot of helpful advice on this paper. He also thanks Professor H.~Ochiai for helpful advice on the structure of this paper, and his colleagues, especially Dr.~M.~Kitagawa for a lot of helpful discussion. In addition he would like to thank anonymous referees for a lot of helpful suggestion to improve this paper.

%\pdfbookmark[1]{References}{ref}
\LastPageEnding


\begin{thebibliography}{99}\addcontentsline{toc}{section}{References}
\footnotesize\itemsep=0pt

\bibitem{BCK}
Ben~Sa\"{i}d S., Clerc J.L., Koufany K., Conformally covariant bi-differential
 operators on a simple real {J}ordan algebra, \href{https://arxiv.org/abs/1704.01817}{arXiv:1704.01817}.

\bibitem{CKOP}
Clerc J.L., Kobayashi T., {\O}rsted B., Pevzner M., Generalized
 {B}ernstein--{R}eznikov integrals, \href{https://doi.org/10.1007/s00208-010-0516-4}{\textit{Math. Ann.}} \textbf{349} (2011),
 395--431, \href{https://arxiv.org/abs/0906.2874}{arXiv:0906.2874}.

\bibitem{C}
Cohen H., Sums involving the values at negative integers of {$L$}-functions of
 quadratic characters, \href{https://doi.org/10.1007/BF01436180}{\textit{Math. Ann.}} \textbf{217} (1975), 271--285.

\bibitem{D}
Dib H., Fonctions de {B}essel sur une alg\`ebre de {J}ordan, \textit{J.~Math.
 Pures Appl.} \textbf{69} (1990), 403--448.

\bibitem{EHW}
Enright T., Howe R., Wallach N., A classification of unitary highest weight
 modules, in Representation Theory of Reductive Groups ({P}ark {C}ity, {U}tah,
 1982), \textit{Progr. Math.}, Vol.~40, \href{https://doi.org/10.1007/978-1-4684-6730-7_7}{Birkh\"{a}user Boston}, Boston, MA,
 1983, 97--143.

\bibitem{FKKLR}
Faraut J., Kaneyuki S., Kor\'{a}nyi A., Lu Q.k., Roos G., Analysis and geometry
 on complex homogeneous domains, \textit{Progress in Mathematics}, Vol.~185,
 \href{https://doi.org/10.1007/978-1-4612-1366-6}{Birkh\"{a}user Boston}, Inc., Boston, MA, 2000.

\bibitem{FK0}
Faraut J., Kor\'{a}nyi A., Function spaces and reproducing kernels on bounded
 symmetric domains, \href{https://doi.org/10.1016/0022-1236(90)90119-6}{\textit{J.~Funct. Anal.}} \textbf{88} (1990), 64--89.

\bibitem{FK}
Faraut J., Kor\'{a}nyi A., Analysis on symmetric cones, \textit{Oxford Mathematical
 Monographs, Oxford Science Publications}, The Clarendon Press, Oxford
 University Press, New York, 1994.

\bibitem{IKO}
Ibukiyama T., Kuzumaki T., Ochiai H., Holonomic systems of {G}egenbauer type
 polynomials of matrix arguments related with {S}iegel modular forms,
 \href{https://doi.org/10.2969/jmsj/06410273}{\textit{J.~Math. Soc. Japan}} \textbf{64} (2012), 273--316.

\bibitem{J}
Jakobsen H.P., Hermitian symmetric spaces and their unitary highest weight
 modules, \href{https://doi.org/10.1016/0022-1236(83)90076-9}{\textit{J.~Funct. Anal.}} \textbf{52} (1983), 385--412.

\bibitem{JV}
Jakobsen H.P., Vergne M., Restrictions and expansions of holomorphic
 representations, \href{https://doi.org/10.1016/0022-1236(79)90023-5}{\textit{J.~Funct. Anal.}} \textbf{34} (1979), 29--53.

\bibitem{Ju}
Juhl A., Families of conformally covariant differential operators,
 {$Q$}-curvature and holography, \textit{Progress in Mathematics}, Vol.~275,
 \href{https://doi.org/10.1007/978-3-7643-9900-9}{Birkh\"{a}user Verlag}, Basel, 2009.

\bibitem{Kdd1}
Kobayashi T., Discrete decomposability of the restriction of {$A_{\mathfrak
 q}(\lambda)$} with respect to reductive subgroups and its applications,
 \href{https://doi.org/10.1007/BF01232239}{\textit{Invent. Math.}} \textbf{117} (1994), 181--205.

\bibitem{Kmf0}
Kobayashi T., Multiplicity free theorem in branching problems of unitary
 highest weight modules, in Proceedings of the Symposium on Representation
 Theory (1997, Saga, Kyushu), Editor K.~Mimachi, Tokyo University of Science, 1997, 9--17.

\bibitem{Kdd2}
Kobayashi T., Discrete decomposability of the restriction of {$A_{\mathfrak
 q}(\lambda)$} with respect to reductive subgroups. {II}.~{M}icro-local
 analysis and asymptotic {$K$}-support, \href{https://doi.org/10.2307/120963}{\textit{Ann. of Math.}} \textbf{147}
 (1998), 709--729.

\bibitem{Kdd3}
Kobayashi T., Discrete decomposability of the restriction of {$A_{\mathfrak
 q}(\lambda)$} with respect to reductive subgroups. {III}.~{R}estriction of
 {H}arish-{C}handra modules and associated varieties, \href{https://doi.org/10.1007/s002220050203}{\textit{Invent. Math.}}
 \textbf{131} (1998), 229--256.

\bibitem{Kmf0-1}
Kobayashi T., Multiplicity-free representations and visible actions on complex
 manifolds, \href{https://doi.org/10.2977/prims/1145475221}{\textit{Publ. Res. Inst. Math. Sci.}} \textbf{41} (2005), 497--549.

\bibitem{Kmf1}
Kobayashi T., Multiplicity-free theorems of the restrictions of unitary highest
 weight modules with respect to reductive symmetric pairs, in Representation
 Theory and Automorphic Forms, \textit{Progr. Math.}, Vol.~255, \href{https://doi.org/10.1007/978-0-8176-4646-2_3}{Birkh\"{a}user
 Boston}, Boston, MA, 2008, 45--109, \href{https://arxiv.org/abs/math.RT/0607002}{arXiv:math.RT/0607002}.

\bibitem{Kbgg}
Kobayashi T., Restrictions of generalized {V}erma modules to symmetric pairs,
 \href{https://doi.org/10.1007/s00031-012-9180-y}{\textit{Transform. Groups}} \textbf{17} (2012), 523--546, \href{https://arxiv.org/abs/1008.4544}{arXiv:1008.4544}.

\bibitem{Kfmeth}
Kobayashi T., {$F$}-method for constructing equivariant differential operators,
 in Geometric Analysis and Integral Geometry, \href{https://doi.org/10.1090/conm/598/11998}{\textit{Contemp. Math.}}, Vol.~598, Amer. Math. Soc., Providence, RI, 2013, 139--146, \href{https://arxiv.org/abs/1212.6862}{arXiv:1212.6862}.

\bibitem{Kmf2}
Kobayashi T., Propagation of multiplicity-freeness property for holomorphic
 vector bundles, in Lie Groups: Structure, Actions, and Representations,
 \textit{Progr. Math.}, Vol.~306, \href{https://doi.org/10.1007/978-1-4614-7193-6_6}{Birkh\"{a}user/Springer}, New York, 2013,
 113--140, \href{https://arxiv.org/abs/math.RT/0607004}{arXiv:math.RT/0607004}.

\bibitem{Ksf}
Kobayashi T., Shintani functions, real spherical manifolds, and symmetry
 breaking operators, in Developments and Retrospectives in {L}ie Theory,
 \textit{Dev. Math.}, Vol.~37, \href{https://doi.org/10.1007/978-3-319-09934-7_5}{Springer}, Cham, 2014, 127--159,
 \href{https://arxiv.org/abs/1401.0117}{arXiv:1401.0117}.

\bibitem{Kfm1}
Kobayashi T., Symmetric pairs with finite-multiplicity property for branching
 laws of admissible representations, \href{https://doi.org/10.3792/pjaa.90.79}{\textit{Proc. Japan Acad. Ser.~A Math.
 Sci.}} \textbf{90} (2014), 79--83.

\bibitem{K1}
Kobayashi T., A program for branching problems in the representation theory of
 real reductive groups, in Representations of Reductive Groups, \textit{Progr.
 Math.}, Vol.~312, \href{https://doi.org/10.1007/978-3-319-23443-4_10}{Birkh\"{a}user/Springer}, Cham, 2015, 277--322,
 \href{https://arxiv.org/abs/1509.08861}{arXiv:1509.08861}.

\bibitem{KKP}
Kobayashi T., Kubo T., Pevzner M., Conformal symmetry breaking operators for
 differential forms on spheres, \textit{Lecture Notes in Math.}, Vol.~2170,
 \href{https://doi.org/10.1007/978-981-10-2657-7_1}{Springer}, Singapore, 2016, \href{https://arxiv.org/abs/1605.09272}{arXiv:1605.09272}.

\bibitem{KM}
Kobayashi T., Matsuki T., Classification of finite-multiplicity symmetric
 pairs, \href{https://doi.org/10.1007/s00031-014-9265-x}{\textit{Transform. Groups}} \textbf{19} (2014), 457--493,
 \href{https://arxiv.org/abs/1312.4246}{arXiv:1312.4246}.

\bibitem{KOSS}
Kobayashi T., {\O}rsted B., Somberg P., Sou\v{c}ek V., Branching laws for
 {V}erma modules and applications in parabolic geometry.~{I}, \href{https://doi.org/10.1016/j.aim.2015.08.020}{\textit{Adv.
 Math.}} \textbf{285} (2015), 1796--1852, \href{https://arxiv.org/abs/1305.6040}{arXiv:1305.6040}.

\bibitem{KtO}
Kobayashi T., Oshima T., Finite multiplicity theorems for induction and
 restriction, \href{https://doi.org/10.1016/j.aim.2013.07.015}{\textit{Adv. Math.}} \textbf{248} (2013), 921--944,
 \href{https://arxiv.org/abs/1108.3477}{arXiv:1108.3477}.

\bibitem{KyO1}
Kobayashi T., Oshima Y., Classification of discretely decomposable
 {$A_{\mathfrak q}(\lambda)$} with respect to reductive symmetric pairs,
 \href{https://doi.org/10.1016/j.aim.2012.07.006}{\textit{Adv. Math.}} \textbf{231} (2012), 2013--2047, \href{https://arxiv.org/abs/1104.4400}{arXiv:1104.4400}.

\bibitem{KyO2}
Kobayashi T., Oshima Y., Classification of symmetric pairs with discretely
 decomposable restrictions of {$(\mathfrak{g},K)$}-modules, \href{https://doi.org/10.1515/crelle-2013-0045}{\textit{J.~Reine
 Angew. Math.}} \textbf{703} (2015), 201--223, \href{https://arxiv.org/abs/1202.5743}{arXiv:1202.5743}.

\bibitem{KP1}
Kobayashi T., Pevzner M., Differential symmetry breaking operators:
 {I}.~{G}eneral theory and {F}-method, \href{https://doi.org/10.1007/s00029-015-0207-9}{\textit{Selecta Math.~(N.S.)}}
 \textbf{22} (2016), 801--845, \href{https://arxiv.org/abs/1301.2111}{arXiv:1301.2111}.

\bibitem{KP2}
Kobayashi T., Pevzner M., Differential symmetry breaking operators:
 {II}.~{R}ankin--{C}ohen operators for symmetric pairs, \href{https://doi.org/10.1007/s00029-015-0208-8}{\textit{Selecta
 Math.~(N.S.)}} \textbf{22} (2016), 847--911, \href{https://arxiv.org/abs/1301.2111}{arXiv:1301.2111}.

\bibitem{KS0}
Kobayashi T., Speh B., Intertwining operators and the restriction of
 representations of rank-one orthogonal groups, \href{https://doi.org/10.1016/j.crma.2013.11.018}{\textit{C.~R.~Math. Acad. Sci.
 Paris}} \textbf{352} (2014), 89--94.

\bibitem{KS1}
Kobayashi T., Speh B., Symmetry breaking for representations of rank one
 orthogonal groups, \href{https://doi.org/10.1090/memo/1126}{\textit{Mem. Amer. Math. Soc.}} \textbf{238} (2015),
 v+110~pages, \href{https://arxiv.org/abs/1310.3213}{arXiv:1310.3213}.

\bibitem{KS2}
Kobayashi T., Speh B., Symmetry breaking for representations of rank one
 orthogonal groups~{II}, \textit{Lecture Notes in Math.}, Vol.~2234, \href{https://doi.org/10.1007/978-981-13-2901-2}{Springer},
 Singapore, 2018.

\bibitem{L}
Loos O., Bounded symmetric domains and {J}ordan pairs, Math. Lectures,
 University of California, Irvine, 1977.

\bibitem{Ma}
Martens S., The characters of the holomorphic discrete series, \href{https://doi.org/10.1073/pnas.72.9.3275}{\textit{Proc.
 Nat. Acad. Sci. USA}} \textbf{72} (1975), 3275--3276.

\bibitem{MOO}
M\"{o}llers J., {\O}rsted B., Oshima Y., Knapp--{S}tein type intertwining
 operators for symmetric pairs, \href{https://doi.org/10.1016/j.aim.2016.02.024}{\textit{Adv. Math.}} \textbf{294} (2016),
 256--306, \href{https://arxiv.org/abs/1309.3904}{arXiv:1309.3904}.

\bibitem{MO}
M\"{o}llers J., Oshima Y., Restriction of most degenerate representations of
 {$O(1,N)$} with respect to symmetric pairs, \textit{J.~Math. Sci. Univ.
 Tokyo} \textbf{22} (2015), 279--338.

\bibitem{M}
Muirhead R.J., Aspects of multivariate statistical theory, Wiley Series in
 Probability and Mathematical Statistics, John Wiley \& Sons, Inc., New York,
 1982.

\bibitem{N}
Nakahama R., Integral formula and upper estimate of {I} and {J}-{B}essel
 functions on {J}ordan algebras, \textit{J.~Lie Theory} \textbf{24} (2014),
 421--438, \href{https://arxiv.org/abs/1211.4702}{arXiv:1211.4702}.

\bibitem{N2}
Nakahama R., Norm computation and analytic continuation of vector valued
 holomorphic discrete series representations, \textit{J.~Lie Theory}
 \textbf{26} (2016), 927--990, \href{https://arxiv.org/abs/1506.05919}{arXiv:1506.05919}.

\bibitem{OR}
Ovsienko V., Redou P., Generalized transvectants-{R}ankin--{C}ohen brackets,
 \href{https://doi.org/10.1023/A:1022956710255}{\textit{Lett. Math. Phys.}} \textbf{63} (2003), 19--28.

\bibitem{P}
Peetre J., Hankel forms of arbitrary weight over a symmetric domain via the
 transvectant, \href{https://doi.org/10.1216/rmjm/1181072389}{\textit{Rocky Mountain~J. Math.}} \textbf{24} (1994),
 1065--1085.

\bibitem{PZ}
Peng L., Zhang G., Tensor products of holomorphic representations and bilinear
 differential operators, \href{https://doi.org/10.1016/j.jfa.2003.09.006}{\textit{J.~Funct. Anal.}} \textbf{210} (2004),
 171--192.

\bibitem{Sat}
Satake I., Algebraic structures of symmetric domains, \textit{Kan\^{o} Memorial
 Lectures}, Vol.~4, Iwanami Shoten, Tokyo, Princeton University Press,
 Princeton, N.J., 1980.

\bibitem{S}
Stembridge J.R., Multiplicity-free products and restrictions of {W}eyl
 characters, \href{https://doi.org/10.1090/S1088-4165-03-00150-X}{\textit{Represent. Theory}} \textbf{7} (2003), 404--439.

\bibitem{T}
Tsukamoto C., Spectra of {L}aplace--{B}eltrami operators on {${\rm
 SO}(n+2)/{\rm SO}(2)\times {\rm SO}(n)$} and {${\rm Sp}(n+1)/{\rm
 Sp}(1)\times {\rm Sp}(n)$}, \textit{Osaka~J. Math.} \textbf{18} (1981),
 407--426.

\bibitem{Y1}
Yokota I., Realizations of involutive automorphisms {$\sigma$} and {$G^\sigma$}
 of exceptional linear {L}ie groups~{$G$}. {I}.~{$G=G_2, F_4$} and {$E_6$},
 \href{https://doi.org/10.21099/tkbjm/1496161328}{\textit{Tsukuba~J. Math.}} \textbf{14} (1990), 185--223.

\bibitem{Y2}
Yokota I., Exceptional {L}ie groups, \href{https://arxiv.org/abs/0902.0431}{arXiv:0902.0431}.

\bibitem{Z}
Zhang G., Branching coefficients of holomorphic representations and
 {S}egal--{B}argmann transform, \href{https://doi.org/10.1006/jfan.2002.3957}{\textit{J.~Funct. Anal.}} \textbf{195} (2002),
 306--349, \href{https://arxiv.org/abs/math.RT/0110212}{arXiv:math.RT/0110212}.

\end{thebibliography}
\end{document}